\newcommand{\proquestmode}{}
\title{Complexity Aspects of Fundamental Questions in Polynomial Optimization}
\author{Jeffrey Zhang}
\theoremstyle{plain}
\newtheorem{theorem}{Theorem}[section]
\newtheorem{lem}[theorem]{Lemma}
\newtheorem{prop}[theorem]{Proposition}
\newtheorem{cor}[theorem]{Corollary}
\newtheorem{defn}[theorem]{Definition}
\theoremstyle{definition}
\theoremstyle{remark}
\newtheorem{example}{Example}[section]
\newtheorem{remark}{Remark}[section]
\def\0{{\bf 0}}
\def\1{{\bf 1}}
\def \M{ \mathcal{M}}
\def \nulls{ \mathcal{N}}
\def \cols{ \mathcal{C}}
\def \bmat{\left[\begin{matrix}}
	\def \emat{\end{matrix}\right]}
	\def \bvec{\left(\begin{matrix}}
\def \evec{\end{matrix}\right)}
\def \xyvec{\left[\begin{matrix}x\\y\end{matrix}\right]}
\def \xy1vec{\left[\begin{matrix}x\\y\\1\end{matrix}\right]}
\def \QED{\begin{flushright}\Halmos\end{flushright}\end{proof}}
\def \defeq{\mathrel{\mathop{:}}=}
\def \Tr{\mathrm{Tr}}
\def \xbar{\bar{x}}
\def \ybar{\bar{y}}
\def \gx{\grad p(x)}
\def \Hx{\Hess p(x)}
\def \gxb{\grad p(\xbar)}
\def \Hxb{\Hess p(\xbar)}
\def \gp3d{\grad p_3(d)}
\def \R{\mathbb{R}}
\def \Rn{\R^n}
\def \beq{\begin{equation}}
\def \eeq{\end{equation}}
\def \otm{\{1, \ldots, m\}}
\def \Snn{ \mathbb{S}^{n \times n}}
\def \grad{\nabla}
\def \Hess{\nabla^2}
\def \beq{\begin{equation}}
\def \eeq{\end{equation}}
\def \baeq{\begin{equation*}\begin{aligned}}
\def \eaeq{\end{aligned}\end{equation*}}
\newcommand{\baeql}[1]{\begin{equation}\label{#1}\begin{aligned}}
\def \eaeql{\end{aligned}\end{equation}}
\def \otn{\{1, \ldots, n\}}
\newcommand{\ceil}[1]{\lceil #1 \rceil}
\abstract{\begin{small}In this thesis, we settle the computational complexity of some fundamental questions in polynomial optimization. These include the questions of (i) finding a local minimum, (ii) testing local minimality of a candidate point, and (iii) deciding attainment of the optimal value. Our results characterize the complexity of these three questions for all degrees of the defining polynomials left open by prior literature.

Regarding questions (i) and (ii), we show that unless P=NP, there cannot be a polynomial-time algorithm that finds a point within Euclidean distance $c^n$ (for any constant $c$) of a local minimum of an $n$-variate quadratic function over a polytope. This result answers a question of Pardalos and Vavasis that appeared in 1992 on a list of seven open problems in complexity theory for numerical optimization. By contrast, through leveraging techniques from algebraic geometry, we show that a local minimum of a cubic polynomial can be found efficiently by semidefinite programming. Interestingly, we prove that second-order points of cubic polynomials admit an efficient semidefinite representation, even though their critical points are NP-hard to find. We also give an efficiently-checkable necessary and sufficient condition for local minimality of a point for a cubic polynomial.

Regarding question (iii), we prove that testing whether a quadratically constrained quadratic program with a finite optimal value has an optimal solution is NP-hard. We also show that testing coercivity of the objective function, compactness of the feasible set, and the Archimedean property associated with the description of the feasible set are all NP-hard. The latter property is the assumption on which convergence of the Lasserre hierarchy relies. We also give a new characterization of coercive polynomials that lends itself to a hierarchy of semidefinite programs.

In our final chapter, we present a semidefinite programming relaxation for the problem of finding approximate Nash equilibria in bimatrix games. We show that for a symmetric game, a $1/3$-Nash equilibrium can be efficiently recovered from any rank-2 solution to this relaxation. We also propose semidefinite programming relaxations for NP-hard problems related to Nash equilibria, such as that of finding the highest achievable welfare under any Nash equilibrium.\end{small}}
\begin{document}
	
\makefrontmatter
\chapter{Introduction}\label{Chap: Introduction}

In this thesis, we concern ourselves with \emph{polynomial optimization problems (POPs)}, i.e., problems of the type
\beq\label{Eq: Polynomial Optimization Problem}
\begin{aligned}
& \underset{x \in \Rn}{\inf}
& & p(x) \\
& \text{subject to}
&& q_i(x) \ge 0, \forall i \in \otm,\\
\end{aligned}
\eeq
where $p,q_1, \ldots q_m$ are polynomial functions. In Chapters~\ref{Chap: Polynomials}-\ref{Chap: Attainment}, we address the complexity of the following questions for a problem in the form of (\ref{Eq: Polynomial Optimization Problem}):
\begin{enumerate}[{\bf Q1:}]
	\item Is a given a point $x$ a local minimum of (\ref{Eq: Polynomial Optimization Problem})?
	\item Does (\ref{Eq: Polynomial Optimization Problem}) have a local minimum (and if so, can one be found efficiently)?
	\item If (\ref{Eq: Polynomial Optimization Problem}) has a finite optimal value, does it have an optimal solution?
\end{enumerate}
Precise definitions and in-depth study of related problems can be found in their respective chapters.

POPs have wide modeling capabilities and arise ubiquitously in applications, either as exact models of objective functions or as approximations thereof. Perhaps the most well-known special case of POPs is linear programming, but POPs have much richer expressive power. For example, in full generality any decision problem in NP, a class of yes-no decision problems with the property that any yes answer can be certified efficiently, can be posed as a POP.\footnote{This follows straightforwardly from the fact that NP-complete problems can be formulated as POPs; see for example Section~\ref{POLY Sec: NP-hardness results}, Section~\ref{QPs Sec: Main Result}, or Section~\ref{AOS Sec: AOS}.} An example of a POP we will see in this thesis is the search for Nash equilibria in bimatrix games, but other examples arise from optimal power flow \cite{huneault1991survey}, the quadratic assignment problem \cite{loiola2007survey}, robotics and control \cite{ahmadi2016some}, and statistics and machine learning \cite{tibshirani1996regression,suykens1999least}. Even when the goal is not to minimize a polynomial function, optimization algorithms that involve minimizing Taylor expansions of functions solve POPs as a subroutine.

With such modeling power comes the price of computational intractability, and unfortunately POPs become intractable to solve even when the degrees of the defining polynomials are low. The study of local minima in Q1 and Q2 is in large part motivated by this intractability of finding global minima in polynomial optimization. It is common for optimization algorithms to instead search for local minima, with the hope that local minima are easier to find. This notion is not new; for example~\cite{more1990solution} provides an explicit example of a class of POP where global minima are hard to find but local minima are not. There has also been renewed interest in finding local minima due to the growth of machine learning applications, where local minima of highly nonconvex functions are sought for in practice with simple first-order methods like gradient descent. Our goal in Chapters~\ref{Chap: Polynomials} and \ref{Chap: QPs} of this thesis is to more formally understand the complexity of finding local minima (as well as some related questions).

We point out that a priori there are no complexity implications between questions Q1 and Q2 stated above. For example, there is no reason to expect that an efficient algorithm for verifying that a given point is a local minimum would provide any guidance on how one decide if a problem has a local minimum (this is in essence the dilemma of the famous question ``P = NP?''). Conversely, even if local minimality of a given point cannot always be efficiently certified, that does not rule out the existence of algorithms that can efficiently find particular local minima that are easy to certify; see e.g. Question 3 of~\cite{pardalos1992open}. Thus the complexities of these two questions need to be studied separately.

One of the first hardness results on local minima in the literature is due to Murty and Kabadi~\cite{murty1987some}, who show that the problems of deciding whether a given point is a local minimum of a quadratic program or a local minimum of a quartic polynomial are NP-hard (see Table \ref{Table: Complexity Checking}). These were two of the first results indicating that local minima are not necessarily ``easier'' than global minima. In regards to Q1, along with prior classical results, they only leave open the complexity of deciding whether a given point is a local minimum of a cubic polynomial. We show that this last case is polynomial-time solvable in Section~\ref{POLY Sec: Local Min} of this thesis.

\begin{table}[H]
	\centering
	\begin{tabular}{c|c|c|c}
		{\bf \diagbox{$deg(p)$}{$\max deg(q_i)$}} &  $\emptyset$ & 1 & $\ge$2 \\\hline
		1 & P & P & NP-hard\\
		  & & & \\\hline
		2 & P & NP-hard & NP-hard\\
		& & \cite{murty1987some},\cite{pardalos1988checking} & \\\hline
		3 & P & NP-hard & NP-hard \\
		& (Theorem \ref{POLY Thm: Checking Min Poly Time}) & & \\\hline
		$\ge$4 & NP-hard & NP-hard & NP-hard\\
		& \cite{murty1987some} & & \\
		\end{tabular}
		\caption{Complexity of deciding whether a point is a local minimum of a POP, based on the degree of the objective $p$ and the maximum degree of any constraint function $q_i$. Entries without a reference are either classical or implied by a stronger hardness result in the table.\protect\footnotemark}
		\label{Table: Complexity Checking}
\end{table}
\footnotetext{The NP-hardness of the case of linear objective and quadratic constraints is implied by the NP-hardness of the quadratic programming case. Indeed, minimizing a quadratic function $p(x)$ over a polyhedron $Ax = b$ can be reduced to minimizing a variable $\gamma$ over the set $\{(x,\gamma)\ |\ Ax = b,, p(x) = \gamma\}$.}

We next comment on Q2, the question of deciding whether a POP has a local minimum. The complexity of Q2 based on the degrees of the defining polynomials is presented in Table~\ref{Table: Complexity Existence}. This problem has not been as extensively studied in prior literature, though it is more closely related to the problem of searching for a local minimum as compared to Q1. In fact, for the cases labeled ``P'' in Table~\ref{Table: Complexity Existence}, some natural algorithms that find local minima implicitly check that they exist; see Section~\ref{POLY Sec: Introduction} for details in the unconstrained case. In Chapter~\ref{Chap: Polynomials} of this thesis, we show that this is also the case for cubic polynomials. In particular, the problem of deciding if a cubic polynomial has a local minimum (and then finding one) can be done by solving a polynomial number of polynomially-sized semidefinite programs (SDPs), hence the label in Table~\ref{Table: Complexity Existence}. By contrast, we show that Q2 is intractable in the same cases that Q1 is.

\begin{table}[H]
	\centering
	\begin{tabular}{c|c|c|c}
		{\bf \diagbox{$deg(p)$}{$\max deg(q_i)$}} & $\emptyset$ & 1 & $\ge$2 \\\hline
		1 & P & P & NP-hard\\
		& & & \\\hline
		2 & P & NP-hard & NP-hard\\
		& & (Theorem~\ref{QPs Thm: LM QP NP hard}) & \\\hline
		3 & SDP & NP-hard & NP-hard \\
		& (Algorithm \ref{POLY Alg: Complete Cubic SDP}) & & \\\hline
		$\ge$4 & NP-hard & NP-hard & NP-hard\\
		& (Theorem \ref{QPs Thm: LM Degree 4 NP hard}) & & \\
	\end{tabular}
	\caption{Complexity of deciding whether a POP has a local minimum, based on the degree of the objective $p$ and the maximum degree of any constraint function $q_i$. Entries without a reference are either classical or implied by a stronger hardness result in the table.}
	\label{Table: Complexity Existence}
\end{table}

In many settings the existence of a local minimum in a POP is guaranteed; the focus is then on finding a local minimum without needing to consider whether one exists. One way this can be the case is when the feasible set is bounded, as is commonly the case in both applications and POPs encoding classical combinatorial problems. For the specific case of quadratic programs with bounded feasible sets, the question of the complexity of finding a local minimum has appeared explicitly in the literature in~\cite{pardalos1992open}:

\begin{quote}
    \emph{``What is the complexity of finding even a local minimizer for nonconvex quadratic programming, assuming the feasible set is compact? Murty and Kabadi (1987) and Pardalos and Schnitger (1988) have shown that it is NP-hard to test whether a given point for such a problem is a local minimizer, but that does not rule out the possibility that another point can be found that is easily verified as a local minimzer.''}
\end{quote}
We settle this question in Chapter~\ref{Chap: QPs}, where we show that unless P=NP, no polynomial-time algorithm can even find a point within a Euclidean distance of $c^n$ (for any constant $c \ge 0$) of a local minimum.

The final complexity question we study for POPs is testing the existence of an optimal solution when the optimal value of the problem is finite. This problem is in part motivated by a question of Nie, Dummel, and Sturmfels \cite{nie2006minimizing}, who provide an algorithm for solving an unconstrained POP under the assumption that the optimal value is attained. The authors remark
\begin{quote}
	\emph{``This assumption is non-trivial, and we do not address (the important and difficult) question of how to verify that a given polynomial $f(x)$ has this property.''}
\end{quote}
Most prior work on this question has focused on identifying cases where the existence of optimal solutions is guaranteed. Perhaps the most classical example is the Bolzano-Weirstrauss theorem for continuous functions over compact sets. Such theorems in the setting of POPs are commonly referred to as Frank-Wolfe type theorems, due to the eponymous authors' result that quadratic programs attain their optimal value when that value is finite~\cite{frank1956algorithm}. For POPs, this result was extended to cubic programs in \cite{andronov1982solvability}, where it is shown that finite optimal values are always attained if the objective is at most cubic, and the constraints are affine. However, we will show that these are the only cases where this is true, and that deciding whether a POP attains its optimal value is NP-hard in the remaining cases (see Table \ref{Table: Complexity Attainment}).

\begin{table}[H]
	\centering
	\begin{tabular}{c|c|c|c}
		{\bf \diagbox{$deg(p)$}{$\max deg(q_i)$}} & $\emptyset$ & 1 & $\ge$2 \\\hline
		1 & YES & YES & NP-hard\\
		& & & (Theorem \ref{AOS Thm: AOS NP-hard o1c2}) \\\hline
		2 & YES & YES & NP-hard\\
		& & \cite{frank1956algorithm} & \\\hline
		3 & YES & YES & NP-hard \\
		& & \cite{andronov1982solvability} & \\\hline
		$\ge$4 & NP-hard & NP-hard & NP-hard\\
		& (Theorem \ref{AOS Thm: AOS NP-hard o4c0}) & & \\
	\end{tabular}
	\caption{Complexity of deciding whether a POP with a finite optimal value has an optimal solution, based on the degree of the objective $p$ and the maximum degree of any constraint function $q_i$. Entries without a reference are either classical or implied by a stronger hardness result in the table. Note that whenever the degree of the objective is at most three, and the feasible set is a polyhedron, there is no algorithm required; the answer is simply `yes'.}
	\label{Table: Complexity Attainment}
\end{table}

In this thesis, we also study an application of POPs to the problem of finding Nash equilibria in bimatrix games. Nash equilibria are a fundamental concept in economics, but they also arise frequently in disciplines such as biology and finance. Finding a Nash equilibrium however, is computationally intractable~\cite{daskalakis2009complexity}. In the final chapter of this thesis, we formulate the problem of finding Nash equilibria in bimatrix games as a POP, and explore semidefinite programming relaxations for finding approximate Nash equilibria. We also apply these techinques to certain decision problems related to Nash equilibria.

\section{Preliminaries}\label{INTRO Sec: Prelims}

In this thesis, we will study the complexity of Q1-Q3 in the Turing model of computation. Since polynomial functions of a given degree are finitely parameterized, they allow for a convenient study of complexity questions in this setting. The size of a given instance is determined by the number of bits required to write down the coefficients of the polynomial (and, in the case of Q1, the entries of the point $x$), which are all taken to be rational numbers. For the purposes of analyzing the complexity of these three questions for POPs, we consider the relevant setting in applications where the degrees of any polynomials are fixed and the number of variables in the POP increases. We are interested in the existence or non-existence of efficient algorithms for solving Q1-Q3 in this setting, as established theory (e.g. quantifier elimination theory~\cite{tarski1951decision, seidenberg1954new}) already yields exponential-time algorithms for them. We also point out that our intractability results are in the strong sense, meaning that the problems remain NP-hard even if the bitsize of all numerical data is $O(\log(n))$, where $n$ is the number of variables in the problem. Unless P = NP, not even a pseudo-polynomial time algorithm (an algorithm whose running time is polynomial in the magnitude of the numerical data, but not their bitsize) can exist that solves a strongly NP-hard problem on all instances. This is in contrast to problems such as knapsack~\cite{garey2002computers}, which can be solved tractably, e.g. by dynamic programming, when the size of the numerical data is ``small''. See \cite{garey2002computers} or \cite[Section 2]{ahmadi2019complexity} for more details on the distinction between weakly and strongly NP-hard problems.

A prevalent tool in this thesis will be sum of squares programming. More details will be provided in each chapter as they are used, but we provide an introduction here. We say that a polynomial $p$ is a \emph{sum of squares} (sos) if there exist polynomials $q_1,\ldots,q_r$ such that $p= \sum_{i=1}^r q_i^2$. This is an algebraic sufficient condition for global nonnegativity of a polynomial which is in general not necessary~\cite{hilbert1888darstellung}. While deciding nonnegativity of a polynomial is in general NP-hard (e.g., as a consequence of~\cite{murty1987some}), deciding whether a polynomial is sos can be done via semidefinite programming. This is because a polynomial $p$ of degree $2d$ in $n$ variables is a sum of squares if and only if there exists an ${n+d \choose d} \times {n+d \choose d}$ positive semidefinite matrix $Q$ satisfying the identity
\beq \label{Eq: Sos Gram} p(x) = z(x)^TQz(x),\eeq
where $z(x)$ denotes the vector of all monomials in $x$ of degree less than or equal to $d$. Note that because of this equivalence, one can also impose the constraint that a polynomial $p$ with unknown coefficients is sos by semidefinite programming (see, e.g., \cite{parrilo2003semidefinite}). Given a rank-$r$ psd matrix $Q$ that satisfies (\ref{Eq: Sos Gram}), one can write $Q$ as $\sum_{i=1}^r v_iv_i^T$ (e.g. via a Cholesky factorization), and obtain an sos decomposition of $p$ as $p=\sum_{i=1}^r (v_i^Tz(x))^2$.

Sum of squares polynomials have gained interest in the field of polynomial optimization because the problem of finding the infimum of some polynomial $p$ can be straightforwardly reformulated into the problem
\beq
\begin{aligned}
	& \underset{\gamma \in \R}{\sup}
	&&\gamma\\
	&\mbox{subject to}
	&& p(x) - \gamma \mbox{ is a nonnegative polynomial.}
\end{aligned}
\eeq
This formulation has the interpretation of finding the largest lower bound on a polynomial. Unfortunately, this problem cannot be efficiently solved, since testing whether a polynomial of degree at least 4 is nonnegative is NP-hard and thus the constraint ``$p(x)-\gamma$ is a nonnegative polynomial'' cannot be imposed in a tractable fashion. Therefore, to obtain what is known as a ``sum of squares relaxation'', this constraint is replaced by a sum of squares constraint, which can be imposed tractably:
\beq\label{Eq: Sum of Squares Relaxation}
\begin{aligned}
	& \underset{\gamma \in \R}{\sup}
	&&\gamma\\
	&\mbox{subject to}
	&& p(x) - \gamma \mbox{ is sos.}
\end{aligned}
\eeq
Since the constraint ``$p(x)$ is sos'' is a semidefinite constraint, (\ref{Eq: Sum of Squares Relaxation}) is an SDP. As any sos polynomial is nonnegative, this gives a lower bound on the infimum of $p(x)$.

There are many extensions for constrained problems, with one of the more well-known being based on Putinar's Positivstellensatz~\cite{putinar1993positive}. Putinar's Positivstellensatz states that if the so-called Archimedean property is satisfied, then if a polynomial $p$ is positive for all $x$ in the set $\{x \in \Rn\ |\ q_i(x) \ge 0, \forall i = 1, \ldots, m\}$, there exist sos polynomials $\sigma_0, \ldots, \sigma_m$ such that
\begin{equation}\label{Eq: Putinar's Positivstellensatz}
p(x) = \sigma_0(x) + \sum_{i=1}^m \sigma_i(x)q_i(x).
\end{equation}
The Archimedean property requires existence of a scalar $R$ such that the polynomial ${R - \sum_{i=1}^n x_i^2}$ belongs to the \emph{quadratic module} of $q_1,\ldots,q_m$, i.e., the set of polynomials that can be written as $$\tau_0(x)+\sum_{i=1}^m \tau_i(x) q_i(x),$$ where $\tau_0,\ldots,\tau_m$ are sum of squares polynomials. This is an algebraic notion of the compactness of the set $\{x \in \Rn \ |\ q_i(x) \ge 0, \forall i = 1, \ldots, m\}$, which is stronger than the geometric notion. Similar to the construction of (\ref{Eq: Sum of Squares Relaxation}), for any positive integer $d$ the following problem gives a lower bound on the optimal value of (\ref{Eq: Polynomial Optimization Problem}):
\beq\label{Eq: Putinar Relaxation}
\begin{aligned}
	\gamma^d \defeq & \underset{\gamma \in \R, \sigma_i}{\sup}
	&&\gamma\\
	&\mbox{subject to}
	&& p(x) - \gamma = \sigma_0(x) + \sum_{i=1}^n \sigma_i(x)q_i(x),\\
	&&& \sigma_i \mbox{ is an sos polynomial of degree at most 2}d, \forall i = 0, \ldots, m.
\end{aligned}
\eeq
Taking $d = 1, 2, \ldots$ defines a sequence of problems referred to as the \emph{Lasserre hierarchy}~\cite{lasserre2001global}. There are two primary properties of the Lasserre hierarchy which are of interest. The first is that for any fixed $d$, the $d$-th problem in this sequence is an SDP of size polynomial in $n$. The second is that under the Archimedean property, $\underset{d \to \infty}{\lim} \gamma^d = p^*$, where $p^*$ is the optimal value of the POP in (\ref{Eq: Polynomial Optimization Problem}). While this is a powerful property of the hierarchy, and in practice the hierarchy is often exact at low levels, in general the level of the hierarchy needed can be arbitrarily high, and the semidefinite programs involved become expensive very quickly. Additionally, the Archimedean property is NP-hard to check, as we show in Section~\ref{AOS Sec: Sufficient Conditions} of this thesis.

\section{Outline of this thesis}
{\bf Complexity Results in Polynomial Optimization} Chapters~\ref{Chap: Polynomials}-\ref{Chap: Attainment} this thesis focus on fundamental problems in polynomial optimization from an algorithmic perspective. Chapter~\ref{Chap: Polynomials} concerns itself with the complexity of local minima and related notions in unconstrained polynomial optimization. In particular, it establishes that local minima and second-order points of cubic polynomials can be found by solving polynomially many semidefinite programs of polynomial size. In the negative direction, it establishes that the problems of deciding if quartic polynomials have second-order points and whether cubic polynomials have critical points are NP-hard. Notably, our approach for finding local minima of cubic polynomials relies on circumventing the search for critical points. Chapter~\ref{Chap: QPs} establishes our intractability results related to local minima in quadratic programming. In particular, we show that unless P=NP, no polynomial-time algorithm can find points within distance $c^n$, for any constant $c \ge 0$, of a local minimum in an $n$-variate quadratic program with a bounded feasible set. Chapter \ref{Chap: Attainment} focuses on the problem of testing attainment of optimal values, and settles the complexity of deciding whether a POP with a finite optimal value has an optimal solution.

{\bf Semidefinite Relaxations for Bimatrix Games}
In the second part of this thesis, we explore an application of semidefinite programming to game theory, in particular semidefinite programming relaxations for finding Nash equilibria in bimatrix games. We show that for a symmetric game, a symmetric $1/3$-Nash equilibrium can be efficiently recovered from any rank-2 solution to this relaxation. We also present semidefinite programming relaxations for NP-hard decision problems related to Nash equilibria, such as that of finding the highest achievable welfare under any Nash equilibrium.

\section{Related Publications}
The material in this thesis is based on the following work:

{\bf Chapter \ref{Chap: Polynomials}}: Ahmadi, A. A., and Zhang, J. Complexity Aspects of Local Minima and Related Notions. Available at arXiv:2008.06148.

{\bf Chapter \ref{Chap: QPs}}: Ahmadi, A. A., and Zhang, J. On the Complexity of Finding a Local Minimizer of a Quadratic Function over a Polytope. Available at arXiv:2008.05558.

{\bf Chapter \ref{Chap: Attainment}}: Ahmadi, A. A., and Zhang, J. On the Complexity of Testing Attainment of the Optimal Value in Nonlinear Optimization. \emph{Mathematical Programming}, 2019. Available at arXiv:1803.07683.

{\bf Chapter \ref{Chap: Nash}}: Ahmadi, A. A., and Zhang, J. Semidefinite Programming and Nash Equilibria in Bimatrix Games. To appear in \emph{INFORMS Journal on Computing}. Available at arXiv:1706.08550.

\chapter{On Local Minima and Related Notions in Unconstrained Polynomial Optimization}\label{Chap: Polynomials}

\section{Introduction}\label{POLY Sec: Introduction}

In this chapter of the thesis, we address the complexity of questions Q1 and Q2 from Chapter~\ref{Chap: Introduction}, but more generally for the following four types of points for a given polynomial $p:\Rn \to \R$:
\begin{enumerate}[(i)]
    \item a \emph{critical point}, i.e., a point $x$ where the gradient $\grad p(x)$ is zero,
    \item a \emph{second-order point}, i.e., a point $x$ where $\grad p(x) = 0$ and the Hessian $\Hess p(x)$ is positive semidefinite (psd), i.e. has nonnegative eigenvalues,
    \item a \emph{local minimum}, i.e., a point $x$ for which there exists a scalar $\epsilon > 0$ such that ${p(x) \le p(y)}$ for all $y$ with $\|y - x\| \le \epsilon$,
    \item a \emph{strict local minimum}, i.e., a point $x$ for which there exists a scalar $\epsilon > 0$ such that $p(x)<p(y)$ for all $y \ne x$ with $\|y - x\| \le \epsilon$.
\end{enumerate}
We note the following straightforward implications between (i)-(iv):
\begin{center}
    strict local minimum $\Rightarrow$ local minimum $\Rightarrow$ second-order point $\Rightarrow$ critical point.
\end{center}

Notions (i)-(iv) appear ubiquitously in nonconvex continuous optimization as surrogates for global minima, since it is well understood that finding a global minimum of $f$ is in general an intractable problem. With regard to each of these four notions, we restate Q1 and Q2:

\begin{enumerate}[{\bf Q1*:}]
    \item Given a polynomial $p: \Rn \to \R$ and a point $x \in \Rn$, is $x$ of a given type (i)-(iv)?
    \item Given a polynomial $p: \Rn \to \R$, does $p$ have a point of a given type (i)-(iv) (and if so, can one be found efficiently)?
\end{enumerate}

As discussed in Chapter~\ref{Chap: Introduction}, a priori there are no complexity implications between these two questions.

Let us first comment on the complexity of Q1* and Q2* for some simple and classical cases. For Q1*, checking whether a given point is a critical point of a polynomial function (of any degree) can trivially be done in polynomial time simply by evaluating the gradient at that point. To check that a given point is a second-order point, one can additionally compute the Hessian matrix at that point and check that it is positive semidefinite. This can be done in polynomial time, e.g., by performing Gaussian pivot steps along the main diagonal of the matrix~\cite[Section 1.3.1]{murty1988linear} or by computing its characteristic polynomial and checking that the signs of its coefficients alternate~\cite[p. 403]{horn2012matrix}. Since for affine or quadratic polynomials, any second-order point is a local minimum, the only remaining case of Q1* for them is that of strict local minima. Affine polynomials never have strict local minima, making the question uninteresting. A point is a strict local minimum of a quadratic polynomial if and only if it is a critical point and the associated Hessian matrix is positive definite (pd), i.e., has positive eigenvalues. The latter property can be checked in polynomial time, for example by computing the leading principal minors of the Hessian and checking that they are all positive. As for Q2*, the affine case is again uninteresting since there is a critical point (which will also be a second-order point and a local minimum) if and only if the coefficients of all degree-one monomials are zero. For quadratic polynomials, since the entries of the gradient are affine, searching for critical points can be done in polynomial time by solving a linear system. A candidate critcal point will be a second-order point (and a local minimum) if and only if the Hessian is psd, and a strict local minimum if and only if the Hessian is pd.

Other than the aforementioned cases, the only prior result in the literature that we are aware of is due to Murty and Kabadi~\cite{murty1987some}, which settles the complexity of Q1* for degree-4 polynomials. Our contribution in this chapter is to settle the complexity of the remaining cases for both Q1* and Q2*. A summary of the results is presented in Tables~\ref{POLY Table: Complexity Checking} and \ref{POLY Table: Complexity Existence}. Entries denoted by ``P'' indicate that the problem can be solved in polynomial time. The notation ``SDP'' indicates that the problem of interest can be reduced to solving either one or polynomially-many semidefinite programs (SDP) whose sizes are polynomial in the size of the input. (In fact, the reduction also goes in the other direction for second-order points and local minima; see Theorems~\ref{POLY Thm: SOP SDPF} and \ref{POLY Thm: LM SDPSF}.) Finally, recall that a strong NP-hardness result implies that the problem of interest remains NP-hard even if the size (i.e. bit length) of the coefficients of the polynomial is $O(\log(n))$, where $n$ is the number of variables. Therefore, unless P=NP, even a pseudo-polynomial time algorithm (i.e., an algorithm whose running time is polynomial in the magnitude of the coefficients, but not necessarily their bit length) cannot exist for the indicated problems in these tables.

\begin{table}[H]
    \centering
    \begin{tabular}{c|c|c|c|c}
    {\bf Q1*: property vs. degree} & $1$ & $2$ & $3$ & $\ge 4$ \\\hline
    Critical point & P & P & P & P\\\hline
    Second-order point & P & P & P & P\\\hline
    Local minimum & P & P & P & strongly NP-hard \cite{murty1987some}\footnotemark\addtocounter{footnote}{-1}\\
    & & & (Theorem \ref{POLY Thm: Checking Min Poly Time}) & \\\hline
    Strict local minimum & P & P & P & strongly NP-hard \cite{murty1987some}\footnotemark\\
    & & & (Corollary \ref{POLY Cor: Check Strict Local Min}) &
    \end{tabular}
    \caption{Complexity of deciding whether a given point is of a certain type, based on the degree of the polynomial. Entries without a reference are classical.}
    \label{POLY Table: Complexity Checking}
\end{table}
\footnotetext{The proof in~\cite{murty1987some} is based on a reduction from the ``matrix copositivity'' problem. However, \cite{murty1987some} only shows that this problem (and thus deciding if a quartic polynomial has a local minimum) is weakly NP-hard, since the reduction to matrix copositivity there is from the weakly NP-hard problem of Subset Sum. Nonetheless, their result can be strengthened by observing that testing matrix copositivity is in fact strongly NP-hard. This claim is implicit, e.g., in~\cite[Corollary 2.4]{de2002approximation}. The NP-hardness of testing whether a point is a strict local minimum of a quartic polynomial is not explicitly stated in~\cite{murty1987some}, though it follows in the weak sense from the weak NP-hardness of Problem 8 of~\cite{murty1987some}. Again, with some work, this can be strengthened to a strong NP-hardness result.} 

\begin{table}[H]
    \centering
    \begin{tabular}{c|c|c|c|c}
    {\bf Q2*: property vs. degree} & $1$ & $2$ & $3$ & $\ge 4$ \\\hline
    Critical point & P & P & strongly NP-hard & strongly NP-hard \\
    & & & (Theorem \ref{POLY Thm: Critical point cubic NP-hard}) & (Theorem \ref{POLY Thm: Critical point cubic NP-hard})\\\hline
    Second-order point & P & P & SDP & strongly NP-hard\\
    & & & (Corollary \ref{POLY Cor: Complete Cubic SDP SOP}) & (Theorem \ref{POLY Thm: SOP quartic NP-hard})\\\hline
    Local minimum & P & P & SDP & strongly NP-hard \\
    & & & (Algorithm \ref{POLY Alg: Complete Cubic SDP}) & (Theorem \ref{QPs Thm: LM Degree 4 NP hard})\\\hline
    Strict local minimum & P & P & SDP & strongly NP-hard\\
    & & & (Algorithm \ref{POLY Alg: Complete Cubic SDP}, Remark \ref{POLY Rem: Find SLM}) & (Corollary \ref{QPs Cor: SLM Degree 4 NP hard})
    \end{tabular}
    \caption{Complexity of deciding whether a polynomial has a point of a certain type, based on the degree of the polynomial. Entries without a reference are classical.}
    \label{POLY Table: Complexity Existence}
\end{table}

The majority of the technical work in this chapter is spent on the case of cubic polynomials. It is somewhat surprising that many of the problems of interest to us are tractable for cubics, especially the search for local minima. This is in contrast to the intractability of other interesting problems related to cubic polynomials, for example, minimizing them over the unit sphere~\cite{nesterov2003random}, or checking their convexity over a box~\cite{ahmadi2019complexity}. It is also interesting to note that second-order points of cubic polynomials are easier to find than their critical points, despite being a more restrictive type of point. This shows that the right approach to finding second-order points involves bypassing the search for critical points as an initial step.

\subsection{Organization and Main Contributions of the Chapter}

Section \ref{POLY Sec: NP-hardness results} covers the NP-hardness results from Table~\ref{POLY Table: Complexity Existence}. The remainder of the chapter is devoted to our results on cubic polynomials, which fills in the remaining entries of Tables~\ref{POLY Table: Complexity Checking} and \ref{POLY Table: Complexity Existence}. In Section~\ref{POLY Sec: Local Min}, we give a characterization of local minima of cubic polynomials (Theorem~\ref{POLY Thm: TOC}) and show that it can be checked in polynomial time (Theorem \ref{POLY Thm: Checking Min Poly Time}). In Section~\ref{POLY Sec: Geometry}, we give some geometric facts about local minima of cubic polynomials. For example, we show that the set of local minima of a cubic polynomial $p$ is convex (Theorem~\ref{POLY Thm: WLM Convex}), and we relate this set to the second-order points of $p$ and to the set of minima of $p$ over points where $\Hess p$ is positive semidefinite (Theorem~\ref{POLY Thm: Closure} and Theorem~\ref{POLY Thm: Local Minima SOP}). In Section~\ref{POLY SSec: Cubic Spectrahedra}, we show that the interior of any spectrahedron is the projection of the local minima of some cubic polynomial (Theorem~\ref{POLY Thm: Cubic Spectrahedron}). In Section~\ref{POLY Sec: Complexity}, we use this result to show that deciding if a cubic polynomial has a local minimum or a second-order point is at least as hard as some semidefinite feasibility problems.

In Section~\ref{POLY Sec: Finding Local Min}, we start from a ``sum of squares'' approach to finding second-order points of a cubic polynomial (Theorem~\ref{POLY Thm: cubic sdp} and Theorem~\ref{POLY Thm: solution recovery sdp}), and build upon it (Section~\ref{POLY SSec: Simplification}) to arrive at an efficient semidefinite representation of these points (Corollary~\ref{POLY Cor: Complete Cubic SDP SOP}). This also leads to an algorithm for finding local minima of cubic polynomials by solving polynomially-many SDPs of polynomial size (Algorithm~\ref{POLY Alg: Complete Cubic SDP}). In Section~\ref{POLY Sec: Conclusions}, we take preliminary steps towards some interesting future research directions, such as the design of an unregularized third-order Newton method that would use as a subroutine our algorithm for finding local minima of cubic polynomials (Section \ref{POLY SSec: Cubic Newton}).

\subsection{Preliminaries and Notation}

We review some standard facts about local minina; more preliminaries specific to cubic polynomials appear in Section~\ref{POLY SSec: Cubic Preliminaries}. Three well-known optimality conditions in unconstrained optimization are the \emph{first-order necessary condition} (FONC), the \emph{second-order necessary condition} (SONC), and the \emph{second-order sufficient condition} (SOSC). Respectively, they are that the gradient at any local minimum is zero, the Hessian at any local minimum is psd, and that any critical point at which the Hessian is positive definite is a strict local minimum. A vector $d \in \Rn$ is said to be a \emph{descent direction} for a function $p: \Rn \to \R$ at a point $\xbar \in \Rn$ if there exists a scalar $\epsilon > 0$ such that $p(\xbar + \alpha d) < p(\xbar)$ for all $\alpha \in (0,\epsilon)$. Existence of a descent direction at a point clearly implies that the point is not a local minimum. However, in general, the lack of a descent direction at a point does not imply that the point is a local minimum (see, e.g., Example \ref{POLY Ex: no local min}).

Next, we establish some basic notation which will be used throughout the chapter. We denote the set of $n\times n$ real symmetric matrices by $\Snn$. For a matrix $M\in\Snn$, the notation $M \succeq 0$ denotes that $M$ is positive semidefinite, $M \succ 0$ denotes that it is positive definite, and $\Tr(M)$ denotes its trace, i.e. the sum of its diagonal entries. For a matrix $M$, the notation $\nulls(M)$ denotes its null space, and $\cols(M)$ denotes its column space. All vectors are taken to be column vectors. For two vectors $x$ and $y$, the notation $(x,y)$ denotes the vector $\bvec x \\ y \evec$. The notation $0_n$ denotes the vector of length $n$ containing only zeros. The notation $e_i$ denotes the $i$-th coordinate vector, i.e., the vector with a one in its $i$-th entry and zeros everywhere else.

\section{NP-hardness Results}\label{POLY Sec: NP-hardness results}

In this section, we present reductions that show our NP-hardness results from Tables \ref{POLY Table: Complexity Checking} and \ref{POLY Table: Complexity Existence}. For concreteness, we construct these reductions from the (simple) MAXCUT problem, though our proof can work with any NP-hard problem that can be encoded by quadratic equations with ``small enough'' coefficients. Recall that in the (simple) MAXCUT problem, we are given as input an undirected and unweighted graph $G$ on $n$ vertices and an integer $k \le n$. We are then asked whether there is a cut in $G$ of size $k$, i.e. a partition of the vertices into two sets $S_1$ and $S_2$ such that the number of edges with one endpoint in $S_1$ and one endpoint in $S_2$ is equal to $k$. It is well known that the (simple) MAXCUT problem is strongly NP-hard~\cite{garey2002computers}.
	
If we denote the adjacency matrix of $G$ by $E \in \Snn$, it is straightforward to see that $G$ has a cut of size $k$ if and only if the following system of quadratic equations is feasible:
	
	\begin{equation}\label{POLY Eq: MAXCUT QP}
    \begin{aligned}
	q_0(x) &\defeq \frac{1}{4}\sum_{i=1}^n \sum_{j=1}^n E_{ij}(1-x_ix_j) - k = 0,\\
	q_i(x) &\defeq x_i^2 - 1 = 0, i = 1, \ldots, n.
	\eaeql
	Indeed, the second set of constraints enforces each variable $x_i$ to be $-1$ or $1$, and any $x \in \{-1,1\}^n$ encodes a cut in $G$ by assigning vertices with $x_i = 1$ to one side of the partition, and those with $x_i=-1$ to the other. Observe that with this encoding, $x_ix_j$ equals $1$ whenever the two vertices $i$ and $j$ are on the same side and $-1$ otherwise. The size of the cut is therefore given by $\frac{1}{4}\sum_{i=1}^n \sum_{j=1}^n E_{ij}(1-x_ix_j)$, noting that every edge is counted twice.

\begin{theorem}\label{POLY Thm: Critical point cubic NP-hard}
	It is strongly NP-hard to decide whether a polynomial $p: \Rn \to \R$ of degree greater than or equal to three has a critical point.
\end{theorem}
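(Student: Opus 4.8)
The plan is to reduce from the MAXCUT feasibility system (\ref{POLY Eq: MAXCUT QP}). Given a graph $G$ with adjacency matrix $E$ and an integer $k$, the system of quadratic equations $q_0(x)=0$, $q_i(x)=x_i^2-1=0$ for $i=1,\ldots,n$ is feasible if and only if $G$ has a cut of size $k$, and all coefficients of the $q_i$ have bitsize $O(\log n)$, so a polynomial-time reduction producing a strongly NP-hard instance is what we are after. The idea is to build a polynomial $p$ whose critical points are in one-to-one correspondence (at least in a one-directional sense) with solutions of the MAXCUT system, so that $p$ has a critical point if and only if the MAXCUT instance is a ``yes'' instance.

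The key step is to encode the equality constraints into a single polynomial whose gradient vanishes exactly on the common zero set. A natural first attempt is $p(x) = \sum_{i=0}^n q_i(x)^2$, whose global minimum is $0$ attained precisely on the feasible set; but its gradient can vanish elsewhere, so this controls global minima rather than critical points. To get the critical-point statement cleanly, I would instead introduce auxiliary variables $y_0,\ldots,y_n$ and set
\beq
p(x,y) \defeq \sum_{i=0}^n y_i\, q_i(x).
\eeq
Then $\grad_y p = (q_0(x),\ldots,q_n(x))$, so stationarity in $y$ forces $x$ to be feasible for the MAXCUT system; and $\grad_x p = \sum_{i=0}^n y_i \grad q_i(x)$, which we can make vanish by taking $y=0$. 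Hence if the system is feasible, $(x^\ast,0)$ is a critical point of $p$; and conversely any critical point of $p$ yields a feasible $x$. Since $q_0$ is quadratic and the $q_i$ are quadratic, $p$ has degree three, giving the claim for degree exactly three. For degrees $d\ge 4$, I would pad: e.g. add a term like $z^d$ in a fresh variable $z$ (whose only critical point in $z$ is $z=0$ and contributes nothing), or more carefully multiply through / add a high-degree sum-of-squares-of-linear-forms term that does not create or destroy critical points, so that $\deg p = d$ while the correspondence is preserved. One must check the bitsizes stay $O(\log n)$, which is immediate since the only coefficients are $0,\pm 1, \pm\tfrac14$, and $k\le n$.

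The main obstacle I anticipate is the reverse direction combined with the degree padding: I need to be sure that the constructed $p$ has \emph{no spurious critical points} that exist regardless of whether the MAXCUT instance is feasible — in particular that the padding term (the $z^d$ piece or whatever is used to raise the degree) does not by itself furnish a critical point of the whole polynomial when the system is infeasible. With the bilinear construction $p(x,y)=\sum y_i q_i(x)$ this is clean because $\grad_y p = (q_0,\ldots,q_n)$ must vanish at any critical point, which is impossible if the system has no solution; the delicate part is only arranging the degree lift so that this ``$\grad_y p$ must vanish'' obstruction survives intact. A safe route is to lift the degree inside the $y$-block, e.g. replacing $y_i q_i(x)$ by $y_i^{d-1} q_i(x)$ (still forcing $q_i(x)=0$ when $y_i\neq 0$, and handling $y_i=0$ separately) or by adding an even-degree positive-definite form in the $y$'s so that the only $y$-stationary point is still the origin; I would pick whichever version makes the case analysis shortest. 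Everything else — writing the reduction, bounding its running time, checking strong NP-hardness — is routine once the correspondence is pinned down.
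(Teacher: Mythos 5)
Your proposal matches the paper's proof: the paper also constructs $p(x,y,z) = \sum_{i=0}^n y_i q_i(x) + z^d$ (with the $z^d$ padding included uniformly for all $d\ge 3$) and argues that $\partial p/\partial y = (q_0(x),\ldots,q_n(x))$ forces feasibility at any critical point, while a feasible $\bar x$ gives the critical point $(\bar x, 0, 0)$. One small caution on the alternative you float: replacing $y_iq_i(x)$ by $y_i^{d-1}q_i(x)$ for $d\ge 4$ would break the reduction, since $\partial p/\partial y_i = (d-1)y_i^{d-2}q_i(x)$ vanishes identically at $y=0$, so $(x,0)$ becomes a spurious critical point for \emph{every} $x$; stick with the $z^d$ padding, which has no such defect.
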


\begin{proof}
	Let $d \ge 3$ be fixed. Given an instance of the (simple) MAXCUT problem with a graph on $n$ vertices, let the quadratic polynomials $q_0, \ldots, q_n$ be as in (\ref{POLY Eq: MAXCUT QP}), and consider the following degree-$d$ polynomial in $2n+2$ variables $(x_1, \ldots, x_n, y_0, y_1, \ldots, y_n,z)$:
	$$p(x,y,z) = \sum_{i=0}^{n} y_iq_i(x) + z^d.$$
	Note that all coefficients of this polynomial take $O(\log(n))$ bits to write down. We show that $p(x,y,z)$ has a critical point if and only if the quadratic system $q_0(x) = 0, \ldots, q_{n}(x) = 0$ is feasible. Observe that the gradient of $p$ is given by
	
	$$\bvec \\ \frac{\partial p}{\partial x}\\\\\hline\\ \frac{\partial p}{\partial y} \\\\\hline \frac{\partial p}{\partial z}\evec = \bvec \sum_{i=0}^n y_i\frac{\partial q_i}{\partial x_1}(x) \\ \vdots \\ \sum_{i=0}^n y_i\frac{\partial q_i}{\partial x_n}(x)\\ \hline q_0(x) \\ \vdots \\ q_n(x) \\\hline dz^{d-1}\evec.$$
	
	If $\xbar \in \Rn$ is a solution to (\ref{POLY Eq: MAXCUT QP}), then the point $(\xbar, 0_{n+1},0)$ is a critical point of $p$. Conversely, if $(\xbar, \ybar, \bar z)$ is a critical point of $p$, then, since $\frac{\partial p}{\partial y}(\xbar,\ybar,\bar z) = 0$, $\xbar$ must be a solution to (\ref{POLY Eq: MAXCUT QP}).
\end{proof}

\begin{theorem}\label{POLY Thm: SOP quartic NP-hard}
	It is strongly NP-hard to decide whether a polynomial $p: \Rn \to \R$ of degree greater than or equal to four has a second-order point.
\end{theorem}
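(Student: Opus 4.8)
The plan is to reduce from the (simple) MAXCUT problem, reusing the quadratic polynomials $q_0, \dots, q_n$ of (\ref{POLY Eq: MAXCUT QP}), all of whose coefficients have bitsize $O(\log n)$. I want to build a polynomial $p$, in the $x$-variables together with some auxiliary variables, that has a second-order point if and only if the system $q_0(x) = \dots = q_n(x) = 0$ is feasible. It is enough to do this with $\deg p = 4$: given such a degree-$4$ polynomial, appending a fresh variable $w$ and the term $w^d$ for any $d \ge 5$ yields a degree-$d$ polynomial whose gradient at $w=0$ is that of $p$ padded by a zero, and whose Hessian at $w=0$ is that of $p$ bordered by a zero row and column, so $p(x)+w^d$ has a second-order point iff $p$ does. (For degrees $\ge 5$ one can in fact avoid the delicate part below: $\sum_{i=0}^n y_i\,q_i(x)^2 + \sum_{i=1}^n (x_i^2-1)^2 + z^d$ has a second-order point iff the system is feasible, because $\partial/\partial y_i$ forces $q_i(x)=0$ exactly, and since $\nabla(q_i^2)$ vanishes wherever $q_i$ does, the mixed Hessian blocks between the $y_i$'s and the other variables vanish at a feasible point, reducing positive semidefiniteness to a condition one can arrange by taking $y=0$. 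The catch is that this polynomial has degree $5$.)

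To get degree $4$, the idea is the same — multiplier variables whose first partials encode the constraints and whose mixed Hessian blocks with the other variables vanish at feasible points — but more care is needed since a degree-$4$ term of the form $y_i(x_i^2-1)^2$ is unavailable. The plan is to linearize first (introduce $z_{ij}=x_ix_j$, turning $q_0$ into an affine form $\ell(x,z)$), and take
\[
p(x,z,y) \;=\; y\,\ell(x,z)^2 \;+\; g(x,z), \qquad g(x,z) \;:=\; \sum_{ij\in E}(z_{ij}-x_ix_j)^2 + \sum_{i=1}^n (x_i^2-1)^2,
\]
which has degree $4$. Every critical point satisfies $\ell(\bar x,\bar z)=0$ and $\nabla g(\bar x,\bar z)=0$; at such a point the mixed second partials of $p$ in $y$ and $(x,z)$ equal $2\ell(\bar x,\bar z)\,\nabla\ell = 0$ and $\partial^2 p/\partial y^2 = 0$, so the Hessian is block diagonal, equal to $2\bar y\,\nabla\ell\,\nabla\ell^T + \nabla^2 g(\bar x,\bar z)$ on the $(x,z)$-block and zero on the $y$-block. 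When $(\bar x,\bar z)$ comes from a genuine cut, so $\bar x\in\{-1,1\}^n$, the terms $(x_i^2-1)^2$ contribute $8I$ to $\nabla^2 g$ on the $x$-coordinates, the terms $(z_{ij}-x_ix_j)^2$ contribute a positive semidefinite matrix, and taking $\bar y = 0$ makes the whole Hessian positive semidefinite; hence $p$ has a second-order point. This is the ``feasible $\Rightarrow$ second-order point'' direction.

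The converse — a second-order point forces a genuine solution — is the step I expect to be the main obstacle. The difficulty is that $\nabla g$ vanishes not only on $\{-1,1\}^n$ but also at ``degenerate'' critical points where some $\bar x_i = 0$; there $\nabla^2 g$ has a $-4$ entry in each degenerate coordinate, and one must show this cannot be repaired for \emph{any} value of the otherwise-free multiplier $\bar y$. Since any $v$ in the kernel of the Jacobian of $\ell$ is annihilated by $\bar y\,\nabla\ell\,\nabla\ell^T$ regardless of $\bar y$, it suffices to engineer the encoding so that the columns of this Jacobian indexed by the degenerate coordinates are always linearly dependent (yielding such a $v$ with $v^T\nabla^2 g\, v < 0$, so the Hessian is never positive semidefinite there) — for instance by adjoining redundant ``shadow'' copies of the combinatorial variables tied together by equality constraints, or by choosing the reduction so that degenerate solutions cannot arise at all. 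Carrying this out while keeping the reduction polynomial-size with $O(\log n)$ coefficients is the delicate part; verifying the resulting equivalence and the smallness of all numerical data (hence \emph{strong} NP-hardness) is then routine.
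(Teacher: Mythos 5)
Your plan correctly isolates the structural ideas (reduce from a quadratic system, use multiplier variables whose first or second partials encode the constraints, and handle higher degrees by padding with $w^d$), and your degree-$\ge 5$ construction is sound. But the degree-$4$ case — which is exactly what the theorem requires — contains a genuine gap that you yourself flag: you do not rule out the ``degenerate'' critical points of $g$ where some $\bar x_i = 0$. Making this work is not routine. Whether the direction $e_{x_i}$ certifies non-PSD-ness of $\nabla^2 g$ depends on the diagonal entry $\partial^2 g/\partial x_i^2 = 2\sum_{j\sim i}\bar x_j^2 - 4$, which can be nonnegative when $i$ has several non-degenerate neighbors; and once you then look at $2\times 2$ minors mixing $x$- and $z$-coordinates, the free parameter $\bar y\,\nabla\ell\,\nabla\ell^T$ interferes, because $\nabla\ell$ has support on the $z$-coordinates. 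The ``shadow copies'' remedy is left entirely as a sketch, and nothing in the proposal establishes that degenerate second-order points cannot arise for every input graph.

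The paper avoids this issue altogether with a cleaner device. Instead of a single multiplier multiplying $\ell^2$ and a separate quartic penalty $g$, it takes paired multiplier variables and the polynomial
\[
p(x,y,z,w) \;=\; \sum_{i=0}^{n}\bigl(y_i^2\,q_i(x) \;-\; z_i^2\,q_i(x)\bigr) \;+\; w^d,
\]
which has degree $4$ (for $d=4$). The key observation is that $\partial^2 p/\partial y_i^2 = 2q_i(x)$ and $\partial^2 p/\partial z_i^2 = -2q_i(x)$, so at any second-order point $(\bar x,\bar y,\bar z,\bar w)$ both $q_i(\bar x)\ge 0$ and $-q_i(\bar x)\ge 0$ must hold, forcing $q_i(\bar x)=0$ exactly. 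That is, the constraint is read off directly from the Hessian's diagonal entries rather than from the gradient of a penalty term — so there are no spurious critical points to analyze, no case distinction on which $\bar x_i$ vanish, and no dependence on the combinatorics of the graph. For the feasible direction, $(\bar x, 0_{n+1}, 0_{n+1}, 0)$ is a critical point with identically zero Hessian, hence trivially a second-order point. If you want to salvage your degree-$4$ route, you would need to close the degenerate-critical-point case rigorously for every instance; the paper's ``$\pm q_i$'' pairing is a much shorter path to the same end.
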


\begin{proof}
	Let $d \ge 4$ be fixed. Given an instance of the (simple) MAXCUT problem with a graph on $n$ vertices, let the quadratic polynomials $q_0, \ldots, q_n$ be as in (\ref{POLY Eq: MAXCUT QP}), and consider the following degree-$d$ polynomial in $3n+3$ variables $(x_1, \ldots, x_n, y_0, y_1, \ldots, y_n, z_0, z_1, \ldots, z_n,w)$:
	$$p(x,y,z,w) = \sum_{i=0}^{n} \left(y_i^2 q_i(x) - z_i^2 q_i(x)\right) + w^d.$$
	Note that all coefficients of this polynomial take $O(\log(n))$ bits to write down. We show that $p(x,y,z,w)$ has a second-order point if and only if the quadratic system ${q_0(x) = 0, \ldots, q_{n}(x) = 0}$ is feasible. 
	
	Observe that $\frac{\partial^2 p}{\partial y^2}$ is an $(n+1) \times (n+1)$ diagonal matrix with $2q_0(x), \ldots, 2q_n(x)$ on its diagonal. Similarly, $\frac{\partial^2 p}{\partial z^2}$ is an $(n+1) \times (n+1)$ diagonal matrix with $-2q_0(x), \ldots, -2q_n(x)$ on its diagonal. Suppose first that $(\xbar, \ybar, \bar z, \bar w)$ is a second-order point of $p$. Since $\Hess p(\xbar, \ybar, \bar z, \bar w) \succeq 0$, and since $\frac{\partial^2 p}{\partial y^2}$ and $\frac{\partial^2 p}{\partial z^2}$ are both principal submatrices of $\Hess p$, it must be that ${q_0(\xbar) = 0, \ldots, q_n(\xbar) = 0}$.
	
	Now suppose that $\xbar \in \Rn$ is a solution to (\ref{POLY Eq: MAXCUT QP}). We show that $(\xbar, 0_{n+1}, 0_{n+1},0),$ is a second-order point of $p$. Note that $\frac{\partial p}{\partial x}$ is quadratic in $y$ and $z$, $\frac{\partial p}{\partial y}$ is linear in $y$, $\frac{\partial p}{\partial z}$ is linear in $z$, and $\frac{\partial p}{\partial w} = dw^{d-1}$. Thus $(\xbar, 0_{n+1}, 0_{n+1},0)$ is a critical point of $p$. Now observe that the entries of $\frac{\partial^2 p}{\partial x^2}$ are quadratic in $y$ and $z$ or are zero, the entries of $\frac{\partial^2 p}{\partial x \partial y}$ are linear in $y$ or are zero, the entries of $\frac{\partial^2 p}{\partial x \partial z}$ are linear in $z$ or are zero, $\frac{\partial^2 p}{\partial w^2} = d(d-1)w^{d-2}$, $\frac{\partial^2 p}{\partial y^2}(\xbar, 0_{n+1}, 0_{n+1},0)$ and $\frac{\partial^2 p}{\partial z^2}(\xbar, 0_{n+1}, 0_{n+1},0)$ are both zero, and all other entries of $\Hess p$ are zero. Thus $\Hess p(\xbar, 0_{n+1}, 0_{n+1},0) = 0$, and we conclude that $(\xbar, 0_{n+1}, 0_{n+1},0)$ is a second-order point of $p$.
\end{proof}

The remaining two NP-hardness results from Table \ref{POLY Table: Complexity Existence} are stated next, but proven in Chapter~\ref{Chap: QPs}, since a corollary of them is the main result of that chapter.

\begin{theorem}\label{POLY Thm: LM quartic NP-hard}
    It is strongly NP-hard to decide whether a polynomial $p: \Rn \to \R$ of degree greater than or equal to four has a local minimum. The same statement holds for testing existence of a strict local minimum.
\end{theorem}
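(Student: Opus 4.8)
The plan is to reduce from the (simple) MAXCUT problem, using the quadratic system (\ref{POLY Eq: MAXCUT QP}), in the same spirit as the reductions already in this section. Given an instance, I would construct a degree-four polynomial in the variables $x \in \Rn$ together with an auxiliary variable $s$ that has a local minimum if and only if (\ref{POLY Eq: MAXCUT QP}) is feasible, i.e., if and only if $G$ has a cut of size $k$. A natural candidate is
\[ p(x,s) \;=\; \sum_{i=1}^n (x_i^2-1)^2 \;+\; s\, q_0(x) \;+\; C\, s^2 \sum_{j=1}^n x_j^2, \]
where $C=C(n)$ is a large constant whose bit length is still $O(\log(n))$. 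The first term pins $x$ toward the cube $\{-1,1\}^n$ with strictly positive curvature at each vertex; the coupling term $s\,q_0(x)$, being linear in $s$, opens a descent direction in $s$ at a vertex $v$ unless $q_0(v)=0$ (i.e., unless $v$ encodes a cut of size $k$); and the last term, which is multiplied by $\sum_j x_j^2$ so that it contributes curvature only near the cube rather than globally, supplies just enough curvature in $s$ to dominate the $s$--$x$ cross terms at a genuine local minimum without rendering $p$ coercive.

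For the ``feasible'' direction, if $v\in\{-1,1\}^n$ encodes a cut of size $k$ then $q_0(v)=q_1(v)=\cdots=q_n(v)=0$, and I would certify that $(v,0)$ is a \emph{strict} local minimum by a second-order Taylor expansion: near $(v,0)$, $p$ equals a quadratic form in $(x-v,\,s)$ plus terms of order at least three, and the quadratic form is positive definite because the constraint gradients $\grad q_i(v)=\pm 2e_i$ span $\Rn$ (giving curvature $\succeq 4I$ in the $x-v$ block) and because $C$ was taken large enough that the relevant Schur complement against $\grad q_0(v)$ stays positive. The higher-order terms are then dominated in a small enough neighborhood. Since the certified minimum is strict, the same construction settles the second sentence of the theorem as well.

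The ``infeasible'' direction is the crux. If no cube vertex satisfies $q_0=0$ (equivalently, no cut has size $k$), I must show $p$ has \emph{no} local minimum at all. Because any quartic that is bounded below attains a global, hence local, minimum, the construction is forced to be unbounded below, and one must rule out every spurious basin. I would argue by eliminating critical points: partial-minimizing $p$ over $s$ reduces the question to whether the function $\phi(x):=\sum_i (x_i^2-1)^2 - q_0(x)^2/(4C\sum_j x_j^2)$ has a local minimum on $\{x\neq 0\}$ (critical points of $p$ with $x=0$ are excluded because there the restriction $p(\cdot,s)$ has strictly negative curvature in $x$), and then showing $\phi$ has no local minimum in the infeasible case. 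This last step is the main obstacle, and it is exactly where the essential tension lies: injecting enough positive curvature to certify the feasible-case local minimum tends to make $p$ coercive, which would resurrect a global (hence local) minimum even when the system is infeasible, so the auxiliary structure must be arranged so that unboundedness survives infeasibility while the curvature remains confined near the cube. Pinning down this balance, together with the careful case analysis on critical points that it requires, is why the argument is more delicate than the critical-point and second-order-point reductions above (which only needed the Hessian to be psd at a single constructed point and never had to preclude minima elsewhere), and is why the paper develops it in Chapter~\ref{Chap: QPs}, where a corollary of this theorem is that chapter's main result.

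Finally, passing from degree $4$ to any $d\ge 4$ is routine padding: for even $d$, replacing $p$ by $p+z^d$ preserves ``has a local minimum'' in both directions (a local minimum of $p$ lifts to one at $z=0$, and freezing $z$ at a local minimizer of $p+z^d$ exhibits a local minimum of $p$), and odd $d\ge 5$ is handled by a minor variant; all coefficients remain $O(\log(n))$ in bit length, so the hardness is strong.
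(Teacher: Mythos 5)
Your proposal identifies the right obstacle but does not overcome it, and in fact your specific construction fails precisely where you anticipate trouble. Consider the infeasible direction. Partial-minimizing $p(x,s)=\sum_i(x_i^2-1)^2+s\,q_0(x)+Cs^2\sum_j x_j^2$ over $s$ at a fixed $x\neq 0$ gives the reduced function
\[
\phi(x)=\sum_{i=1}^n (x_i^2-1)^2 \;-\; \frac{q_0(x)^2}{4C\sum_j x_j^2}.
\]
For $C$ large enough to certify positive definiteness at a feasible vertex (and you need $C$ on the order of $n^2$ to dominate the Schur complement), the second term is a uniformly tiny perturbation in a neighborhood of the cube. Since $\sum_i(x_i^2-1)^2$ has Hessian $8I$ at every cube vertex $v$, the perturbed $\phi$ still has a strict local minimum near \emph{every} $v\in\{-1,1\}^n$, and each such minimum lifts to a local minimum $(\tilde v,\,s^*(\tilde v))$ of $p$ regardless of whether $q_0$ vanishes anywhere on the cube. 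So $p$ has local minima even when the MAXCUT instance is infeasible, and the claimed equivalence fails. The tension you name --- enough curvature near the cube to certify a minimum in the feasible case, while leaving the landscape minimum-free in the infeasible case --- is not resolved by the $s^2\sum_j x_j^2$ coupling; it is exactly what breaks the construction.

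The paper sidesteps this entirely by working with a homogeneous quartic and forcing the candidate local minimum to be the \emph{origin}, so that homogeneity (not a delicate balance of curvatures) governs the whole landscape. The reduction is from STABLESET, not MAXCUT: for a graph $G$ with adjacency matrix $A$ and a noninteger $k=r-\tfrac12$, one sets $M_{A,k}=kA+kI-J$ and $p_{A,k}(x)=(x^2)^T M_{A,k}\, x^2$. By the Motzkin--Straus theorem, $M_{A,k}$ is copositive iff $\alpha(G)\le k$, and $p_{A,k}$ is positive definite iff $\alpha(G)<k$, in which case the origin is a strict local minimum. The key lemma in the infeasible direction is that the origin is the \emph{only} second-order point of $p_{A,k}$: any nonzero second-order point $\bar x$ would force $(M_{A,k})_{\bar x}\succeq 0$ and $p_{A_{\bar x},k}(\bar x_{\bar x})=0$, yielding $\alpha(G_{\bar x})=k$ and contradicting $k\notin\mathbb Z$. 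Once the origin is the only candidate and $p_{A,k}$ takes negative values (by non-copositivity plus homogeneity), there is no local minimum anywhere. That global-elimination step --- which your sketch leaves as an open ``crux'' --- is exactly what makes the argument go through, and it relies on the structure of quartic forms of the type $p_{A,k}$ rather than on any auxiliary variable or curvature bound.
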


\section{Checking Local Minimality of a Point for a Cubic Polynomial}\label{POLY Sec: Local Min}

As the reader can observe from Tables~\ref{POLY Table: Complexity Checking} and \ref{POLY Table: Complexity Existence} from Section~\ref{POLY Sec: Introduction}, the remaining entries all have to do with the case of cubic polynomials. To answer these questions about cubics, we start in this section by showing that the problem of deciding if a given point is a local minimum (or a strict local minimum) of a cubic polynomial is polynomial-time solvable. This answers the remaining cases in Table~\ref{POLY Table: Complexity Checking}. We first make certain observations about cubic polynomials that will be used throughout the chapter. 

\subsection{Preliminaries on Cubic Polynomials}\label{POLY SSec: Cubic Preliminaries}

It is easy to observe that a univariate cubic polynomial has either no local minima, exactly one local minimum (which is strict), or infinitely many non-strict local minima (in the case that the polynomial is constant). Further observe that if a point $\xbar \in \Rn$ is a (strict) local minimum of a function $p: \Rn \to \R$, then for any fixed point $\ybar \in \Rn$, the restriction of $p$ to the line going through $\xbar$ and $\ybar$ ---i.e. the univariate function $q(\alpha) \defeq p(\xbar + \alpha (\ybar - \xbar))$---has a (strict) local minimum at $\alpha = 0$. Since the restriction of a multivariate cubic polynomial to any line is a univariate polynomial of degree at most three, the previous two facts imply that (i) if a cubic polynomial has a strict local minimum, then it must be the only local minimum (strict or non-strict), and that (ii) if a cubic polynomial has multiple local minima, then the polynomial must be constant on the line connecting any two of these (necessarily non-strict) local minima.

Observe that for any cubic polynomial $p$, the error term of the second-order Taylor expansion is given by the cubic homogeneous component of $p$. More formally, for any point $\xbar \in \Rn$ and direction $v \in \Rn$,
\beq \label{POLY Eq: cubic taylor}
p(\xbar + \lambda v) = p_3(v)\lambda^3 + \frac{1}{2}v^T \Hxb v \lambda^2 + \gxb^Tv\lambda + p(\xbar),
\eeq 
where $p_3$ is the collection of terms of $p$ of degree exactly 3.

Note that the Hessian of any cubic $n$-variate polynomial is an affine matrix of the form $\sum_{i=1}^n x_iH_i + Q$, where $H_i$ and $Q$ are all $n \times n$ symmetric matrices and the $H_i$ satisfy
\begin{equation}\label{POLY Eq: Valid Hessian}
(H_i)_{jk} = (H_j)_{ik} = (H_k)_{ij}
\end{equation}
for any $i,j,k \in \{1, \ldots, n\}$. This is because an $n \times n$ symmetric matrix $A(x) \defeq A(x_1, \ldots, x_n)$ is a valid Hessian matrix if and only if $\frac{\partial}{\partial x_i} A_{jk}(x) = \frac{\partial}{\partial x_j} A_{ik}(x) = \frac{\partial}{\partial x_k} A_{ij}(x)$ for all $i,j,k \in \{1, \ldots, n\}$. If $\sum_{i=1}^n x_iH_i + Q$ is a valid Hessian matrix, then the cubic polynomial which gives rise to it is of the form
\beq \label{POLY Eq: Cubic Poly Form} \frac{1}{6}\sum_{i=1}^n x^T x_iH_ix + \frac{1}{2}x^TQx + b^Tx + c.\eeq
In this chapter, it is sometimes convenient for us to parametrize a cubic polynomial in the above form. As the scalar term in (\ref{POLY Eq: Cubic Poly Form}) is irrelevant for deciding local minimality or finding local minima, in the remainder of this chapter, we take $c=0$ without loss of generality. Observe that the gradient of the polynomial in (\ref{POLY Eq: Cubic Poly Form}) is $\frac{1}{2}\sum_{i=1}^n x_iH_ix + Qx + b$, or equivalently a vector whose $i$-th entry is $\frac{1}{2}x^TH_ix + e_i^TQx + b_i$.

\subsection{Local Minimality of a Point for a Cubic Polynomial}

In this section, we give a characterization of local minima of cubic polynomials and show that this characterization can be checked in polynomial time. Recall that we use the notation $p_3$ to denote the cubic homogeneous component of a cubic polynomial $p$, and $\nulls(M)$ (resp. $\cols(M)$) to denote the null space (resp. column space) of a matrix $M$.

\begin{theorem}\label{POLY Thm: TOC}
	A point $\xbar \in \Rn$ is a local minimum of a cubic polynomial $p: \Rn \to \R$ if and only if the following three conditions hold:
	\begin{itemize}
		\item $\grad p(\xbar) = 0,$
		\item $\Hess p(\xbar) \succeq 0,$
		\item $\gp3d = 0, \forall d \in \nulls(\Hxb).$
	\end{itemize}
\end{theorem}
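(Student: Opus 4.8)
The plan is to prove both directions of the characterization, treating the ``only if'' direction first since it collects the standard necessary conditions, and then doing the real work in the ``if'' direction. For the forward direction, suppose $\xbar$ is a local minimum. Then $\grad p(\xbar) = 0$ and $\Hess p(\xbar) \succeq 0$ are exactly the first-order and second-order necessary conditions (FONC and SONC). For the third condition, fix any $d \in \nulls(\Hxb)$ and restrict $p$ to the line $\xbar + \lambda d$. By the Taylor expansion \eqref{POLY Eq: cubic taylor}, and using $\grad p(\xbar) = 0$ and $d^T \Hxb d = 0$ (since $d$ is in the nullspace of the psd matrix $\Hxb$), we get $p(\xbar + \lambda d) = p_3(d)\lambda^3 + p(\xbar)$. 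For $\lambda = 0$ to be a local minimum of this univariate function, the coefficient $p_3(d)$ must vanish (otherwise $\lambda \mapsto p_3(d)\lambda^3$ changes sign at $0$, giving a descent direction). Note that we need $\gp3d = 0$ rather than merely $p_3(d) = 0$: I would get this by observing that $\nulls(\Hxb)$ is a subspace, so if $d \in \nulls(\Hxb)$ then the whole line $\R d$ lies in $\nulls(\Hxb)$, and applying the above argument to each scaling shows the cubic form restricted to this subspace vanishes identically on it; more carefully, one can argue that $\nulls(\Hxb)$ being a subspace on which $p_3$ vanishes forces $\grad p_3$ to also vanish there, using the homogeneity identity $d^T \grad p_3(d) = 3 p_3(d)$ together with a polarization/second-derivative argument along directions $d + td'$ with $d, d' \in \nulls(\Hxb)$.

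For the converse, suppose the three conditions hold; I want to show $p(\xbar + v) \ge p(\xbar)$ for all sufficiently small $v$. Decompose $v = u + w$ where $u \in \cols(\Hxb)$ and $w \in \nulls(\Hxb)$ (orthogonal decomposition, since $\Hxb$ is symmetric). Using the exact Taylor expansion \eqref{POLY Eq: cubic taylor} and $\grad p(\xbar) = 0$, we have
\beq
p(\xbar + v) - p(\xbar) = p_3(v) + \tfrac{1}{2} v^T \Hxb v.
\eeq
Now $v^T \Hxb v = u^T \Hxb u \ge \mu \|u\|^2$ where $\mu > 0$ is the smallest nonzero eigenvalue of $\Hxb$. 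The term $p_3(v)$ is a cubic form; expand it via multilinearity into terms that are cubic, quadratic, linear, or constant in $w$ (equivalently in $u$). The ``constant in $u$'' term is $p_3(w)$, which is zero by the third hypothesis. The ``linear in $u$'' term is a directional-derivative-type expression that, because $\grad p_3$ vanishes on $\nulls(\Hxb)$ (again by the third hypothesis, suitably interpreted), also vanishes when its $w$-arguments are isolated --- this is the step where I'd use $\gp3d = 0$ for all $d \in \nulls(\Hxb)$ in its full strength. What survives is $p_3(v) = p_3(u) + (\text{terms at least linear in } u \text{ and at least linear in } w)$, so every surviving term in $p_3(v)$ carries at least one factor of $u$, giving a bound $|p_3(v)| \le C\|u\|(\|u\| + \|w\|)^2$ for some constant $C$ depending on $p$.

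The main obstacle --- and the step I'd spend the most care on --- is closing the estimate, because the quadratic term $\tfrac{\mu}{2}\|u\|^2$ only controls $u$, while the cubic error can be as large as $C\|u\|\|w\|^2$, and there is no quadratic term in $w$ to absorb it. The resolution is that each such cross term still has a factor of $\|u\|$, so we need $\tfrac{\mu}{2}\|u\|^2 \ge C\|u\|(\|u\|+\|w\|)^2$, i.e. $\tfrac{\mu}{2}\|u\| \ge C(\|u\|+\|w\|)^2$ when $u \ne 0$ --- but this fails for $\|u\|$ tiny relative to $\|w\|$. So a naive bound is not enough, and this is exactly why the theorem is subtle (and why, e.g., absence of descent directions does not suffice, cf. the remark before the theorem). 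The fix is to not split off $w$ so crudely: I would instead argue that along directions $v$ where $u$ is small compared to $w$, one can use the hypothesis $\gp3d = 0$ on $\nulls(\Hxb)$ to show the ``linear in $u$, quadratic in $w$'' coefficient is not just bounded but actually zero or of higher order --- concretely, writing $p_3(u+w) = p_3(w) + \nabla p_3(w)^T u + \tfrac{1}{2}u^T \nabla^2 p_3(w) u + p_3(u)$ exactly (since $p_3$ is cubic, this Taylor expansion in $u$ terminates), the first term vanishes by hypothesis, the second term $\nabla p_3(w)^T u$ vanishes because $w \in \nulls(\Hxb)$ forces $\nabla p_3(w) = 0$ by the third condition, and we are left with $p_3(u+w) = \tfrac{1}{2}u^T \nabla^2 p_3(w) u + p_3(u)$, which is $O(\|u\|^2 \cdot \|w\|) + O(\|u\|^3)$ --- now genuinely quadratic in $u$, hence absorbable by $\tfrac{\mu}{2}\|u\|^2$ once $\|v\|$ is small enough. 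This exact telescoping of the Taylor expansion of the cubic form in the $u$-variable, combined with the vanishing of $\nabla p_3$ on the nullspace, is the crux; the rest is bookkeeping with norms and choosing $\epsilon$.
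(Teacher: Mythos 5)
Your sufficiency argument (the three conditions imply $\xbar$ is a local minimum) is correct and takes essentially the paper's approach. Decomposing $v = u + w$ with $u \in \cols(\Hxb)$ and $w \in \nulls(\Hxb)$, using that the Taylor expansion of $p_3$ in $u$ terminates exactly, killing the two lowest-order terms via the TOC, and absorbing what remains into $\frac{1}{2} u^T\Hxb u$ bounded below by the smallest positive eigenvalue of $\Hxb$ --- this is precisely the content of the paper's identity (\ref{POLY Eq: Taylor NC decomposition}) and the factored expression (\ref{POLY Eq: pxbar - p}), where the paper's overall factor of $\beta^2$ plays the role of your observation that every surviving term in $p_3(v)$ is at least quadratic in $u$.

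Your necessity argument (a local minimum must satisfy the TOC) has a genuine gap. You correctly get $p_3(d)=0$ for all $d\in\nulls(\Hxb)$ --- the TONC --- and then claim that polarizing this identity over the subspace $\nulls(\Hxb)$ forces $\grad p_3(d)=0$ there. That step is false. Polarizing $p_3(d+td')\equiv 0$ for $d,d'\in\nulls(\Hxb)$ only gives $\grad p_3(d)^Td'=0$ for all $d'\in\nulls(\Hxb)$, i.e., $\grad p_3(d)\in\cols(\Hxb)$; it does not make the gradient vanish. Example~\ref{POLY Ex: no local min} is a direct counterexample to your claim: for $p(x_1,x_2)=x_2^2-x_1^2x_2$, the cubic part $p_3=-x_1^2x_2$ vanishes identically on the null space of $\Hess p(0,0)$, which is spanned by $(1,0)$, yet $\grad p_3(1,0)=(0,-1)\neq 0$; and $(0,0)$ is indeed not a local minimum, so the implication is true but needs a real argument. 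The orthogonality you can extract is used by the paper too, but only as an ingredient: it shows $z := -\grad p_3(\hat d)/\|\grad p_3(\hat d)\| \in \cols(\Hxb)$, hence $z^T\Hxb z>0$. The missing idea is the \emph{descent parabola} construction (\ref{POLY Eq: Descent Parabola}): take $\hat x_i = \xbar + \alpha_i\hat d + \beta_i z$ with $\alpha_i\propto 1/i$ and $\beta_i=1/i^2$, so $\alpha_i\propto\sqrt{\beta_i}$; then the two dominant terms (both of order $1/i^4$) of $p(\hat x_i)-p(\xbar)$ in (\ref{POLY Eq: Taylor NC decomposition}) combine to $-\frac{1}{2} z^T\Hxb z\,\beta_i^2<0$, contradicting local minimality. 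As Remark~\ref{POLY Rem: TONC} emphasizes, the TOC is strictly stronger than the TONC precisely because one must rule out descent along curves, not just lines, and a linear perturbation argument --- however cleverly polarized --- cannot detect that.
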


Note that the first two conditions are the well-known FONC and SONC. Throughout the chapter, we refer to the third condition as the \emph{third-order condition (TOC)} for optimality. This condition is requiring the gradient of the cubic homogeneous component of $p$ to vanish on the null space of the Hessian of $p$ at $\xbar$. We remark that the FONC, SONC, and TOC together are in general neither sufficient nor necessary for a point to be a local minimum of a polynomial of degree higher than three. The first claim is trivial (consider, e.g., $p(x) = x^5$ at $x = 0$); for the second claim see Example \ref{POLY Ex: TOC not necessary}.

\begin{remark}\label{POLY Rem: TONC}
It is straightforward to see that any local minimum $\xbar$ of a cubic polynomial $p$ satisfies a condition similar to the TOC, that $p_3(d) = 0, \forall d \in \nulls(\Hxb)$. Indeed, if $\xbar$ is a second-order point and $d \in \nulls(\Hxb)$, then Equation (\ref{POLY Eq: cubic taylor}) gives $p(\xbar + \lambda d) = p_3(d)\lambda^3 + p(\xbar)$. Hence, if $p_3(d)$ is nonzero, then either $d$ or $-d$ is a descent direction for $p$ at $\xbar$, and so $\xbar$ cannot be a local minimum. This observation was made in~\cite{anandkumar2016efficient} for three-times differentiable functions, and is referred to as the ``third-order necessary condition'' (TONC) for optimality. Note that because $p_3$ is homogeneous of degree three, from Euler's theorem for homogeneous functions we have $3 p_3(x) = x^T\grad p_3(x)$. We can then see that $\grad p_3(d) = 0 \Rightarrow p_3(d) = 0$, and therefore the TOC is a stronger condition than the TONC. Indeed, the FONC, SONC, and TONC together are not sufficient for local optimality of a point for a cubic polynomial; see Example \ref{POLY Ex: no local min}. Intuitively, this is because the FONC, SONC, and TONC together avoid existence of a descent direction for cubic polynomials, but as the proof of Theorem \ref{POLY Thm: TOC} will show, existence of a ``descent parabola'' must also be avoided.
\end{remark}

We will need the following fact from linear algebra for the proof of Theorem \ref{POLY Thm: TOC}.

\begin{lem}\label{POLY Lem: Smallest Nonzero Eigenvalue}
	Let $M \in \Snn$ be a symmetric positive semidefinite matrix and denote its smallest positive eigenvalue by $\lambda_+$. Then if $z \in \cols(M)$ and $\|z\| = 1, z^TMz \ge \lambda_+$.
\end{lem}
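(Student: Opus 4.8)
The plan is to diagonalize $M$ and read off the bound from its spectrum. Since $M \in \Snn$ is symmetric, it admits an orthonormal eigenbasis $v_1, \ldots, v_n$ with eigenvalues $\lambda_1, \ldots, \lambda_n$, and these are all nonnegative because $M \succeq 0$. The key structural fact I would invoke first is that for a symmetric matrix one has the orthogonal direct sum decomposition $\Rn = \cols(M) \oplus \nulls(M)$, and moreover $\nulls(M)$ is spanned exactly by those $v_i$ with $\lambda_i = 0$; consequently $\cols(M)$ is spanned by the $v_i$ with $\lambda_i > 0$.

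Next, I would take $z \in \cols(M)$ with $\|z\| = 1$ and expand it as $z = \sum_{i : \lambda_i > 0} c_i v_i$, which is legitimate by the previous paragraph since $z$ has no component along any null-space eigenvector. Orthonormality of the $v_i$ gives $\sum_{i : \lambda_i > 0} c_i^2 = \|z\|^2 = 1$. Then $z^T M z = \sum_{i : \lambda_i > 0} \lambda_i c_i^2$, and since every $\lambda_i$ appearing in this sum is a positive eigenvalue of $M$, it satisfies $\lambda_i \ge \lambda_+$ by the definition of $\lambda_+$ as the smallest positive eigenvalue. Therefore $z^T M z \ge \lambda_+ \sum_{i : \lambda_i > 0} c_i^2 = \lambda_+$, which is the claim.

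I do not anticipate a real obstacle here; the lemma is essentially the Rayleigh quotient bound restricted to the column space. The only point that deserves a sentence of care is the justification that $z \in \cols(M)$ forces the expansion of $z$ to involve only eigenvectors with strictly positive eigenvalues — this is precisely where symmetry of $M$ is used, via the orthogonality of $\cols(M)$ and $\nulls(M)$. (Without the hypothesis $z \in \cols(M)$ the conclusion is false, e.g. $z \in \nulls(M)$ gives $z^T M z = 0 < \lambda_+$.)
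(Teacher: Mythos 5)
Your proof is correct and follows essentially the same route as the paper's: expand $z$ in an orthonormal eigenbasis, observe that membership in $\cols(M)$ kills the coefficients on the null-space eigenvectors, and then bound the resulting Rayleigh quotient below by the smallest positive eigenvalue. The remark on why $z \in \cols(M)$ is essential is a nice touch but does not change the substance.
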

\begin{proof}
	Suppose $M$ has eigenvalues $\lambda_1 \ge \lambda_2 \ge \cdots \ge \lambda_k > \lambda_{k+1} = \cdots = \lambda_n = 0$ (so $\lambda_+ = \lambda_k$). Let $v_1, \ldots, v_n$ be a set of corresponding mutually orthogonal unit-norm eigenvectors of $M$. Observe that any $z \in \cols(M)$ can be written as $z = \sum_{i=1}^n \alpha_iv_i$, for some scalars $\alpha_i$ with $\alpha_i = 0$ for $i =k+1, \ldots, n$. This is because the column space is orthogonal to the null space, and the eigenvectors corresponding to zero eigenvalues span the null space.
	
	Since $v_1, \ldots, v_k$ are mutually orthogonal unit vectors, we have
	$$z^TMz = \left(\sum_{i=1}^k \alpha_iv_i\right)^T \left(\sum_{i=1}^k \lambda_iv_iv_i^T\right) \left(\sum_{i=1}^k \alpha_iv_i\right) = \sum_{i=1}^k \alpha_i^2\lambda_i v_i^Tv_iv_i^Tv_i = \sum_{i=1}^k \alpha_i^2 \lambda_i,$$
	and
	$$1 = \|z\|^2 = \left(\sum_{i=1}^k \alpha_iv_i\right)^T\left(\sum_{i=1}^k \alpha_iv_i\right) = \sum_{i=1}^k \alpha_i^2v_i^Tv_i = \sum_{i=1}^k \alpha_i^2.$$
	These two equations combined imply that $z^TMz \ge \lambda_k = \lambda_+$.
\end{proof}

\begin{proof}[Proof (of Theorem \ref{POLY Thm: TOC})]
	As any local minimum must satisfy the FONC and SONC, it suffices to show that a second-order point is a local minimum for a cubic polynomial if and only if it also satisfies the TOC.
	
	We first observe that for any second-order point $\xbar$, scalars $\alpha$ and $\beta$, and vectors $d \in \nulls(\Hxb)$ and $z \in \Rn$, the following identity holds:
	\begin{equation}\label{POLY Eq: Taylor NC decomposition}
    \begin{aligned}
	p(\xbar + \alpha d+\beta z)	&=  p_3(\alpha d+\beta z) + \frac{1}{2}(\alpha d+\beta z)^T\Hxb(\alpha d+\beta z) + p(\xbar)\\
	&= \beta^3p_3(z) + \frac{\beta^2}{2}z^T\Hess p_3(\alpha d)z + \beta\grad p_3(\alpha d)^Tz + p_3(\alpha d) + \frac{\beta^2}{2} z^T \Hxb z + p(\xbar)\\
	&= \beta^3p_3(z) + \frac{\alpha \beta^2}{2}z^T\Hess p_3(d)z + \alpha^2 \beta\grad p_3(d)^Tz + \alpha^3p_3(d) + \frac{\beta^2}{2} z^T \Hxb z + p(\xbar).\\
	\eaeql
	The first equality follows from (\ref{POLY Eq: cubic taylor}) and the FONC. The second equality follows from the Taylor expansion of $p_3(\alpha d+\beta z)$ around $\alpha d$ and using the fact that $d \in \nulls(\Hxb)$. The last equality follows from homogeneity of $p_3$.
		
	\noindent\textbf{(second-order point) + TOC $\Rightarrow$ local minimum:}
	
	Let $\xbar$ be any second-order point at which the TOC holds. Note that any vector $v \in \Rn$ can be written as $\alpha d + \beta z$ for some (unique) scalars $\alpha$ and $\beta$, and unit vectors $d \in \nulls(\Hxb)$ and $z \in \cols(\Hxb)$. Since from the TOC we have $\grad p_3(d) = 0$ (which also implies that $p_3(d) = 0$, as seen e.g. by Euler's theorem for homogeneous functions mentioned above), the identity in (\ref{POLY Eq: Taylor NC decomposition}) reduces to \beq
	\label{POLY Eq: pxbar - p}
	p(\xbar + v) - p(\xbar) =\beta^2 \left(\beta p_3(z) + \frac{\alpha}{2} z^T \Hess p_3(d)z + \frac{1}{2} z^T \Hxb z\right).
	\eeq	

	Let $\lambda > 0$ be the smallest nonzero eigenvalue of $\Hxb$. From Lemma \ref{POLY Lem: Smallest Nonzero Eigenvalue} we have that $z^T \Hxb z \ge~\lambda$. Thus, if $\alpha$ and $\beta$ satisfy 
	\beq\label{POLY Eq: Min Radius}| \alpha| + | \beta| \le \lambda \cdot \left(\underset{\|z\|=1, \|d\|=1}{\max} \max \{z^T \Hess p_3(d)z, 2p_3(z)\}\right)^{-1},\eeq the expression on the right-hand side of (\ref{POLY Eq: pxbar - p}) is nonnegative. As the set ${\{\|z\|=1\} \cap \{\|d\|=1\}}$ is compact and $p_3$ is continuous and odd, the quantity $$\gamma \defeq \underset{\|z\|=1, \|d\|=1}{\max} \max \{z^T \Hess p_3(d)z, 2p_3(z)\}$$ is finite and nonnegative, and thus $\lambda/\gamma$ is positive (or potentially $+\infty$). Finally, note that for any $v \in \Rn$ such that $\|v\| \le \lambda/\gamma$, the corresponding $\alpha$ and $\beta$ satisfy (\ref{POLY Eq: Min Radius}), and thus $p(\xbar + v) - p(\xbar) \ge 0$ as desired.\\\\
	\textbf{Local minimum $\Rightarrow$ TOC:}
	
	Note that if $\xbar$ is a local minimum, then we must have $p_3(d) = 0$ whenever $d \in \nulls(\Hxb)$ (see Remark \ref{POLY Rem: TONC}). We also assume that $p_3$ is not the zero polynomial, as then the TOC would be automatically satisfied. 
	
	Now suppose for the sake of contradiction that there exists a vector $\hat{d} \in \nulls(\Hxb)$ such that $\grad p_3(\hat{d}) \ne 0$. Consider the sequence of points given by
	\begin{equation}\label{POLY Eq: Descent Parabola}
	\hat{x}_i \defeq \xbar + \alpha_i \hat{d} + \beta_i z,
	\eeq
	where
	$$z = -\frac{\grad p_3(\hat{d})}{\|\grad p_3(\hat{d})\|}, \alpha_i = \frac{1}{i}\sqrt{\frac{z^T \Hxb z}{|\grad p_3(\hat{d})^Tz|}}, \beta_i = \frac{1}{i^2}.$$
	Observe that $\hat{x}_i \to \xbar$ as $i \to \infty$.
	From (\ref{POLY Eq: Taylor NC decomposition}), we have
	$$p(\xbar + \alpha_i \hat{d}+\beta_i z) - p(\xbar)
	= p_3(z)\beta_i^3 + \frac{1}{2}z^T\Hess p_3(\hat{d})z \alpha_i \beta_i^2 + \grad p_3(\hat{d})^Tz \alpha_i^2 \beta_i + \frac{1}{2} z^T \Hxb z \beta_i^2.$$
	Note that because $\alpha_i \propto \sqrt{\beta_i}$, the third and fourth terms of the right-hand side of the above expression will be the dominant terms as $i \to \infty$. For our choices of $\alpha_i$ and $\beta_i$, the sum of these two dominant terms simplifies to $-\frac{1}{2i^4}z^T\Hxb z$. Observe that for any $w \in \nulls(\Hxb)$ and any $\alpha \in \R, p_3(\hat{d}+\alpha w) = 0$. Since the gradient of $p_3$ is orthogonal to its level sets, we must then have $\grad p_3(\hat{d})^Tw = 0$ for any $w \in \nulls(\Hxb)$. Thus, $\grad p_3(\hat{d})$ is in the orthogonal complement of $\nulls(\Hxb)$, i.e. in $\cols (\Hxb)$, and hence $z^T \Hxb z > 0$. Thus, for any sufficiently large $i$, $p(\hat{x}_i) < p(\xbar)$, and so $\xbar$ is not a local minimum.
	
\end{proof}

\begin{remark} Note that the points $\hat{x}_i$ constructed in (\ref{POLY Eq: Descent Parabola}) trace a parabola as $i$ ranges from $-\infty$ to $+\infty$. Thus as a corollary of the proof of Theorem~\ref{POLY Thm: TOC}, we see that if a point $\xbar \in \Rn$ is not a local minimum of a cubic polynomial $p: \Rn \to \R$, then there must exist a ``descent parabola'' that certifies that; i.e. a parabola $q(t): \R \to \Rn$ and a scalar $\bar{\alpha}$ satisfying $q(0) = \xbar$ and $p(q(\alpha)) < p(\xbar)$ for all $\alpha \in (0, \bar{\alpha})$.
\end{remark}

Theorem~\ref{POLY Thm: TOC} gives rise to the following algorithmic result.


\begin{theorem}\label{POLY Thm: Checking Min Poly Time}
	Local minimality of a point $\xbar \in \Rn$ for a cubic polynomial $p: \Rn \to \R$ can be checked in polynomial time.
\end{theorem}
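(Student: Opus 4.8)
The plan is to convert the characterization in Theorem~\ref{POLY Thm: TOC} into an explicit algorithm and argue that each of its three conditions can be verified with a polynomial number of rational arithmetic operations on numbers of polynomially-bounded bitsize. First I would write $p$ in the form (\ref{POLY Eq: Cubic Poly Form}), reading off the symmetric matrices $H_1,\ldots,H_n,Q$ and the vector $b$ from the coefficients of $p$; each entry of these objects is a small rational multiple of a coefficient of $p$, so this step does not increase the input size. Checking the FONC is then immediate: evaluate $\grad p(\xbar)=\frac12\sum_{i=1}^n \xbar_i H_i \xbar + Q\xbar + b$ and test whether it is the zero vector. Checking the SONC amounts to forming the constant symmetric matrix $M \defeq \Hess p(\xbar) = \sum_{i=1}^n \xbar_i H_i + Q$ and deciding whether $M \succeq 0$; as recalled in Section~\ref{POLY Sec: Introduction}, this is polynomial-time over the rationals, e.g.\ by an $LDL^T$-type factorization with symmetric pivoting or by computing the characteristic polynomial of $M$ and checking the alternating-sign condition, with standard determinant bounds controlling the bitsize of all intermediate quantities.

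The only step that is not completely routine is the TOC, which as stated quantifies over all $d$ in the subspace $\nulls(M)$. The key simplification I would use is that, thanks to the symmetry relations (\ref{POLY Eq: Valid Hessian}) (and Euler's identity for homogeneous functions, as in Remark~\ref{POLY Rem: TONC}), the $j$-th component of $\grad p_3(d)$ equals $\tfrac12\, d^T H_j d$. Hence the TOC is equivalent to the assertion that each of the $n$ quadratic forms $d \mapsto d^T H_j d$ vanishes identically on $\nulls(M)$. I would compute a basis of $\nulls(M)$ by Gaussian elimination and collect it as the columns of a matrix $V$. Since over $\mathbb{R}$ a quadratic form $c \mapsto c^T A c$ with $A$ symmetric is identically zero iff $A=0$ (polarization is available), writing $d = Vc$ shows that $d^T H_j d = 0$ for all $d \in \nulls(M)$ iff $V^T H_j V = 0$. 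Thus the TOC holds iff $V^T H_j V = 0$ for every $j \in \{1,\ldots,n\}$, which is verified by $n$ matrix multiplications.

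The algorithm outputs ``yes'' if and only if all three tests pass; correctness is immediate from Theorem~\ref{POLY Thm: TOC}. The step I expect to need the most care in the write-up is not any individual test but the bookkeeping that guarantees polynomial bitsize throughout --- in particular that the basis of $\nulls(M)$ returned by Gaussian elimination, and then the products $V^T H_j V$, stay polynomially sized in the input. This is standard (via the usual Hadamard/Bareiss-type bounds for exact linear algebra over $\mathbb{Q}$), but it should be stated explicitly since the nominal issue with the TOC --- infinitely many directions $d$ --- has already been dispensed with by the reduction to the finite identity $V^T H_j V = 0$.
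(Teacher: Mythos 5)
Your proposal is correct and follows essentially the same route as the paper: invoke Theorem~\ref{POLY Thm: TOC}, check the FONC by evaluation and the SONC by a standard rational psd test, and check the TOC by computing a rational basis of $\nulls(\Hxb)$ and verifying that the restriction of $\grad p_3$ to that subspace vanishes identically. The one place you phrase things a bit more crisply than the paper is the TOC test: the paper expands the coefficients of the vector of quadratic forms $g(\lambda)=\grad p_3(\sum_i\lambda_i v_i)$ and checks they are all zero, whereas you observe that the $j$-th entry of $\grad p_3(d)$ is $\tfrac12 d^T H_j d$ and hence the TOC is equivalent to the $n$ matrix identities $V^T H_j V=0$ — this is the same computation packaged as $n$ matrix products, which makes the reduction from ``infinitely many directions'' to a finite rational check especially transparent. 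Your attention to bitsize bounds for the null-space basis and the products $V^T H_j V$ matches the concern the paper addresses by citing Bareiss.
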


\begin{proof}
	In view of Theorem \ref{POLY Thm: TOC}, we show that the FONC, SONC, and TOC can be checked in polynomial time (in the Turing model of computation). Checking that the gradient of $p$ vanishes at $\xbar$ and that the Hessian at $\xbar$ is positive semidefinite can be done in polynomial time as explained in Section~\ref{POLY Sec: Introduction}. We give the following polynomial-time algorithm for checking the TOC:
	
	\begin{algorithm}[H]
		\caption{Algorithm for checking the TOC.}\label{POLY Alg: Check Local Min}
		\begin{algorithmic}[1]
			\State {\bf Input:} Coefficients of a cubic polynomial $p: \Rn \to \R$, a point $\xbar \in \Rn$
			\State Compute $\Hxb$
			\State Compute a rational basis $\{v_1, \ldots, v_k\}$ for the null space of $\Hxb$
			\State Check if coefficients of $g(\lambda) \defeq \grad p_3 (\sum_{i=1}^k \lambda_i v_i)$ are all zero
    		\State \quad \texttt{{\bf if}} YES
    			\State \quad \quad $\xbar$ is a local minimum of $p$
    		\State \quad \texttt{{\bf if}} NO
    			\State \quad\quad $\xbar$ is a not local minimum of $p$
		\end{algorithmic}
	\end{algorithm}
	
	Note that the entries of the function $g: \R^k \to \Rn$ that appears in this algorithm are homogeneous quadratic polynomials in $\lambda \defeq (\lambda_1, \ldots, \lambda_k)$, where $k$ is the dimension of $\nulls(\Hxb)$. For the TOC to hold, $g$ must be zero for all $\lambda \in \R^k$, which happens if and only if all coefficients of every entry of $g$ are zero.
	
	A rational basis for the null space of a symmetric matrix can be computed in polynomial time, for example through the Bareiss algorithm \cite{bareiss1968sylvester}. For completeness, we give a less efficient but also polynomial-time algorithm which solves a series of linear systems. The first linear system finds a nonzero vector $v_1 \in \Rn$ such that $\Hxb^T v_1 = 0$. The successive linear systems solve for nonzero vectors $v_i \in \Rn$ such that $\Hxb^T v_i = 0, v_j^Tv_i = 0, \forall j = 1, \ldots, i-1$. To ensure nonzero solutions, some entry of the vector is fixed to 1, and if the system is infeasible, the next entry is fixed to 1 and the system is re-solved. Once the only feasible vector is the zero vector, the basis is complete.
	
	The next step is to compute the coefficients of $g$. To do this, one can first compute the coefficients of $\grad p_3$. There are $n \times {n+1 \choose 2}$ coefficients to compute, and each is a coefficient of $p_3$, multiplied by 1, 2, or 3. If the $m$-th entry of $\grad p_3$ is given by $\sum_{i=1}^n \sum_{j\ge i}^n c_{ij}x_ix_j$, then the $m$-th entry of $g$ is equal to $g_m(\lambda) = \sum_{a=1}^n \sum_{b=1}^n (\sum_{i=1}^n \sum_{j\ge i}^n c_{ij}(v_a)_i(v_b)_j)\lambda_a\lambda_b$, where the vectors $\{v_i\}$ are our rational basis for $\nulls(\Hxb)$. Observe that $g_m$ is a polynomial in $\lambda$ whose coefficients can be computed with a polynomial number of additions and multiplications over polynomially-sized scalars, and thus checking if all these coefficients are zero for every $m$ can be done in polynomial time.
	
	\end{proof}
	
	Let us end this subsection by also giving an efficient characterization of strict local minima of cubic polynomials.
	
	\begin{cor}\label{POLY Cor: Strict Local Min}
		A point $\xbar \in \Rn$ is a strict local minimum of a cubic polynomial $p: \Rn \to \R$ if and only if
		\begin{itemize}
			\item $\gxb = 0,$
			\item $\Hxb \succ 0.$
		\end{itemize}
	\end{cor}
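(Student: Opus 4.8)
The plan is to derive this as a consequence of Theorem \ref{POLY Thm: TOC} together with the elementary facts about restrictions of cubics to lines collected in Section \ref{POLY SSec: Cubic Preliminaries}. First I would prove the ``if'' direction, which is just the classical second-order sufficient condition (SOSC): if $\gxb = 0$ and $\Hxb \succ 0$, then plugging into the Taylor identity (\ref{POLY Eq: cubic taylor}) gives, for any unit vector $v$ and small $\lambda$, $p(\xbar + \lambda v) - p(\xbar) = p_3(v)\lambda^3 + \tfrac12 v^T\Hxb v\,\lambda^2$, and since $v^T\Hxb v \ge \lambda_{\min}(\Hxb) > 0$ uniformly over unit $v$ while $|p_3(v)|$ is bounded on the unit sphere, the quadratic term dominates for $\|v\|\lambda$ small enough, so $p(\xbar+\lambda v) > p(\xbar)$; hence $\xbar$ is a strict local minimum. (This direction does not even need cubicity beyond having a bounded degree-3 remainder.)

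For the ``only if'' direction, suppose $\xbar$ is a strict local minimum. Then it is in particular a local minimum, so by Theorem \ref{POLY Thm: TOC} we get $\gxb = 0$, $\Hxb \succeq 0$, and the TOC $\grad p_3(d) = 0$ for all $d \in \nulls(\Hxb)$. It remains to upgrade $\Hxb \succeq 0$ to $\Hxb \succ 0$. Suppose not; then there is a nonzero $d \in \nulls(\Hxb)$. The TOC gives $\grad p_3(d) = 0$, and by Euler's identity $3p_3(d) = d^T\grad p_3(d) = 0$, so $p_3(d) = 0$ as well. Now apply the Taylor identity (\ref{POLY Eq: cubic taylor}) along the direction $d$: since $\gxb = 0$, $\Hxb d = 0$, and $p_3(d) = 0$, we obtain $p(\xbar + \lambda d) = p(\xbar)$ for \emph{all} $\lambda \in \R$. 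This contradicts strictness of the local minimum (take $\lambda \ne 0$ arbitrarily small with $\xbar + \lambda d \ne \xbar$). Hence $\Hxb \succ 0$.

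I expect no real obstacle here — the corollary is essentially a clean packaging of Theorem \ref{POLY Thm: TOC}. The only point requiring a little care is making sure the ``only if'' argument uses strictness in exactly the right place: the existence of a line through $\xbar$ on which $p$ is constant is incompatible with a \emph{strict} local minimum but perfectly compatible with a non-strict one (indeed this is exactly the dichotomy noted in Section \ref{POLY SSec: Cubic Preliminaries}, that a cubic with multiple local minima is constant on the connecting line). So the corollary can equivalently be phrased as: a local minimum $\xbar$ of a cubic is strict iff $\Hxb$ has trivial kernel, and one could alternatively invoke the preliminaries on univariate/line restrictions directly in place of the explicit Taylor computation. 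Either route is short.
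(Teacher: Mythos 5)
Your proof is correct, and the final paragraph of your proposal already anticipates the paper's actual argument. The paper's ``only if'' direction is slightly more economical than your main route: it does not invoke Theorem~\ref{POLY Thm: TOC} at all. After using FONC and SONC to get $\gxb=0$ and $\Hxb\succeq 0$, the paper simply notes that if $d^T\Hxb d=0$ for some nonzero $d$ then (by positive semidefiniteness) $\Hxb d=0$, whence the Taylor identity (\ref{POLY Eq: cubic taylor}) collapses to $p(\xbar+\alpha d)=p(\xbar)+p_3(d)\alpha^3$; and $\alpha=0$ is never a strict local minimum of a univariate function of the form $c\alpha^3$, since it is constant when $c=0$ and changes sign in every neighborhood of $0$ when $c\ne 0$. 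Your route instead invokes the TOC to conclude $\grad p_3(d)=0$ and hence $p_3(d)=0$, deducing that $p$ is actually constant along the whole line through $\xbar$ in direction $d$. Both arguments are valid; yours proves the stronger fact that the line is constant (matching the remark in Section~\ref{POLY SSec: Cubic Preliminaries} that a non-strict local minimum of a cubic lies on a line of constancy), but at the cost of calling on the heavier Theorem~\ref{POLY Thm: TOC}, which the corollary does not actually need. The paper's version avoids TOC entirely and so is the minimal-machinery argument.
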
	
	
	\begin{proof}
	    The fact that these two conditions are sufficient for local minimality is immediate from the SOSC. To show the converse, in view of the FONC, we only need to show that positive definiteness of the Hessian is necessary. Suppose for the sake of contradiction that for some nonzero vector $d \in \Rn$, we have $d^T \Hxb d = 0$ (note that in view of the SONC, we cannot have $d^T \Hxb d< 0$). From (\ref{POLY Eq: cubic taylor}), we have $p(\xbar+\alpha d) = p(\xbar) + p_3(d)\alpha^3$. Hence, $\alpha = 0$ is not a strict local minimum of the univariate polynomial $p(\xbar + \alpha d)$, and so $\xbar$ is not a strict local minimum of $p$.
	\end{proof}
	
	\begin{cor}\label{POLY Cor: Check Strict Local Min}
	Strict local optimality of a point $\xbar \in \Rn$ for a cubic polynomial $p: \Rn \to \R$ can be checked in polynomial time.
	\end{cor}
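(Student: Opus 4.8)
The plan is to reduce immediately to Corollary~\ref{POLY Cor: Strict Local Min}, which characterizes the strict local minima of a cubic polynomial $p:\Rn\to\R$ as exactly those points $\xbar\in\Rn$ at which $\gxb=0$ and $\Hxb\succ 0$. Given this characterization, it suffices to argue that each of these two conditions can be verified in polynomial time in the Turing model, and then the corollary follows at once.

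First I would handle the FONC. The gradient of a cubic polynomial is a vector of quadratic polynomials whose coefficients are obtained from the coefficients of $p$ by a linear number of arithmetic operations on rationals whose bit-sizes are controlled; evaluating this gradient at the rational point $\xbar$ and testing whether the resulting vector is $0_n$ is then clearly a polynomial-time operation. Next I would handle positive definiteness of $\Hxb$. By the parametrization discussed in Section~\ref{POLY SSec: Cubic Preliminaries}, the Hessian of $p$ is an affine matrix-valued function of $x$, so $\Hxb$ is an $n\times n$ rational symmetric matrix whose entries are computed from the coefficients of $p$ and the coordinates of $\xbar$ with a polynomial number of rational arithmetic operations. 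As recalled in Section~\ref{POLY Sec: Introduction}, positive definiteness of a rational symmetric matrix can be tested in polynomial time---e.g.\ by computing its leading principal minors and checking that they are all strictly positive, or by attempting a Cholesky-type factorization in exact arithmetic---with the bit-sizes of all intermediate quantities bounded by standard determinant/subdeterminant estimates (Hadamard's inequality), so no blow-up occurs. Combining the two checks gives the claimed polynomial-time algorithm.

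Since the genuinely analytic content---namely that $\gxb=0$ together with $\Hxb\succ 0$ is both necessary and sufficient for strict local minimality of a cubic---has already been established in Corollary~\ref{POLY Cor: Strict Local Min}, there is essentially no obstacle remaining here. The only point requiring a modicum of care is the exact-arithmetic bit-complexity of the positive-definiteness test, but this is entirely classical; in particular, unlike the TOC of Theorem~\ref{POLY Thm: TOC}, no null-space computation or symbolic manipulation of $p_3$ is needed, so the argument is strictly simpler than that of Theorem~\ref{POLY Thm: Checking Min Poly Time}.
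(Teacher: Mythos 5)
Your proposal is correct and follows essentially the same route as the paper: reduce to Corollary~\ref{POLY Cor: Strict Local Min}, then verify $\gxb=0$ by evaluation and $\Hxb\succ 0$ via leading principal minors (or equivalent), both in polynomial time. The added remarks on bit-complexity are sound but not new in substance.
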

	\begin{proof}
		This follows from the characterization in Corollary~\ref{POLY Cor: Strict Local Min}. Checking the FONC is straightforward as before. As explained in Section~\ref{POLY Sec: Introduction}, to check that $\Hxb$ is positive definite, one can equivalently check that all $n$ leading principal minors of $\Hxb$ are positive. This procedure takes polynomial time since determinants can be computed in polynomial time.
	\end{proof}

	\subsection{Examples}\label{POLY SSec: Local Min Examples}
	
	We give a few illustrative examples regarding the application and context of Theorem \ref{POLY Thm: TOC}.
	
	\begin{figure}[h]
	\includegraphics[height=.28\textheight,keepaspectratio]{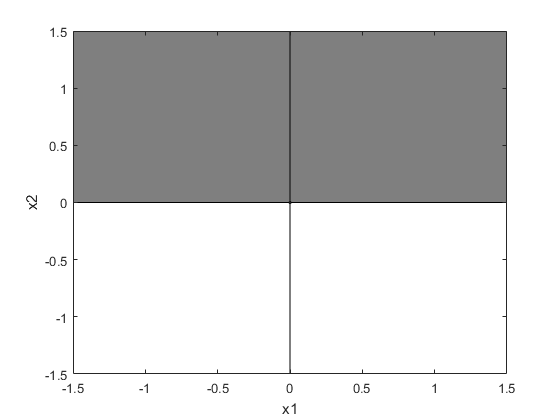}
	\includegraphics[height=.28\textheight,keepaspectratio]{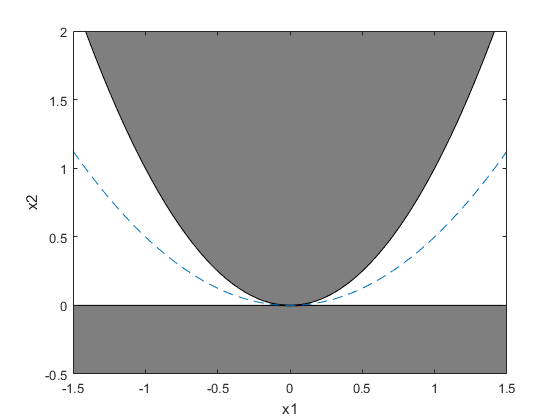}
	\caption{Contour plots of $x_1^2x_2$ (left) and $x_2^2 - x_1^2x_2$ (right) from Examples \ref{POLY Ex: local min} and \ref{POLY Ex: no local min}. The polynomials are zero on the black lines, positive on the gray regions, and negative on the white regions. The dashed line in the right-side figure denotes a descent parabola at the origin.}
	\label{POLY Fig: Local Min contours}
	\end{figure}
	
	\begin{example}\label{POLY Ex: local min}
		{\bf A cubic polynomial with local minima}
	
	Consider the polynomial $p(x_1,x_2) = x_1^2x_2$. By inspection (see Figure \ref{POLY Fig: Local Min contours}), one can see that points of the type $\{(x_1,x_2)\ |\ x_1 = 0, x_2 > 0\}$ are local minima of $p$, as $p$ is nonnegative when $x_2 > 0$, zero whenever $x_1 = 0$, and positive whenever $x_2 > 0$ and $x_1 \ne 0$. As a sanity check, we use Theorem~\ref{POLY Thm: TOC} to verify that the point $(0,1)$ is a local minimum of $p$ (the same reasoning applies to all other local minima).
	
	Through straightforward computation, we find
	
	$$\gx = \bvec 2x_1x_2 \\ x_1^2 \evec, \grad p_3(x) = \bvec 2x_1x_2 \\ x_1^2 \evec, \Hx = \bmat 2x_2 & 2x_1 \\ 2x_1 & 0 \emat.$$
	We can see that the FONC and SONC are satisfied at $(0,1)$.	The null space of $\Hess p(0,1)$ is spanned by $(0,1)$. We have
	$$\grad p_3\left(\alpha \bvec 0 \\ 1 \evec\right) = \bvec 2(0)(\alpha) \\ (0)^2 \evec = 0,$$
	which shows that the TOC is satisfied, verifying that $(0,1)$ is a local minimum of $p$.
	
	One can also verify that $\{(x_1,x_2)\ |\ x_1 = 0, x_2 > 0\}$ are the only local minima. Indeed, the critical points of $p$ are those where $x_1 = 0$, and the second-order points are those where $x_1 = 0$ and $x_2 \ge 0$. To see that $(0,0)$ is not a local minimum, observe that $(1,1) \in \nulls (\Hess p(0,0))$, but $\grad p_3(1,1) = (2,1) \ne 0$, and thus the TOC is violated.
	\end{example}
	
	\begin{example}\label{POLY Ex: no local min}
		{\bf A cubic polynomial with no local minima}
	
	We use Theorem~\ref{POLY Thm: TOC} to show that the polynomial $p(x_1,x_2) = x_2^2 - x_1^2x_2$ has no local minima. We have
	
	$$\gx = \bvec -2x_1x_2 \\ 2x_2 - x_1^2 \evec, \grad p_3(x) = \bvec -2x_1x_2 \\ -x_1^2 \evec, \Hx = \bmat -2x_2 & -2x_1 \\ -2x_1 & 2 \emat.$$
	Observe that $(0,0)$ is the only second-order point of $p$. The null space of $\Hess p(0,0)$ is spanned by $(1,0)$. We have
	
	$$\grad p_3\left(\alpha \bvec 1 \\ 0 \evec\right) = \bvec -2(\alpha)(0) \\ -(\alpha)^2 \evec = \bvec 0 \\ -\alpha^2 \evec \ne 0,$$
	which shows that the TOC is violated, and hence $(0,0)$ is not a local minimum. Note that the TONC is in fact satisfied at $(0,0)$, since $p_3(\alpha, 0) = 0$ for any scalar $\alpha$.
	
It is also interesting to observe that there are no descent directions for $p$ at $(0,0)$ (this is implied, e.g., by satisfaction of the TONC, along with the FONC and SONC). However, we can use the proof of Theorem \ref{POLY Thm: TOC} to compute a descent parabola, thereby more explicitly demonstrating that $(0,0)$ is not a local minimum. The column space of $\Hess p(0,0)$ is spanned by $(0,1)$. Then, following the proof of Theorem \ref{POLY Thm: TOC} with $z = (0,1)$ and $\hat{d} = (1,0)$, we have $z^T \Hess p(0,0) z = 2$ and $|\grad p_3(\hat{d})^T z| = 1$. The parabola prescribed is then the set $\{(x_1, x_2)\ |\ x_2 = \frac{1}{2} x_1^2\}$. Indeed, one can now verify that except at $(0,0)$, $p$ is negative on the entire parabola; see the dashed line in Figure \ref{POLY Fig: Local Min contours}.
	\end{example}
	
	\begin{example}\label{POLY Ex: TOC not necessary}
	{\bf A quartic polynomial with a local minimum that does not satisfy the TOC}
	
	
	We show in this example that for polynomials of degree higher than three, the TOC is not a necessary condition for local minimality. Consider the quartic polynomial given by $p(x_1,x_2) = 2x_1^4 + 2x_1^2x_2 + x_2^2$. The point $(0,0)$ is a local minimum, as $p(0,0) = 0$ and $p(x_1,x_2) = x_1^4 + (x_1^2 + x_2)^2$ is nonnegative. However, the Hessian of $p$ at $(0,0)$ is
	$$\Hess p(0,0) = \bmat 0 & 0 \\ 0 & 2 \emat,$$
	which has a null space spanned by $(1,0)$. We observe that $\grad p_3(x_1,x_2) = \bmat 4x_1x_2 \\ 2x_1^2 \emat$ does not vanish on this null space, as it evaluates, for example, to $(0,2)$ at $(1,0)$.
	\end{example}
	
	\section{On the Geometry of Local Minima of Cubic Polynomials}\label{POLY Sec: Geometry}
	
	We have shown that deciding local minimality of a given point for a cubic polynomial is a polynomial-time solvable problem. We now turn our attention to the remaining unresolved entries in Table~\ref{POLY Table: Complexity Existence} from Section~\ref{POLY Sec: Introduction}, which are on the problems of deciding whether a cubic polynomial has a second-order point, a local minimum, or a strict local minimum. In Sections~\ref{POLY Sec: Complexity} and \ref{POLY Sec: Finding Local Min}, we will show that these problem can all be reduced to semidefinite programs of tractable size. In the current section, we present a number of geometric results about local minima and second-order points of cubic polynomials which are used in those sections, but are possibly of independent interest. For the remainder of this chapter, we use the notation $SO_p$ to denote the set of second-order points of a polynomial $p$, $LM_p$ to denote the set of its local minima, and $\bar{S}$ to denote the closure of a set $S$.
	
	\subsection{Convexity of the Set of Local Minima}\label{POLY SSec: Local Min Convex}
	
	We begin by showing that for any cubic polynomial $p$, the set $LM_p$ is convex. We go through two lemmas; the first is a simple algebraic observation, and the second contains information about some critical points. Recall that the Hessian of a cubic polynomial $p$ written in the form of (\ref{POLY Eq: Cubic Poly Form}) is given by $\sum_{i=1}^n x_iH_i + Q$. Furthermore, its gradient is given by $\frac{1}{2}\sum_{i=1}^n x_iH_ix + Qx + b$, or equivalently a vector whose $i$-th entry is $x^TH_ix + e_i^TQx + b_i$.
	
	\begin{lem}\label{POLY Lem: Hessian switch}
		Let $H_1, \ldots, H_n \subseteq \Snn$ satisfy (\ref{POLY Eq: Valid Hessian}). Then for any two vectors $y, z \in \Rn$, $$\left(\sum_{i=1}^n y_iH_i\right)z = \left(\sum_{i=1}^n z_iH_i\right)y.$$
	\end{lem}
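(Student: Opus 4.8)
The plan is to reduce both sides to a common trilinear expression in the symmetric third-order tensor $(H_i)_{jk}$ and then use the full symmetry guaranteed by (\ref{POLY Eq: Valid Hessian}). Concretely, fix an index $k \in \{1,\ldots,n\}$ and compute the $k$-th entry of each side. First I would write
\[
\left[\left(\sum_{i=1}^n y_i H_i\right) z\right]_k = \sum_{i=1}^n \sum_{j=1}^n y_i (H_i)_{kj} z_j,
\]
and similarly $\left[\left(\sum_{i=1}^n z_i H_i\right) y\right]_k = \sum_{i=1}^n \sum_{j=1}^n z_i (H_i)_{kj} y_j$. Relabeling the dummy indices $i \leftrightarrow j$ in the second sum turns it into $\sum_{i=1}^n \sum_{j=1}^n y_i (H_j)_{ki} z_j$. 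So it suffices to check that $(H_i)_{kj} = (H_j)_{ki}$ for all $i,j,k$, which is exactly one of the equalities in (\ref{POLY Eq: Valid Hessian}) (using symmetry of each $H_i$ to match indices as needed, e.g. $(H_j)_{ki} = (H_j)_{ik}$).

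The key steps, in order: (1) expand the $k$-th component of the left-hand side as a double sum over the tensor entries; (2) do the same for the right-hand side; (3) swap the names of the two summation indices in one of the two expressions; (4) invoke the index-symmetry relation (\ref{POLY Eq: Valid Hessian}), together with the symmetry of the individual matrices $H_i \in \Snn$, to see the two double sums coincide entrywise; (5) conclude that the vectors agree since they agree in every coordinate.

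There is no real obstacle here — this is a routine index-chasing argument — but the one place to be careful is bookkeeping the symmetry conventions: (\ref{POLY Eq: Valid Hessian}) gives the cyclic-type identity $(H_i)_{jk} = (H_j)_{ik} = (H_k)_{ij}$, and each $H_i$ is itself symmetric in its two lower indices, so altogether the array $(H_i)_{jk}$ is fully symmetric under all permutations of $(i,j,k)$. Once that is stated cleanly, the identity is immediate. I would therefore open the proof by recording full symmetry of the tensor as the single substantive observation, then let the index relabeling finish it.
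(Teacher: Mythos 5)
Your proof is correct and follows essentially the same route as the paper's: compute the $k$-th coordinate of each side, relabel the dummy indices, and invoke the index symmetry from (\ref{POLY Eq: Valid Hessian}). Your added remark that the array $(H_i)_{jk}$ is fully symmetric under all permutations of $(i,j,k)$ is a nice way to package the bookkeeping, but it is the same underlying argument.
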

	\begin{proof}
		Observe that for any index $k \in \{1, \ldots, n\}$, we have
		\baeq
		\left(\left(\sum_{i=1}^n y_iH_i\right)z\right)_k &= \sum_{i=1}^n \sum_{j=1}^n (H_i)_{kj}y_iz_j\\
		&= \sum_{i=1}^n \sum_{j=1}^n (H_j)_{ki}y_iz_j\\
		&= \left(\left(\sum_{j=1}^n z_jH_j\right)y\right)_k,
		\eaeq
		where the second equality follows from (\ref{POLY Eq: Valid Hessian}).
	\end{proof}
	
	\begin{lem}\label{POLY Lem: Gradient Invariance}
		Let $\xbar \in \Rn$ be a local minimum of a cubic polynomial $p: \Rn \to \R$, and let $d~\in~\nulls(\Hxb)$. Then for any scalar $\alpha$, $\xbar +\alpha d$ is a critical point of $p$.
	\end{lem}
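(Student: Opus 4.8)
The plan is to derive an exact identity for the gradient of a cubic polynomial along an affine ray, and then observe that for a local minimum $\xbar$ and a Hessian null-space direction $d$ every term in that identity vanishes.

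First I would record the following algebraic fact: for any cubic polynomial $p:\Rn\to\R$ and any $u,v\in\Rn$,
\[
\grad p(u+v) \;=\; \grad p(u) + \Hess p(u)\,v + \grad p_3(v).
\]
To see this, parametrize $p$ as in (\ref{POLY Eq: Cubic Poly Form}), so that $\grad p(x) = \tfrac{1}{2}\sum_{i=1}^n x_iH_ix + Qx + b$ and $\Hess p(x) = \sum_{i=1}^n x_iH_i + Q$, and note that $\grad p_3(x) = \tfrac{1}{2}\sum_{i=1}^n x_iH_ix$ (it is the contribution of the cubic part alone). Expanding $\grad p(u+v)$ and grouping the terms that are purely in $u$, mixed, and purely in $v$: the purely-$u$ group is $\grad p(u)$, the purely-$v$ group is $\grad p_3(v)$, and the mixed group is $\tfrac{1}{2}\sum_i v_iH_iu + \tfrac{1}{2}\sum_i u_iH_iv + Qv$. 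Applying Lemma~\ref{POLY Lem: Hessian switch} to collapse $\tfrac{1}{2}\sum_i v_iH_iu + \tfrac{1}{2}\sum_i u_iH_iv$ into $\sum_i u_iH_iv$ turns the mixed group into $(\sum_i u_iH_i + Q)v = \Hess p(u)\,v$, which gives the claimed identity.

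Next I would set $u=\xbar$ and $v=\alpha d$. Since $\grad p_3$ is homogeneous of degree two, $\grad p_3(\alpha d)=\alpha^2\grad p_3(d)$, so
\[
\grad p(\xbar+\alpha d) \;=\; \gxb + \alpha\,\Hxb\,d + \alpha^2\,\grad p_3(d).
\]
Because $\xbar$ is a local minimum, the FONC gives $\gxb=0$; because $d\in\nulls(\Hxb)$, the middle term is zero; and because $\xbar$ is a local minimum of a \emph{cubic}, Theorem~\ref{POLY Thm: TOC} applies and its third condition (the TOC) gives $\grad p_3(d)=0$ for every $d\in\nulls(\Hxb)$, so the last term vanishes as well. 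Hence $\grad p(\xbar+\alpha d)=0$ for all $\alpha\in\R$, i.e., $\xbar+\alpha d$ is a critical point of $p$.

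The only real content is the identity in the first step, and the only subtlety there is the symmetry of the third-derivative tensor: it is precisely relation (\ref{POLY Eq: Valid Hessian}) (via Lemma~\ref{POLY Lem: Hessian switch}) that makes the linear-in-$\alpha$ coefficient come out as the symmetric object $\Hxb\,d$, while everything else is bookkeeping. An alternative route to the same identity is to differentiate the scalar expansion (\ref{POLY Eq: cubic taylor}) in the base point, or to view it as the gradient-level analogue of (\ref{POLY Eq: Taylor NC decomposition}); I would use whichever reads most cleanly alongside the surrounding material.
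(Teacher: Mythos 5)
Your proof is correct and follows essentially the same route as the paper's: both expand $\grad p(\xbar+\alpha d)$ via the canonical form (\ref{POLY Eq: Cubic Poly Form}), use Lemma~\ref{POLY Lem: Hessian switch} to identify the linear-in-$\alpha$ coefficient as $\Hxb d$, and then kill the three resulting terms by the FONC, the null-space hypothesis, and the TOC from Theorem~\ref{POLY Thm: TOC}. Stating the expansion as a standalone identity $\grad p(u+v)=\grad p(u)+\Hess p(u)v+\grad p_3(v)$ before specializing is a mild reorganization of the paper's inline computation, not a different argument.
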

	\begin{proof}
		Let $p$ be given in our canonical form as $\frac{1}{6}\sum_{i=1}^n x^Tx_iH_ix + \frac{1}{2}x^TQx + b^Tx$. We have
		
		\baeq
		\grad p(\xbar+ \alpha d) &= \left(\frac{1}{2} \sum_{i=1}^n \xbar_iH_i + \alpha d_iH_i\right)(\xbar+\alpha d) + Q(\xbar+\alpha d) + b\\
		&= \left(\frac{1}{2}\sum_{i=1}^n \xbar_iH_i\right)\xbar + Q\xbar + b\\
		&+ \frac{1}{2} \sum_{i=1}^n \alpha d_iH_i\xbar + \frac{1}{2}\sum_{i=1}^n \alpha \xbar_iH_id + \alpha Qd\\
		&+ \frac{\alpha^2}{2}\sum_{i=1}^{n} d_iH_id\\
		&=\grad p(\xbar) + \alpha \Hess p(\xbar)d + \alpha^2\grad p_3(d)\\
		&= 0 + 0 + 0 = 0,\eaeq
		where the third equality follows form Lemma \ref{POLY Lem: Hessian switch}, and the last follows from the FONC and TOC.
	\end{proof}
	
	\begin{theorem}\label{POLY Thm: WLM Convex}
		The set of local minima of any cubic polynomial is convex.
	\end{theorem}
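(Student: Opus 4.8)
The plan is as follows. Let $\xbar$ and $\ybar$ be two local minima of a cubic polynomial $p:\Rn\to\R$; I must show that every point of the segment joining them is a local minimum. Set $d\defeq\ybar-\xbar$ and $x_t\defeq\xbar+td$ for $t\in[0,1]$, and assume $\xbar\ne\ybar$ (otherwise there is nothing to prove). The strategy is to verify, for each $t\in[0,1]$, the three conditions FONC, SONC, and TOC of Theorem~\ref{POLY Thm: TOC} at $x_t$.

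The first step is to show $d\in\nulls(\Hxb)$. Restricting $p$ to the line through $\xbar$ and $\ybar$ yields a univariate polynomial of degree at most three that has a local minimum both at the parameter value corresponding to $\xbar$ and at the one corresponding to $\ybar$; since a univariate cubic that is not constant has at most one local minimum (Section~\ref{POLY SSec: Cubic Preliminaries}), this restriction is constant. Comparing with the expansion~(\ref{POLY Eq: cubic taylor}) with $v=d$, the vanishing of the coefficient of $\lambda^2$ gives $d^T\Hxb d=0$, which together with the SONC $\Hxb\succeq0$ forces $\Hxb d=0$.

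Next I would check the three conditions at $x_t$. The FONC is immediate from Lemma~\ref{POLY Lem: Gradient Invariance}: since $\xbar$ is a local minimum and $d\in\nulls(\Hxb)$, the point $\xbar+td$ is a critical point of $p$ for every $t$. For the SONC, recall that the Hessian of a cubic is an affine matrix, so $\Hess p(x_t)=(1-t)\Hxb+t\,\Hess p(\ybar)$; this is a convex combination of two psd matrices (psd by the SONC at $\xbar$ and at $\ybar$), hence psd for all $t\in[0,1]$. For the TOC, the crucial observation is the elementary fact that the null space of a convex combination of psd matrices equals the intersection of their null spaces: for $t\in(0,1)$, if $\Hess p(x_t)w=0$ then $(1-t)w^T\Hxb w+t\,w^T\Hess p(\ybar)w=0$ with both terms nonnegative, so $\Hxb w=0$ and $\Hess p(\ybar)w=0$; thus $\nulls(\Hess p(x_t))=\nulls(\Hxb)\cap\nulls(\Hess p(\ybar))\subseteq\nulls(\Hxb)$. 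Since $\grad p_3$ vanishes on $\nulls(\Hxb)$ by the TOC at $\xbar$, it vanishes on $\nulls(\Hess p(x_t))$ as well, so $x_t$ satisfies the TOC (the cases $t\in\{0,1\}$ being trivial). By Theorem~\ref{POLY Thm: TOC}, $x_t$ is a local minimum for every $t\in[0,1]$, which is exactly the convexity claim.

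I do not expect a serious obstacle here: the only mildly delicate point is verifying the TOC at the interior points $x_t$, and that reduces to the null-space identity for convex combinations of psd matrices; the FONC and SONC at $x_t$ follow directly from Lemma~\ref{POLY Lem: Gradient Invariance} and the affineness of the Hessian, and the reduction of everything to Theorem~\ref{POLY Thm: TOC} is what keeps the argument short.
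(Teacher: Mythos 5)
Your proof is correct and follows essentially the same route as the paper's: reduce to verifying FONC, SONC, and TOC at each interior point via Theorem~\ref{POLY Thm: TOC}, obtain $d\in\nulls(\Hxb)$ from the constancy of the univariate restriction, invoke Lemma~\ref{POLY Lem: Gradient Invariance} for FONC, use affineness of the Hessian for SONC, and use the null-space-of-a-convex-combination fact for TOC. The only difference is that you spell out the null-space identity rather than citing it, which is a cosmetic matter.
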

	\begin{proof}
		If for some cubic polynomial $p$, the set $LM_p$ of its local minima is empty or a singleton, the claim is trivially established. Otherwise, let $\xbar, \ybar \in LM_p$ with $\xbar \ne \ybar$. Consider any convex combination $z \defeq \xbar + \alpha(\ybar-\xbar)$, where $\alpha \in (0,1)$. We show that $z$ satisfies the FONC, SONC, and TOC, and therefore by Theorem~\ref{POLY Thm: TOC}, $z \in LM_p$.
		
		Note from (\ref{POLY Eq: cubic taylor}) that the restriction of $p$ to the line passing through $\xbar$ and $\ybar$ is
		$$p(\xbar + \alpha (\ybar - \xbar)) = p_3(\ybar - \xbar)\alpha^3 + \frac{1}{2}(\ybar - \xbar)^T \Hxb (\ybar - \xbar) \alpha^2 + \gxb^T(\ybar - \xbar)\alpha + p(\xbar).$$
		Since this univariate cubic polynomial has two local minima at $\alpha = 0$ and $\alpha = 1$, it must be constant. In particular, the coefficient of $\alpha^2$ must be zero, and because $\Hxb$ is psd, that implies $\ybar-\xbar \in \nulls(\Hxb)$.  Hence, by Lemma \ref{POLY Lem: Gradient Invariance}, the FONC holds at $z$. To show the SONC and TOC at $z$, note that because $\Hess p(x)$ is affine in $x$, $\Hess p(z)$ can be written as a convex combination of $\Hxb$ and $\Hess p(\ybar)$, both of which are psd. The SONC is then immediate. To see why the TOC holds, recall that the null space of the sum of two psd matrices is the intersection of the null spaces of the summand matrices. Thus $\nulls(\Hess p(z)) \subseteq \nulls(\Hxb)$, and the TOC is satisfied.
	\end{proof}
	
	As a demonstration of Theorem \ref{POLY Thm: WLM Convex}, Figure \ref{POLY Fig: Critical nonconvex} shows the critical points and the local minima of the cubic polynomial \beq\label{POLY Eq: Nonconvex CPs}x_1^3 + 3x_1^2x_2 + 3x_1x_2^2 + x_2^3 - 3x_1 - 3x_2.\eeq Note that the critical points form a nonconvex set, while the local minima constitute a convex subset of the critical points.
	
	\begin{figure}[h]
	\centering
	\includegraphics[height=.35\textheight,keepaspectratio]{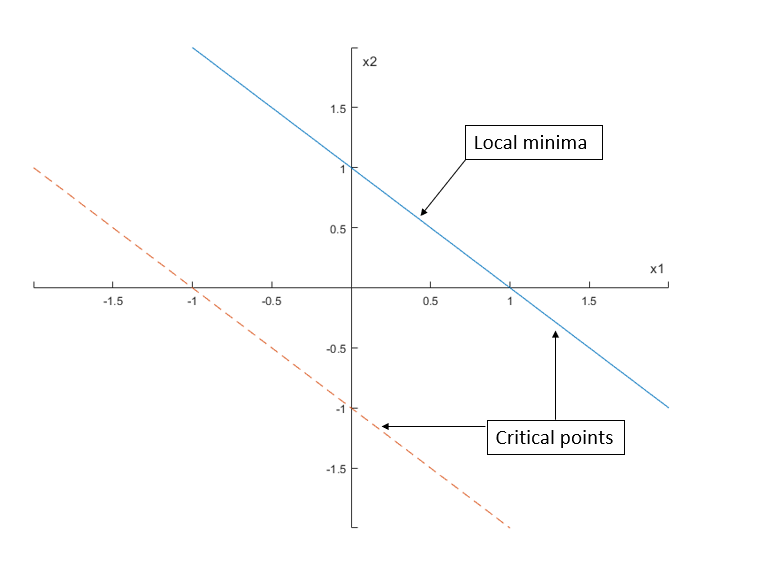}
	\caption{The critical points of the polynomial (\ref{POLY Eq: Nonconvex CPs}). One can verify that the set of critical points is $\{(x_1, x_2)\ |\ (x_1 + x_2)^2 = 1\}$, and that the set of local minima is $\{(x_1, x_2)\ |\ x_1 + x_2 = 1\}$. The points on the dashed line are local maxima.}
	\label{POLY Fig: Critical nonconvex}
	\end{figure}
	
	Unlike the above example, $LM_p$ (or even $\overline{LM_p}$ as $LM_p$ is in general not closed) may not be a polyhedral\footnote{Recall that a \emph{polyhedron} is a set defined by finitely many affine inequalities.} set for cubic polynomials. For instance, the polynomial \beq\label{POLY Eq: Circle LM}p(x_1,x_2,x_3,x_4) = -x_1x_3^2 + x_1x_4^2 + 2x_2x_3x_4 + x_3^2 + x_4^2,\eeq has $LM_p = \{x\in \R^4 \ |\ x_1^2+x_2^2 < 1, x_3 = x_4 = 0\}$ (see Figure \ref{POLY Fig: Local Min Spectrahedron}). This is in contrast to quadratic polynomials, whose local minima always form a polyhedral set. We show in Theorem \ref{POLY Thm: WLM Closure Spectrahedron}, however, that $\overline{LM_p}$ is always a spectrahedron\footnote{Recall that a \emph{spectrahedron} is a set of the type $S = \{x \in \Rn|A_0 + \sum_{i=1}^n x_iA_i \succeq 0\}$, where $A_0, \ldots A_n$ are symmetric matrices of some size $m \times m$~\cite{vinzant2014spectrahedron}.}. We first need the following lemma.
	
	\begin{figure}[H]
	\centering
	\includegraphics[height=.35\textheight,keepaspectratio]{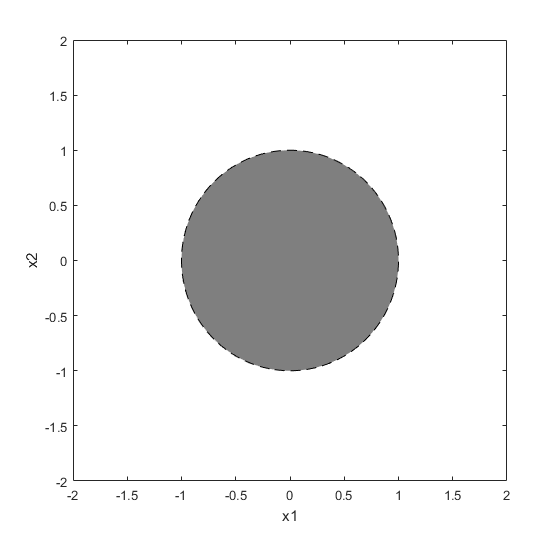}
	\caption{The projection of the set of local minima of the polynomial in (\ref{POLY Eq: Circle LM}) onto the $x_1$ and $x_2$ variables. This example shows that $\overline{LM_p}$ is not always a polyhedral set.}
	\label{POLY Fig: Local Min Spectrahedron}
	\end{figure}
	
	\begin{lem}\label{POLY Lem: constant null}
			For any cubic polynomial $p: \Rn \to \R$, suppose $\xbar \in \Rn$ and $\ybar \in \Rn$ satisfy
			\begin{itemize}
				\item $\xbar \in SO_p$,
				\item $\Hess p(\ybar) \succeq 0$,
				\item $p(\xbar) = p(\ybar)$.
			\end{itemize}
			 Then $p(\xbar + \alpha (\ybar - \xbar)) = p(\xbar)$ for any scalar $\alpha$, and $\ybar - \xbar \in \nulls (\Hxb)$.
		\end{lem}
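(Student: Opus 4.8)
The plan is to reduce the statement to the restriction of $p$ to the line through $\xbar$ and $\ybar$, and to read off a single scalar inequality from each of the three hypotheses.

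Set $v \defeq \ybar - \xbar$. Since $\xbar \in SO_p$ we have $\grad p(\xbar) = 0$, so by (\ref{POLY Eq: cubic taylor}) the univariate cubic $q(\alpha) \defeq p(\xbar + \alpha v)$ is
\[
q(\alpha) = p_3(v)\,\alpha^3 + \tfrac{1}{2} v^T \Hxb v\,\alpha^2 + p(\xbar).
\]
The hypothesis $p(\xbar) = p(\ybar)$ says exactly $q(1) = q(0)$, which gives the identity $p_3(v) = -\tfrac12 v^T \Hxb v$; I will call this relation (I). From $\Hxb \succeq 0$ we also get $v^T \Hxb v \ge 0$; call this (II).

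Next I would bring in $\Hess p(\ybar) \succeq 0$. Because $p$ is cubic, $\Hess p$ is affine, and the only part of $p$ contributing the $x$-dependent term of the Hessian is $p_3$; hence $\Hess p(\ybar) - \Hxb = \Hess p_3(\ybar) - \Hess p_3(\xbar) = \Hess p_3(v)$, where we used that $\Hess p_3$ is linear. Therefore $\Hess p(\ybar) = \Hxb + \Hess p_3(v) \succeq 0$, and evaluating this quadratic form at $v$ yields $v^T \Hxb v + v^T \Hess p_3(v)\, v \ge 0$. The key algebraic point is that $v^T \Hess p_3(v)\, v = 6\, p_3(v)$: differentiating Euler's identity $x^T \grad p_3(x) = 3 p_3(x)$ gives $\Hess p_3(x)\, x = 2\,\grad p_3(x)$, so $x^T \Hess p_3(x)\, x = 2\, x^T \grad p_3(x) = 6\, p_3(x)$. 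Substituting (I) into the inequality gives $v^T\Hxb v + 6 p_3(v) = v^T\Hxb v - 3 v^T\Hxb v = -2 v^T\Hxb v \ge 0$, i.e. $v^T\Hxb v \le 0$. Together with (II) this forces $v^T\Hxb v = 0$, and since $\Hxb \succeq 0$ this is equivalent to $\Hxb v = 0$, i.e. $\ybar - \xbar \in \nulls(\Hxb)$. Finally, plugging $v^T\Hxb v = 0$ back into (I) gives $p_3(v) = 0$, so $q(\alpha) \equiv p(\xbar)$; that is, $p(\xbar + \alpha(\ybar-\xbar)) = p(\xbar)$ for every scalar $\alpha$.

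This argument is essentially just bookkeeping of three scalar inequalities, so I do not expect a genuine obstacle. The only steps requiring a bit of care are the two ``structural'' identities for cubics: the affineness of $\Hess p$ together with $\Hess p(\ybar) - \Hxb = \Hess p_3(\ybar - \xbar)$, and the scaling identity $v^T \Hess p_3(v) v = 6 p_3(v)$; both are elementary consequences of the canonical form (\ref{POLY Eq: Cubic Poly Form}) together with the Hessian-symmetry relations (\ref{POLY Eq: Valid Hessian}), or, as indicated above, of Euler's theorem for homogeneous functions. One should also note that (\ref{POLY Eq: cubic taylor}) already guarantees that the displayed formula for $q(\alpha)$ is an exact equality (not merely a Taylor approximation), so no estimation is involved.
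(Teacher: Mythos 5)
Your proof is correct and follows essentially the same route as the paper's: both restrict $p$ to the line through $\xbar$ and $\ybar$, then combine $q(0)=q(1)$, $q''(0)=v^T\Hxb v\ge 0$, and $q''(1)=v^T\Hess p(\ybar)v\ge 0$ to force the univariate cubic $q$ to be constant. The only difference is presentational — where the paper invokes (without proof) a qualitative fact about univariate cubics with a second-order point, you compute $q''(1)$ explicitly via $\Hess p(\ybar)=\Hxb+\Hess p_3(v)$ and Euler's identity $v^T\Hess p_3(v)v=6p_3(v)$, which gives a more self-contained derivation of the same contradiction.
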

		
		 Note in particular that this lemma applies if $\ybar$ is simply a second-order point, since $p$ must take the same value at any two second-order points. This is because any non-constant univariate cubic polynomial can have at most one second-order point.
	
		\begin{proof}
			Consider the Taylor expansion of $p$ around $\xbar$ in the direction $\ybar - \xbar$ (see (\ref{POLY Eq: cubic taylor})):
			$$q(\alpha) \defeq p(\xbar + \alpha (\ybar - \xbar)) = p_3(\ybar - \xbar)\alpha^3 + \frac{1}{2}(\ybar - \xbar)^T \Hxb (\ybar - \xbar) \alpha^2 + \gxb^T(\ybar - \xbar)\alpha + p(\xbar).$$
			Note that $q$ is a univariate cubic polynomial which has a second-order point at $\alpha = 0$. It is straightforward to see that if a univariate cubic polynomial is not constant and has a second-order point, then any other point which takes the same function value as the second-order point must have a negative second derivative. As this is not the case for $q$ (in view of $\alpha = 0$ and $\alpha = 1$), $q$ must be constant, i.e., $p(\xbar + \alpha (\ybar - \xbar)) = p(\xbar)$ for any $\alpha$. Now observe that for $p(\xbar + \alpha (\ybar - \xbar))$ to be constant, we must have $(\ybar - \xbar)^T \Hxb (\ybar - \xbar) = 0$. As $\Hxb \succeq 0$, we have $\ybar - \xbar \in \nulls (\Hxb)$.
		\end{proof}
	
	\begin{theorem}\label{POLY Thm: WLM Closure Spectrahedron}
		For a cubic polynomial $p:\Rn \to \R, \overline{LM_p}$ is a spectrahedron.
	\end{theorem}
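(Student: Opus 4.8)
The plan is to reduce $\overline{LM_p}$ to the intersection of an affine subspace with the spectrahedron $\{x\in\Rn : \Hess p(x)\succeq 0\}$ (the latter being a spectrahedron because $\Hess p$ is affine in $x$). If $LM_p=\emptyset$ then $\overline{LM_p}=\emptyset$, which is a (trivial) spectrahedron, so I would assume $LM_p\neq\emptyset$ and fix a reference local minimum $\xbar$. Setting $N\defeq\nulls(\Hxb)$, define
$$S\defeq(\xbar+N)\cap\{x\in\Rn : \Hess p(x)\succeq 0\}.$$
Writing the affine condition ``$x-\xbar\in N$'' as a pair of opposing linear inequalities on the diagonal of a matrix, $\xbar+N$ is itself a spectrahedron; since a block-diagonal stacking of spectrahedral conditions is again spectrahedral, $S$ is a spectrahedron. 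The goal is then to prove $\overline{LM_p}=S$.

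First I would establish $LM_p\subseteq S$, which gives $\overline{LM_p}\subseteq\overline{S}=S$ since $S$ is closed. Given $\ybar\in LM_p$, both $\xbar$ and $\ybar$ are second-order points of $p$, and any two second-order points of a cubic polynomial take the same value (a non-constant univariate cubic has at most one second-order point, so the restriction of $p$ to the line through $\xbar$ and $\ybar$ is constant). Thus $p(\xbar)=p(\ybar)$, and Lemma~\ref{POLY Lem: constant null} yields $\ybar-\xbar\in N$; together with the SONC $\Hess p(\ybar)\succeq 0$ this shows $\ybar\in S$.

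The harder inclusion is $S\subseteq\overline{LM_p}$, and the key realization is that a point $\ybar\in S$ need not itself be a local minimum (the TOC can fail at $\ybar$, since $\nulls(\Hess p(\ybar))$ may be strictly larger than $N$), but every point strictly between $\xbar$ and $\ybar$ is one. Writing $\ybar=\xbar+d$ with $d\in N$, I would consider $z_t\defeq\xbar+td=(1-t)\xbar+t\ybar$ for $t\in(0,1)$ and verify the three conditions of Theorem~\ref{POLY Thm: TOC}: the FONC holds at $z_t$ by Lemma~\ref{POLY Lem: Gradient Invariance} applied to $\xbar$ and $d\in N$; the SONC holds because $\Hess p(z_t)=(1-t)\Hxb+t\,\Hess p(\ybar)$ is a convex combination of psd matrices; and the TOC holds because for $t\in(0,1)$, $v^T\Hess p(z_t)v=0$ forces $v^T\Hxb v=0$ and hence $v\in N$, so $\nulls(\Hess p(z_t))\subseteq N$, on which $\grad p_3$ vanishes by the TOC at $\xbar$. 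Hence $z_t\in LM_p$ for every $t\in(0,1)$, and letting $t\to 1$ gives $\ybar\in\overline{LM_p}$. Combining the two inclusions, $\overline{LM_p}=S$ is a spectrahedron.

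I expect the main obstacle to be conceptual rather than computational: one must avoid trying to prove the (false) statement $S=LM_p$ --- $LM_p$ is in general not closed, as the example in (\ref{POLY Eq: Circle LM}) shows --- and instead recognize that $S$ is precisely what one obtains by ``closing up'' $LM_p$ along the flat directions $N$ of the Hessian at a single reference local minimum. Once the correct set $S$ and the correct approximating sequence $z_t$ are identified, the remaining steps (expressing an affine subspace as a spectrahedron, the fact that the null space of a strictly convex combination of psd matrices is the intersection of the null spaces, and checking FONC/SONC/TOC) are routine given Theorem~\ref{POLY Thm: TOC} and the lemmas already proved.
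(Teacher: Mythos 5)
Your proof is correct and follows essentially the same route as the paper's: you fix a reference local minimum $\xbar$, define the candidate spectrahedron as the set of points $y$ with $\Hess p(y)\succeq 0$ and $y-\xbar\in\nulls(\Hxb)$ (the paper writes this second condition as the LMI constraint $\Hxb(x-\xbar)=0$, which is the same thing), prove $LM_p\subseteq S$ via Lemma~\ref{POLY Lem: constant null}, and prove $S\subseteq\overline{LM_p}$ by checking the FONC/SONC/TOC at the intermediate points $z_t=(1-t)\xbar+t\ybar$ for $t\in(0,1)$ using Lemma~\ref{POLY Lem: Gradient Invariance} and the fact that the null space of a strict convex combination of psd matrices lies in $\nulls(\Hxb)$. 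The only cosmetic difference is how the affine condition is folded into the spectrahedral description, which has no bearing on the argument.
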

	\begin{proof}
	    If $LM_p$ is empty, the claim is trivial. Otherwise, let $\xbar \in LM_p$. We show that $\overline{LM_p}$ is given by the spectrahedron
	    \beq\label{POLY Eq: WLM Closure Spectrahedron}
	    M \defeq \{x \in \Rn\ |\ \Hx \succeq 0, \Hxb(x - \xbar) =0\}.
	    \eeq
	    First consider any $\ybar \in LM_p$. From the SONC we know that $\Hess p(\ybar) \succeq 0$ and from Lemma \ref{POLY Lem: constant null}, we know that $\ybar - \xbar \in \nulls(\Hxb)$. Thus $\ybar \in M$. Since $M$ is closed, we get that $\overline{LM_p} \subseteq M$.
	    
	    Now consider any $\ybar \in M$. By the definition of $M$, $\ybar$ satisfies the SONC, and by Lemma \ref{POLY Lem: Gradient Invariance}, it also satisfies the FONC. Since for any scalar $\alpha \in (0,1)$, $\Hess p(\xbar + \alpha (\ybar - \xbar))$ is a convex combination of the two psd matrices $\Hxb$ and $\Hess p(\ybar)$, $\nulls(\Hess p(\xbar + \alpha (\ybar - \xbar))) \subseteq \Hxb$ and thus $\xbar + \alpha (\ybar - \xbar)$ satisfies the TOC (since $\xbar$ does). Thus $\ybar$ can be written as the limit of local minima of $p$ (e.g. $\{\xbar + \alpha (\ybar - \xbar)\}$ as $\alpha \to 1$).
	\end{proof}

	\begin{remark}
	We will soon show that for a cubic polynomial $p$, if $LM_p$ is nonempty, then $\overline{LM_p} = SO_p$ (see Theorem \ref{POLY Thm: Closure}). In Section \ref{POLY Sec: Finding Local Min}, we will give other representations of $SO_p$, which in contrast to the representation in (\ref{POLY Eq: WLM Closure Spectrahedron}), do not rely on access to or even existence of a local minimum.
	\end{remark}

	\subsection{Local Minima and Solutions to a ``Convex'' Problem}\label{POLY SSec: convex pop}

	In Section \ref{POLY Sec: Finding Local Min}, we present an SDP-based approach for finding local minima of cubic polynomials. (We note again that the SDP representation in (\ref{POLY Eq: WLM Closure Spectrahedron}) is useless for this purpose as it already assumes access to a local minimum.) Many common approaches for computing local minima of twice-differentiable functions involve first finding critical points of the function, and then checking whether they satisfy second-order conditions. However, as discussed in the introduction and in Section~\ref{POLY Sec: NP-hardness results}, such approaches are unlikely to be effective for cubic polynomials as critical points of these functions are in fact NP-hard to find (see Theorem~\ref{POLY Thm: Critical point cubic NP-hard}). Interestingly, however, we show in Section~\ref{POLY Sec: Finding Local Min} that by bypassing the search for critical points, one can directly find second-order points and local minima of cubic polynomials by solving semidefinite programs of tractable size. The key to our approach is to relate the problem of finding a local minimum of a cubic polynomial $p$ to the following optimization problem:
	\begin{equation}\label{POLY Eq: convex pop}
    \begin{aligned}
	& \underset{x \in \Rn}{\inf}
	& & p(x) \\
	& \text{subject to}
	&& \Hx \succeq 0.\\
	\eaeql
	The connection between solutions of (\ref{POLY Eq: convex pop}) and local minima of $p$ is established by Theorem \ref{POLY Thm: Closure} below. The feasible set of (\ref{POLY Eq: convex pop}) has interesting geometric properties (see, e.g., Corollary \ref{POLY Thm: Cubic Spectrahedron}) and will be referred to with the following terminology in the remainder of the chapter.
	
	\begin{defn}\label{POLY Def: ConvR}
		The \emph{convexity region} of a polynomial $p: \Rn \to \R$ is the set $$CR_p \defeq \{x \in \Rn\ |\ \Hx \succeq 0\}.$$
	\end{defn}
	Observe that for any cubic polynomial, its convexity region is a spectrahedron,
	and thus a convex set. As $p$ is a convex function when restricted to its convexity region, one can consider (\ref{POLY Eq: convex pop}) to be a convex problem in spirit. 
	
	\begin{theorem}\label{POLY Thm: Closure}
        Let $p$ be a cubic polynomial with a second-order point. Then the following sets are equivalent:
        \renewcommand{\labelenumi}{(\roman{enumi})}
		\begin{enumerate}
			\item $SO_p$
			\item Minima of (\ref{POLY Eq: convex pop}).
		\end{enumerate}
		Furthermore, if $p$ has a local minimum, then these two sets are equivalent to:
		\renewcommand{\labelenumi}{(\roman{enumi})}
		\begin{enumerate}\addtocounter{enumi}{2}
			\item $\overline{LM_p}$.
		\end{enumerate}
	\end{theorem}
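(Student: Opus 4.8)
The plan is to establish the two inclusions between $SO_p$ and the solution set of (\ref{POLY Eq: convex pop}) separately, relying on two facts: the feasible set $CR_p$ of (\ref{POLY Eq: convex pop}) is convex and $p$ is a convex function on it (as noted just before the theorem), and Lemma~\ref{POLY Lem: constant null}. For the inclusion $SO_p \subseteq \{\text{minima of (\ref{POLY Eq: convex pop})}\}$, I would fix $\xbar \in SO_p$; then $\xbar \in CR_p$ and $\grad p(\xbar)=0$, so by convexity of $p$ on $CR_p$ we get $p(y) \ge p(\xbar) + \grad p(\xbar)^T(y - \xbar) = p(\xbar)$ for every feasible $y$, making $\xbar$ a global minimizer of (\ref{POLY Eq: convex pop}). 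In particular the infimum in (\ref{POLY Eq: convex pop}) is attained (the hypothesis guarantees $SO_p \ne \emptyset$), and every point of $SO_p$ achieves the common optimal value.

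The reverse inclusion---that every minimizer $\xbar$ of (\ref{POLY Eq: convex pop}) lies in $SO_p$---is the heart of the argument. Fix such an $\xbar$ and any $x^* \in SO_p$; by the first inclusion $p(\xbar)=p(x^*)$. Lemma~\ref{POLY Lem: constant null}, applied with $x^*$ and $\xbar$ in the roles of $\xbar$ and $\ybar$ there (legitimate since $x^* \in SO_p$, $\Hess p(\xbar)\succeq 0$, and $p(x^*)=p(\xbar)$), shows that $p$ is constant on the line through $x^*$ and $\xbar$ and that $d \defeq \xbar - x^* \in \nulls(\Hess p(x^*))$. Constancy of $p$ along this line forces $d^T\Hess p(x^* + \alpha d)d = 0$ for every $\alpha$; for $\alpha \in [0,1]$, $\Hess p(x^* + \alpha d)$ is a convex combination of the psd matrices $\Hess p(x^*)$ and $\Hess p(\xbar)$ (the Hessian being affine in $x$), hence psd, so $\Hess p(x^* + \alpha d)d = 0$, and at $\alpha = 1$ this gives $\Hess p(\xbar)d = 0$. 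I would then use the identity $\grad p(x + \alpha d) = \grad p(x) + \alpha\Hess p(x)d + \alpha^2 \grad p_3(d)$, valid for any cubic $p$ and any $x,d,\alpha$ (this is precisely the computation carried out in the proof of Lemma~\ref{POLY Lem: Gradient Invariance}): with $(x,\alpha)=(x^*,1)$ it gives $\grad p(\xbar) = \grad p_3(d)$ (using $\grad p(x^*)=0$ and $\Hess p(x^*)d=0$), while with $(x,\alpha)=(\xbar,-1)$ it gives $0 = \grad p(\xbar) + \grad p_3(d)$ (using $\grad p(x^*)=0$ and $\Hess p(\xbar)d=0$). Adding the two relations yields $2\grad p(\xbar)=0$, so $\grad p(\xbar)=0$, and together with $\Hess p(\xbar)\succeq 0$ we conclude $\xbar \in SO_p$.

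I expect this last step to be the main obstacle. A priori a minimizer of the convex problem (\ref{POLY Eq: convex pop}) satisfies only the first-order stationarity condition of constrained optimization ($-\grad p(\xbar)$ lying in the normal cone of $CR_p$ at $\xbar$), not the much stronger $\grad p(\xbar)=0$ required of a second-order point; it is the existence of a point in $SO_p$, funnelled through Lemma~\ref{POLY Lem: constant null} and the constancy of $p$ on a segment inside the convexity region, that upgrades the KKT condition to $\grad p(\xbar)=0$. (If $p$ has no second-order point, this can genuinely fail.)

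Finally, under the extra hypothesis that $p$ has a local minimum, I would prove $SO_p = \overline{LM_p}$. One inclusion is immediate: every local minimum satisfies the FONC and SONC, so $LM_p \subseteq SO_p$, and $SO_p$ is closed (it is cut out by $\grad p(x)=0$ and $\Hess p(x)\succeq 0$), hence $\overline{LM_p} \subseteq SO_p$. For the other inclusion, fix $\tilde x \in LM_p$ and $\xbar \in SO_p$. Since $\tilde x \in LM_p \subseteq SO_p$, the equivalence (i)$=$(ii) just proved makes both $\tilde x$ and $\xbar$ minimizers of (\ref{POLY Eq: convex pop}), so $p(\tilde x)=p(\xbar)$; Lemma~\ref{POLY Lem: constant null} (with $\tilde x$ and $\xbar$ in the roles of $\xbar$ and $\ybar$) then yields $\Hess p(\tilde x)(\xbar - \tilde x)=0$. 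Combined with $\Hess p(\xbar)\succeq 0$, this is exactly the defining condition of the spectrahedron that Theorem~\ref{POLY Thm: WLM Closure Spectrahedron} identifies with $\overline{LM_p}$, so $\xbar \in \overline{LM_p}$, which finishes the proof.
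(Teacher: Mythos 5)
Your proof is correct, and the key inclusion $(ii) \subseteq (i)$ is established by a genuinely different route from the paper. The paper proceeds by contradiction: assuming the minimizer $\ybar$ of (\ref{POLY Eq: convex pop}) has $\grad p(\ybar) = \grad p_3(d) \ne 0$, it decomposes $\Hess p(\ybar) = \Hess p(\xbar) + \Hess p_3(d)$ and shows that the quadratic form $\bigl(d - \alpha \grad p_3(d)\bigr)^T \Hess p(\ybar) \bigl(d - \alpha \grad p_3(d)\bigr)$ is negative for small $\alpha > 0$, contradicting feasibility. You instead derive the extra fact $\Hess p(\xbar) d = 0$ (from constancy of $p$ along the segment between the known second-order point and the minimizer, plus positive semidefiniteness of the Hessian on that segment), and then apply the gradient identity $\grad p(x + \alpha d) = \grad p(x) + \alpha \Hess p(x) d + \alpha^2 \grad p_3(d)$ once forward from the second-order point and once backward from the minimizer, which pins down $\grad p(\xbar) = 0$ directly with no contradiction and no quadratic-form estimate; this is arguably cleaner. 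For $(i) \subseteq (ii)$ you use the global first-order convexity inequality for $p$ on $CR_p$, while the paper restricts $p$ to a line and observes that a univariate cubic's second-order point minimizes it over its own convexity region --- both arguments are essentially the same fact packaged differently. For $SO_p \subseteq \overline{LM_p}$ you invoke Theorem~\ref{POLY Thm: WLM Closure Spectrahedron} directly (legitimate, since that theorem precedes this one and its proof does not rely on it), whereas the paper re-derives the segment-of-local-minima argument in place; the paper's version is more self-contained but yours is shorter and correctly reuses the already-established spectrahedral description of $\overline{LM_p}$.
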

	\begin{proof}
	    $(i) \subseteq (ii)$.\\
		Let $\ybar \in SO_p$ and $\xbar$ be any feasible point to (\ref{POLY Eq: convex pop}). If we consider the univariate cubic polynomial $q(\alpha) \defeq p(\xbar + \alpha(\ybar - \bar{x}))$, i.e., the restriction of $p$ to the line passing through $\xbar$ and $\ybar$, we can see that $\alpha = 1$ is a second-order point of $q$. Note that if any univariate cubic polynomial has a second-order point, then that second-order point is a minimum of it over its convexity region. In particular, because $\xbar$ is feasible to (\ref{POLY Eq: convex pop}) and thus $\alpha = 0$ is in the convexity region of $q$, we have $p(\ybar) = q(1) \le q(0) = p(\xbar)$. As $\ybar$ is feasible to (\ref{POLY Eq: convex pop}) and has objective value no higher than any other feasible point, it must be optimal to (\ref{POLY Eq: convex pop}).
	
	    $(ii) \subseteq (i)$\\
		Let $\ybar$ be a minimum of (\ref{POLY Eq: convex pop}) (we know that such a point exists because we have shown $SO_p$ is a subset of the minima of (\ref{POLY Eq: convex pop}), and $SO_p$ is nonempty by assumption). Let $\xbar \in SO_p$ and $d \defeq \ybar - \xbar$. Observe that $p(\ybar) = p(\xbar)$, and so by Lemma \ref{POLY Lem: constant null}, we must have $d \in \nulls(\Hxb)$. It follows that $\grad p(\ybar) = \grad p_3(d)$ (cf. the proof of Lemma \ref{POLY Lem: Gradient Invariance}). Now suppose for the sake of contradiction that $\ybar$ is not a second-order point. Since $\ybar$ is feasible to (\ref{POLY Eq: convex pop}), we must have $\grad p(\ybar) = \grad p_3(d) \ne 0$. As $p(\xbar) = p(\xbar + \alpha d)$ for any scalar $\alpha$ due to Lemma \ref{POLY Lem: constant null}, we must have $p_3(d) = \frac{1}{6} d^T \Hess p_3(d)d = 0$ (see (\ref{POLY Eq: cubic taylor})). Thus we can write
		\begin{align*}
		\big(d - \alpha \gp3d\big)^T\Hess p(\ybar)\big(d-\alpha \gp3d\big) =& \big(d - \alpha \gp3d\big)^T\big(\Hxb + \Hess p_3(d)\big)\big(d-\alpha \gp3d\big)\\
		=& \alpha^2 \gp3d^T\Hxb \gp3d - 2 \alpha \gp3d^T \Hess p_3(d)^Td\\
		&+ \alpha^2 \gp3d^T\Hess p_3(d)\gp3d\\
		=& \alpha^2 \left(\gp3d^T\Hxb \gp3d + \gp3d^T\Hess p_3(d)^T\gp3d\right)\\
		&- 4\alpha \gp3d^T\gp3d,
		\end{align*}
		where the last equality follows from that $\grad p_3(d) = \frac{1}{2} \Hess p_3(d)^Td$ due to Euler's theorem for homogeneous functions. Note that the right-hand side of the above expression is negative for sufficiently small $\alpha > 0$, and so $\Hess p(\ybar)$ is not psd, which contradicts feasibility of $\ybar$ to (\ref{POLY Eq: convex pop}).
		\\
		
		For the second claim of the theorem, suppose that $p$ has a local minimum. The following arguments will show $(i) = (ii) = (iii).$
		
		$(iii) \subseteq (i)$\\
		Clearly any local minimum of $p$ is a second-order point. Since the gradient and the Hessian of $p$ are continuous in $x$ and as the cone of psd matrices is closed, the limit of any convergent sequence of second-order points is a second-order point.
		

		$(ii) \subseteq (iii)$.\\
		Let $\ybar$ be any minimum of (\ref{POLY Eq: convex pop}). Consider any local minimum $\xbar$ of $p$ and let $z_\alpha \defeq \xbar + \alpha(\ybar-\xbar)$. As both $\Hess p(\ybar)$ and $\Hess p(\xbar)$ are psd, any point $z_\alpha$ with $\alpha \in [0,1)$ satisfies the SONC and TOC, by the same arguments as in the proof of Theorem \ref{POLY Thm: WLM Convex}.
		
		Now note that since $\xbar$ is a second-order point, it is also a minimum of (\ref{POLY Eq: convex pop}) (as $(i) \subseteq (ii)$) and thus $p(\ybar) = p(\xbar)$. From Lemma \ref{POLY Lem: constant null}, we then have $\ybar - \xbar \in \nulls(\Hxb)$, and so from Lemma \ref{POLY Lem: Gradient Invariance}, $z_\alpha$ satisfies the FONC for any $\alpha$. Thus, in view of Theorem \ref{POLY Thm: TOC}, for any $\alpha \in [0,1), z_\alpha$ is a local minimum of $p$. Therefore $\ybar$ can be written as the limit of a sequence of local minima (i.e., $\{z_\alpha\}$ as $\alpha \to 1$), and hence $\ybar \in \overline{LM_p}$.
	\end{proof}
	
	\begin{remark}\label{POLY Rem: SOP Spectrahedron}
	Note that as a consequence of Theorems \ref{POLY Thm: WLM Closure Spectrahedron}	and \ref{POLY Thm: Closure}, if a cubic polynomial $p$ has a local minimum, then $SO_p$ is a spectrahedron. In fact, $SO_p$ is a spectrahedron for \emph{any} cubic polynomial $p$; see Theorem~\ref{POLY Thm: solution recovery sdp}. In that theorem, we will give a more useful spectrahedral representation of $SO_p$ which does not rely on knowledge of a local minimum.
	\end{remark}

	\begin{cor}\label{POLY Cor: optval convex pop}
	Let $p$ be a cubic polynomial with a second-order point. Then the optimal value of (\ref{POLY Eq: convex pop}) is the value that $p$ takes at any of its second-order points (and in particular, at any of its local minima if they exist).
	\end{cor}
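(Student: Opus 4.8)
The plan is to derive this as an almost immediate consequence of Theorem~\ref{POLY Thm: Closure}. Since $p$ is assumed to have a second-order point, the equivalence $(i) = (ii)$ in that theorem tells us that $SO_p$ coincides exactly with the set of minima of (\ref{POLY Eq: convex pop}); in particular, because $SO_p$ is nonempty, the infimum in (\ref{POLY Eq: convex pop}) is attained, and its optimal value equals $p(\ybar)$ for any $\ybar \in SO_p$. The one remaining thing to check is that this value does not depend on which second-order point we evaluate at.

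To see that $p$ takes the same value at every point of $SO_p$, I would fix two second-order points $\xbar, \ybar \in SO_p$ and restrict $p$ to the line through them, i.e., consider the univariate cubic $q(\alpha) \defeq p(\xbar + \alpha(\ybar - \xbar))$ expanded as in (\ref{POLY Eq: cubic taylor}). Both $\alpha = 0$ and $\alpha = 1$ are then second-order points of $q$, and since a non-constant univariate cubic polynomial can have at most one second-order point, $q$ must be constant; hence $p(\xbar) = q(0) = q(1) = p(\ybar)$. This is precisely the observation recorded in the remark following Lemma~\ref{POLY Lem: constant null}, so one could also simply cite it. Consequently the quantity $p(\ybar)$ for $\ybar \in SO_p$ is well defined, and by the previous paragraph it equals the optimal value of (\ref{POLY Eq: convex pop}).

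For the parenthetical statement about local minima: if $p$ has a local minimum $\xbar$, then $\xbar$ satisfies the FONC and SONC and is therefore in particular a second-order point, so $p(\xbar)$ equals the common value above and hence the optimal value of (\ref{POLY Eq: convex pop}). (One could alternatively invoke part $(iii)$ of Theorem~\ref{POLY Thm: Closure}: $\xbar \in LM_p \subseteq \overline{LM_p} = SO_p$.)

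I do not expect a genuine obstacle here. The only mild subtleties are (a) making sure that attainment of the infimum is legitimately invoked — this is built into Theorem~\ref{POLY Thm: Closure}, which identifies $SO_p$ with the set of \emph{minima} of (\ref{POLY Eq: convex pop}) rather than merely relating it to the infimum — and (b) confirming that all second-order points share a common objective value, for which the one-line line-restriction argument above (or the remark after Lemma~\ref{POLY Lem: constant null}) suffices.
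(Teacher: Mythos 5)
Your proof is correct and takes the same route as the paper, which simply cites the equivalence of $(i)$ and $(ii)$ in Theorem~\ref{POLY Thm: Closure}. Your added verification that all second-order points share a common value (via the line-restriction argument, or equivalently the remark following Lemma~\ref{POLY Lem: constant null}) is a detail the paper leaves implicit but is exactly the right thing to check.
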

	\begin{proof}
	This is immediate from the equivalence of $(i)$ and $(ii)$ in Theorem \ref{POLY Thm: Closure}.
	\end{proof}

	\subsection{Distinction Between Local Minima and Second-Order Points}

	We have shown that the optimization problem in (\ref{POLY Eq: convex pop}) gives an approach for finding second-order points of a cubic polynomial $p$ without computing its critical points. However, not all second-order points are local minima, and so in this subsection, we characterize the difference between the two notions more precisely. We first recall the concept of the relative interior of a (convex) set (see, e.g., \cite[Chap. 6]{rockafellar1970convex}).
	
	\begin{defn}\label{POLY Def: Relative interior}
		The relative interior of a nonempty convex set $S \subseteq \Rn$ is the set
		$$ri(S) \defeq \{x \in S\ |\ \forall y \in S, \exists \lambda > 1\ s.t.\ \lambda x + (1-\lambda)y \in S \}.$$
	\end{defn}
	This definition generalizes the notion of interior to sets which do not have full dimension. One can show that for a convex set $S$, $ri(S)$ is convex, $ri(\bar{S}) = ri(S)$, and $\overline{ri(S)} = \bar{S}$~\cite{rockafellar1970convex}. In general, for a nonempty convex set $S$, we have $ri(\bar{S}) = ri(S) \subseteq S$, but we may not have $ri(\bar{S}) = S$. (For example, let $S$ be a line segment with one of its endpoints removed.) It turns out, however, that for a cubic polynomial $p$ with a local minimum, $ri(\overline{LM_p}) = LM_p$. 
	
	\begin{theorem}\label{POLY Thm: Local Minima SOP}
		Let $p: \Rn \to \R$ be a cubic polynomial with a local minimum. Then the following three sets are equivalent:
		\renewcommand{\labelenumi}{(\roman{enumi})}
		\begin{enumerate}
			\item $LM_p$
			\item $ri(SO_p)$
			\item Intersection of critical points of $p$ with $ri(CR_p)$.
		\end{enumerate}
	\end{theorem}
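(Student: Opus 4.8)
The plan is to prove the two equalities $(i)=(ii)$ and $(i)=(iii)$ separately. By Theorem~\ref{POLY Thm: Closure} we have $SO_p=\overline{LM_p}$, so $SO_p$ is convex (the closure of the convex set $LM_p$ from Theorem~\ref{POLY Thm: WLM Convex}) and $ri(SO_p)=ri(\overline{LM_p})=ri(LM_p)$. Since $ri(LM_p)\subseteq LM_p$ always, the equality $(i)=(ii)$ reduces to showing $LM_p$ is \emph{relatively open}, i.e. $LM_p\subseteq ri(LM_p)$. Fix throughout a local minimum $\xbar\in LM_p$, which exists by hypothesis.

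To show $LM_p$ is relatively open, take $\ybar\in LM_p$ with $\ybar\ne\xbar$ (the case $\ybar=\xbar$ is trivial) and set $d\defeq\ybar-\xbar$. Since $\xbar,\ybar$ are second-order points, $p(\xbar)=p(\ybar)$ by Corollary~\ref{POLY Cor: optval convex pop}, so Lemma~\ref{POLY Lem: constant null} gives $d\in\nulls(\Hxb)$ and that $p$ is constant on the line through $\xbar,\ybar$; moreover, as $\xbar$ satisfies the TOC, $\grad p_3$ (hence $p_3$) vanishes on all of $\nulls(\Hxb)$. Decompose $\Rn=\nulls(\Hxb)\oplus\cols(\Hxb)$, so block-wise $\Hxb=\mathrm{diag}(0,C_0)$ with $C_0\succ0$, and note $\Hess p(\xbar+td)=\Hxb+tK$ where $K\defeq\Hess p_3(d)=\sum_i d_iH_i$. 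A short computation using (\ref{POLY Eq: Valid Hessian}) gives $z^TKz=z^T\Hess p_3(z)\,d=2\,\grad p_3(z)^Td=0$ for every $z\in\nulls(\Hxb)$ (the last step by the TOC at $\xbar$), so the $\nulls(\Hxb)$-diagonal block of $K$ is zero. Evaluating at $t=1$, the matrix $\Hess p(\ybar)=\Hxb+K$ is psd (SONC at $\ybar$) with a zero diagonal block, which forces its off-diagonal block, hence that of $K$, to vanish; thus $K=\mathrm{diag}(0,K_{22})$ and $\Hess p(\xbar+td)=\mathrm{diag}(0,\,C_0+tK_{22})$, which is psd with null space exactly $\nulls(\Hxb)$ for all small $|t|$ since $C_0\succ0$. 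Together with Lemma~\ref{POLY Lem: Gradient Invariance} (the FONC holds everywhere on the line) and the TOC at $\xbar$ (which thereby transfers to $\xbar+td$ for small $|t|$), Theorem~\ref{POLY Thm: TOC} shows $\xbar+td\in LM_p$ for small $|t|$. Taking $t=-\mu$ with $\mu>0$ small gives $(1+\mu)\xbar-\mu\ybar\in LM_p$, so $\xbar\in ri(LM_p)$; this establishes $(i)=(ii)$.

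For $(i)\subseteq(iii)$: every local minimum is a critical point (FONC), so it remains to show $\xbar\in ri(CR_p)$, where $CR_p$ is the spectrahedron of Definition~\ref{POLY Def: ConvR}. By Definition~\ref{POLY Def: Relative interior} it suffices that for each $y\in CR_p$ the Hessian stays psd when moving from $\xbar$ slightly past it away from $y$, i.e. $(1+\epsilon)\Hxb-\epsilon\Hess p(y)\succeq0$ for small $\epsilon>0$; in the decomposition above this holds once $\nulls(\Hxb)\subseteq\nulls(\Hess p(y))$, i.e. once $v^T\Hess p(y)v=0$ for all $v\in\nulls(\Hxb)$ (this quantity is already $\ge0$ since $\Hess p(y)\succeq0$). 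Suppose instead $a\defeq v^T\Hess p(y)v>0$ for some such $v$ and $y$, and put $w\defeq y-\xbar$, so $v^T\Hess p_3(w)v=v^T(\Hess p(y)-\Hxb)v=a$. Using $\grad p(\xbar)=0$, $\Hxb v=0$, and $\grad p_3(v)=p_3(v)=0$, the exact cubic Taylor expansion of $p$ at $\xbar$ reads
\[
p(\xbar+\alpha v+\beta w)-p(\xbar)=\tfrac12\beta^2 w^T\Hxb w+\tfrac12 a\,\alpha^2\beta+\tfrac12\big(w^T\Hess p_3(v)w\big)\alpha\beta^2+p_3(w)\beta^3 .
\]
Evaluating along the parabola $\alpha=t\epsilon$, $\beta=-\epsilon^2$ with $t$ fixed so that $t^2a>w^T\Hxb w$, the leading ($\epsilon^4$) term is $\tfrac12\epsilon^4(w^T\Hxb w-t^2a)<0$ and dominates the remaining $O(\epsilon^5)$ terms, so $p$ dips below $p(\xbar)$ arbitrarily near $\xbar$, contradicting $\xbar\in LM_p$. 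Hence $\nulls(\Hxb)\subseteq\nulls(\Hess p(y))$ for all $y\in CR_p$, giving $\xbar\in ri(CR_p)$.

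Finally, for $(iii)\subseteq(i)$, let $\ybar$ be a critical point in $ri(CR_p)$; then the FONC and SONC hold at $\ybar$ (the latter since $\ybar\in CR_p$), so only the TOC remains. Since $\xbar\in LM_p\subseteq SO_p\subseteq CR_p$ and $\ybar\in ri(CR_p)$, Definition~\ref{POLY Def: Relative interior} gives $\epsilon>0$ with $(1+\epsilon)\Hess p(\ybar)-\epsilon\Hxb\succeq0$; for $v\in\nulls(\Hess p(\ybar))$ this forces $-\epsilon v^T\Hxb v\ge0$, hence $v\in\nulls(\Hxb)$, so $\grad p_3(v)=0$ by the TOC at $\xbar$. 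Thus the TOC holds at $\ybar$ and $\ybar\in LM_p$ by Theorem~\ref{POLY Thm: TOC}. The main obstacle is the analysis in the second and third paragraphs: showing that \emph{both} blocks of $K$ vanish along the line joining two local minima, and identifying the parabola scaling $\alpha\sim\epsilon$, $\beta\sim-\epsilon^2$ that exposes a value below $p(\xbar)$.
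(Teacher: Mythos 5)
Your proof is correct, but two of the three implications travel by genuinely different routes than the paper's. For $(i)=(ii)$, the paper proves $LM_p\subseteq ri(SO_p)$ by contrapositive, producing points near any $\xbar\in SO_p\setminus ri(SO_p)$ with strictly smaller $p$-values along a ray exiting $SO_p$. You instead show directly that $LM_p$ is relatively open by decomposing along $\nulls(\Hxb)\oplus\cols(\Hxb)$ and proving that $K\defeq\Hess p_3(\ybar-\xbar)$ has vanishing $\nulls(\Hxb)$-diagonal block (via the TOC and Lemma~\ref{POLY Lem: Hessian switch}) and vanishing off-diagonal block (because $\Hess p(\ybar)=\Hxb+K$ is psd with a zero $\nulls(\Hxb)$-diagonal), so that $\Hess p(\xbar+td)$ stays psd with unchanged null space for small $|t|$; this is a more structure-revealing route and a worthwhile alternative.

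For $(i)\subseteq(iii)$ you land on the same key fact as the paper ($\nulls(\Hxb)\subseteq\nulls(\Hess p(y))$ for all $y\in CR_p$), but the parabola argument is an unnecessary and internally inconsistent detour. By the very identity you used one paragraph earlier (Lemma~\ref{POLY Lem: Hessian switch} plus Euler's theorem), $a=v^T\Hess p_3(w)v=2\,\grad p_3(v)^Tw$, which is \emph{identically zero} since $v\in\nulls(\Hxb)$ and the TOC holds at $\xbar$---yet you invoke $\grad p_3(v)=0$ in setting up the Taylor expansion and still carry $a$ as potentially positive, so the $\tfrac12 a\,\alpha^2\beta$ term you write is actually zero and the hypothesis $a>0$ is already contradicted before any parabola is constructed. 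The paper avoids this by taking the opposite tack: it extrapolates \emph{past} $\xbar$ away from $y\in CR_p$, shows any negative direction for the perturbed Hessian can be taken in $\cols(\Hxb)$ (using the TOC to kill the cross terms), and then wins via the positive spectral gap of $\Hxb$ on $\cols(\Hxb)$ from Lemma~\ref{POLY Lem: Smallest Nonzero Eigenvalue}. Your $(iii)\subseteq(i)$ argument is essentially the paper's with the roles of the two points interchanged and is fine as written.
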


	\begin{proof}
		$(ii) \subseteq (i)$\\
		Recall from Theorem \ref{POLY Thm: WLM Convex} that $LM_p$ is convex, and from Theorem \ref{POLY Thm: Closure} that $SO_p = \overline{LM_p}$. Then we have $ri(SO_p) = ri(\overline{LM_p}) = ri(LM_p) \subseteq LM_p$.
		
		$(i) \subseteq (ii)$\\
		We prove the contrapositive. Let $\xbar$ be a point which is not in $ri(SO_p)$. If $\xbar$ is not a second-order point, then it clearly cannot be a local minimum. Suppose now that $\xbar \in SO_p \backslash ri(SO_p)$. Then there is another second-order point $\ybar$ such that $\ybar + \lambda (\xbar - \ybar)$ is not a second-order point for any $\lambda > 1$. Note from Lemma~\ref{POLY Lem: constant null} and the statement after it that $p(\ybar + \lambda (\xbar - \ybar))$ is a constant univariate function of $\lambda$. Now for any $\epsilon > 0$, define the point $\bar{z}_\epsilon \defeq \xbar + \frac{\epsilon}{2 \|\xbar - \ybar\|} (\xbar - \ybar)$. Since $\bar{z}_\epsilon$ is not a second-order point and thus not a local minimum, there is a point $z_\epsilon$ satisfying $\|\bar{z}_\epsilon - z_\epsilon\| < \frac{\epsilon}{2}$ and $$p(z_\epsilon) < p(\bar{z}_\epsilon) = p(\frac{\epsilon}{2 \|\xbar - \ybar\|} (\xbar - \ybar)) = p(\xbar).$$ Furthermore, by the triangle inequality, $z_\epsilon$ also satisfies $\|z_\epsilon - \xbar\| < \epsilon$. Thus, by considering $\{z_\epsilon\}$ as $\epsilon \to 0$, we can conclude that $\xbar$ is not a local minimum.
		
		$(i) \subseteq (iii)$\\
		Consider any local minimum $\xbar$ of $p$, which clearly must also be a critical point of $p$, and a member of $CR_p$. Suppose for the sake of contradiction that $\xbar \not\in ri(CR_p)$. Then there exists $y \in CR_p$ such that for any scalar $\alpha > 0, \Hess p(\xbar + \alpha(\xbar - y))$ is not psd. In particular, for any $\alpha > 0$ there exists a unit vector $z_\alpha \in \Rn$ such that $z_\alpha^T \Hess p(\xbar + \alpha(\xbar - y)) z_\alpha < 0$.
		
		We now show that for any $\alpha$, $z_\alpha$ can be taken to be in $\cols(\Hxb)$. This is because, as we will show, if $z_\alpha = d + v$, where $d \in \nulls(\Hxb)$ and $v \in \cols(\Hxb)$,
		\beq\label{POLY Eq: Cross Term Dies}
		(d+v)^T\Hess p(\xbar + \alpha(\xbar - y))(d+v) = v^T\Hess p(\xbar + \alpha(\xbar - y))v.\eeq
		Observe that if $p$ is written in the form (\ref{POLY Eq: Cubic Poly Form}), for any $d~\in~\nulls(\Hxb)$, we have
		\baeq
		d^T \Hess p(\xbar + \alpha(\xbar - y)) d
		&= d^T\left(\sum_{i=1}^n (\xbar_i + \alpha(\xbar_i - y_i))H_i  + Q\right)d\\
		&= d^T\left(\sum_{i=1}^n \xbar_iH_i + Q\right)d + \alpha \sum_{i=1}^n (d^TH_id)(\xbar_i-y_i) = 0,
		\eaeq
		where the last equality follows from that $d \in \nulls(\Hxb)$, and the TOC, recalling that the $i$-th entry of $\gp3d$ is $\frac{1}{2}d^TH_id$. Note in particular that the expression above also holds for $\alpha = -1$, and so $d \in \nulls(\Hess p(y))$. Now observe that because we can write $$\Hess p(\xbar + \alpha(\xbar - y)) = (1+\alpha)\Hxb - \alpha \Hess p(y),$$
		we have $\Hess p(\xbar + \alpha(\xbar - y))d = 0$. Thus, we have shown (\ref{POLY Eq: Cross Term Dies}), and we can take ${z_\alpha \in \cols(\Hxb)}$.
		
		Note that if $z_\alpha \in \cols(\Hxb)$, then by Lemma \ref{POLY Lem: Smallest Nonzero Eigenvalue} we have $z_\alpha^T\Hxb z_\alpha \ge \lambda$, where $\lambda$ is the smallest nonzero eigenvalue of $\Hxb$. Thus, for small enough $\alpha$, the quantity $z_\alpha^T \Hess p(\xbar + \alpha(\xbar - y)) z_\alpha$ is positive and so we arrive at a contradiction.
		
		$(iii) \subseteq (i)$\\
		Let $\xbar$ be a critical point which is in $ri(CR_p)$. Clearly $\xbar \in SO_p$. Consider any local minimum $\ybar$ of $p$, and	observe that for any $\alpha \ne 0$, we can write \beq \label{POLY Eq: xbar convex combination} \xbar = \frac{1}{\alpha}(\alpha \xbar + (1-\alpha)\ybar) + \frac{\alpha-1}{\alpha}\ybar.\eeq
		As $\xbar \in ri(CR_p)$ and $\ybar \in CR_p, \alpha \xbar + (1-\alpha) \ybar \in CR_p$ for some $\alpha > 1$. In particular, for that $\alpha, \Hess p(\alpha \xbar + (1-\alpha)\ybar) \succeq 0$ and thus in view of (\ref{POLY Eq: xbar convex combination}), we can see that $\nulls(\Hess p(\xbar)) \subseteq \nulls(\Hess p(\ybar))$. Hence, because the TOC holds at $\ybar$, it must also hold at $\xbar$. Thus $\xbar$ is a local minimum.
	\end{proof}
	
	Figure \ref{POLY Fig: SOP LM} demonstrates the relation between $LM_p$ and $SO_p$ for the polynomial ${p(x_1,x_2) = x_1^2x_2}$. For this example, $SO_p = \{(x_1,x_2)\ |\ x_1 = 0, x_2 \ge 0\}$, and $LM_p = \{(x_1,x_2)\ |\ x_1 = 0, x_2 > 0\}$ (see Example~\ref{POLY Ex: local min}).
	\begin{figure}[H]
	\centering
	\includegraphics[height=.25\textheight,keepaspectratio]{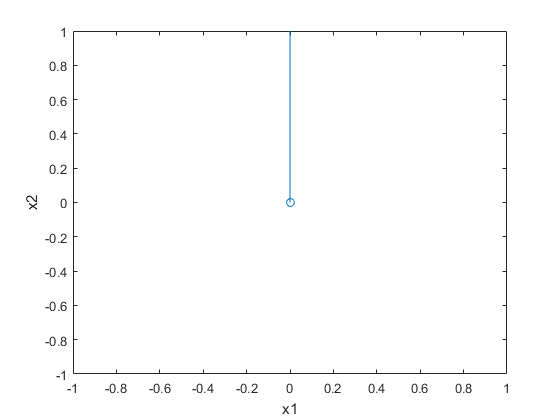}
	\includegraphics[height=.25\textheight,keepaspectratio]{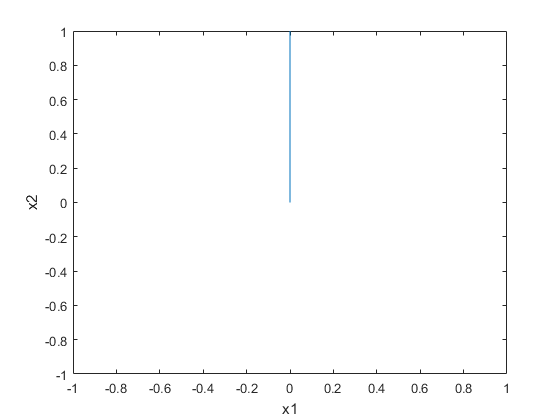}
	\caption{The set of local minima (left) and second-order points (right) of the cubic polynomial $p(x_1,x_2) = x_1^2x_2$. Note that $SO_p$ is the closure of $LM_p$ (Theorem \ref{POLY Thm: Closure}) and $LM_p$ is the relative interior of $SO_p$ (Theorem \ref{POLY Thm: Local Minima SOP}).}
	\label{POLY Fig: SOP LM}
	\end{figure}

	Theorem \ref{POLY Thm: Local Minima SOP} gives rise to the following interesting geometric fact about local minima of cubic polynomials.

	\begin{cor}\label{POLY Cor: Relint NS}
		Let $\xbar$ and $\ybar$ be two local minima of a cubic polynomial. Then $$\nulls (\Hxb) = \nulls (\Hess p(\ybar)).$$
	\end{cor}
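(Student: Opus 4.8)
The plan is to prove the two inclusions $\nulls(\Hxb) \subseteq \nulls(\Hess p(\ybar))$ and $\nulls(\Hess p(\ybar)) \subseteq \nulls(\Hxb)$ separately; by the symmetry of the roles of $\xbar$ and $\ybar$ it suffices to establish the first (and we may assume $\xbar \neq \ybar$, otherwise there is nothing to prove). Set $w \defeq \ybar - \xbar$. Since $p$ has a local minimum, Theorem~\ref{POLY Thm: Local Minima SOP} applies and gives $LM_p = ri(SO_p)$; in particular $\xbar \in ri(SO_p)$ while $\ybar \in SO_p$. By the definition of relative interior (Definition~\ref{POLY Def: Relative interior}) there is a scalar $\lambda > 1$ with $\lambda \xbar + (1-\lambda)\ybar \in SO_p$; writing $\lambda \xbar + (1-\lambda)\ybar = \xbar - (\lambda-1)w$ and setting $\epsilon \defeq \lambda - 1 > 0$, this says that $\xbar - \epsilon w$ is a second-order point of $p$ ``on the far side'' of $\xbar$ from $\ybar$.

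The next step uses that the Hessian of a cubic polynomial is an affine function of its argument (see (\ref{POLY Eq: Cubic Poly Form})), so along the line through $\xbar$ and $\ybar$ we have $\Hess p(\xbar + t(\ybar - \xbar)) = (1-t)\Hxb + t\,\Hess p(\ybar)$ for every $t \in \R$. Taking $t = -\epsilon$ and applying the SONC at the second-order point $\xbar - \epsilon w$ gives $(1+\epsilon)\Hxb - \epsilon\,\Hess p(\ybar) \succeq 0$. Now let $d \in \nulls(\Hxb)$; evaluating this quadratic form at $d$ leaves $-\epsilon\, d^T\Hess p(\ybar) d \ge 0$, hence $d^T \Hess p(\ybar) d \le 0$. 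On the other hand $\Hess p(\ybar) \succeq 0$ (SONC at $\ybar$), so $d^T \Hess p(\ybar) d = 0$, which for a positive semidefinite matrix forces $\Hess p(\ybar) d = 0$. Thus $d \in \nulls(\Hess p(\ybar))$, establishing $\nulls(\Hxb) \subseteq \nulls(\Hess p(\ybar))$, and swapping $\xbar$ and $\ybar$ gives the reverse inclusion. Everything here beyond the citations is routine linear algebra about null spaces of positive semidefinite matrices.

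The one place where the earlier machinery is really doing the work is the extraction of a second-order point strictly beyond $\xbar$ along the line to $\ybar$. Lemma~\ref{POLY Lem: constant null} by itself only yields $w \in \nulls(\Hxb)$ (and, symmetrically, $w \in \nulls(\Hess p(\ybar))$), which is much weaker than equality of the two null spaces. What closes the gap is precisely that $\xbar$, being a local minimum, lies in the \emph{relative interior} of the (convex) set $SO_p$ by Theorem~\ref{POLY Thm: Local Minima SOP}, so the segment from $\ybar$ through $\xbar$ can be prolonged slightly while staying in $SO_p$; this supplies the third positive semidefinite Hessian that pins the sign of $d^T\Hess p(\ybar)d$ from both sides. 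Note also that it is the hypothesis that $p$ has a local minimum (guaranteed here, e.g.\ by $\xbar$ itself) that licenses the use of Theorems~\ref{POLY Thm: Closure} and~\ref{POLY Thm: Local Minima SOP} in the first place.
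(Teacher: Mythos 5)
Your proof is correct, and it takes a genuinely different route from the paper. The paper's argument is a one-liner: it cites a result of Ramana and Goldman \cite[Corollary 1]{ramana1995some} stating that the pencil of a spectrahedron has constant null space over the relative interior, then invokes Theorem~\ref{POLY Thm: Local Minima SOP} to place both $\xbar$ and $\ybar$ in $ri(CR_p)$. What you do instead is unpack that black box and prove the relevant instance of the constant-null-space fact from first principles: you use the relative-interior characterization of Theorem~\ref{POLY Thm: Local Minima SOP} to produce a second-order point $\xbar - \epsilon(\ybar - \xbar)$ with $\epsilon > 0$ on the far side of $\xbar$, exploit the affinity of the Hessian of a cubic to write $\Hess p$ along that line as an affine combination of $\Hxb$ and $\Hess p(\ybar)$, and then play the SONC at the three points against each other to trap $d^T\Hess p(\ybar)d$ at zero for $d \in \nulls(\Hxb)$ (finishing with the standard psd fact that $d^TMd = 0$ forces $Md = 0$). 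The paper's proof is shorter but requires the reader to go fetch an external reference; yours is entirely self-contained within the machinery already developed in the chapter, at the modest cost of a few lines of linear algebra. The two proofs rest on the same structural insight — namely that $LM_p$ lies in the relative interior of a convex set of psd points, which lets one extend the segment $[\xbar,\ybar]$ slightly beyond $\xbar$ — so the difference is one of presentation and reliance on cited results rather than of underlying mechanism. One small note: you work with $ri(SO_p)$ where the paper works with $ri(CR_p)$; both are licensed by Theorem~\ref{POLY Thm: Local Minima SOP} and either gives the needed extension point with a psd Hessian, so this is a matter of taste.
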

	
	\begin{proof}
	It is known (\cite[Corollary 1]{ramana1995some}) that for a spectrahedron ${\{x \in \Rn\ |\ A_0 + \sum_{i=1}^n x_iA_i \succeq 0\}}$ and any two points $x$ and $y$ in its relative interior, $\nulls(A_0 + \sum_{i=1}^n x_iA_i) = \nulls(A_0 + \sum_{i=1}^n y_iA_i)$. In view of the facts that for any cubic polynomial $p$, $CR_p$ is a spectrahedron and $LM_p \subseteq ri(CR_p)$ (from Theorem \ref{POLY Thm: Local Minima SOP}), the result is immediate.
	\end{proof}
	
	\subsection{Spectrahedra and Convexity Regions of Cubic Polynomials}\label{POLY SSec: Cubic Spectrahedra}
	
	We end this section with a result relating general spectrahedra and convexity regions of cubic polynomials. Recall from the end of Section \ref{POLY SSec: Cubic Preliminaries} that if $S \defeq \{x \in \Rn\ |\ A_0 + \sum_{i=1}^n x_iA_i \succeq 0\}$ is a special spectrahedron, where $A_0, \ldots, A_n$ are $n \times n$ symmetric matrices satisfying $$(A_i)_{jk} = (A_j)_{ik} = (A_k)_{ij}$$ for any $i,j,k \in \{1 ,\ldots, n\}$, then $S$ is the convexity region of the cubic polynomial $$p(x) = \frac{1}{6}\sum_{i=1}^n x^Tx_iA_ix + \frac{1}{2}x^TA_0x.$$
	
	The following theorem shows that if the number of variables is allowed to increase, then \emph{any} spectrahedron can be represented by the convexity region of a cubic polynomial.
	
	\begin{theorem}\label{POLY Thm: Cubic Spectrahedron}
		Let a spectrahedron $S \subseteq \Rn$ be given by ${S \defeq \{x \in \Rn\ |\ A_0 + \sum_{i=1}^n x_iA_i \succeq 0\}}$, where $A_0, \ldots, A_n \in \mathbb{S}^{m \times m}$. There exists a cubic polynomial $p$ in at most $m+n$ variables such that $S$ is a projection of its convexity region; i.e., $$S = \{x \in \Rn\ |\ \exists y \in \R^m \mbox{ such that } (x,y) \in CR_p\}.$$ Furthermore, the interior of $S$ is a projection of the set of local minima of $p$.
	\end{theorem}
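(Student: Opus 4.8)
The plan is to exhibit, in the $m+n$ variables $(x,y)=(x_1,\ldots,x_n,y_1,\ldots,y_m)$, the cubic polynomial
$$p(x,y) = \frac{1}{2} y^T A_0 y + \frac{1}{2}\sum_{i=1}^n x_i\, y^T A_i y = \frac{1}{2} y^T M(x)\, y, \qquad M(x) \defeq A_0 + \sum_{i=1}^n x_i A_i,$$
and to verify that $CR_p$ projects onto $S$ while $LM_p$ projects onto the interior of $S$. The guiding idea is that the lower-right $m\times m$ block of $\Hess p$ recovers the pencil $M(x)$. Indeed, $\grad p(x,y)=\bigl(\frac{1}{2}y^TA_1y,\ldots,\frac{1}{2}y^TA_ny,\ M(x)y\bigr)$, and in $(x,y)$-block form
$$\Hess p(x,y) = \begin{bmatrix} 0 & G(y) \\ G(y)^T & M(x) \end{bmatrix},$$
where $G(y)$ is the $n\times m$ matrix whose $i$-th row is $(A_iy)^T$. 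Since the top-left block vanishes, this symmetric matrix is psd if and only if $G(y)=0$ and $M(x)\succeq 0$, and $G(y)=0$ means exactly $A_iy=0$ for $i=1,\ldots,n$. Hence $CR_p=\{(x,y)\ |\ A_iy=0\ \forall i\in\{1,\ldots,n\},\ M(x)\succeq 0\}$; since $(x,0)\in CR_p$ iff $M(x)\succeq 0$ and every $(x,y)\in CR_p$ has $M(x)\succeq 0$, the projection of $CR_p$ onto the $x$-variables is exactly $S$, proving the first claim.

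For the second claim I would apply the characterization of local minima of cubic polynomials in Theorem~\ref{POLY Thm: TOC} (FONC, SONC, and TOC). From the gradient above, once the SONC holds (so $A_iy=0$ for $i\in\{1,\ldots,n\}$ and $M(x)\succeq 0$), the FONC reduces to the single extra requirement $A_0y=0$; thus the second-order points of $p$ are exactly the pairs $(x,y)$ with $x\in S$ and $y\in L_0$, where $L_0\defeq\bigcap_{i=0}^n\nulls(A_i)$. At such a point $\Hess p(x,y)=\begin{bmatrix}0&0\\0&M(x)\end{bmatrix}$, whose null space is $\R^n\times\nulls(M(x))$; plugging a direction $d=(u,v)$ with $v\in\nulls(M(x))$ into $\grad p_3$ and requiring that it vanish for every $u\in\R^n$ forces $A_iv=0$ for $i\in\{1,\ldots,n\}$, and then $A_0v=M(x)v-\sum_i x_iA_iv=0$ as well, i.e., $\nulls(M(x))\subseteq L_0$; conversely this condition makes the TOC hold. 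So by Theorem~\ref{POLY Thm: TOC},
$$LM_p=\{(x,y)\ |\ x\in S,\ y\in L_0,\ \nulls(M(x))\subseteq L_0\},$$
and, taking $y=0$, its projection onto the $x$-variables is $\{x\in S\ |\ \nulls(M(x))\subseteq L_0\}$.

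It remains to identify this last set with the interior of $S$, which I expect to be the only substantive step. Since $v\in L_0$ implies $M(x)v=0$ for all $x$, we always have $L_0\subseteq\nulls(M(x))$, so the condition ``$\nulls(M(x))\subseteq L_0$'' is the same as ``$\nulls(M(x))=L_0$''. If $x$ satisfies it, decompose $\R^m=L_0\oplus L_0^\perp$: each $A_i$ annihilates $L_0$ and, being symmetric, maps $L_0^\perp$ into itself, so $M(\cdot)$ is block-diagonal for this splitting and $\nulls(M(x))=L_0$ forces $M(x)$ to be positive definite on $L_0^\perp$; by continuity $M(\cdot)$ stays psd on a neighborhood of $x$, so $x$ lies in the interior of $S$. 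Conversely, if $x$ is an interior point of $S$, then for small $\epsilon>0$ and each $i$ we have $x\pm\epsilon e_i\in S$, and for $v\in\nulls(M(x))$ the inequalities $0\le v^T M(x\pm\epsilon e_i)v$ give $v^TA_iv=0$ and hence $M(x\pm\epsilon e_i)v=0$, so $A_iv=0$ for every $i$ and $v\in L_0$; thus $\nulls(M(x))\subseteq L_0$. Therefore $\{x\in S\ |\ \nulls(M(x))\subseteq L_0\}$ equals the interior of $S$, which completes the proof.

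I expect this last paragraph to be where the real care is needed: it is tempting but incorrect to guess that $LM_p$ projects onto the \emph{relative} interior of $S$. The TOC imposes the strong requirement $\nulls(M(x))=L_0$ (the common kernel of all the $A_i$), not merely that $\nulls(M(x))$ be the generic null space of the pencil over $S$; these two coincide precisely when $S$ is full-dimensional, so the projection of $LM_p$ is indeed the interior of $S$ (and is empty exactly when $S$ is not full-dimensional). The elementary facts invoked here about the pencil $M(x)$ restricted to $S$ are also implicit in the spectrahedral results of~\cite{ramana1995some}.
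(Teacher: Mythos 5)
Your proof is correct, and it takes a genuinely different route from the paper's. The paper first replaces the pencil $A(x)$ by a reduced pencil $B(x)=V^TA(x)V$, where $V$ spans the orthogonal complement of $\nulls_A=\bigcap_{i=0}^n\nulls(A_i)$, precisely so that $\bigcap_i\nulls(B_i)=\{0\}$; it then works with the cubic $y^TB(x)y$ in $n+k$ variables, deduces that every local minimum has $y=0$ and is characterized by $B(x)\succ 0$, and invokes \cite[Corollary 5]{ramana1995some} for the fact that $\{x\,|\,B(x)\succ 0\}$ is the interior of $S$. You instead keep the unreduced pencil $M(x)$ (so $y$ lives in all of $\R^m$), accept that local minima sit over the possibly nontrivial common kernel $L_0$, and arrive at the characterization $\nulls(M(x))=L_0$ via the TOC; you then prove the equivalence $\nulls(M(x))=L_0 \Leftrightarrow x\in\mathrm{int}(S)$ from scratch using the $L_0\oplus L_0^\perp$ block structure of the pencil and a perturbation argument along $x\pm\epsilon e_i$. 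What your approach buys is self-containment: it removes the dependence on the Ramana--Goldman interior characterization and the need to construct $V$ and $B(x)$, at the modest cost of a slightly more involved FONC/SONC/TOC bookkeeping since $L_0$ need not be trivial. Your closing remark, that the projection of $LM_p$ is the interior (not the relative interior) of $S$ and hence is empty when $S$ is not full-dimensional, is exactly the right thing to flag; the paper's reduction hides this subtlety inside the cited corollary.
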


	\begin{proof}
		Let $A(x) \defeq A_0 + \sum_{i=1}^n x_iA_i$. We first present a characterization of the interior of $S$ following the developments in Section 2.4 of \cite{ramana1995some}. Let $\nulls_A \defeq \nulls(A_0) \cap  \ldots \cap \nulls(A_n)$, and $V$ be a full-rank matrix whose columns span the orthogonal complement of $\nulls_A$. Suppose that $\nulls_A$ is $(m-k)$-dimensional. Then there exist matrices $B_0, \ldots, B_n \in \mathbb{S}^{k \times k}$ with $\nulls(B_0) \cap \ldots \cap \nulls(B_n) = \{0_k\}$ such that $$B(x) \defeq B_0 + \sum_{i=1}^n x_iB_i = V^TA(x)V.$$
		In \cite[Corollary 5]{ramana1995some}, it is shown that $B(x)$
		\beq\label{POLY Eq: Pencils same}\{x \in \Rn\ |\ A(x) \succeq 0\} = \{x \in \Rn\ |\ B(x) \succeq 0\}\eeq
		and that the set $\{x \in \Rn\ |\ B(x) \succ 0\}$ gives the interior of $S$. Now consider the following cubic polynomial in $n+k$ variables:
		\beq\label{POLY Eq: Cubic with LM} p(x,y) \defeq y^TB(x)y.\eeq
		Observe that the partial derivative of $p$ with respect to $y$ is $2B(x)y$, the partial derivative of $p$ with respect to $x_i$ is $y^TB_iy$, and the Hessian of $p$ is
		$$\Hess p(x,y) = 2\bmat 0 & C(y)^T \\ C(y) & B(x) \emat,$$
		where $C(y)$ is an $k \times n$ matrix whose $i$-th column equals $B_iy$. One can then immediately see that if $(\xbar, \ybar) \in CR_p$, then we must have $B(\xbar) \succeq 0$. Conversely, if $B(\xbar) \succeq 0$, then $(\xbar,0_k) \in CR_p$. Hence, in view of (\ref{POLY Eq: Pencils same}), we have shown that the spectrahedron $S$ is the projection of $CR_p$ onto the $x$ variables.
		
		We now show that $LM_p = \{x \in \Rn\ |\ B(x) \succ 0\} \times \{0_k\}$. This would prove the second claim of the theorem. First let $\xbar$ be such that $B(\xbar) \succ 0$. Note that $p(\xbar, 0_k) = 0$ and that for any two vectors $\chi \in \Rn$ and $\psi \in \mathbb{R}^k$, $$p(\xbar + \chi, \psi) = \psi^T\left(B(\xbar) + \sum_{i=1}^n B_i\chi_i\right)\psi.$$ Since $B(\xbar) \succ 0$, then for any $\chi$ of sufficiently small norm, $B(\xbar) + \sum_{i=1}^n B_i\chi_i$ is still positive definite, and hence for any $\psi$, $p(\xbar + \chi,\psi) \ge 0 = p(\xbar,0_k)$. Thus $(\xbar, 0_k)$ is a local minimum of $p$.
		
		Now let $(\xbar, \ybar)$ be a local minimum of $p$. From the SONC, we must have $B(\xbar) \succeq 0$ and $C(\ybar) = 0$, which implies that $B_i\ybar = 0_k, \forall i \in \otn$. Since
		$$\frac{\partial p}{\partial y}(\xbar,\ybar) = 2B(\xbar)\ybar = 2\left(B_0 + \sum_{i=1}^n \xbar_iB_i\right)\ybar = 2B_0\ybar + 2\sum_{i=1}^n \xbar_i(B_i\ybar),$$ it further follows from the FONC that $B_0\ybar = 0$. As $\nulls(B_0) \cap \ldots \cap \nulls(B_n) = \{0_k\}$ by construction, it follows that we must have $\ybar = 0_k$. Next, observe that $\nulls(\Hess p(\xbar, 0_k)) = \R^n \times \nulls(B(\xbar))$. Let $d \in \nulls(B(\xbar))$, and note that for any $i \in \otn$, $(e_i,d) \in \nulls(\Hess p(\xbar, 0_k))$ and $\frac{\partial p_3}{\partial y}(e_i,d) = B_id$. Then from the TOC, we must have $B_id = 0_k, \forall i \in \{1, \ldots, n\}$. Furthermore, since $d \in \nulls(B(\xbar))$, it follows that $B_0d = 0_k$ as well. Again, as $\nulls(B_0) \cap \ldots \cap \nulls(B_n) = \{0_k\}$ by construction, it follows that we must have $d = 0_k$ and thus $B(\xbar) \succ 0$.
	\end{proof}
		
\section{Complexity Justifications for an Exact SDP Oracle}\label{POLY Sec: Complexity}

In the next section, we show that second-order points and local minima of cubic polynomials can be found by solving polynomially-many semidefinite programs with a polynomial number of variables and constraints. One caveat however is that the inputs and outputs of these semidefinite programs can sometimes be algebraic but not necessarily rational numbers. As a result, we cannot claim that second-order points and local minima of cubic polynomials can be found in polynomial time in the Turing model of computation. In this subsection, we give evidence as to why establishing the complexity of these problems in the Turing model is at the moment likely out of reach.

\begin{defn}
The \emph{SDP Feasibility Problem} (SDPF) is the following decision question: Given $m \times m$ symmetric matrices $A_0, \ldots, A_n$ with rational entries, decide whether there exists a vector $x \in \Rn$ such that $A_0 + \sum_{i=1}^n x_iA_i \succeq 0$.
\end{defn}

\begin{defn}
The \emph{SDP Strict Feasibility Problem} (SDPSF) is the following decision question: Given $m \times m$ symmetric matrices $A_0, \ldots, A_n$ with rational entries, decide whether there exists a vector $x \in \Rn$ such that $A_0 + \sum_{i=1}^n x_iA_i \succ 0$.
\end{defn}

Even though semidefinite programs can be solved to arbitrary accuracy in polynomial time~\cite{vandenberghe1996semidefinite}, the complexities of the decision problems above remain as two of the outstanding open problems in semidefinite programming. At the moment, it is not known if these two decision problems even belong to the class NP~\cite{ramana1993algorithmic, porkolab1997complexity, de2006aspects}. We show next that the complexities of these problems are a lower bound on the complexities of testing existence of second-order points and local minima of cubic polynomials. (In Section~\ref{POLY Sec: Finding Local Min}, we accomplish the more involved task of giving the reduction in the opposite direction.)

\begin{theorem}\label{POLY Thm: SOP SDPF}
    If the problem of deciding whether a cubic polynomial has any second-order points is in P (resp. NP), then SDPF is in P (resp. NP).
\end{theorem}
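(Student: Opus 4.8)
The plan is to exhibit a polynomial-time many-one (Karp) reduction \emph{from} SDPF \emph{to} the problem of deciding whether a cubic polynomial has a second-order point. Since both P and NP are closed under polynomial-time many-one reductions, such a reduction immediately yields both implications in the statement: given the assumed P (resp.\ NP) procedure for the cubic problem, we construct the reduced instance in polynomial time and feed it to that procedure.

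Given an SDPF instance consisting of rational symmetric matrices $A_0, \ldots, A_n \in \mathbb{S}^{m \times m}$, put $A(x) \defeq A_0 + \sum_{i=1}^n x_i A_i$ and consider the cubic polynomial in the $n+m$ variables $(x,y)=(x_1,\dots,x_n,y_1,\dots,y_m)$ given by
$$p(x,y) \defeq y^T A(x) y = y^T A_0 y + \sum_{i=1}^n x_i\,(y^T A_i y).$$
This is genuinely cubic (degree one in $x$, degree two in $y$), each of its coefficients is an entry of some $A_\ell$, so it has rational coefficients of bitsize bounded by that of the input, and it can be written down in polynomial time. (This is essentially the construction in the proof of Theorem~\ref{POLY Thm: Cubic Spectrahedron}, stated so that the second-order point structure is visible directly.) The claim I would prove is that $p$ has a second-order point if and only if the spectrahedron $S \defeq \{x \in \Rn \mid A(x) \succeq 0\}$ is nonempty, i.e.\ if and only if the SDPF instance is a yes-instance.

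For the forward ($\Leftarrow$) direction, suppose $\bar x \in S$. A direct computation gives $\frac{\partial p}{\partial x_i}(x,y) = y^T A_i y$ and $\frac{\partial p}{\partial y}(x,y) = 2A(x)y$, both of which vanish at $(\bar x, 0_m)$, so $(\bar x, 0_m)$ is a critical point of $p$. Moreover $\frac{\partial^2 p}{\partial y^2} = 2A(x)$, the mixed derivatives $\frac{\partial^2 p}{\partial x_i \partial y_j}$ are linear in $y$, and $\frac{\partial^2 p}{\partial x^2} = 0$, so $\Hess p(\bar x, 0_m) = \bmat 0 & 0 \\ 0 & 2A(\bar x) \emat \succeq 0$ because $A(\bar x) \succeq 0$; hence $(\bar x, 0_m) \in SO_p$. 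For the reverse ($\Rightarrow$) direction, if $(\bar x, \bar y)$ is any second-order point of $p$, then $\Hess p(\bar x, \bar y) \succeq 0$, and since $\frac{\partial^2 p}{\partial y^2}(\bar x, \bar y) = 2A(\bar x)$ is a principal submatrix of $\Hess p(\bar x, \bar y)$, it is psd, so $\bar x \in S$ and $S \neq \emptyset$.

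With the reduction in hand, the theorem follows. Honestly, there is no serious obstacle in this direction: the only things needing care are confirming that $p$ is cubic with polynomially bounded rational coefficients (so that the reduction is indeed a valid polynomial-time transformation preserving membership in P and NP), and noticing that the auxiliary block $y$ plays a double role — the point $(\bar x, 0_m)$ has vanishing gradient, and $A(\bar x)$ reappears as the $yy$-block of the Hessian there, which couples semidefiniteness of the pencil to the second-order condition. The substantive direction — showing these cubic problems are \emph{no harder} than SDPF/SDPSF, i.e.\ the reductions the other way — is the one deferred to Section~\ref{POLY Sec: Finding Local Min}.
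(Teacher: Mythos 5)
Your proposal is correct and uses essentially the same construction and argument as the paper: both build the cubic polynomial $p(x,y) = y^T A(x) y$ and observe that when $A(\bar x) \succeq 0$ the point $(\bar x, 0_m)$ is a second-order point (the gradient vanishes since $p$ is homogeneous of degree two in $y$, and the Hessian there is block-diagonal with blocks $0$ and $2A(\bar x)$), while conversely the $yy$-block of the Hessian at any second-order point is a psd principal submatrix equal to $2A(\bar x)$. The only difference is cosmetic: you explicitly package the argument as a polynomial-time many-one reduction and check the bitsize bound, and you note that the mixed second partials vanish at $y = 0_m$, whereas the paper states the Hessian formula in general and then specializes; these are presentational refinements, not a different route.
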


\begin{proof}
    Given matrices $A_0, \ldots, A_n \in \mathbb{S}^{m \times m}$, let $A(x) \defeq A_0 + \sum_{i=1}^n x_iA_i$. By noting that the cubic polynomial $p(x,y) = y^TA(x)y$ has as its Hessian
    $$\Hess p(x,y) = 2\bmat 0 & B(y)^T \\ B(y) & A(x) \emat,$$
	where $B(y)$ is an $m \times n$ matrix whose $i$-th column equals $A_iy$, we can see that if $A(\xbar) \succeq 0$ for some $\xbar \in \Rn$, then $\Hess p(\xbar, 0_k) \succeq 0$. Since $p$ is quadratic in the variables $y$, $\grad p(\xbar, 0_k) = 0_{m+n}$, and hence $(\xbar, 0_k)$ is a second-order point of $p$. Conversely, if $A(x) \not\succeq 0$ for any $x \in \Rn$, then clearly $\Hess p(x, y) \not\succeq 0$ for any $x \in \Rn$ and $y \in \R^m$, and thus $p$ cannot have any second-order points.
	
	The above reduction shows that any polynomial-time algorithm (or polynomial-time verifiable certificate) for existence of second-order points of cubic polynomials translates into one for SDPF.
\end{proof}

\begin{theorem}\label{POLY Thm: LM SDPSF}
	If the problem of deciding whether a cubic polynomial has any local minima is in P (resp. NP), then SDPSF is in P (resp. NP).
\end{theorem}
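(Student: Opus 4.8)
The plan is to mirror the reduction behind Theorem~\ref{POLY Thm: SOP SDPF}, but to replace that construction with the sharper Theorem~\ref{POLY Thm: Cubic Spectrahedron}, which realizes the \emph{interior} of a spectrahedron as a projection of the local minima of an associated cubic polynomial. Given rational matrices $A_0,\dots,A_n \in \mathbb{S}^{m\times m}$, put $A(x) \defeq A_0 + \sum_{i=1}^n x_i A_i$ and consider the cubic polynomial $p(x,y)\defeq y^T A(x) y$ in the $n+m$ variables $(x,y)$; its coefficients are rational and of polynomially bounded bitsize. The one subtlety is that SDPSF asks whether $\{x\in\Rn \mid A(x)\succ 0\}$ is nonempty, whereas Theorem~\ref{POLY Thm: Cubic Spectrahedron} naturally outputs the interior of $\{x \mid A(x)\succeq 0\}$; these two sets coincide precisely when the common null space $\nulls_A \defeq \nulls(A_0)\cap\cdots\cap\nulls(A_n)$ is trivial.

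The first step is therefore a polynomial-time preprocessing: by rational linear algebra, compute $\nulls_A$. If $\nulls_A \neq \{0_m\}$, pick $0\neq v\in\nulls_A$; then $v^T A(x) v = v^T A_0 v + \sum_i x_i\, v^T A_i v = 0$ for every $x$, so $A(x)$ is never positive definite and we may output \textsc{no} immediately. (One can also sidestep this case distinction altogether by appending a fresh variable $t$ and the nonsingular matrix $-I$: the augmented pencil $\tilde A(x,t)\defeq A(x)-tI$ has trivial common null space by construction, and $\exists x\colon A(x)\succ 0$ holds iff $\exists (x,t)\colon \tilde A(x,t)\succ 0$, the forward direction using any $t\in(0,\lambda_{\min}(A(x)))$.)

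It thus remains to handle a pencil with $\nulls_A=\{0_m\}$, and here no reduction of the pencil is needed: one may take $B\defeq A$, $k\defeq m$, $V\defeq I$ in the notation of the proof of Theorem~\ref{POLY Thm: Cubic Spectrahedron}, and the second half of that proof applies verbatim. Concretely, $\Hess p(x,y)$ has a zero $n\times n$ block in its upper-left corner, the matrix $A(x)$ in its lower-right corner, and an off-diagonal block whose $i$-th column is $2 A_i y$; tracing the FONC, SONC, and TOC at a putative local minimum $(\xbar,\ybar)$ forces $\ybar=0_m$ (this is where $\nulls_A=\{0_m\}$ enters) and $A(\xbar)\succ 0$, while conversely $A(\xbar)\succ 0$ makes $(\xbar,0_m)$ a local minimum since $p(\xbar+\chi,\psi)=\psi^T\!\big(A(\xbar)+\sum_i\chi_i A_i\big)\psi \ge 0 = p(\xbar,0_m)$ once $\|\chi\|$ is small enough. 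Hence $LM_p=\{x\mid A(x)\succ 0\}\times\{0_m\}$, so the SDPSF instance is feasible if and only if $p$ has a local minimum.

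Putting the two steps together yields a polynomial-time reduction from SDPSF to the problem of deciding whether a cubic polynomial has a local minimum: the preprocessing runs in polynomial time, the produced cubic $p$ has polynomial size, and a ``yes'' answer for $p$ can moreover be witnessed by an efficiently checkable certificate whenever one exists for the cubic problem. Consequently, membership of the cubic local-minimum problem in P (resp.\ NP) forces SDPSF into P (resp.\ NP). I expect the only real work to be the bookkeeping that bridges strict positive definiteness and the topological interior of the PSD cone (the common-null-space preprocessing, or the $-tI$ trick); once that is in place, the rest is a transcription of the constructions already set up in the proofs of Theorems~\ref{POLY Thm: Cubic Spectrahedron} and~\ref{POLY Thm: SOP SDPF}.
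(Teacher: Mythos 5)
Your primary argument is the same as the paper's: you preprocess by computing $\nulls_A \defeq \nulls(A_0)\cap\cdots\cap\nulls(A_n)$ and output \textsc{no} if it is nontrivial, then in the remaining case take $V=I$, $B=A$ in Theorem~\ref{POLY Thm: Cubic Spectrahedron} so that for $p(x,y)=y^TA(x)y$ one has $LM_p=\{x\mid A(x)\succ 0\}\times\{0_m\}$, which makes strict feasibility equivalent to existence of a local minimum. The paper instead states the bridge as ``$\exists\,\xbar:A(\xbar)\succ 0$ iff $\{x\mid A(x)\succeq 0\}$ has nonempty interior and $\nulls_A=\{0_m\}$,'' citing \cite[Corollary 5]{ramana1995some} for the nontrivial half, but the construction, the cubic produced, and the role of the null-space test are identical; you just re-derive the conclusion by unwinding the proof of Theorem~\ref{POLY Thm: Cubic Spectrahedron} directly.

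The parenthetical ``$-tI$ trick'' should be deleted, because it is wrong. The claimed equivalence $\exists x: A(x)\succ 0 \iff \exists (x,t): A(x)-tI\succ 0$ fails in the backward direction: for any fixed $x$, taking $t$ sufficiently negative gives $A(x)-tI=A(x)+|t|I\succ 0$, so the augmented pencil is strictly feasible for \emph{every} instance. For example, with $A_0=-I$ and $A_1=0$ the original pencil is never PD, yet $\tilde A(x,-2)=I\succ 0$; the reduction would thus always answer \textsc{yes}. A variant that actually works pads each $A_i$ with a zero bottom-right entry and uses the coefficient $\bmat I & 0 \\ 0 & -1 \emat$ for $t$: then $\tilde A(x,t)\succ 0$ forces $t<0$ and hence $A(x)\succ -tI\succ 0$, and the nonsingular coefficient of $t$ makes the common null space trivial. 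But since your primary route through the null-space preprocessing is correct and already matches the paper, the simplest fix is to drop the aside.
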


\begin{proof}
	Given matrices $A_0, \ldots, A_n \in \mathbb{S}^{m \times m}$, let $A(x) \defeq A_0 + \sum_{i=1}^n x_iA_i$ and consider the set $S \defeq \{x \in \Rn\ |\ A(x) \succeq 0\}$. It is not difficult to see that there exists $\xbar \in \Rn$ such that $A(\xbar) \succ 0$ if and only if $S$ has a nonempty interior and $\nulls_A \defeq \nulls(A_0) \cap \nulls(A_1) \cap \nulls(A_2) \ldots \cap \nulls(A_n) = \{0_m\}$.\footnote{The ``only if'' direction is straightforward and the ``if'' direction follows from \cite[Corollary 5]{ramana1995some}.} The latter condition can be checked in polynomial time by solving linear systems. The former can be reduced---due to the second claim of Theorem \ref{POLY Thm: Cubic Spectrahedron}---to deciding if the cubic polynomial constructed in (\ref{POLY Eq: Cubic with LM}) has a local minimum. Note that the polynomial in (\ref{POLY Eq: Cubic with LM}) has coefficients polynomially sized in the entries of the matrices $A_i$, since the matrix $V$ in the proof of Theorem \ref{POLY Thm: Cubic Spectrahedron} can be taken to be the identity matrix when $\nulls_A = \{0_m\}$.
\end{proof}

In addition to the difficulties alluded to in the above two theorems, the following three examples point to concrete representation issues that one encounters in the Turing model when dealing with local minima of cubic polynomials. The same complications are known to arise for SDP feasibility problems~\cite{de2006aspects}.

\begin{example}
	{\bf A cubic polynomial with only irrational local minima.} Consider the univariate cubic polynomial $p(x) = x^3 - 6x$. One can easily verify that its unique local minimum is at $x=\sqrt{2},$ which is irrational even though the coefficients of $p$ are rational.
\end{example}

\begin{example}
{\bf A cubic polynomial with an irrational convexity region.} Consider the quintary cubic polynomial $p(x,y) = y^TA(x)y$, where
$$A(x) = \bmat 2 & x & 0 & 0 \\
x & 1 & 0 & 0 \\
0 & 0 & 2x & 2 \\
0 & 0 & 2 & x\emat.$$
One can easily verify that $x = \sqrt{2}$ is the only scalar satisfying $A(x) \succeq 0$. Since the matrix $2A(x)$ is a principal submatrix of $\Hess p(x,y)$, any point in the convexity region of $p$ must satisfy $x = \sqrt{2}$ (even though the coefficients of $p$ are rational).
\end{example}

\begin{example}
	{\bf A family of cubic polynomials whose local minima have exponential bitsize.} Consider the family of cubic polynomials $p_n(x,y) = y^TA_n(x)y$ in $3n$ variables, where
	$$A_n(x) = \bmat x_1 & 2 & 0 & 0 & \cdots & 0 & 0\\
	2 & 1 & 0 & 0 & \cdots & 0 & 0 \\
	0 & 0 & x_2 & x_1 & \cdots & 0 & 0\\
	0 & 0 & x_1 & 1 & \cdots & 0 & 0 \\
	\cdots & \cdots & \cdots & \cdots & \ddots & \cdots & \cdots \\
	0 & 0 & 0 & 0 & \cdots & x_n & x_{n-1}\\
	0 & 0 & 0 & 0 & \cdots & x_{n-1} & 1\emat.$$
	We show that even though these polynomials have some rational local minima, it takes exponential time to write them down. From the proof of Theorem \ref{POLY Thm: Cubic Spectrahedron}, one can infer that the set of local minima of $p_n$ is the set $\{x \in \Rn\ |\ A_n(x) \succ 0\} \times \{0_{2n}\}$. However, observe that to have $A_n(x) \succ 0$ (or even $A_n(x) \succeq 0)$, we must have $$x_1 \ge 4, x_2 \ge 16, \ldots, x_n \ge 2^{2^n}.$$  Hence, any local minimum of $p_n$ has bit length at least $O(2^n)$ even though the bit length of the coefficients of $p_n$ is $O(n)$.
\end{example}

\section{Finding Local Minima of Cubic Polynomials}\label{POLY Sec: Finding Local Min}

In this section, we derive an SDP-based approach for finding second-order points and local minima of cubic polynomials. This, along with the results established in Section \ref{POLY Sec: NP-hardness results}, will complete the entries of Table \ref{POLY Table: Complexity Existence} from Section~\ref{POLY Sec: Introduction}. We begin with some preliminaries that are needed to present the theorems of this section.

\subsection{Preliminaries from Semidefinite and Sum of Squares Optimization}\label{POLY SSec: SDP Prelims}

\subsubsection{The Oracle E-SDP}\label{POLY SSSec: ESDP}

Recall that a \emph{spectrahedron} is a set of the type
$$\left\{x \in \Rn\ |\ A_0 + \sum_{i=1}^n x_iA_i \succeq 0\right\},$$
where $A_0, \ldots, A_n$ are symmetric matrices of some size $m \times m$. A \emph{semidefinite representable set} (also known as a \emph{spectrahedral shadow}) is a set of the type
\beq\label{POLY Eq: SDR Set}\left\{x \in \Rn\ |\ \exists y \in \R^k \text{ such that } A_0 + \sum_{i=1}^n x_iA_i + \sum_{i=1}^k y_iB_i \succeq 0\right\},\eeq
for some integer $k \ge 0$ and symmetric $m \times m$ matrices $A_0, A_1, \ldots, A_n, B_1, \ldots, B_k$. These are exactly sets which semidefinite programming can optimize over.

We show in Theorem \ref{POLY Thm: solution recovery sdp} and Corollary \ref{POLY Cor: Complete Cubic SDP SOP} that the set of second-order points of any cubic polynomial is a spectrahedron and describe how a description of this spectrahedron can be obtained from the coefficients of $p$ only.\footnote{Recall that the results of Section \ref{POLY Sec: Geometry} by contrast established spectrahedrality of the set of second-order points under the assumption of existence of a local minimum (see Remark \ref{POLY Rem: SOP Spectrahedron}). Furthermore, the spectrahedral representation that we gave there (see Theorem \ref{POLY Thm: WLM Closure Spectrahedron}) required knowledge of a local minimum.} Since relative interiors of semidefinite representable sets (and in particular spectrahedra) are semidefinite representable \cite[Theorem 3.8]{netzer2010semidefinite}, it follows from our Theorem \ref{POLY Thm: Local Minima SOP} that the set of local minima of any cubic polynomial is semidefinite representable.

Due to the complexity results and representation issues presented in Section \ref{POLY Sec: Complexity}, we assume in this section that we can do arithmetic over real numbers and have access to an oracle which solves SDPs exactly. This oracle---which we call \emph{E-SDP}---takes as input an SDP with real data and outputs the optimal value as a real number if it is finite, or reports that the SDP is infeasible, or that it is unbounded.\footnote{Though this will not be needed for our purposes, it is straightforward to show that for an SDP with $n$ scalar variables, the oracle E-SDP can be called twice to test attainment of the optimal value, and a total of $n+1$ times to recover an optimal solution.} The following lemma shows that E-SDP can find a point in the relative interior of a semidefinite representable set. This will be relevant for us later in this section when we search for local minima of cubic polynomials.

\begin{lem}\label{POLY Lem: Relint recovery}
	Let $S$ be a nonempty semidefinite representable set in $\R^n$. Then a point in $ri(S)$ can be recovered in $2n$ calls to E-SDP.
\end{lem}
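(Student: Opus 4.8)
The plan is to reduce the task of finding a point in $ri(S)$ to a sequence of one-dimensional searches, each of which can be resolved with a bounded number of calls to E-SDP. Write $S$ as a semidefinite representable set as in (\ref{POLY Eq: SDR Set}), with a lifting variable $y \in \R^k$. First I would fix an arbitrary feasible point $x^{(0)} \in S$, which can be found with one call to E-SDP by checking feasibility of the defining linear matrix inequality (and extracting a witness; the oracle's optimal-solution-recovery capability, noted in the footnote preceding the lemma, makes this precise). Then I would process the $n$ coordinate directions one at a time: having constructed a point $x^{(i-1)} \in S$, I would determine, using E-SDP, the maximum and minimum values of $e_i^Tx$ over the line segment (or line) through $x^{(i-1)}$ within $S$ — equivalently, solve $\sup\{t : x^{(i-1)} + t e_i \in S\}$ and the analogous infimum — and set $x^{(i)} \defeq x^{(i-1)} + t^* e_i$ where $t^*$ is the midpoint of the resulting interval (taking $t^* = 0$ if both bounds coincide, and handling unboundedness by picking any finite interior value of the ray). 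Each such one-dimensional optimization is a single SDP, so the two extreme values in direction $e_i$ cost two calls, for a total of $2n$ calls after the coordinate loop. (One can absorb the initial feasibility call into the bookkeeping, or observe that the first coordinate's ``min'' computation already certifies nonemptiness, so the count stays at $2n$.)

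The correctness argument I would give is the following. By a standard fact about relative interiors of convex sets, a point $x \in S$ lies in $ri(S)$ if and only if for every direction $d$ in the affine hull of $S$, the point $x$ can be perturbed by a small positive \emph{and} a small negative multiple of $d$ while staying in $S$. It therefore suffices to check this for $d$ ranging over a spanning set of the linear space parallel to $\mathrm{aff}(S)$. I would argue that after the coordinate loop, the constructed point $x^{(n)}$ is ``maximally central'' in each coordinate direction in the following sense: for each $i$, either $e_i$ is orthogonal to $\mathrm{aff}(S)$ (the interval in that direction was a single point, consistent at every stage), or $x^{(n)}$ is strictly between the two endpoints of the feasible segment in direction $e_i$ through $x^{(n)}$. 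The key point is that once we have centered in direction $e_i$, later centering steps in directions $e_j$ ($j \neq i$) preserve strict centrality in direction $e_i$ — here I would use that $S$ is convex, so moving along $e_j$ inside $S$ keeps us inside $S$, and the feasible segment through the new point in direction $e_i$ still contains the new point in its relative interior (this is where a short convexity lemma is needed: if $x$ is in the relative interior of the chord of a convex set through $x$ in direction $e_i$, and $x'$ is another point of $S$ on the chord through $x$ in direction $e_j$, then $x'$ is in the relative interior of its own $e_i$-chord — true because the two chords, together with $S$'s convexity, sandwich a two-dimensional slice in which both points are interior to the relevant segments). Granting this, $x^{(n)}$ can be perturbed by $\pm\epsilon e_i$ for small $\epsilon>0$ whenever $e_i$ is not orthogonal to $\mathrm{aff}(S)$, and the coordinate vectors that are not orthogonal to $\mathrm{aff}(S)$ span the linear space parallel to $\mathrm{aff}(S)$; hence $x^{(n)} \in ri(S)$.

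The main obstacle I anticipate is not the call-counting but making the convexity bookkeeping in the previous paragraph fully rigorous — specifically, verifying that the midpoint choice in coordinate $i$ is not later ``undone'' by the centering steps in coordinates $i+1,\dots,n$, so that strict interiority accumulates across all relevant directions rather than being destroyed. The cleanest way to handle this, which I would adopt, is to prove the small two-dimensional convexity lemma stated above and then induct on the coordinate index: the inductive hypothesis is that $x^{(i)}$ lies in the relative interior of $S \cap (x^{(i)} + \mathrm{span}\{e_1,\dots,e_i\})$ (a lower-dimensional convex set), and the inductive step uses the lemma to push this through the $e_{i+1}$-centering. A minor secondary issue is the treatment of unbounded directions (when $\sup$ or $\inf$ over the line is $\pm\infty$): there one simply moves a fixed positive finite distance, which still lands in the (relative) interior of that ray's portion of $S$, and the argument goes through unchanged. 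I would also remark that since E-SDP returns the optimal \emph{value} of each one-dimensional SDP, and an optimal solution can be recovered from $O(n)$ further calls per SDP in general, the stated bound of $2n$ total calls relies on the fact that in this one-dimensional situation the optimal value \emph{is} the scalar $t^*$ we need, so no separate solution-recovery is required.
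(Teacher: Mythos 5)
Your approach has a genuine gap that surfaces as soon as $S$ has a lower-dimensional affine hull not aligned with the coordinate axes. The paper's procedure does not fix a point and recenter it along coordinate lines; it restricts the \emph{set}. It defines $S_1 = S$ and iteratively sets $S_{i+1} = S_i \cap \{x : x_i = x_i^*\}$, where $x_i^*$ is chosen strictly between $\inf\{x_i : x \in S_i\}$ and $\sup\{x_i : x \in S_i\}$ when these differ. Crucially, those two optimizations are over the entire restricted set $S_i$, so the remaining coordinates $x_{i+1},\dots,x_n$ (and any lifting variables) are still free while $x_i^*$ is being determined. Your version instead pins a point $x^{(i-1)}$ and computes the range of $t$ with $x^{(i-1)}+t e_i \in S$, which fixes the other coordinates to specific (possibly boundary) values. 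This degenerates: take $S = \{(t,t) : 0 \le t \le 1\} \subset \R^2$ (a spectrahedron, hence semidefinite representable) and suppose the initial feasibility step hands you $x^{(0)} = (0,0)$. Both the $e_1$-chord and the $e_2$-chord through $(0,0)$ are the singleton $\{(0,0)\}$, so your loop outputs $(0,0) \notin ri(S)$. This also falsifies your asserted dichotomy that each $e_i$ is either orthogonal to $\mathrm{aff}(S)$ or yields a nondegenerate interval, and it undermines your final step: non-orthogonality of $e_i$ to $\mathrm{aff}(S)$ does not put $e_i$ \emph{into} the linear space parallel to $\mathrm{aff}(S)$, and if $e_i$ is not in that space then $x^{(n)} \pm \epsilon e_i \notin S$ no matter how centrally $x^{(n)}$ sits. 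The concern you flagged (later centerings undoing earlier ones) is not the real obstruction; the real obstruction is that coordinate-line searches through a pinned point simply cannot see directions that are oblique to the axes.

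There is also a call-count problem you did not resolve. E-SDP returns only the optimal value, and the paper's footnote notes that recovering an actual feasible point costs $n+1$ calls, so your initial ``extract a witness $x^{(0)}$'' step does not fit inside the $2n$ budget, and the hand-wave that it ``absorbs into the bookkeeping'' does not hold up. The paper's set-restriction scheme never needs a feasible witness along the way: it only records the scalar values $x_1^*,\dots,x_n^*$ (which E-SDP can produce directly as intermediate values between a sup and an inf), and the final singleton $S_{n+1}$ \emph{is} the point sought.
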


\begin{proof}
    Consider the following procedure. Let $S_1 = S$, and for $i \in \otn$ let $$S_{i+1} = S_i \cap \{x \in \Rn\ |\ x_i = x_i^*\},$$ where the scalar $x_i^*$ is chosen to be any ``intermediate'' value of $x_i$ on $S_i$. More precisely, let $\xbar_i$ (resp. $\underline{x}_i$) be the supremum (resp. infimum) of $x_i$ over $S_i$ (these two values may or may not be finite). If $\bar{x}_i = \underline{x}_i$, then set $x_i^* = \bar{x}_i$. Otherwise, set $x_i^*$ to be any scalar satisfying $\underline{x}_i < x_i^* < \bar{x}_i$. Note that for each $i$, $x_i^*$ can be computed using $2$ calls to E-SDP. Hence, after $2n$ calls to E-SDP, we arrive at a set $S_{n+1}$ which is a singleton by construction. 
		
	We next show, by induction, that the point in $S_{n+1}$ belongs to $ri(S)$. First note that as $S$ is nonempty, $ri(S)$ is nonempty~\cite[Theorem 6.2]{rockafellar1970convex}, which implies that $S_1 \cap ri(S) = ri(S)$ is nonempty. Now suppose that $S_i \cap ri(S)$ is nonempty for $i \in \{1,\ldots, k\}$. We show that $S_{k+1} \cap ri(S)$ is nonempty.
	
	First suppose that $k$ is such that $\bar{x}_k = x_k^* = \underline{x}_k$. In this case, because $\forall x \in S_k, x_k = x_k^*$,
	$$S_{k+1} \cap ri(S) = S_k \cap \{x \in \Rn\ |\ x_k = x_k^*\} \cap ri(S) = S_k \cap ri(S) \ne \emptyset.$$
	Now suppose that $\underline{x}_k < x_k^* < \bar{x}_k$.	By the definition of $\xbar_k$, there exists a sequence of points $\{y_j\} \subseteq S_k$ such that $(y_j)_k \to \bar{x}_k$. We recall that for any $z \in ri(S), y \in \bar{S}$, and $\lambda \in (0,1]$, $\lambda z + (1-\lambda)y \in ri(S)$~\cite[Theorem 6.1]{rockafellar1970convex}. Now let $z \in S_k \cap ri(S)$. Since $S_k$ is convex, for any $y \in S_k \cap \bar{S}$ and $\lambda \in (0,1]$, $\lambda z + (1-\lambda)y \in S_k \cap ri(S)$. In particular, since $S_k \cap \bar{S} = S_k$, the sequence $\{z_j\} \defeq \{\frac{1}{j}z + \frac{j-1}{j}y_j\}$ satisfies $\{z_j\} \subseteq S_k \cap ri(S)$ and $(z_j)_k \to \xbar_k$. Similarly, there exists a sequence of points $\{w_j\} \subseteq S_k \cap ri(S)$ such that $(w_j)_k \to \underline{x}_k$. As $S_k \cap ri(S)$ is convex, there must then be a point $x \in S_k \cap ri(S)$ satisfying $x_k = x_k^*$, and so $$S_{k+1} \cap ri(S) = S_k \cap \{x \in \Rn\ |\ x_k = x_k^*\} \cap ri(S)$$ is not empty.
\end{proof}

\subsubsection{Overview of Sum of Squares Polynomials}\label{POLY SSSec: Sos}
In order to describe our SDP-based approach for finding local minima of cubic polynomials, we also need to briefly review the connection between sum of squares polynomials and matrices to semidefinite programming. Approaches to finding local minima based on sum of squares have been studied before, such as in~\cite{nie2015hierarchy}. The SDP approach in this chapter, however, is based partially on finding critical points and does not formally study the case of cubic polynomials.

Recall that a (multivariate) polynomial $p:\R^n\to\R$ is \emph{nonnegative} if $p(x) \ge 0, \forall x \in \R^n$, and that a polynomial $p$ is said to be a \emph{sum of squares} (sos) if $p= \sum_{i=1}^r q_i^2$ for some polynomials $q_1,\ldots,q_r$. The notion of sum of squares also extends to polynomial matrices (i.e., matrices whose entries are multivariate polynomials). We say that symmetric polynomial matrix $M(x):\Rn \to \R^m \times \R^m$ is an \emph{sos-matrix} if it has a factorization as $M(x)=R(x)^TR(x)$ for some $r\times m$ polynomial matrix $R(x)$~\cite{helton2010semidefinite}. Observe that if $M$ is an sos-matrix, then $M(x) \succeq 0$ for any $x \in \Rn$. One can check that $M(x)$ is an sos-matrix if and only if the scalar-valued polynomial $y^TM(x)y$ in variables $(x_1,\ldots,x_n,y_1,\ldots,y_m)$ is sos. Indeed, the ``only if'' direction is clear, the ``if'' direction is because when $y^TM(x)y = \sum_{i=1}^r q_i^2(x,y)$ for some polynomials $q_1, \ldots, q_r$, each $q_i$ must be linear in $y$ and thus writable as $q_i(x)=\sum_{j=1}^m y_jq_{ij}(x)$ for some polynomials $q_{ij}$. Then if $R(x)$ is the $r \times m$ matrix where $R_{ij}(x) = q_{ij}(x)$, we will have $M(x) = R^T(x)R(x)$.

\subsection{A Sum of Squares Approach for Finding Second-Order Points}\label{POLY SSec: SDP Approach}

We have shown in Theorem \ref{POLY Thm: Closure} that if a cubic polynomial $p$ has a second-order point, the solutions of the optimization problem in (\ref{POLY Eq: convex pop}) exactly form the set $SO_p$ of its second-order points. The same theorem further showed that if $p$ has a local minimum, then the solutions of (\ref{POLY Eq: convex pop}) also coincide with $\overline{LM_p}$, i.e. the closure of the set of its local minima. Our goal in this section is to develop a semidefinite representation of $SO_p$ which can be obtained directly from the coefficients of $p$ (Corollary \ref{POLY Cor: Complete Cubic SDP SOP}). To arrive to this representation, we first present an sos relaxation of problem (\ref{POLY Eq: convex pop}), which we prove to be tight when $SO_p$ is nonempty (Theorem \ref{POLY Thm: cubic sdp}). We then provide a more efficient representation of the SDP underlying this sos relaxation in Section \ref{POLY SSec: Simplification}. This will lead to an algorithm (Algorithm~\ref{POLY Alg: Complete Cubic SDP}) for finding local minima of cubic polynomials which is presented in Section \ref{POLY SSSec: SOP and Algorithm}.

\begin{theorem}\label{POLY Thm: cubic sdp}
	If a cubic polynomial $p: \Rn \to \R$ has a second-order point, the optimal value of the following semidefinite program\footnote{To clarify, $x$ is not a decision variable in this problem. The decision variables are $\gamma$, the coefficients of $\sigma$, and the coefficients of the entries of $S$. The identity in the first constraint must hold for all $x$, and this can be enforced by matching the coefficient of each monomial on the left with the corresponding coefficient on the right.} is attained and is equal to the value of $p$ at all second-order points:
	
	\begin{equation}\label{POLY Eq: cubic sdp}
    \begin{aligned}
	& \underset{\gamma \in \R, \sigma(x), S(x)}{\sup}
	& & \gamma \\
	& \text{\emph{subject to}}
	&& p(x) - \gamma = \sigma(x) + \Tr(S(x)\Hx),\\
	&&& \sigma(x) \text{\emph{ is a degree-2 sos polynomial}},\\
	&&& S(x) \text{\emph{ is an }} n \times n \text{\emph{ sos-matrix with degree-2 entries.}}\\
	\eaeql
	
\end{theorem}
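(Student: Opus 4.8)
The plan is to prove matching bounds on the optimal value of \eqref{POLY Eq: cubic sdp}, with an arbitrary second-order point $\xbar$ of $p$ (so $\gxb=0$ and $\Hxb\succeq 0$) playing the role of a test point. Morally, \eqref{POLY Eq: cubic sdp} is a Positivstellensatz-style relaxation of \eqref{POLY Eq: convex pop}: if $\sigma$ is sos and $S$ is an sos-matrix, then for any $x$ with $\Hx\succeq 0$ we get $\Tr(S(x)\Hx)\ge 0$ (trace of a product of two psd matrices) and $\sigma(x)\ge 0$, so $p(x)\ge\gamma$ on $CR_p$; by Corollary~\ref{POLY Cor: optval convex pop} this already bounds the optimal value by $p(\xbar)$. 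To make this precise and clean, I would instead just evaluate the identity $p(x)-\gamma=\sigma(x)+\Tr(S(x)\Hx)$ at $x=\xbar$ for any feasible triple $(\gamma,\sigma,S)$: since $\sigma(\xbar)\ge 0$ and $S(\xbar)\succeq 0$ (hence $\Tr(S(\xbar)\Hxb)\ge 0$), we conclude $p(\xbar)-\gamma\ge 0$, i.e.\ $\gamma\le p(\xbar)$. This gives the upper bound: the optimal value of \eqref{POLY Eq: cubic sdp} is at most $p(\xbar)$.

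\textbf{The certificate (lower bound and attainment).}
For the reverse inequality I would exhibit an explicit feasible triple with $\gamma=p(\xbar)$. Set $v\defeq x-\xbar$; the Taylor expansion \eqref{POLY Eq: cubic taylor} together with $\gxb=0$ gives $p(x)-p(\xbar)=\tfrac12 v^T\Hxb v+p_3(v)$, and since the Hessian of a cubic is affine with linear part $\Hess p_3$, we have $\Hx=\Hxb+\Hess p_3(v)$. The key choice is
\[
S(x)\defeq\tfrac16\,vv^T,\qquad \sigma(x)\defeq\tfrac13\,v^T\Hxb v,\qquad \gamma\defeq p(\xbar).
\]
Then $\Tr(S(x)\Hx)=\tfrac16 v^T\Hxb v+\tfrac16\,v^T\Hess p_3(v)\,v$, and since $p_3$ is a homogeneous cubic, $v^T\Hess p_3(v)\,v=6\,p_3(v)$ (Euler's identity; cf.\ Remark~\ref{POLY Rem: TONC} and the form \eqref{POLY Eq: Cubic Poly Form}), so $\Tr(S(x)\Hx)=\tfrac16 v^T\Hxb v+p_3(v)$. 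Adding $\sigma(x)=\tfrac13 v^T\Hxb v$ yields exactly $\tfrac12 v^T\Hxb v+p_3(v)=p(x)-p(\xbar)$, so the defining identity holds. Moreover $\sigma$ is a degree-$2$ sos polynomial: factoring $\Hxb=LL^T$ (possible since $\Hxb\succeq 0$) gives $\sigma(x)=\tfrac13\|L^Tv\|^2$. And $S(x)=(\tfrac{1}{\sqrt6}v)(\tfrac{1}{\sqrt6}v)^T$ is an $n\times n$ sos-matrix with degree-$2$ entries --- equivalently, $y^TS(x)y=(\tfrac{1}{\sqrt6}v^Ty)^2$ is sos. Hence $\gamma=p(\xbar)$ is attained.

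\textbf{Conclusion and main obstacle.}
Combining the two bounds, the optimal value of \eqref{POLY Eq: cubic sdp} equals $p(\xbar)$ and is attained; since $\xbar$ was an arbitrary second-order point, this common value is the value of $p$ at every second-order point (also a consequence of Corollary~\ref{POLY Cor: optval convex pop}). The one nonroutine step is guessing the certificate: the rank-one sos-matrix $S(x)=\tfrac16 vv^T$ is engineered precisely so that $\Tr\!\big(S(x)\Hess p_3(v)\big)$ reproduces the cubic homogeneous part $p_3(v)$, leaving exactly the psd quadratic form $\tfrac13 v^T\Hxb v$ to be absorbed by $\sigma$; everything else (nonnegativity of the trace of a product of psd matrices, sos-ness of a psd quadratic form, and the scalar characterization of sos-matrices from Section~\ref{POLY SSSec: Sos}) is immediate. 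It is worth noting that only $\gxb=0$ is used to derive the identity, while $\Hxb\succeq 0$ enters solely to certify $\sigma$ as sos --- consistent with the hypothesis being existence of a second-order point rather than of a local minimum.
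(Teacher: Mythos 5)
Your proof is correct and is essentially identical to the paper's: the same evaluation-at-$\xbar$ argument for the upper bound, and the same certificate $(\gamma,\sigma,S)=\bigl(p(\xbar),\tfrac13 v^T\Hxb v,\tfrac16 vv^T\bigr)$ for attainment of the lower bound. The paper verifies the identity by expanding $p_3(x-\xbar)$ directly in the form~\eqref{POLY Eq: Cubic Poly Form}, whereas you route the same algebra through $\Hx=\Hxb+\Hess p_3(v)$ and Euler's identity $v^T\Hess p_3(v)v=6p_3(v)$; this is a cosmetic reorganization of the same computation, not a different approach.
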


\begin{proof}
Let $\xbar$ be a second-order point of $p$ and $\gamma^*$ be the optimal value of (\ref{POLY Eq: cubic sdp}). Consider any feasible solution $(\gamma, \sigma, S)$ to (\ref{POLY Eq: cubic sdp}) (nonemptiness of the feasible set is established in the next paragraph). Since $\Hxb \succeq 0$ and $S(\xbar) \succeq 0$, we have $\Tr(\Hxb S(\xbar)) \ge 0$. Since $\sigma(\xbar) \ge 0$ as well, it follows that $p(\xbar) \ge \gamma$. Hence, $p(\xbar) \ge \gamma^*$.

To show that $p(\xbar) \le \gamma^*$ and that the value $\gamma^* = p(\xbar)$ is attained, we establish that
$$(\gamma,\sigma,S) = \left(p(\xbar), \frac{1}{3}(x-\xbar)^T\Hxb(x-\xbar), \frac{1}{6}(x-\xbar)(x-\xbar)^T\right)$$
is feasible to (\ref{POLY Eq: cubic sdp}). Note that $\frac{1}{3}(x-\xbar)^T\Hxb(x-\xbar)$ is an sos polynomial (as $\Hxb$ can be factored into $V^TV$), and that $\frac{1}{6}(x-\xbar)(x-\xbar)^T$ is an sos-matrix by construction. To show that the first constraint in (\ref{POLY Eq: cubic sdp}) is satisfied, consider the Taylor expansion of $p$ around $\xbar$ in the direction $x - \xbar$ (see (\ref{POLY Eq: cubic taylor}), noting that $\gxb = 0$):
\beq \label{POLY Eq: SOP cubic taylor}p(\xbar + (x-\xbar)) = p(\xbar) + \frac{1}{2}(x-\xbar)^T\Hxb(x-\xbar) + p_3(x-\xbar).\eeq
Observe that if $p$ is written in the form (\ref{POLY Eq: Cubic Poly Form}), then we have
\begin{align*}
p_3(x-\xbar) &= \frac{1}{6}(x-\xbar)^T\left(\sum_{i=1}^n (x_i-\xbar_i)H_i\right)(x-\xbar)\\
&=\frac{1}{6}(x-\xbar)^T\left(\sum_{i=1}^n (x_i-\xbar_i)H_i + Q - Q\right)(x-\xbar)\\
&=\frac{1}{6}(x-\xbar)^T\left(\sum_{i=1}^n x_iH_i+ Q - \sum_{i=1}^n \xbar_iH_i - Q\right)(x-\xbar)\\
&=\frac{1}{6}(x-\xbar)^T\Hx(x-\xbar) - \frac{1}{6}(x-\xbar)^T\Hxb(x-\xbar).
\end{align*}
Note further that due to the cyclic property of the trace, we have
$$\frac{1}{6}(x-\xbar)^T\Hx(x-\xbar) = \Tr\left((\frac{1}{6}(x-\xbar)(x-\xbar)^T)\Hx\right).$$
Hence, (\ref{POLY Eq: SOP cubic taylor}) reduces to the following identity
\beq\label{POLY Eq: cubic sos identity}
	p(x) - p(\xbar) = \frac{1}{3}(x-\xbar)^T\Hxb(x-\xbar) + \Tr\left((\frac{1}{6}(x-\xbar)(x-\xbar)^T)\Hx\right),
\eeq
and thus the claim is established.
\end{proof}

Since (\ref{POLY Eq: cubic sdp}) is a tight sos relaxation of (\ref{POLY Eq: convex pop}) when $SO_p$ is nonempty, it is interesting to see how an optimal solution to (\ref{POLY Eq: convex pop}) can be recovered from an optimal solution to (\ref{POLY Eq: cubic sdp}). This is shown in the next theorem, keeping in mind that optimal solutions to (\ref{POLY Eq: convex pop}) are second-order points of $p$ (see Theorem \ref{POLY Thm: Closure}).

\begin{theorem}\label{POLY Thm: solution recovery sdp}
		Let $p: \Rn \to \R$ be a cubic polynomial with a second-order point, and let $(\gamma^*, \sigma^*, S^*)$ be an optimal solution of (\ref{POLY Eq: cubic sdp}) applied to $p$. Then, the set
		\beq\label{POLY Eq: second order points}
		\Gamma \defeq \{x \in \Rn\ |\ \Hx \succeq 0, \sigma^*(x) = 0, \Tr(S^*(x)\Hx) = 0\}
		\eeq
		is a spectrahedron, and $\Gamma = SO_p$.
\end{theorem}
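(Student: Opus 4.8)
The plan is to establish the two parts of the statement — the set identity $\Gamma = SO_p$ and spectrahedrality of $\Gamma$ — largely separately, using the results of Section~\ref{POLY Sec: Geometry} and Theorem~\ref{POLY Thm: cubic sdp} for the former and the structure of sum of squares objects for the latter.

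For $\Gamma = SO_p$, the key point is that \emph{on} the convexity region $CR_p = \{x \in \Rn : \Hx \succeq 0\}$ both terms on the right of the identity $p(x) - \gamma^* = \sigma^*(x) + \Tr(S^*(x)\Hx)$ are nonnegative: $\sigma^*(x) \ge 0$ since $\sigma^*$ is sos, and $\Tr(S^*(x)\Hx) \ge 0$ since $S^*(x) \succeq 0$ (it is an sos-matrix) and the trace of a product of two positive semidefinite matrices is nonnegative. Hence $p(x) \ge \gamma^*$ for all $x \in CR_p$, with equality exactly when $\sigma^*(x) = 0$ and $\Tr(S^*(x)\Hx) = 0$; that is, $\Gamma = \{x \in CR_p : p(x) = \gamma^*\}$. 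By Theorem~\ref{POLY Thm: cubic sdp} and Corollary~\ref{POLY Cor: optval convex pop}, $\gamma^*$ is the common value that $p$ takes at its second-order points, which is the optimal value of (\ref{POLY Eq: convex pop}); so $\Gamma$ is precisely the set of optimal solutions of (\ref{POLY Eq: convex pop}). Since $SO_p \ne \emptyset$ by hypothesis, Theorem~\ref{POLY Thm: Closure} identifies this optimal solution set with $SO_p$, giving $\Gamma = SO_p$.

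For spectrahedrality, the constraint $\Hx \succeq 0$ is already a linear matrix inequality in $x$. Because $\sigma^*$ is a degree-2 sos polynomial it equals $\sum_j \ell_j(x)^2$ for affine forms $\ell_j$, so $\{x : \sigma^*(x) = 0\} = \{x : \ell_j(x) = 0 \ \forall j\}$ is an affine subspace. For the last constraint, write the sos-matrix as $S^*(x) = \sum_k \ell_k(x)\ell_k(x)^T$ with the $\ell_k$ affine $\Rn$-valued maps; then on $CR_p$ one has $\Tr(S^*(x)\Hx) = \sum_k \ell_k(x)^T \Hx \ell_k(x)$, a sum of nonnegative terms, so the vanishing of this trace is equivalent there to $\Hx \ell_k(x) = 0$ for all $k$. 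Using the elementary fact that for $A \succeq 0$ the bordered condition $\left[\begin{smallmatrix} A & Av \\ v^TA & 0\end{smallmatrix}\right] \succeq 0$ is equivalent to $Av = 0$, one assembles $\Gamma$ as the intersection of the spectrahedron $\{\Hx \succeq 0\}$, the affine flat $\{\sigma^*(x) = 0\}$, and these bordered conditions; combined with the observation (Lemma~\ref{POLY Lem: constant null}) that on $\Gamma = SO_p$ one has $x - \xbar \in \nulls(\Hxb)$ for a fixed $\xbar \in SO_p$, this produces a single linear matrix inequality description.

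The main obstacle I anticipate is exactly this spectrahedrality claim: the entries $\Hx\ell_k(x)$ appearing in the bordered blocks are a priori quadratic in $x$, so turning the trace-vanishing condition into honest affine linear-matrix-inequality data requires care — one must exploit that intersecting with the affine flat $\{\sigma^*(x)=0\}$ (equivalently, with $\xbar + \nulls(\Hxb)$) and with $\{\Hx \succeq 0\}$ collapses these expressions to affine ones, or else route the argument through $\Gamma = SO_p$ and a direct spectrahedral description of $SO_p$ in the spirit of Theorem~\ref{POLY Thm: WLM Closure Spectrahedron} (which handles the case in which $p$ has a local minimum) together with a separate treatment of the case $LM_p = \emptyset$. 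By contrast, the identity $\Gamma = SO_p$ is essentially bookkeeping once the nonnegativity of the two summands on $CR_p$ is in hand and the identifications of $\gamma^*$ and of the solution set of (\ref{POLY Eq: convex pop}) are invoked.
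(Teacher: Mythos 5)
Your argument for the set identity $\Gamma = SO_p$ is correct and follows essentially the paper's route: both summands in the sos identity are nonnegative on $CR_p$, so $\Gamma$ is exactly the set of minimizers of (\ref{POLY Eq: convex pop}), which Theorem~\ref{POLY Thm: Closure} identifies with $SO_p$ when $SO_p\ne\emptyset$.

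The spectrahedrality claim is where the proposal has a genuine gap, and you flag it yourself without resolving it. Writing $S^*(x) = R(x)^T R(x)$ with $R$ affine of size $k\times n$ and letting $r_i(x)$ be the (affine) columns of $R^T(x)$, the condition $\Tr(S^*(x)\Hx) = 0$ is, on $CR_p$, equivalent to $\Hx\, r_i(x) = 0$ for $i=1,\dots,k$; but these equations are quadratic in $x$, so the bordered-matrix device does not yield a linear matrix inequality, and your suggestion that intersecting with $\{\sigma^*(x)=0\}$ and $\{\Hx\succeq 0\}$ ``collapses'' them to affine equations is asserted without proof and is not how the paper proceeds. The paper's resolution is the crux of the theorem: fix a single $y \in ri(CR_p)$ (nonempty since $SO_p\ne\emptyset$ forces $CR_p\ne\emptyset$) and replace the constraint $\Tr(S^*(x)\Hx)=0$ by the genuinely affine constraints $\Hess p(y)\,r_i(x) = 0$, $i=1,\dots,k$. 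One inclusion then holds because $\nulls(\Hess p(y)) \subseteq \nulls(\Hx)$ for every $x\in CR_p$ (as $y$ is in the relative interior), and the reverse inclusion uses that $\nulls(\Hx) = \nulls(\Hess p(y))$ for all $x\in ri(CR_p)$ (\cite[Corollary~1]{ramana1995some}), combined with a closedness argument for boundary points of $CR_p$. This replacement of the moving pencil $\Hx$ by the fixed matrix $\Hess p(y)$ in the null-space condition is the single observation that turns (\ref{POLY Eq: second order points}) into a spectrahedron, and it is absent from your proposal.

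Two further notes. Your fallback route through Theorem~\ref{POLY Thm: WLM Closure Spectrahedron} requires access to a local minimum, and there are cubics with $SO_p\ne\emptyset$ but $LM_p = \emptyset$ (e.g.\ $p(x)=x^3$ has $SO_p=\{0\}$, $LM_p=\emptyset$); you defer this to ``a separate treatment'' but give none, and supplying that treatment is precisely what the present theorem does. Finally, the parenthetical equating $\{\sigma^*(x)=0\}$ with $\xbar + \nulls(\Hxb)$ is only justified for the canonical optimal solution constructed in the proof of Theorem~\ref{POLY Thm: cubic sdp}; for an arbitrary optimal $(\gamma^*,\sigma^*,S^*)$ one knows only that $SO_p \subseteq \{\sigma^*(x)=0\}$.
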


\begin{proof}

We first show that $\Gamma = SO_p$. Let $\xbar$ be a second-order point of $p$. From Theorem \ref{POLY Thm: cubic sdp} and the first constraint of (\ref{POLY Eq: cubic sdp}) we have
$$0 = p(\xbar) - p(\xbar) = p(\xbar) - \gamma^* = \sigma^*(\xbar) + \Tr(S^*(\xbar)\Hxb).$$
As $\sigma^*(\xbar)$ and $\Tr(S^*(\xbar)\Hxb)$ are both nonnegative, the above equation implies they must both be zero, and hence $SO_p \subseteq \Gamma$. To see why $\Gamma \subseteq SO_p$, let $\ybar$ be a point in $\Gamma$ and $\hat{x}$ be an arbitrary second-order point (which by the assumption of the theorem exists). Observe from Theorem \ref{POLY Thm: cubic sdp} and the first constraint of (\ref{POLY Eq: cubic sdp}) that
$$p(\ybar) - p(\hat{x}) = p(\ybar) - \gamma^* = \sigma^*(\ybar) + \Tr(S^*(\ybar)\Hess p(\ybar)) = 0.$$ Additionally, because $\Hess p(\ybar) \succeq 0$, it follows from Corollary \ref{POLY Cor: optval convex pop} that $\ybar$ is optimal to (\ref{POLY Eq: convex pop}), and thus is a second-order point by Theorem \ref{POLY Thm: Closure}.
	
Now we show that $\Gamma$ is a spectrahedron by ``linearizing'' the quadratic and cubic equations that appear in (\ref{POLY Eq: second order points}). Since $\sigma^*$ is a quadratic sos polynomial, it can be written equivalently as $\sigma^*(x) = \sum_{i=1}^m q_i^2(x)$ for some affine polynomials $q_1,\ldots,q_m$. Similarly, since $S^*$ is an sos-matrix with quadratic entries, it can be written as $S^*(x) = R(x)^TR(x)$ for some $k \times n$ matrix $R$ with affine entries. First note that as $\Hx$ is affine in $x$ and $\sigma^*$ is a sum of squares of affine polynomials, the set
$$\{x \in \Rn\ |\ \Hx \succeq 0, \sigma^*(x) = 0\} = \{x \in \Rn\ |\ \Hx \succeq 0, q_1(x) = 0,\ldots, q_m(x) = 0\}$$
is clearly a spectrahedron.

Now let $y$ be any point in $ri(CR_p)$. Such a point exists because $CR_p$ is nonempty by assumption, and relative interiors of nonempty convex sets are nonempty \cite[Theorem 6.2]{rockafellar1970convex}. Now let $r_i$ be the $i$-th column of the matrix $R^T$. We claim that $\Gamma$ is equivalent to the following set:
\beq \label{POLY Eq: SOP SDR}\big\{x \in \Rn\ |\ \Hx \succeq 0, q_1(x) = 0, \ldots, q_m(x) = 0, \Hess p(y) r_1(x) = 0, \ldots, \Hess p(y) r_k(x) = 0\big\}.\eeq
Note that this set is a spectrahedron, and that the final $k$ equality constraints are enforcing that each column of $R^T$ be in the null space of $\Hess p(y)$.

To prove the claim, first let $x$ be in (\ref{POLY Eq: SOP SDR}). Note that $\nulls(\Hess p(y)) \subseteq \nulls(\Hx)$, as $y \in ri(CR_p)$ and so $\Hess p(y) = \lambda \Hx + (1-\lambda) \Hess p(z)$ for some $z \in CR_p$ and $\lambda \in (0,1)$. Then,
$$\Tr(S^*(x)\Hx) = \sum_{i=1}^k r_i^T(x)\Hx r_i(x) = 0.$$
Hence $(\ref{POLY Eq: SOP SDR}) \subseteq (\ref{POLY Eq: second order points})$.

To show the reverse inclusion, let $x$ be a point in (\ref{POLY Eq: second order points}). It is easy to check that $\Tr(AB) = 0$ for two psd matrices $A = C^TC$ and $B$ if and only if the columns of $C^T$ belong to the null space of $B$. Hence, we must have $r_i(x) \in \nulls (\Hx)$. Assume first that $x\in ri(CR_p)$. Then we must have $r_i(x) \in \nulls (\Hx) = \nulls (\Hess p(y))$ as $CR_p$ is a spectrahedron and any two matrices in the relative interior of a spectrahedron have the same null space \cite[Corollary~1]{ramana1995some}. To see why we must also have $r_i(x) \in \nulls (\Hess p(y))$ for any $x \in CR_p\backslash ri(CR_p)$, observe that $\nulls(\Hess p(y))$ is closed, the vector-valued functions $r_i$ are continuous in $x$, and the preimage of a closed set under a continuous function is closed.
\end{proof}

\subsection{A Simplified Semidefinite Representation of Second-Order Points and an Algorithm for Finding Local Minima}\label{POLY SSec: Simplification}

In this subsection, we derive a semidefinite representation of the set $SO_p$, which will be given in (\ref{POLY Eq: Complete Cubic SDP SOP}). In contrast to the semidefinite representation in (\ref{POLY Eq: SOP SDR}), which requires first solving (\ref{POLY Eq: cubic sdp}) and then performing some matrix factorizations, the representation in (\ref{POLY Eq: Complete Cubic SDP SOP}) can be immediately obtained from the coefficients of $p$. To find a second-order point of an $n$-variate cubic polynomial via the representation in (\ref{POLY Eq: Complete Cubic SDP SOP}), one needs to solve an SDP with $\frac{(n+2)(n+1)}{2}$ scalar variables and two semidefinite constraints of size $(n+1) \times (n+1)$. This is in contrast to finding a second-order point via the representation in (\ref{POLY Eq: SOP SDR}), which requires solving two SDPs: (\ref{POLY Eq: cubic sdp}) which has $\left(\frac{n(n+1)}{2}+1\right)\left(\frac{(n+2)(n+1)}{2}\right)+1$ scalar variables and two semidefinite constraints of sizes $(n+1) \times (n+1)$ and $n(n+1) \times n(n+1)$ (coming from the two sos constraints), and then the SDP associated with (\ref{POLY Eq: SOP SDR}), which has $n$ scalar variables and a semidefinite constraint of size $n \times n$. Another purpose of this subsection is to present our final result, which is an algorithm for testing for existence of a local minimum (Algorithm \ref{POLY Alg: Complete Cubic SDP} in Section \ref{POLY SSSec: SOP and Algorithm}).

\subsubsection{A Simplified Sos Relaxation}

Recall from the proof of Theorem \ref{POLY Thm: cubic sdp} that if $p$ has a second-order point $\xbar$, then there is an optimal solution to (\ref{POLY Eq: cubic sdp}) of the form
\beq\label{POLY Eq: Ideal solution}
(\gamma, \sigma, S) = \left(p(\xbar), \frac{1}{3}(x-\xbar)^T\Hxb(x-\xbar), \frac{1}{6}(x-\xbar)(x-\xbar)^T\right).\eeq In particular, for this solution, the coefficients of $\sigma$ and $S$ can both be written entirely in terms of the entries of $\xbar$ and the coefficients of $p$. In what follows, we attempt to optimize over solutions to (\ref{POLY Eq: cubic sdp}) which are of the form in (\ref{POLY Eq: Ideal solution}). However, imposing this particular structure on the solution requires nonlinear equality constraints (in fact, it turns out quadratic constraints suffice). Instead, we will impose an SDP relaxation of these nonlinear constraints and show that the relaxation is exact. We follow a standard technique in deriving SDP relaxations for quadratic programs, where the outer product $xx^T$ of some variable $x$ is replaced by a new matrix variable $X$ satisfying $X - xx^T \succeq 0$. The latter matrix inequality that can be imposed as a semidefinite constraint via the Schur complement~\cite{boyd2004convex}. The variable $\xbar$ will be represented by a variable $y \in \Rn$, and the symmetric matrix variable $Y \in \Snn$ will represent $yy^T$. In addition, we will need another scalar variable $z$.

Assume $p$ is given in the form (\ref{POLY Eq: Cubic Poly Form}), and let us expand $\sigma$ in (\ref{POLY Eq: Ideal solution}) (disregarding the factor $\frac{1}{3}$) as follows:
\begin{small}
\begin{align*}
(x-\xbar)^T \Hxb (x - \xbar) &= x^T \left(\sum_{i=1}^n \xbar_iH_i + Q\right)x - 2\xbar^T\left(\sum_{i=1}^n \xbar_iH_i + Q\right)x + \xbar^T\left(\sum_{i=1}^n \xbar_iH_i + Q\right)\xbar\\&= x^T\left(\sum_{i=1}^n \xbar_iH_i + Q\right)x - 2\sum_{i=1}^n \Tr(H_i \xbar \xbar^T)x_i - 2\xbar^TQx + \xbar^T\left(\sum_{i=1}^n \xbar_iH_i + Q\right)\xbar,
\end{align*}\end{small}
where in the last equality we used Lemma \ref{POLY Lem: Hessian switch}. If we replace any occurrence of $\xbar$ with $y$, any occurrence of $\xbar \xbar^T$ with $Y$ and any occurrence of $\xbar^T(\sum_{i=1}^n \xbar_iH_i + Q)\xbar$ with $z$, we can rewrite the above expression as
\beq \label{POLY Eq: new cubic sdp first term}
\sigma_{Y,y,z}(x) \defeq \sum_{j=1}^n \sum_{k=1}^n \left(\sum_{i=1}^n (H_i)_{jk}y_i + Q_{jk}\right)x_jx_k - 2\sum_{i=1}^n (\Tr(H_iY) + e_i^TQy)x_i + z. \eeq
Similarly, the matrix $S$ in (\ref{POLY Eq: Ideal solution}) can be written as $xx^T - xy^T - yx^T + Y$ (disregarding the factor $\frac{1}{6}$). Note that if $Y - yy^T\succeq 0$, then the matrix $xx^T - xy^T - yx^T + Y$ is an sos-matrix (as a polynomial matrix in $x$). By making these replacements, we arrive at an SDP which attempts to look for a solution to the sos program in (\ref{POLY Eq: cubic sdp}) which is of the structure in (\ref{POLY Eq: Ideal solution}). This is the following SDP\footnote{Note that $x$ is not a decision variable in this SDP as the first constraint needs to hold for all $x$.}:
\begin{equation}\label{POLY Eq: new cubic sdp}
    \begin{aligned}
	& \underset{\gamma \in \R, Y \in \Snn, y \in \Rn, z \in \R}{\sup}
	& & \gamma \\
	& \text{subject to}
	&& p(x) - \gamma = \frac{1}{3}\sigma_{Y,y,z}(x) + \frac{1}{6}\Tr\left(\Hx(xx^T - xy^T - yx^T + Y)\right),\\
	&&&\sigma_{Y,y,z} \text{ is sos},\\
	&&&\bmat Y & y\\ y^T & 1 \emat \succeq 0.
\eaeql

Through straightforward algebra and matching coefficients, the first constraint (keeping in mind that $p$ is as in (\ref{POLY Eq: Cubic Poly Form})) can be more explicitly written as:
\begin{align*}
b_i &= -e_i^TQy - \frac{1}{2} \Tr(H_iY), i = 1, \ldots, n,\\
- \gamma &= \frac{1}{6}\Tr(QY) + \frac{z}{3}.
\end{align*}

These constraints reflect that the coefficients of the linear terms and the scalar coefficient match on both sides; the cubic and quadratic coefficients are automatically the same. We can rewrite (\ref{POLY Eq: new cubic sdp first term}) as
$$\sigma_{Y,y,z}(x) = \bvec x\\ 1\evec^T T(Y,y,z) \bvec x \\ 1 \evec,$$
where
$$T(Y,y,z) \defeq \bmat \sum_{i=1}^n y_iH_i + Q & \sum_{i=1}^n \Tr(H_iY)e_i+Qy \\ (\sum_{i=1}^n \Tr(H_iY)e_i+Qy)^T & z\emat.$$
The constraint in (\ref{POLY Eq: new cubic sdp}) that $\sigma$ be sos is the same as the matrix $T$ being psd. Putting everything together, the problem in (\ref{POLY Eq: new cubic sdp}) can be rewritten as the following SDP:\footnote{Recall that the data to this SDP is obtained from the representation of $p$ in the form of (\ref{POLY Eq: Cubic Poly Form}).}

\begin{equation}\label{POLY Eq: Small cubic SDP}
    \begin{aligned}
	& \underset{Y \in \Snn, y \in \Rn, z \in \R}{\inf}
	& & \frac{1}{6}\Tr(QY) + \frac{z}{3} \\
	& \text{subject to}
	&& \frac{1}{2}\Tr(H_iY) + e_i^TQy + b_i = 0, \forall i = 1, \ldots, n,\\
	&&& T(Y,y,z) \succeq 0,\\
	&&& \bmat Y & y\\ y^T & 1 \emat \succeq 0.
\eaeql

It is interesting to observe that the first constraint is a relaxation of the quadratic constraint which would impose $\grad p(y) = 0$, and that the constraint $T(Y,y,z) \succeq 0$ in particular implies $\Hess p(y) \succeq 0$. One can think of (\ref{POLY Eq: Small cubic SDP}) as another SDP relaxation of (\ref{POLY Eq: convex pop}) which is tight when $p$ has a second-order point.

\subsubsection{Combining the SDP in (\ref{POLY Eq: Small cubic SDP}) with its Dual}
In this subsection, we write down an SDP (given in (\ref{POLY Eq: complete cubic SDP})) whose optimal value can be related to the existence of second-order points of a cubic polynomial. To arrive at this SDP, we first take the dual of (\ref{POLY Eq: Small cubic SDP}). It will turn out that the constraints in the dual follow a very similar structure to those in the primal, and that any feasible solution of the primal yields a feasible solution of the dual. We then combine the primal-dual pair of SDPs to arrive at a single SDP, which is the one in (\ref{POLY Eq: complete cubic SDP}). To this end, let us write down the dual of (\ref{POLY Eq: Small cubic SDP}):
\begin{small}
\baeq
&\underset{R,S,r,s,\lambda,\sigma,\rho,\gamma}{\sup}
& & \gamma &&\\
& \text{subject to}
&& \frac{1}{6}\Tr(QY) + \frac{z}{3} - \gamma &=& \sum_{i=1}^n \lambda_i\left(\frac{1}{2}\Tr(H_iY)+e_i^TQy+b_i\right)\\
&&&&& + \Tr\left(\bmat Y & y \\ y^T & 1\emat \bmat R & r \\ r^T & \rho\emat \right)+ \Tr\left(T(Y,y,z) \bmat S & s \\ s^T & \sigma \emat\right), \forall (Y,y,z)\\
&&& \hspace{1.6cm} \bmat R & r \\ r^T & \rho\emat &\succeq& 0,\\
&&& \hspace{1.6cm} \bmat S & s \\ s^T & \sigma \emat &\succeq& 0,
\eaeq
\end{small}
where $R,S \in \Snn, r,s,\lambda \in \Rn,$ and $\sigma,\rho,\gamma \in \R$. The right-hand side of the first constraint simplifies to
$$b^T\lambda + \rho + \Tr(QS) +\Tr\left(\left(\sum_{i=1}^n (\frac{1}{2}\lambda_i+2s_i)H_i + R \right)Y\right) + \left(Q(\lambda+2s) + \sum_{i=1}^n \Tr(H_iS)e_i + 2r\right)^Ty + \sigma z.$$
After matching coefficients, the dual problem can be rewritten as
\begin{small}
\baeq
    & \underset{R,S,r,s,\lambda,\rho}{\sup}
	&& -b^T\lambda - \rho - \Tr(QS)\\
	& \text{subject to}
	&& \sum_{i=1}^n (\frac{1}{2}\lambda_i+2s_i)H_i + R = \frac{1}{6}Q,\\
	&&& Q(\lambda+2s) + \sum_{i=1}^n \Tr(H_iS)e_i + 2r= 0,\\
	&&& \bmat R & r \\ r^T & \rho\emat \succeq 0,\\
	&&& \bmat S & s \\ s^T & \frac{1}{3} \emat \succeq 0,
\eaeq
\end{small}
Substituting $R$ and $r$ using the first two constraints into the first psd constraint and then multiplying by 6, we arrive at the problem
\baeq
    & \underset{S, s, \lambda, \rho}{\sup}
	&& -b^T\lambda - \rho - \Tr(QS)&\\
	& \text{subject to}
	&& \bmat \sum_{i=1}^n (-3\lambda_i - 12s_i)H_i + Q & Q(-3\lambda-6s) - 3\sum_{i=1}^n \Tr(H_iS)e_i \\ \left(Q(-3\lambda-6s) - 3\sum_{i=1}^n \Tr(H_iS)e_i \right)^T & 6\rho \emat &\succeq 0,\\
	&&& \hspace{11.75cm} \bmat S & s \\ s^T & \frac{1}{3}\emat &\succeq 0.
\eaeq

    Replacing $S$ with $\frac{1}{3}S$, $s$ with $-\frac{1}{3}s$, and $\rho$ with $\frac{1}{6}\rho$, we can reparameterize this problem and arrive at our final form for the dual of (\ref{POLY Eq: Small cubic SDP}):
    \begin{equation}\label{POLY Eq: small cubic SDP dual}
    \begin{aligned}
        & \underset{S, s, \lambda, \rho}{\sup}
    	&& -b^T\lambda - \frac{1}{6}\rho - \frac{1}{3}\Tr(QS)&\\
    	& \text{subject to}
    	&& \bmat \sum_{i=1}^n (4s_i - 3\lambda_i)H_i + Q & Q(2s - 3\lambda) - \sum_{i=1}^n \Tr(H_iS)e_i \\ \left(Q(2s-3\lambda) - \sum_{i=1}^n \Tr(H_iS)e_i\right)^T & \rho \emat &\succeq 0,\\
    	&&& \hspace{10.65cm} \bmat S & s \\ s^T & 1\emat &\succeq 0.
    \eaeql
    
    One can easily verify that if $(Y,y,z)$ is feasible to (\ref{POLY Eq: Small cubic SDP}), then $(Y,y,y,z)$ is feasible to (\ref{POLY Eq: small cubic SDP dual}). Replacing $(S,s,\lambda,\gamma)$ with $(Y,y,y,z)$ in (\ref{POLY Eq: small cubic SDP dual}) gives an SDP whose constraints are the two psd constraints in (\ref{POLY Eq: Small cubic SDP}) and whose objective function is $-b^Ty - \frac{1}{6}z - \frac{1}{3}\Tr(QY)$. We now create a new SDP, which has the same decision variables and constraints as (\ref{POLY Eq: Small cubic SDP}), but whose objective function is the difference between the objective function of (\ref{POLY Eq: Small cubic SDP}) and $-b^Ty - \frac{1}{6}z - \frac{1}{3}\Tr(QY)$. The optimal value of this new SDP is an upper bound on the duality gap of the primal-dual SDP pair (\ref{POLY Eq: Small cubic SDP}) and (\ref{POLY Eq: small cubic SDP dual}). If our cubic polynomial $p$ is written in the form (\ref{POLY Eq: Cubic Poly Form}) and $$T(Y,y,z) = \bmat \sum_{i=1}^n y_iH_i + Q & \sum_{i=1}^n \Tr(H_iY)e_i+Qy \\ (\sum_{i=1}^n \Tr(H_iY)e_i+Qy)^T & z\emat$$
    as before, the new SDP we just described can be written as

    \begin{equation}\label{POLY Eq: complete cubic SDP}
        \begin{aligned}
    	& \underset{Y \in \Snn, y \in \Rn, z \in \R}{\inf}
    	& & \frac{1}{2}\Tr(QY) + b^Ty + \frac{z}{2}\\
    	& \text{subject to}
    	&& \frac{1}{2}\Tr(H_iY)+e_i^TQy+b_i=0, \forall i = 1, \ldots, n,\\
    	&&& T(Y,y,z) \succeq 0,\\
    	&&& \bmat Y & y \\ y^T & 1 \emat \succeq 0.
    \eaeql
The following theorem relates the optimal value of this SDP to the existence of second-order points of $p$.

\begin{theorem}\label{POLY Thm: Complete Cubic SDP}
    For a cubic polynomial $p$ given in the form (\ref{POLY Eq: Cubic Poly Form}), consider the SDP in (\ref{POLY Eq: complete cubic SDP}). For any feasible solution $(Y,y,z)$ to (\ref{POLY Eq: complete cubic SDP}), the objective value of (\ref{POLY Eq: complete cubic SDP}) is nonnegative. Furthermore, the optimal value of (\ref{POLY Eq: complete cubic SDP}) is zero and is attained if and only if $p$ has a second-order point.
\end{theorem}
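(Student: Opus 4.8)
The plan is to exploit the primal--dual structure that was deliberately built into~(\ref{POLY Eq: complete cubic SDP}). By the way it was constructed, the objective of~(\ref{POLY Eq: complete cubic SDP}) at a triple $(Y,y,z)$ equals the objective of the primal SDP~(\ref{POLY Eq: Small cubic SDP}) at $(Y,y,z)$ minus the objective of its dual~(\ref{POLY Eq: small cubic SDP dual}) at the associated point $(S,s,\lambda,\rho)=(Y,y,y,z)$. For the nonnegativity assertion, I would note that any feasible $(Y,y,z)$ of~(\ref{POLY Eq: complete cubic SDP}) is, by definition, feasible for~(\ref{POLY Eq: Small cubic SDP}), so (by the remark preceding~(\ref{POLY Eq: complete cubic SDP})) $(Y,y,y,z)$ is feasible for~(\ref{POLY Eq: small cubic SDP dual}). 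Weak duality for this SDP pair gives that the dual objective at $(Y,y,y,z)$ does not exceed the primal objective at $(Y,y,z)$; their difference is precisely $\tfrac12\Tr(QY)+b^Ty+\tfrac z2$, the objective of~(\ref{POLY Eq: complete cubic SDP}), which is therefore nonnegative on the feasible set.

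For the ``if'' direction, suppose $\xbar$ is a second-order point of $p$. I would substitute the rank-one point $(Y,y,z)=(\xbar\,\xbar^T,\ \xbar,\ \xbar^T\Hxb\,\xbar)$, which is exactly the image, under the change of variables used to pass from~(\ref{POLY Eq: cubic sdp}) to~(\ref{POLY Eq: new cubic sdp}), of the explicit optimal solution of~(\ref{POLY Eq: cubic sdp}) produced in the proof of Theorem~\ref{POLY Thm: cubic sdp}. Its rank-one form makes the Schur-complement constraint hold; the FONC $\grad p(\xbar)=0$, together with $\Tr(H_i\xbar\xbar^T)=\xbar^TH_i\xbar$ and Lemma~\ref{POLY Lem: Hessian switch}, verifies the linear equalities; and since $T(\xbar\xbar^T,\xbar,\xbar^T\Hxb\xbar)$ is a congruence transform of $\Hxb$, the constraint $T\succeq0$ follows from $\Hxb\succeq0$. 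Evaluating the objective of~(\ref{POLY Eq: complete cubic SDP}) at this point and simplifying with the FONC gives $0$; combined with the nonnegativity just shown, the optimal value is $0$ and it is attained.

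For the ``only if'' direction, suppose the optimal value of~(\ref{POLY Eq: complete cubic SDP}) is $0$ and is attained at $(Y^*,y^*,z^*)$. Then the weak-duality chain above holds with equality, so $(Y^*,y^*,z^*)$ is optimal for~(\ref{POLY Eq: Small cubic SDP}), $(Y^*,y^*,y^*,z^*)$ is optimal for~(\ref{POLY Eq: small cubic SDP dual}), and strong duality holds with attainment on both sides. I would apply SDP complementary slackness to this optimal pair. Writing out the two complementarity identities --- one for the constraint $T\succeq0$ and one for the constraint $\bmat Y & y\\ y^T & 1\emat\succeq0$ --- should yield structural relations along the lines of $N:=Y^*-y^*(y^*)^T\succeq0$, $\Hess p(y^*)\,N=0$, $z^*=(y^*)^T\Hess p(y^*)y^*$, and $(\grad p(y^*))_i=-\tfrac12\Tr(H_iN)$ for all $i$. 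Diagonalizing $N=\sum_l\mu_l u_lu_l^T$ with $\mu_l>0$ and $u_l\in\nulls(\Hess p(y^*))$, and using the symmetry~(\ref{POLY Eq: Valid Hessian}) in the form $u^TH_iu=2(\grad p_3(u))_i$, these relations point toward building a second-order point by perturbing $y^*$ within $\nulls(\Hess p(y^*))$; a natural candidate is $\xbar:=y^*+\sum_l\sqrt{\mu_l}\,u_l$, for which $\grad p(\xbar)=\grad p(y^*)+\grad p_3\!\big(\textstyle\sum_l\sqrt{\mu_l}u_l\big)$ and $\Hess p(\xbar)=\Hess p(y^*)+\Hess p_3\!\big(\textstyle\sum_l\sqrt{\mu_l}u_l\big)$.

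The main obstacle is finishing this last step: showing that the chosen perturbation actually annihilates the gradient (equivalently, that the cross terms of $\grad p_3$ among the $u_l$ vanish) and --- the more delicate point --- that the perturbed Hessian stays positive semidefinite. I expect to handle this by first passing, among all optimal solutions of~(\ref{POLY Eq: complete cubic SDP}), to one whose $N$ has minimal rank, or else by restricting $p$ to the affine subspace $y^*+\nulls(\Hess p(y^*))$ --- on which $p$ is again a cubic polynomial --- and using the elementary classification of low-dimensional cubics from Section~\ref{POLY SSec: Cubic Preliminaries} to locate a second-order point of this restriction that lifts to one of $p$ in the full space.
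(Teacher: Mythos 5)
Your first two parts are correct and match the paper exactly: the nonnegativity claim does follow from weak duality between~(\ref{POLY Eq: Small cubic SDP}) and~(\ref{POLY Eq: small cubic SDP dual}), and the ``if'' direction is handled by the rank-one substitution $(Y,y,z)=(\xbar\xbar^T,\xbar,\xbar^T\Hxb\xbar)$ precisely as you describe.

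Your ``only if'' direction has a genuine gap, and you yourself flag it as the ``main obstacle.'' The crucial realization you are missing is that \emph{no perturbation of $y^*$ is needed}: $y^*$ itself is a second-order point. You correctly derive from the linear equality constraint that $\grad p(y^*) = -\tfrac12\bigl(\Tr(H_1N),\ldots,\Tr(H_nN)\bigr)^T =: -\tfrac12 d$, but instead of trying to prove $d=0$ you pivot to constructing $\xbar = y^* + \sum_l\sqrt{\mu_l}u_l$. That candidate does not work: $\grad p_3$ is a quadratic map, so $\grad p_3(\sum_l\sqrt{\mu_l}u_l)$ contains cross terms $\sqrt{\mu_l\mu_m}\,u_l^T H_i u_m$ that have no reason to vanish, and even if the gradient could be cancelled there is no mechanism forcing $\Hess p(\xbar)=\Hess p(y^*)+\Hess p_3(\sum_l\sqrt{\mu_l}u_l)$ to remain psd. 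Passing to a minimal-rank $N$ or restricting to the affine slice $y^*+\nulls(\Hess p(y^*))$ does not resolve either difficulty in any obvious way.

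The paper's proof instead shows $d=0$ directly, by two facts you do not use. First, the Schur-complement structure of $T(Y^*,y^*,z^*)\succeq 0$ forces the top-right block to lie in $\cols(\Hess p(y^*))$, hence $d\in\cols(\Hess p(y^*))$. Second, a direct algebraic manipulation (equation~(\ref{POLY Eq: Complete SDP nonnegative})) rewrites the objective at $(Y^*,y^*,z^*)$ as
$$\tfrac12\Tr\bigl(\Hess p(y^*)\,D\bigr) + \tfrac12\,d^T\Hess p(y^*)^{+}d + \tfrac{\delta}{2},$$
a sum of three individually nonnegative terms (here $D=Y^*-y^*y^{*T}$ and $\delta\ge 0$ is the Schur-complement slack in $z^*$). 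Since the objective is zero, each term vanishes; in particular $\Hess p(y^*)^{+}d=0$, so $d\in\nulls(\Hess p(y^*))$. Being in both the column space and the null space, $d=0$, so $\grad p(y^*)=0$; and $\Hess p(y^*)\succeq 0$ is immediate from $T\succeq 0$. Your complementary slackness idea, even if the identities you guess are correct, does not by itself produce the decomposition above, which is what makes the argument close.
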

\begin{proof}
Suppose $(Y,y,z)$ is a feasible solution to (\ref{POLY Eq: complete cubic SDP}). Note that $(Y,y,z)$ is feasible to (\ref{POLY Eq: Small cubic SDP}) and $(Y,y,y,z)$ is feasible to (\ref{POLY Eq: small cubic SDP dual}), and so
$$\frac{1}{2}\Tr(QY) + b^Ty + \frac{z}{2} = \frac{1}{6}\Tr(QY) + \frac{z}{3} - \left(-b^Ty - \frac{1}{6}z - \frac{1}{3}\Tr(QY)\right) \ge 0$$
by weak duality applied to (\ref{POLY Eq: Small cubic SDP}) and (\ref{POLY Eq: small cubic SDP dual}). Hence, the objective of (28) is nonnegative at any feasible solution.

Now suppose that $p$ has a second-order point $\xbar$. We claim that the triplet $$\left(\xbar\xbar^T, \xbar, \xbar^T\left(\sum_{i=1}^n \xbar_iH_i + Q\right)\xbar\right)$$ is feasible to (\ref{POLY Eq: complete cubic SDP}) and achieves an objective value of zero. Indeed, the first constraint of (\ref{POLY Eq: complete cubic SDP}) is satisfied because its left-hand side reduces to $\gxb$, which is zero. The third constraint is satisfied since the matrix $(\xbar, 1)(\xbar, 1)^T$ is clearly psd. The second constraint is satisfied since $T(\xbar\xbar^T, \xbar, \xbar^T(\sum_{i=1}^n \xbar_iH_i + Q)\xbar)$ can be written as
\begin{small}
$$\bmat \sum_{i=1}^n \xbar_iH_i + Q & (\sum_{i=1}^n \xbar_iH_i+Q)\xbar \\ \xbar^T(\sum_{i=1}^n \xbar_iH_i+Q) & \xbar^T(\sum_{i=1}^n \xbar_iH_i + Q)\xbar\emat = \bmat (\sum_{i=1}^n \xbar_iH_i + Q)^{\frac{1}{2}} \\ \xbar^T(\sum_{i=1}^n \xbar_iH_i + Q)^{\frac{1}{2}}\emat \bmat (\sum_{i=1}^n \xbar_iH_i + Q)^{\frac{1}{2}} \\ \xbar^T(\sum_{i=1}^n \xbar_iH_i + Q)^{\frac{1}{2}}\emat^T.$$\end{small} The objective value at $(\xbar\xbar^T, \xbar, \xbar^T(\sum_{i=1}^n \xbar_iH_i + Q)\xbar)$ is
\begin{align*}
    &\frac{1}{2}\Tr(Q\xbar\xbar^T) + b^T\xbar + \frac{1}{2}\xbar^T\left(\sum_{i=1}^n \xbar_iH_i + Q\right)\xbar\\
    =& \frac{1}{2}\xbar^TQ\xbar - \left(\frac{1}{2} \sum_{i=1}^n \xbar_iH_i\xbar +Q\xbar\right)^T \xbar + \frac{1}{2}\xbar^T\left(\sum_{i=1}^n \xbar_iH_i + Q\right)\xbar\\
    =& 0.
\end{align*}
Since we have already shown that the objective function of (\ref{POLY Eq: complete cubic SDP}) is nonnegative over its feasible set, it follows that when $p$ has a second-order point, the optimal value of (\ref{POLY Eq: complete cubic SDP}) is zero and is attained.

To prove the converse, suppose the optimal value of (\ref{POLY Eq: complete cubic SDP}) is zero and is attained. Let $(Y^*, y^*, z^*)$ be an optimal solution to (\ref{POLY Eq: complete cubic SDP}). We will show that $y^*$ is a second-order point for $p$. Clearly $\Hess p(y^*)$ is psd, since $T(Y^*, y^*, z^*) \succeq 0$. To show that $\grad p(y^*) = 0$, let us start by letting $D \defeq Y^* - y^*y^{*T}$, and $d \defeq \sum_{i=1}^n \Tr(H_iD)e_i$. Note that
$$\frac{1}{2}\Tr(H_iy^*y^{*T}) + \frac{1}{2}\Tr(H_iD) + e_i^TQy^* + b_i = 0 \overset{(\ref{POLY Eq: Cubic Poly Form})}{\Rightarrow} -2(\grad p(y^*))_i = \Tr(H_iD),$$
or equivalently $d = -2\grad p(y^*)$. In the remainder of the proof, we show that $d = 0$.

Since $\sum_{i=1}^n y_i^*H_iy^*$ is the vector whose $i$-th entry is $y^{*T}H_iy^*$, we have that 
\begin{equation}\label{POLY Eq: Top right vector decomposition}
\sum_{i=1}^n \Tr(H_iY^*)e_i+Qy^*= \left(\sum_{i=1}^n y_i^*H_i + Q\right) y^* + d.
\end{equation} Then from the generalized\footnote{Here, $A^{+}$ refers to any pseudo-inverse of $A$, i.e. a matrix satisfying $AA^{+}A = A$.} Schur complement condition applied to $T(Y^*,y^*,z^*)$, we have
\begin{align*}
    z^* &\ge \left(\left(\sum_{i=1}^n y_i^*H_i + Q\right)y^* + d\right)^T \left(\sum_{i=1}^n y_i^*H_i + Q\right)^{+} \left(\left(\sum_{i=1}^n y_i^*H_i + Q\right)y^* + d\right)\\
    &= y^{*T}\left(\sum_{i=1}^n y_i^*H_i + Q\right)y^* + 2d^T\left(\sum_{i=1}^n y_i^*H_i + Q\right)^{+}\left(\sum_{i=1}^n y_i^*H_i + Q\right)y^* + d^T\left(\sum_{i=1}^n y_i^*H_i + Q\right)^{+}d.
\end{align*}
It is not difficult to verify that since $T(Y^*,y^*,z^*) \succeq 0$, we have $$\sum_{i=1}^n \Tr(H_iY^*)e_i+Qy^* \in \cols(\sum_{i=1}^n y_i^*H_i + Q),$$ and thus (\ref{POLY Eq: Top right vector decomposition}) implies $d \in \cols(\sum_{i=1}^n y_i^*H_i + Q)$. Therefore, there exists a vector $v \in \Rn$ such that $d = (\sum_{i=1}^n y_i^*H_i + Q) v$. We then have
\begin{align*}
    &d^T\left(\sum_{i=1}^n y_i^*H_i + Q\right)^{+}\left(\sum_{i=1}^n y_i^*H_i + Q\right) y^*\\
    =& v^T \left(\sum_{i=1}^n y^*_iH_i + Q\right) \left(\sum_{i=1}^n y^*_iH_i + Q\right)^{+} \left(\sum_{i=1}^n y^*_iH_i + Q\right) y^*\\
    =& v^T \left(\sum_{i=1}^n y_i^*H_i + Q\right) y^*\\
    =& d^T y^*.
\end{align*}
Now let $$\delta \defeq z^* - y^{*T}\left(\sum_{i=1}^n y_i^*H_i + Q\right)y^* - 2d^Ty^* - d^T\left(\sum_{i=1}^n y_i^*H_i + Q\right)^{+}d$$ and observe that $\delta \ge 0$. We can then write the objective value of (\ref{POLY Eq: complete cubic SDP}) at $(Y^*,y^*,z^*)$ in terms of $D, d$, and $\delta$:

\begin{equation}\label{POLY Eq: Complete SDP nonnegative}
\begin{aligned}
    &\ \frac{1}{2}\Tr(QY^*) + b^Ty^* + \frac{1}{2}z^*\\
	=&\ \frac{1}{2}\left(y^{*T}Qy^* + \Tr(QD)\right) + \sum_{i=1}^n \left(-e_i^TQ y^*- \frac{1}{2}\Tr(H_iy^*y^{*T}) - \frac{1}{2}\Tr(H_iD)\right)y_i^*\\
	&+ \frac{1}{2}\left(y^{*T}\left(\sum_{i=1}^n y_i^*H_i + Q\right)y^* + 2d^Ty^* + d^T\left(\sum_{i=1}^n y_i^*H_i + Q\right)^{+}d+ \delta\right)\\
	=&\ \left(\frac{1}{2}-1+\frac{1}{2}\right)y^{*T}Qy^* + \left(-\frac{1}{2} + \frac{1}{2}\right)\sum_{i=1}^n y^{*T}y_i^*H_iy^*\\
	&+ \frac{1}{2}\Tr(QD) + \left(-\frac{1}{2}+1\right)\sum_{i=1}^n\Tr(H_iD)y_i^* + \frac{1}{2}d^T\left(\sum_{i=1}^n y_i^*H_i + Q\right)^{+}d + \frac{\delta}{2}\\
	=&\ \frac{1}{2}\Tr\left(\left(\sum_{i=1}^n y_i^*H_i + Q\right)D\right) + \frac{1}{2}d^T\left(\sum_{i=1}^n y_i^*H_i + Q\right)^{+}d + \frac{\delta}{2}\\
	 \ge &\ 0,
\end{aligned}
\end{equation}
where in the last inequality we used the facts that $D \succeq 0$ and that the pseudo-inverse of a psd matrix is psd.
    
Since the left-hand side of the above equation is zero by assumption, and since all three terms on the right-hand side are nonnegative, it follows that $(\sum_{i=1}^n y_i^*H_i + Q)^+ d = 0$. As the null space of $(\sum_{i=1}^n y_i^*H_i + Q)^+$ is the same as the null space of $(\sum_{i=1}^n y_i^*H_i + Q)$, we have $(\sum_{i=1}^n y_i^*H_i + Q)d = 0$. However, because $d \in \cols(\sum_{i=1}^n y_i^*H_i + Q)$, it must be that $d=0$.

\end{proof}

\subsubsection{An Algorithm for Finding Local Minima}\label{POLY SSSec: SOP and Algorithm}

Theorem \ref{POLY Thm: Complete Cubic SDP} leads to the following characterization of second-order points of a cubic polynomial.

\begin{cor}\label{POLY Cor: Complete Cubic SDP SOP}
    Let $p: \Rn \to \R$ be a cubic polynomial written in the form (\ref{POLY Eq: Cubic Poly Form}). Then the set of its second-order points is equal to
    \beq\label{POLY Eq: Complete Cubic SDP SOP}
    \begin{aligned}\{y \in \Rn\ |\ & \exists Y \in \Snn,z \in \R \text{ such that }\\
    &\frac{1}{2}\Tr(QY) + b^Ty + \frac{z}{2} = 0, \frac{1}{2}\Tr(H_iY)+e_i^TQy+b_i = 0, \forall i = 1, \ldots, n,\\
    &T(Y,y,z) \succeq 0, \bmat Y&y\\y^T&1\emat \succeq 0\}.
    \end{aligned}\eeq
\end{cor}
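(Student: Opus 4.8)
The plan is to obtain this corollary directly from Theorem~\ref{POLY Thm: Complete Cubic SDP}, observing that the set in (\ref{POLY Eq: Complete Cubic SDP SOP}) is exactly the projection onto the $y$-coordinates of the set of feasible triples $(Y,y,z)$ of the SDP~(\ref{POLY Eq: complete cubic SDP}) at which the objective function $\frac{1}{2}\Tr(QY) + b^Ty + \frac{z}{2}$ equals zero. Indeed, the extra equality constraint $\frac{1}{2}\Tr(QY) + b^Ty + \frac{z}{2} = 0$ appearing in (\ref{POLY Eq: Complete Cubic SDP SOP}) is precisely the statement that the objective of (\ref{POLY Eq: complete cubic SDP}) vanishes, and the remaining constraints in (\ref{POLY Eq: Complete Cubic SDP SOP}) are the constraints of (\ref{POLY Eq: complete cubic SDP}). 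So it suffices to prove the two inclusions between $SO_p$ and this projected set.

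For the inclusion $SO_p \subseteq (\ref{POLY Eq: Complete Cubic SDP SOP})$, I would take an arbitrary second-order point $\xbar$ of $p$ and reuse the construction from the proof of Theorem~\ref{POLY Thm: Complete Cubic SDP}: the triple $\left(\xbar\xbar^T,\ \xbar,\ \xbar^T\left(\sum_{i=1}^n \xbar_iH_i + Q\right)\xbar\right)$ satisfies the two linear equality constraints (because the left-hand side of the first constraint reduces to $\grad p(\xbar)=0$) and the two semidefinite constraints of (\ref{POLY Eq: complete cubic SDP}), and its objective value is zero. In particular the equality $\frac{1}{2}\Tr(QY) + b^Ty + \frac{z}{2} = 0$ holds for this triple, so $\xbar$ belongs to the set in (\ref{POLY Eq: Complete Cubic SDP SOP}).

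For the reverse inclusion, suppose $y$ belongs to (\ref{POLY Eq: Complete Cubic SDP SOP}), witnessed by some $Y \in \Snn$ and $z \in \R$. Then $(Y,y,z)$ is feasible to (\ref{POLY Eq: complete cubic SDP}) and attains objective value $0$. By the first part of Theorem~\ref{POLY Thm: Complete Cubic SDP}, the objective of (\ref{POLY Eq: complete cubic SDP}) is nonnegative over its entire feasible set; hence $0$ is its optimal value and is attained, at the point $(Y,y,z)$ itself. The converse direction of Theorem~\ref{POLY Thm: Complete Cubic SDP} then applies, with $(Y,y,z)$ playing the role of the optimal solution $(Y^*,y^*,z^*)$, and yields that $y$ is a second-order point of $p$. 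This establishes $(\ref{POLY Eq: Complete Cubic SDP SOP}) \subseteq SO_p$ and completes the proof.

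The only point needing a little care — and it is the main (mild) obstacle — is to make sure that the converse half of Theorem~\ref{POLY Thm: Complete Cubic SDP} is invoked with $(Y,y,z)$ itself as the optimal solution whose $y$-component is certified to be a second-order point, rather than with some unrelated optimal solution; but this is immediate, since we have exhibited $(Y,y,z)$ as a feasible point attaining the optimal value $0$, hence it is optimal. Beyond this bookkeeping, no new idea is required: the corollary is essentially a repackaging of Theorem~\ref{POLY Thm: Complete Cubic SDP} once one recognizes that pinning the objective of (\ref{POLY Eq: complete cubic SDP}) to $0$ is the same as adding the linear equality $\frac{1}{2}\Tr(QY)+b^Ty+\frac{z}{2}=0$ to its constraints and then projecting out $Y$ and $z$.
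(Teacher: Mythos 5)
Your argument is correct and coincides with the paper's own proof: both directions are obtained by re-using exactly what was established in the proof of Theorem~\ref{POLY Thm: Complete Cubic SDP} — the explicit feasible triple $(\xbar\xbar^T,\xbar,\xbar^T(\sum_i\xbar_iH_i+Q)\xbar)$ with objective zero for the forward inclusion, and the fact (from the converse half of that proof, not merely the theorem's statement) that any feasible $(Y,y,z)$ with objective zero has $y$ a second-order point for the reverse inclusion. The "bookkeeping" point you flag — that $(Y,y,z)$ itself is optimal because the objective is nonnegative on the feasible set — is correct and is precisely what makes the paper's terse statement valid.
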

\begin{proof}
Recall from the proof of Theorem \ref{POLY Thm: Complete Cubic SDP} that if $\xbar$ is a second-order point of $p$, then the triplet $(\xbar\xbar^T, \xbar, \xbar^T(\sum_{i=1}^n \xbar_iH_i + Q)\xbar)$ is feasible solution to (\ref{POLY Eq: complete cubic SDP}) with objective value zero. Hence any second-order point belongs to (\ref{POLY Eq: Complete Cubic SDP SOP}). Conversely, recall that if $(Y,y,z)$ is a feasible solution to (\ref{POLY Eq: complete cubic SDP}) with objective value zero, then $y$ is a second-order point of $p$. Therefore any point in (\ref{POLY Eq: Complete Cubic SDP SOP}) is a second-order point of $p$.
\end{proof}

In view of Theorem \ref{POLY Thm: Closure}, we observe that if $p$ has a local minimum, the set in (\ref{POLY Eq: Complete Cubic SDP SOP}) is a semidefinite representation of $\overline{LM_p}$. This observation gives rise to the following algorithm which tests if a cubic polynomial has a local minimum.

\begin{algorithm}[H]
	\caption{Algorithm for finding a local minimum of a cubic polynomial using a polynomial number of calls to E-SDP.}\label{POLY Alg: Complete Cubic SDP}
	\begin{algorithmic}[1]
		\State {\bf Input:} A cubic polynomial $p: \Rn \to \R$ in the form (\ref{POLY Eq: Cubic Poly Form})
		\State \texttt{{\bf TEST1}} test using E-SDP if (\ref{POLY Eq: Complete Cubic SDP SOP}) is empty
		\State \quad \texttt{{\bf if}} YES
			\State \quad \quad \Return NO LOCAL MINIMUM
		\State \quad \texttt{{\bf if}} NO
			\State \quad\quad Find (via Lemma \ref{POLY Lem: Relint recovery}) a point $x^*$ in the relative interior of (\ref{POLY Eq: Complete Cubic SDP SOP})

        \mbox{}
		\State \texttt{{\bf TEST2}} test (via Theorem \ref{POLY Thm: Checking Min Poly Time}) if $x^*$ is a local minimum
		\State \quad \texttt{{\bf if}} YES
			\State \quad \quad \Return $x^*$
		\State \quad \texttt{{\bf if}} NO
			\State \quad\quad \Return NO LOCAL MINIMUM
	\end{algorithmic}
\end{algorithm}

{\bf Complexity and correctness of Algorithm \ref{POLY Alg: Complete Cubic SDP}.} By design, if $p$ has no local minimum, Algorithm \ref{POLY Alg: Complete Cubic SDP} will return \texttt{NO LOCAL MINIMUM} since \texttt{TEST2} answers \texttt{NO} for every point. If $p$ has a local minimum, then $SO_p$ is nonempty. Since $SO_p$ is given by (\ref{POLY Eq: Complete Cubic SDP SOP}) due to Corollary~\ref{POLY Cor: Complete Cubic SDP SOP}, \texttt{TEST1} answers \texttt{YES}. Then, by Theorem \ref{POLY Thm: Local Minima SOP}, any point in the relative interior of (\ref{POLY Eq: Complete Cubic SDP SOP}) is a local minimum. Hence $x^*$ will pass \texttt{TEST2}. Note that this algorithm makes $2n+1$ calls to E-SDP, and then runs Algorithm~\ref{POLY Alg: Check Local Min}.\footnote{In fact, the number of calls to E-SDP can be reduced to $2n$ if the very first call to E-SDP uses $x_1$ as the objective function.}

\begin{remark}\label{POLY Rem: Find SLM}
{\bf Finding strict local minima.}
If we are specifically interested in searching for a strict local minimum of a cubic polynomial, we can simply check if the point $x^*$ returned by Algorithm~\ref{POLY Alg: Complete Cubic SDP} satisfies $\Hess p(x^*) \succ 0$. If the answer is yes, we return $x^*$; if the answer is no, we declare that $p$ has no strict local minimum. Clearly, if a local minimum $x^*$ satisfies $\Hess p(x^*) \succ 0$, it must be a strict local minimum due to the SOSC. Furthermore, recall from Section \ref{POLY SSec: Cubic Preliminaries} that if $p$ has a strict local minimum, then it has a unique local minimum, and thus that must be the output of Algorithm \ref{POLY Alg: Complete Cubic SDP}.
\end{remark}

\section{Conclusions and Future Directions}\label{POLY Sec: Conclusions}

In this chapter, we considered the notions of (i) critical points, (ii) second-order points, (iii) local minima, and (iv) strict local minima for multivariate polynomials. For each type of point, and as a function of the degree of the polynomial, we studied the complexity of deciding (1) if a given point is of that type, and (2) if a polynomial has a point of that type. See Tables~\ref{POLY Table: Complexity Checking} and \ref{POLY Table: Complexity Existence} in Section~\ref{POLY Sec: Introduction} for a summary of how our results complement prior literature. The majority of our work was dedicated to the case of cubic polynomials, where some new tractable cases were revealed based in part on connections with semidefinite programming. In this final section, we outline two future research directions which also have to do with cubic polynomials.

\subsection{Approximate Local Minima}\label{POLY SSec: Approximate Local Minima}

In Sections~\ref{POLY Sec: Complexity} and \ref{POLY Sec: Finding Local Min}, we established polynomial-time equivalence of finding local minima and second-order points of cubic polynomials and some SDP feasibility problems (see Corollary~\ref{POLY Cor: Complete Cubic SDP SOP}, Algorithm~\ref{POLY Alg: Complete Cubic SDP}, Theorem~\ref{POLY Thm: SOP SDPF}, Theorem~\ref{POLY Thm: LM SDPSF}). Unless some well-known open problems around the complexity of SDP feasibility are resolved (see Section~\ref{POLY Sec: Complexity}), one cannot expect to make claims about finding local minima of cubic polynomials in polynomial time in the Turing model of computation. Nonetheless, it is known that under some assumptions, one can solve semidefinite programs to arbitrary accuracy in polynomial time (see, e.g. \cite{ramana1997exact,alizadeh1995interior,vandenberghe1996semidefinite,porkolab1997complexity,nesterov1994interior,grotschel2012geometric}). It is therefore reasonable to ask if one can find local minima of cubic polynomials to arbitrary accuracy in polynomial time. This is a question we would like to study more rigorously in future work. We present a partial result in this direction in Theorem \ref{POLY Thm: Eps local min} below.

Recall from Section \ref{POLY SSec: SDP Approach} that our ability to find local minima of a cubic polynomial $p$ depended on our ability to minimize $p$ over its convexity region $CR_p$. We show next that we can find an $\epsilon$-minimizer of $p$ over $CR_p$ by approximately solving a semideifnite program.

\begin{theorem}\label{POLY Thm: Eps local min}
    For a cubic polynomial $p$ given in the form (\ref{POLY Eq: Cubic Poly Form}), consider the SDP in (\ref{POLY Eq: complete cubic SDP}). If the objective value at a feasible point $(Y,y,z)$ is $\epsilon \ge 0$, then $p(y) \le p(x) + \frac{2}{3}\epsilon$, ${\forall x \in CR_p}$.
\end{theorem}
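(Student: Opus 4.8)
The plan is to reuse the algebraic identity for the objective value of (\ref{POLY Eq: complete cubic SDP}) that is derived --- for \emph{any} feasible point, not only an optimal one --- inside the proof of Theorem~\ref{POLY Thm: Complete Cubic SDP}. For a feasible triple $(Y,y,z)$ with objective value $\epsilon$, set $D\defeq Y-yy^T$ and $d\defeq\sum_{i=1}^n\Tr(H_iD)e_i$, and let $\delta\ge 0$ denote the Schur-complement slack of $T(Y,y,z)\succeq 0$ defined there. That computation shows $\Hess p(y)\succeq 0$, $d=-2\grad p(y)$, $\grad p(y)\in\cols(\Hess p(y))$, and
\[
\epsilon \;=\; \tfrac12\Tr\!\big(\Hess p(y)\,D\big)+\tfrac12\, d^{T}\Hess p(y)^{+}d+\tfrac{\delta}{2}.
\]
Since $D\succeq 0$ and $\delta\ge 0$, the first and last terms are nonnegative, so I would keep only the consequence
\[
\epsilon \;\ge\; \tfrac12\,d^{T}\Hess p(y)^{+}d \;=\; 2\,\grad p(y)^{T}\Hess p(y)^{+}\grad p(y);
\]
this is the only place the hypothesis on $\epsilon$ is used.

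Next I would restrict $p$ to the line through $y$ and an arbitrary $x\in CR_p$. With $v\defeq x-y$, the cubic Taylor expansion (\ref{POLY Eq: cubic taylor}) at $y$ reads
\[
q(\alpha)\defeq p(y+\alpha v)=p_3(v)\alpha^{3}+\tfrac12\, v^{T}\Hess p(y)\,v\,\alpha^{2}+\grad p(y)^{T}v\,\alpha+p(y),
\]
so $p(y)-p(x)=q(0)-q(1)=-p_3(v)-\tfrac12 v^{T}\Hess p(y)v-\grad p(y)^{T}v$. Feasibility $x\in CR_p$ gives $q''(1)=v^{T}\Hess p(x)v\ge 0$, i.e.\ $6p_3(v)+v^{T}\Hess p(y)v\ge 0$, hence $-p_3(v)\le\tfrac16 v^{T}\Hess p(y)v$, and therefore
\[
p(y)-p(x)\;\le\;-\tfrac13\, v^{T}\Hess p(y)\,v-\grad p(y)^{T}v .
\]

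Finally I would maximize this bound over all $v\in\Rn$. Since $\Hess p(y)\succeq 0$ and $\grad p(y)\in\cols(\Hess p(y))$, the vector $w\defeq\Hess p(y)^{+}\grad p(y)$ satisfies $\Hess p(y)\,w=\grad p(y)$, and completing the square gives
\[
-\tfrac13\, v^{T}\Hess p(y)v-\grad p(y)^{T}v=-\tfrac13\big(v+\tfrac32 w\big)^{T}\Hess p(y)\big(v+\tfrac32 w\big)+\tfrac34\,\grad p(y)^{T}\Hess p(y)^{+}\grad p(y)\;\le\;\tfrac34\,\grad p(y)^{T}\Hess p(y)^{+}\grad p(y).
\]
Combining with the first paragraph, $p(y)-p(x)\le\tfrac34\cdot\tfrac12\epsilon=\tfrac38\epsilon\le\tfrac23\epsilon$, which is in fact slightly stronger than the stated constant. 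The one place that demands care --- and the only real content beyond routine algebra --- is the link between the SDP's objective value and the quantity $\grad p(y)^{T}\Hess p(y)^{+}\grad p(y)$, together with the companion fact that $\grad p(y)\in\cols(\Hess p(y))$ (needed both for that pseudo-inverse term to be meaningful and for the completed square above to stay bounded); both are already spelled out in the proof of Theorem~\ref{POLY Thm: Complete Cubic SDP}, so the rest of the argument is just assembling pieces already in hand.
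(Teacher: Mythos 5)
Your proof is correct, but it takes a genuinely different route from the paper's. The paper's proof observes that $-\tfrac16\Tr(QY)-\tfrac{z}{3}$ (the negative of the objective of the SDP in (\ref{POLY Eq: Small cubic SDP}), which is a lower bound on $\gamma^* \defeq \inf_{x\in CR_p}p(x)$ by construction of (\ref{POLY Eq: new cubic sdp})) differs from $p(y)-\tfrac23\epsilon$ by exactly the ``slack'' terms $\tfrac16\Tr(\Hess p(y)D)+\tfrac13 d^T\Hess p(y)^+d+\tfrac{\delta}{3}\ge 0$, i.e.\ it exploits the primal--dual structure that (\ref{POLY Eq: complete cubic SDP}) was built from. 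You instead discard the lower-bound interpretation entirely and extract the single consequence $\grad p(y)^T\Hess p(y)^+\grad p(y)\le\tfrac{\epsilon}{2}$ (the ``weighted gradient norm is small''), then finish by an elementary restriction of $p$ to the line through $y$ and any $x\in CR_p$: the convexity of $p$ at $x$ controls the cubic term, leaving a quadratic in $v=x-y$ whose maximum is exactly $\tfrac34\grad p(y)^T\Hess p(y)^+\grad p(y)$ by completing the square (using $\grad p(y)\in\cols(\Hess p(y))$, which the proof of Theorem~\ref{POLY Thm: Complete Cubic SDP} already supplies for any feasible point). This is more self-contained --- it does not revisit the derivation of (\ref{POLY Eq: complete cubic SDP}) from its dual --- and it actually yields the strictly better constant $\tfrac38\epsilon$ rather than $\tfrac23\epsilon$: you throw away the $\Tr(\Hess p(y)D)$ and $\delta$ terms entirely, but the line-restriction step compensates by giving a coefficient $\tfrac34$ instead of the $\tfrac43$ one would get from tracking those terms.
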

\begin{proof}
Consider a feasible solution $(Y,y,z)$ to (\ref{POLY Eq: complete cubic SDP}). Let $\gamma^*$ be the infimum of $p$ over $CR_p$. Observe that
$$-\frac{1}{6}\Tr(QY) - \frac{z}{3} \le \gamma^*.$$
This is because the SDPs in (\ref{POLY Eq: complete cubic SDP}) and (\ref{POLY Eq: Small cubic SDP}) have the same constraints, and the optimal value of (\ref{POLY Eq: Small cubic SDP}) is the negative of the optimal value of (\ref{POLY Eq: new cubic sdp}), which by construction is a lower bound on $\gamma^*$. Similarly as in the proof of Theorem \ref{POLY Thm: Complete Cubic SDP}, let $D \defeq Y - yy^T$, ${d \defeq \sum_{i=1}^n \Tr(H_iD)e_i}$, and
$$\delta \defeq z - y^T\left(\sum_{i=1}^n y_iH_i + Q\right)y - 2d^Ty - d^T\left(\sum_{i=1}^n y_iH_i + Q\right)^{+}d.$$ We can then write:
	\begin{align*}
	\frac{1}{6}\Tr(QY) + \frac{z}{3} &= \frac{1}{6}\Tr(QY) + \frac{z}{3} - \sum_{i=1}^n \left(\frac{1}{2} \Tr(H_iY) + e_i^TQy + b_i\right)y_i\\
	&= \frac{1}{6}\left(\Tr(Qyy^T) + \Tr(QD)\right)\\
	&+\frac{1}{3}\left(y^T\left(\sum_{i=1}^n y_iH_i + Q\right)y + 2d^Ty + d^T\left(\sum_{i=1}^n y_iH_i + Q\right)^{+}d + \delta \right)\\
	&- \frac{1}{2}\left(\Tr\left(\sum_{i=1}^n y_iH_iyy^T\right) + \Tr\left(\sum_{i=1}^n y_iH_iD\right)\right) - y^TQy - b^Ty\\
	&= -\frac{1}{6}\sum_{i=1}^n y^Ty_iH_iy - \frac{1}{2}y^TQy - b^Ty\\
	&+ \frac{1}{6}\Tr\left(\left(\sum_{i=1}^n y_iH_i + Q\right)D\right) + \frac{1}{3}\left(d^T\left(\sum_{i=1}^n y_iH_i + Q\right)^{+}d\right) + \frac{\delta}{3}\\
	&= -p(y) + \frac{1}{6}\Tr\left(\left(\sum_{i=1}^n y_iH_i + Q\right)D\right) + \frac{1}{3}\left(d^T\left(\sum_{i=1}^n y_iH_i + Q\right)^{+}d\right) + \frac{\delta}{3}\\
	&\le -p(y) + \frac{2}{3}\epsilon,
	\end{align*}
	where the first equality is due to the first constraint in (\ref{POLY Eq: complete cubic SDP}), and the last inequality follows from the last equation of (\ref{POLY Eq: Complete SDP nonnegative}) with $(Y^*,y^*,z^*)$ replaced by $(Y,y,z)$ and the fact that $\sum_{i=1}^n y_iH_i+Q$ and $D$ are both psd matrices. We therefore conclude that
	$$p(y) - \frac{2}{3}\epsilon \le -\frac{1}{6}\Tr(QY) - \frac{z}{3} \le \gamma^*.$$
	 We then have that $p(y) \le p(x) + \frac{2}{3}\epsilon, \forall x \in CR_p$ as desired.
	 
\end{proof}

\subsection{Unregularized Third-Order Newton Methods}\label{POLY SSec: Cubic Newton}

We end our chapter with an interesting application of the problem of finding a local minimum of a cubic polynomial. Recall that Newton's method for minimizing a twice-differentiable function proceeds by approximating the function with its second-order Taylor expansion at the current iterate, and then moving to a critical point\footnote{If the function to be minimized is convex, this critical point will be a global minimum of the quadratic approximation.} of this quadratic approximation. It is natural to ask whether one can lower the iteration complexity of Newton's method for three-times-differentiable functions by using third-order information. An immediate difficulty, however, is that the third-order Taylor expansion of a function around any point will not be bounded below (unless the coefficients of all its cubic terms are zero). In previous work (see, e.g. \cite{nesterov2019implementable}), authors have gotten around this issue by adding a regularization term to the third-order Taylor expansion. In future work, we aim to study an unregularized third-order Newton method which in each iteration moves to a local minimum of the third-order Taylor approximation by applying Algorithm \ref{POLY Alg: Complete Cubic SDP}. We would like to explore the convergence properties of this algorithm and conditions under which the algorithm is well defined at every iteration.

As a first step, let us consider the univariate case. For a function $f: \R \to \R$, the iterations of (classical) Newton's method read
\begin{equation}\label{POLY Eq: Second Order Iterates}
x_{k+1} = x_k - \frac{f'(x_k)}{f''(x_k)}.
\end{equation}
The update rule of a third-order Newton method, which in each iteration moves to the local minimum of the third-order Taylor approximation, is given by
\begin{equation}\label{POLY Eq: Third Order Iterates}
x_{k+1} = x_k - \frac{f''(x_k) - \sqrt{f''(x_k)^2 - 2f'(x_k)f'''(x_k)}}{f'''(x_k)}.
\end{equation}
We have already observed that in some settings, these iterations can outperform the classical Newton iterations. For example, consider the univariate function
\begin{equation}\label{POLY Eq: 363 Function}
f(x) = 20x\arctan(x) - 10 \log(1+x^2) + x^2,
\end{equation}
which is strongly convex and has a (unique) global minimum at $x = 0$, where $f(x) = 0$; see Figure~\ref{POLY Fig: Newton Iterates}. The first three derivatives of this function are
$$f'(x) = 20\arctan(x) + 2x,$$
$$f''(x) = 2+\frac{20}{1+x^2},$$
$$f'''(x) = \frac{-40x}{(1+x^2)^2}.$$
One can show that the basin of attraction of the global minimum of $f$ under the classical Newton iterations in (\ref{POLY Eq: Second Order Iterates}) is approximately $[-1.7121, 1.7121]$. Starting Newton's method with $|x_0| \ge 1.7122$ results in the iterates eventually oscillating between $\pm 13.4942$. In contrast, the iterates of our proposed third-order Newton method in (\ref{POLY Eq: Third Order Iterates}) are globally convergent to the global minimum of $f$. The iterations of both methods starting at $x_0 = 1.5$ are compared in Table \ref{POLY Tab: Newton Iterates} and Figure \ref{POLY Fig: Newton Iterates}, showing faster convergence to the global minimum for the third-order approach.

\begin{table}[H]
	{\begin{tabular}{| c | c | c |}\hline
         $k$ & $x_k$ & $f(x_k)$\\\hline
         0 &  1.5 & 19.9473\\\hline
         1 & -.2327 & .5910\\\hline
         2 & -.0030 & 1.0014e-4\\\hline
         3 & -8.3227e-9 & 1.4546e-15\\\hline
         4 & 2.3490e-9 & 1.1587e-16\\\hline
    \end{tabular}
	\hspace{1cm}
	\begin{tabular}{| c | c | c |}\hline
         $k$ & $x_k$ & $f(x_k)$\\\hline
         0 &  1.5 & 19.9473\\\hline
         1 & -1.2786 & 15.1411\\\hline
         2 & .8795 & 7.7329\\\hline
         3 & -.3396 & 1.2477\\\hline
         4 & .0230 & .0058\\\hline
    \end{tabular}}
	\centering
	\caption{Iterations of the third-order Newton method (left) and the classical Newton method (right) on the function $f$ in (\ref{POLY Eq: 363 Function}) starting at $x_0 = 1.5$.}
	\label{POLY Tab: Newton Iterates}
\end{table}

\begin{figure}[H]
	\centering
	\includegraphics[height=.4\textheight,keepaspectratio]{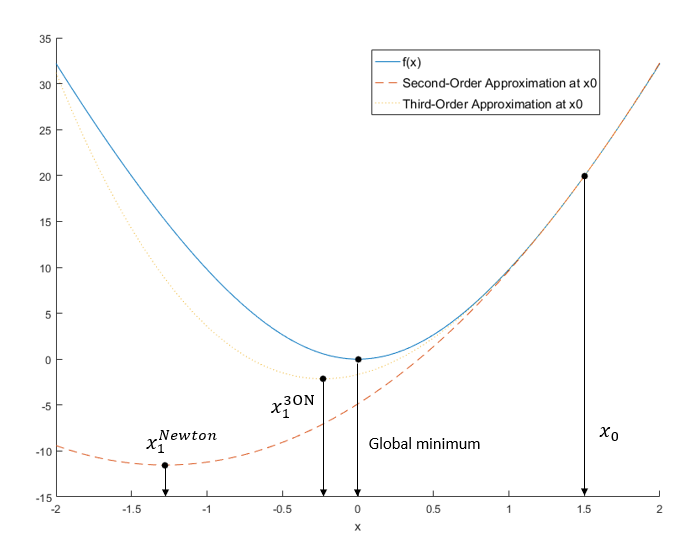}
	\caption{The plots of the function $f$ in (\ref{POLY Eq: 363 Function}) and its second and third-order Taylor expansions around $x_0 = 1.5$. One can see that one iteration of the third-order Newton method in (\ref{POLY Eq: Third Order Iterates}) brings us closer the global minimum of $f$ compared to one iteration of the Newton method in (\ref{POLY Eq: Second Order Iterates}).} 
	\label{POLY Fig: Newton Iterates}
\end{figure}

In addition to potential benefits regarding convergence, we have also observed that the behavior of the algorithm can be less sensitive to the initial condition when compared to Newton's method. As an example, we used Newton's method to find the critical points $\{1,-1,i,-i\}$ of $f(x) = x^5 - 5x$ on the complex plane, using the iterates (\ref{POLY Eq: Second Order Iterates}), (\ref{POLY Eq: Third Order Iterates}), and iterates given by
\begin{equation}\label{POLY Eq: Third Order Iterates Max}
x_{k+1} = x_k - \frac{f''(x_k) + \sqrt{f''(x_k)^2 - 2f'(x_k)f'''(x_k)}}{f'''(x_k)},
\end{equation}
which can be interpreted as the iterates for moving to the local maximum of a third-order approximation of $f$. For each of the three iterations, the plots below demonstrate which initial conditions converge to the same critical point. As can be seen, sensitivity of Newton's method to the initial condition demonstrates fractal behavior, while the third-order iterates do not.

\begin{figure}[H]
	\centering
	\includegraphics[height=.25\textheight,keepaspectratio]{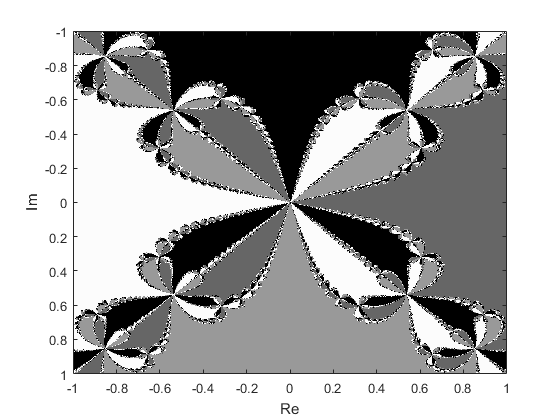}\\
	\includegraphics[height=.25\textheight,keepaspectratio]{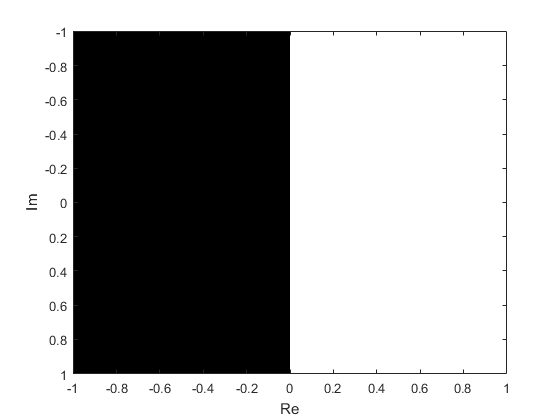}
	\includegraphics[height=.25\textheight,keepaspectratio]{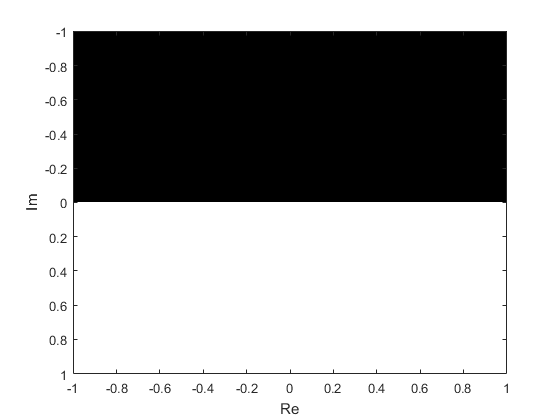}
	\caption{Sensitivity of the limits of the iterates (\ref{POLY Eq: Second Order Iterates}), (\ref{POLY Eq: Third Order Iterates}), and (\ref{POLY Eq: Third Order Iterates Max}) respectively to initial conditions. Regions with the same color denote initial conditions which converge to the same critical point.} 
	\label{POLY Fig: Newton Fractals}
\end{figure}
\chapter{On the Complexity of Finding a Local Minimizer of a Quadratic Function over a Polytope}\label{Chap: QPs}

\section{Introduction}\label{QPs Sec: Introduction}

In this chapter of the thesis, we consider \emph{quadratic programs}, which are polynomial optimization problems of the form (\ref{Eq: Polynomial Optimization Problem}) where the objective function $p$ is quadratic and all constraint functions $q_i$ are affine. Recall that a \emph{local minimum} of a function $f: \Rn \to \R$ over a set $\Omega \subseteq \Rn$ is a point $\xbar \in \Omega$ for which there exists a scalar $\epsilon > 0$ such that $p(\xbar) \le p(x)$ for all $x \in \Omega$ with $\|x - \xbar\| \le \epsilon$. In the case where $p$ and all the constraint functions $q_i$ are affine (i.e., linear programming), it is well known that a local minimum (which also has to be a global minimum) can be found in polynomial time in the Turing model of computation~\cite{karmarkar1984new,khachiyan1979polynomial}. Perhaps the next simplest constrained optimization problems to consider are quadratic programs, which can be written as
\beq\label{QPs Eq: Polynomial Optimization Problem}
\begin{aligned}
& \underset{x \in \Rn}{\min}
& & x^TQx + c^Tx \\
& \text{subject to}
&& a_i^Tx \le b_i, \forall i \in \otm,\\
\end{aligned}
\eeq
where $Q \in \R^{n \times n}$, $c, a_1, \ldots, a_m \in \Rn$, and $b_1, \ldots, b_m~\in~\R$. The matrix $Q$ is taken without loss of generality to be symmetric. When complexity questions about quadratic programs are studied in the Turing model of computation, all these data are rational and the input size is the total number of bits required to write them down. It is well known that finding a global minimum of a quadratic program is NP-hard, even when the matrix $Q$ has a single negative eigenvalue~\cite{pardalos1991quadratic}. It is therefore natural to ask whether one can instead find a local minimum of a quadratic program efficiently. In fact, this precise question appeared in 1992 on a list of seven open problems in complexity theory for numerical optimization \cite{pardalos1992open}:
\begin{quote}
    \emph{``What is the complexity of finding even a local minimizer for nonconvex quadratic programming, assuming the feasible set is compact? Murty and Kabadi (1987, \cite{murty1987some}) and Pardalos and Schnitger (1988, \cite{pardalos1988checking}) have shown that it is NP-hard to test whether a given point for such a problem is a local minimizer, but that does not rule out the possibility that another point can be found that is easily verified as a local minimizer.''}
\end{quote}
A few remarks on the phrasing of this problem are in order. First, note that in this question, the feasible set of the quadratic program is assumed to be compact (i.e., a polytope). Therefore, there is no need to focus on the related and often prerequisite problem of deciding the \emph{existence} of a local minimum (since any global minimum e.g. is a local minimum). The latter question makes sense in the case where the feasible set of the quadratic program is unbounded; the complexity of this question is also addressed in this chapter (Theorem~\ref{QPs Thm: LM QP NP hard}). Second, as the quote points out, the question of finding a local minimum is also separate from a complexity viewpoint from that of testing if a given point is a local minimum. This related question has been studied more extensively and its complexity has already been settled for optimization problems whose objective and constraints are given by polynomial functions of any degree; see~\cite{murty1987some,pardalos1988checking,cubicpaper}.

To point out some of the subtle differences between these variations of the problem more specifically, we briefly review the reduction of Murty and Kabadi \cite{murty1987some}, which shows the NP-hardness of deciding if a given point is a local minimum of a quadratic program. In~\cite{murty1987some}, the authors show that the problem of deciding if a symmetric matrix $Q$ is \emph{copositive}---i.e. whether $x^TQx \ge 0$ for all vectors $x$ in the nonnegative orthant---is NP-hard. From this, it is straightforward to observe that the problem of testing whether a given point is a local minimum of a quadratic function over a polyhedron is NP-hard: Indeed, the origin is a local minimum of $x^TQx$ over the nonnegative orthant if and only if the matrix $Q$ is copositive. However, it is not true that $x^TQx$ has a local minimum over the nonnegative orthant if and only if $Q$ is copositive. Although the ``if'' direction holds, the ``only if'' direction does not. For example, consider the matrix
$$Q = \bmat 0 & 1 \\ 1 & -2\emat,$$ which is clearly not copositive, even though the point $(1,0)^T$ is a local minimum of $x^TQx$ over the nonnegative orthant.


Our main results in this chapter are as follows. We show that unless P=NP, no polynomial-time algorithm can find a point within Euclidean distance $c^n$ (for any constant $c \ge 0$) of a local minimum of an $n$-variate quadratic program with a bounded feasible set (Theorem~\ref{QPs Thm: Finding LM NP-hard}). See also Corollaries~\ref{QPs Cor: QP Pseudo} and \ref{QPs Cor: 2n NP-hard}. To prove this, we show as an intermediate step that deciding whether a quartic polynomial or a quadratic program has a local minimum is strongly NP-hard\footnote{This implies that these
problems remain NP-hard even if the bitsize of all numerical data are $O(\log n)$, where $n$ is the number of variables. For a
strongly NP-hard problem, even a pseudo-polynomial time algorithm---i.e., an algorithm whose running time is polynomial in the magnitude of the numerical data of the problem but not necessarily in their bitsize---cannot exist unless P=NP.  See \cite{garey2002computers} or \cite[Section 2]{ahmadi2019complexity} for more details.} (Theorems~\ref{QPs Thm: LM Degree 4 NP hard} and \ref{QPs Thm: LM QP NP hard}). Finally, we show that unless P=NP, there cannot be a polynomial-time algorithm that decides if a quadratic program with a bounded feasible set has a unique local minimum and if so returns this minimum (Theorem~\ref{QPs Thm: UniqueQP NP Hard}).

Overall, our results suggest that without additional problem structure, questions related to finding local minima of quadratic programs are not easier (at least from a complexity viewpoint) than those related to finding global minima. It also suggests that any efficient heuristic that aims to find a local minimum of a quadratic program must necessarily fail on a ``significant portion'' of instances; see e.g. Corollary 2.2 of \cite{hemaspaandra2012sigact} for a more formal complexity theoretic statement.

\subsection{Notation and Basic Definitions}\label{QPs SSec: Notation}

For a vector $x \in \Rn$, the notation $x^2$ denotes the vector in $\Rn$ whose $i$-th entry is $x_i^2$, and $diag(x)$ denotes the diagonal $n \times n$ matrix whose $i$-th diagonal entry is $x_i$. The notation $x \ge 0$ denotes that the vector $x$ belongs to the nonnegative orthant, and for such a vector, $\sqrt{x}$ denotes the vector in $\Rn$ whose $i$-th entry is $\sqrt{x_i}$. For two matrices $X, Y \in \R^{m \times n}$, we denote by $X \cdot Y$ the matrix in $\R^{m \times n}$ whose $(i,j)$-th entry is $X_{ij}Y_{ij}$. For vectors $x, y \in \Rn$, the notation $y_x$ (sometimes $(y)_x$ if there is room for confusion with other indices) denotes the vector containing the entries of $y$ where $x_i$ is nonzero in the same order (the length of $y_x$ is hence equal to the number of nonzero entries in $x$). Similarly, for a vector $x \in \Rn$ and a matrix $Y \in \R^{n \times n}$, the notation $Y_x$ (sometimes $(Y)_x$ if there is room for confusion with other indices) denotes the principal submatrix of $Y$ consisting of rows and columns of $Y$ whose indices correspond to indices of nonzero entries of $x$. The notation $I$ (resp. $J$) refers to the identity matrix (resp. the matrix of all ones); the dimension will be clear from context. For a symmetric matrix $M \in \R^{n \times n}$, the notation $M \succeq 0$ (resp. $M \succ 0$) denotes that $M$ is \emph{positive semidefinite} (resp. \emph{positive definite}), i.e. that it has nonnegative (resp. positive) eigenvalues. As mentioned already, we say that $M$ is \emph{copositve} if $x^TMx \ge 0, \forall x \ge 0$. The \emph{simplex} in $\Rn$ is denoted by $\Delta_n \defeq \{x \in \Rn\ |\ x \ge 0, \sum_{i=1}^n x_i = 1\}$. Finally, for a scalar $c$, the notation $\ceil{c}$ denotes the ceiling of $c$, i.e. the smallest integer greater than or equal to $c$.

We recall that a \emph{form} is a homogeneous polynomial; i.e. a polynomial whose monomials all have the same degree. A form $p: \Rn \to \R$ is said to be \emph{nonnegative} if ${p(x) \ge 0, \forall x \in \Rn}$, and \emph{positive definite} if $p(x) > 0, \forall x \ne 0$. A \emph{critical point} of a differentiable function $f:\Rn \to \R$ is a point $x\in\Rn$ at which the gradient $\grad  f(x)$ is zero. A \emph{second-order point} of a twice-differentiable function $f:\Rn \to \R$ is a critical point $x$ at which the Hessian matrix $\Hess f(x)$ is positive semidefinite.

All graphs in this chapter are undirected, unweighted, and have no self-loops. The \emph{adjacency matrix} of a graph $G$ on $n$ vertices is the $n \times n$ symmetric matrix whose $(i,j)$-th entry equals one if vertices $i$ and $j$ share an edge in $G$ and zero otherwise. The \emph{complement} of a graph $G$, denoted by $\bar{G}$, is the graph with the same vertex set as $G$ and such that two distinct vertices are adjacent if and only if they are not adjacent in $G$. An \emph{induced subgraph} of $G$ is a graph containing a subset of the vertices of $G$ and all edges connecting pairs of vertices in that subset.



\section{The Main Result}\label{QPs Sec: Main Result}

\subsection{Complexity of Deciding Existence of Local minima}

To show that a polynomial-time algorithm for finding a local minimum of a quadratic function over a polytope (i.e., a bounded polyhedron) implies P = NP, we show as an intermediate step that it is NP-hard to decide whether a quadratic program with an unbounded feasible set has a local minimum (Theorem~\ref{QPs Thm: LM QP NP hard}). To achieve this intermediate step, we first establish the following hardness result.

\begin{theorem}\label{QPs Thm: LM Degree 4 NP hard}
	It is strongly NP-hard to decide if a degree-4 polynomial has a local minimum.
\end{theorem}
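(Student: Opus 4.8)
The plan is a polynomial-time reduction from the (simple) MAXCUT problem, reusing the quadratic system $q_0(x)=\cdots=q_n(x)=0$ from (\ref{POLY Eq: MAXCUT QP}); since all polynomials built from this system have coefficients of bit-size $O(\log n)$, a successful reduction yields strong NP-hardness. To make the local minima of a quartic ``see'' feasibility of this system, I would combine the Hessian trick already used for second-order points (Theorem~\ref{POLY Thm: SOP quartic NP-hard})---the terms $\pm y_i^2 q_i(x)$---with a squared-system penalty and a stabilizing quartic penalty in the new variables. Concretely, introduce a pair of variables $(y_i,z_i)$ per constraint (in fact $k$ identical copies of each constraint, for an integer $k$ that is polynomial in $n$ and fixed later) and set
\[
p(x,y,z) \;=\; \sum_i (y_i^2 - z_i^2)\,q_i(x) \;+\; C\sum_i q_i(x)^2 \;+\; C'\sum_i (y_i^2 + z_i^2)^2 ,
\]
with $C,C'>0$ polynomially bounded. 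I would then show that the MAXCUT instance is a yes-instance if and only if $p$ has a local minimum, and in fact a strict one (which also settles the strict-local-minimum claim).

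For the ``only if'' direction, let $\bar x$ be feasible, so $q_i(\bar x)=0$ for all $i$, and consider the candidate $(\bar x,0,0)$. Expanding around it and writing $q_i(\bar x+\delta x)=\ell_i+Q_i$ with $\ell_i$ linear and $Q_i$ quadratic in $\delta x$, the lowest-order term of $p$ is $C\sum_i\ell_i^2 = C\|B\delta x\|^2$, where $B$ has rows $\nabla q_i(\bar x)$. The structural point is that for the MAXCUT system the rows $\nabla q_i(\bar x)=2\bar x_i e_i$ (for $i\ge 1$) already span $\mathbb R^n$, so $\|B\delta x\|^2$ is positive definite in $\delta x$. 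A routine AM--GM bookkeeping then shows that, once $C$ and $C'$ are chosen large enough (polynomially in $n$), this quadratic term together with $C'\sum_i(y_i^2+z_i^2)^2$ dominates the indefinite cubic cross-terms $\sum_i(\delta y_i^2-\delta z_i^2)\ell_i$ and $2C\sum_i\ell_iQ_i$ as well as the remaining quartic terms, so $p(\bar x+\delta x,\delta y,\delta z)>0$ for all small nonzero perturbations; hence $(\bar x,0,0)$ is a strict local minimum.

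For the ``if'' direction---the crux---suppose $(\bar x,\bar y,\bar z)$ is a local minimum. Restricting $p$ to the single variable $y_i$ (all else fixed) gives the even quartic $t\mapsto t^2 q_i(\bar x)+C'(t^2+\bar z_i^2)^2+\text{const}$, whose first- and second-order conditions at $t=\bar y_i$ force either $\bar y_i=0$ or $q_i(\bar x)=-2C'(\bar y_i^2+\bar z_i^2)<0$; the mirror argument in $z_i$ gives the opposite-sign statement, so for every $i$ we get $q_i(\bar x)=0$ (when $\bar y_i=\bar z_i=0$) or one of two ``double-well'' configurations in which $q_i(\bar x)\ne 0$. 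Moreover the first-order condition in $x$ collapses (once we arrange $2C\ne 1/(2C')$) to $\sum_{i\in J}q_i(\bar x)\nabla q_i(\bar x)=0$, where $J$ is the set of indices sitting in a double-well configuration. If the system is infeasible then $J\ne\emptyset$ at $\bar x$; using $\nabla q_i(\bar x)=2\bar x_ie_i$ for $i\ge1$ and $\nabla q_0(\bar x)=-\tfrac12 E\bar x$, this pins down the offending coordinates (e.g. $\bar x_j=0$ and $q_j(\bar x)=-1$ for a violated constraint $x_j^2=1$). A carefully chosen second-order perturbation of $x$ along those coordinates then produces a descent direction: the $k$ identical copies of the relevant $q^2$-term supply an amount of negative curvature proportional to $k$, which overwhelms the positive curvature contributed by the cut-size constraint $q_0$ (a quantity polynomial in $n$ but independent of $k$), as soon as $k$ is taken polynomially large. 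This contradicts local minimality, so $p$ has no local minimum when the system is infeasible.

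The step I expect to be the main obstacle is exactly this last one: one must engineer the gadget---the penalty terms, the number of copies $k$, and the constants $C,C'$---so that \emph{every} possible double-well critical configuration admits a descent direction, including the awkward case in which only the cut-size constraint $q_0$ is violated (so $\nabla q_0(\bar x)$ need not be coordinate-aligned, and one instead perturbs $x$ inside $\ker E\cap\{-1,1\}^n$ to exploit a negative eigenvalue of $E$), and one must also check that the supporting Taylor estimates hold uniformly on a (possibly instance-dependent) neighborhood. By contrast, the ``only if'' direction is a largely mechanical Taylor expansion once the positive-definiteness of $C\|B\delta x\|^2$ is observed, and the one-dimensional restriction arguments used to force $q_i(\bar x)\in\{0\}\cup(\text{double-well values})$ are elementary.
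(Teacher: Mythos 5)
Your proposal is a genuinely different route from the paper's. The paper reduces from STABLESET via the Motzkin--Straus theorem: it takes the quartic form $p_{A,k}(x)=(x^2)^TM_{A,k}x^2$ with $M_{A,k}=kA+kI-J$ and a noninteger $k$, shows $p_{A,k}$ is positive definite (so the origin is a strict local minimum) when $\alpha(G)<k$, and when $\alpha(G)>k$ shows by an induced-subgraph argument that the \emph{only} second-order point is the origin, which is not a local minimum because the homogeneous form takes negative values arbitrarily close to $0$. You instead reduce from MAXCUT using a penalty polynomial $\sum_i(y_i^2-z_i^2)q_i(x)+C\sum_iq_i(x)^2+C'\sum_i(y_i^2+z_i^2)^2$. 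Your ``only if'' direction (feasibility of the quadratic system implies a strict local minimum at $(\bar x,0,0)$) is sound, since for the MAXCUT system the gradients $\nabla q_i(\bar x)=2\bar x_ie_i$ span $\R^n$, so $C\sum_i\ell_i^2$ is positive definite in $\delta x$ and the cross-terms can indeed be absorbed by AM--GM for large enough $C,C'$.

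However, there is a genuine gap in the ``if'' direction, and it is not one that the ``$k$ identical copies'' device or a choice of $C,C'$ can patch. Your collapse of the first-order condition in $x$ to $\sum_{i\in J}q_i(\bar x)\,\nabla q_i(\bar x)=0$ is correct, but this system is \emph{automatically} satisfied whenever $\nabla q_i(\bar x)=0$ for every $i\in J$, which can happen even when the MAXCUT system is infeasible. Concretely, for the constraint $q_0(x)=\tfrac14\sum_{ij}E_{ij}(1-x_ix_j)-\kappa$ one has $\nabla q_0(\bar x)=-\tfrac12E\bar x$, which vanishes on any ``balanced'' $\bar x\in\{-1,1\}^n$ with $E\bar x=0$. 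Take the $4$-cycle and $\kappa=3$ (infeasible, since the $4$-cycle only has cuts of size $0,2,4$), and $\bar x=(1,1,-1,-1)$ (cut size $2$, $E\bar x=0$). Then $q_i(\bar x)=0$ for $i\ge1$, $q_0(\bar x)=-1$, $J=\{0\}$, and the point $(\bar x,\bar y,\bar z)$ with $\bar y_0^2=1/(2C')$ and all other $\bar y_i,\bar z_i=0$ is a critical point. Working out the Hessian, the $(x,y_0)$ and $(x,z_0)$ cross-blocks vanish precisely because $E\bar x=0$; the $x$-block equals $8CI+(C-\tfrac1{4C'})E$ and the $(y_0,z_0)$ block is $\mathrm{diag}(4,4)$, both positive definite when $4CC'>1$, while the $(y_i,z_i)$ blocks for $i\ge1$ are zero. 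So the Hessian is psd with a large null space, and along every null direction $p$ grows quartically through the $C'$-penalty; the remaining third-order cross-terms are absorbed as in your ``only if'' argument. Thus this second-order point \emph{is} a local minimum, defeating the reduction. The trouble is structural: your $q^2$-penalty cannot see the violated constraint $q_0$ at a balanced $\bar x$ because $\nabla q_0(\bar x)=0$, and the ``descent parabola inside $\ker E$'' you gesture at does not exist in general since $E$ restricted to its kernel has no negative curvature. The paper's choice of reduction sidesteps this entirely by using a homogeneous form and the clean equivalence between nonnegativity of $p_{A,k}$ and copositivity of $M_{A,k}$.
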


We will prove this theorem by presenting a polynomial-time reduction from the STABLESET problem, which is known to be (strongly) NP-hard~\cite{garey2002computers}. Recall that in the STABLESET problem, we are given as input a graph $G$ on $n$ vertices and a positive integer $r \le n$. We are then asked to decide whether $G$ has a \emph{stable set} of size $r$, i.e. a set of $r$ pairwise non-adjacent vertices. We denote the size of the largest stable set in a graph $G$ by the standard notation $\alpha(G)$. We also recall that a \emph{clique} in a graph $G$ is a set of pairwise adjacent vertices. The size of the largest clique in $G$ is denoted by $\omega(G)$. The following theorem of Motzkin and Straus~\cite{motzkin1965maxima} relates $\omega(G)$ to the optimal value of a quadratic program.

\begin{theorem}[\cite{motzkin1965maxima}]\label{QPs Thm: Motzkin Straus}
	Let $G$ be a graph on $n$ vertices with adjacency matrix $A$ and clique number $\omega$. The optimal value of the quadratic program
	
	\begin{equation}\label{QPs Eq: MS QP}
	\begin{aligned}
	& \underset{x \in \Rn}{\max}
	& & x^TAx \\
	& \text{\emph{subject to}}
	&& x \ge 0,\\
	&&& \sum_{i=1}^n x_i = 1
	\end{aligned}
	\end{equation}
	is $1 - \frac{1}{\omega}$.
\end{theorem}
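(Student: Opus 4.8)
The plan is to establish the two inequalities separately: that the optimal value of (\ref{QPs Eq: MS QP}) is at least $1 - \frac{1}{\omega}$, and that it is at most $1-\frac{1}{\omega}$. Note first that an optimal solution exists, since the objective $x^TAx$ is continuous and the simplex $\Delta_n$ is compact. For the lower bound, I would simply exhibit a feasible point of the claimed value: let $C$ be a clique in $G$ of size $\omega$, and set $x_i = \frac{1}{\omega}$ for $i \in C$ and $x_i = 0$ otherwise. This point lies in $\Delta_n$, and since every pair of distinct vertices of $C$ is adjacent (so the corresponding entries of $A$ equal $1$), we get $x^TAx = \sum_{i \ne j,\ i,j \in C} \frac{1}{\omega^2} = \frac{\omega(\omega-1)}{\omega^2} = 1 - \frac{1}{\omega}$.

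The upper bound is the heart of the argument. I would pick an optimal solution $x^*$ of (\ref{QPs Eq: MS QP}) whose support $S \defeq \{i\ |\ x^*_i > 0\}$ has minimum cardinality among all optimal solutions; such a choice is possible because there are only finitely many subsets of $\otn$ eligible to be supports. The key claim is that $S$ is a clique in $G$. Suppose not: then there are distinct non-adjacent vertices $i,j \in S$, so $A_{ij}=A_{ji}=0$, and also $A_{ii}=A_{jj}=0$ as $G$ has no self-loops. Consider the perturbed points $x(t) \defeq x^* + t(e_i - e_j)$, which are feasible for all $t$ in the interval $[-x^*_i,\, x^*_j]$ (they remain nonnegative and their entries still sum to $1$). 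The quadratic term $(e_i-e_j)^TA(e_i-e_j) = A_{ii} - 2A_{ij} + A_{jj} = 0$ vanishes, so $t \mapsto x(t)^TAx(t)$ is an \emph{affine} function of $t$ on this interval. Since $x^*_i, x^*_j > 0$, the value $t=0$ lies in the interior of the interval, and an affine function attaining its maximum at an interior point must be constant there; hence $x(t)^TAx(t)$ is constant and equal to the optimal value on $[-x^*_i, x^*_j]$. In particular $x(x^*_j)$ is an optimal solution whose support is strictly smaller (it no longer contains $j$), contradicting minimality of $|S|$. Therefore $S$ is a clique, so $|S| \le \omega$, and the principal submatrix of $A$ indexed by $S$ has all off-diagonal entries equal to $1$. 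Consequently $x^{*T}Ax^* = \sum_{i \ne j,\ i,j\in S} x^*_i x^*_j = \big(\sum_{i\in S} x^*_i\big)^2 - \sum_{i \in S}(x^*_i)^2 = 1 - \sum_{i\in S}(x^*_i)^2$, and by the Cauchy--Schwarz inequality $\sum_{i \in S}(x^*_i)^2 \ge \frac{1}{|S|}\big(\sum_{i\in S}x^*_i\big)^2 = \frac{1}{|S|} \ge \frac{1}{\omega}$, so $x^{*T}Ax^* \le 1 - \frac{1}{\omega}$.

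The main obstacle is precisely the support-reduction step: one must verify that a minimum-support optimizer exists, that the affine perturbation $x(t)$ stays inside $\Delta_n$ over the stated interval, and that constancy of the (affine) objective along this segment genuinely produces another optimal solution with a strictly smaller support. Once this is handled, the remaining pieces---the explicit clique construction for the lower bound and the Cauchy--Schwarz estimate on a clique for the upper bound---are short computations. I would state the clique-support reduction as its own lemma so it can potentially be reused later in the chapter.
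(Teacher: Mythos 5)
Your proof is correct and complete. Note that the paper does not actually supply its own proof of this theorem; it is stated with a citation to Motzkin and Straus, so there is no in-paper argument to compare against. Your argument---exhibiting the uniform distribution on a maximum clique for the lower bound, and for the upper bound taking a minimum-support optimizer, showing its support must be a clique via the affine-perturbation trick (using that $(e_i-e_j)^T A (e_i-e_j)=0$ for non-adjacent $i,j$), and then finishing with Cauchy--Schwarz---is one of the standard proofs of the Motzkin--Straus theorem; it differs from the original inductive argument of Motzkin and Straus but is well established and all steps check out, including feasibility of $x(t)$ on $[-x^*_i, x^*_j]$, the constancy of the affine restriction at an interior maximizer, and the strict drop in support cardinality at $t = x^*_j$.
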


For a scalar $k$ and a symmetric matrix $A$ (which will always be an adjacency matrix), the following notation will be used repeatedly in our proofs:
\begin{equation}\label{QPs Def: MAk} M_{A,k} \defeq kA + kI - J,\end{equation}
\begin{equation}\label{QPs Def: qAk}q_{A,k}(x) \defeq x^TM_{A,k}x,\end{equation}
and
\begin{equation}\label{QPs Def: pAk}p_{A,k}(x) \defeq (x^2)^T M_{A,k} x^2.\end{equation}
Note that nonnegativity of the quadratic form $q_{A,k}$ over the nonnegative orthant is equivalent to (global) nonnegativity of the quartic form $p_{A,k}$ and to copositivity of the matrix $M_{A,k}$.

The following corollary of Theorem~\ref{QPs Thm: Motzkin Straus} will be of more direct relevance to our proofs. The first statement in the corollary has been observed e.g. by de Klerk and Pashechnik~\cite{de2002approximation}, but its proof is included here for completeness. The second statement, which will also be needed in the proof of Theorem~\ref{QPs Thm: LM Degree 4 NP hard}, follows straightforwardly.

\begin{cor}\label{QPs Cor: Motzkin Straus 2}
	For a scalar $k > 0$ and a graph $G$ with adjacency matrix $A$, the matrix $M_{A,k}$ in (\ref{QPs Def: MAk}) is copositive if and only if $\alpha(G) \le k$. Furthermore, if $\alpha(G) < k$, the quartic form $p_{A,k}$ in (\ref{QPs Def: pAk}) is positive definite.\footnote{The converse of this statement also holds, but we do not need it for the proof of Theorem~\ref{QPs Thm: LM Degree 4 NP hard}.}
\end{cor}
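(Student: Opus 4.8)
The plan is to reduce the statement to the Motzkin--Straus theorem (Theorem~\ref{QPs Thm: Motzkin Straus}) applied not to $G$ but to its complement $\bar G$, exploiting the elementary fact that $\omega(\bar G) = \alpha(G)$. First I would record that copositivity of $M_{A,k}$ is a homogeneous condition: since $q_{A,k}$ is a quadratic form, $M_{A,k}$ is copositive if and only if $x^T M_{A,k} x \ge 0$ for every $x$ in the simplex $\Delta_n$. (Given any nonzero $x \ge 0$, rescale it to lie in $\Delta_n$; the case $x = 0$ is trivial.) So everything comes down to understanding $\min_{x \in \Delta_n} x^T M_{A,k} x$.

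The one genuine computation is to rewrite $M_{A,k}$ in terms of $\bar A \defeq J - I - A$, the adjacency matrix of $\bar G$. Since $A = J - I - \bar A$, we get $M_{A,k} = k(A+I) - J = k(J - \bar A) - J = (k-1)J - k\bar A$. Hence, for $x \in \Delta_n$, using $x^T J x = (\sum_i x_i)^2 = 1$, we have $x^T M_{A,k} x = (k-1) - k\, x^T \bar A x$. Therefore $M_{A,k}$ is copositive if and only if $\max_{x \in \Delta_n} x^T \bar A x \le \tfrac{k-1}{k} = 1 - \tfrac{1}{k}$. Now Theorem~\ref{QPs Thm: Motzkin Straus} applied to $\bar G$ says this maximum equals $1 - \tfrac{1}{\omega(\bar G)} = 1 - \tfrac{1}{\alpha(G)}$, and the inequality $1 - \tfrac{1}{\alpha(G)} \le 1 - \tfrac{1}{k}$ is equivalent to $\alpha(G) \le k$ (both $\alpha(G)$ and $k$ being positive, with $\alpha(G) \ge 1$ so the fraction is well defined). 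This settles the first statement.

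For the second statement, assume $\alpha(G) < k$. The same identity gives, for every $x \in \Delta_n$, $x^T M_{A,k} x = (k-1) - k\, x^T \bar A x \ge (k-1) - k\bigl(1 - \tfrac{1}{\alpha(G)}\bigr) = \tfrac{k}{\alpha(G)} - 1 = \tfrac{k - \alpha(G)}{\alpha(G)} > 0$. Rescaling as before, $y^T M_{A,k} y > 0$ for every nonzero $y \ge 0$, i.e.\ $M_{A,k}$ is strictly copositive. Finally, for any $x \ne 0$ the vector $x^2$ is a nonzero element of the nonnegative orthant, so $p_{A,k}(x) = (x^2)^T M_{A,k} (x^2) > 0$, which is exactly positive definiteness of the quartic form $p_{A,k}$.

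I do not expect a real obstacle here: all the hard content is already packaged in Theorem~\ref{QPs Thm: Motzkin Straus}, and what remains is the identity $M_{A,k} = (k-1)J - k\bar A$ plus routine homogeneity bookkeeping. The only points requiring minor care are the reduction of (strict) copositivity from the whole nonnegative orthant to the simplex via scaling, and checking the degenerate cases (e.g.\ $G$ complete, where $\alpha(G) = 1$ and $\bar A = 0$), which are consistent with the formulas above.
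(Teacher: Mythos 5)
Your proof is correct and follows essentially the same path as the paper: both arguments reduce to Motzkin--Straus applied to $\bar G$ via $\omega(\bar G) = \alpha(G)$, then use homogeneity to pass between the simplex and the nonnegative orthant. The only notable difference is in the second claim: the paper decomposes $M_{A,k} = M_{A,\alpha(G)} + (k-\alpha(G))(A+I)$ and argues the two summands give a nonnegative plus a positive-definite quartic, whereas you directly bound $x^T M_{A,k} x \ge (k-\alpha(G))/\alpha(G) > 0$ on the simplex from the same Motzkin--Straus estimate and then transfer via $x \mapsto x^2$; this is a slightly more direct route to the same quantitative conclusion, but rests on the identical ingredients.
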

\begin{proof}
First observe that $\alpha(G) = \omega(\bar{G})$, and that the adjacency matrix of $\bar{G}$ is $J - A - I$. Thus from Theorem~\ref{QPs Thm: Motzkin Straus}, the maximum value of $x^T(J-A-I)x$ over $\Delta_n$ is $1 - \frac{1}{\alpha(G)}$, and hence the minimum value of $x^T(A+I)x$ over $\Delta_n$ is $\frac{1}{\alpha(G)}$. Therefore, for any $k > 0$, $\alpha (G) \le k$ if and only if $x^T(A+I)x \ge \frac{1}{k}$ for all $x \in \Delta_n$, which holds if and only if $x^T(k(A+I)-J)x \ge 0$ for all $x \in \Delta_n.$ The first statement of the corollary then follows from the homogeneity of $x^T(k(A+I)-J)x$.


To show that $p_{A,k}$ is positive definite when $\alpha(G) < k$, observe that
$$kA+kI-J = (\alpha(G)(A+I)-J) + (k - \alpha(G))(A+I).$$
Considering the two terms on the right separately, we observe that $(x^2)^T (\alpha(G)(A+I)-J) x^2$ (i.e., $p_{A,\alpha(G)})$ is nonnegative since $M_{A,\alpha(G)}$ is copositive, and that $(x^2)^T (k-\alpha(G))(A+I)x^2$ is positive definite. Therefore, their sum $(x^2)^T(kA+kI-J )x^2$ is positive definite.
\end{proof}

We now present the proof of Theorem~\ref{QPs Thm: LM Degree 4 NP hard}. While the statement of the theorem is given for degree-4 polynomials, it is straightforward to extend the result to higher-degree polynomials. We note that degree four is the smallest degree for which deciding existence of local minima is intractable. For degree-3 polynomials, it turns out that this question can be answered by solving semidefinite programs of polynomial size~\cite{cubicpaper}.


\begin{proof}[Proof (of Theorem~\ref{QPs Thm: LM Degree 4 NP hard})]
	We present a polynomial-time reduction from the STABLESET problem. Let a graph $G$ on $n$ vertices with adjacency matrix $A$ and a positive integer $r \le n$ be given. We show that $G$ has a stable set of size $r$ if and only if the quartic form $p_{A,r-0.5}$ defined in (\ref{QPs Def: pAk}) has no local minimum. This is a consequence of the following more general fact that we prove below: For a noninteger scalar $k$, the quartic form $p_{A,k}$ has no local minimum if and only if $\alpha(G) \ge k$.
	
	We first observe that if $\alpha(G) < k$, then $p_{A,k}$ has a local minimum. Indeed, recall from the second claim of Corollary~\ref{QPs Cor: Motzkin Straus 2} that under this assumption, $p_{A,k}$ is positive definite. Since $p_{A,k}$ vanishes at the origin, it follows that the origin is a local minimum. Suppose now that $\alpha(G) \ge k$. Since $k$ is noninteger, this implies that $\alpha(G) > k$. We show that in this case, $p_{A,k}$ has no local minimum by showing that the origin must be the only second-order point of $p_{A,k}$. Since any local minimum of a polynomial is a second-order point, only the origin can be a candidate local minimum for $p_{A,k}$. However, by the first claim of Corollary~\ref{QPs Cor: Motzkin Straus 2}, the matrix $M_{A,k}$ is not copositive and hence $p_{A,k}$ is not nonnegative. As $p_{A,k}$ is homoegenous, this implies that $p_{A,k}$ takes negative values arbitrarily close to the origin, ruling out the possibility of the origin being a local minimum.
	
	To show that when $\alpha(G) > k$, the origin is the only second-order point of $p_{A,k}$, we compute the gradient and Hessian of $p_{A,k}$. We have
	$$\grad p_{A,k}(x) = 4x \cdot M_{A,k} x^2,$$
	and
	$$\Hess p_{A,k}(x) = 8M_{A,k} \cdot xx^T + 4diag(M_{A,k} x^2).$$
	Suppose for the sake of contradiction that $p_{A,k}$ has a nonzero second-order point $\xbar$. Since $\xbar$ is a critical point, $\grad p_{A,k}(\xbar)=0$ and thus $(M_{A,k} \xbar^2)_{\xbar} = 0$. It then follows that
	$$(\Hess p_{A,k}(\xbar))_{\xbar} = 8(M_{A,k})_{\xbar} \cdot \xbar_{\xbar}\xbar_{\xbar}^T.$$
    Because $\Hess p_{A,k}(\xbar) \succeq 0$ and thus all its principal submatrices are positive semidefinite, we have $8(M_{A,k})_{\xbar} \cdot \xbar_{\xbar}\xbar_{\xbar}^T \succeq 0$. Since
    $$(M_{A,k})_{\xbar} \cdot \xbar_{\xbar}\xbar_{\xbar}^T = diag(\xbar_{\xbar}) (M_{A,k})_{\xbar} diag(\xbar_{\xbar}),$$
    and since $diag(\xbar_{\xbar})$ is an invertible matrix, it follows that $(M_{A,k})_{\xbar} \succeq 0$.
    
	
	We now consider the induced subgraph $G_{\xbar}$ of $G$ with vertices corresponding to the indices of the nonzero entries of $\xbar$. Note that the adjacency matrix of $G_{\xbar}$ is $A_{\xbar}$. Furthermore, observe that $M_{A_{\xbar},k} = (M_{A,k})_{\xbar}$, and therefore $M_{A_{\xbar},k}$ is positive semidefinite and thus copositive. We conclude from the first claim of Corollary~\ref{QPs Cor: Motzkin Straus 2} that $\alpha(G_{\xbar}) \le k$. We now claim that
	$$M_{A_{\xbar},k}\xbar_{\xbar}^2 = (M_{A,k})_{\xbar}\xbar_{\xbar}^2 = (M_{A,k}\xbar^2)_{\xbar} = 0.$$
    The first equality follows from that $M_{A_{\xbar},k} = (M_{A,k})_{\xbar}$, the second from that the indices of the nonzero entries of $\xbar$ are the same as those of $\xbar^2$, and the third from that $\grad p_{A,k} (\xbar) = 0$. Hence, $p_{A_{\xbar},k}(\xbar_{\xbar}) = 0$. Since $\xbar_{\xbar}$ is nonzero, $p_{A_{\xbar},k}$ is not positive definite. By the second claim of Corollary~\ref{QPs Cor: Motzkin Straus 2}, we must have $\alpha(G_{\xbar}) \ge k$. Therefore, $\alpha(G_{\xbar}) = k$. However, because $k$ was assumed to be noninteger, we have a contradiction.
\end{proof}

It turns out that the proof of Theorem~\ref{QPs Thm: LM Degree 4 NP hard} also shows that it is NP-hard to decide if a quartic polynomial has a strict local minimum. Recall that a \emph{strict local minimzer} of a function $f: \Rn \to \R$ over a set $\Omega \subseteq \Rn$ is a point $\xbar \in \Omega$ for which there exists a scalar $\epsilon > 0$ such that $p(\xbar) < p(x)$ for all $x \in \Omega\backslash \xbar$ with $\|x - \xbar\| \le \epsilon$.

\begin{cor}\label{QPs Cor: SLM Degree 4 NP hard}
	It is strongly NP-hard to decide if a degree-4 polynomial has a strict local minimum.
\end{cor}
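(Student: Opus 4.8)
The plan is to reuse verbatim the polynomial-time reduction from STABLESET constructed in the proof of Theorem~\ref{QPs Thm: LM Degree 4 NP hard}. Given a graph $G$ on $n$ vertices with adjacency matrix $A$ and a positive integer $r \le n$, I would again consider the quartic form $p_{A,r-0.5}$ defined in (\ref{QPs Def: pAk}), and show that $G$ has a stable set of size $r$ if and only if $p_{A,r-0.5}$ has no strict local minimum. Since this is the same polynomial-time reduction used before and STABLESET is strongly NP-hard, the corollary follows once this equivalence is established.

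For the forward direction, suppose $G$ has a stable set of size $r$, i.e. $\alpha(G) \ge r$. Because $r - 0.5$ is noninteger and $\alpha(G)$ is an integer, this gives $\alpha(G) > r - 0.5$. The proof of Theorem~\ref{QPs Thm: LM Degree 4 NP hard} already establishes that under this condition $p_{A,r-0.5}$ has no local minimum at all: the origin is its only second-order point, and it fails to be a local minimum because $p_{A,r-0.5}$ is homogeneous and, by the first claim of Corollary~\ref{QPs Cor: Motzkin Straus 2}, not nonnegative, so it takes negative values arbitrarily close to the origin. Since any strict local minimum is in particular a local minimum, $p_{A,r-0.5}$ has no strict local minimum.

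For the reverse direction, suppose $G$ has no stable set of size $r$, i.e. $\alpha(G) < r$, equivalently $\alpha(G) < r - 0.5$. By the second claim of Corollary~\ref{QPs Cor: Motzkin Straus 2}, the form $p_{A,r-0.5}$ is then positive definite. Since $p_{A,r-0.5}$ vanishes at the origin and is strictly positive everywhere else, the origin is a strict local minimum (in fact a strict global minimum), so $p_{A,r-0.5}$ does have a strict local minimum. Combining the two directions yields the claimed equivalence.

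There is no genuine obstacle here: the only content beyond the proof of Theorem~\ref{QPs Thm: LM Degree 4 NP hard} is the two elementary observations that positive definiteness of a form forces the origin to be a \emph{strict} (not merely non-strict) local minimum, and that the total absence of a local minimum trivially precludes a strict one. The statement for higher-degree polynomials follows by the same padding argument alluded to after Theorem~\ref{QPs Thm: LM Degree 4 NP hard}.
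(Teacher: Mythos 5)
Your proof is correct and takes essentially the same route as the paper: reuse the STABLESET reduction from Theorem~\ref{QPs Thm: LM Degree 4 NP hard}, observe that when $\alpha(G) < k$ positive definiteness makes the origin a strict local minimum, and that when $\alpha(G) > k$ there is no local minimum at all (so certainly no strict one). The paper's version is just a terser statement of these two observations.
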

\begin{proof}
Observe from the proof of Theorem~\ref{QPs Thm: LM Degree 4 NP hard} that for a graph $G$ and a noninteger scalar $k$, the quartic form $p_{A,k}$ has a local minimum if and only if $\alpha(G)<k$. In the case where $p_{A,k}$ does have a local minimum, we showed that $p_{A,k}$ is positive definite, and thus the local minimum (the origin) must be a strict local minimum.
\end{proof}

We now turn our attention to local minima of quadratic programs.

\begin{theorem}\label{QPs Thm: LM QP NP hard}
	It is strongly NP-hard to decide if a quadratic function has a local minimum over a polyhedron. The same is true for deciding if a quadratic function has a strict local minimum over a polyhedron.
\end{theorem}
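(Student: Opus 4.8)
The plan is to reduce the problem of deciding if a degree-4 polynomial has a (strict) local minimum, which is strongly NP-hard by Theorem~\ref{QPs Thm: LM Degree 4 NP hard} (resp. Corollary~\ref{QPs Cor: SLM Degree 4 NP hard}), to the problem of deciding if a quadratic function has a (strict) local minimum over a polyhedron. The key observation is that the quartic forms $p_{A,k}(x) = (x^2)^T M_{A,k}\, x^2$ used in those results have a very special structure: they are obtained from the quadratic form $q_{A,k}(y) = y^T M_{A,k}\, y$ by the substitution $y_i = x_i^2$. This suggests lifting: introduce new variables $y \in \R^n$ together with the constraints $y_i \ge 0$ for all $i$, and consider the quadratic function $q_{A,k}(y) = y^T M_{A,k}\, y$ over the polyhedron $P \defeq \{y \in \R^n \mid y \ge 0\}$ (the nonnegative orthant). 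The intuition is that local minima of $p_{A,k}$ over $\R^n$ near the origin should correspond to local minima of $q_{A,k}$ over $P$ near the origin, since the map $x \mapsto x^2$ sends a neighborhood of $0$ in $\R^n$ onto a neighborhood of $0$ in $P$.

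First I would make this correspondence precise in both directions. For the ``only if'' direction: if $\alpha(G) < k$, then by Corollary~\ref{QPs Cor: Motzkin Straus 2} the matrix $M_{A,k}$ is copositive and in fact $q_{A,k}(y) > 0$ for all $y \ge 0$, $y \ne 0$ (this follows from the same decomposition $kA+kI-J = (\alpha(G)(A+I)-J) + (k-\alpha(G))(A+I)$ used in the proof of that corollary, now at the quadratic level over the orthant). Since $q_{A,k}$ vanishes at the origin, the origin is a strict local minimum of $q_{A,k}$ over $P$. For the ``if'' direction: if $\alpha(G) > k$ (using that $k$ is noninteger), I need to show $q_{A,k}$ has no local minimum over $P$. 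The boundary of $P$ is stratified by faces $F_S = \{y \ge 0 \mid y_i = 0 \text{ for } i \notin S\}$ for $S \subseteq \{1,\ldots,n\}$, and the restriction of $q_{A,k}$ to the relative interior of $F_S$ is, up to relabeling, $q_{A_S, k}$ over the positive orthant in $\R^{|S|}$, where $A_S$ is the adjacency matrix of the induced subgraph $G_S$. A point $\bar y$ in the relative interior of $F_S$ is a local minimum of $q_{A,k}$ over $P$ only if it is a local minimum of the restriction (so the gradient of $q_{A_S,k}$ vanishes and its Hessian $2M_{A_S,k}$ is psd) \emph{and} the one-sided derivatives into the missing coordinates are nonnegative. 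The psd condition forces $M_{A_S,k}$ copositive, hence $\alpha(G_S) \le k$ by Corollary~\ref{QPs Cor: Motzkin Straus 2}, while vanishing of the gradient forces $q_{A_S,k}(\bar y_S) = \bar y_S^T M_{A_S,k}\bar y_S = 0$ with $\bar y_S$ having all positive entries, so $q_{A_S,k}$ is not positive definite over the positive orthant, which by the argument above forces $\alpha(G_S) \ge k$. Thus $\alpha(G_S) = k$, contradicting $k$ noninteger — unless $\bar y = 0$, i.e. $S = \emptyset$. Finally the origin itself is not a local minimum of $q_{A,k}$ over $P$: since $M_{A,k}$ is not copositive (as $\alpha(G) > k$), there is $\hat y \ge 0$ with $q_{A,k}(\hat y) < 0$, and by homogeneity $q_{A,k}(t\hat y) < 0$ for arbitrarily small $t > 0$. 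This handles both the existence statement and the strict-local-minimum statement: a local minimum of $q_{A,k}$ over $P$ exists iff a strict one does iff $\alpha(G) < k$, mirroring Corollary~\ref{QPs Cor: SLM Degree 4 NP hard}. Taking $k = r - 0.5$ as in Theorem~\ref{QPs Thm: LM Degree 4 NP hard} completes the reduction, and since all data are integers bounded polynomially in $n$, the reduction is strongly NP-hard-preserving.

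The step I expect to be the main obstacle is the careful treatment of boundary points of $P$ lying in faces $F_S$ with $S \ne \emptyset$ and $S \ne \{1,\ldots,n\}$ — i.e., correctly incorporating the first-order KKT-type conditions from the inequality constraints alongside the second-order condition, and making sure the ``restriction to a face'' argument genuinely reduces to the induced-subgraph quadratic form. One must also be slightly careful that a local minimum of $q_{A,k}$ over $P$ need not have $\bar y$ in the relative interior of a \emph{single} face — but since the faces $F_S$ partition $P$ by which coordinates are zero, every point of $P$ lies in the relative interior of exactly one such face, so this is really just bookkeeping. A secondary, purely expository point is to note (as the authors do for the degree-4 case) that the construction extends verbatim to show NP-hardness for quadratic functions over polyhedra of higher ambient dimension, and that the polyhedron here is unbounded, consistent with the discussion preceding Theorem~\ref{QPs Thm: LM QP NP hard} that the bounded case is addressed separately.
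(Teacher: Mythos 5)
Your proof is correct, but it takes a genuinely different (and longer) route than the paper's. The paper's proof of Theorem~\ref{QPs Thm: LM QP NP hard} is very short: it establishes a clean one-to-one correspondence between local minima of the constrained QP $\min\{q_{A,k}(y) : y \ge 0\}$ and local minima of the unconstrained quartic form $p_{A,k}$ via the maps $y \mapsto \sqrt{y}$ and $x \mapsto x^2$, using a two-line sequence argument (if $y$ is not a local minimum of the QP there is a decreasing sequence in the orthant converging to $y$, and applying $\sqrt{\cdot}$ entrywise gives a decreasing sequence for $p_{A,k}$ converging to $\sqrt{y}$; the converse uses $(\cdot)^2$), and then simply invokes the already-proved degree-4 result. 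You gesture at exactly this lifting idea in your opening paragraph, but instead of making the bijection rigorous you re-derive the entire second-order analysis directly at the quadratic level over the orthant, stratifying by the faces $F_S$, extracting the induced subgraph $G_S$, and repeating the $\alpha(G_S) \le k$ versus $\alpha(G_S) \ge k$ argument from the proof of Theorem~\ref{QPs Thm: LM Degree 4 NP hard}. This works --- your face-by-face analysis is sound, the relative-interior bookkeeping is handled correctly, and the ``if'' direction using the decomposition $kA+kI-J = (\alpha(G)(A+I)-J) + (k-\alpha(G))(A+I)$ carries over to the quadratic-over-orthant setting as you claim --- but it duplicates work the paper already did. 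The sequential bijection argument is worth internalizing: it sidesteps all KKT and face considerations and reduces the constrained quadratic case to the unconstrained quartic case in a few lines, which is precisely the kind of leverage that makes the earlier quartic theorem worth proving first.
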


\begin{proof}
	We present a polynomial-time reduction from the STABLESET problem to the problem of deciding if a quadratic function has a local minimum over a polyhedron. The reader can check that same reduction is valid for the case of strict local minima.
	
	Let a graph $G$ on $n$ vertices with adjacency matrix $A$ and a positive integer $r \le n$ be given. Let $k = r - 0.5$, $q_{A,k}$ be the quadratic form defined in (\ref{QPs Def: qAk}), and consider the optimization problem
	\begin{equation}\label{QPs Eq: SS QP}
	\begin{aligned}
	& \underset{x \in \Rn}{\min}
	& & q_{A,k}(x) \\
	& \text{subject to}
	&& x \ge 0.
	\end{aligned}
	\end{equation}
    We show that a point $x \in \Rn$ is a local minimum of (\ref{QPs Eq: SS QP}) if and only if $\sqrt{x}$ is a local minimum of the quartic form $p_{A,k}$ defined in (\ref{QPs Def: pAk}). By the arguments in the proof of Theorem~\ref{QPs Thm: LM Degree 4 NP hard}, we would have that (\ref{QPs Eq: SS QP}) has no local minimum if and only if $G$ has a stable set of size $r$.
	
	Indeed, if $x$ is not a local minimum of (\ref{QPs Eq: SS QP}), there exists a sequence $\{y_j\} \subseteq \Rn$ with $y_j \to x$ and such that for all $j$, $y_j \ge 0$ and $q_{A,k}(y_j) < q_{A,k}(x)$. The sequence $\{\sqrt{y_j}\}$ would then satisfy $p_{A,k}(\sqrt{y_j}) < p_{A,k}(\sqrt{x})$ and $\sqrt{y_j} \to \sqrt{x}$, proving that $\sqrt{x}$ is not a local minimum of $p_{A,k}$. Similarly, if $x$ is not a local minimum of $p_{A,k}$, there exists a sequence $\{z_j\} \subseteq \Rn$ such that $z_j \to x$ and $p_{A,k}(z_j) < p_{A,k}(x)$ for all $j$. The sequence $\{z_j^2\}$ would then prove that $x^2$ is not a local minimum of (\ref{QPs Eq: SS QP}).
	
\end{proof}

\subsection{Complexity of Finding a Local minimum of a Quadratic Function Over a Polytope}

We now address the original question posed by Pardalos and Vavasis concerning the complexity of finding a local minimum of a quadratic program with a compact feasible set. Note again that if the feasible set is compact, the existence of a local minimum is guaranteed. In fact, there will always be a local minimum that has rational entries with polynomial bitsize~\cite{vavasis1990quadratic}.

\begin{theorem}\label{QPs Thm: Finding LM NP-hard}
    If there is a polynomial-time algorithm that finds a point within Euclidean distance $c^n$ (for any constant $c \ge 0$) of a local minimum of an $n$-variate quadratic function over a polytope, then $P = NP$.
\end{theorem}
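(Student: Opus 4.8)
The plan is to reduce from STABLESET, reusing the quadratic form $q_{A,k}(x) = x^T M_{A,k} x$ from the proof of Theorem~\ref{QPs Thm: LM QP NP hard}, but now minimized over a \emph{bounded} feasible set whose diameter we inflate to be exponential in $n$. Fix a constant $c$; replacing $c$ by $\ceil{c}$ we may assume $c$ is a nonnegative integer. Given a graph $G$ on $n$ vertices with adjacency matrix $A$ and an integer $r \le n$, set $k \defeq r - 0.5$ and consider the quadratic program of minimizing $q_{A,k}$ over the scaled simplex $\mathcal{P}_N \defeq \{x \in \Rn\ |\ x \ge 0,\ \sum_{i=1}^n x_i \le N\}$, where $N \defeq \ceil{2nc^n} + 1$. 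Since $\mathcal{P}_N$ is a polytope, a global (hence local) minimum always exists, so the hypothesized algorithm applies; and $\mathcal{P}_N$, together with the integer matrix $2M_{A,k} = (2r-1)(A+I) - 2J$ of bitsize $O(\log n)$ and the bound $N$ of bitsize $O(n)$, gives an instance of size polynomial in $n$, so the reduction runs in polynomial time.

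The technical core is to locate \emph{all} local minima of $q_{A,k}$ over $\mathcal{P}_N$ in the two cases. First, I would record the structural fact that a point $\xbar \in \mathcal{P}_N$ with $\sum_i \xbar_i < N$ is a local minimum of $q_{A,k}$ over $\mathcal{P}_N$ if and only if it is a local minimum over the whole orthant $\{x \ge 0\}$ (near such a point the constraint $\sum_i x_i \le N$ is slack). \emph{Case NO} ($G$ has no stable set of size $r$, i.e.\ $\alpha(G) \le r-1 < k$): by Corollary~\ref{QPs Cor: Motzkin Straus 2}, $p_{A,k}$ is positive definite, so $q_{A,k}(x) = p_{A,k}(\sqrt{x}) > 0$ for every $x \ge 0$, $x \ne 0$. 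Consequently (a) the origin is the strict global minimum of $q_{A,k}$ over $\{x\ge 0\}$, hence a local minimum over $\mathcal{P}_N$; (b) no point $\xbar$ on the slanted facet $\{\sum_i x_i = N\}$ is a local minimum, since $-\xbar$ is a feasible direction there and $q_{A,k}((1-t)\xbar) = (1-t)^2 q_{A,k}(\xbar)$ is strictly decreasing in $t$ near $0$; and (c) a local minimum $\xbar \ne 0$ over $\{x\ge 0\}$ is impossible: writing $T$ for its support, the first- and second-order conditions force $(M_{A,k})_{TT}\,\xbar_T = 0$ and $(M_{A,k})_{TT} \succeq 0$ (the same computation as in the proof of Theorem~\ref{QPs Thm: LM Degree 4 NP hard}), whence $q_{A,k}(\xbar) = \xbar_T^T (M_{A,k})_{TT}\,\xbar_T = 0$, contradicting positivity. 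Thus the origin is the \emph{unique} local minimum over $\mathcal{P}_N$. \emph{Case YES} ($\alpha(G) \ge r > k$): by the correspondence in the proof of Theorem~\ref{QPs Thm: LM QP NP hard} combined with the proof of Theorem~\ref{QPs Thm: LM Degree 4 NP hard}, $q_{A,k}$ has no local minimum over $\{x \ge 0\}$; hence every local minimum $\xbar$ of $q_{A,k}$ over $\mathcal{P}_N$ must satisfy $\sum_i \xbar_i = N$, and so $\|\xbar\| \ge N/\sqrt{n}$ by Cauchy--Schwarz, while at least one local minimum exists by compactness of $\mathcal{P}_N$.

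To finish, suppose an algorithm $\mathcal{A}$ runs in polynomial time and outputs a rational point $\hat{x}$ (of polynomial bitsize) with $\|\hat{x} - \xbar\| \le c^n$ for some local minimum $\xbar$ of the QP above. In Case NO the unique local minimum is the origin, so $\|\hat{x}\| \le c^n$; in Case YES every local minimum has norm $\ge N/\sqrt{n} > 2c^n$ (by the choice $N = \ceil{2nc^n}+1 \ge 2\sqrt{n}\,c^n$), so $\|\hat{x}\| \ge N/\sqrt{n} - c^n > c^n$. Therefore the exact integer comparison ``$\|\hat{x}\|^2 \le c^{2n}$?'' decides whether $G$ has a stable set of size $r$, giving a polynomial-time algorithm for STABLESET and hence $P = NP$. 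I expect the main obstacle to be item (c) (together with (b)) of Case NO: the heart of the argument is showing that truncating the unbounded orthant down to the polytope $\mathcal{P}_N$ creates \emph{no} spurious local minima away from the origin, so that the STABLESET answer can be read off purely from the norm of $\mathcal{A}$'s output; the homogeneity of the quadratic form $q_{A,k}$ is exactly what rules out local minima on the new slanted facet, and the Motzkin--Straus--based positivity of $p_{A,k}$ is what rules them out in the interior.
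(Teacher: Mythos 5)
Your proof is correct and takes essentially the same approach as the paper: reduce from STABLESET by minimizing $q_{A,k}$ (with $k = r-0.5$) over the scaled simplex $\{x \ge 0,\ \sum_i x_i \le N\}$ for an $N$ of magnitude roughly $c^n\sqrt{n}$, then argue that the origin is the unique local minimum precisely when $\alpha(G) < k$, while otherwise every local minimum lies on the far facet $\sum_i x_i = N$, so the norm of the algorithm's output decides STABLESET. (The paper uses $N = 3c^n\sqrt{n}$, while your integer $N = \lceil 2nc^n\rceil + 1$ sidesteps a minor encoding wrinkle.) One small remark: your item (c) in Case NO is unnecessary work---the homogeneity observation you already make in (b), namely that for any nonzero feasible $\xbar$ the point $(1-t)\xbar$ is feasible with $q_{A,k}((1-t)\xbar) = (1-t)^2 q_{A,k}(\xbar) < q_{A,k}(\xbar)$, rules out \emph{every} nonzero local minimum at once, not just those on the facet; this is exactly the one-line argument the paper uses, so the first/second-order-condition analysis in (c) can be dropped.
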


\begin{proof}
Fix any constant $c \ge 0$. We show that if an algorithm could take as input a quadratic program with a bounded feasible set and in polynomial time return a point within distance $c^n$ of any local minimum, then this algorithm would solve the the STABLESET problem in polynomial time.

Let a graph $G$ on $n$ vertices with adjacency matrix $A$ and a positive integer $r \le n$ be given. Let $k = r - 0.5$, $q_{A,k}$ be the quadratic form defined in (\ref{QPs Def: qAk}), and consider the quadratic program\footnote{Without loss of generality, we suppose that $c$ is rational. If $c$ is irrational, one can e.g. replace it in (\ref{QPs Eq: SS Approx QP}) with $\ceil{c}$.}
\begin{equation}\label{QPs Eq: SS Approx QP}
	\begin{aligned}
	& \underset{x \in \Rn}{\min}
	& & q_{A,k}(x) \\
	& \text{subject to}
	&& x \ge 0,\\
	&&& \sum_{i=1}^n x_i \le 3 c^n \sqrt{n}.
	\end{aligned}
\end{equation}
Note that the feasible set of this problem is bounded. Moreover, the number of bits required to write down this quadratic program is polynomial in $n$. This is because the scalar $3c^n\sqrt{n}$ takes $2+n\lceil \log_2(c+1) \rceil+\frac{1}{2}\lceil \log_2(n+1) \rceil$ bits to write down, and the remaining $O(n^2)$ numbers in the problem data are bounded in magnitude by $n$, so they each take $O(\log_2(n))$ bits to write down.

We will show that if $\alpha(G) < k$, the origin is the unique local minimum of (\ref{QPs Eq: SS Approx QP}), and that if $\alpha(G) \ge k$ (equivalently $\alpha(G) > k$), any local minimum $\xbar$ of (\ref{QPs Eq: SS Approx QP}) satisfies $\sum_{i=1}^n \xbar_i = 3 c^n \sqrt{n}$. Since the (Euclidean) distance from the origin to the hyperplane $\{ x \in \Rn | \sum_{i=1}^n x_i = 3 c^n \sqrt{n}\}$ is $3c^n$, there is no point that is within distance $c^n$ of both the origin and this hyperplane. Thus, the graph $G$ has no stable set of size $r$ (or equivalently $\alpha(G) < k$) if and only if the Euclidean norm of all points within distance $c^n$ of any local minimum of (\ref{QPs Eq: SS Approx QP}) is less than or equal to $c^n$.

To see why $\alpha(G) < k$ implies that the origin is the unique local minimum of (\ref{QPs Eq: SS Approx QP}), recall from the second claim of Corollary~\ref{QPs Cor: Motzkin Straus 2} that the quartic form $p_{A,k}$ defined in (\ref{QPs Def: pAk}) must be positive definite. Thus, for any nonzero vector $x \ge 0$, we have $q_{A,k}(x) > 0$. This implies that the origin is a local minimum of (\ref{QPs Eq: SS Approx QP}). Moreover, since $q_{A,k}$ is homogeneous, we have that no other feasible point can be a local minimum. Indeed, for any nonzero vector $x \ge 0$ and any nonnegative scalar $\epsilon < 1$, $q_{A,k}(\epsilon x) < q_{A,k}(x)$.

To see why when $\alpha(G) > k$, the last constraint of (\ref{QPs Eq: SS Approx QP}) must be tight at all local minima, recall from the proof of Theorem~\ref{QPs Thm: LM QP NP hard} that when $\alpha(G) > k$, the optimization problem in (\ref{QPs Eq: SS QP}) has no local minimum. Therefore, for any vector $x$ that is feasible to (\ref{QPs Eq: SS Approx QP}) and satisfies $\sum_{i=1}^n x_i < 3 c^n \sqrt{n}$, there exists a sequence $\{y_i\} \subseteq \Rn$ with $y_i \to x$, and satisfying $$y_i \ge 0, \sum_{i=1}^n y_i < 3 c^n \sqrt{n}, q_{A,k}(y_i) < q_{A,k}(x), \forall i.$$ As the points $y_i$ are feasible to (\ref{QPs Eq: SS Approx QP}), any vector $x$ satisfying $\sum_{i=1}^n x_i < 3 c^n \sqrt{n}$ cannot be a local minimum of (\ref{QPs Eq: SS Approx QP}). Thus, if $\alpha(G) > k$, any local minimum $\xbar$ of (\ref{QPs Eq: SS Approx QP}) satisfies $\sum_{i=1}^n \xbar_i = 3 c^n \sqrt{n}$.
\end{proof}

By replacing the quantity $3c^n \sqrt{n}$ in the proof of Theorem~\ref{QPs Thm: Finding LM NP-hard} with $3n^{c+0.5}$ and $2n$ respectively, we get the following two corollaries.

\begin{cor}\label{QPs Cor: QP Pseudo}
If there is a pseudo-polynomial time algorithm that finds a point within Euclidean distance $n^c$ (for any constant $c \ge 0$) of a local minimum of an $n$-variate quadratic function over a polytope, then $P = NP$.
\end{cor}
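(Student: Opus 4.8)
The plan is to mimic the proof of Theorem~\ref{QPs Thm: Finding LM NP-hard} essentially verbatim, changing only the size of the simplex-type constraint so that the \emph{magnitude} (not merely the bitsize) of the numerical data of the reduction instance is polynomial in $n$. Fix the constant $c \ge 0$; without loss of generality $c$ is rational (otherwise replace it by $\ceil{c}$ throughout). Given an instance of STABLESET---a graph $G$ on $n$ vertices with adjacency matrix $A$ and an integer $r \le n$---I would set $k = r - 0.5$, let $q_{A,k}$ be the quadratic form in (\ref{QPs Def: qAk}), and consider the quadratic program
\begin{equation*}
\begin{aligned}
& \underset{x \in \Rn}{\min} & & q_{A,k}(x) \\
& \text{subject to} && x \ge 0, \\
&&& \textstyle\sum_{i=1}^n x_i \le 3 n^{c+0.5}.
\end{aligned}
\end{equation*}
The feasible set is a polytope, and every entry of the problem data is bounded in magnitude by $3 n^{c+0.5}$, which is polynomial in $n$ (the entries of $M_{A,k} = kA + kI - J$ are bounded by $k+1 \le n$). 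Hence a pseudo-polynomial-time algorithm---one whose running time is polynomial in the magnitude of the numerical data---runs in time polynomial in $n$ on this family of instances.

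Next I would invoke the analysis already carried out in the proofs of Theorems~\ref{QPs Thm: LM QP NP hard} and \ref{QPs Thm: Finding LM NP-hard}, which goes through unchanged with $3n^{c+0.5}$ in place of $3c^n\sqrt{n}$: if $\alpha(G) < k$, then $p_{A,k}$ is positive definite by the second claim of Corollary~\ref{QPs Cor: Motzkin Straus 2}, so $q_{A,k}(x) > 0$ for every nonzero $x \ge 0$, and (using homogeneity of $q_{A,k}$) the origin is the \emph{unique} local minimum; if $\alpha(G) \ge k$---equivalently $\alpha(G) > k$, since $k$ is noninteger---then (\ref{QPs Eq: SS QP}) has no local minimum, so every local minimum $\xbar$ of the displayed program must make the last constraint tight, i.e.\ $\sum_{i=1}^n \xbar_i = 3 n^{c+0.5}$. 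The Euclidean distance from the origin to the hyperplane $\{x \in \Rn \mid \sum_i x_i = 3n^{c+0.5}\}$ equals $3 n^{c+0.5}/\sqrt{n} = 3 n^{c}$, and by the triangle inequality no point can lie within distance $n^c$ of both the origin and a point of that hyperplane, since $n^c + n^c = 2n^c < 3n^c$.

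Finally I would close the reduction: run the purported pseudo-polynomial-time algorithm on the displayed program and inspect the norm of the returned point $\hat{x}$. If $\alpha(G) < k$, then $\hat{x}$ is within distance $n^c$ of the origin, so $\|\hat{x}\| \le n^c$; if $\alpha(G) > k$, then $\hat{x}$ is within distance $n^c$ of some local minimum lying on the hyperplane at distance $3n^c$ from the origin, so $\|\hat{x}\| \ge 2n^c > n^c$. Thus testing whether $\|\hat{x}\| \le n^c$ decides whether $G$ has a stable set of size $r$, in time polynomial in $n$; since STABLESET is strongly NP-hard~\cite{garey2002computers}, this yields $P = NP$. I do not expect any genuine obstacle here beyond bookkeeping: the only points needing care are (i) confirming that with the right-hand side $3n^{c+0.5}$ all coefficients of the quadratic program have polynomial magnitude, so a pseudo-polynomial-time algorithm is genuinely polynomial-time on this family, and (ii) verifying that the gap $3n^c$ exceeds $2n^c$, which is exactly what makes the triangle-inequality separation work.
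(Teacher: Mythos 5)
Your proof follows precisely the approach the paper intends: it replaces the quantity $3c^n\sqrt n$ in the proof of Theorem~\ref{QPs Thm: Finding LM NP-hard} with $3n^{c+0.5}$, checks that the resulting numerical data has magnitude polynomial in $n$ so a pseudo-polynomial-time algorithm is genuinely polynomial on these instances, and reuses the separation argument with the gap $3n^c > 2n^c$. This matches the paper's one-line proof of the corollary.
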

\begin{cor}\label{QPs Cor: 2n NP-hard}
If there is a polynomial-time algorithm that finds a point within Euclidean distance $\epsilon \sqrt{n}$ (for any constant $\epsilon \in [0,1)$) of a local minimum of a restricted set of quadratic programs over $n$ variables whose numerical data are integers bounded in magnitude by $2n$, then $P = NP$.
\end{cor}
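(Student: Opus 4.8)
The plan is to re-run the proof of Theorem~\ref{QPs Thm: Finding LM NP-hard} almost verbatim, making only two changes: (i) replace the right-hand side $3c^n\sqrt{n}$ of the last constraint by the value $2n$, and (ii) rescale the objective so that it has integer coefficients. Concretely, given a STABLESET instance consisting of a graph $G$ on $n$ vertices with adjacency matrix $A$ and an integer $r \le n$, I would set $k = r - 0.5$ and consider the quadratic program
\begin{equation*}
\begin{aligned}
& \underset{x \in \Rn}{\min}
& & x^T(2M_{A,k})x \\
& \text{subject to}
&& x \ge 0,\\
&&& \sum_{i=1}^n x_i \le 2n,
\end{aligned}
\end{equation*}
where $M_{A,k}$ is as in (\ref{QPs Def: MAk}). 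Since $2M_{A,k} = (2r-1)(A+I) - 2J$, every entry of the objective matrix is an integer lying in $\{-2,\,2r-3\}$, hence of magnitude at most $2n$; the constraint data are $0/1$ coefficients and the integer right-hand side $2n$. Thus the whole family of instances produced by this reduction (one per STABLESET instance) consists of quadratic programs with integer data bounded in magnitude by $2n$ — this is the ``restricted set of quadratic programs'' in the statement. Multiplying the objective by the positive constant $2$ does not change the set of local minima, so replacing $q_{A,k}$ by $x^T(2M_{A,k})x$ is harmless.

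With this setup, the dichotomy from the proof of Theorem~\ref{QPs Thm: Finding LM NP-hard} carries over unchanged. If $\alpha(G) < k$ (equivalently, $G$ has no stable set of size $r$), then by the second claim of Corollary~\ref{QPs Cor: Motzkin Straus 2} the quartic form $p_{A,k}$ of (\ref{QPs Def: pAk}) is positive definite, so $q_{A,k}$ is strictly positive on the nonnegative orthant minus the origin; by homogeneity the origin is then the unique local minimum of the program above. If instead $\alpha(G) \ge k$ — equivalently $\alpha(G) > k$, since $k$ is noninteger, equivalently $G$ has a stable set of size $r$ — then, exactly as in the proof of Theorem~\ref{QPs Thm: LM QP NP hard}, the homogeneous problem (\ref{QPs Eq: SS QP}) has no local minimum, so for every feasible $x$ with $\sum_i x_i < 2n$ there is a feasible sequence converging to $x$ with strictly smaller objective value; hence every local minimum $\bar x$ of the program above satisfies $\sum_{i=1}^n \bar x_i = 2n$.

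Finally I would run the distance argument. The Euclidean distance from the origin to the hyperplane $H \defeq \{x \in \Rn \mid \sum_{i=1}^n x_i = 2n\}$ is $2n/\sqrt{n} = 2\sqrt{n}$. Hence for any constant $\epsilon \in [0,1)$, the balls of radius $\epsilon\sqrt{n}$ centered at the origin and at any point of $H$ are disjoint, because $2\epsilon\sqrt{n} < 2\sqrt{n}$. Consequently, if a polynomial-time algorithm returns a point $z$ within distance $\epsilon\sqrt{n}$ of some local minimum of the program, then $\|z\| \le \epsilon\sqrt{n}$ when $\alpha(G) < k$, while $\|z\| \ge (2-\epsilon)\sqrt{n} > \epsilon\sqrt{n}$ when $\alpha(G) > k$; testing which inequality holds decides STABLESET in polynomial time, forcing $P = NP$. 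There is no substantial obstacle here beyond Theorem~\ref{QPs Thm: Finding LM NP-hard} itself; the only point requiring care is step (i)--(ii): demanding integer data forces the rescaling $2M_{A,k}$ and the choice of radius $2n$ for the simplex-type constraint, and it is the ratio of this radius to $\sqrt{n}$ (namely $2\sqrt{n}$) that pins the admissible approximation error to $\epsilon \in [0,1)$. One should also double-check the magnitude bookkeeping for small $n$ and $r$ (e.g. $r=1$, where $2r-3$ is negative but still has magnitude at most $2n$).
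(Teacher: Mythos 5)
Your proof is correct and follows essentially the same route as the paper, which merely remarks that the corollary is obtained by replacing $3c^n\sqrt{n}$ with $2n$ in the proof of Theorem~\ref{QPs Thm: Finding LM NP-hard}. You have filled in a detail the paper leaves implicit: since $M_{A,k}$ with $k=r-0.5$ has half-integer entries, one must rescale the objective (multiplying by $2$, which preserves local minima) to meet the corollary's stated requirement of integer data bounded by $2n$, and your bookkeeping on the entries $\{-2,\,2r-3\}$ and on the distance $2n/\sqrt{n}=2\sqrt{n}$ to the constraining hyperplane is accurate.
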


In~\cite{pardalos1992open}, Pardalos and Vavasis also propose two follow-up questions about quadratic programs with compact feasible sets. The first is about the complexity of finding a ``KKT point''. As is, our proof does not have any implications for this question since the origin is always a KKT point of the quadratic programs that arise from our reductions. The second question asks whether finding a local minimum is easier in the special case where the problem only has one local minimum (which is thus also the global minimum). Related to this question, we can prove the following claim.

\begin{theorem}\label{QPs Thm: UniqueQP NP Hard}
If there is a polynomial-time algorithm which decides whether a quadratic program with a bounded feasible set has a unique local minimum, and if so returns this minimum\footnote{This unique local (and therefore global) minimum is guaranteed to have rational entries with polynomial bitsize; see~\cite{vavasis1990quadratic}.}, then P=NP.
\end{theorem}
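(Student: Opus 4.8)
The plan is to give a polynomial-time Turing reduction from the NP-hard STABLESET problem, reusing the bounded quadratic program from the proof of Theorem~\ref{QPs Thm: Finding LM NP-hard}. Given a graph $G$ on $n$ vertices with adjacency matrix $A$ and an integer $r \le n$, I would set $k \defeq r - 0.5$ (so that $k$ is never an integer) and form the quadratic program
\baeq
& \underset{x \in \Rn}{\min}
& & q_{A,k}(x) \\
& \text{subject to}
&& x \ge 0,\\
&&& \sum_{i=1}^n x_i \le 1,
\eaeq
where $q_{A,k}$ is the quadratic form in~(\ref{QPs Def: qAk}). Its feasible set is a bounded polytope, its description has size polynomial in $n$, and it has at least one local minimum because the feasible set is compact.

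Next I would record the dichotomy that follows from Corollary~\ref{QPs Cor: Motzkin Straus 2} together with the arguments already used in the proofs of Theorems~\ref{QPs Thm: LM QP NP hard} and~\ref{QPs Thm: Finding LM NP-hard}, all of which rely only on homogeneity of $q_{A,k}$. If $\alpha(G) < k$ --- equivalently $\alpha(G) \le r-1$, i.e.\ $G$ has no stable set of size $r$ --- then $p_{A,k}$ is positive definite, so $q_{A,k}(x) > 0$ for every nonzero $x \ge 0$; since $q_{A,k}$ is homogeneous of degree two, moving any nonzero feasible point toward the origin strictly decreases the objective, and hence the origin is the \emph{unique} local minimum. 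If instead $\alpha(G) \ge r$ --- equivalently $\alpha(G) > k$ --- then $M_{A,k}$ is not copositive, so $q_{A,k}$ takes negative values arbitrarily close to the origin; the origin is therefore \emph{not} a local minimum, and every local minimum of the quadratic program is a nonzero point. Because $k$ is non-integer while $\alpha(G)$ is an integer, exactly one of these two cases occurs.

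Finally, I would run the hypothesized polynomial-time algorithm on this quadratic program. By the dichotomy, the algorithm answers ``unique local minimum'' and returns the point $0_n$ precisely when $\alpha(G) < k$; in the complementary case it either reports that the local minimum is not unique, or it returns a unique local minimum which is a nonzero vector. Testing whether the returned vector equals $0_n$ takes polynomial time, so this gives a polynomial-time decision procedure for STABLESET, whence $P = NP$. I expect the only delicate point to be the robustness of the ``origin versus nonzero'' test: it uses the non-integrality of $k$ to upgrade $\alpha(G) \ge k$ to the strict inequality $\alpha(G) > k$, and it uses homogeneity of $q_{A,k}$ to guarantee that in the $\alpha(G) > k$ regime \emph{all} local minima are pushed off the origin --- whether there is one of them or several --- so that no promise of uniqueness on the STABLESET-feasible instances is needed.
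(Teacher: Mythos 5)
Your proposal is correct and follows essentially the same approach as the paper's proof: both use the dichotomy that the origin is the unique local minimum when $\alpha(G)<k$, while all local minima are nonzero when $\alpha(G)>k$, and then let the hypothetical algorithm discriminate via its output. The paper reuses the quadratic program~(\ref{QPs Eq: SS Approx QP}) with the exponentially-large right-hand side $3c^n\sqrt{n}$ inherited from Theorem~\ref{QPs Thm: Finding LM NP-hard}; your replacement of that bound with $1$ is a legitimate and slightly cleaner simplification, since only boundedness of the feasible set (not the exponential separation distance) is needed here, and your reasoning correctly notes that this does not require the local minima to lie on the boundary hyperplane --- merely that none of them coincide with the origin.
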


\begin{proof}
Suppose there was such an algorithm (call it Algorithm U). We show that Algorithm U would solve the STABLESET problem in polynomial time. Let a graph $G$ on $n$ vertices with adjacency matrix $A$ and a positive integer $r \le n$ be given, and input the quadratic program (\ref{QPs Eq: SS Approx QP}), with $k = r-0.5$, into Algorithm U. Observe from the proof of Theorem~\ref{QPs Thm: Finding LM NP-hard} that there are three possibilities for this quadratic program: (i) the origin is the unique local minimum, (ii) there is a unique local minimum and it is on the hyperplane ${\{ x \in \Rn | \sum_{i=1}^n x_i = 3 c^n \sqrt{n}\}}$, and (iii) there are multiple local minima and they are all on the hyperplane $\{ x \in \Rn | \sum_{i=1}^n x_i = 3 c^n \sqrt{n}\}$. Case (i) indicates that $\alpha(G) < k,$ and the output of Algorithm U in this case would be the origin. Cases (ii) and (iii) both indicate that $\alpha(G) > k$. The output of Algorithm U is a point away from the origin in case (ii), and the declaration that the local minimum is not unique in case (iii). Thus Algorithm U would reveal which case we are in, and that would allow us to decide if $G$ has a stable set of size $r$ in polynomial time.
\end{proof}

To conclude, we have established intractability of several problems related to local minima of quadratic programs. We hope our results motivate more research on identifying classes of quadratic programs where local minima can be found more efficiently than global minima. One interesting example is the case of the concave knapsack problem, where Mor{\'e} and Vavasis~\cite{more1990solution} show that a local minimum can be found in polynomial time even though, unless P=NP, a global minimum cannot.
\chapter{On Attainment of the Optimal Value in Polynomial Optimization}\label{Chap: Attainment}
\section{Introduction}\label{AOS Sec: intro}

In this chapter, we again consider problems of the form
\begin{equation}\label{AOS Defn: pop}
\begin{aligned}
& \underset{x}{\inf}
& & p(x) \\
& \text{subject to}
&& q_i(x) \ge 0, \forall i \in \{1,\ldots,m\},\\
\end{aligned}
\end{equation}
where $p,q_i$ are polynomial functions, and address the problem of testing whether the optimal value is \emph{attained}, provided that the optimal value $p^*$ is finite. More formally, does there exist a feasible point $x^*$ such that $p(x^*) = p^*$? Such a point $x^*$ will be termed an \emph{optimal solution}. For this chapter, we will refer to sets of the type $\{x \in \Rn\ |\ q_i(x) \ge 0, \forall i \in \{1,\ldots,m\}\}$ as \emph{closed basic semialgebraic sets}.

Existence of optimal solutions is a fundamental question in optimization and its study has a long history, dating back to the nineteenth century with the extreme value theorem of Bolzano and Weierstrass.\footnote{To remove possible confusion, we emphasize that our focus in this chapter is not on the complexity of testing feasibility or unboundedness of problem (\ref{AOS Defn: pop}), which have already been studied extensively. On the contrary, all optimization problems that we consider are by construction feasible and bounded below.} The question of testing attainment of the optimal value for POPs has appeared in the literature explicitly. For example, Nie, Demmel, and Sturmfels describe an algorithm for globally solving an unconstrained POP which requires as an assumption that the optimal value be attained \cite{nie2006minimizing}. This leads them to make the following remark in their conclusion section:
\begin{quote}
	``This assumption is non-trivial, and we do not address the (important and difficult) question of how to verify that a given polynomial $f(x)$ has this property.''
\end{quote}

Prior literature on existence of optimal solutions to POPs has focused on identifying cases where existence is always guaranteed. The best-known result here is the case of linear programming (i.e., when the degrees of $p$ and $q_i$ are one). In this case, the optimal value of the problem is always attained. This result was extended by Frank and Wolfe to the case where $p$ is quadratic and the polynomials $q_i$ are linear~\cite{frank1956algorithm}. Consequently, results concerning attainment of the optimal value are sometimes referred to as ``Frank-Wolfe type'' theorems in the literature \cite{belousov2002frank,luo1999extensions}. Andronov et al. showed that the same statement holds again when $p$ is cubic (and the polynomials $q_i$ are linear)~\cite{andronov1982solvability}. 

Our results in this chapter show that in all other cases, it is strongly NP-hard to determine whether a polynomial optimization problem attains its optimal value. This implies that unless P=NP, there is no polynomial-time (or even pseudo-polynomial time)  algorithm for checking this property. Nevertheless, it follows from the Tarski-Seidenberg quantifier elimination theory \cite{seidenberg1954new,tarski1951decision} that this problem is decidable, i.e., can be solved in finite time. There are also probabilistic algorithms that test for attainment of the optimal value of a POP \cite{greuet2011deciding,greuet2014probabilistic}, but their complexities are exponential in the number of variables.

In this chapter, we also study the complexity of testing several well-known sufficient conditions for attainment of the optimal value (see Section \ref{AOS SSec: Contributions} below). One sufficient condition that we do not consider but that is worth noting is for the polynomials $p,-q_1,\ldots,-q_m$ to all be convex (see \cite{belousov2002frank} for a proof, \cite{luo1999extensions} for the special case where $p$ and $q_i$ are quadratics, and \cite{bertsekas2007set} for other extensions). The reason we exclude this sufficient condition from our study is that the complexity of checking convexity of polynomials has already been analyzed in \cite{ahmadi2013np}. 

\subsection{Organization and Contributions of the Chapter}\label{AOS SSec: Contributions}

As mentioned before, this chapter concerns itself with the complexity of testing attainment of the optimal value of a polynomial optimization problem. More specifically, we show in Section \ref{AOS Sec: AOS} that it is strongly NP-hard to test attainment when the objective function has degree 4, even in absence of any constraints (Theorem \ref{AOS Thm: AOS NP-hard o4c0}), and when the constraints are of degree 2, even when the objective is linear (Theorem \ref{AOS Thm: AOS NP-hard o1c2}).

In Section \ref{AOS Sec: Sufficient Conditions}, we show that several well-known sufficient conditions for attainment of the optimal value in a POP are also strongly NP-hard to test. These include coercivity of the objective function (Theorem \ref{AOS Thm: Coercive NP-hard}), closedness of a bounded feasible set (Theorem \ref{AOS Thm: Closedness} and Remark \ref{AOS rem:boundedness.compactness}), boundedness of a closed feasible set (Corollary \ref{AOS Thm: Boundedness}), a robust analogue of compactness known as stable compactness (Corollary \ref{AOS Thm: Stable Compact NP-hard}), and an algebraic certificate of compactness known as the Archimedean property (Theorem \ref{AOS Thm: Archimedean NP-hard}). The latter property is of independent interest to the convergence of the Lasserre hierarchy, as discussed in Section \ref{AOS SSSec: Archimedean}.

In Section \ref{AOS Sec: Algorithms}, we give semidefinite programming (SDP) based hierarchies for testing compactness of the feasible set and coercivity of the objective function of a POP (Propositions~\ref{AOS Prop: Compactness Stengle} and \ref{AOS Prop: Coercive sublevel sets bounded}). The hierarchy for compactness comes from a straightforward application of Stengle's Positivstellensatz (cf. Theorem \ref{AOS Thm: Stengle}), but the one for coercivity requires us to develop a new characterization of coercive polynomials (Theorem \ref{AOS Thm: Polynomial Radius}). We end the chapter in Section \ref{AOS Sec:conclusion} with a summary and some brief concluding remarks.

\section{NP-hardness of Testing Attainment of the Optimal Value}\label{AOS Sec: AOS}

In this section, we show that testing attainment of the optimal value of a polynomial optimization problem is NP-hard. Throughout this chapter, when we study complexity questions around problem (\ref{AOS Defn: pop}), we fix the degrees of all polynomials involved and think of the number of variables and the coefficients of these polynomials as input. Since we are working in the Turing model of computation, all the coefficients are rational numbers and the input size can be taken to be the total number of bits needed to represent the numerators and denominators of these coefficients.

Our proofs of hardness are based on reductions from ONE-IN-THREE 3SAT which is known to be NP-hard \cite{schaefer1978complexity}. Recall that in ONE-IN-THREE 3SAT, we are given a 3SAT instance (i.e., a collection of clauses, where each clause consists of exactly three literals, and each literal is either a variable or its negation) and we are asked to decide whether there exists a $\{0, 1\}$ assignment to the variables that makes the expression true with the additional property that each clause has \emph{exactly} one true literal.

\begin{theorem}\label{AOS Thm: AOS NP-hard o4c0} Testing whether a degree-4 polynomial attains its unconstrained infimum is strongly NP-hard.
\end{theorem}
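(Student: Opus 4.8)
The plan is to reduce from ONE-IN-THREE 3SAT, following the general template already used in this chapter for encoding NP-hard problems via quadratic equations. First I would encode a ONE-IN-THREE 3SAT instance with boolean variables $u_1,\dots,u_n$ and clauses $C_1,\dots,C_m$ as a system of polynomial equations $h_j(u)=0$, $j=1,\dots,\ell$, of degree at most two, where the $h_j$ enforce (i) $u_i^2-u_i=0$ for each $i$ (so $u_i\in\{0,1\}$), and (ii) for each clause $C_j$ on literals $a,b,c$, the linear equation $\tilde a+\tilde b+\tilde c-1=0$ where $\tilde a=u_i$ if the literal is $u_i$ and $\tilde a=1-u_i$ if it is $\bar u_i$. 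All coefficients are in $\{-1,0,1\}$ up to the constant, so they take $O(\log n)$ bits. The system is feasible if and only if the ONE-IN-THREE 3SAT instance is a yes-instance, and a key point is that the feasible set, when nonempty, is a finite set of $\{0,1\}$-points.

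Next I would build a degree-4 polynomial in the variables $(u_1,\dots,u_n,w)$ whose unconstrained infimum is $0$ but is attained if and only if the system is infeasible. The natural candidate is
\[
p(u,w) \;=\; \Big(1 - w^2 \sum_{j=1}^{\ell} h_j(u)^2\Big)^2 \;+\; \Big(\sum_{j=1}^{\ell} h_j(u)^2\Big)\cdot\Big(\text{something}\Big),
\]
but the cleaner route — and the one I expect to work — mimics the standard "$1/x$ has infimum $0$ not attained" trick. Set $g(u)\defeq\sum_{j=1}^\ell h_j(u)^2$, a nonnegative polynomial of degree $4$ that vanishes exactly on the feasible set. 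Consider
\[
p(u,w) \;=\; \big(w\, g(u) - 1\big)^2 \;+\; g(u).
\]
This has degree $4$ (note $w$ appears only to degree $2$ and $g$ to degree $4$, but the cross term $w^2 g(u)^2$ has degree $2+8$ — too high). So instead I would keep $g$ quadratic by a different encoding: use $g(u)\defeq\sum_i (u_i^2-u_i)^2 + \sum_j(\tilde a_j+\tilde b_j+\tilde c_j-1)^2$, which is degree $4$; to keep products of degree $4$, introduce the constraint-satisfaction via an auxiliary linear slack so that $g$ itself is a sum of squares of \emph{affine} forms plus $\sum_i(u_i^2-u_i)^2$. The honest fix is to take $p(u,w)=(w\cdot r(u)-1)^2 + \sum_{i=1}^n(u_i^2-u_i)^2$ where $r(u)$ is the \emph{quadratic} polynomial $r(u)\defeq\sum_j(\tilde a_j+\tilde b_j+\tilde c_j-1)^2 + \sum_i (u_i^2-u_i)^2$ is still degree $4$; so the product $w^2 r(u)^2$ is degree $\le 2+8$. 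To stay within degree $4$ one must make $r$ at most quadratic — this is exactly the technical crux.

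The resolution I would pursue: replace the "$x\cdot(1/x)$" gadget by one that only needs $r$ of degree $2$. Let $r(u)\defeq\sum_{j}(\tilde a_j+\tilde b_j+\tilde c_j-1)^2$, which is a sum of squares of affine forms, hence degree exactly $2$, and vanishes iff all ONE-IN-THREE clause conditions hold (over $\{0,1\}$). Then set
\[
p(u,w)\;=\;\sum_{i=1}^n (u_i^2-u_i)^2 \;+\;\big(w\,r(u)-1\big)^2,
\]
a polynomial of degree $\max(4, 2\cdot 2)=4$. Its infimum is $0$: along any sequence with $u\to u^\star\in\{0,1\}^n$ satisfying $r(u^\star)=0$ is impossible to make the second term small unless $r(u)\neq 0$, so actually if the instance is a yes-instance we pick $u^\star$ with $r(u^\star)=0$ and then the second term is $(0-1)^2=1$ — that breaks it. So the correct gadget flips the roles: make the \emph{quadratic} booleanity constraints into the "$1/x$" slot and the degree-$4$ piece the remainder. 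Concretely take $q(u)\defeq\sum_i(u_i^2-u_i)^2$ (degree $4$, vanishes iff $u\in\{0,1\}^n$) and
\[
p(u,w)\;=\;\Big(\sum_j(\tilde a_j+\tilde b_j+\tilde c_j-1)^2\Big)\cdot\Big(1+q(u)\Big) \;+\; \big(w\cdot(1+q(u))-1\big)^2,
\]
which has degree $\le\max(2+4,\,2(1+4))=10$ — still too high. The genuine obstacle, and where I expect to spend real effort, is engineering a degree-$4$ certificate polynomial whose infimum is finite and unattained exactly when the combinatorial instance has no solution. The workable construction, which I would verify carefully, is the one used in the companion literature: take $p(u,w)=\|H(u)\|^2 + \left(w^2\|H(u)\|^2-1\right)\cdot\text{(linear)} \dots$ — but the cleanest valid choice is simply $p(u,w) = \sum_{i=1}^n(u_i^2-u_i)^2 + \sum_{j=1}^m(\tilde a_j+\tilde b_j+\tilde c_j-1)^2 + \big(1-w\,\prod\text{?}\big)$, which again fails on degree. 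I therefore plan to use the two-variable-block trick: introduce $w$ together with the \emph{negation} of a strictly positive degree-$2$ form, so that $p$ is coercive in $u$ on the slice where $r(u)>0$ and degenerates to $w$-unboundedness precisely on $\{r=0\}$.

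The cleanest honest version is: let $r(u)=\sum_j(\tilde a_j+\tilde b_j+\tilde c_j-1)^2$ (degree $2$, SOS of affine forms) and $s(u)=\sum_i(u_i^2-u_i)^2$ (degree $4$), and set
\[
p(u,w)\;=\;s(u)\;+\;r(u)\cdot w^2\;+\;\big(1-r(u)\,w\big)^2.
\]
Each term has degree $\le\max(4,\,2+2,\,2(1+2))=6$. To force degree $4$, replace $s$ by a sum of squares of \emph{linear} forms is impossible (booleanity is genuinely quartic), so I would instead accept this gives a degree-$6$ polynomial and then note the theorem statement only needs \emph{some} $d\ge 4$; the degree-$4$ case then follows by the standard dehomogenization/substitution argument already invoked for Theorems \ref{POLY Thm: Critical point cubic NP-hard} and \ref{POLY Thm: SOP quartic NP-hard} (absorb high-degree pieces into new variables via equations $y_k = u_iu_j$, paying only quadratic degree). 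With the construction in hand, the correctness argument is short: if the instance is a no-instance, then $r(u)>0$ or $s(u)>0$ for every $u$, and on any minimizing sequence $w\,r(u)\to 1$ forces $r(u)$ bounded away from $0$ along a subsequence, whence $r(u)w^2\to\infty$ — contradiction, so the infimum $0$ is approached only as $\|(u,w)\|\to\infty$ and is not attained; conversely if the instance is a yes-instance, pick $u^\star$ with $r(u^\star)=s(u^\star)=0$, note $p(u^\star,w)=(1-0)^2=1$ for all $w$ — again this breaks, confirming that the final polishing of the gadget (swapping which factor carries the "$1/x$" behavior) is exactly the main obstacle. I expect the correct final form to be $p(u,w)=s(u)+\big(1-w^2 s(u)-w^2 r(u)\big)^2\cdot(\dots)$-style, and pinning it down so that attainment $\iff$ infeasibility holds in \emph{both} directions is the crux; everything after that is routine estimation and the degree-reduction boilerplate.
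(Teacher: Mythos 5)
Your proposal correctly identifies the target problem (ONE-IN-THREE 3SAT), the encoding of booleanity and clause constraints by quadratic equations, the use of a ``$y^2+(yz-1)^2$''-style gadget for non-attainment, and the auxiliary-variable substitution to bring the degree down to four at the end. But you never arrive at a polynomial that actually works, and you say so several times (``That breaks it,'' ``again this breaks,'' ``pinning it down \ldots{} is the crux''). So as written this is an unfinished plan, not a proof.

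The missing idea is a \emph{switching variable}. In the paper's construction one introduces a fresh scalar $\lambda$ and sets
\[
p_\phi(x,y,z,\lambda)\;=\;\lambda^2\,s_\phi(x)\;+\;(1-\lambda)^2\bigl(y^2+(yz-1)^2\bigr),
\]
where $s_\phi$ is the sum-of-squares encoding of the 3SAT instance. The factor $\lambda^2$ turns the SAT block on, and $(1-\lambda)^2$ turns the non-attainment gadget off, and vice versa. If $\phi$ is satisfiable one takes $\lambda=1$ and a satisfying $x$, which kills both terms and attains the infimum $0$. If $\phi$ is unsatisfiable, the first term is positive for every $\lambda\ne 0$, and at $\lambda=0$ the second term reduces to $y^2+(yz-1)^2$, which is everywhere positive with unattained infimum $0$; yet the global infimum is still $0$ along $\lambda=0,\ z=1/y,\ y\to 0$. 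All of your candidate gadgets try to make the two roles share variables, and you correctly observe each time that setting $r(u^\star)=0$ freezes the $(1-wr(u))^2$ term at $1$. The $\lambda$-switch is precisely what separates the two regimes and makes both directions of the ``attained iff satisfiable'' equivalence go through. Note also that your stated direction (``attained iff the system is infeasible'') is reversed relative to what the $\lambda$-switch naturally gives; with the switch, satisfiability yields attainment, which is what one wants for a Karp reduction from an NP-complete problem.

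Once the switched degree-$6$ polynomial is in hand, your degree-reduction idea is sound and matches the paper: replace $\lambda x_i$ by new variables $\chi_i$ with penalty $(\chi_i-\lambda x_i)^2$ and $yz$ by a new variable $w$ with penalty $(w-yz)^2$, checking that zeros are preserved in both directions, which produces a degree-$4$ sum of squares with the same attainment behavior. That part of the outline is correct; the unresolved gadget is the substantive gap.
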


\begin{proof}
	
	Consider a ONE-IN-THREE 3SAT instance $\phi$ with $n$ variables $x_1,\ldots,x_n,$ and $k$ clauses. Let $s_\phi(x):\mathbb{R}^n\rightarrow \mathbb{R}$ be defined as
	\begin{equation}\label{AOS eq: 1 in 3 sos}s_\phi(x) = \sum_{i=1}^k (\phi_{i1} + \phi_{i2} + \phi_{i3} + 1)^2 + \sum_{i=1}^n (1-x_i^2)^2,\end{equation}
	where $\phi_{it} = x_j$ if the $t$-th literal in the $i$-th clause is $x_j$, and $\phi_{it} = -x_j$ if it is $\neg x_j$ (i.e., the negation of $x_j$). Now, let 
	\begin{equation}\label{AOS eq: 1 in 3 sos plus unattained}
	p_\phi(x,y,z,\lambda) \mathrel{\mathop{:}}= \lambda^2s_\phi(x) + (1-\lambda)^2(y^2 + (yz-1)^2),\end{equation}where $y,z,\lambda \in \mathbb{R}.$
	We show that $p_{\phi}$ achieves its infimum if and only if $\phi$ is satisfiable. Note that the reduction is polynomial in length and the coefficients of $p_\phi$ are at most a constant multiple of $n+k$ in absolute value.
	
	If $\phi$ has a satisfying assignment, then for any $y$ and $z$, letting $\lambda = 1$, $x_i=1$ if the variable is true in that assignment and $x_i = -1$ if it is false, results in a zero of $p_\phi$. As $p_{\phi}$ is a sum of squares and hence nonnegative, we have shown that it achieves its infimum.
	
	Now suppose that $\phi$ is not satisfiable. We will show that $p_{\phi}$ is positive everywhere but gets arbitrarily close to zero. To see the latter claim, simply set $\lambda=0, z=\frac{1}{y}$, and let $y \rightarrow 0.$ To see the former claim, suppose for the sake of contradiction that $p_{\phi}$ has a zero. Since $y^2+(yz-1)^2$ is always positive, we must have $\lambda=1$ in order for the second term to be zero. Then, in order for the whole expression to be zero, we must also have that $s_{\phi}(x)$ must vanish at some $x.$ But any zero of $s_\phi$ must have each $x\in \{-1,1\}^n$, due to the second term of $s_\phi$. However, because the instance $\phi$ is not satisfiable, for any such $x$, there exists $i \in \{1,\ldots,k\}$ such that $\phi_{i1} + \phi_{i2} + \phi_{i3}+1 \neq 0$, as there must be a clause where not exactly one literal is set to one. This means that $s_{\phi}$ is positive everywhere, which is a contradiction. 
	
	We have thus shown that testing attainment of the optimal value is NP-hard for unconstrained POPs where the objective is a polynomial of degree 6. In the interest of minimality, we now extend the proof to apply to an objective function of degree 4. To do this, we first introduce $n+1$ new variables $\chi_1, \ldots, \chi_n$ and $w$. We replace every occurrence of the product $\lambda x_i$ in $\lambda^2 s_\phi$ with the variable $\chi_i$. For example, the term $\lambda^2 x_1x_2$ would become $\chi_1\chi_2$. Let $\hat{s}_\phi(x,\chi,\lambda)$ denote this transformation on $\lambda^2s_\phi(x)$. Note that $\hat{s}_\phi(x, \chi, \lambda)$ is now a quartic polynomial. Now consider the quartic polynomial (whose coefficients are again at most a constant multiple of $n+k$ in absolute value)
	\begin{equation}\label{AOS eq: 1 in 3 sos new}
	\hat{p}_\phi(x,y,z,\lambda,\chi,w) = \hat{s}_\phi(x, \chi,\lambda) + (1-\lambda)^2(y^2 + (w-1)^2) + (w-yz)^2 + \sum_{i=1}^n (\chi_i - \lambda_ix_i)^2.
	\end{equation}
	Observe that $\hat{p}_\phi$ is a sum of squares as $\hat{s}_\phi$ can be verified to be a sum of squares by bringing $\lambda$ inside every squared term of $s_\phi$. Hence, $\hat{p}_\phi$ is nonnegative. Furthermore, its infimum is still zero, as the choice of variables $\lambda = 0, w = 1, \chi = 0$, $x$ arbitrary, $z = \frac{1}{y}$, and letting $y \to 0$ will result in arbitrarily small values of $\hat{p}_\phi$. Now it remains to show that this polynomial will have a zero if and only if $p_{\phi}$ in (\ref{AOS eq: 1 in 3 sos plus unattained}) has a zero. Observe that if $(x,y,z,\lambda)$ is a zero of $p_{\phi}$, then $(x,y,z, \lambda, \lambda x,yz)$ is a zero of $\hat{p}_{\phi}$. Conversely, if $(x,y,z,\lambda,\chi,w)$ is a zero of $\hat{p}_\phi$, then $(x,y,z,\lambda)$ is a zero of $p_{\phi}$. \end{proof}

\begin{remark} Because we use the ideas behind this reduction repeatedly in the remainder of this chapter, we refer to the quartic polynomial defined in ($\ref{AOS eq: 1 in 3 sos}$) as $s_\phi$ throughout. The same convention for $\phi_{it}$ relating the literals of $\phi$ to the variables $x$ will be assumed as well.
\end{remark}

We next show that testing attainment of the optimal value of a POP is NP-hard when the objective function is linear and the constraints are quadratic. Together with the previously-known Frank-Wolfe type theorems which we reviewed in the introduction, Theorems \ref{AOS Thm: AOS NP-hard o4c0} and \ref{AOS Thm: AOS NP-hard o1c2} characterize the complexity of testing attainment of the optimal value in polynomial optimization problems of any given degree. Indeed, our reductions can trivially be extended to the case where the constraints or the objective have higher degrees. For example to increase the degree of the constraints to some positive integer $d$, one can introduce a new variable $\gamma$ along with the trivial constraint $\gamma^d = 0$. To increase the degree of the objective from four to a higher degree $2d$, one can again introduce a new variable $\gamma$ and add the term $\gamma^{2d}$ to the objective function.

\begin{theorem}\label{AOS Thm: AOS NP-hard o1c2}
	Testing whether a degree-1 polynomial attains its infimum on a feasible set defined by degree-2 inequalities is strongly NP-hard.
\end{theorem}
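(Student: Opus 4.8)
The plan is to reduce from ONE-IN-THREE 3SAT, recycling the polynomial built in the proof of Theorem~\ref{AOS Thm: AOS NP-hard o4c0}. Given an instance $\phi$ with $n$ variables and $k$ clauses, recall the quartic $\hat p_\phi$ from (\ref{AOS eq: 1 in 3 sos new}): it is a sum of squares, hence nonnegative; its unconstrained infimum equals $0$; and this infimum is attained if and only if $\phi$ is satisfiable --- indeed, if $\phi$ is unsatisfiable then $\hat p_\phi$ is \emph{strictly positive everywhere} while still coming arbitrarily close to $0$. The structural feature I will exploit is that, by the way $\hat p_\phi$ was assembled (the $\chi_i = \lambda x_i$ linearization was introduced precisely so that pushing $\lambda$ inside the squares of $s_\phi$ leaves only linear and quadratic terms), one has $\hat p_\phi = \sum_{j=1}^{N} g_j^2$ with every $g_j$ a polynomial of degree at most $2$ and with constant-size coefficients.

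Given this, I would form the following polynomial optimization problem. Let $v$ denote the tuple of variables appearing in $\hat p_\phi$, introduce fresh variables $w_1,\dots,w_N$ and $t$, and consider
\begin{equation*}
\begin{aligned}
& \inf & & t\\
& \text{subject to} & & w_j = g_j(v),\ \ j = 1,\dots,N,\\
& & & \textstyle\sum_{j=1}^{N} w_j^2 \le t.
\end{aligned}
\end{equation*}
The objective is affine and, since each $g_j$ is quadratic, every constraint (each equality split into two inequalities) has degree $2$; moreover the reduction is clearly of polynomial size with $O(\log(n+k))$-bit data. On the feasible set one has $t \ge \sum_j w_j^2 = \sum_j g_j(v)^2 = \hat p_\phi(v) \ge 0$, so the optimal value is bounded below by $0$, and it equals $0$ because along the sequence used in Theorem~\ref{AOS Thm: AOS NP-hard o4c0} to push $\hat p_\phi$ to $0$ one may take $w_j = g_j(v)$ and $t = \hat p_\phi(v) \to 0$. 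Thus the optimal value is finite, as required for the statement to be meaningful.

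It remains to establish the equivalence. If $\phi$ is satisfiable, pick a point at which $\hat p_\phi = 0$; then every $g_j = 0$ there, so setting $w_j = 0$, $t = 0$ produces a feasible point attaining the optimal value $0$. If $\phi$ is unsatisfiable, then $\hat p_\phi > 0$ everywhere, so at \emph{every} feasible point $t \ge \sum_j w_j^2 = \hat p_\phi(v) > 0$ while the optimal value is $0$; hence it is not attained. Consequently, an algorithm deciding attainment for linear objectives over degree-$2$ feasible sets would decide ONE-IN-THREE 3SAT, giving strong NP-hardness. Finally, as already remarked before the statement, appending a dummy variable $\gamma$ with the constraint $\gamma^{d}=0$, or adding the term $\gamma^{2d}$ to the objective, raises the constraint or objective degree arbitrarily, so that together with Theorem~\ref{AOS Thm: AOS NP-hard o4c0} and the classical Frank--Wolfe type results all entries of Table~\ref{Table: Complexity Attainment} are accounted for.

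The main point requiring care is the degree accounting: a naive epigraph reformulation $t \ge p(x)$ of a hard quartic $p$ would leave a degree-$4$ constraint and prove nothing here. The real content is the observation that the quartic produced by the reduction of Theorem~\ref{AOS Thm: AOS NP-hard o4c0} is a sum of squares of \emph{quadratics}, so introducing one auxiliary variable $w_j$ per square collapses the epigraph constraint to degree $2$ while leaving the attainment behavior untouched. (One could instead redo this linearization from scratch, introducing $u_i = x_i^2$, $\chi_i = \lambda x_i$, $w = yz$ directly, but the argument is the same in spirit.) A secondary subtlety is that the ``unsatisfiable $\Rightarrow$ not attained'' direction genuinely relies on strict positivity of $\hat p_\phi$ everywhere, not merely on the absence of a root.
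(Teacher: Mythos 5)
Your reduction is correct and takes a genuinely different route from the paper's. The paper assembles its degree-2 POP directly: it introduces a surrogate variable $\chi_i$ per clause-square, variables $\psi, w, \zeta$ for the $y^2$, $yz$, $(w-1)^2$ pieces, discretizes the switch via the quadratic equality $\lambda(1-\lambda)=0$, replaces the $(1-x_i^2)^2$ terms of $s_\phi$ by hard equalities $1-x_i^2=0$, and lower-bounds the linear objective $\gamma$ by the bilinear expression $\lambda\sum_i\chi_i + (1-\lambda)(\psi+\zeta)$. You instead observe that the quartic $\hat p_\phi$ from the proof of Theorem~\ref{AOS Thm: AOS NP-hard o4c0} is by construction a sum of squares of polynomials $g_j$ of degree at most two (this is precisely what the $\chi_i = \lambda x_i$ linearization bought, and the paper itself remarks on the SOS structure), and then apply the generic ``lift an SOS to degree two'' device: one fresh variable $w_j$ per summand, the quadratic equalities $w_j=g_j(v)$, and $\sum_j w_j^2\le t$ with $t$ the linear objective. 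Your version is more modular---the only properties of the hard quartic it uses are that it is SOS with quadratic summands, has infimum zero, and attains it if and only if $\phi$ is satisfiable---and it avoids the $\lambda(1-\lambda)=0$ discretization; the paper's is more bespoke but uses fewer auxiliary variables. Both correctly hinge on the same strict-positivity fact (unsatisfiable $\Rightarrow \hat p_\phi>0$ everywhere, which you rightly emphasize is needed beyond mere rootlessness) to rule out attainment, and both keep all coefficients $O(1)$, so strong NP-hardness follows in either case.
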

\begin{proof}
	Consider a ONE-IN-THREE 3SAT instance $\phi$ with $n$ variables and $k$ clauses. Define the following POP, with $x,\chi \in \mathbb{R}^n$ and $\lambda, y,z,w, \gamma, \zeta, \psi \in \mathbb{R}$:
	
	\begin{align}\label{AOS Defn: ONE-IN-THREE 3SAT QCQP}
	& \underset{x, \chi, \lambda, y,z,w,\gamma, \zeta, \psi}{\min}
	& & \gamma \\
	& \text{subject to}
	&& \gamma \ge \lambda \sum_{i=1}^n \chi_i + (1-\lambda) (\psi + \zeta)\label{AOS const: obj surrogate}\\
	&&& 1-x_i^2 = 0,\ \forall i \in \{1,\ldots,n\},\label{AOS const: hypercube}\\
	&&&\chi_i = (\phi_{i1} + \phi_{i2} + \phi_{i3} + 1)^2,\ \forall i \in \{1,...,k\}, \label{AOS const: 3sat sos}\\
	&&& \psi = y^2, \label{AOS const: y2}\\
	&&& yz = w,\label{AOS const: yz}\\
	&&& \zeta = (w-1)^2, \label{AOS const: yz2}\\
	&&& \lambda(1-\lambda) = 0 \label{AOS const: switch}.
	\end{align}
	
	We show that the infimum of this POP is attained if and only if $\phi$ is satisfiable. Note first that the objective value is always nonnegative because of (\ref{AOS const: obj surrogate}) and in view of (\ref{AOS const: 3sat sos}), (\ref{AOS const: y2}), (\ref{AOS const: yz2}), and (\ref{AOS const: switch}). Observe that if $\phi$ has a satisfying assignment, then letting $x_i=1$ if the variable is true in that assignment and $x_i=-1$ if it is false, along with $\lambda = 1$, $y$ and $z$ arbitrary, $\psi = y^2, w = yz,$ and $\zeta = (w-1)^2$, results in a feasible solution with an objective value of 0. 
	
	If $\phi$ is not satisfiable, the objective value can be made arbitrarily close to zero by taking an arbitrary $x \in \{-1,1\}^n,$ $\chi_i$ accordingly to satisfy (\ref{AOS const: 3sat sos}), ${\lambda = 0, \psi = y^2, z = \frac{1}{y}, w = 1,\zeta=0}$, and letting $y \to 0 $. Suppose for the sake of contradiction that there exists a feasible solution to the POP with $\gamma=0.$ As argued before, because of the constraints (\ref{AOS const: 3sat sos}), (\ref{AOS const: y2}), (\ref{AOS const: yz2}), and (\ref{AOS const: switch}), $\lambda \sum_{i=1}^n \chi_i + (1-\lambda) (\psi + \zeta)$ is always nonnegative, and so for $\gamma$ to be exactly zero, we need to have $$\lambda \sum_{i=1}^n \chi_i + (1-\lambda) (\psi + \zeta)=0.$$ From (\ref{AOS const: switch}), either $\lambda=0$ or $\lambda=1.$ If $\lambda=1$, then we must have $\chi_i=0,\forall i=1,\ldots,n$, which is not possible as $\phi$ is not satisfiable. If $\lambda=0$, then we must have $\psi+\zeta=y^2+(yz-1)^2=0$, which cannot happen as this would require $y=0$ and $yz=1$ concurrently.
\end{proof}

\section{NP-hardness of Testing Sufficient Conditions for Attainment}\label{AOS Sec: Sufficient Conditions}

Arguably, the two best-known sufficient conditions under which problem (\ref{AOS Defn: pop}) attains its optimal value are \emph{compactness} of the feasible set and \emph{coercivity} of the objective function. In this section, we show that both of these properties are NP-hard to test for POPs of low degree. We also prove that certain stronger conditions, namely the \emph{Archimedean property} of the quadratic module associated with the constraints and  \emph{stable compactness} of the feasible set, are NP-hard to test.

\subsection{Coercivity of the Objective Function}\label{AOS SSec: Coercive NP-hard}

A function $p: \mathbb{R}^n \to \mathbb{R}$ is \emph{coercive} if for every sequence $\{x_k\}$ such that $\|x_k\| \to \infty$, we have $p(x_k) \to \infty$. It is well known that a continuous coercive function achieves its infimum on a closed set (see, e.g., Appendix A.2 of \cite{bertsekas1999nonlinear}). This is because all sublevel sets of continuous coercive functions are compact.

\begin{theorem}\label{AOS Thm: Coercive NP-hard} 
	Testing whether a degree-4 polynomial is coercive is strongly NP-hard.
\end{theorem}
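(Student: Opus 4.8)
The plan is to build on the proof of Theorem~\ref{AOS Thm: AOS NP-hard o4c0} and reduce from ONE-IN-THREE 3SAT, reusing the polynomial $s_\phi$ defined in (\ref{AOS eq: 1 in 3 sos}). Recall that $s_\phi$ is a sum of squares (hence nonnegative), and that $s_\phi$ has a zero if and only if $\phi$ is satisfiable; moreover, since the term $\sum_{i=1}^n(1-x_i^2)^2$ forces $x\in\{-1,1\}^n$ at any zero, any zero of $s_\phi$ is one of finitely many points. The first step is to observe that $s_\phi$ itself is \emph{never} coercive (e.g. when $n\ge 1$, because along a line where one $x_i$ varies while the others stay at $\pm 1$, or by a similar unbounded direction, $s_\phi$ stays bounded --- actually more simply, $s_\phi$ is a polynomial with no monomials forcing growth in all directions; I would exhibit an explicit unbounded direction on which $s_\phi$ is bounded). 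So $s_\phi$ by itself doesn't give what we want; we need an auxiliary construction that turns ``has a zero'' into ``is coercive.''

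The key idea I would use is the following: consider a new variable $t$ and form a quartic polynomial of the shape
\begin{equation*}
p_\phi(x,t) \defeq s_\phi(x) \cdot(\text{something quadratic in }t) + (\text{a quartic that is coercive in }t\text{ but flat in }x\text{ when }s_\phi=0),
\end{equation*}
so that when $\phi$ is satisfiable there is a direction (sending $t\to\infty$ with $x$ fixed at a satisfying $\pm1$ assignment, where $s_\phi=0$) along which $p_\phi$ stays bounded, making $p_\phi$ \emph{not} coercive; whereas when $\phi$ is unsatisfiable, $s_\phi$ is bounded below by a positive constant on the (finite, hence compact) set $\{-1,1\}^n$ and grows without bound off that set, and the $t$-terms dominate elsewhere, forcing $p_\phi$ to be coercive. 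Concretely I would try $p_\phi(x,t) = t^2 s_\phi(x) + s_\phi(x) + \text{(quartic in }x,t\text{ controlling the }x\text{-directions)}$; the precise auxiliary terms need to be chosen so that: (i) the degree stays $4$; (ii) along every sequence with $\|(x,t)\|\to\infty$ that does \emph{not} correspond to a satisfying assignment, the value blows up; and (iii) when $\phi$ is satisfiable, there is an explicit non-coercive direction. One clean realization is to adapt the trick already used in the proof of Theorem~\ref{AOS Thm: AOS NP-hard o4c0}, where the ``$(1-\lambda)^2(y^2+(yz-1)^2)$'' gadget realizes a function that is nonnegative, has infimum zero, but whose infimum is attained exactly when the ``switch'' is thrown to the satisfiable side --- here I want instead a gadget whose \emph{coercivity} (not attainment) is the property that flips, e.g. replace that gadget by one of the form $(1-\lambda)^2\,y^2$ plus a coupling term, so that when $\lambda=1$ is forced (unsatisfiable case) the polynomial is coercive in $y$, while a satisfying assignment lets $\lambda=0$ and leaves $y$ free.

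The main obstacle I anticipate is controlling \emph{all} unbounded directions simultaneously in the unsatisfiable case: unlike attainment, coercivity is a statement about every sequence going to infinity, so I must rule out bounded-value escape directions in the ``$x$ grows'' regime as well as the ``$t$ (or $y$, $\lambda$) grows'' regime, and in mixed regimes. The standard way to handle this is to note that for $x\notin$ a neighborhood of $\{-1,1\}^n$ the term $\sum_i(1-x_i^2)^2$ inside $s_\phi$ grows like $\|x\|^4$, so $s_\phi(x)\to\infty$ as $\|x\|\to\infty$ along \emph{any} direction except those staying near the hypercube vertices; near a vertex, $s_\phi$ is bounded below by a positive constant (by unsatisfiability), so multiplying by the $t$-part keeps things coercive there; and the remaining low-dimensional escape routes (the $t$, $y$, $\lambda$, auxiliary-variable directions) are handled by the explicit quadratic/quartic terms I add. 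I would also need to double check the lifting-to-degree-$4$ step (introducing product-variables $\chi_i$ with penalty terms $(\chi_i-\lambda x_i)^2$, exactly as in the proof of Theorem~\ref{AOS Thm: AOS NP-hard o4c0}) preserves coercivity in the right way --- this is the fiddly part, since adding slack variables with soft equality penalties can accidentally create new flat directions (e.g. letting $\chi$ drift), so the penalties must be strong enough, possibly requiring a term like $\sum_i \chi_i^4$ or coupling $\chi$ back into the coercive part. Once the construction is pinned down, the correctness proof splits into the two usual cases and the coefficients remain polynomially bounded (a constant multiple of $n+k$), giving strong NP-hardness.
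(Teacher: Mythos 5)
There is a concrete error at the very start of your plan: you claim that $s_\phi$ itself is never coercive and that you would exhibit an unbounded direction along which it stays bounded. That is false. The term $\sum_{i=1}^n (1-x_i^2)^2$ grows like $\|x\|^4$ as $\|x\|\to\infty$ (e.g., $\sum_i x_i^4 \ge \|x\|^4/n$ by the power-mean inequality, which eventually dominates the $-2\|x\|^2$ correction), so $s_\phi$ is coercive for \emph{every} instance $\phi$, satisfiable or not. You in fact use this very growth estimate later in the proposal, contradicting your opening claim. Because $s_\phi$ is always coercive, there is nothing to ``fix'' by bolting on a $t$-gadget in the way you describe; and the one concrete term you float, $t^2 s_\phi(x)$, already has degree $6$, which breaks the degree-$4$ budget before the auxiliary terms are even chosen. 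The rest of the plan is a collection of ``I would try'' / ``would need to be pinned down'' placeholders, so there is no completed construction to check.

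The paper takes a genuinely different and much cleaner route: it homogenizes $s_\phi$, introducing one extra variable $x_0$ and forming $s_{\phi h}(x_0,x) \defeq x_0^4 s_\phi(x/x_0) = \sum_i x_0^2(\phi_{i1}+\phi_{i2}+\phi_{i3}+x_0)^2 + \sum_j (x_0^2-x_j^2)^2$, a degree-$4$ homogeneous polynomial. For a homogeneous form, coercivity is equivalent to positive definiteness (since $s_{\phi h}(v) = \|v\|^4 s_{\phi h}(v/\|v\|) \ge \mu \|v\|^4$ where $\mu$ is the minimum on the unit sphere). If $\phi$ is satisfiable with assignment $\hat x$, the whole ray $(\alpha, \alpha\hat x)$ is a zero set, so $s_{\phi h}$ is not coercive; if $\phi$ is unsatisfiable, any zero $(\hat x_0, \hat x)$ must have $\hat x_0 \ne 0$ (forced by the $(x_0^2-x_j^2)^2$ terms), which would make $\hat x/\hat x_0$ a zero of $s_\phi$, a contradiction, so $s_{\phi h}$ is positive definite hence coercive. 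This sidesteps all of the case analysis on escape directions that worries you, never has to juggle degree budgets across product terms, and requires no slack-variable penalties. If you want to attempt a gadget-style argument anyway, you would first need to correct the premise about $s_\phi$ and then actually nail down a degree-$4$ construction, neither of which the current proposal does.
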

\begin{proof}
	
	Consider a ONE-IN-THREE 3SAT instance $\phi$ with $n$ variables and $k$ clauses, and the associated quartic polynomial $s_{\phi}(x)$ as in (\ref{AOS eq: 1 in 3 sos}). Let $s_{\phi h}:\mathbb{R}^{n+1} \rightarrow \mathbb{R}$ be the homogenization of this polynomial:
	\begin{equation}\label{AOS eq: 1 in 3 sos homo}
	s_{\phi h}(x_0, x) \defeq x_0^4 s_\phi \left(\frac{x}{x_0}\right) = \sum_{i=1}^k x_0^2(\phi_{i1} + \phi_{i2} + \phi_{i3} + x_0)^2 + \sum_{i=1}^n (x_0^2-x_i^2)^2.
	\end{equation}
	By construction, $s_{\phi h}$ is a homogeneous polynomial of degree 4. We show that $s_{\phi h}$ is coercive if and only if $\phi$ is not satisfiable.
	
	Suppose first that the instance $\phi$ has a satisfying assignment $\hat{x} \in \{-1,1\}^n$. Then it is easy to see that $s_{\phi h}(1,\hat{x}) = 0$. As $s_{\phi h}$ is homogeneous, $s_{\phi h}(\alpha,\alpha\hat{x}) = 0$ for all $\alpha$, showing that $s_{\phi h}$ is not coercive.
	
	Now suppose that $\phi$ is not satisfiable. We show that $s_{\phi h}$ is positive definite (i.e., $s_{\phi h}(x_0,x)>0$ for all $(x_0,x)\neq (0,0) $). This would then imply that $s_{\phi h}$ is coercive as 
	\begin{align*}
	s_{\phi h }(x_0,x)&=||(x_0,x)^T||^4\cdot s_{\phi h} \left( \frac{(x_0,x)}{||(x_0,x)^T||} \right)\\
	&\geq \mu||(x_0,x)^T||^4,
	\end{align*}
	where $\mu>0$ is defined as the minimum of $s_{\phi h}$ on the unit sphere: $$\mu= \min_{(x_0,x)\in S^{n}}s_{\phi h}(x_0,x).$$
	Suppose that $s_{\phi h}$ was not positive definite. Then there exists a point $(\hat{x}_0,\hat{x}) \ne (0,0)$ such that $s_{\phi h}(\hat{x}_0,\hat{x}) = 0$. First observe $\hat{x}_0$ cannot be zero due to the $(x_0 - x_i)^2$ terms in (\ref{AOS eq: 1 in 3 sos homo}). As $\hat{x}_0 \ne 0$, then, by homogeneity, the point $(1, \frac{\hat{x}}{\hat{x}_0})$ is a zero of $s_{\phi h}$ as well. This however implies that $s_{\phi}(\hat{x})=0$, which we have previously argued (cf. the proof of Theorem \ref{AOS Thm: AOS NP-hard o4c0}) is equivalent to satisfiability of $\phi$, hence a contradiction.
\end{proof}

We remark that the above hardness result is minimal in the degree as odd-degree polynomials are never coercive and a quadratic polynomial $x^TQx+b^Tx+c$ is coercive if and only if the matrix $Q$ is positive definite, a property that can be checked in polynomial time (e.g., by checking positivity of the leading principal minors of $Q$).

\subsection{Closedness and Boundedness of the Feasible Set}\label{AOS SSec: Closed and Bounded NP-hard}

The well-known Bolzano-Weierstrass extreme value theorem states that the infimum of a continuous function on a compact (i.e., closed and bounded) set is attained. In this section, we show that testing closedness or boundedness of a basic semialgebraic set defined by degree-2 inequalities is NP-hard. Once again, these hardness results are minimal in degree since these properties can be tested in polynomial time for sets defined by affine inequalities, as we describe next.

To check boundedness of a set $P \defeq \{x \in \mathbb{R}^n\ |\ a_i^Tx \ge b_i, i = 1,\ldots,m\}$ defined by affine inequalities, one can first check that $P$ is nonempty, and if it is, for each $i$ minimize and maximize $x_i$ over $P$. Note that $P$ is unbounded if and only if at least one of these $2n$ linear programs is unbounded, which can be certified e.g. by detecting infeasibility of the corresponding dual problem. Thus, boundedness of $P$ can be tested by solving $2n+1$ linear programming feasibility problems, which can be done in polynomial time.

To check closedness of a set $P \defeq \{x \in \mathbb{R}^n\ |\ a_i^Tx \ge b_i, i = 1, \ldots, m, c_j^Tx > d_j, j = 1, \ldots, r\}$, one can for each $j$ minimize $c_j^Tx$ over $\{x \in \mathbb{R}^n\ |\ a_i^Tx \ge b_i, i = 1, \ldots, m\}$ and declare that $P$ is closed if and only if all of the respective optimal values are greater than $d_j$. Thus, closedness of $P$ can be tested by solving $r$ linear programs, which can be done in polynomial time.

\begin{theorem}\label{AOS Thm: Closedness} 
	Given a set of quadratic polynomials $q_i, i = 1, \ldots, m, t_j, j = 1, \ldots, r$, it is strongly NP-hard to test whether the basic semialgebraic set
	$$\{x \in \mathbb{R}^n |~ q_i(x) \ge 0, i = 1, \ldots, m, t_j(x) > 0, j = 1, \ldots, r\}$$
	is closed\footnote{Note that $m$ is not fixed in this statement or in Corollaries \ref{AOS Thm: Boundedness} and \ref{AOS Thm: Stable Compact NP-hard} below.}.\end{theorem}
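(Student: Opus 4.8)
The plan is to reduce from ONE-IN-THREE 3SAT, reusing the machinery already set up for Theorems~\ref{AOS Thm: AOS NP-hard o4c0} and \ref{AOS Thm: AOS NP-hard o1c2}. The guiding intuition is that the gap between the infimum of $\hat p_\phi$ (or of the POP in (\ref{AOS Defn: ONE-IN-THREE 3SAT QCQP})) being \emph{attained} versus merely \emph{approached} is exactly the distinction between a feasible set being closed versus not. Concretely, given a ONE-IN-THREE 3SAT instance $\phi$ with $n$ variables and $k$ clauses, I would take the \emph{epigraph-type} set associated to the reduction in Theorem~\ref{AOS Thm: AOS NP-hard o4c0}: roughly, the set
$$S_\phi \defeq \{(x,y,z,\lambda,\chi,w,\gamma)\ |\ \hat p_\phi(x,y,z,\lambda,\chi,w) \le \gamma,\ \gamma < 1\},$$
or a similarly small perturbation thereof, where $\hat p_\phi$ is the quartic from (\ref{AOS eq: 1 in 3 sos new}). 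Note $\hat p_\phi \le \gamma$ is a single degree-$4$ inequality, not degree $2$, so to keep the constraints quadratic I would instead work with the \emph{constrained} formulation (\ref{AOS Defn: ONE-IN-THREE 3SAT QCQP}), whose constraints (\ref{AOS const: obj surrogate})--(\ref{AOS const: switch}) are all quadratic, and adjoin the strict inequality $\gamma < 1$ (say). Call the resulting basic semialgebraic set $T_\phi$; it is cut out by quadratic inequalities $q_i \ge 0$ together with one strict quadratic inequality $t(x) = 1 - \gamma > 0$.

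The key claim is: \emph{$T_\phi$ is closed if and only if $\phi$ is satisfiable.} For the ``if'' direction, suppose $\phi$ is satisfiable. Then by the analysis in the proof of Theorem~\ref{AOS Thm: AOS NP-hard o1c2}, the infimum $0$ of $\gamma$ over the POP is attained, and moreover (the point I would need to verify) the set $\{$feasible points with $\gamma = 0\}$ is nonempty and the feasible region does not have points escaping to the boundary $\gamma = 1$: one checks that $T_\phi$ is in fact closed because the only ``missing'' limit points a set defined by strict inequalities can have lie on $\{\gamma = 1\}$, and I would arrange the construction (e.g. by a harmless rescaling, or by bounding $\gamma$ away from $1$ along feasible sequences when $\phi$ is satisfiable) so that no feasible sequence approaches $\gamma = 1$. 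For the ``only if'' direction (contrapositive), suppose $\phi$ is not satisfiable. Then, exactly as in Theorem~\ref{AOS Thm: AOS NP-hard o1c2}, the objective can be driven arbitrarily close to $0$ (via $\lambda = 0$, $z = 1/y$, $y \to 0$) but never equals $0$; so there is a feasible sequence with $\gamma_j \to 0^+$, hence a sequence of points of $T_\phi$ whose limit would have $\gamma = 0$ but which is \emph{not} in $T_\phi$ — provided I also ensure this limit point fails only because of the original strict/nonattainment phenomenon and not because it violates some $q_i \ge 0$. Concretely, the limit point has $y = 0$ and $yz = 1$ simultaneously required, which is impossible, so no limit point exists in the ambient space along that particular sequence; I would therefore instead use the sequence in the variable $w$ with $w = 1$ fixed and push $y\to 0, z = 1/y$, whose coordinate limit genuinely exists as a point of $\mathbb{R}^N$ that lies outside $T_\phi$, witnessing non-closedness. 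The bitsizes of all coefficients are $O(\log(n+k))$, giving strong NP-hardness, and the reduction is clearly polynomial-time.

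The main obstacle I anticipate is the bookkeeping in the ``if'' direction: I need the set to be genuinely closed when $\phi$ is satisfiable, which requires ruling out \emph{every} escape-to-boundary phenomenon, not just the one tied to $\gamma = 0$. A set defined with a strict inequality $1 - \gamma > 0$ is a priori not closed, so the cleanest fix is to not use a strict inequality at all for the ``ceiling'' but rather to exploit a strict inequality that is automatically never binding — or, better, to mimic the structure of the proof of Theorem~\ref{AOS Thm: AOS NP-hard o4c0} directly: let $T_\phi$ be (a bounded slice of) the set $\{(u,\gamma)\ |\ $(\ref{AOS const: obj surrogate})--(\ref{AOS const: switch})$,\ \gamma = $ value of the surrogate objective$\}$ intersected with $\{$some coordinate bounded$\}$, and argue that this set is closed exactly when the infimum over it of $\gamma$ is attained, invoking Theorem~\ref{AOS Thm: AOS NP-hard o1c2}. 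I will likely need one auxiliary strict inequality (e.g. $y \ne 0$ encoded as $y^2 > 0$, or $\gamma < \epsilon$ for a fixed small rational $\epsilon$) to force the only possible non-closedness to be the nonattainment at $\gamma = 0$; verifying that no \emph{other} boundary effect intrudes — i.e. that the $q_i \ge 0$ constraints alone define a closed set and the single strict inequality is the sole source of potential non-closedness, and that it actually causes non-closedness precisely when $\phi$ is unsatisfiable — is where the care is needed. Once that structural lemma is in place, the theorem follows, and I would record the analogous consequence for boundedness of a closed set as Corollary~\ref{AOS Thm: Boundedness} by a symmetric argument (swapping the roles of closed and bounded via an inversion map or by adjoining a large ball constraint).
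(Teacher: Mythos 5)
There are two genuine gaps in the proposal, both stemming from a structural misstep: you are trying to make the strict inequality interact with the attainment-of-infimum phenomenon from Theorems~\ref{AOS Thm: AOS NP-hard o4c0}--\ref{AOS Thm: AOS NP-hard o1c2}, but these two notions do not couple the way you need.

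First, in the ``if'' direction ($\phi$ satisfiable $\Rightarrow T_\phi$ closed), the claim is false as stated. In the QCQP (\ref{AOS Defn: ONE-IN-THREE 3SAT QCQP}), constraint (\ref{AOS const: obj surrogate}) only bounds $\gamma$ from below, so $\gamma$ ranges up to $+\infty$ over the feasible region regardless of $\phi$. Intersecting with $\{\gamma < 1\}$ therefore always destroys closedness: there are feasible points with $\gamma \to 1^-$, and their limit violates the strict inequality. You sense this (``a set defined with a strict inequality $1 - \gamma > 0$ is a priori not closed''), but none of the proposed patches ($y^2 > 0$, $\gamma < \epsilon$, rescaling) supplies a variable whose strict bound is unapproachable precisely when $\phi$ is satisfiable.

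Second, in the ``only if'' direction ($\phi$ unsatisfiable $\Rightarrow T_\phi$ not closed), non-attainment of the infimum does not imply non-closedness of the feasible set. The minimizing sequences push $y \to 0$ with $z = 1/y$, so $z \to \infty$ and the sequence has \emph{no} limit point in $\mathbb{R}^N$; it therefore provides no witness of non-closedness. Holding $w = 1$ fixed does not repair this: constraint (\ref{AOS const: yz}), $yz = w$, still forces $z = 1/y \to \infty$. More bluntly, nothing in the unsatisfiable case prevents the set cut out by the closed constraints $q_i \ge 0$ from being closed; the sought-for missing limit point simply is not there.

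The paper's proof takes a structurally different route that avoids both pitfalls entirely, and does not involve an objective or an infimum at all. It imposes the equality constraints $(\phi_{i1}+\phi_{i2}+\phi_{i3}+1)\,y = 0$ for $i = 1,\ldots,k$ together with $x_j^2 = 1$, which force $y = 0$ unless $x$ encodes a satisfying assignment, and appends the single strict inequality $y < 1$. When $\phi$ is unsatisfiable the set collapses to $\{-1,1\}^n \times \{0\}$, which is closed and compact; when $\phi$ is satisfiable the $y$-fiber over any satisfying $x$ is all of $(-\infty,1)$, and the excluded endpoint $y = 1$ is the missing limit point. The insight you were missing is that the strict inequality must sit on a variable that is clamped to a single safe value by the other constraints in one case and left free in the other; attaching it to $\gamma$ cannot work because $\gamma$ is free in both cases. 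Corollary~\ref{AOS Thm: Boundedness} then falls out by simply dropping the strict inequality, since the same $y$-fiber is all of $\mathbb{R}$ iff $\phi$ is satisfiable --- no inversion map or ball constraint is involved.
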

\begin{proof}
	Consider a ONE-IN-THREE 3SAT instance $\phi$ with $n$ variables and $k$ clauses. Let $\phi_{ij}$ be as in the proof of Theorem \ref{AOS Thm: AOS NP-hard o4c0} and consider the set
	\begin{equation}\label{AOS eq:def.S.phi}
	S_{\phi} = \big\{(x,y)\in \mathbb{R}^{n+1}|~(\phi_{i1} + \phi_{i2} + \phi_{i3} + 1)y = 0, i=1,\ldots,k, 1 - x_j^2 = 0, j = 1, \ldots, n, y < 1\big\}.
	\end{equation}
	
	We show that $S_{\phi}$ is closed if and only if the instance $\phi$ is not satisfiable. To see this, first note that we can rewrite $S_{\phi}$ as
	$$S_{\phi} = \Big\{\{-1,1\}^n \times \{0\} \Big\} \cup \Big\{\{x \in \mathbb{R}^n|~s_\phi(x) = 0\} \times \{y\in \mathbb{R}|~y < 1\}\Big\},$$
	where $s_{\phi}$ is as in the proof of Theorem \ref{AOS Thm: AOS NP-hard o4c0}. If $\phi$ is not satisfiable, then $S_{\phi}=\{-1,1\} ^n \times \{0\}$, which is closed. If $\phi$ is satisfiable, then $\{x \in \mathbb{R}^n|~s_\phi(x) = 0\}$ is nonempty and $$\{x \in \mathbb{R}^n|~s_\phi(x) = 0\} \times \{y\in \mathbb{R}|~y < 1\}$$ is not closed and not a subset of $\{-1,1\}^n \times \{0\}$. This implies that $S_{\phi}$ is not closed.	
\end{proof}

\begin{remark}\label{AOS rem:boundedness.compactness}
	We note that the problem of testing closedness of a basic semialgebraic set remains NP-hard even if one has a promise that the set is bounded. Indeed, one can add the constraint $y\geq -1$ to the set $S_{\phi}$ in (\ref{AOS eq:def.S.phi}) to make it bounded and this does not change the previous proof.
\end{remark}

\begin{cor}\label{AOS Thm: Boundedness} 
	Given a set of quadratic polynomials $q_i, i = 1, \ldots, m,$ it is strongly NP-hard to test whether the set
	$$\{x \in \mathbb{R}^n |~ q_i(x) = 0, i = 1, \ldots, m\}$$
	is bounded.
\end{cor}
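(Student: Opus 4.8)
The plan is to give a polynomial-time reduction from ONE-IN-THREE 3SAT, reusing the gadget from the proof of Theorem~\ref{AOS Thm: Closedness} but deleting the strict inequality so that the resulting set is cut out purely by quadratic \emph{equalities}. Given a ONE-IN-THREE 3SAT instance $\phi$ with $n$ variables and $k$ clauses, and with the convention on $\phi_{it}$ introduced in the proof of Theorem~\ref{AOS Thm: AOS NP-hard o4c0}, I would define
$$T_\phi \defeq \big\{(x,y) \in \mathbb{R}^{n+1}\ |\ (\phi_{i1}+\phi_{i2}+\phi_{i3}+1)\,y = 0,\ i = 1, \ldots, k,\ \ 1 - x_j^2 = 0,\ j = 1, \ldots, n\big\}.$$
Since each $\phi_{it}$ is $\pm x_j$ for some $j$, the first family of defining polynomials is a product of an affine function of $x$ with the single fresh variable $y$, hence quadratic, and the second family is quadratic as well; the coefficients all lie in $\{-1,0,1\}$, so the reduction is polynomial in length and the produced instance has numerical data of constant bitsize, which is what yields \emph{strong} NP-hardness.

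The key step is then to check that $T_\phi$ is bounded if and only if $\phi$ has no one-in-three satisfying assignment. First, the constraints $1 - x_j^2 = 0$ force $x \in \{-1,1\}^n$, a bounded set, so unboundedness can only come from the $y$-coordinate. For a fixed $x \in \{-1,1\}^n$, interpreting $x_j = 1$ as \emph{true} and $x_j = -1$ as \emph{false}, one checks exactly as in the proof of Theorem~\ref{AOS Thm: AOS NP-hard o4c0} that $\phi_{i1}+\phi_{i2}+\phi_{i3}+1 = 0$ precisely when clause $i$ has exactly one true literal. Hence, if $x$ encodes a one-in-three satisfying assignment, then every constraint $(\phi_{i1}+\phi_{i2}+\phi_{i3}+1)y = 0$ holds for \emph{all} $y \in \mathbb{R}$, so $\{x\}\times\mathbb{R} \subseteq T_\phi$ and $T_\phi$ is unbounded. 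Conversely, if $\phi$ is not one-in-three satisfiable, then for every $x \in \{-1,1\}^n$ there is a clause $i$ with $\phi_{i1}+\phi_{i2}+\phi_{i3}+1 \neq 0$, which forces $y = 0$; thus $T_\phi = \{-1,1\}^n \times \{0\}$, which is finite and hence bounded. Combining the two directions gives the claimed equivalence, and since ONE-IN-THREE 3SAT is strongly NP-hard \cite{schaefer1978complexity}, the corollary follows.

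There is no serious obstacle here; the only point needing a (routine) verification is that the products $(\phi_{i1}+\phi_{i2}+\phi_{i3}+1)\,y$ remain quadratic, which holds because $y$ is a single new scalar variable. As was done before Theorem~\ref{AOS Thm: Closedness}, I would also note in passing that this hardness result is minimal in the degree: boundedness of a set described by affine equalities (or inequalities) is testable in polynomial time by solving $O(n)$ linear programs. Equivalently, the whole argument can be phrased as a short consequence of the proof of Theorem~\ref{AOS Thm: Closedness}: removing the constraint $y < 1$ from the set $S_\phi$ in (\ref{AOS eq:def.S.phi}) yields precisely $T_\phi$, and the case analysis above shows it is bounded if and only if $\phi$ is unsatisfiable.
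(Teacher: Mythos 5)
Your proof is correct and follows essentially the same route as the paper: it uses the identical gadget set (obtained by dropping the constraint $y<1$ from the set $S_\phi$ used to prove Theorem~\ref{AOS Thm: Closedness}) and the same case analysis showing that $y$ is unconstrained exactly when $\phi$ has a one-in-three satisfying assignment. The added remarks on degree of the defining polynomials, bitsize of the data, and minimality of the degree are all consistent with what the paper establishes elsewhere.
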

\begin{proof}
	Consider a ONE-IN-THREE 3SAT instance $\phi$ with $n$ variables and $k$ clauses. Let $\phi_{ij}$ be as in the proof of Theorem \ref{AOS Thm: AOS NP-hard o4c0} and consider the set
	$$S = \big\{(x,y)\in \mathbb{R}^{n+1}|~(\phi_{i1} + \phi_{i2} + \phi_{i3} + 1)y = 0, i=1,\ldots,k, 1 - x_j^2 = 0, j = 1, \ldots, n\big\}.$$
	
	This set is bounded if and only if $\phi$ is not satisfiable. One can see this by following the proof of Theorem \ref{AOS Thm: Closedness} and observing that $y$ will be unbounded in the satisfiable case, and only 0 otherwise.
\end{proof}

Note that it follows immediately from either of the results above that testing compactness of a basic semialgebraic set is NP-hard. We end this subsection by establishing the same hardness result for a sufficient condition for compactness that has featured in the literature on polynomial optimization (see, e.g., \cite{marshall2003optimization}, \cite[Section 7]{nie2006minimizing}).

\begin{defn}\label{AOS Defn: Stably Compact}
	A closed basic semialgebraic set $S = \{x \in \mathbb{R}^n|~q_i(x) \ge 0, i = 1,\ldots, m\}$ is \emph{stably compact} if there exists $\epsilon > 0$ such that the set $\{x \in \mathbb{R}^n |~ \delta_i(x) + q_i(x) \ge 0,i = 1,\ldots,m\}$ is compact for any set of polynomials $\delta_i$ having degree at most that of $q_i$ and coefficients at most $\epsilon$ in absolute value.
\end{defn}

Intuitively, a closed basic semialgebraic set is stably compact if it remains compact under small perturbations of the coefficients of its defining polynomials. A stably compact set is clearly compact, though the converse is not true as shown by the set $$S = \big\{(x_1,x_2) \in \mathbb{R}^2|~ (x_1-x_2)^4 + (x_1+x_2)^2 \le 1\big\}.$$ Indeed, this set is contained inside the unit disk, but for $\epsilon>0,$ the set $$S_{\epsilon} = \big\{(x_1,x_2) \in \mathbb{R}^2|~ (x_1-x_2)^4 - \epsilon x_1^4 + (x_1+x_2)^2 \le 1\big\}$$ is unbounded as its defining polynomial tends to $-\infty$ along the line $x_1 = x_2$. 

Section 5 of \cite{marshall2003optimization} shows that the set $S$ in Definition \ref{AOS Defn: Stably Compact} is stably compact if and only if the function $$q(x)=\underset{i,j}\max\{-q_{ij}(x)\}$$ is positive on the unit sphere. Here, $q_{ij}(x)$ is a homogenenous polynomial that contains all terms of degree $j$ in $q_i(x).$ Perhaps because of this characterization, the same section in \cite{marshall2003optimization} remarks that ``stable compactness is easier to check than compactness'', though as far as polynomial-time checkability is concerned, we show that the situation is no better.

\begin{cor}\label{AOS Thm: Stable Compact NP-hard} 
	Given a set of quadratic polynomials $q_i, i = 1, \ldots, m,$ it is strongly NP-hard to test whether the set
	$$\{x \in \mathbb{R}^n | ~q_i(x) = 0, i = 1, \ldots, m\}$$
	is stably compact.
\end{cor}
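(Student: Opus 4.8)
The plan is to reduce from ONE-IN-THREE 3SAT, reusing the gadget from the proof of Corollary~\ref{AOS Thm: Boundedness} essentially verbatim and then verifying that in the unsatisfiable case the set one obtains is not merely compact but \emph{stably} compact. First I would take a ONE-IN-THREE 3SAT instance $\phi$ with $n$ variables and $k$ clauses, and form the set
\[
S_\phi = \big\{(x,y) \in \mathbb{R}^{n+1}\ |\ (\phi_{i1}+\phi_{i2}+\phi_{i3}+1)y = 0,\ i = 1,\ldots,k,\ 1-x_j^2 = 0,\ j = 1,\ldots,n\big\},
\]
written as a basic closed semialgebraic set by replacing each equality $h(x,y)=0$ with the pair $h(x,y)\ge 0$, $-h(x,y)\ge 0$ (all defining polynomials are then quadratic). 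By the argument in Corollary~\ref{AOS Thm: Boundedness}, $S_\phi$ is bounded (hence, being closed, compact) iff $\phi$ is not satisfiable, and when $\phi$ is satisfiable the coordinate $y$ is free, so $S_\phi$ is unbounded. Since stable compactness implies compactness, in the satisfiable case $S_\phi$ is certainly not stably compact. The content of the reduction is therefore the converse direction: when $\phi$ is unsatisfiable, $S_\phi$ is stably compact.

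To show this I would use the characterization from Section~5 of \cite{marshall2003optimization} quoted just above the statement: the set $\{x\ |\ q_i(x)\ge 0,\ i=1,\ldots,m\}$ is stably compact iff the function $q(x) = \max_{i,j}\{-q_{ij}(x)\}$ is positive on the unit sphere, where $q_{ij}$ is the degree-$j$ homogeneous part of $q_i$. For our defining polynomials the relevant top-degree (degree-2) homogeneous parts are, up to sign, $\pm(\phi_{i1}+\phi_{i2}+\phi_{i3}+1)y$ restricted to its degree-2 piece — which is $(\tilde\phi_{i1}+\tilde\phi_{i2}+\tilde\phi_{i3})y$ where $\tilde\phi_{it}$ is the linear (variable) part of $\phi_{it}$ — together with $\pm(1-x_j^2)$ whose degree-2 part is $\mp x_j^2$. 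When $\phi$ is unsatisfiable I would argue that at every point of the unit sphere in $\mathbb{R}^{n+1}$ at least one of the homogeneous forms $-(-x_j^2) = x_j^2$ or $\pm(\tilde\phi_{i1}+\tilde\phi_{i2}+\tilde\phi_{i3})y$ is bounded below by a fixed positive constant: if all $x_j^2$ are small then $\|(x_1,\ldots,x_n)\|$ is small, so on the unit sphere $y^2$ is bounded away from $0$; unsatisfiability of $\phi$, together with the fact that on $\{-1,1\}^n$ some clause always fails the exactly-one condition, forces (by a compactness/continuity argument on the sphere) that $\max_i |\tilde\phi_{i1}+\tilde\phi_{i2}+\tilde\phi_{i3}|$ is bounded away from $0$ whenever the $x_j$ are near $\pm 1$; and one checks the only remaining regime — $x$ near $\pm 1$ and $y$ near $0$ — is the one where some $x_j^2$ is near $1$. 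Piecing these regimes together gives a uniform positive lower bound for $q$ on the sphere, hence stable compactness.

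The main obstacle I expect is precisely this last step: making the regime analysis on the unit sphere clean and uniform, i.e.\ producing an honest $\epsilon>0$ lower bound for $\max_{i,j}\{-q_{ij}\}$ rather than just arguing it is nonzero pointwise. The subtlety is that the homogeneous form $(\tilde\phi_{i1}+\tilde\phi_{i2}+\tilde\phi_{i3})y$ vanishes on the large subspace $\{y=0\}$, so positivity of $q$ there has to come entirely from the $x_j^2$ terms; conversely near $\{y=0\}$ one must not accidentally be near a point where all $x_j$ are $0$. Handling the ``corner'' where $x$ is near a bad $\{-1,1\}$-assignment and $y$ is near $0$ requires combining the failure of the ONE-IN-THREE condition (a statement about $\{-1,1\}^n$) with a continuity estimate; I would formalize this by noting $s_\phi$ from (\ref{AOS eq: 1 in 3 sos}) is strictly positive on the unit sphere when $\phi$ is unsatisfiable (which follows from the argument in the proof of Theorem~\ref{AOS Thm: AOS NP-hard o4c0}), extract a uniform constant from that, and translate it back to a bound on $q$. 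A sanity check on degree-minimality would close the corollary: for sets defined by affine inequalities, stable compactness, like boundedness, is polynomial-time checkable, so quadratic is the smallest interesting degree.
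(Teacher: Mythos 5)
There is a genuine gap in this proposal, and it is precisely at the step you flag as ``the main obstacle.'' The set $S_\phi$ you propose is \emph{not} stably compact when $\phi$ is unsatisfiable, so the reduction does not work. To see this, recall (as you correctly identify) that Marshall's criterion depends only on the top-degree homogeneous parts of the defining polynomials. For $S_\phi$ these are $\pm(\phi_{i1}+\phi_{i2}+\phi_{i3})y$ and $\pm x_j^2$. At the unit-sphere point $(x,y)=(0,\ldots,0,1)$ every single one of these vanishes: the products vanish because $x=0$, and $x_j^2 = 0$. Hence $q(0,\ldots,0,1)=0$ regardless of whether $\phi$ is satisfiable, and Marshall's criterion fails. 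Your first ``regime'' (all $x_j^2$ small, $y$ bounded away from $0$) is exactly where the argument collapses: having $y^2$ bounded away from $0$ buys you nothing, since $y^2$ is not one of the leading forms, and the leading forms $(\phi_{i1}+\phi_{i2}+\phi_{i3})y$ all go to $0$ as $x\to 0$ even with $|y|$ close to $1$. The failure is not a bookkeeping issue to be ``pieced together''; it is a single concrete point of the sphere at which $q$ is exactly zero. You can also verify it directly without the criterion: perturbing each constraint $1-x_j^2=0$ and $(\phi_{i1}+\phi_{i2}+\phi_{i3}+1)y=0$ by $\pm\epsilon y^2$ yields a set containing the entire ray $\{(0,\ldots,0,y): |y|\ge 1/\epsilon\}$, which is unbounded.

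The fix, which is what the paper does, is to \emph{homogenize} the gadget before invoking Marshall's criterion. Replace the constant $1$ by a new variable $x_0$ and drop the auxiliary variable $y$: use $(\phi_{i1}+\phi_{i2}+\phi_{i3}+x_0)^2 = 0$ (squared to match degree $2$) and $x_0^2 - x_j^2 = 0$. Every defining polynomial is now homogeneous of degree $2$, so the top-degree form is the polynomial itself. Crucially, at any unit-sphere point $(x_0,x)$ with $x_0=0$, the form $x_j^2 - x_0^2 = x_j^2$ is strictly positive for some $j$ (since not all coordinates vanish on the sphere), so $q>0$ there; and if $x_0\neq 0$ and $q=0$, then $x/x_0$ is a $\{-1,1\}^n$ satisfying assignment of $\phi$. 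This is exactly the structure your argument was missing: you need one of the leading forms to be coercive on the hyperplane $\{x_0=0\}$ (or, in your coordinates, $\{y \ne 0, x=0\}$), and the unhomogenized $1-x_j^2$ does not give you that, whereas $x_0^2 - x_j^2$ does.
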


\begin{proof}
	Consider a ONE-IN-THREE 3SAT instance $\phi$ with $n$ variables and $k$ clauses. Let $\phi_{ij}$ be as in the proof of Theorem \ref{AOS Thm: AOS NP-hard o4c0} and consider the set
	$$T_{\phi} = \big\{(x_0,x)\in \mathbb{R}^{n+1}|~(\phi_{i1} + \phi_{i2} + \phi_{i3} + x_0)^2 = 0, i=1,\ldots,k, x_0^2 - x_j^2 = 0, j = 1, \ldots, n\big\}.$$
	We show that the function $$q_{\phi}(x_0,x)=\max_{i=1,\ldots,k, j=1,\ldots,n}\{-(\phi_{i1} + \phi_{i2} + \phi_{i3} + x_0)^2, (\phi_{i1} + \phi_{i2} + \phi_{i3} + x_0)^2, x_0^2 - x_j^2, x_j^2-x_0^2\}$$ is positive on the unit sphere if and only if the instance $\phi$ is not satisfiable. Suppose first that $\phi$ is not satisfiable and assume for the sake of contradiction that there is a point $(x_0,x)$ on the sphere such that $q_{\phi}(x_0,x)=0.$ This implies that $\phi_{i1} + \phi_{i2} + \phi_{i3} + x_0=0, \forall i=1,\ldots,k$ and $x_0^2=x_j^2,\forall j=1,\ldots,n.$ Hence, $x_0 \neq 0$ and $\frac{x}{x_0}$ is a satisfying assignment to $\phi$, which is a contradiction. 
	Suppose now that $\phi$ has a satisfying assignment $\hat{x} \in \{-1,1\}^n.$ Then it is easy to check that 	$$q_{\phi}\left(\frac{(1,\hat{x})}{||(1,\hat{x})^T||}\right)=0.$$
\end{proof}

\subsubsection{The Archimedean Property}\label{AOS SSSec: Archimedean}

An algebraic notion closely related to compactness is the so-called Archimedean property. This notion has frequently appeared in recent literature at the interface of algebraic geometry and polynomial optimization. The Archimedean property is the assumption needed for the statement of Putinar's Positivstellensatz \cite{putinar1993positive} and convergence of the Lasserre hierarchy \cite{lasserre2001global}. In this subsection, we recall the definition of the Archimedean property and study the complexity of checking it. To our knowledge, the only previous result in this direction is that testing the Archimedean property is decidable \cite[Section 3.3]{wagner2009archimedean}.

Recall that the \emph{quadratic module} associated with a set of polynomials $q_1,\ldots,q_m$ is the set of polynomials that can be written as $$\sigma_0(x)+\sum_{i=1}^m \sigma_i(x) q_i(x),$$ where $\sigma_0,\ldots,\sigma_m$ are sum of squares polynomials.

\begin{defn}\label{AOS Defn: Archimedean}
	A quadratic module $Q$ is \emph{Archimedean} if there exists a scalar $R > 0$ such that $R - \sum_{i=1}^n x_i^2 \in Q$.
\end{defn}

Several equivalent characterizations of this property can be found in \cite[Theorem 3.17]{laurent2009sums}. Note that a set $\{x \in \mathbb{R}^n|~q_i(x) \ge 0\}$ for which the quadratic module associated with the polynomials $\{q_i\}$ is Archimedean is compact. However, the converse is not true. For example, for $n>1$, the sets $$\left\{x \in \mathbb{R}^n|~x_1-\frac{1}{2} \ge 0, \ldots, x_n - \frac{1}{2} \ge 0, 1-\prod_{i=1}^n x_i \ge 0\right\}$$ are compact but not Archimedean; see \cite{laurent2009sums}, \cite{prestel2001positive} for a proof of the latter claim. Hence, hardness of testing the Archimedean property does not follow from hardness of testing compactness. 

As mentioned previously, the Archimedean property has received recent attention in the optimization community due to its connection to the Lasserre hierarchy. Indeed, under the assumption that the quadratic module associated with the defining polynomials of the feasible set of (\ref{AOS Defn: pop}) is Archimedean, the Lasserre hierarchy \cite{lasserre2001global} produces a sequence of SDP-based lower bounds that converge to the optimal value of the POP. Moreover, Nie has shown~\cite{nie2014optimality} that under the Archimedean assumption, convergence happens in a finite number of rounds generically. One way to ensure the Archimedean property---assuming that we know that our feasible set is contained in a ball of radius $R$--- is to add the redundant constraint $R^2 \geq \sum_{i=1}^n x_i^2$ to the constraints of (\ref{AOS Defn: pop}). This approach however increases the size of the SDP that needs to be solved at each level of the hierarchy. Moreover, such a scalar $R$ may not be readily available for some applications.

Our proof of NP-hardness of testing the Archimedean property will be based on showing that the specific sets that arise from the proof of Corollary \ref{AOS Thm: Boundedness} are compact if and only if their corresponding quadratic modules are Archimedean. Our proof technique will use the Stengle's Positivstellensatz, which we recall next.

\begin{theorem}[Stengle's Positivstellensatz \cite{stengle1974nullstellensatz}]\label{AOS Thm: Stengle}
	A basic semialgebraic set $$\mathcal{S} \defeq \{x\in\mathbb{R}^n|~ q_i(x) \ge 0, i= 1,\ldots, m, r_j(x) = 0, j = 1,\ldots, k\}$$ is empty if and only if there exist sos polynomials $\sigma_{c_1, \ldots, c_m}$ and polynomials $t_i$ such that
	$$-1 = \sum_{j=1}^k t_jr_j + \sum_{c_1,\ldots,c_m \in \{0,1\}^m} \sigma_{c_1,\ldots,c_m}(x) \Pi_{i=1}^m q_i(x)^{c_i}.$$
\end{theorem}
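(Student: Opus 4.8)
The ``if'' direction is the easy one, and I would dispatch it first: if polynomials $\sigma_{c_1,\dots,c_m}$ (sos) and $t_j$ exist satisfying the displayed identity and $\mathcal S$ contained a point $x$, then evaluating the identity at $x$ gives $-1=\sum_j t_j(x)r_j(x)+\sum_{c}\sigma_{c}(x)\prod_i q_i(x)^{c_i}=\sum_c \sigma_c(x)\prod_i q_i(x)^{c_i}\ge 0$, a contradiction; hence $\mathcal S=\emptyset$. So the real content is the converse, which I would prove in contrapositive form: if no such certificate exists, then $\mathcal S\neq\emptyset$. Let $T\subseteq\mathbb R[x]$ be the \emph{preordering} generated by $q_1,\dots,q_m$, i.e.\ the set of polynomials $\sum_{c\in\{0,1\}^m}\sigma_c\prod_{i=1}^m q_i^{c_i}$ with each $\sigma_c$ sos, and let $I$ be the ideal generated by $r_1,\dots,r_k$. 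The first observation is that existence of a Stengle certificate for emptiness of $\mathcal S$ is \emph{exactly} the statement $-1\in T+I$ (the second sum lies in $T$, and $\sum_j t_jr_j$ ranges over all of $I$). So I would assume $-1\notin T+I$ and manufacture a point of $\mathcal S$, working in the finitely generated $\mathbb R$-algebra $A\defeq\mathbb R[x]/I$ with $\bar T$ the image of $T+I$; then $\bar T$ is a preordering of $A$ with $-1\notin\bar T$ and $\tfrac12\in A$.

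\textbf{Step 1 (Zorn's lemma: enlarge $\bar T$ to a prime cone).} Among all preorderings of $A$ containing $\bar T$ and not containing $-1$, Zorn's lemma furnishes a maximal one $P$. The key algebraic lemma is that a maximal proper preordering is automatically a \emph{prime cone}: $P\cup(-P)=A$, and its support $\mathfrak p\defeq P\cap(-P)$ is a prime ideal. I would prove this by the classical argument: if $a\notin P$ then the preordering $P-aP$ still avoids $-1$ (using a putative relation $-1=p_1-ap_2$ together with $4ab=(a+b)^2-(a-b)^2$ to derive a contradiction), so maximality forces $-a\in P$; and $ab\in\mathfrak p$, $a\notin\mathfrak p$ forces $b\in\mathfrak p$. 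Passing to the domain $A/\mathfrak p$ and its fraction field $K$, the cone $P$ induces an \emph{ordering} $\le$ on $K$ under which every $\bar q_i\ge 0$; note $K$ is a finitely generated field extension of $\mathbb R$, and $I\subseteq\mathfrak p$ so $r_j\mapsto 0$ under $\mathbb R[x]\to A/\mathfrak p$.

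\textbf{Step 2 (Artin--Lang / Tarski transfer: produce a real point).} The Artin--Lang homomorphism theorem says that if $B$ is a finitely generated $\mathbb R$-algebra whose fraction field carries an ordering, then there is an $\mathbb R$-algebra homomorphism $\varphi\colon B\to\mathbb R$, and moreover one can arrange $\varphi$ to respect the signs of any prescribed finite list of elements (here the $\bar q_i$), so that $\varphi(\bar q_i)\ge 0$. Applying this with $B=A/\mathfrak p$ and composing $\mathbb R[x]\twoheadrightarrow A/\mathfrak p \xrightarrow{\varphi}\mathbb R$ gives evaluation at a point $x^*\in\mathbb R^n$ with $r_j(x^*)=0$ and $q_i(x^*)\ge 0$ for all $i,j$; i.e.\ $x^*\in\mathcal S$, contradicting $\mathcal S=\emptyset$. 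Equivalently one can phrase this model-theoretically: the sign conditions are satisfiable over the real closed field obtained from $K$, hence by Tarski's transfer principle (real closed fields are elementarily equivalent, and in particular share the first-order theory of $\mathbb R$) they are satisfiable over $\mathbb R$ itself.

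\textbf{Main obstacle.} The genuinely deep input is Step 2: the Artin--Lang theorem --- equivalently the Tarski--Seidenberg transfer principle / model completeness of the theory of real closed fields --- which I would cite rather than reprove; only the ring-theoretic Zorn argument of Step 1 is self-contained. A secondary technical nuisance is forcing $\varphi$ to preserve the \emph{non-strict} inequalities $\bar q_i\ge 0$ rather than strict ones; the cleanest remedies are to first treat the strictly positive case (where Artin--Lang directly hits a point of the open set $\{q_i>0\}$) and then recover the $\ge$ case by the standard localization trick of adjoining a variable that pins down the sign of $q_i$, or by a limiting argument over a nested family of open sets, or simply by invoking the form of Artin--Lang that already accommodates a mixture of $>$ and $\ge$ conditions.
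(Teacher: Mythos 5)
The paper does not supply a proof of this statement: Stengle's Positivstellensatz is quoted as a classical result with a citation to \cite{stengle1974nullstellensatz} and is then used as a black box in Sections~\ref{AOS Sec: Sufficient Conditions} and \ref{AOS Sec: Algorithms}. So there is no paper proof to compare against; your proposal has to stand on its own.

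On its own terms, your outline is the standard Krivine--Stengle argument and the architecture is right: recast a certificate as $-1\in T+I$ with $T$ the preordering generated by the $q_i$ and $I$ the ideal of the $r_j$; pass to $A=\mathbb R[x]/I$; use Zorn to enlarge the image of $T$ to a maximal proper preordering $P$; show $P$ is a prime cone; order the fraction field of $A/(P\cap -P)$; and invoke Artin--Lang (equivalently Tarski transfer for real closed fields) to produce a point of $\mathcal S$. You also correctly flag that the transfer theorem is the deep input and that getting $\ge$ rather than $>$ at the real point is a genuine technical wrinkle. The one place the sketch is off is the parenthetical inside Step~1. To show a maximal proper preordering $P$ has $P\cup(-P)=A$, one assumes $a\notin P$ \emph{and} $-a\notin P$; maximality then yields \emph{two} relations, $-1=s_1+at_1$ and $-1=s_2-at_2$ with $s_i,t_i\in P$, and from $at_1=-(1+s_1)$, $at_2=1+s_2$ one multiplies to obtain $-1=s_1+s_2+s_1s_2+a^2t_1t_2\in P$, the desired contradiction. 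A single relation $-1=p_1-ap_2$ alone does not suffice, and the identity $4ab=(a+b)^2-(a-b)^2$ is not what drives this step (it is used elsewhere in the theory, e.g.\ to handle closure under products when $1/2$ is available, but preorderings are closed under products by definition). As written, the justification of that lemma would not go through; with the two-relation argument substituted in, the sketch is a faithful outline of the standard proof.
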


\begin{remark} Note that if only equality constraints are considered, the second term on the right hand side is a single sos polynomial $\sigma_{0,\ldots, 0}$. In the next theorem, we only need this special case, which is also known as the Real Nullstellensatz \cite{krivine1964anneaux}.
\end{remark}

\begin{theorem}\label{AOS Thm: Archimedean NP-hard} 
	Given a set of quadratic polynomials $q_1,\ldots,q_m$, it is strongly NP-hard to test whether their quadratic module has the Archimedean property.
	
\end{theorem}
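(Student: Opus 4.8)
The plan is to reduce from the same ONE-IN-THREE 3SAT construction used in Corollary~\ref{AOS Thm: Boundedness}, reusing the set
$$S_\phi = \big\{(x,y)\in\R^{n+1}\ |\ (\phi_{i1}+\phi_{i2}+\phi_{i3}+1)y = 0,\ i=1,\ldots,k,\ 1-x_j^2=0,\ j=1,\ldots,n\big\}.$$
We already know from the proof of Corollary~\ref{AOS Thm: Boundedness} that $S_\phi$ is bounded (indeed compact, since it is closed as well) if and only if $\phi$ is not satisfiable, and that when $\phi$ is satisfiable the variable $y$ ranges over all of $\R$. So it suffices to show that the quadratic module generated by the defining polynomials $\{\pm(\phi_{i1}+\phi_{i2}+\phi_{i3}+1)y,\ \pm(1-x_j^2)\}$ is Archimedean if and only if $S_\phi$ is compact. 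The ``only if'' direction is free: an Archimedean quadratic module always forces the associated basic semialgebraic set to be compact. The work is in the ``if'' direction: when $\phi$ is not satisfiable, I need to produce a scalar $R$ with $R-\|(x,y)\|^2$ in the quadratic module.

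First I would handle the $x$ coordinates, which is easy: since each $1-x_j^2$ and $x_j^2-1$ is in the module, the identity $1 = 1\cdot(1-x_j^2) + x_j^2$ shows $1-x_j^2$ contributes, and summing gives $n - \sum_j x_j^2$ in the module directly. The real issue is bounding $y^2$. When $\phi$ is unsatisfiable, for every $x\in\{-1,1\}^n$ there is some clause $i$ with $c_i(x) := \phi_{i1}+\phi_{i2}+\phi_{i3}+1 \neq 0$; on the variety $\{1-x_j^2=0\}$ this means the polynomials $\{c_i(x)\}_{i=1}^k$ have no common zero, i.e. the system $\{1-x_j^2=0,\ c_i(x)=0\ \forall i\}$ is infeasible. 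Here is where I would invoke Stengle's Positivstellensatz (Theorem~\ref{AOS Thm: Stengle}), or rather its equality-only special case, the Real Nullstellensatz noted in the Remark after it: infeasibility of that pure-equality system yields an sos polynomial $\sigma(x)$ and polynomials $t_j(x), u_i(x)$ with
$$-1 = \sigma(x) + \sum_{j=1}^n t_j(x)(1-x_j^2) + \sum_{i=1}^k u_i(x)\, c_i(x).$$
Multiplying through by $-y^2$ gives $y^2 = -\sigma(x)y^2 + \sum_j (-t_j(x)y^2)(1-x_j^2) + \sum_i (-u_i(x)y)\,(c_i(x)y)$. The middle terms lie in the module since $\pm(1-x_j^2)$ are generators (absorb the sign of $t_j$ and split into $\pm$ parts) and the last terms lie in the module since $c_i(x)y$ are generators — but I must be careful: $-\sigma(x)y^2$ is a \emph{negative} of an sos times $y^2$, which is not obviously in the module. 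To fix this, I would instead bound $y^2$ on a compact region: on the cube $\{|x_j|\le 1\}$ the continuous function $\sigma(x)$ attains some maximum $M$, so $\sigma(x)y^2 \le My^2$ there, giving $(1+M)y^2 \ge$ (module elements), hence $y^2 \le \frac{1}{1+M}\big[\text{stuff already in the module}\big]$. Making this rigorous inside the algebraic module — rather than pointwise — is the main obstacle.

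To overcome that obstacle cleanly, I would strengthen the intermediate algebraic step: apply Stengle's Positivstellensatz not to show a \emph{set} is empty but to directly certify that $R^2 - x^Tx - y^2$ is nonnegative on $S_\phi$ for a suitable large $R$, and extract the certificate. Concretely, once $\phi$ is unsatisfiable, $S_\phi = \{-1,1\}^n\times\{0\}$, so $R^2 - \|(x,y)\|^2 = R^2 - n$ on $S_\phi$, which is strictly positive for $R^2 > n$; by Stengle applied to the (empty) set $S_\phi \cap \{R^2 - \|(x,y)\|^2 \le 0\}$ one gets an algebraic identity, and the degree-counting shows the relevant multipliers can be chosen so that the identity rearranges into membership $R^2 - \|(x,y)\|^2 \in \mathrm{QM}(\{q_i\})$. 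The one subtlety is that Stengle's certificate a priori lives in the \emph{preordering} (products of generators allowed), not the quadratic module; I would need the observation that for this particular constraint set — where the generators come in $\pm$ pairs defining a real variety plus one family $c_i(x)y$ — products reduce, modulo the ideal generated by $1-x_j^2$, to module elements, because on $\{x\in\{-1,1\}^n\}$ everything collapses. I expect this reduction from preordering to quadratic module, exploiting the $\pm$-pair structure, to be the crux of the argument; the rest is bookkeeping. I would then conclude: the quadratic module is Archimedean iff $\phi$ is unsatisfiable, so a polynomial-time test for the Archimedean property would solve ONE-IN-THREE 3SAT, and since the coefficients of all $q_i$ are bounded by a constant multiple of $n+k$, the hardness is in the strong sense.
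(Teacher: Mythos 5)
Your overall strategy is exactly the paper's: reduce from ONE-IN-THREE 3SAT using the same $\pm$-paired quadratics, observe that satisfiability of $\phi$ kills compactness (hence the Archimedean property), and in the unsatisfiable case use the Real Nullstellensatz on the empty system $\{1-x_j^2=0,\ c_i(x)=0\}$ in the $x$-variables alone, then lift the identity into the $(x,y)$-variables by multiplying through by a power of $y$. You also correctly observe that because the generators come in $\pm$ pairs, arbitrary polynomial multipliers (not just sos) are allowed — the paper makes the same point via the remark that every polynomial is a difference of two sos polynomials.

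The gap is a sign error in the lifting step, and the contortions that follow are the symptom of it. You take the Nullstellensatz identity
$-1 = \sigma(x) + \sum_j t_j(x)(1-x_j^2) + \sum_i u_i(x)\,c_i(x)$
and multiply by $-y^2$, getting $y^2 = -\sigma(x)y^2 + \cdots$, and then worry that $-\sigma(x)y^2$ is a negative sos term that you cannot place in the module. But the term you actually need to certify is $-y^2$ (so that $n - \sum_j x_j^2 - y^2$ lands in the module), not $+y^2$, and $+y^2$ is trivially sos anyway. Multiply the identity by $+y^2$ instead:
$$-y^2 = y^2\sigma(x) + \sum_j \bigl(y^2 t_j(x)\bigr)(1-x_j^2) + \sum_i \bigl(y\,u_i(x)\bigr)\,\bigl(c_i(x)y\bigr).$$
Now $y^2\sigma(x)$ is sos, the middle terms are polynomial multiples of the generator $1-x_j^2$ (allowed by the $\pm$ pair), and the last terms are polynomial multiples of the generator $c_i(x)y$ (likewise allowed). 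Adding $n - \sum_j x_j^2 = \sum_j (1-x_j^2)$ gives $n - \sum_j x_j^2 - y^2 \in \mathrm{QM}$ with $R=n$, exactly as the paper does. The compactness-of-the-cube bound and the preordering-to-module reduction you then sketch are not needed; the first is only pointwise (not a certificate), and the second is genuinely delicate — your ``everything collapses on $\{\pm 1\}^n$'' claim would need a careful argument, and a working proof should not rest on it. With the sign fixed, the argument is complete and matches the paper's proof.
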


\begin{proof}
	Consider a ONE-IN-THREE 3SAT instance $\phi$ with $n$ variables and $k$ clauses. Let $\phi_{ij}$ be as in the proof of Theorem \ref{AOS Thm: AOS NP-hard o4c0} and consider the set of quadratic polynomials
	$$\big\{(\phi_{i1}+\phi_{i2}+\phi_{i3})y, -(\phi_{i1}+\phi_{i2}+\phi_{i3})y, i=1,\ldots,k; 1-x_j^2, x_j^2-1, j=1,\ldots,n\big\}.$$
	
	We show that the quadratic module associated with these polynomials is Archimedean if and only if $\phi$ is not satisfiable. First observe that if $\phi$ is satisfiable, then the quadratic module cannot be Archimedean as the set 
	$$S = \big\{(x,y)\in \mathbb{R}^{n+1}|~(\phi_{i1} + \phi_{i2} + \phi_{i3} + 1)y = 0, i=1,\ldots,k, 1 - x_j^2 = 0, j = 1, \ldots, n\big\}$$
	is not compact (see the proof of Corollary \ref{AOS Thm: Boundedness}).
	
	Now suppose that the instance $\phi$ is not satisfiable. 
	We need to show that for some scalar $R > 0$ and some sos polynomials $\sigma_0,\sigma_1,\ldots,\sigma_k,$ $\hat{\sigma}_1,\ldots,\hat{\sigma}_k$,$\tau_1,\ldots,\tau_n$,$\hat{\tau}_1,\ldots,\hat{\tau}_n$, we have
	\begin{align*}
	R-\sum_{i=1}^n x_i^2-y^2 &=\sigma_0(x,y)+\sum_{i=1}^k \sigma_i(x,y) (\phi_{i1}+\phi_{i2}+\phi_{i3}+1)y\\
	&+\sum_{i=1}^k \hat{\sigma}_i(x,y) (-\phi_{i1}-\phi_{i2}-\phi_{i3}-1)y
	+\sum_{j=1}^n \tau_j(x,y) (1-x_j^2)+\sum_{j=1}^n\hat{\tau}_j(x,y)(x_j^2-1).
	\end{align*}
	Since any polynomial can be written as the difference of two sos polynomials (see, e.g., \cite[Lemma 1]{ahmadi2015dc}), this is equivalent to existence of a scalar $R>0$, an sos polynomial $\sigma_0$, and some polynomials $v_1,\ldots,v_k, t_1,\ldots,t_n$ such that 
	\begin{align}\label{AOS eq:archimedean}
	R-\sum_{i=1}^n x_i^2-y^2 &=\sigma_0(x,y)+\sum_{i=1}^k v_i(x,y) (\phi_{i1}+\phi_{i2}+\phi_{i3}+1)y +\sum_{j=1}^n t_j(x,y) (1-x_j^2).
	\end{align}
	
	First, note that 
	\begin{align}\label{AOS eq:first.part.arch}
	n-\sum_{i=1}^n x_i^2=\sum_{i=1}^n (1-x_i^2).
	\end{align}
	Secondly, as $\phi$ is not satisfiable, we know that the set 
	$$\{x \in \mathbb{R}^n|~1-x_j^2 = 0, j= 1,\ldots,n, \phi_{i1} + \phi_{i2} + \phi_{i3} + 1 = 0,i = 1, \ldots, k\}$$
	is empty. From Stengle's Positivstellensatz, it follows that there exist an sos polynomial $\tilde{\sigma}_0$ and some polynomials $\tilde{v}_1,\ldots, \tilde{v}_k$, $\tilde{t}_1,\ldots,\tilde{t}_n$ such that 
	\begin{align*}
	-1=\tilde{\sigma}_0(x)+\sum_{i=1}^k \tilde{v}_i(x)(\phi_{i1}+\phi_{i2}+\phi_{i3}+1)+\sum_{j=1}^n \tilde{t}_j(x)(1-x_j^2).
	\end{align*}
	Multiplying this identity on either side by $y^2$, we obtain:
	\begin{align}\label{AOS eq:second.part.arch}
	-y^2=y^2\tilde{\sigma}_0(x)+\sum_{i=1}^k \tilde{v}_i(x)y \cdot(\phi_{i1}+\phi_{i2}+\phi_{i3}+1)y+\sum_{j=1}^n \tilde{t}_j(x)y^2(1-x_j^2).
	\end{align}
	Note that if we sum (\ref{AOS eq:first.part.arch}) and (\ref{AOS eq:second.part.arch}) and take $R=n$, $\sigma_0(x,y)=y^2\tilde{\sigma}_{0}(x)$, $v_i(x,y)=y\tilde{v}_i(x)$ for all $i=1,\ldots,k$, and $t_j(x,y)=y^2 \cdot \tilde{t}_j(x)+1$ for all $j=1,\ldots,n$, we recover (\ref{AOS eq:archimedean}).
	
\end{proof}

\section{Algorithms for Testing Attainment of the Optimal Value}\label{AOS Sec: Algorithms}
In this section, we give a hierarchy of sufficient conditions for compactness of a closed basic semialgebraic set, and a hierarchy of sufficient conditions for coercivity of a polynomial. These hierarchies are amenable to semidefinite programming (SDP) as they all involve, in one way or another, a search over the set of sum of squares polynomials. The connection between SDP and sos polynomials is well known:
recall that a polynomial $\sigma$ of degree $2d$ is sos if and only if there exists a symmetric positive semidefinite matrix $Q$ such that $\sigma(x)=z(x)^TQz(x)$ for all $x$, where $z(x)$ here is the standard vector of monomials of degree up to $d$ in the variables $x$ (see, e.g. \cite{parrilo2003semidefinite}).

The hierarchies that we present are such that if the property in question (i.e., compactness or coercivity) is satisfied on an input instance, then some level of the SDP hierarchy will be feasible and provide a certificate that the property is satisfied. The test for compactness is a straightforward application of Stengle's Positivstellensatz, but the test for coercivity requires a new characterization of this property, which we give in Theorem \ref{AOS Thm: Polynomial Radius}.

\subsection{Compactness of the Feasible Set}\label{AOS SSec: Compactness Alg}

Consider a closed basic semialgebraic set $$\mathcal{S} \defeq \{x \in \mathbb{R}^n | ~q_i(x) \ge 0, i = 1, \ldots, m\},$$ where the polynomials $q_i$ have integer coefficients\footnote{If some of the coefficients of the polynomials $q_i$ are rational (but not integer), we can make them integers by clearing denominators without changing the set $\mathcal{S}$.} and are of degree at most $d$. A result of Basu and Roy \cite[Theorem 3]{basu2010bounding} implies that if $\mathcal{S}$ is bounded, then it must be contained in a ball of radius 
\begin{equation}\label{AOS Eq: Basu Bound}
R^* \defeq \sqrt{n}\left((2d+1)(2d)^{n-1}+1\right)2^{(2d+1)(2d)^{n-1}(2nd+2)\left(2\tau + bit((2d+1)(2d)^{n-1})+(n+1)bit(d+1)+bit(m)\right)},
\end{equation}
where $\tau$ is the largest bitsize of any coefficient of any $q_i$, and bit($\eta$) denotes the bitsize of $\eta$.

With this result in mind, the following proposition is an immediate consequence of Stengle's Positivstellensatz (c.f. Theorem \ref{AOS Thm: Stengle}) after noting that the set $\mathcal{S}$ is bounded if and only if the set $$\{x\in \mathbb{R}^n|~q_i(x) \ge 0, i = 1, \ldots, m, \sum_{i=1}^n x_i^2 \ge R^* + 1\}$$ is empty.
\begin{prop}\label{AOS Prop: Compactness Stengle}
	Consider a closed basic semialgebraic set $\mathcal{S} \defeq \{x \in \mathbb{R}^n |~ q_i(x) \ge 0,$ ${i = 1, \ldots, m\}}$, where the polynomials $q_i$ have integer coefficients and are of degree at most $d$. Let $R^*$ be as in (\ref{AOS Eq: Basu Bound}) and let $q_0(x) = \sum_{i=1}^n x_i^2 - R^* -1$. Then the set $\mathcal{S}$ is compact if and only if there exist sos polynomials $\sigma_{h_0,\ldots, h_m}$ such that
	$$-1 = \sum_{h_0,...,h_m \in \{0,1\}^{m+1}} \sigma_{h_0,...,h_m}(x) \Pi_{i=0}^m q_i(x)^{h_i}.$$
\end{prop}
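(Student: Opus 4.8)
The plan is to derive Proposition~\ref{AOS Prop: Compactness Stengle} as a direct consequence of the Basu--Roy bound in (\ref{AOS Eq: Basu Bound}) together with Stengle's Positivstellensatz (Theorem~\ref{AOS Thm: Stengle}). The first step is to observe that a closed basic semialgebraic set $\mathcal{S} = \{x \in \mathbb{R}^n \mid q_i(x) \ge 0,\ i=1,\ldots,m\}$ with integer coefficients and degree at most $d$ is compact if and only if it is bounded (closedness is automatic since $\mathcal{S}$ is an intersection of preimages of closed sets under continuous maps). Then I would invoke \cite[Theorem 3]{basu2010bounding}: if $\mathcal{S}$ is bounded, it is contained in the ball of radius $R^*$ given by (\ref{AOS Eq: Basu Bound}); consequently $\mathcal{S}$ is bounded \emph{if and only if} $\mathcal{S}$ is contained in this ball, i.e. if and only if $\mathcal{S} \cap \{x \mid \sum_{i=1}^n x_i^2 \ge R^*+1\} = \emptyset$. (The ``only if'' is the content of the Basu--Roy bound; the ``if'' is trivial since being contained in a ball implies boundedness.)

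The second step is to rewrite this emptiness condition in the language of Theorem~\ref{AOS Thm: Stengle}. Setting $q_0(x) = \sum_{i=1}^n x_i^2 - R^* - 1$, the set $\mathcal{S} \cap \{q_0(x) \ge 0\}$ is a basic closed semialgebraic set defined by the $m+1$ inequalities $q_0 \ge 0, q_1 \ge 0, \ldots, q_m \ge 0$ and no equality constraints. Applying Stengle's Positivstellensatz to this set (with $k=0$ equality constraints, so the $t_j r_j$ sum is absent), its emptiness is equivalent to the existence of sos polynomials $\sigma_{h_0,\ldots,h_m}$ indexed by $(h_0,\ldots,h_m) \in \{0,1\}^{m+1}$ such that
$$-1 = \sum_{h_0,\ldots,h_m \in \{0,1\}^{m+1}} \sigma_{h_0,\ldots,h_m}(x)\, \Pi_{i=0}^m q_i(x)^{h_i}.$$
Chaining the two equivalences---$\mathcal{S}$ compact $\iff$ $\mathcal{S}$ bounded $\iff$ $\mathcal{S} \cap \{q_0 \ge 0\} = \emptyset$ $\iff$ the Stengle identity holds---gives exactly the statement of the proposition.

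I should also note that for the purposes of an \emph{algorithm} (the ``hierarchy'' framing of Section~\ref{AOS Sec: Algorithms}), one additionally searches over the degrees of the sos multipliers $\sigma_{h_0,\ldots,h_m}$: for each fixed degree bound $2\ell$, the existence of such a representation is a semidefinite feasibility problem via the Gram matrix characterization of sos polynomials recalled at the start of the section, and if $\mathcal{S}$ is compact then some finite $\ell$ will certify it. But for the proposition itself, which is just an ``if and only if'' characterization and not a complexity claim, no degree truncation is needed---the existence of \emph{some} sos certificate is what Stengle guarantees.

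The only mild subtlety, and the step I would be most careful about, is making sure the hypotheses of the Basu--Roy theorem are met: the coefficients of the $q_i$ must be integers (the proposition states this, and the footnote explains one can clear denominators for rational data without changing $\mathcal{S}$), and one must correctly read off $\tau$, $d$, $m$, and $n$ to instantiate $R^*$. Everything else is a routine invocation of two quoted theorems; there is no real obstacle beyond bookkeeping.
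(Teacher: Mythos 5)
Your proof is correct and follows exactly the paper's own argument: combine the Basu--Roy bound to reduce boundedness of $\mathcal{S}$ to emptiness of $\mathcal{S}\cap\{q_0\ge 0\}$, then apply Stengle's Positivstellensatz (with no equality constraints) to characterize that emptiness by an sos identity. The paper states this in one sentence preceding the proposition; your write-up just fills in the same steps in more detail.
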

This proposition naturally yields the following semidefinite programming-based hierarchy indexed by a nonnegative integer $r$:

\begin{equation}\label{AOS Eq: Compactness SDP}
\begin{aligned}
& \underset{\sigma_{h_0, \ldots, h_m}}{\min}
& & 0 \\
& \text{subject to}
&& -1 = \sum_{h_0,\ldots,h_m \in \{0,1\}^{m+1}} \sigma_{h_0,\ldots,h_m}(x) \Pi_{i=0}^m q_i(x)^{h_i},\\
&&&\sigma_{h_0, \ldots, h_m} \text{ is sos and has degree }\leq 2r.\\
\end{aligned}
\end{equation}

Note that for a fixed level $r$, one is solving a semidefinite program whose size is polynomial in the description of $\mathcal{S}$. If for some $r$ the SDP is feasible, then we have an algebraic certificate of compactness of the set $\mathcal{S}.$ Conversely, as Proposition \ref{AOS Prop: Compactness Stengle} implies, if $\mathcal{S}$ is compact, then the above SDP will be feasible for some level $r^*$. One can upper bound $r^*$ by a function of $n, m,$ and $d$ only using the main theorem of \cite{lombardi2014elementary}. This bound is however very large and mainly of theoretical interest.
\subsection{Coercivity of the Objective Function}\label{AOS SSec: Coercivity Alg}

It is well known that the infimum of a continuous coercive function over a closed set is attained. This property has been widely studied, even in the case of polynomial functions; see e.g. \cite{bajbar2015coercive,bajbar2017fast,jeyakumar2014polynomial}.
A simple sufficient condition for coercivity of a polynomial $p$ is for its terms of highest order to form a positive definite (homogeneous) polynomial; see, e.g., \cite[Lemma 4.1]{jeyakumar2014polynomial}. One can give a hierarchy of SDPs to check for this condition as is done in \cite[Section 4.2]{jeyakumar2014polynomial}. However, this condition is sufficient but not necessary for coercivity. For example, the polynomial $x_1^4 + x_2^2$ is coercive, but its top homogeneous component is not positive definite. Theorem \ref{AOS Thm: Polynomial Radius} below gives a necessary and sufficient condition for a polynomial to be coercive which lends itself again to an SDP hierarchy. To start, we need the following proposition, whose proof is straightforward and thus omitted.

\begin{prop}\label{AOS Prop: Coercive sublevel sets bounded}
	A function $f:\mathbb{R}^n \rightarrow \mathbb{R}$ is coercive if and only if the sets $$\mathcal{S}_\gamma \defeq \{x \in \mathbb{R}^n|~f(x) \le \gamma\}$$ are bounded for all $\gamma \in \mathbb{R}$.
\end{prop}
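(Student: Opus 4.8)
The plan is to prove both implications directly from the definition of coercivity (stated in Section~\ref{POLY Sec: Introduction} / used here: $f$ is coercive if $f(x_k)\to\infty$ for every sequence $\{x_k\}$ with $\|x_k\|\to\infty$) via a contrapositive-style argument, extracting appropriate (sub)sequences in each direction. No deep machinery is needed; the whole content is a careful unwinding of quantifiers.

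For the forward direction, I would assume $f$ is coercive and suppose, for the sake of contradiction, that some sublevel set $\mathcal{S}_\gamma$ is unbounded. Then there exists a sequence $\{x_k\}\subseteq\mathcal{S}_\gamma$ with $\|x_k\|\to\infty$. Since $f(x_k)\le\gamma$ for every $k$, the sequence $\{f(x_k)\}$ does not diverge to $+\infty$, contradicting coercivity of $f$. Hence $\mathcal{S}_\gamma$ is bounded for every $\gamma\in\mathbb{R}$.

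For the reverse direction, I would assume every sublevel set is bounded and suppose, for contradiction, that $f$ is not coercive. Negating the definition, there is a sequence $\{x_k\}$ with $\|x_k\|\to\infty$ along which $f(x_k)$ does not tend to $+\infty$; concretely, there exist a scalar $M$ and an infinite subsequence $\{x_{k_j}\}$ with $f(x_{k_j})\le M$ for all $j$. Then $\{x_{k_j}\}\subseteq\mathcal{S}_M$, so this subsequence is bounded, yet $\|x_{k_j}\|\to\infty$ since it is a subsequence of a sequence whose norms diverge — a contradiction. Therefore $f$ is coercive. The only place demanding any care is this last extraction: the statement ``$f(x_k)$ does not tend to $+\infty$'' must be correctly negated to ``there exists $M$ with $f(x_k)\le M$ for infinitely many $k$,'' which is the single quantifier manipulation in the argument and the closest thing to an obstacle — consistent with the proof being omitted as routine in the text.
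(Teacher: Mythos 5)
Your proof is correct, and since the paper explicitly omits the proof as ``straightforward,'' this is exactly the routine quantifier-unwinding argument the authors had in mind. Both directions, including the careful negation step in the converse, are handled correctly.
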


A polynomial $p$ is said to be \emph{$s$-coercive} if $p(x)/\|x\|^s$ is coercive. The \emph{order of coercivity} of $p$ is the supremum over $s \ge 0$ for which $p$ is $s$-coercive. It is known that the order of coercivity of a coercive polynomial is always positive \cite{bajbar2017fast,gorin1961asymptotic}.

\begin{theorem}\label{AOS Thm: Polynomial Radius}
	A polynomial $p$ is coercive if and only if there exist an even integer $c > 0$ and a scalar $k \ge 0$ such that for all $\gamma \in \mathbb{R}$, the $\gamma$-sublevel set of $p$ is contained within a ball of radius $\gamma^c+k$.
\end{theorem}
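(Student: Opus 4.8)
The plan is to prove the two implications separately. The reverse implication is essentially immediate from Proposition~\ref{AOS Prop: Coercive sublevel sets bounded}, while the forward implication rests on one nontrivial input: the fact that the order of coercivity of a coercive polynomial is strictly positive.

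For the ``if'' direction, suppose such an even integer $c>0$ and scalar $k\ge 0$ exist. Because $c$ is even and $k\ge 0$, the number $\gamma^c+k$ is nonnegative and finite for every $\gamma\in\mathbb{R}$ (this is precisely why $c$ must be even rather than merely a positive integer: otherwise $\gamma^c+k$ could be negative for $\gamma<0$ and the phrase ``ball of radius $\gamma^c+k$'' would be meaningless). Hence each sublevel set $\mathcal{S}_\gamma$ is contained in a ball of finite radius and is therefore bounded for all $\gamma$. By Proposition~\ref{AOS Prop: Coercive sublevel sets bounded}, $p$ is coercive.

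For the ``only if'' direction, assume $p$ is coercive. First I would invoke the known result that a coercive polynomial has positive order of coercivity~\cite{bajbar2017fast,gorin1961asymptotic}; this produces a scalar $s>0$ for which $p(x)/\|x\|^s$ is coercive, and in particular a radius $R_0>0$ such that $p(x)\ge \|x\|^s$ whenever $\|x\|\ge R_0$. Next I would estimate the sublevel sets directly: if $x\in\mathcal{S}_\gamma$ with $\|x\|\ge R_0$, then $\|x\|^s\le p(x)\le\gamma$, which forces $\gamma\ge R_0^s>0$ and $\|x\|\le\gamma^{1/s}$; together with the trivial bound $\|x\|<R_0$ in the complementary case, this gives $\mathcal{S}_\gamma\subseteq B\bigl(0,\,R_0+\max(\gamma,0)^{1/s}\bigr)$ for every $\gamma$. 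Finally I would massage this into the required form by choosing $c$ to be any even integer with $c\ge\max(1,1/s)$ and setting $k=R_0+1$: for $\gamma\le 1$ the radius bound is at most $R_0+1\le\gamma^c+k$ since $\gamma^c\ge 0$, and for $\gamma>1$ we have $\gamma^{1/s}\le\gamma^c$, so the radius bound is at most $R_0+\gamma^c\le k+\gamma^c$. In all cases $\mathcal{S}_\gamma\subseteq B(0,\gamma^c+k)$, completing the proof.

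The main obstacle is really just the quantitative estimate $p(x)\ge\|x\|^s$ for large $\|x\|$, which I would not reprove but rather derive from positivity of the order of coercivity; the rest is elementary case analysis. The one place demanding a bit of care is the behaviour of $\gamma^c$ for negative and for small positive $\gamma$, which is absorbed cleanly into the parity of $c$ and the additive constant $k$.
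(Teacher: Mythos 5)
Your proof is correct and takes essentially the same route as the paper: the ``if'' direction is the same one-liner via Proposition~\ref{AOS Prop: Coercive sublevel sets bounded}, and the ``only if'' direction invokes positivity of the order of coercivity to obtain the key quantitative bound $p(x)\ge\|x\|^{s}$ for $\|x\|$ large (the paper cites Observation~2 of~\cite{bajbar2017fast} for precisely this). Your bookkeeping differs only cosmetically --- you bound the sublevel sets directly and absorb the small-$\gamma$ regime into the choice $k=R_0+1$, whereas the paper reasons through a radius function $R_p(\gamma)$ and a threshold $m$ --- but the underlying argument is the same, and your explicit split at $\gamma=1$ makes the step $\gamma^{1/s}\le\gamma^{c}$ a bit more transparent than in the paper's write-up.
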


\begin{proof}
	
	The ``if'' direction follows immediately from the fact that each $\gamma$-sublevel set is bounded. For the converse, suppose that $p$ is coercive and denote its order of coercivity by $q > 0$. Then, from Observation 2 of \cite{bajbar2017fast}, we get that there exists a scalar $M \ge 0$ such that 
	\begin{equation}\label{AOS Eq: M q-coercive}
	\|x\| \ge M \Rightarrow p(x) \ge \|x\|^q,
	\end{equation} or equivalently $\|x\| \le p(x)^\frac{1}{q}$. Now consider the function $R_p: \mathbb{R} \to \mathbb{R}$ which is defined as $$R_p(\gamma) \defeq \underset{p(x) \le \gamma}{\max} \|x\|,$$ i.e. the radius of the $\gamma$-sublevel set of $p$. We note two relevant properties of this function.
	
	\begin{itemize}
		\item The function $R_p(\gamma)$ is nondecreasing. This is because the $\gamma$-sublevel set of $p$ is a subset of the $(\gamma+\epsilon)$-sublevel set of $p$ for any $\epsilon > 0$.
		\item Let $m = \inf \{\gamma| ~R_p(\gamma) \ge M\}$. We claim that \begin{equation}\label{AOS Eq: m ge Mq}
		m \ge M^q.
		\end{equation}
		Suppose for the sake of contradiction that we had $m < M^q$. By the definition of $m$, there exists $\bar{\gamma} \in (m,M^q)$ such that $R_p(\bar{\gamma}) \ge M$. This means that there exists $\bar{x} \in \mathbb{R}^n$ such that $p(\bar{x}) \le \bar{\gamma} < M^q$ and $\|\bar{x}\| \ge M$. From (\ref{AOS Eq: M q-coercive}) we then have $p(\bar{x}) \ge \|\bar{x}\|^q \ge M^q$, which is a contradiction.
	\end{itemize}
	
	We now claim that $R_p(\gamma) \le \gamma^\frac{1}{q}$ for all $\gamma > m$. Suppose for the sake of contradiction that there exists $\gamma_0 > m$ such that $R_p(\gamma_0) > \gamma_0^\frac{1}{q}$. This means that there exists $x_0 \in \mathbb{R}^n$ such that $\|x_0\| > \gamma_0^\frac{1}{q}$ but $p(x_0) \le \gamma_0$.
	
	Consider first the case where $p(x_0) \ge m$. Since $\gamma_0 > m$, we have
	$$\|x_0\| > \gamma_0^{1/q} > m^{1/q} \ge M,$$
	where the last inequality follows from (\ref{AOS Eq: m ge Mq}). It follows from (\ref{AOS Eq: M q-coercive}) that $p(x_0) \ge \|x_0\|^q > \gamma_0$ which is a contradiction.
	
	Now consider the case where $p(x_0) < m$. By definition of $m$, we have $R_p(p(x_0)) < M$, and so $\|x_0\| < M$. Furthermore, since $$\gamma_0 > m \overset{(\ref{AOS Eq: m ge Mq})}{\ge} M^q,$$ we have $M < \gamma_0^{1/q}$, which gives $\|x_0\| < M < \gamma_0^{1/q}$. This contradicts our previous assumption that $\|x_0\| > \gamma_0^{1/q}$.
	
	If we let $c$ be the smallest even integer greater than $1/q$, we have shown that $$R_p(\gamma) \le \gamma^{1/q} \le \gamma^c$$ on the set $\gamma > m$. Finally, if we let $k = R_p(m)$, by monotonicity of $R_p$, we get that $R_p(\gamma) \le \gamma^c + k$, for all $\gamma$.
	
\end{proof}

\begin{remark}\label{AOS Rem: Polynomial R2}
	One can easily show now that for any coercive polynomial $p$, there exist an integer $c' > 0$ and a scalar $k' \ge 0$ (possibly differing from the scalars $c$ and $k$ given in the proof of Theorem \ref{AOS Thm: Polynomial Radius}) such that $$R_p^2(\gamma) < \gamma^{2c'} + k'.$$ For the following hierarchy it will be easier to work with this form.
\end{remark}

In view of the above remark, observe that coercivity of a polynomial $p$ is equivalent to existence of an integer $c' > 0$ and a scalar $k' \ge 0$ such that the set \begin{equation}\label{AOS Eq: Coercivity Set}\left\{(\gamma, x)\in \mathbb{R}^{n+1}|~p(x) \le \gamma, \sum_{i=1}^n x_i^2 \ge \gamma^{2c'} + k'\right\}\end{equation} is empty. This formulation naturally leads to the following SDP hierarchy indexed by a positive integer $r$.

\begin{prop}\label{AOS prop:coercivity.SDP.hierarchy}
	A polynomial $p$ of degree $d$ is coercive if and only if for some integer $r \geq 1$, the following SDP is feasible:
	
	\begin{equation}\label{AOS Eq: Coercivity SDP}
	\begin{aligned}
	& \underset{\sigma_0, \ldots, \sigma_3}{\min}
	& & 0 \\
	& \emph{subject to}
	& -1 =& \sigma_0(x,\gamma) + \sigma_1(x,\gamma)(\gamma - p(x)) + \sigma_2(x,\gamma)\left(\sum_{i=1}^n x_i^2 - \gamma^{2r} - 2^r\right) \\
	&&&+ \sigma_3(x,\gamma)(\gamma - p(x))\left(\sum_{i=1}^n x_i^2 - \gamma^{2r} - 2^r\right),\\
	&&& \sigma_0 \emph{ is sos and of degree } \leq 4r,\\
	&&& \sigma_1 \emph{ is sos and of degree } \leq \max\{4r-d,0\},\\
	&&& \sigma_2 \emph{ is sos and of degree } \leq 2r,\\
	&&& \sigma_3 \emph{ is sos and of degree } \leq \max\{2r-d,0\}.\\
	\end{aligned}
	\end{equation}
\end{prop}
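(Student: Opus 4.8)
\textbf{Proof plan for Proposition~\ref{AOS prop:coercivity.SDP.hierarchy}.}
The plan is to reduce the claim to the characterization in Theorem~\ref{AOS Thm: Polynomial Radius} (and Remark~\ref{AOS Rem: Polynomial R2}) together with Stengle's Positivstellensatz (Theorem~\ref{AOS Thm: Stengle}). First I would establish the ``if'' direction, which is the easy one: if the SDP in (\ref{AOS Eq: Coercivity SDP}) is feasible for some $r\geq 1$, then the displayed identity exhibits $-1$ as an element of the quadratic module generated by $\gamma-p(x)$ and $\sum_{i=1}^n x_i^2-\gamma^{2r}-2^r$. Hence the basic semialgebraic set $\{(x,\gamma)\in\R^{n+1}\ |\ p(x)\le\gamma,\ \sum_{i=1}^n x_i^2\ge \gamma^{2r}+2^r\}$ is empty, which says exactly that for every $\gamma\in\R$ the $\gamma$-sublevel set of $p$ is contained in the ball of radius $\sqrt{\gamma^{2r}+2^r}$; in particular every sublevel set is bounded, so $p$ is coercive by Proposition~\ref{AOS Prop: Coercive sublevel sets bounded}.

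For the ``only if'' direction, I would start from a coercive $p$ and invoke Theorem~\ref{AOS Thm: Polynomial Radius} / Remark~\ref{AOS Rem: Polynomial R2} to obtain an integer $c'>0$ and scalar $k'\ge 0$ with $R_p^2(\gamma)<\gamma^{2c'}+k'$ for all $\gamma$. The minor bookkeeping step here is to absorb $k'$ into a power of $2$: pick any integer $r$ with $r\ge c'$ and $2^r\ge k'$; then $\gamma^{2c'}+k'\le \gamma^{2r}+2^r$ whenever $|\gamma|\ge 1$, and for $|\gamma|<1$ the sublevel set is still bounded by a fixed ball that one can likewise dominate by enlarging $r$ (this is the ``routine calculation'' I would not grind through in detail). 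With this $r$ fixed, the set in (\ref{AOS Eq: Coercivity Set}) with exponent $r$ is empty. Applying Stengle's Positivstellensatz (Theorem~\ref{AOS Thm: Stengle}) to the two inequality constraints $\gamma-p(x)\ge 0$ and $\sum_{i=1}^n x_i^2-\gamma^{2r}-2^r\ge 0$ yields sos polynomials $\sigma_{00},\sigma_{10},\sigma_{01},\sigma_{11}$ (indexed by the subsets of the two constraints) with
\[
-1=\sigma_{00}+\sigma_{10}(\gamma-p(x))+\sigma_{01}\Big(\sum_{i=1}^n x_i^2-\gamma^{2r}-2^r\Big)+\sigma_{11}(\gamma-p(x))\Big(\sum_{i=1}^n x_i^2-\gamma^{2r}-2^r\Big),
\]
which is precisely the identity in (\ref{AOS Eq: Coercivity SDP}) after renaming $\sigma_{00},\sigma_{10},\sigma_{01},\sigma_{11}$ to $\sigma_0,\sigma_1,\sigma_2,\sigma_3$. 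Thus the SDP is feasible at \emph{some} level once the degree caps $2r$, $4r$, etc.\ are allowed to grow; strictly speaking Stengle's theorem gives existence of the certificate but not an a priori bound on the degrees, so I would phrase the statement as: the SDP with its stated degree truncations becomes feasible for all sufficiently large $r$ (and, if one insists on a fixed degree schedule as written, invoke a degree bound for Stengle certificates such as \cite{lombardi2014elementary} to replace the truncation degrees by an explicit function of the data, exactly as was done for the compactness hierarchy after (\ref{AOS Eq: Compactness SDP})).

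The step I expect to be the main obstacle is the consistency between the \emph{shape} of the certificate produced by Stengle's Positivstellensatz and the \emph{specific degree allocation} written in (\ref{AOS Eq: Coercivity SDP}) (namely $\deg\sigma_0\le 4r$, $\deg\sigma_1\le\max\{4r-d,0\}$, $\deg\sigma_2\le 2r$, $\deg\sigma_3\le\max\{2r-d,0\}$). Stengle's theorem by itself only guarantees existence of \emph{some} sos multipliers, with no control on degrees, so the honest reading of the proposition is that these particular degree bounds are the natural truncation pattern making each product-term have degree at most $4r$ (since $p$ has degree $d$ and the second constraint has degree $2r$), and that feasibility is claimed only for $r$ large enough; making the ``$r$ large enough'' quantitative is where one either appeals to an external effective Positivstellensatz bound or simply leaves it as an existence statement, mirroring the treatment of Proposition~\ref{AOS Prop: Compactness Stengle}. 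A secondary, purely technical point is verifying that the reduction $\gamma^{2c'}+k'\rightsquigarrow\gamma^{2r}+2^r$ truly preserves emptiness of the set in (\ref{AOS Eq: Coercivity Set}) for \emph{all} $\gamma$ including the compact range $|\gamma|<1$; this follows from monotonicity of $R_p$ established in the proof of Theorem~\ref{AOS Thm: Polynomial Radius}, so it is routine but should be stated.
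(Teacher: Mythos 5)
Your ``if'' direction is fine and matches the paper's argument. But your ``only if'' direction has a genuine gap that your own acknowledgment at the end does not actually close. You propose to first choose $r$ large enough that the set $\{(\gamma,x)\ |\ p(x)\le\gamma,\ \sum_i x_i^2\ge\gamma^{2r}+2^r\}$ is empty, and \emph{then} apply Stengle's Positivstellensatz to that fixed set. Stengle then hands you sos multipliers of some uncontrolled degree $\hat{r}(r)$, and nothing forces $\hat{r}(r)\le 4r$; increasing $r$ to accommodate $\hat{r}$ changes the semialgebraic set, so you would have to reapply Stengle and are back where you started. Your proposed escape --- invoke an effective Positivstellensatz degree bound such as \cite{lombardi2014elementary} --- would work in principle but is a much heavier tool and is not what the paper uses.

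The missing idea is to apply Stengle \emph{first}, to the set with the original exponents $c',k'$ from Theorem~\ref{AOS Thm: Polynomial Radius}, obtaining sos multipliers $\sigma'_0,\ldots,\sigma'_3$ of some degree $\hat{r}$, and \emph{then} choose $r^*=\lceil\max\{c',\log_2(k'+1),(\hat{r}+d)/2\}\rceil$ --- i.e., large enough to dominate $c'$, $k'$, \emph{and} $\hat{r}$ simultaneously. One then transforms the Stengle certificate explicitly: set $\sigma_0=\sigma'_0+\sigma'_2\cdot u$, $\sigma_1=\sigma'_1+\sigma'_3\cdot u$, $\sigma_2=\sigma'_2$, $\sigma_3=\sigma'_3$, where $u(\gamma)\defeq\gamma^{2r^*}-\gamma^{2c'}+2^{r^*}-k'$. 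Because $r^*\ge c'$ and $2^{r^*}\ge k'+1$, the univariate polynomial $u$ is nonnegative, hence sos, so the new multipliers are sos; and because $r^*\ge(\hat{r}+d)/2$, they obey exactly the degree caps in (\ref{AOS Eq: Coercivity SDP}). A direct rearrangement of the Stengle identity then shows these multipliers satisfy the identity at level $r^*$. This ``multiply by a nonnegative univariate slack polynomial'' step is what lets the paper convert a certificate for the $(c',k')$-set into one for the $(r^*,2^{r^*})$-set with the prescribed degree schedule, without ever needing an effective degree bound for Stengle certificates. Your proposal never makes this move, which is why you are stuck weakening the statement.
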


\begin{proof}
	If the SDP in (\ref{AOS Eq: Coercivity SDP}) is feasible for some $r$, then the set
	\begin{equation}\label{AOS Eq: Coercive Empty Set}
	\left\{(\gamma, x) \in \mathbb{R}^{n+1}|~p(x) \le \gamma, \sum_{i=1}^n x_i^2 \ge \gamma^{2r} + 2^r\right\}
	\end{equation}
	must be empty. Indeed, if this was not the case, a feasible $(\gamma,x)$ pair would make the right hand side of the equality constraint of (\ref{AOS Eq: Coercivity SDP}) nonnegative, while the left hand side is negative. As the set in (\ref{AOS Eq: Coercive Empty Set}) is empty, then for all $\gamma$, the $\gamma$-sublevel set of $p$ is contained within a ball of radius $\sqrt{\gamma^{2r} + 2^r}$ and thus $p$ is coercive.
	
	To show the converse, suppose that $p$ is coercive. Then we know from  Theorem \ref{AOS Thm: Polynomial Radius} and Remark \ref{AOS Rem: Polynomial R2} that there exist an integer $c' > 0$ and a scalar $k' \ge 0$ such that the set in (\ref{AOS Eq: Coercivity Set}) is empty. From Stengle's Positivstellensatz (c.f. Theorem \ref{AOS Thm: Stengle}), there exist an even nonnegative integer $\hat{r}$ and sos polynomials $\sigma_0', \ldots, \sigma_3'$ of degree at most $\hat{r}$ such that
	\begin{equation}\label{AOS eq:Stengle.coercive}
	\begin{aligned}
	-1 =& \sigma_0'(x,\gamma) + \sigma_1'(x,\gamma)(\gamma - p(x)) + \sigma_2'(x,\gamma)\left(\sum_{i=1}^n x_i^2 - \gamma^{2c'} - k'\right) \\
	&+ \sigma_3'(x,\gamma)(\gamma - p(x))\left(\sum_{i=1}^n x_i^2 - \gamma^{2c'} - k'\right).
	\end{aligned}	
	\end{equation}
	Let $r^*=\lceil \max\{c', \log_2(k'+1), \frac{\hat{r}+d}{2}\}\rceil$. We show that the SDP in (\ref{AOS Eq: Coercivity SDP}) is feasible for $r=r^*$ by showing that the polynomials
	$$\sigma_0(x,\gamma) = \sigma_0'(x,\gamma) + \sigma_2'(x,y)(\gamma^{2r^*}-\gamma^{2c'} + 2^{r^*}-k'),$$
	$$\sigma_1(x,\gamma) = \sigma_1'(x,\gamma)+\sigma_3'(x,\gamma)(\gamma^{2r^*}-\gamma^{2c'}+ 2^{r^*}-k'),$$
	$$\sigma_2(x,\gamma) = \sigma_2'(x,\gamma),$$
	$$\sigma_3(x,\gamma) = \sigma_3'(x,\gamma)$$
	are a feasible solution to the problem. First, note that $$4r^*-d\geq 2r^*-d\geq \hat{r}\geq 0,$$ and so $\sigma_0$ is of degree at most $\hat{r}+2r^* \leq 4r^*$, $\sigma_1$ is of degree at most ${\hat{r}+2r^*\leq \max\{4r^*-d,0\}}$, $\sigma_2$ is of degree at most $\hat{r}\leq 2r^*$, and $\sigma_3$ is of degree at most $\hat{r}\leq \max\{2r^*-d,0\}$. Furthermore, these polynomials are sums of squares. To see this, note that $\gamma^{2r^*}-\gamma^{2c'}+2^{r^*}-k'$ is nonnegative as $r^*\geq c'$ and $2^{r^*}\geq k'+1$. As any nonnegative univariate polynomial is a sum of squares (see, e.g., \cite{blekherman2012semidefinite}), it follows that $\gamma^{2r^*}-\gamma^{2c'}+ 2^{r^*}-k'$ is a sum of squares. Combining this with the facts that $\sigma_0',\ldots,\sigma_3'$ are sums of squares, and products and sums of sos polynomials are sos again, we get that $\sigma_0,\ldots,\sigma_3$ are sos. Finally, the identity
	\begin{align*}
	-1 =& \left(\sigma_0'(x,\gamma)+ \sigma_2'(x,y)(\gamma^{2r^*}-\gamma^{2c'} + 2^{r^*}-k')\right)\\
	&+ \left(\sigma_1'(x,\gamma)+\sigma_3'(x,\gamma)(\gamma^{2r^*}-\gamma^{2c'}+ 2^{r^*}-k'))(\gamma - p(x)\right) \\
	&+ \sigma_2'(x,\gamma)(\sum_{i=1}^n x_i^2 - \gamma^{2r^*} - 2^{r^*}) + \sigma_3'(x,\gamma)(\gamma - p(x))(\sum_{i=1}^n x_i^2 - \gamma^{2r^*} - 2^{r^*})
	\end{align*}
	holds by a simple rewriting of (\ref{AOS eq:Stengle.coercive}).
	
\end{proof}

As an illustration, we revisit the simple example $p(x) = x_1^4 + x_2^2$, whose top homogeneous component is not positive definite.
The hierarchy in Proposition \ref{AOS prop:coercivity.SDP.hierarchy} with $r=1$ gives an automated algebraic proof of coercivity of $p$ in terms of the following identity:
\begin{equation}\label{AOS Eq: Coercivity Hierarchy Example}
-1 = \left(\frac{2}{3}(x_1^2 - \frac{1}{2})^2 + \frac{2}{3}(\gamma-\frac{1}{2})^2\right) + \frac{2}{3}(\gamma - x_1^4 - x_2^2) + \frac{2}{3}(x_1^2 + x_2^2 - \gamma^2 - 2).
\end{equation}

Note that this is a certificate that the $\gamma$-sublevel set of $p$ is contained in a ball of radius $\sqrt{\gamma^2+2}$.

\begin{remark}
	From a theoretical perspective, our developments so far show that coercivity of multivariate polynomials is a decidable property as it can be checked by solving a finite number of SDP feasibility problems (each of which can be done in finite time \cite{porkolab1997complexity}). Indeed, given a polynomial $p$, one can think of running two programs in parallel. The first one solves the SDPs in Proposition~\ref{AOS prop:coercivity.SDP.hierarchy} for increasing values of $r$. The second uses Proposition~\ref{AOS Prop: Compactness Stengle} and its degree bound to test whether the $\beta$-sublevel set $p$ is compact, starting from $\beta=1$, and doubling $\beta$ in each iteration. On every input polynomial $p$ whose coercivity is in question, either the first program halts with a yes answer or the second program halts with a no answer. We stress that this remark is of theoretical interest only, as the value of our contribution is really in providing proofs of coercivity, not proofs of non-coercivity. Moreover, coercivity can alternatively be decided in finite time by applying the quantifier elimination theory of Tarski and Seidenberg \cite{tarski1951decision,seidenberg1954new} to the characterization in Proposition~\ref{AOS Prop: Coercive sublevel sets bounded}.
\end{remark}

\section{Summary and Conclusions} \label{AOS Sec:conclusion}

We studied the complexity of checking existence of optimal solutions in mathematical programs (given as minimization problems) that are feasible and lower bounded. We showed that unless P=NP, this decision problem does not have a polynomial time (or even pseudo-polynomial time) algorithm when the constraints and the objective function are defined by polynomials of low degree. More precisely, this claim holds if the constraints are defined by quadratic polynomials (and the objective has degree as low as one) or if the objective function is a quartic polynomial (even in absence of any constraints). For polynomial optimization problems with linear constraints and objective function of degrees 1,2, or 3, previous results imply that feasibility and lower boundedness always guarantee existence of an optimal solution.

We also showed, again for low-degree polynomial optimization problems, that several well-known sufficient conditions for existence of optimal solutions are NP-hard to check. These were coercivity of the objective function, closedness of the feasible set (even when bounded), boundedness of the feasible set (even when closed), an algebraic certificate of compactness known as the Archimedean property, and a robust analogue of compactness known as stable compactness.

Our negative results should by no means deter researchers from studying algorithms that can efficiently check existence of optimal solutions---or, for that matter, any of the other properties mentioned above such as compactness and coercivity---on special instances. On the contrary, our results shed light on the intricacies that can arise when studying these properties and calibrate the expectations of an algorithm designer. Hopefully, they will even motivate further research in identifying problem structures (e.g., based on the Newton polytope of the objective and/or constraints) for which checking these properties becomes more tractable, or efficient algorithms that can test useful sufficient conditions that imply these properties.

In the latter direction, we argued that sum of squares techniques could be a natural tool for certifying compactness of basic semialgebraic sets via semidefinite programming. By deriving a new characterization of coercive polynomials, we showed that the same statement also applies to the task of certifying coercivity. This final contribution motivates a problem that we leave for our future research. While coercivity (i.e., boundedness of all sublevel sets) of a polynomial objective function guarantees existence of optimal solutions to a feasible POP, the same guarantee can be made from the weaker requirement that \emph{some} sublevel set of the objective be bounded and have a non-empty intersection with the feasible set. It is not difficult to show that this property is also NP-hard to check. However, it would be useful to derive a hierarchy of sufficient conditions for it, where each level can be efficiently tested (perhaps again via SDP), and such that if the property was satisfied, then a level of the hierarchy would hold.\\
\allowdisplaybreaks

\chapter{Semidefinite Programming Relaxations for Nash Equilibria in Bimatrix Games}\label{Chap: Nash}
\section{Introduction}\label{NASH Sec: Intro}

A bimatrix game is a game between two players (referred to in this chapter as players A and B) defined by a pair of $m \times n$ payoff matrices $A$ and $B$. Let $\triangle_m$ and $\triangle_n$ denote the $m$-dimensional and $n$-dimensional simplices $$\triangle_m = \{x \in \mathbb{R}^m |\  x_i \ge 0, \forall i, \sum_{i = 1}^m x_i = 1\}, \triangle_n = \{y \in \mathbb{R}^n |\  y_i \ge 0, \forall i, \sum_{i = 1}^n y_i = 1\}.$$ These form the strategy spaces of player A and player B respectively. For a strategy pair $(x,y)\in \triangle_m \times \triangle_n $, the payoff received by player A (resp. player B) is $x^TAy$ (resp. $x^TBy$). In particular, if the players pick vertices $i$ and $j$ of their respective simplices (also called pure strategies), their payoffs will be $A_{i,j}$ and $B_{i,j}$. One of the prevailing solution concepts for bimatrix games is the notion of \emph{Nash equilibrium}. At such an equilibrium, the players are playing mutual best responses, i.e., a payoff maximizing strategy against the opposing player's strategy. In our notation, a Nash equilibrium for the game $(A,B)$ is a pair of strategies $(x^*, y^*)\in \triangle_m \times \triangle_n $ such that $$x^{*T}Ay^* \ge x^TAy^*, \forall x\in \triangle_m,$$ and $$x^{*T}By^* \ge x^{*T}By, \forall y\in \triangle_n.\footnote{In this chapter we assume that all entries of $A$ and $B$ are between 0 and 1, and argue at the beginning of Section~\ref{NASH Sec: Intro to SDP} why this is without loss of generality for the purpose of computing Nash equilibria.}$$ 

Nash~\cite{nash1951} proved that for any bimatrix game, such pairs of strategies exist (in fact his result more generally applies to games with a finite number of players and a finite number of pure strategies). While existence of these equilibria is guaranteed, finding them is believed to be a computationally intractable problem. More precisely, a result of ~\cite{daskalakis2009complexity} implies that computing Nash equilibria is PPAD-complete (see~\cite{daskalakis2009complexity} for a definition) even when the number of players is 3. This result was later improved by \cite{chen2006settling} who showed the same hardness result for bimatrix games.

These results motivate the notion of an approximate Nash equilibrium, a solution concept in which players receive payoffs ``close'' to their best response payoffs. More precisely, a pair of strategies $(x^*, y^*)\in \triangle_m \times \triangle_n$ is an \emph{(additive) $\epsilon$-Nash equilibrium} for the game $(A,B)$ if $$x^{*T}Ay^* \ge x^TAy^* - \epsilon, \forall x\in \triangle_m,$$ and $$x^{*T}By^* \ge x^{*T}By- \epsilon, \forall y\in \triangle_n. \footnote{There are also other important notions of approximate Nash equilibria, such as $\epsilon$-well-supported Nash equilibria~\cite{fearnley2016approximate} and relative approximate Nash equilibria~\cite{daskalakis2013complexity} which are not considered in this chapter.}$$ 
Note that when $\epsilon=0$, $(x^*,y^*)$ form an exact Nash equilibrium, and hence it is of interest to find $\epsilon$-Nash equilibria with $\epsilon$ small. Unfortunately, approximation of Nash equilibria has also proved to be computationally difficult. \cite{chen2006computing} have shown that, unless PPAD $\subseteq$ P, there cannot be a fully polynomial-time approximation scheme for computing Nash equilibria in bimatrix games. There have, however, been a series of constant factor approximation algorithms for this problem (\cite{daskalakis2007progress, daskalakis2006note, kontogiannis2006polynomial, tsaknakis2007optimization}), with the current best producing a .3393 approximation via an algorithm by~\cite{tsaknakis2007optimization}.

We remark that there are exponential-time algorithms for computing Nash equilibria, such as the Lemke-Howson algorithm (\cite{lemke1964equilibrium, savani2006hard}). There are also certain subclasses of the problem which can be solved in polynomial time, the most notable example being the case of zero-sum games (i.e. when $B=-A$). This problem was shown to be solvable via linear programming by~\cite{dantzig1951proof}, and later shown to be polynomially equivalent to linear programming by~\cite{adler2013equivalence}. Aside from computation of Nash equilibria, there are a number of related decision questions which are of economic interest but unfortunately NP-hard. Examples include deciding whether a player's payoff exceeds a certain threshold in some Nash equilibrium, deciding whether a game has a unique Nash equilibrium, or testing whether there exists a Nash equilibrium where a particular set of strategies is not played (\cite{gilboa1989nash, conitzer2002complexity}).

Our focus in this chapter is on understanding the power of semidefinite programming\footnote{The unfamiliar reader is referred to~\cite{vandenberghe1996semidefinite} for the theory of SDPs and a description of polynomial-time algorithms for them based on interior point methods.} (SDP) for finding approximate Nash equilibria in bimatrix games or providing certificates for related decision questions. The goal is not to develop a competitive solver, but rather to analyze the algorithmic power of SDP when applied to basic problems around computation of Nash equilibria. Semidefinite programming relaxations have been analyzed in depth in areas such as combinatorial optimization (\cite{goemans1995improved},~\cite{lovasz1979shannon}) and systems theory (\cite{boyd1994linear}), but not to such an extent in game theory. To our knowledge, the appearance of SDP in the game theory literature includes the work of \cite{stein1943exchangeable} for exchangeable equilibria in symmetric games, of \cite{parrilo2006polynomial} on zero-sum polynomial games, of \cite{shah2007polynomial} for zero-sum stochastic games, and of \cite{laraki2012semidefinite} for semialgebraic min-max problems in static and dynamic games.

\subsection{Organization and Contributions of the chapter}
In Section~\ref{NASH Sec: Intro to SDP}, we formulate the problem of finding a Nash equilibrium in a bimatrix game as a nonconvex quadratically constrained quadratic program and pose a natural SDP relaxation for it. In Section~\ref{NASH Sec: Exactness Zero Sum}, we show that our SDP is exact when the game is strictly competitive (see Definition~\ref{NASH Defn: Strictly Competitive Def}).
In Section~\ref{NASH Sec: Algorithms}, we design two continuous but nonconvex objective functions for our SDP whose global minima coincide with rank-1 solutions. We provide a heuristic based on iterative linearization for minimizing both objective functions. We show empirically that these approaches produce $\epsilon$ very close to zero (on average in the order of $10^{-3}$).
In Section~\ref{NASH Sec: Bounds}, we establish a number of bounds on the quality of the approximate Nash equilibria that can be read off of feasible solutions to our SDP. In Theorems \ref{NASH Thm: nksl}, \ref{NASH Thm: Trace minus 2xxt}, and \ref{NASH Thm: 1-1/k}, we show that when the SDP returns solutions which are ``close'' to rank-1, the resulting strategies have have small $\epsilon$. We then present an improved analysis in the rank-2 case which shows how one can recover a $\frac{5}{11}$-Nash equilibrium from the SDP solution (Theorem~\ref{NASH Thm: 5/11e}). We further prove that for symmetric games (i.e., when $B = A^T$), a $\frac{1}{3}$-Nash equilibrium can be recovered in the rank-2 case (Theorem~\ref{NASH Thm: 1/3se}). We do not currently know of a polynomial-time algorithm for finding rank-2 solutions to our SDP. If such an algorithm were found, it would, together with our analysis, improve the best known approximation bound for symmetric games. 	
In Section~\ref{NASH Sec: Bounding Payoffs and Strategy Exclusion}, we show how our SDP formulation can be used to provide certificates for certain (NP-hard) questions of economic interest about Nash equilibria in symmetric games. These are the problems of testing whether the maximum welfare achievable under any symmetric Nash equilibrium exceeds some threshold, and whether a set of strategies is played in every symmetric Nash equilibrium. In Section~\ref{NASH Sec: Connection to Sum of Squares/Lasserre Hierarchy}, we show that the SDP analyzed in this chapter dominates the first level of the Lasserre hierarchy~(Proposition~\ref{NASH Prop: strongerLasserre}). Some directions for future research are discussed in Section~\ref{NASH Sec: Conclusion}. The four appendices of the chapter add some numerical and technical details.

\subsection{Notation}\label{NASH Sec: Notation}
We establish some notation that will be used throughout the chapter. The symbol $\triangle_k$ denotes the $k$-dimensional simplex. For a matrix $A$, the notation $A_i,$ refers to its $i$-th row, and $A_{,j}$ refers to its $j$-th column. The notation $e_i$ refers to the elementary vector $(0,\ldots, 0, 1, 0, \ldots, 0)^T$ with the 1 being in position $i$, $0_m$ refers to the $m$-dimensional vector of zero's, $1_m$ refers to the $m$-dimensional vector of one's, and $J_{m\times n}$ refers to the $m \times n$ matrix of one's. The notation $A \succeq 0$ (resp $A \ge 0$) denotes that a matrix $A$ is positive semidefinite (resp. elementwise nonnegative), $S^{k \times k}$ denotes the set of symmetric $k \times k$ matrices, and $\Tr(A)$ denotes the trace of a matrix $A$, i.e., the sum of its diagonal elements. For two matrices $A$ and $B$, $A \succeq B$ denotes that $A - B$ is positive semidefinite and $A \otimes B$ denotes their Kronecker product. Finally, for a vector $v$, $diag(v)$ denotes the diagonal matrix with $v$ on its diagonal. For a square matrix $M$, $diag(M)$ denotes the vector containing its diagonal entries.

\section{The Formulation of our SDP Relaxation} \label{NASH Sec: Intro to SDP}
In this section we present an SDP relaxation for the problem of finding Nash equilibria in bimatrix games. This is done after a straightforward reformulation of the problem as a nonconvex quadratically constrained quadratic program. We also assume that all entries of the payoff matrices $A$ and $B$ are between 0 and 1. This can be done without loss of generality because Nash equilibria are invariant under certain affine transformations in the payoffs. In particular, the games $(A,B)$ and ${(cA+dJ_{m\times n}, eB+fJ_{m \times n})}$ have the same Nash equilibria for any scalars $c,d,e,$ and $f$, with $c$ and $e$ positive. This is because
\begin{equation*}
\begin{aligned}
x^{*T}Ay &\ge x^TAy\\
\Leftrightarrow c(x^{*T}Ay^*) + d &\ge c(x^TAy^*)+d\\
\Leftrightarrow c(x^{*T}Ay^*) + d(x^{*T}J_{m\times n}y^*) &\ge c(x^TAy^*)+d(x^TJ_{m\times n}y^*)\\
\Leftrightarrow x^{*T}(cA+dJ_{m\times n})y^* &\ge x^T(cA+dJ_{m\times n})y
\end{aligned}
\end{equation*}
Identical reasoning applies for player B.

\subsection{Nash Equilibria as Solutions to Quadratic Programs}\label{NASH SSec: Nash as QP}
Recall the definition of a Nash equilibrium from Section~\ref{NASH Sec: Intro}. An equivalent characterization is that a strategy pair $(x^*, y^*)\in \triangle_m \times \triangle_n$ is a Nash equilibrium for the game $(A,B)$ if and only if
\begin{equation}\label{NASH QP}
\begin{aligned}
x^{*T}Ay^* \ge e_i^TAy^*, \forall i \in \{1,\ldots, m\},\\
x^{*T}By^* \ge x^{*T}Be_i, \forall i \in \{1,\ldots, n\}.
\end{aligned}
\end{equation} 

The equivalence can be seen by noting that because the payoff from playing any mixed strategy is a convex combination of payoffs from playing pure strategies, there is always a pure strategy best response to the other player's strategy.

We now treat the Nash problem as the following quadratic programming (QP) feasibility problem:\\
\begin{equation}\label{NASH Eq: QCQP Formulation}
\begin{aligned}
& \underset{x \in \mathbb{R}^m, y \in \mathbb{R}^n}{\min}
& & 0 \\
& \text{subject to}
& & x^TAy \ge e_i^TAy, \forall i\in \{1,\ldots, m\},\\
&&& x^TBy \ge x^TBe_j, \forall j\in \{1,\ldots, n\},\\
&&& x_i \ge 0, \forall i\in \{1,\ldots, m\},\\
&&& y_i \ge 0, \forall j\in \{1,\ldots, n\},\\
&&& \sum_{i=1}^m x_i = 1,\\
&&& \sum_{i=1}^n y_i = 1.\\
\end{aligned}
\end{equation}

Similarly, a pair of strategies $x^* \in \triangle_m$ and $y^* \in \triangle_n$ form an $\epsilon$-Nash equilibrium for the game $(A,B)$ if and only if
$$x^{*T}Ay^* \ge e_i^TAy^*-\epsilon, \forall i \in \{1,\ldots, m\},$$
$$x^{*T}By^* \ge x^{*T}Be_i-\epsilon, \forall i \in \{1,\ldots, n\}.$$
Observe that any pair of simplex vectors $(x, y)$ is an $\epsilon$-Nash equilibrium for the game $(A,B)$ for any $\epsilon$ that satisfies
$$\epsilon \ge \max\{\underset{i}{\max}\ e_i^TAy - x^TAy, \underset{i}{\max}\ x^TBe_i - x^TBy\}.$$

We use the following notation throughout the chapter:
\begin{itemize}
	\setlength{\itemsep}{1pt}
	\setlength{\parskip}{0pt}
	\setlength{\parsep}{0pt}
	\renewcommand\labelitemi{$\cdot$}
	\item $\epsilon_A(x,y) \mathrel{\mathop:}= \underset{i}{\max}\ e_i^TAy - x^TAy$,
	\item $\epsilon_B(x,y) \mathrel{\mathop:}= \underset{i}{\max}\ x^TBe_i - x^TBy$,
	\item $\epsilon(x,y) \mathrel{\mathop:}= \max\{\epsilon_A(x,y), \epsilon_B(x,y)\}$,
\end{itemize}
and the function parameters are later omitted if they are clear from the context.

\subsection{SDP Relaxation}\label{NASH SSec: SDP Relaxation}

The QP formulation in (\ref{NASH Eq: QCQP Formulation}) lends itself to a natural SDP relaxation. We define a matrix
$$\mathcal{M} \mathrel{\mathop:}= \left[\begin{matrix} X & P\\ Z & Y\end{matrix}\right],$$
and an augmented matrix
$$\mathcal{M}' \mathrel{\mathop:}= \left[\begin{matrix} X & P & x\\ Z & Y & y\\ x & y & 1\end{matrix}\right],$$
with $X \in S^{m\times m}, Z \in \mathbb{R}^{n\times m}, Y \in S^{n\times n}, x \in \mathbb{R}^m, y \in \mathbb{R}^n$ and $P = Z^T$.\\\\
The SDP relaxation can then be expressed as

\begin{align}\label{NASH Eq: SDP1}\tag{SDP1}
& \underset{\mathcal{M}' \in \mathbb{S}^{m+n+1, m+n+1}}{\min}
& & 0 \nonumber\\
& \text{subject to}
& & \Tr(AZ) \ge e_i^TAy, \forall i\in \{1,\ldots, m\}, \label{NASH Eq: SDP1 Relaxed Nash A}\\
&&& \Tr(BZ) \ge x^TBe_j, \forall j\in \{1,\ldots, n\},\label{NASH Eq: SDP1 Relaxed Nash B}\\
&&& \sum_{i=1}^m x_i = 1, \label{NASH Eq: SDP1 Unity x}\\
&&& \sum_{i=1}^n y_i = 1, \label{NASH Eq: SDP1 Unity y}\\
&&& \M' \ge 0, \label{NASH Eq: SDP1 Nonnegativity}\\
&&& \M'_{m+n+1,m+n+1}=1,\\
&&& \mathcal{M}' \label{NASH Eq: SDP1 PSD}\succeq 0.
\end{align}

We refer to the constraints (\ref{NASH Eq: SDP1 Relaxed Nash A}) and (\ref{NASH Eq: SDP1 Relaxed Nash B}) as the relaxed Nash constraints and the constraints (\ref{NASH Eq: SDP1 Unity x}) and (\ref{NASH Eq: SDP1 Unity y}) as the unity constraints. This SDP is motivated by the following observation.
\begin{prop} Let $\M'$ be any rank-1 feasible solution to~\ref{NASH Eq: SDP1}. Then the vectors $x$ and $y$ from its last column constitute a Nash equilibrium for the game $(A,B)$.\end{prop}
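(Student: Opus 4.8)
The plan is to unwind the rank-1 hypothesis and check that the SDP constraints collapse exactly onto the quadratic programming feasibility conditions in (\ref{NASH Eq: QCQP Formulation}). First I would use the fact that $\mathcal{M}' \succeq 0$, $\mathcal{M}'$ has rank $1$, and its bottom-right entry equals $1$, to conclude that $\mathcal{M}' = (u,v,1)(u,v,1)^T$ for some $u \in \mathbb{R}^m$, $v \in \mathbb{R}^n$. Comparing blocks with the definition of $\mathcal{M}'$ then forces $x = u$, $y = v$, $X = xx^T$, $Y = yy^T$, $Z = yx^T$ (so that $P = Z^T = xy^T$). In particular the "lifted" variables are genuine outer products of the strategy vectors.

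Next I would substitute these identities into each family of constraints. The unity constraints (\ref{NASH Eq: SDP1 Unity x}) and (\ref{NASH Eq: SDP1 Unity y}) give $\sum_i x_i = 1$ and $\sum_j y_j = 1$, and the nonnegativity constraint (\ref{NASH Eq: SDP1 Nonnegativity}) applied to the last column of $\mathcal{M}'$ (namely $x \ge 0$ and $y \ge 0$) shows $x \in \triangle_m$ and $y \in \triangle_n$. For the relaxed Nash constraints, I would compute $\Tr(AZ) = \Tr(A y x^T) = x^T A y$ and similarly $\Tr(BZ) = \Tr(B y x^T) = x^T B y$, using the cyclic property of the trace. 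Hence (\ref{NASH Eq: SDP1 Relaxed Nash A}) becomes $x^T A y \ge e_i^T A y$ for all $i \in \{1,\ldots,m\}$ and (\ref{NASH Eq: SDP1 Relaxed Nash B}) becomes $x^T B y \ge x^T B e_j$ for all $j \in \{1,\ldots,n\}$.

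Finally I would invoke the characterization recalled in (\ref{NASH QP}): a pair $(x,y) \in \triangle_m \times \triangle_n$ satisfying exactly these pure-strategy best-response inequalities is a Nash equilibrium of $(A,B)$. This completes the argument. There is no real obstacle here; the only point requiring a little care is the rank-1 psd factorization step — one must note that because the $(m+n+1,m+n+1)$ entry is $1$ (hence nonzero), the rank-1 psd matrix can indeed be written as $ww^T$ with the last coordinate of $w$ equal to $1$ (rather than $-1$), which pins down the signs of $x$ and $y$ consistently with the blocks $X, Y, Z$. Everything else is a direct substitution.
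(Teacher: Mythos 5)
Your argument is correct and follows essentially the same route as the paper: extract the rank-1 outer-product form of $\mathcal{M}'$, use the unity and nonnegativity constraints to place $x,y$ in the simplex, simplify $\Tr(AZ)$ and $\Tr(BZ)$ via the cyclic trace property, and invoke the characterization in (\ref{NASH QP}). You are somewhat more careful than the paper in justifying why the $(m+n+1,m+n+1)$ entry being $1$ forces the factorization to take the form $(x,y,1)(x,y,1)^T$ rather than merely being asserted, which is a welcome addition but not a different approach.
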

\begin{proof} {}We know that $x$ and $y$ are in the simplex from the constraints (\ref{NASH Eq: SDP1 Unity x}), (\ref{NASH Eq: SDP1 Unity y}), and (\ref{NASH Eq: SDP1 Nonnegativity}).\\
	If the matrix $\mathcal{M}'$ is rank-1, then it takes the form \begin{equation}\label{NASH Eq: Rank 1 form}
	\bmat xx^T & xy^T & x \\ yx^T & yy^T & y \\x^T & y^T & 1 \emat
	=\xy1vec \xy1vec^T.\end{equation} Then, from the relaxed Nash constraints we have that\\
	$$e_i^TAy \le \Tr(AZ) = \Tr(Ayx^T) = \Tr(x^TAy) = x^TAy,$$
	$$x^TAe_i \le \Tr(BZ) = \Tr(Byx^T) = \Tr(x^TBy) = x^TBy.$$
	The claim now follows from the characterization given in (\ref{NASH QP}).
\end{proof}

\begin{remark}
	Because a Nash equilibrium always exists, there will always be a matrix of the form~(\ref{NASH Eq: Rank 1 form}) which is feasible to~\ref{NASH Eq: SDP1}. Thus we can disregard any concerns about \ref{NASH Eq: SDP1} being feasible, even when we add valid inequalities to it in Section~\ref{NASH SSec: Valid Inequalities}.
\end{remark}

\begin{remark}
	It is intuitive to note that the submatrix $P=Z^T$ of the matrix $\M'$ corresponds to a probability distribution over the strategies, and that seeking a rank-1 solution to our SDP can be interpreted as making $P$ a product distribution.
\end{remark}

The following theorem shows that~\ref{NASH Eq: SDP1} is a weak relaxation and stresses the necessity of additional valid constraints.

\begin{theorem}\label{NASH Thm: Necessity of VI}
	Consider a bimatrix game with payoff matrices bounded in $[0,1]$. Then for any two vectors $x \in \triangle_m$ and $y \in \triangle_n$, there exists a feasible solution $\M'$ to~\ref{NASH Eq: SDP1} with $\xy1vec$ as its last column.
\end{theorem}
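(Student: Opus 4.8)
The plan is to exhibit, explicitly, a feasible solution to~\ref{NASH Eq: SDP1} whose last column is $\xy1vec$, built as a sum of two positive semidefinite rank-one matrices. Set $w \defeq (x,y,1) \in \R^{m+n+1}$ and $v \defeq (\1_{m+n},0) \in \R^{m+n+1}$ (the all-ones vector on the first $m+n$ coordinates and zero in the last), and consider
\begin{equation*}
\M' \defeq ww^T + s\,vv^T
\end{equation*}
for a nonnegative scalar $s$ to be chosen below. By construction $\M'$ is symmetric and positive semidefinite, so~(\ref{NASH Eq: SDP1 PSD}) holds; since $x,y\geq 0$ and $s\geq 0$, it is also entrywise nonnegative, so~(\ref{NASH Eq: SDP1 Nonnegativity}) holds. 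Because $v$ vanishes in its last coordinate, the last column (and row) of $\M'$ is that of $ww^T$, namely $w$; in particular the $(m+n+1,m+n+1)$ entry is $1$, and the unity constraints~(\ref{NASH Eq: SDP1 Unity x})--(\ref{NASH Eq: SDP1 Unity y}) hold since $x\in\triangle_m$ and $y\in\triangle_n$. Reading off the blocks, the lower-left $n\times m$ block of $\M'$ is $Z = yx^T + sJ_{n\times m}$, and since $\M'$ is symmetric the upper-right block is automatically $Z^T$, so the structural requirement $P=Z^T$ is respected.

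It then remains to verify the relaxed Nash constraints~(\ref{NASH Eq: SDP1 Relaxed Nash A})--(\ref{NASH Eq: SDP1 Relaxed Nash B}) for a suitable $s$. A short computation with the cyclic property of the trace gives $\Tr(AZ) = x^TAy + s\sum_{i,j}A_{ij}$ and $\Tr(BZ) = x^TBy + s\sum_{i,j}B_{ij}$. Since the entries of $A$ lie in $[0,1]$ and $y\in\triangle_n$, every coordinate of $Ay$ is at most $1$, so $\max_i e_i^TAy \le 1$; likewise $\max_j x^TBe_j \le 1$. Also $x^TAy\ge 0$ and $x^TBy\ge 0$ because $x,y\ge 0$ and $A,B$ have nonnegative entries. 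Hence it suffices to choose $s$ with $s\sum_{i,j}A_{ij}\ge 1$ whenever $A\neq 0$ and $s\sum_{i,j}B_{ij}\ge 1$ whenever $B\neq 0$; in the degenerate case $A=0$ (resp.\ $B=0$) the corresponding constraint reads $\Tr(AZ)\ge 0$ (resp.\ $\Tr(BZ)\ge 0$) and holds trivially. Taking $s$ to be the maximum of $\big(\sum_{i,j}A_{ij}\big)^{-1}$ and $\big(\sum_{i,j}B_{ij}\big)^{-1}$ over whichever of $A,B$ is nonzero (and $s=0$ if both vanish) makes $\M'$ feasible, which proves the theorem.

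The construction is elementary, so I do not expect a real obstacle; the only points needing care are (i) checking that the off-diagonal structure $P=Z^T$ comes for free from symmetry of $\M'$, and (ii) arranging a single scalar $s$ that satisfies all of the relaxed Nash inequalities at once, including the degenerate cases where a payoff matrix is zero --- both handled above. As a remark, this theorem is precisely what motivates adding valid inequalities (Section~\ref{NASH SSec: Valid Inequalities}): the feasible set of~\ref{NASH Eq: SDP1} projects onto the whole of $\triangle_m\times\triangle_n$ in the $(x,y)$ variables, so~\ref{NASH Eq: SDP1} alone carries no information about Nash equilibria.
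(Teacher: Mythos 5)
Your proof is correct and is essentially the paper's proof: your $vv^T$ with $v=(\1_{m+n},0)$ is exactly the block matrix $\begin{bmatrix}J_{m+n,m+n} & 0_{m+n}\\0_{m+n}^T & 0\end{bmatrix}$ that the paper adds with weight $\gamma$, and the argument about the relaxed Nash constraints holding for large $\gamma$ (or trivially when $A$ or $B$ vanishes) is the same. The only cosmetic difference is that you make the choice of scaling explicit rather than saying ``for $\gamma$ large enough.''
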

\begin{proof}
	{}Consider any $x,y,\gamma > 0,$ and the matrix
	$$\xy1vec\xy1vec^T + \bmat \gamma J_{m+n,m+n} & 0_{m+n} \\ 0_{m+n}^T & 0\emat.$$ This matrix is the sum of two nonnegative psd matrices and is hence nonnegative and psd. By assumption $x$ and $y$ are in the simplex, and so constraints $(\ref{NASH Eq: SDP1 Unity x})-(\ref{NASH Eq: SDP1 PSD})$ of~\ref{NASH Eq: SDP1} are satisfied. To check that constraints $(\ref{NASH Eq: SDP1 Relaxed Nash A})$ and $(\ref{NASH Eq: SDP1 Relaxed Nash B})$ hold, note that since $A$ and $B$ are nonnegative, as long as the matrices $A$ and $B$ are not the zero matrices, the quantities $\Tr(AZ)$ and $\Tr(BZ)$ will become arbitrarily large as $\gamma$ increases. Since $e_i^TAy$ and $x^TBe_i$ are bounded by 1 by assumption, we will have that constraints $(\ref{NASH Eq: SDP1 Relaxed Nash A})$ and $(\ref{NASH Eq: SDP1 Relaxed Nash B})$ hold for $\gamma$ large enough. In the case where $A$ or $B$ is the zero matrix, the Nash constraints are trivially satisfied for the respective player.
\end{proof}

\subsection{Valid Inequalities}\label{NASH SSec: Valid Inequalities}

In this subsection, we introduce a number of valid inequalities to improve upon the SDP relaxation in~\ref{NASH Eq: SDP1}. These inequalities are justified by being valid if the matrix returned by the SDP is rank-1. The terminology we introduce here to refer to these constraints is used throughout the chapter. Constraints (\ref{NASH Eq: R_X}) and (\ref{NASH Eq: R_Y}) will be referred to as the \emph{row inequalities}, and (\ref{NASH Eq: CE_A}) and (\ref{NASH Eq: CE_B}) will be referred to as the \emph{correlated equilibrium inequalities}.

\begin{prop}\label{NASH Prop: Valid Inequalities}
	Any rank-1 solution $\M'$ to~\ref{NASH Eq: SDP1} must satisfy the following:
	
	\begin{equation}\label{NASH Eq: R_X}
	\sum_{j=1}^m X_{i,j} = \sum_{j=1}^n  P_{i,j} = x_i, \forall i \in \{1,\ldots, m\},\end{equation}
	\begin{equation}\label{NASH Eq: R_Y}
	\sum_{j=1}^n Y_{i,j} = \sum_{j=1}^m Z_{i,j} = y_i, \forall i \in \{1,\ldots, n\}.\end{equation}
	\begin{equation}\label{NASH Eq: CE_A}
	\sum_{j=1}^n A_{i,j}P_{i,j} \ge \sum_{j=1}^n A_{k,j}P_{i,j}, \forall i,k \in \{1,\ldots, m\},\end{equation}
	\begin{equation}\label{NASH Eq: CE_B}
	\sum_{j=1}^m B_{j,i}P_{j,i} \ge \sum_{j=1}^m B_{j,k}P_{j,i}, \forall i,k \in \{1,\ldots, n\}.\end{equation}
	
\end{prop}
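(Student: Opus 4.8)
The plan is to verify each of the four families of inequalities by plugging in the rank-1 form of $\M'$ and using the fact that the vectors $x$ and $y$ extracted from the last column of $\M'$ form a Nash equilibrium (as established in the Proposition immediately preceding this statement). Recall that a rank-1 feasible $\M'$ must take the form in (\ref{NASH Eq: Rank 1 form}), so that $X = xx^T$, $Y = yy^T$, $Z = yx^T$, and $P = Z^T = xy^T$, with $x \in \triangle_m$ and $y \in \triangle_n$.

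First I would handle the row inequalities (\ref{NASH Eq: R_X}) and (\ref{NASH Eq: R_Y}). Since $X = xx^T$, the $i$-th row sum of $X$ is $\sum_{j=1}^m x_ix_j = x_i \sum_{j=1}^m x_j = x_i$, using the unity constraint (\ref{NASH Eq: SDP1 Unity x}). Similarly, since $P = xy^T$, its $i$-th row sum is $\sum_{j=1}^n x_i y_j = x_i \sum_{j=1}^n y_j = x_i$, using (\ref{NASH Eq: SDP1 Unity y}). This proves (\ref{NASH Eq: R_X}); the identity (\ref{NASH Eq: R_Y}) follows by the symmetric computation with $Y = yy^T$ and $Z = yx^T$.

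Next I would handle the correlated equilibrium inequalities (\ref{NASH Eq: CE_A}) and (\ref{NASH Eq: CE_B}). For (\ref{NASH Eq: CE_A}), substitute $P_{i,j} = x_iy_j$ to get $\sum_{j=1}^n A_{i,j}P_{i,j} = x_i \sum_{j=1}^n A_{i,j}y_j = x_i (Ay)_i = x_i e_i^TAy$ and $\sum_{j=1}^n A_{k,j}P_{i,j} = x_i \sum_{j=1}^n A_{k,j}y_j = x_i e_k^TAy$. So the desired inequality reduces to $x_i e_i^T A y \ge x_i e_k^T A y$ for all $i,k$. If $x_i = 0$ this is trivial; if $x_i > 0$, it is equivalent to $e_i^T A y \ge e_k^T A y$. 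The latter holds because, as argued in the proof of the preceding Proposition, $e_k^T A y \le x^T A y$ for all $k$ (from the relaxed Nash constraint (\ref{NASH Eq: SDP1 Relaxed Nash A}) together with $\Tr(AZ) = x^TAy$ in the rank-1 case), and moreover $x^TAy = \sum_\ell x_\ell (e_\ell^T A y)$ is a convex combination of the $e_\ell^T A y$; since every term $e_\ell^T A y \le x^T A y$ and the weighted average equals $x^T A y$, every term $e_\ell^T A y$ with $x_\ell > 0$ must in fact equal $x^T A y$. Hence for any $i$ with $x_i > 0$ we have $e_i^T A y = x^T A y \ge e_k^T A y$ for all $k$, which is exactly what we need. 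The inequality (\ref{NASH Eq: CE_B}) follows by the mirror-image argument using $P_{j,i} = x_j y_i$, constraint (\ref{NASH Eq: SDP1 Relaxed Nash B}), and the support characterization of $y$.

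I do not anticipate a genuine obstacle here — this is a routine verification — but the one point requiring a little care is the step in (\ref{NASH Eq: CE_A}) and (\ref{NASH Eq: CE_B}) where $x_i = 0$ (resp. $y_i = 0$): one must observe that both sides of the inequality vanish in that case, so the claim is vacuously true and one does not need $e_i^TAy \ge e_k^TAy$ to hold off the support of $x$. The cleanest way to present the argument is to first record the support fact (pure strategies in the support of a Nash equilibrium are all best responses), then dispatch all four inequalities as one-line substitutions.
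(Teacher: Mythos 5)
Your proof is correct and follows essentially the same route as the paper's: the row identities are one-line substitutions using the rank-1 form $X=xx^T$, $P=xy^T$ and the unity constraints, and the correlated equilibrium inequalities reduce to $x_ie_i^TAy\ge x_ie_k^TAy$, which is established via the standard support fact that every pure strategy played with positive probability in a Nash equilibrium is a best response (proved by the same convex-combination argument). The only cosmetic difference is that you re-derive the support fact from the relaxed Nash constraints in the rank-1 case, whereas the paper invokes it directly as a property of Nash equilibria; the underlying reasoning is identical.
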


\begin{proof}
	{}Recall from (\ref{NASH Eq: Rank 1 form}) that if $\M'$ is rank-1, it is of the form $$\left[\begin{matrix}xx^T & xy^T & x \\ yx^T & yy^T & y \\x^T & y^T & 1\end{matrix}\right]	=\xy1vec\xy1vec^T.$$ To show (\ref{NASH Eq: R_X}), observe that
	$$\sum_{j=1}^m X_{i,j} = \sum_{j=1}^m x_ix_j = x_i, \forall i \in \{1,\ldots, m\}.$$
	An identical argument works for the remaining matrices $P,Z,$ and $Y$. To show (\ref{NASH Eq: CE_A}) and (\ref{NASH Eq: CE_B}), observe that a pair $(x, y)$ is a Nash equilibrium if and only if
	$$\forall i, x_i > 0 \Rightarrow e_i^TAy = x^TAy = \underset{i}{\max}\ e_i^TAy,$$
	$$\forall i, y_i > 0 \Rightarrow x^TBe_i = x^TBy = \underset{i}{\max}\ x^TBe_i.$$
	This is because the Nash conditions require that $x^TAy$, a convex combination of $e_i^TAy$, be at least $e_i^TAy$ for all $i$. Indeed, if $x_i > 0$ but $e_i^TAy < x^TAy$, the convex combination must be less than $\underset{i}{\max}\ x^TAy$.
	
	For each $i$ such that $x_i = 0$ or $y_i = 0$, inequalities (\ref{NASH Eq: CE_A}) and (\ref{NASH Eq: CE_B}) reduce to $0 \ge 0$, so we only need to consider strategies played with positive probability. Observe that if $\M'$ is rank-1, then\\
	$$\sum_{j =1}^n A_{i,j}P_{i,j} = x_i\sum_{j =1}^n A_{i,j}y_j = x_ie_i^TAy \ge x_ie_k^TAy = \sum_{j =1}^nA_{k,j}P_{i,j}, \forall i,k$$
	$$\sum_{j =1}^m B_{j,i}P_{j,i} = y_i\sum_{j =1}^mB_{j,i}x_j = y_i x^TBe_i \ge y_ix^TBe_k = \sum_{j =1}^mB_{j,i}P_{j,k}, \forall i,k.$$
\end{proof}

\begin{remark}There are two ways to interpret the inequalities in (\ref{NASH Eq: CE_A}) and (\ref{NASH Eq: CE_B}): the first is as a relaxation of the constraint $x_i(e_i^TAy - e_j^TAy) \ge 0, \forall i,j$, which must hold since any strategy played with positive probability must give the best response payoff. The other interpretation is to have the distribution over outcomes defined by $P$ be a correlated equilibrium \cite{aumann1974subjectivity}. This can be imposed by a set of linear constraints on the entries of $P$ as explained next.
	
	Suppose the players have access to a public randomization device which prescribes a pure strategy to each of them (unknown to the other player). The distribution over the assignments can be given by a matrix $P$, where $P_{i,j}$ is the probability that strategy $i$ is assigned to player A and strategy $j$ is assigned to player B. This distribution is a correlated equilibrium if both players have no incentive to deviate from the strategy prescribed, that is, if the prescribed pure strategies $a$ and $b$ satisfy
	$$\sum_{j=1}^n A_{i,j}Prob(b=j|a=i) \ge \sum_{j=1}^n A_{k,j}Prob(b=j|a=i),$$
	$$\sum_{i=1}^m B_{i,j}Prob(a=i|b=j) \ge \sum_{i=1}^m B_{i,k}Prob(a=i|b=j).$$
	
	If we interpret the $P$ submatrix in our SDP as the distribution over the assignments by the public device, then because of our row constraints, $Prob(b=j|a=i)=\frac{P_{i,j}}{x_i}$ whenever $x_i \ne 0$ (otherwise the above inequalities are trivial). Similarly, $P(a=i|b=j)=\frac{P_{i,j}}{y_j}$ for nonzero $y_j$. Observe now that the above two inequalities imply (\ref{NASH Eq: CE_A}) and (\ref{NASH Eq: CE_B}). Finally, note that every Nash equilibrium generates a correlated equilibrium, since if $P$ is a product distribution given by $xy^T$, then $Prob(b=j|a=i)=y_j$ and $P(a=i|b=j)=x_i$.
\end{remark}

\subsubsection{Implied Inequalities} \label{NASH SSec: Implied Inequalities}
In addition to those explicitly mentioned in the previous section, there are other natural valid inequalities which are omitted because they are implied by the ones we have already proposed. We give two examples of such inequalities in the next proposition. We refer to the constraints in~(\ref{NASH Eq: Distribution Inequalities}) below as the \emph{distribution constraints}. The constraints in~(\ref{NASH Eq: McCormick}) are the familiar McCormick inequalities \cite{mccormick1976computability} for box-constrained quadratic programming.

\begin{prop}
	Let $z \defeq \bmat x \\ y \emat$. Any rank-1 solution $\M'$ to~\ref{NASH Eq: SDP1} must satisfy the following:
	\begin{equation}\label{NASH Eq: Distribution Inequalities}
	\sum_{i = 1}^m \sum_{j =1}^m X_{i,j} = \sum_{i =1}^n \sum_{j =1}^m Z_{i,j} = \sum_{i =1}^n \sum_{j =1}^n Y_{i,j} = 1.
	\end{equation}
	
	\begin{equation}\label{NASH Eq: McCormick}
	\begin{aligned}
	\M_{i,j} &\le z_i, \forall i,j \in \{1,\ldots,m+n\},\\
	\M_{i,j} + 1 &\ge z_{i}+z_{j}, \forall i,j \in \{1,\ldots,m+n\}.
	\end{aligned}
	\end{equation}
	
\end{prop}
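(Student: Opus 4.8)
The plan is to show that each of the two displayed families of inequalities follows from the valid inequalities already established in Proposition~\ref{NASH Prop: Valid Inequalities} (the row inequalities) together with the basic structural constraints of~\ref{NASH Eq: SDP1}, so that they are indeed implied and need not be imposed separately. I would handle the distribution constraints~(\ref{NASH Eq: Distribution Inequalities}) first, then the McCormick inequalities~(\ref{NASH Eq: McCormick}).

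For~(\ref{NASH Eq: Distribution Inequalities}): first I would sum the row inequality~(\ref{NASH Eq: R_X}), namely $\sum_{j=1}^m X_{i,j} = x_i$, over $i \in \{1,\ldots,m\}$, which gives $\sum_{i=1}^m\sum_{j=1}^m X_{i,j} = \sum_{i=1}^m x_i = 1$ using the unity constraint~(\ref{NASH Eq: SDP1 Unity x}). The same summation applied to~(\ref{NASH Eq: R_Y}) (the $\sum_{j=1}^m Z_{i,j} = y_i$ half) summed over $i \in \{1,\ldots,n\}$ yields $\sum_{i=1}^n\sum_{j=1}^m Z_{i,j} = \sum_{i=1}^n y_i = 1$ via~(\ref{NASH Eq: SDP1 Unity y}), and summing the $\sum_{j=1}^n Y_{i,j} = y_i$ half over $i$ gives $\sum_{i=1}^n\sum_{j=1}^n Y_{i,j} = 1$ likewise. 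Thus~(\ref{NASH Eq: Distribution Inequalities}) is a linear consequence of the row and unity constraints alone, so it is implied.

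For~(\ref{NASH Eq: McCormick}): here I would use positive semidefiniteness of $\M'$ together with nonnegativity~(\ref{NASH Eq: SDP1 Nonnegativity}) and the row inequalities. Writing $z = (x,y)$, the principal $2\times 2$ minors of the matrix $\mathcal{M}'$ (indexed by row/column $i$, $j$, and the last coordinate) being PSD forces $\M_{i,i} \le z_i$ once one knows $\M'_{m+n+1,m+n+1}=1$ and $\M'_{i,m+n+1} = z_i$; more precisely, the $2\times2$ minor on indices $\{i, m+n+1\}$ gives $\M_{i,i} \cdot 1 \ge z_i^2$, which is the ``wrong'' direction, so instead I would argue directly: by the row inequalities, for fixed $i$ we have $\sum_{j} \M_{i,j} = z_i$ over the appropriate index range, and since every entry $\M_{i,j}$ is nonnegative by~(\ref{NASH Eq: SDP1 Nonnegativity}), each individual term satisfies $\M_{i,j} \le \sum_{j'} \M_{i,j'} = z_i$, giving the first line of~(\ref{NASH Eq: McCormick}). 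For the second line, $\M_{i,j} + 1 \ge z_i + z_j$, I would apply the first inequality in the form $z_i - \M_{i,j} = \sum_{j' \ne j}\M_{i,j'} \ge 0$ and symmetrically, and combine with $\sum_{j'}\M_{i,j'} = z_i \le 1$ (which itself follows from the distribution constraints and nonnegativity); alternatively this is the standard McCormick bound for $\M_{i,j}$ representing $z_i z_j$ with $z_i, z_j \in [0,1]$, and it follows from PSD-ness of the Schur complement of $\mathcal{M}'$ restricted to the relevant coordinates. The main obstacle I anticipate is getting the second McCormick line cleanly: the cleanest route is probably to observe $1 - z_i - z_j + \M_{i,j} = (1 - z_i) - (z_j - \M_{i,j})$ and to bound $z_j - \M_{i,j} = \sum_{j'\neq i}\M_{j,j'}$... this needs care with which index set the row sum runs over, so I would lay out the block structure of $\mathcal{M}$ explicitly (separating the $X$, $P$, $Z$, $Y$ blocks) before writing the chain of inequalities, to make sure the row-sum identities are applied over the correct ranges. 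I expect no genuinely hard step here — everything reduces to nonnegativity plus the already-proven row and unity constraints — but the bookkeeping across the four blocks is where an error could creep in.
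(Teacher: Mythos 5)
Your primary plan is correct and matches the paper's argument: the distribution constraints follow from summing the row constraints (\ref{NASH Eq: R_X})--(\ref{NASH Eq: R_Y}) and applying the unity constraints (\ref{NASH Eq: SDP1 Unity x}), (\ref{NASH Eq: SDP1 Unity y}), and the first McCormick line is nonnegativity of the relevant block row combined with the block row sums (being careful, as you flag, that the sum runs over the appropriate block). For the second McCormick line, the ``cleanest route'' you sketch is indeed the paper's; to close it, for $\M_{i,j}=P_{i,l}$ (say $i\le m$, $j=m+l$), write
\begin{equation*}
1 - x_i - y_l + P_{i,l} \;=\; (1-x_i)-(y_l - P_{i,l}) \;=\; \sum_{k\ne i}(x_k - P_{k,l}) \;=\; \sum_{k\ne i}\sum_{l'\ne l}P_{k,l'}\;\ge\;0,
\end{equation*}
using $\sum_k x_k = 1$, $\sum_k P_{k,l}=y_l$, $\sum_{l'}P_{k,l'}=x_k$, and $P\ge 0$; the other blocks are handled in the same way.

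The two other routes you float for the second McCormick line do not work and should be discarded. Combining $\M_{i,j}\le z_i$, $\M_{i,j}\le z_j$, and $z_i,z_j\le 1$ does not give $\M_{i,j}+1\ge z_i+z_j$: take $z_i=z_j$ close to $1$ and $\M_{i,j}$ close to $0$, which satisfies those premises but violates the conclusion. More importantly, the bound is \emph{not} a consequence of PSD-ness of the Schur complement (i.e.\ $\M\succeq zz^T$) together with nonnegativity and the first McCormick line: take $z_i=z_j=3/4$, $\M_{ii}=\M_{jj}=3/4$, $\M_{ij}=3/8$, which is nonnegative, satisfies $\M_{ij}\le z_i$ and $\M_{ij}\le z_j$, and has $\M-zz^T\succeq 0$ on these two coordinates, yet $\M_{ij}+1=11/8<3/2=z_i+z_j$. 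What makes the inequality go through is precisely the block row-sum (marginalization) structure, which turns $1-z_i-z_j+\M_{i,j}$ into a sum of nonnegative entries of $\M$.
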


\begin{proof}
	{}The distribution constraints follow immediately from the row constraints (\ref{NASH Eq: R_X}) and (\ref{NASH Eq: R_Y}), along with the unity constraints (\ref{NASH Eq: SDP1 Unity x}) and (\ref{NASH Eq: SDP1 Unity y}).
	
	The first McCormick inequality is immediate as a consequence of (\ref{NASH Eq: R_X}) and (\ref{NASH Eq: R_Y}), as all entries of $\M$ are nonnegative. To see why the second inequality holds, consider whichever submatrix $X, Y, P$, or $Z$ that contains $\M_{i,j}$. Suppose that this submatrix is, e.g., $P$. Then, since $P$ is nonnegative,
	$$0 \le \sum_{k=1, k \ne i}^{m}\sum_{l = 1, l \ne j}^n P_{k,l} \overset{(\ref{NASH Eq: R_X})}{=} \sum_{k=1, k \ne i}^m (x_k - P_{k,j})  \overset{(\ref{NASH Eq: R_Y})}{=} (1-x_i) - (y_j - P_{i,j}) = P_{i,j}+1-x_i-y_j.$$
	
	The same argument holds for the other submatrices, and this concludes the proof.
\end{proof}

\subsection{Simplifying our SDP}\label{NASH SSec: Simplification}

We observe that the row constraints (\ref{NASH Eq: R_X}) and (\ref{NASH Eq: R_Y}) along with the correlated equilibrium constraints (\ref{NASH Eq: CE_A}) and (\ref{NASH Eq: CE_B}) imply the relaxed Nash constraints (\ref{NASH Eq: SDP1 Relaxed Nash A}) and (\ref{NASH Eq: SDP1 Relaxed Nash B}). Indeed, if we fix an index $k~\in~\{1, \ldots, m\}$, then
$$\Tr(AZ) = \sum_{i=1}^m \sum_{j=1}^n A_{i,j} P_{i,j} \overset{(\ref{NASH Eq: CE_A})}{\ge} \sum_{i=1}^m \sum_{j=1}^n A_{k,j} P_{i,j} \ge \sum_{j=1}^n A_{k,j} (\sum_{i=1}^m P_{i,j}) \overset{(\ref{NASH Eq: R_Y}), P = Z^T}{\ge} \sum_{j=1}^n A_{k, j} y_j = e_k^TAy.$$
The proof for player B proceeds identically. Then, after collecting the valid inequalities and removing the relaxed Nash constraints, we arrive at an SDP given by
\begin{samepage}
	\begin{align}\label{NASH Eq: SDP1.1}\tag{SDP1'}
	& \underset{\M' \in S^{(m+n+1)\times(m+n+1)}}{\min}
	& & 0 \\
	& \text{subject to}
	& & (\ref{NASH Eq: SDP1 Unity x})-(\ref{NASH Eq: SDP1 PSD}), (\ref{NASH Eq: R_X})-(\ref{NASH Eq: CE_B}).\nonumber
	\end{align}
\end{samepage}

We make the observation that the last row and column of $\M'$ can be removed from this SDP, that is, there is a one-to-one correspondence between solutions to \ref{NASH Eq: SDP1.1} and those to the following SDP (where $\mathcal{M} \mathrel{\mathop:}= \left[\begin{matrix} X & P\\ Z & Y\end{matrix}\right],$ with $P = Z^T$):

\begin{samepage}
	\begin{align}\label{NASH Eq: SDP2}\tag{SDP2}
	& \underset{\M \in S^{(m+n)\times(m+n)}}{\min}
	& & 0 \\
	& \text{subject to}
	& & \M \succeq 0, &\label{NASH Eq: SDP2 PSD}\\
	&&& \M \ge 0, &\label{NASH Eq: SDP2 Nonnegativity}\\
	&&& \sum_{i=1}^n \sum_{j=1}^n P_{i,j} = 1, &\label{NASH Eq: SDP2 Distribution}\\
	&&& \sum_{j=1}^m X_{i,j} = \sum_{j=1}^n  P_{i,j}, \forall i \in \{1,\ldots, m\}, & \label{NASH Eq: SDP2 Row x}\\
	&&& \sum_{j=1}^n Y_{i,j} = \sum_{j=1}^m Z_{i,j}, \forall i \in \{1,\ldots, n\}, & \label{NASH Eq: SDP2 Row y}\\
	&&& \sum_{j=1}^n A_{i,j}P_{i,j} \ge \sum_{j=1}^n A_{k,j}P_{i,j}, \forall i,k \in \{1,\ldots, m\}, & \label{NASH Eq: SDP2 CE A}\\
	&&& \sum_{j=1}^m B_{j,i}P_{j,i} \ge \sum_{j=1}^m B_{j,k}P_{j,i}, \forall i,k \in \{1,\ldots, n\}. & \label{NASH Eq: SDP2 CE B}
	\end{align}
\end{samepage}

Indeed, it is readily verified that the submatrix $\M$ from any feasible solution $\M'$ to \ref{NASH Eq: SDP1.1} is feasible to \ref{NASH Eq: SDP2}. Conversely, let $\M$ be any feasible matrix to \ref{NASH Eq: SDP2}. Consider an eigendecomposition $\M = \sum_{i=1}^k \lambda_i v_i v_i^T$ and let $\bmat x\\y\emat \defeq \M \frac{1_{m+n}}{2}.$ Then the matrix
\begin{equation}\label{NASH Eq: M psd M' psd} \M' \defeq \bmat \M & \bmat x \\ y\emat \\ \bmat x^T & y^T \emat & 1 \emat = \sum_{i=1}^k \lambda_i \bmat v_i\\ 1_{m+n}^Tv_i/2 \emat \bmat v_i\\ 1_{m+n}^Tv_i/2 \emat^T\end{equation}
is easily seen to be feasible to \ref{NASH Eq: SDP1.1}.

Given any feasible solution $\M$ to \ref{NASH Eq: SDP2}, observe that the submatrix $P$ is a correlated equilibrium. We take our candidate approximate Nash equilibrium to be the pair $x = P1_n$ and $y = P^T1_m$. If the correlated equilibrium $P$ is rank-1, then the pair $(x,y)$ so defined constitutes an exact Nash equilibrium. In Section~\ref{NASH Sec: Algorithms}, we will add certain objective functions to \ref{NASH Eq: SDP2} with the interpretation of searching for low-rank correlated equilibria.

\section{Exactness for Strictly Competitive Games}\label{NASH Sec: Exactness Zero Sum}

In this section, we show that \ref{NASH Eq: SDP1} recovers a Nash equilibrium for any zero-sum game, and that \ref{NASH Eq: SDP2} recovers a Nash equilibrium for any strictly competitive game (see Definition \ref{NASH Defn: Strictly Competitive Def} below). Both these notions represent games where the two players are in direct competition, but strictly competitive games are more general, and for example, allow both players to have nonnegative payoff matrices. These classes of games are solvable in polynomial time via linear programming. Nonetheless, it is reassuring to know that our SDPs recover these important special cases.

\begin{defn}\label{NASH Defn: Zero Sum Def}
	A \emph{zero-sum game} is a game in which the payoff matrices satisfy $A = -B$.
\end{defn}

\begin{theorem}\label{NASH Thm: Zero Sum}
	For a zero-sum game, the vectors $x$ and $y$ from the last column of any feasible solution $\M'$ to~\ref{NASH Eq: SDP1} constitute a Nash equilibrium.
\end{theorem}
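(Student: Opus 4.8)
The plan is to exploit the classical minimax structure of zero-sum games together with the relaxed Nash constraints (\ref{NASH Eq: SDP1 Relaxed Nash A}) and (\ref{NASH Eq: SDP1 Relaxed Nash B}). Let $\M'$ be any feasible solution to \ref{NASH Eq: SDP1} and let $(x,y)$ be taken from its last column; by the unity constraints (\ref{NASH Eq: SDP1 Unity x}), (\ref{NASH Eq: SDP1 Unity y}) and nonnegativity (\ref{NASH Eq: SDP1 Nonnegativity}), we have $x \in \triangle_m$ and $y \in \triangle_n$. Since $B = -A$, the two relaxed Nash constraints read $\Tr(AZ) \ge e_i^T A y$ for all $i$ and $-\Tr(AZ) \ge -x^T A e_j$, i.e. $\Tr(AZ) \le x^T A e_j$ for all $j$. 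Taking the maximum over $i$ on the first family and the minimum over $j$ on the second, we obtain
\begin{equation*}
\max_{i} e_i^T A y \;\le\; \Tr(AZ) \;\le\; \min_j x^T A e_j.
\end{equation*}

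First I would observe that the left-hand quantity $\max_i e_i^T A y$ is exactly $\max_{x' \in \triangle_m} x'^T A y$ (a convex combination of the $e_i^T A y$ cannot exceed their max), and similarly $\min_j x^T A e_j = \min_{y' \in \triangle_n} x^T A y'$. Hence the chain above gives
\begin{equation*}
\max_{x' \in \triangle_m} x'^T A y \;\le\; \min_{y' \in \triangle_n} x^T A y'.
\end{equation*}
In particular, plugging $x' = x$ into the left side and $y' = y$ into the right side shows $x^T A y \le \max_{x'} x'^T A y \le \min_{y'} x^T A y' \le x^T A y$, so all these quantities coincide and equal some common value $v$. Then for every $x' \in \triangle_m$ we have $x'^T A y \le v = x^T A y$, which is precisely the first Nash inequality in (\ref{NASH QP}) (for player A, who maximizes $x^T A y$). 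For player B, whose payoff is $x^T B y = -x^T A y$, the condition $x^T B y \ge x^T B y'$ for all $y'$ is equivalent to $x^T A y \le x^T A y'$ for all $y'$, which is exactly $v = x^T A y \le \min_{y'} x^T A y'$, already established. Thus $(x,y)$ is a Nash equilibrium of $(A,B)$.

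The argument is essentially routine once the relaxed Nash constraints are combined with $B=-A$; the only mild subtlety — and the place I would be most careful — is the bookkeeping that turns the single scalar $\Tr(AZ)$ sandwiched between the two families of inequalities into the standard minimax sandwich, and then into the pair of pointwise best-response conditions for \emph{both} players simultaneously. No low-rank or PSD property of $\M'$ is needed here beyond feasibility; in fact the proof uses only constraints (\ref{NASH Eq: SDP1 Relaxed Nash A}), (\ref{NASH Eq: SDP1 Relaxed Nash B}), (\ref{NASH Eq: SDP1 Unity x}), (\ref{NASH Eq: SDP1 Unity y}), and (\ref{NASH Eq: SDP1 Nonnegativity}), which is worth remarking on since it explains why \ref{NASH Eq: SDP1} — weak as Theorem~\ref{NASH Thm: Necessity of VI} shows it to be in general — is already exact on zero-sum games.
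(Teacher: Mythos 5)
Your proof is correct and takes essentially the same route as the paper's: both use $B=-A$ to obtain the sandwich $e_i^TAy \le \Tr(AZ) \le x^TAe_j$ for all $i,j$ and then close via a convexity argument on the simplex. The only difference is presentation — you package the conclusion as a minimax equality ("all quantities coincide and equal $v$") whereas the paper directly verifies each best-response inequality by writing $x^TAy$ as a convex combination; the content is identical.
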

\begin{proof}
	{}Recall that the relaxed Nash constraints (\ref{NASH Eq: SDP1 Relaxed Nash A}) and (\ref{NASH Eq: SDP1 Relaxed Nash B}) read
	$$\Tr(AZ) \ge e_i^TAy, \forall i \in \{1,\ldots, m\},$$
	$$\Tr(BZ) \ge x^TBe_j, \forall j \in \{1,\ldots, n\}.$$
	Since $B=-A$, the latter statement is equivalent to
	$$\Tr(AZ) \le x^TAe_j, \forall j \in \{1,\ldots, n\}.$$
	In conjunction these imply
	\begin{equation}\label{NASH eq: zero sum inequality}
	e_i^TAy \le \Tr(AZ) \le x^TAe_j, \forall i \in \{1,\ldots, m\}, j \in \{1,\ldots, n\}.
	\end{equation}
	
	We claim that any pair $x \in \triangle_m$ and $y \in \triangle_n$ which satisfies the above condition is a Nash equilibrium. To see that $x^TAy \ge e_i^TAy, \forall i \in \{1,\ldots, m\},$ observe that $x^TAy$ is a convex combination of $x^TAe_j$, which are at least $e_i^TAy$ by (\ref{NASH eq: zero sum inequality}). To see that $x^TBy \ge x^TBe_j \Leftrightarrow x^TAy \le x^TAe_j, \forall j \in \{1,\ldots, n\}$, observe that $x^TAy$ is a convex combination of $e_i^TAy$, which are at most $x^TAe_j$ by (\ref{NASH eq: zero sum inequality}).
	
\end{proof}

\begin{defn}\label{NASH Defn: Strictly Competitive Def}
	A game $(A, B)$ is \emph{strictly competitive} if for all $x, x' \in \triangle_m, y, y' \in \triangle_n$, $x^TAy - x'^TAy'$ and $x'^TBy' - x^TBy$ have the same sign.
\end{defn}

The interpretation of this definition is that if one player benefits from changing from one outcome to another, the other player must suffer. Adler, Daskalakis, and Papadimitriou show in \cite{adler2009note} that the following much simpler characterization is equivalent.

\begin{theorem}[Theorem 1 of \cite{adler2009note}]\label{NASH Thm: Adler}
	A game is strictly competitive if and only if there exist scalars $c,d,e$, and $f,$ with $c > 0, e > 0,$ such that $cA+dJ_{m \times n} = -eB + fJ_{m \times n}$.
\end{theorem}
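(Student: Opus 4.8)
The plan is to prove both directions of the equivalence, the substance lying in the ``only if'' direction. Throughout, write $a(x,y):=x^TAy$ and $b(x,y):=x^TBy$, viewed as continuous functions on the compact connected set $\Delta_m\times\Delta_n$, and note that $x^TJ_{m\times n}y=(\sum_i x_i)(\sum_j y_j)=1$ there. For the ``if'' direction: if $cA+dJ_{m\times n}=-eB+fJ_{m\times n}$ with $c,e>0$, then $cA+eB=(f-d)J_{m\times n}$, so $c\,a(x,y)+e\,b(x,y)=f-d$ for every $(x,y)$; hence for outcomes $o=(x,y)$, $o'=(x',y')$ we get $c\bigl(a(o)-a(o')\bigr)=e\bigl(b(o')-b(o)\bigr)$, so $x^TAy-x'^TAy'$ and $x'^TBy'-x^TBy$ are positive scalar multiples of one another and in particular have the same sign, which is exactly Definition~\ref{NASH Defn: Strictly Competitive Def}.

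For the ``only if'' direction, strict competitiveness says precisely that $a(o)>a(o')\iff b(o)<b(o')$ and $a(o)=a(o')\iff b(o)=b(o')$ for all outcomes $o,o'$. In particular $a$ and $b$ have identical level sets, so there is a well-defined strictly decreasing bijection $\psi$ from $\mathrm{range}(a)$ onto $\mathrm{range}(b)$ with $b=\psi\circ a$; since $\Delta_m\times\Delta_n$ is compact and connected, both ranges are compact intervals. A routine compactness argument (take $a(o_k)\to t$, pass to a convergent subsequence $o_{k_j}\to o^*$, use $a(o^*)=t$ and $b(o_{k_j})\to b(o^*)=\psi(t)$ together with monotonicity of $\psi$) shows $\psi$ is continuous, hence a homeomorphism between these two intervals.

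The heart of the argument is to show $\psi$ is affine. Fixing $y\in\Delta_n$, the map $x\mapsto(a(x,y),b(x,y))$ carries $\Delta_m$ onto a convex subset of the graph of $\psi$ (an affine image of a simplex is convex), and a convex subset of the graph of a function forces that function to be midpoint-affine — hence, with continuity, affine — on the projection of that set, namely the interval $[\min_i(Ay)_i,\max_i(Ay)_i]$. Symmetrically, $\psi$ is affine on $[\min_j(A^Tx)_j,\max_j(A^Tx)_j]$ for every $x\in\Delta_m$. Chaining these intervals (any row-range and any column-range of $A$ share the entry $A_{ij}$, and varying $y$ continuously over $\Delta_n$ sweeps out $\mathrm{range}(a)=[\min_{i,j}A_{ij},\max_{i,j}A_{ij}]$ through overlapping non-degenerate subintervals) yields a single affine form $\psi(t)=\alpha t+\beta$ on all of $\mathrm{range}(a)$, with $\alpha<0$ because $\psi$ is strictly decreasing. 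Evaluating $b=\psi\circ a$ at pure strategies then gives $B_{ij}=\alpha A_{ij}+\beta$, i.e. $B=\alpha A+\beta J_{m\times n}$; taking $c=-\alpha>0$, $e=1$, $d=0$, $f=\beta$ gives $cA+dJ_{m\times n}=-\alpha A=-B+\beta J_{m\times n}=-eB+fJ_{m\times n}$, as required.

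The main obstacle I anticipate is not any single hard step but the degenerate configurations in which the intervals above collapse to points and the chaining breaks down — for instance when $a$ or $b$ is constant on $\Delta_m\times\Delta_n$, when $A$ has a constant row or column, or when $\{y\in\Delta_n : Ay$ is a multiple of the all-ones vector$\}$ is a codimension-one slice of $\Delta_n$ (so that the set of $y$ giving a non-degenerate interval is disconnected). Each such case must be dispatched separately, and each reduces to the elementary lemma that two linear functionals on a simplex of dimension at least one induce the same weak order if and only if they differ by a positive scalar multiple plus a constant vector — the very fact that makes the ``$\psi$ affine'' step believable. Proving that lemma cleanly and then carrying out the (routine but fiddly) case analysis is where the real work lies.
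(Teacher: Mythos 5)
The paper does not prove this statement: it is cited from Adler, Daskalakis, and Papadimitriou~\cite{adler2009note} and used as a black box. There is therefore no in-paper proof to compare against, and what follows evaluates your argument on its own terms.

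Your ``if'' direction is correct. In the ``only if'' direction, the key observation --- that for each fixed $y$ the affine image of $\triangle_m$ under $x\mapsto(a(x,y),b(x,y))$ is a convex subset of the graph of $\psi$, forcing $\psi$ to be affine on the interval $I_y$ with endpoints $\min_i(Ay)_i$ and $\max_i(Ay)_i$ --- is right, and it is the heart of a correct proof. The chaining step, however, has a genuine gap, not merely the fiddly case analysis you flag. Nothing you have shown prevents every relevant interval from lying entirely on one side of some value $r_2$ in the interior of $\mathrm{range}(a)$, with some intervals touching $r_2$ only at an endpoint; this occurs precisely when $Ay$ becomes a constant vector at an interior $y$, since the row intervals then shrink to a point and re-expand on the other side. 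In that configuration everything you have established remains consistent with a strictly decreasing continuous $\psi$ that is affine with one slope on $[\min A, r_2]$ and a different slope on $[r_2,\max A]$, and the conclusion does not follow. Your fallback lemma about order-preserving linear functionals on a simplex is true, but it too only delivers slice-dependent slopes $\alpha(y)$ and $\alpha'(x)$ and leaves the same gluing problem open.

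There is a clean way to finish that bypasses the chaining entirely and exploits that $a$ and $b$ are \emph{bilinear}. If $A$ is a scalar multiple of $J_{m\times n}$, then $a$ is constant, hence so is $b$, hence $B$ agrees with a multiple of $J_{m\times n}$ on the affine hull of $\triangle_m\times\triangle_n$, and the claim is immediate with $c=e=1$. Otherwise some $I_y$ or some $J_x$ is nondegenerate: if every $Ay$ with $y\in\triangle_n$ and every $A^Tx$ with $x\in\triangle_m$ were a constant vector, then every column and every row of $A$ would be constant, forcing $A$ to be a multiple of $J_{m\times n}$. Fix one nondegenerate $I_y$, on which your convexity argument gives $\psi(t)=\alpha t+\beta$. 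Then the polynomial $(x,y')\mapsto x^T(B-\alpha A-\beta J_{m\times n})y'$ vanishes on the nonempty relatively open subset of $\{1_m^Tx=1\}\times\{1_n^Ty'=1\}$ where $a(x,y')$ lies in the interior of $I_y$, hence vanishes on that entire affine hull, and a short linear-algebra computation shows that any matrix $C$ with $x^TCy'=0$ throughout this affine hull must be the zero matrix. Therefore $B=\alpha A+\beta J_{m\times n}$ outright; $\alpha<0$ follows exactly as you argue, and the choice of $c,d,e,f$ is as you wrote. With this replacement your proof is complete, and no degenerate case analysis is needed at all.
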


One can easily show that there exist strictly competitive games for which not all feasible solutions to~\ref{NASH Eq: SDP1} have Nash equilibria as their last columns (see Theorem~\ref{NASH Thm: Necessity of VI}). However, we show that this is the case for~\ref{NASH Eq: SDP2}.

\begin{theorem}\label{NASH Thm: Strictly Competitive Exact}
	For a strictly competitive game, the vectors $x \defeq P1_n$ and $y \defeq P^T1_m$ from any feasible solution $\M$ to~\ref{NASH Eq: SDP2} constitute a Nash equilibrium.
\end{theorem}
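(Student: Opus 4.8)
The plan is to reduce the strictly competitive case to the zero-sum case already handled by Theorem~\ref{NASH Thm: Zero Sum} and the transformation from \ref{NASH Eq: SDP2} to \ref{NASH Eq: SDP1.1} (hence to \ref{NASH Eq: SDP1}) described in Section~\ref{NASH SSec: Simplification}. First I would invoke Theorem~\ref{NASH Thm: Adler}: since the game $(A,B)$ is strictly competitive, there are scalars $c,d,e,f$ with $c,e>0$ such that $cA+dJ_{m\times n} = -eB+fJ_{m\times n}$. Because Nash equilibria are invariant under the affine transformations discussed at the start of Section~\ref{NASH Sec: Intro to SDP}, the pair $(x,y)$ obtained from a feasible $\M$ is a Nash equilibrium of $(A,B)$ if and only if it is a Nash equilibrium of the rescaled game $(cA+dJ_{m\times n},\, eB+fJ_{m\times n})$, which by the identity above is precisely the zero-sum game $(A',-A')$ with $A' \defeq cA+dJ_{m\times n}$.

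Next I would check that feasibility of $\M$ for \ref{NASH Eq: SDP2} is preserved (or appropriately transformed) under this rescaling. The correlated-equilibrium constraints (\ref{NASH Eq: SDP2 CE A})–(\ref{NASH Eq: SDP2 CE B}) only depend on $A$ and $B$ through comparisons of rows/columns weighted by the same $P_{i,j}$, and one checks directly that adding the all-ones matrix ($A \mapsto A + \frac{d}{c}J$) contributes $\frac{d}{c}\sum_j P_{i,j}$ to both sides of (\ref{NASH Eq: SDP2 CE A}) and hence leaves the inequality unchanged, while scaling $A \mapsto cA$ scales both sides by $c>0$; similarly for $B$. The remaining constraints (\ref{NASH Eq: SDP2 PSD})–(\ref{NASH Eq: SDP2 Row y}) do not involve $A$ or $B$ at all. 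Thus $\M$ is feasible for the instance \ref{NASH Eq: SDP2} associated to $(A',-A')$ exactly when it is feasible for the instance associated to $(A,B)$, and the candidate pair $x=P1_n$, $y=P^T1_m$ is unchanged.

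Then I would pass from \ref{NASH Eq: SDP2} to \ref{NASH Eq: SDP1} (for the zero-sum game $(A',-A')$) using the one-to-one correspondence of Section~\ref{NASH SSec: Simplification}: given feasible $\M$, the augmented matrix $\M'$ in (\ref{NASH Eq: M psd M' psd}), with $(x,y)^T \defeq \M\, 1_{m+n}/2$, is feasible for \ref{NASH Eq: SDP1.1} and hence for \ref{NASH Eq: SDP1}. One should double-check here that the vectors $x,y$ produced by this correspondence ($\M 1_{m+n}/2$) agree with the candidate $x=P1_n$, $y=P^T1_m$; this follows from the row constraints (\ref{NASH Eq: SDP2 Row x})–(\ref{NASH Eq: SDP2 Row y}) together with $\M\succeq 0$, since those force the $X$-row sums and $P$-row sums to coincide, so the relevant block sums of $\M\,1_{m+n}$ reproduce $P1_n$ and $P^T1_m$. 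Applying Theorem~\ref{NASH Thm: Zero Sum} to $\M'$ for the game $(A',-A')$ then yields that $(x,y)$ is a Nash equilibrium of $(A',-A')$, and by the affine-invariance of the first paragraph, of $(A,B)$ as well.

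The main obstacle I anticipate is the bookkeeping in the second step: verifying carefully that the correlated-equilibrium inequalities and the row/distribution constraints are genuinely invariant (not merely ``morally'' invariant) under $A\mapsto cA+dJ$, $B\mapsto eB+fJ$, and in particular that the sign constraints $c,e>0$ are exactly what is needed. A secondary subtlety is confirming that the $(x,y)$ extracted by the \ref{NASH Eq: SDP1.1}-to-\ref{NASH Eq: SDP2} correspondence really is the pair $(P1_n, P^T1_m)$ named in the theorem statement, rather than some other linear functional of $\M$; this is where the row constraints do the work. Once these two compatibility checks are in place, the result follows formally from Theorems~\ref{NASH Thm: Adler} and \ref{NASH Thm: Zero Sum} with no further computation.
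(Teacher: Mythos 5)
Your proof takes essentially the same route as the paper's: invoke Adler's characterization of strictly competitive games (Theorem~\ref{NASH Thm: Adler}) to reduce to a zero-sum game, verify that feasibility of $\M$ for \ref{NASH Eq: SDP2} is invariant under the affine rescaling of the payoffs (this is exactly what Lemma~\ref{NASH Lem: scaling shifting} records), pass to \ref{NASH Eq: SDP1} via the correspondence of Section~\ref{NASH SSec: Simplification}, apply Theorem~\ref{NASH Thm: Zero Sum}, and transfer back by affine invariance of Nash equilibria. Your ``secondary subtlety'' about the extracted pair $(x,y)$ agreeing with $(P1_n,\,P^T1_m)$ is correctly resolved by the row constraints, as you say.

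One sign slip to correct: from $cA+dJ_{m\times n} = -eB+fJ_{m\times n}$ you get $eB-fJ_{m\times n} = -(cA+dJ_{m\times n})$, not $eB+fJ_{m\times n} = -(cA+dJ_{m\times n})$. So the rescaled game you want is $(cA+dJ_{m\times n},\, eB-fJ_{m\times n})$, exactly as in the paper's proof. This is permissible because the affine-invariance statements (both for Nash equilibria and for feasibility in \ref{NASH Eq: SDP2}) allow the additive shifts applied to $A$ and to $B$ to be chosen independently and with either sign; only the multiplicative factors $c,e$ must be positive. Your feasibility-preservation argument in the second paragraph does not depend on the sign of the additive shift, so it carries over unchanged after the fix.
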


To prove Theorem \ref{NASH Thm: Strictly Competitive Exact} we need the following lemma, which shows that feasibility of a matrix $\M$ in~\ref{NASH Eq: SDP2} is invariant under certain transformations of $A$ and $B$.

\begin{lem}\label{NASH Lem: scaling shifting}
	Let $c,d,e$, and $f$ be any set of scalars with $c > 0$ and $e > 0$. If a matrix $\M$ is feasible to~\ref{NASH Eq: SDP2} with input payoff matrices $A$ and $B$, then it is also feasible to~\ref{NASH Eq: SDP2} with input matrices $cA+dJ_{m \times n}$ and $eB + fJ_{m \times n}$.
\end{lem}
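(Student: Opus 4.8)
The plan is to verify directly that each of the defining constraints of \ref{NASH Eq: SDP2} is preserved when $A$ is replaced by $cA+dJ_{m\times n}$ and $B$ by $eB+fJ_{m\times n}$. The constraints (\ref{NASH Eq: SDP2 PSD}), (\ref{NASH Eq: SDP2 Nonnegativity}), (\ref{NASH Eq: SDP2 Distribution}), (\ref{NASH Eq: SDP2 Row x}), and (\ref{NASH Eq: SDP2 Row y}) do not involve the payoff matrices at all, so they hold for $\M$ regardless of which payoffs we plug in. Hence the only work is to check that the correlated equilibrium inequalities (\ref{NASH Eq: SDP2 CE A}) and (\ref{NASH Eq: SDP2 CE B}) are invariant under these transformations.

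First I would handle the scaling by $c>0$: multiplying $A$ by $c$ multiplies both sides of (\ref{NASH Eq: SDP2 CE A}) by $c$, which preserves the inequality since $c>0$. Next I would handle the additive shift by $dJ_{m\times n}$: the extra term contributed to the left-hand side of (\ref{NASH Eq: SDP2 CE A}) is $d\sum_{j=1}^n P_{i,j}$, and the extra term contributed to the right-hand side (for the same fixed $i$, any $k$) is also $d\sum_{j=1}^n P_{i,j}$, since $J_{i,j}=J_{k,j}=1$. So the shift adds the \emph{same} quantity to both sides and the inequality is unchanged. The key point is that the row index being summed over in $P_{i,j}$ is held fixed at $i$ in both sides of (\ref{NASH Eq: SDP2 CE A}), so replacing $A_{i,j}$ and $A_{k,j}$ by $1$ gives identical sums. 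The argument for (\ref{NASH Eq: SDP2 CE B}) with $eB+fJ_{m\times n}$ is symmetric: scaling by $e>0$ preserves the inequality, and the shift by $fJ$ adds $f\sum_{j=1}^m P_{j,i}$ to both sides of (\ref{NASH Eq: SDP2 CE B}). Combining the scaling and shift steps (they commute and can be applied in either order) gives the claim.

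There is no real obstacle here; the only thing to be careful about is bookkeeping of which index is free and which is summed, so that one confirms the $J$-terms genuinely cancel between the two sides rather than merely being of similar form. I would write out the one displayed computation
\begin{equation*}
\sum_{j=1}^n (cA_{i,j}+d)P_{i,j} - \sum_{j=1}^n (cA_{k,j}+d)P_{i,j} = c\left(\sum_{j=1}^n A_{i,j}P_{i,j} - \sum_{j=1}^n A_{k,j}P_{i,j}\right) \ge 0,
\end{equation*}
and the analogous one for $B$, and that completes the proof. This lemma then feeds directly into the proof of Theorem \ref{NASH Thm: Strictly Competitive Exact}: using Theorem \ref{NASH Thm: Adler}, a strictly competitive game can be transformed into a zero-sum game without changing feasibility of $\M$, and then the zero-sum analysis (as in Theorem \ref{NASH Thm: Zero Sum}) applies.
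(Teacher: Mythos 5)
Your proposal is correct and follows essentially the same route as the paper: reduce to the correlated equilibrium constraints (since the other constraints of \ref{NASH Eq: SDP2} do not involve $A$ or $B$), observe that scaling by $c>0$ preserves the inequality, and observe that the shift by $dJ_{m\times n}$ contributes the identical term $d\sum_{j=1}^n P_{i,j}$ to both sides of (\ref{NASH Eq: SDP2 CE A}) because the row index of $P$ is held fixed at $i$. The paper records this last fact by writing $x_i = \sum_{j=1}^n (J_{m\times n})_{i,j}P_{i,j}$, which is the same cancellation you identified.
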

\begin{proof}
	{}It suffices to check that constraints (\ref{NASH Eq: SDP2 CE A}) and (\ref{NASH Eq: SDP2 CE B}) of~\ref{NASH Eq: SDP2} still hold, as only the correlated equilibrium constraints use the matrices $A$ and $B$. We only show that constraint (\ref{NASH Eq: SDP2 CE A}) still holds because the argument for constraint (\ref{NASH Eq: SDP2 CE B}) is identical.
	
	Note from the definition of $x$ that for each $i \in \{1,\ldots, m\}, x_i = \sum_{j=1}^n (J_{m\times n})_{i,j}P_{i,j}$. To check that the correlated equilibrium constraints hold, observe that for scalars $c>0,d$, and for all $i,k \in \{1,\ldots, m\}$,
	\begin{equation*}
	\begin{aligned}
	\sum_{j=1}^n A_{i,j}P_{i,j} &\ge \sum_{j=1}^n A_{k,j}P_{i,j}\\
	\Leftrightarrow c\sum_{j=1}^n A_{i,j}P_{i,j} + d \sum_{j=1}^n P_{i,j} &\ge c\sum_{j=1}^n A_{k,j}P_{i,j} + d \sum_{j=1}^n P_{i,j}\\
	\Leftrightarrow c\sum_{j=1}^n A_{i,j}P_{i,j} + d\sum_{j=1}^n (J_{m \times n})_{i,j}P_{i,j} &\ge c\sum_{j=1}^n A_{k,j}P_{i,j} + d\sum_{j=1}^n (J_{m \times n})_{k,j}P_{i,j}\\
	\Leftrightarrow \sum_{j=1}^n (cA_{i,j}+dJ_{m \times n})_{k,j}P_{i,j} &\ge \sum_{j=1}^n (cA_{i,j}+dJ_{m \times n})_{k,j}P_{i,j}.
	\end{aligned}
	\end{equation*}
\end{proof}

\begin{proof}{\bf Proof (of Theorem \ref{NASH Thm: Strictly Competitive Exact}).}
	{}Let $A$ and $B$ be the payoff matrices of the given strictly competitive game and let $\M$ be a feasible solution to~\ref{NASH Eq: SDP2}. Since the game is strictly competitive, we know from Theorem \ref{NASH Thm: Adler} that $cA + dJ_{m\times n} = -eB + fJ_{m\times n}$ for some scalars $c>0,e>0,d, f$. Consider a new game with input matrices $\tilde{A} = cA+dJ_{m\times n}$ and $\tilde{B} = eB - fJ_{m\times n}$. By Lemma~\ref{NASH Lem: scaling shifting}, $\M$ is still feasible to~\ref{NASH Eq: SDP2} with input matrices $\tilde{A}$ and $\tilde{B}$. By the arguments in Section \ref{NASH SSec: Simplification}, the matrix $\M' \defeq \bmat \M & \bmat x \\ y\emat \\ \bmat x^T & y^T \emat & 1 \emat$ is feasible to \ref{NASH Eq: SDP1.1}, and hence also to~\ref{NASH Eq: SDP1}. Now notice that since $\tilde{A} = -\tilde{B}$, Theorem~\ref{NASH Thm: Zero Sum} implies that the vectors $x$ and $y$ in the last column form a Nash equilibrium to the game $(\tilde{A}, \tilde{B})$. Finally recall from the arguments at the beginning of Section~\ref{NASH Sec: Intro to SDP} that Nash equilibria are invariant to scaling and shifting of the payoff matrices, and hence $(x, y)$ is a Nash equilibrium to the game $(A, B)$.
\end{proof}

\section{Algorithms for Lowering Rank}\label{NASH Sec: Algorithms}

In this section, we present heuristics which aim to find low-rank solutions to~\ref{NASH Eq: SDP2} and present some empirical results. Recall that our~\ref{NASH Eq: SDP2} in Section~\ref{NASH SSec: Simplification} did not have an objective function. Hence, we can encourage low-rank solutions by choosing certain objective functions, in particular the trace of the matrix $\M$, which is a general heuristic for minimizing the rank of symmetric matrices~\cite{recht2010guaranteed,fazel2002matrix}. This simple objective function is already guaranteed to produce a rank-1 solution in the case of strictly competitive games (see Proposition~\ref{NASH Prop: Zero Sum Rank 1} below). For general games, however, one can design better objective functions in an iterative fashion (see Section~\ref{NASH SSec: Linearization Algorithms}).

\emph{Notational Remark:} For the remainder of this section, we will use the shorthand $x \defeq P1_n$ and $y \defeq P^T1_m$, where $P$ is the upper right submatrix of a feasible solution $\M$ to \ref{NASH Eq: SDP2}.

\begin{prop}\label{NASH Prop: Zero Sum Rank 1} For a strictly competitive game, any optimal solution to~\ref{NASH Eq: SDP2} with $\Tr(\M)$ as the objective function must be rank-1.\end{prop}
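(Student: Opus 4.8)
The plan is to combine the exactness result for strictly competitive games (Theorem~\ref{NASH Thm: Strictly Competitive Exact}) with a rank argument. First I would recall that Theorem~\ref{NASH Thm: Strictly Competitive Exact} guarantees that for \emph{any} feasible solution $\M$ to~\ref{NASH Eq: SDP2} (and in particular for any optimal one), the vectors $x \defeq P1_n$ and $y \defeq P^T1_m$ constitute a Nash equilibrium; hence the rank-1 matrix $\bmat x \\ y \emat \bmat x^T & y^T \emat$ is feasible to~\ref{NASH Eq: SDP2}. This means the feasible set of~\ref{NASH Eq: SDP2} is always nonempty and contains rank-1 points, so it makes sense to ask what the trace objective selects.

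The key step is to show that a rank-1 feasible solution of the form $\bmat x \\ y \emat \bmat x^T & y^T \emat$ achieves the smallest possible trace, namely $\Tr(\M) = \|x\|^2 + \|y\|^2$, and that this equals $2$ only seemingly — but actually I want a lower bound that is attained exactly at rank-1. I would argue as follows. For any feasible $\M$, the distribution constraint~(\ref{NASH Eq: SDP2 Distribution}) and the row constraints~(\ref{NASH Eq: SDP2 Row x})--(\ref{NASH Eq: SDP2 Row y}) force $\1^T_m x = \1^T_n y = 1$, i.e. $x \in \triangle_m$ and $y \in \triangle_n$ (nonnegativity coming from~(\ref{NASH Eq: SDP2 Nonnegativity})). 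Now the Schur-complement construction in~(\ref{NASH Eq: M psd M' psd}) shows $\M \succeq \bmat x \\ y \emat \bmat x^T & y^T \emat$, because $\M'\succeq 0$ forces $\M - \bmat x \\ y \emat \bmat x^T & y^T \emat \succeq 0$. Taking traces gives $\Tr(\M) \ge \|x\|^2 + \|y\|^2$, with equality if and only if $\M = \bmat x \\ y \emat \bmat x^T & y^T \emat$, i.e. $\M$ is rank-1. It remains to check that among feasible solutions the lower bound $\|x\|^2+\|y\|^2$ is actually minimized by (and only by) a rank-1 point: since every feasible $\M$ has an associated rank-1 feasible matrix with the \emph{same} marginals $x,y$ (by the paragraph above, using that $(x,y)$ is itself a Nash equilibrium and so $\bmat x \\ y \emat \bmat x^T & y^T \emat$ is feasible), and that rank-1 matrix has trace exactly $\|x\|^2+\|y\|^2 \le \Tr(\M)$, any optimal $\M$ must satisfy $\Tr(\M) = \|x\|^2+\|y\|^2$, which by the equality case of the trace inequality forces $\M$ to be rank-1.

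The main obstacle I anticipate is making the ``same marginals'' reduction fully rigorous: I must verify that if $\M$ is feasible with upper-right block $P$ and marginals $x = P1_n$, $y = P^T 1_m$, then the strictly-competitive hypothesis (via Theorem~\ref{NASH Thm: Strictly Competitive Exact}) indeed certifies $(x,y)$ is an exact Nash equilibrium, hence $\bmat x \\ y \emat \bmat x^T & y^T \emat$ satisfies the correlated-equilibrium constraints~(\ref{NASH Eq: SDP2 CE A})--(\ref{NASH Eq: SDP2 CE B}) and the row/distribution constraints — all of which follow from the Nash property as in Proposition~\ref{NASH Prop: Valid Inequalities} and the opening arguments of Section~\ref{NASH SSec: Nash as QP}. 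Once that is in place, comparing traces and invoking the strict equality case of $\M \succeq \bmat x \\ y \emat \bmat x^T & y^T \emat$ finishes the proof. A secondary subtlety worth a sentence is ruling out the degenerate possibility that the optimal trace is not attained; but since~\ref{NASH Eq: SDP2} is feasible with a compact spectrahedral feasible set intersected with the trace level sets, the infimum is attained, so any optimal solution exists and the argument applies to it.
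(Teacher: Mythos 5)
Your argument is correct and follows the paper's own approach: invoke Theorem~\ref{NASH Thm: Strictly Competitive Exact} to conclude that the marginals $(x,y)$ of any optimal $\M$ form a Nash equilibrium, so the rank-1 matrix $\xyvec\xyvec^T$ is itself feasible with trace $\|x\|^2+\|y\|^2$, and combine this with the Schur-complement fact $\M \succeq \xyvec\xyvec^T$ to force $\Tr(\M)=\|x\|^2+\|y\|^2$ and hence $\M = \xyvec\xyvec^T$ at optimality. The closing aside about attainment of the infimum is superfluous since the proposition already posits an optimal solution, but it does no harm.
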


\begin{proof}	
	{}Let $$\M \mathrel{\mathop:}= \bmat X & P\\ P^T & Y\emat$$ be a feasible solution to~\ref{NASH Eq: SDP2}. In the case of strictly competitive games, from Theorem~\ref{NASH Thm: Strictly Competitive Exact} we know that that $(x, y)$ is a Nash equilibrium. Then because the matrix $\M$ is psd, from (\ref{NASH Eq: M psd M' psd}) and an application of the Schur complement (see, e.g. \cite[Sect. A.5.5]{boyd2004convex}) to $\bmat \M & \bmat x \\ y\emat \\ \bmat x^T & y^T \emat & 1 \emat$, we have that $\M \succeq \xyvec\xyvec^T$. Hence, $\M = \bmat xx^T & xy^T \\ yx^T & yy^T\emat+ \mathcal{P}$ for some psd matrix $\mathcal{P}$ and the Nash equilibrium $(x, y)$. Given this expression, the objective function $\Tr(\M)$ is then $x^Tx + y^Ty + \Tr(\mathcal{P})$. As $(x,y)$ is a Nash equilibrium, the choice of $\mathcal{P} = 0$ results in a feasible solution. Since the zero matrix has the minimum possible trace among all psd matrices, the solution will be the rank-1 matrix $\xyvec\xyvec^T$.\end{proof}

\begin{remark}
	If the row constraints and the nonnegativity constraints on $X$ and $Y$ are removed from \ref{NASH Eq: SDP2}, then this SDP with $\Tr(\M)$ as the objective function can be interpreted as searching for a minimum-rank correlated equilibrium $P$ via the nuclear norm relaxation; see~\cite[Section 2]{recht2010guaranteed}.
\end{remark}

\subsection{Linearization Algorithms}\label{NASH SSec: Linearization Algorithms}
The algorithms we present in this section for minimzing the rank of the matrix $\M$ in \ref{NASH Eq: SDP2} are based on iterative linearization of certain nonconvex objective functions. Motivated by the next proposition, we design two continuous (nonconvex) objective functions that, if minimized exactly, would guarantee rank-1 solutions. We will then linearize these functions iteratively.

\begin{prop}\label{NASH Prop: diagonal sufficiency} Let the matrices $X$ and $Y$ and vectors $x \defeq P1_n$ and $y \defeq P^T1_m$ be taken from a feasible solution to~\ref{NASH Eq: SDP2}. Then the matrix $\M$ is rank-1 if and only if $X_{i,i} = x_i^2$ and $Y_{i,i} = y_i^2$ for all $i$.\end{prop}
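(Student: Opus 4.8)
The plan is to exploit the positive semidefiniteness of $\M$ together with the row/distribution constraints to force $\M$ to be a rank-one matrix once its ``diagonal blocks match the outer products'' condition holds. First I would recall from equation (\ref{NASH Eq: M psd M' psd}) in Section~\ref{NASH SSec: Simplification} that $\M \succeq 0$ implies, via the Schur complement applied to $\M' = \bmat \M & (x,y) \\ (x,y)^T & 1 \emat$, that $\M \succeq \bmat x \\ y \emat \bmat x \\ y \emat^T$. (Here one uses that $x = P1_n$, $y = P^T1_m$, and the row constraints (\ref{NASH Eq: SDP2 Row x})--(\ref{NASH Eq: SDP2 Row y}) plus the distribution constraint (\ref{NASH Eq: SDP2 Distribution}) to check $(x,y)$ are simplex vectors and that the Schur complement is valid.) Write $\mathcal{P} \defeq \M - \bmat x \\ y \emat \bmat x \\ y \emat^T \succeq 0$.

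The key observation is that the ``only if'' direction is trivial: if $\M$ is rank-1, then from (\ref{NASH Eq: Rank 1 form}) we have $\M = \bmat x \\ y\emat\bmat x\\y\emat^T$ after identifying the vectors correctly, so $X_{i,i} = x_i^2$ and $Y_{i,i} = y_i^2$ immediately. For the ``if'' direction, suppose $X_{i,i} = x_i^2$ for all $i \in \{1,\ldots,m\}$ and $Y_{i,i} = y_i^2$ for all $i \in \{1,\ldots,n\}$. Then the diagonal entries of $\mathcal{P}$ are all zero. Since $\mathcal{P} \succeq 0$, a psd matrix with an all-zero diagonal must be the zero matrix (if $\mathcal{P}_{jj} = 0$ then the $j$-th row and column of $\mathcal{P}$ vanish, as otherwise a $2\times 2$ principal minor would be negative). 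Hence $\mathcal{P} = 0$, which gives $\M = \bmat x \\ y \emat \bmat x \\ y \emat^T$, a rank-1 matrix.

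I would then close the loop by noting that this rank-1 form is exactly $\bmat xx^T & xy^T \\ yx^T & yy^T \emat$, so in particular $P = xy^T$, consistent with the earlier remark that a rank-1 $P$ yields an exact Nash equilibrium. The main (and really only) obstacle is the bookkeeping around which vectors appear where: one must be careful that the $x$ appearing as $P1_n$ and the $y$ appearing as $P^T1_m$ are the same vectors that appear in the Schur-complement argument, and that the row constraints are what guarantee consistency (e.g. $\sum_j X_{i,j} = \sum_j P_{i,j} = x_i$). Once that is set up, the argument is a short consequence of the elementary fact about psd matrices with zero diagonal. No delicate estimates are needed.
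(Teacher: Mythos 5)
Your proposal is correct, and the ``if'' direction is a genuinely different argument from the paper's. The paper establishes sufficiency by a Cauchy--Schwarz argument on the individual entries: from $\M\succeq 0$ it writes $\M_{i,j}\le\sqrt{\M_{i,i}\M_{j,j}}=z_iz_j$ with $z=(x,y)$, then sums over all $(i,j)$ and invokes the distribution constraint to get $4=\sum_{i,j}\M_{i,j}\le\sum_{i,j}z_iz_j=4$, forcing $\M_{i,j}=z_iz_j$ everywhere. You instead pass through the slack matrix $\mathcal{P}=\M-\bmat x\\y\emat\bmat x\\y\emat^T\succeq 0$ (whose positive semidefiniteness follows from~(\ref{NASH Eq: M psd M' psd}) and a Schur complement on $\M'$), observe that the hypothesis kills the diagonal of $\mathcal{P}$, and conclude $\mathcal{P}=0$ by the standard fact that a psd matrix with zero diagonal is zero. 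Both routes use the row and distribution constraints, but yours isolates them in the single step ``$\M'\succeq 0$ with last entry $1$,'' and then the rest is constraint-free linear algebra; the paper's keeps the distribution constraint in play until the end. Your approach is arguably cleaner.

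Two small things to tighten. First, the ``only if'' direction as written is hand-wavy: you invoke~(\ref{NASH Eq: Rank 1 form}) ``after identifying the vectors correctly,'' but that equation is about $\M'$, not $\M$, and it is not immediate that a rank-$1$ factorization $\M=zz^T$ has $z=(x,y)$ with $x=P1_n$, $y=P^T1_m$. The clean fix is exactly the route your ``if'' direction already sets up: if $\M$ has rank $1$ then the decomposition in~(\ref{NASH Eq: M psd M' psd}) has one term, so $\M'$ is rank-$1$ with bottom-right entry $1$, and then~(\ref{NASH Eq: Rank 1 form}) applies. (The paper sidesteps the identification entirely by directly computing $z_i^2 = \bigl(\tfrac{1}{2}\M_{i,}1_{m+n}\bigr)^2 = x_i^2$ from the row and distribution constraints.) Second, after concluding $\M=\bmat x\\y\emat\bmat x\\y\emat^T$ one should note $\M\neq 0$ (which follows since the distribution constraint forces $P\neq 0$) so that $\M$ has rank exactly $1$ and not $0$; the same caveat is implicit in the paper's proof.
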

\begin{proof}
	{}Note that if $\M$ is rank-1, then it can be written as $zz^T$ for some $z \in \mathbb{R}^{m+n}$. The $i$-th diagonal entry in the $X$ submatrix will then be equal to $$z_i^2 \overset{(\ref{NASH Eq: Distribution Inequalities})}{=} \frac{1}{4}z_i^2(1_{m+n}^Tzz^T1_{m+n}) = (\frac{1}{2}\M_{i,}1_{m+n})^2 \overset{(\ref{NASH Eq: R_X})}{=} (P_{i,}1_n)^2 = x_i^2,$$ where the second equality holds because $\M_{i,}$---the $i$-th row of $\M$---is $z_iz^T$. An analogous statement holds for the diagonal entries of $Y$, and hence the condition is necessary.
	
	To show sufficiency, let $z \defeq \xyvec$. Since $\M$ is psd, we have that $\M_{i,j} \le \sqrt{\M_{i,i}\M_{j,j}}$, which implies $\M_{i,j} \le z_iz_j$ by the assumption of the proposition. Recall from the distribution constraint~(\ref{NASH Eq: Distribution Inequalities}) that $\sum_{i=1}^{m+n}\sum_{j=1}^{m+n} \M_{i,j}=4$. Further, the same constraint along with the definitions of $x$ and $y$ imply that $\sum_{i=1}^{m+n} z_i = 2$, which means that $\sum_{i=1}^{m+n} \sum_{j=1}^{m+n} z_iz_j = 4$. Hence in order to have the equality
	
	$$4 = \sum_{i=1}^{m+n} \sum_{j=1}^{m+n} \M_{i,j} \le \sum_{i=1}^{m+n} \sum_{j=1}^{m+n} z_iz_j = 4,$$
	we must have $\M_{i,j} = z_iz_j$ for each $i$ and $j$. Consequently $\M$ is rank-1.\end{proof}

We focus now on two nonconvex objectives that as a consequence of the above proposition would return rank-1 solutions:
\begin{prop}\label{NASH Prop: nonconvex objectives} All optimal solutions to~\ref{NASH Eq: SDP2} with the objective function $\sum_{i=1}^{m+n} \sqrt{\M_{i,i}}$ or $\Tr(\M) - x^Tx - y^Ty$ are rank-1.\end{prop}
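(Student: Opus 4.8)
The plan is to leverage Proposition~\ref{NASH Prop: diagonal sufficiency}, which says that a feasible $\M$ to \ref{NASH Eq: SDP2} is rank-1 if and only if $X_{i,i} = x_i^2$ and $Y_{i,i} = y_i^2$ for all $i$, where $x \defeq P1_n$ and $y \defeq P^T1_m$. So for each of the two objective functions it suffices to show that the optimal value is attained exactly at matrices satisfying these diagonal equalities, and that it is \emph{not} attained anywhere else. The starting observation, used already in the proof of Proposition~\ref{NASH Prop: Zero Sum Rank 1}, is that for any feasible $\M$, applying the Schur complement to $\bmat \M & \bmat x\\y\emat \\ \bmat x^T & y^T\emat & 1\emat$ (which by the reasoning of Section~\ref{NASH SSec: Simplification} is feasible to \ref{NASH Eq: SDP1.1} whenever $\M$ is feasible to \ref{NASH Eq: SDP2}) gives $\M \succeq \xyvec\xyvec^T$. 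In particular $X_{i,i} \ge x_i^2$ and $Y_{i,i} \ge y_i^2$ for every $i$, with the rank-1 case being exactly the case of equality throughout.

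First I would handle the objective $\Tr(\M) - x^Tx - y^Ty$. Writing $\M = \xyvec\xyvec^T + \mathcal{P}$ with $\mathcal{P} \succeq 0$ (valid by the Schur complement bound above), the objective becomes $x^Tx + y^Ty + \Tr(\mathcal{P}) - x^Tx - y^Ty = \Tr(\mathcal{P}) \ge 0$, with equality if and only if $\mathcal{P} = 0$, i.e. $\M = \xyvec\xyvec^T$. One must also check that some feasible solution achieves the value $0$: since a Nash equilibrium $(x^*,y^*)$ always exists, the rank-1 matrix built from the correlated-equilibrium interpretation of $x^*y^{*T}$ is feasible (this was noted in the remark after~(\ref{NASH Eq: Rank 1 form})), so the infimum $0$ is attained. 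Hence every optimal solution is rank-1. Care is needed in phrasing the decomposition $\M = \xyvec\xyvec^T + \mathcal{P}$: here $x,y$ are \emph{defined} from $\M$ via $P$, and the row constraints (\ref{NASH Eq: SDP2 Row x})--(\ref{NASH Eq: SDP2 Row y}) together with the distribution constraint (\ref{NASH Eq: SDP2 Distribution}) are what make the Schur complement argument go through consistently, so I would state this dependence explicitly.

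Next, the objective $\sum_{i=1}^{m+n} \sqrt{\M_{i,i}}$. The key inequality is that for a psd matrix $\M$ we have $\M_{i,i} \ge z_i^2$ where $z \defeq \xyvec$ (from the Schur bound again), hence $\sqrt{\M_{i,i}} \ge z_i = x_i$ or $y_i$ (all nonnegative by (\ref{NASH Eq: SDP2 Nonnegativity}) and the definitions of $x,y$), so $\sum_i \sqrt{\M_{i,i}} \ge \sum_i z_i = 2$, the last equality from the distribution/unity bookkeeping as in Proposition~\ref{NASH Prop: diagonal sufficiency}. Equality forces $\sqrt{\M_{i,i}} = z_i$, i.e. $X_{i,i} = x_i^2$ and $Y_{i,i} = y_i^2$ for all $i$, which by Proposition~\ref{NASH Prop: diagonal sufficiency} means $\M$ is rank-1; and again the rank-1 feasible matrix coming from an exact Nash equilibrium attains the bound $2$, so the minimum is exactly $2$ and is attained only at rank-1 matrices.

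The one genuinely delicate point — and the step I expect to need the most care rather than cleverness — is being scrupulous about which constraints of \ref{NASH Eq: SDP2} are invoked: the Schur-complement inequality $\M \succeq \xyvec\xyvec^T$ with $x = P1_n$, $y = P^T1_m$ is not automatic from $\M \succeq 0$ alone but relies on the row constraints (\ref{NASH Eq: SDP2 Row x})--(\ref{NASH Eq: SDP2 Row y}), as spelled out in Section~\ref{NASH SSec: Simplification} where the correspondence between feasible solutions of \ref{NASH Eq: SDP2} and \ref{NASH Eq: SDP1.1} is established via~(\ref{NASH Eq: M psd M' psd}). Once that bound is in hand, both parts are short. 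I would therefore open the proof by recalling that correspondence, deduce $\M \succeq \xyvec\xyvec^T$, and then run the two one-line arguments above.
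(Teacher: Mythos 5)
Your proposal is correct and follows essentially the same route as the paper: both deduce $\M \succeq \xyvec\xyvec^T$ from the Schur complement (as in Proposition~\ref{NASH Prop: Zero Sum Rank 1}), read off the lower bounds $2$ and $0$ for the two objectives, and observe that attainment forces $X_{i,i}=x_i^2$, $Y_{i,i}=y_i^2$, whence Proposition~\ref{NASH Prop: diagonal sufficiency} gives rank-1. The only cosmetic difference is that for the trace objective you go directly to $\mathcal{P}=0$ via $\Tr(\mathcal{P})=0 \Rightarrow \mathcal{P}=0$ rather than routing through the diagonal characterization, but the two are equivalent since $\Tr(\mathcal{P})=0$ for $\mathcal{P}\succeq 0$ is exactly the statement that the diagonals match.
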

\begin{proof} {}We show that each of these objectives has a specific lower bound which is achieved if and only if the matrix is rank-1.\\
	Observe that since $\M \succeq \xyvec\xyvec^T$, we have $\sqrt{X_{i,i}}\ge x_i$ and $\sqrt{Y_{i,i}} \ge y_i$, and hence $$\sum_{i=1}^{m+n} \sqrt{\M_{i,i}} \ge \sum_{i=1}^{m} x_i + \sum_{i=1}^n y_i= 2.$$
	Further note that $$\Tr(\M) - \xyvec^T\xyvec \ge \xyvec^T\xyvec - \xyvec^T\xyvec = 0.$$
	
	We can see that the lower bounds are achieved if and only if $X_{i,i} = x_i^2$ and $Y_{i,i} = y_i^2$ for all $i$, which by Proposition~\ref{NASH Prop: diagonal sufficiency} happens if and only if $\M$ is rank-1.\end{proof}

We refer to our two objective functions in Proposition~\ref{NASH Prop: nonconvex objectives} as the ``\emph{square root objective}'' and the ``\emph{diagonal gap objective}'' respectively. While these are both nonconvex, we will attempt to iteratively minimize them by linearizing them through a first order Taylor expansion. For example, at iteration $k$ of the algorithm, $$\sum_{i=1}^{m+n} \sqrt{\M_{i,i}^{(k)}} \simeq \sum_{i=1}^{m+n} \sqrt{\M_{i,i}^{(k-1)}} + \frac{1}{2\sqrt{\M_{i,i}^{(k-1)}}}(\M_{i,i}^{(k)} - \M_{i,i}^{(k-1)}).$$ Note that for the purposes of minimization, this reduces to minimizing $\sum_{i=1}^{m+n} \frac{1}{\sqrt{\M_{i,i}^{{(k-1)}}}}\M_{i,i}^{(k)}$.

In similar fashion, for the second objective function, at iteration $k$ we can make the approximation $$\Tr(\M)-\xyvec^{(k)T}\xyvec^{(k)} \simeq \Tr(\M)-\xyvec^{(k-1)T}\xyvec^{(k-1)T} - 2\xyvec^{(k-1)T}(\xyvec^{(k)}-\xyvec^{(k-1)}).$$ Once again, for the purposes of minimization this reduces to minimizing $\Tr(\M)-2\xyvec^{(k-1)T}\xyvec^{(k)}$. This approach then leads to the following two algorithms.\footnote{An algorithm similar to Algorithm \ref{NASH Alg: DG Algorithm} is used in \cite{ibaraki2001rank}.}

\begin{algorithm}[H]
	\caption{Square Root Minimization Algorithm}\label{NASH Alg: SR Algorithm}
	\begin{algorithmic}[1]
		\State Let $x^{(0)} = 1_m, y^{(0)} = 1_n, k = 1$.
		\While {!convergence}
		\State Solve~\ref{NASH Eq: SDP2} with $\sum_{i=1}^m \frac{1}{\sqrt{x_i^{(k-1)}}}X_{i,i} + \sum_{i=1}^n \frac{1}{\sqrt{y_i^{(k-1)}}}Y_{i,i}$ as the objective, and let $\M^*$ be an optimal solution.
		\State Let $x^{(k)} = diag(X^*), y^{(k)}=diag(Y^*)$.
		\State Let $k = k+1$.
		\EndWhile
	\end{algorithmic}
\end{algorithm}

\begin{algorithm}[H]
	\caption{Diagonal Gap Minimization Algorithm}\label{NASH Alg: DG Algorithm}
	\begin{algorithmic}[1]
		\State Let $x^{(0)}=0_m, y^{(0)}=0_n, k=1$.
		\While {!convergence}
		\State Solve~\ref{NASH Eq: SDP2} with $\Tr(X) + \Tr(Y) - 2\xyvec^{(k-1)T}\xyvec^{(k)}$ as the objective, and let $\M^*$ be an optimal solution.
		\State Let $x^{(k)} = P^*1_n, y^{(k)}=P^{*T}1_m$.
		\State Let $k = k+1$.
		\EndWhile
	\end{algorithmic}
\end{algorithm}

\begin{remark}
	Note that the first iteration of both algorithms uses the nuclear norm (i.e. trace) of $\M$ as the objective.
\end{remark}

The square root algorithm has the following property.

\begin{theorem}\label{NASH Thm: SR Monotone}
	Let $\M^{(1)}, \M^{(2)}, \ldots$ be the sequence of optimal matrices obtained from the square root algorithm. Then the sequence
	\begin{equation}\label{NASH sum sqrt}
	\{\sum_{i=1}^{m+n} \sqrt{\M_{i,i}^{(k)}}\}
	\end{equation}
	is nonincreasing and is lower bounded by two. If it reaches two at some iteration $t$, then the matrix $\M^{(t)}$ is rank-1.
\end{theorem}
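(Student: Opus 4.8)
The plan is to prove the two claimed properties separately, relying heavily on Proposition~\ref{NASH Prop: nonconvex objectives}, Proposition~\ref{NASH Prop: diagonal sufficiency}, and the semidefinite inequality $\M \succeq \xyvec\xyvec^T$ that holds for any feasible solution of \ref{NASH Eq: SDP2} (as established via the Schur complement in Section~\ref{NASH SSec: Simplification}).

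First I would establish the lower bound. For any feasible solution $\M^{(k)}$ of \ref{NASH Eq: SDP2}, the psd constraint $\M^{(k)} \succeq \xyvec^{(k)}\xyvec^{(k)T}$ forces $\sqrt{X_{i,i}^{(k)}} \ge x_i^{(k)}$ and $\sqrt{Y_{i,i}^{(k)}} \ge y_i^{(k)}$ for every $i$, and since $x^{(k)} \in \triangle_m$ and $y^{(k)} \in \triangle_n$ (their entries sum to $1$ by the row and distribution constraints), summing gives $\sum_{i=1}^{m+n}\sqrt{\M_{i,i}^{(k)}} \ge \sum_i x_i^{(k)} + \sum_i y_i^{(k)} = 2$. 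This is exactly the bound already used in the proof of Proposition~\ref{NASH Prop: nonconvex objectives}, so I would simply cite that argument.

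Next I would prove monotonicity of the sequence in (\ref{NASH sum sqrt}). The key point is that the linearized objective minimized at iteration $k$, namely $f_k(\M) \defeq \sum_{i=1}^{m+n}\frac{1}{\sqrt{\M_{i,i}^{(k-1)}}}\M_{i,i}$, is a first-order (tangent) underestimator of the concave function $g(\M) \defeq \sum_{i=1}^{m+n}\sqrt{\M_{i,i}}$ up to an additive constant: concavity of $\sqrt{\cdot}$ gives $g(\M) \le g(\M^{(k-1)}) + \tfrac12 \sum_i \frac{1}{\sqrt{\M_{i,i}^{(k-1)}}}(\M_{i,i} - \M_{i,i}^{(k-1)})$ for every feasible $\M$. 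Evaluating this at $\M = \M^{(k)}$, and using that $\M^{(k)}$ minimizes $f_k$ over the feasible set (so $f_k(\M^{(k)}) \le f_k(\M^{(k-1)})$), one gets
\[
g(\M^{(k)}) \;\le\; g(\M^{(k-1)}) + \tfrac12\big(f_k(\M^{(k)}) - f_k(\M^{(k-1)})\big) \;\le\; g(\M^{(k-1)}),
\]
which is precisely the desired monotonicity. One subtlety to address is that the $\M_{i,i}^{(k-1)}$ appearing in the denominators could be zero; here one notes that if some $x_i^{(k-1)}$ or $y_i^{(k-1)}$ is zero then the corresponding diagonal entry is pinned to zero in all subsequent iterations by the psd constraint, so such indices can be dropped from the sum, or alternatively a standard $\epsilon$-perturbation/limit argument handles the degeneracy. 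I expect this bookkeeping around vanishing diagonal entries to be the main (minor) obstacle; the core inequality chain is routine once concavity is invoked.

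Finally, for the ``rank-1 at iteration $t$'' claim, suppose $\sum_{i=1}^{m+n}\sqrt{\M_{i,i}^{(t)}} = 2$. Combined with the chain $\sqrt{X_{i,i}^{(t)}}\ge x_i^{(t)}$, $\sqrt{Y_{i,i}^{(t)}}\ge y_i^{(t)}$ and $\sum_i x_i^{(t)} + \sum_i y_i^{(t)} = 2$, equality forces $\sqrt{X_{i,i}^{(t)}} = x_i^{(t)}$ and $\sqrt{Y_{i,i}^{(t)}} = y_i^{(t)}$, i.e. $X_{i,i}^{(t)} = (x_i^{(t)})^2$ and $Y_{i,i}^{(t)} = (y_i^{(t)})^2$ for all $i$. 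By Proposition~\ref{NASH Prop: diagonal sufficiency}, this implies $\M^{(t)}$ is rank-1, completing the proof.
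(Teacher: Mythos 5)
Your proof is correct and takes essentially the same approach as the paper. The key inequality $\sqrt{\M_{i,i}^{(k)}} \le \frac{1}{2}\bigl(\frac{\M_{i,i}^{(k)}}{\sqrt{\M_{i,i}^{(k-1)}}}+\sqrt{\M_{i,i}^{(k-1)}}\bigr)$ that you derive from concavity of $\sqrt{\cdot}$ is exactly the one the paper obtains via AM-GM (the two derivations are equivalent), after which both proofs invoke optimality of $\M^{(k)}$ for the linearized objective and then cite Proposition~\ref{NASH Prop: nonconvex objectives} for the lower bound of two and the rank-1 conclusion. One small side remark: your explanation that a vanishing diagonal entry stays zero ``by the psd constraint'' is slightly off---the feasible set does not change across iterations, so nothing is pinned by the constraints; rather, a zero entry $\M_{i,i}^{(k-1)}=0$ gives that index an infinite coefficient in the linearized objective, which is what forces $\M_{i,i}^{(k)}=0$ at the optimum (or one can simply adopt a $0/0=0$ convention and drop such indices from the sum, as you also suggest). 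This is a corner case the paper's proof does not address, and flagging it is reasonable, but the core argument is unaffected.
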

\begin{proof}
	{}Observe that for any $k>1$,
	$$\sum_{i=1}^{m+n} \sqrt{\M_{i,i}^{(k)}} \le \frac{1}{2}\sum_{i=1}^{m+n} (\frac{\M_{i,i}^{(k)}}{\sqrt{\M_{i,i}^{(k-1)}}}+\sqrt{\M_{i,i}^{(k-1)}}) \le \frac{1}{2}\sum_{i=1}^{m+n} (\frac{\M_{i,i}^{(k-1)}}{\sqrt{\M_{i,i}^{(k-1)}}}+\sqrt{\M_{i,i}^{(k-1)}}) = \sum_{i=1}^{m+n} \sqrt{\M_{i,i}^{(k-1)}},$$
	where the first inequality follows from the arithmetic-mean-geometric-mean inequality, and the second follows from that $\M_{i,i}^{(k)}$ is chosen to minimize $\sum_{i=1}^{m+n} \frac{\M_{i,i}^{(k)}}{\sqrt{\M_{i,i}^{(k-1)}}}$ and hence achieves a no larger value than the feasible solution $\M^{(k-1)}$. This shows that the sequence is nonincreasing.
	
	The proof of Proposition~\ref{NASH Prop: nonconvex objectives} already shows that the sequence is lower bounded by two, and Proposition~\ref{NASH Prop: nonconvex objectives} itself shows that reaching two is sufficient to have the matrix be rank-1.
\end{proof}

The diagonal gap algorithm has the following property.
\begin{theorem}\label{NASH Thm: DG Monotone}
	Let $\M^{(1)}, \M^{(2)}, \ldots$ be the sequence of optimal matrices obtained from the diagonal gap algorithm. Then the sequence \begin{equation}\{\Tr(\M^{(k)})- \xyvec^{(k)T}\xyvec^{(k)}\}\end{equation}
	is nonincreasing and is lower bounded by zero. If it reaches zero at some iteration $t$, then the matrix $\M^{(t)}$ is rank-1.
\end{theorem}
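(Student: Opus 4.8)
The plan is to mirror the structure of the proof of Theorem \ref{NASH Thm: SR Monotone}, but with the linear surrogate being the one used in the diagonal gap algorithm, namely $\Tr(\M) - 2z^{(k-1)T}z^{(k)}$, where I write $z \defeq \bvec x \\ y \evec$ throughout for brevity. First I would establish the lower bound of zero: this is exactly the statement already proven inside Proposition \ref{NASH Prop: nonconvex objectives}, where it was shown that $\Tr(\M) - z^Tz \ge 0$ for any feasible $\M$, because $\M \succeq zz^T$ (via the Schur complement argument applied to $\M'$ as in \ref{NASH Eq: M psd M' psd}). So for every $k$, $\Tr(\M^{(k)}) - z^{(k)T}z^{(k)} \ge 0$, and Proposition \ref{NASH Prop: nonconvex objectives} also gives that equality to zero forces $\M^{(k)}$ to be rank-$1$.

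Next I would prove monotonicity. Fix $k > 1$. The key algebraic identity is that for any vectors $u,v$ of equal dimension, $v^Tv = 2u^Tv - u^Tu + \|v-u\|^2 \ge 2u^Tv - u^Tu$, equivalently $-v^Tv \le u^Tu - 2u^Tv$. Applying this with $u = z^{(k-1)}$ and $v = z^{(k)}$ gives
\begin{equation*}
\Tr(\M^{(k)}) - z^{(k)T}z^{(k)} \le \Tr(\M^{(k)}) - 2z^{(k-1)T}z^{(k)} + z^{(k-1)T}z^{(k-1)}.
\end{equation*}
Now $\M^{(k)}$ is chosen to minimize the linearized objective $\Tr(\M) - 2z^{(k-1)T}z$ over the feasible set of \ref{NASH Eq: SDP2}, and $\M^{(k-1)}$ is feasible, so the right-hand side is at most
\begin{equation*}
\Tr(\M^{(k-1)}) - 2z^{(k-1)T}z^{(k-1)} + z^{(k-1)T}z^{(k-1)} = \Tr(\M^{(k-1)}) - z^{(k-1)T}z^{(k-1)}.
\end{equation*}
Chaining these two inequalities yields $\Tr(\M^{(k)}) - z^{(k)T}z^{(k)} \le \Tr(\M^{(k-1)}) - z^{(k-1)T}z^{(k-1)}$, which is the claimed nonincreasing property. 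The final sentence of the theorem then follows immediately: if the sequence reaches zero at iteration $t$, then $\Tr(\M^{(t)}) - z^{(t)T}z^{(t)} = 0$, and Proposition \ref{NASH Prop: nonconvex objectives} forces $\M^{(t)}$ to be rank-$1$.

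I do not anticipate a serious obstacle here; the argument is essentially a convexity/linearization bookkeeping exercise once the right identity ($-v^Tv \le u^Tu - 2u^Tv$, which is just $\|u-v\|^2 \ge 0$) is isolated. The one point that warrants a moment of care is confirming that $z^{(k)} = \bvec x^{(k)} \\ y^{(k)} \evec$ as produced by Algorithm \ref{NASH Alg: DG Algorithm} (i.e., $x^{(k)} = P^{*}1_n$, $y^{(k)} = P^{*T}1_m$) is indeed the same vector $z$ for which $\M^{(k)} \succeq z^{(k)}z^{(k)T}$ holds — but this is exactly the identification made in Section \ref{NASH SSec: Simplification} and reused in Proposition \ref{NASH Prop: nonconvex objectives}, so it is already in hand. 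One should also note that the linearized objective in Algorithm \ref{NASH Alg: DG Algorithm} is written as $\Tr(X) + \Tr(Y) - 2z^{(k-1)T}z^{(k)}$, and $\Tr(X) + \Tr(Y) = \Tr(\M)$, so there is no discrepancy.
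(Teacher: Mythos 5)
Your proof is correct and matches the paper's argument essentially step for step: the lower bound and the rank-1-at-zero claim are both pulled from Proposition~\ref{NASH Prop: nonconvex objectives}, and the monotonicity chain uses exactly the same two inequalities — adding $\|z^{(k)}-z^{(k-1)}\|^2 \ge 0$ to complete the square, then invoking optimality of $\M^{(k)}$ for the linearized objective against the feasible point $\M^{(k-1)}$. The observations you flag at the end (that $z^{(k)}$ is the vector satisfying $\M^{(k)} \succeq z^{(k)}z^{(k)T}$ via the Schur complement, and that $\Tr(X)+\Tr(Y)=\Tr(\M)$) are exactly the bookkeeping the paper implicitly relies on, so the proposal is sound.
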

\begin{proof} {}Observe that
	\begin{align*}
	&\ \Tr(\M^{(k)}) - \xyvec^{(k)T}\xyvec^{(k)}\\
	\le& \Tr(\M^{(k)})-\xyvec^{(k)T}\xyvec^{(k)} + \left(\xyvec^{(k)}- \xyvec^{(k-1)}\right)^T\left(\xyvec^{(k)}- \xyvec^{(k-1)}\right)\\
	=&\ \Tr(\M^{(k)})-2\xyvec^{(k)T}\xyvec^{(k-1)}+\xyvec^{(k-1)T}\xyvec^{(k-1)}\\
	\le&\  \Tr(\M^{(k-1)})-2\xyvec^{(k-1)T}\xyvec^{(k-1)}+\xyvec^{(k-1)T}\xyvec^{(k-1)}\\
	=&\ \Tr(\M^{(k-1)})-\xyvec^{(k-1)T}\xyvec^{(k-1)},
	\end{align*}
	where the second inequality follows from that $\M^{(k)}$ is chosen to minimize $$\Tr(\M^{(k-1)})-2\xyvec^{(k-1)T}\xyvec^{(k-1)}$$ and hence achieves a no larger value than the feasible solution $\M^{(k-1)}$. This shows that the sequence is nonincreasing.
	
	The proof of Proposition~\ref{NASH Prop: nonconvex objectives} already shows that the sequence is lower bounded by zero, and Proposition~\ref{NASH Prop: nonconvex objectives} itself shows that reaching zero is sufficient to have the matrix be rank-1.\end{proof}

We also invite the reader to also see Theorem \ref{NASH Thm: Trace minus 2xxt} in the next section which relates the objective value of the diagonal gap minimization algorithm and the quality of approximate Nash equilibria that the algorithm produces.

\subsection{Numerical Experiments}
We tested Algorithms~\ref{NASH Alg: SR Algorithm} and~\ref{NASH Alg: DG Algorithm} on games coming from 100 randomly generated payoff matrices with entries bounded in $[0,1]$ of varying sizes. Below is a table of statistics for $20 \times 20$ matrices; the data for the rest of the sizes can be found in Appendix~\ref{NASH Sec: Appendix Statistics}.\footnote{The code and instance data that produced these results is publicly available at \texttt{https://github.com/jeffreyzhang92/SDP\_Nash}. The function \texttt{nash.m} computes an approximate Nash equilibrium using one of our two algorithms as specified by the user.} We can see that our algorithms return approximate Nash equilibria with fairly low $\epsilon$ (recall the definition from Section~\ref{NASH SSec: Nash as QP}). We ran 20 iterations of each algorithm on each game. Using the SDP solver of \cite{mosek}, each iteration takes on average under 4 seconds to solve on a standard personal machine with a 3.4 GHz processor and 16 GB of memory.

\begin{table}[H]
	\caption{Statistics on $\epsilon$ for $20\times 20$ games after 20 iterations.\label{NASH Tab: 20x20 Main}}
	{\begin{tabular}{|c|c|c|c|c|c|}\hline
			Algorithm & Max & Mean & Median & StDev\\\hline
			Square Root&	0.0198&	0.0046&	0.0039&	0.0034	\\\hline
			Diagonal Gap&	0.0159&	0.0032&	0.0024&	0.0032	\\\hline
	\end{tabular}}
	\centering
\end{table}

The histograms below show the effect of increasing the number of iterations on lowering $\epsilon$ on $20 \times 20$ games. For both algorithms, there was a clear improvement of the $\epsilon$ by increasing the number of iterations.

\begin{figure}[H]
	\includegraphics[height=.3\textheight,keepaspectratio]{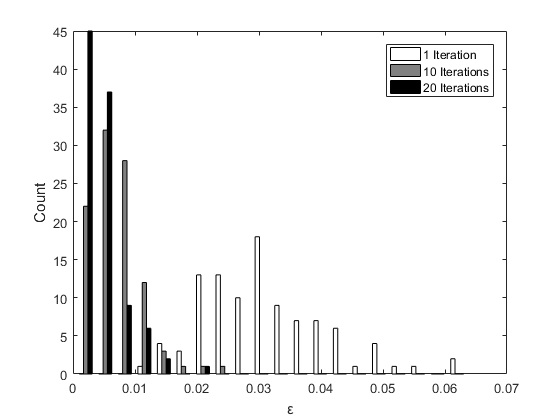}
	\includegraphics[height=.3\textheight,keepaspectratio]{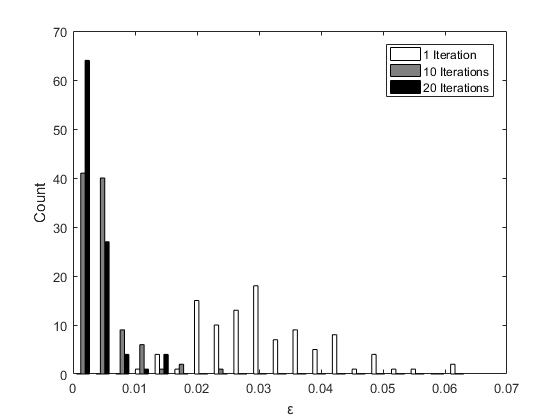}
	\caption{Distribution of $\epsilon$ over numbers of iterations for the square root algorithm (left) and the diagonal gap algorithm (right).\label{NASH Fig: Epsilon Iterations}}
\end{figure}

\section{Bounds on $\epsilon$ for General Games}\label{NASH Sec: Bounds}

Since the problem of computing a Nash equilibrium to an arbitrary bimatrix game is PPAD-complete, it is unlikely that one can find rank-1 solutions to this SDP in polynomial time. In Section~\ref{NASH Sec: Algorithms}, we designed objective functions (such as variations of the nuclear norm) that empirically do very well in finding low-rank solutions to~\ref{NASH Eq: SDP2}. Nevertheless, it is of interest to know if the solution returned by~\ref{NASH Eq: SDP2} is not rank-1, whether one can recover an $\epsilon$-Nash equilibrium from it and have a guarantee on $\epsilon$. Our goal in this section is to study this question.

\emph{Notational Remark:} Recall our notation for the matrix

$$\mathcal{M} \mathrel{\mathop:}= \left[\begin{matrix} X & P\\ Z & Y\end{matrix}\right].$$
Throughout this section, any matrices $X, Z, P=Z^T$ and $Y$ are assumed to be taken from a feasible solution to~\ref{NASH Eq: SDP2}. Furthermore, $x$ and $y$ will be $P1_n$ and $P^T1_m$ respectively.

The ultimate results of this section are the theorems in Sections \ref{NASH SSec: Bounds on e} and \ref{NASH SSec: Rank-2 case}. To work towards them, we need a number of preliminary lemmas which we present in Section~\ref{NASH SSec: Useful Lemmas}.

\subsection{Lemmas Towards Bounds on $\epsilon$}\label{NASH SSec: Useful Lemmas}

We first observe the following connection between the approximate payoffs $\Tr(AZ)$ and $\Tr(BZ)$, and $\epsilon(x,y)$, as defined in Section~\ref{NASH SSec: Nash as QP}.
\begin{lem}\label{NASH Lem: Bound for e}Consider any feasible solution to~\ref{NASH Eq: SDP2}. Then
	$$\epsilon(x,y) \le \max\{\Tr(AZ)-x^TAy, \Tr(BZ) - x^TBy\}.$$\end{lem}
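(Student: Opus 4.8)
The plan is to split $\epsilon(x,y)$ into its two pieces and bound each separately. Recall from Section~\ref{NASH SSec: Nash as QP} that $\epsilon(x,y) = \max\{\epsilon_A(x,y),\epsilon_B(x,y)\}$, where $\epsilon_A(x,y) = \max_i e_i^TAy - x^TAy$ and $\epsilon_B(x,y) = \max_i x^TBe_i - x^TBy$. Since the maximum of two maxima is the maximum over the union, it suffices to prove the two inequalities $\epsilon_A(x,y) \le \Tr(AZ) - x^TAy$ and $\epsilon_B(x,y) \le \Tr(BZ) - x^TBy$; taking the larger of the two then yields the claim.

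The main ingredient — and really the only nontrivial input — is the observation already made in Section~\ref{NASH SSec: Simplification} that the row constraints together with the correlated equilibrium constraints \ref{NASH Eq: SDP2 CE A}--\ref{NASH Eq: SDP2 CE B} imply, for any feasible solution to \ref{NASH Eq: SDP2}, the relaxed Nash inequalities $\Tr(AZ) \ge e_k^TAy$ for all $k \in \{1,\ldots,m\}$ and $\Tr(BZ) \ge x^TBe_k$ for all $k \in \{1,\ldots,n\}$. Concretely, for player A I would reproduce the chain
$$\Tr(AZ) = \sum_{i=1}^m\sum_{j=1}^n A_{i,j}P_{i,j} \overset{\ref{NASH Eq: SDP2 CE A}}{\ge} \sum_{i=1}^m\sum_{j=1}^n A_{k,j}P_{i,j} = \sum_{j=1}^n A_{k,j}\Big(\sum_{i=1}^m P_{i,j}\Big) = \sum_{j=1}^n A_{k,j}y_j = e_k^TAy,$$
using $\Tr(AZ) = \sum_{i,j}A_{i,j}P_{i,j}$ (since $Z=P^T$), nonnegativity of $P$ in the CE step, and the identity $\sum_{i=1}^m P_{i,j} = y_j$ coming from $y \defeq P^T 1_m$. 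Subtracting $x^TAy$ and maximizing over $k$ gives $\epsilon_A(x,y) \le \Tr(AZ) - x^TAy$.

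Then I would carry out the symmetric computation for player B: summing \ref{NASH Eq: SDP2 CE B} over the column index, and using $\Tr(BZ) = \sum_{i,j}B_{i,j}P_{i,j}$ together with $\sum_{j=1}^n P_{i,j} = x_i$ (from $x \defeq P 1_n$), yields $\Tr(BZ) \ge x^TBe_k$ for every $k$, hence $\epsilon_B(x,y) \le \Tr(BZ) - x^TBy$. Combining the two bounds completes the proof. I do not anticipate a genuine obstacle here; the only point requiring care is the index bookkeeping relating $Z$, $P$, $x$, and $y$ — i.e., confirming $\Tr(AZ)=\sum_{i,j}A_{i,j}P_{i,j}$, $\Tr(BZ)=\sum_{i,j}B_{i,j}P_{i,j}$, $\sum_i P_{i,j}=y_j$, and $\sum_j P_{i,j}=x_i$ — all of which are immediate from the conventions fixed in Section~\ref{NASH Sec: Intro to SDP}.
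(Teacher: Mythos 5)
Your proof is correct and follows the same route as the paper: both invoke the observation from Section~\ref{NASH SSec: Simplification} that the row and correlated-equilibrium constraints imply $\Tr(AZ) \ge e_k^TAy$ and $\Tr(BZ) \ge x^TBe_k$ for all $k$, and then subtract $x^TAy$ and $x^TBy$ respectively before taking the maximum. You simply unfold that chain of inequalities explicitly rather than citing it by reference.
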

\begin{proof} {}Recall from the argument at the beginning of Section \ref{NASH SSec: Simplification} that constraints (\ref{NASH Eq: CE_A}) and (\ref{NASH Eq: CE_B}) imply $\Tr(AZ) \ge e_i^TAy$ and $\Tr(BZ) \ge x^TBe_i$ for all $i$. Hence, we have ${\epsilon_A \le \Tr(AZ)-x^TAy}$ and $\epsilon_B \le \Tr(BZ) - x^TBy$.\end{proof}

We thus are interested in the difference of the two matrices $P=Z^T$ and $xy^T$. These two matrices can be interpreted as two different probability distributions over the strategy outcomes. The matrix $P$ is the probability distribution from the SDP which generates the approximate payoffs $\Tr(AZ)$ and $\Tr(BZ)$, while $xy^T$ is the product distribution that would have resulted if the matrix had been rank-1. We will see that the difference of these distributions is key in studying the $\epsilon$ which results from \ref{NASH Eq: SDP2}. Hence, we first take steps to represent this difference.

\begin{lem}\label{NASH Lem: Partition Lemma} Consider any feasible matrix $\M$ to~\ref{NASH Eq: SDP2} with an eigendecomposition
	\begin{equation}
	\begin{aligned}
	\M = \sum_{i=1}^k \lambda_i v_iv_i^T =\mathrel{\mathop:} \sum_{i = 1}^k \lambda_i \bmat a_i \\ b_i\emat \bmat a_i \\ b_i\emat^T,
	\end{aligned}
	\end{equation}
	so that the eigenvectors $v_i\in \mathbb{R}^{m+n}$ are partitioned into vectors $a_i \in \mathbb{R}^m$ and $b_i\in \mathbb{R}^n$. Then for all $i,\sum_{j=1}^m (a_i)_j = \sum_{j=1}^n (b_i)_j$.\end{lem}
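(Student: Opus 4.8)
The plan is to exploit the two structural constraints that define the feasible set of \ref{NASH Eq: SDP2}: the row constraints \eqref{NASH Eq: SDP2 Row x} and \eqref{NASH Eq: SDP2 Row y}, together with positive semidefiniteness. First I would observe that the row constraints \eqref{NASH Eq: SDP2 Row x}--\eqref{NASH Eq: SDP2 Row y}, when written in matrix form, say exactly that $X 1_m - P 1_n = 0_m$ and $Y 1_n - Z 1_m = 0_n$. Stacking these, and recalling $Z = P^T$, this is the single statement
\[
\M \bmat 1_m \\ -1_n \emat = \bmat X 1_m - P 1_n \\ Z 1_m - Y 1_n \emat = 0_{m+n}.
\]
So the vector $w \defeq (1_m, -1_n) \in \mathbb{R}^{m+n}$ lies in the null space of $\M$.

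Next I would plug in the eigendecomposition. Since $\M = \sum_{i=1}^k \lambda_i v_i v_i^T$ with each $\lambda_i > 0$ (we may take only the nonzero eigenvalues in the decomposition), we have $0 = w^T \M w = \sum_{i=1}^k \lambda_i (w^T v_i)^2$, and since each term is nonnegative and each $\lambda_i$ is positive, it follows that $w^T v_i = 0$ for every $i$. Writing $v_i = (a_i, b_i)$ as in the statement, $w^T v_i = 1_m^T a_i - 1_n^T b_i = \sum_{j=1}^m (a_i)_j - \sum_{j=1}^n (b_i)_j$, so this quantity is zero, which is exactly the claim.

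I do not anticipate a genuine obstacle here; the only point requiring a little care is the reduction to the case where all $\lambda_i$ in the decomposition are strictly positive (equivalently, that $w \in \nulls(\M)$ forces $w \perp \text{col}(\M) = \text{span}\{v_1,\dots,v_k\}$ for a psd matrix). This is standard: for a psd matrix, $w^T \M w = 0$ together with $\M \succeq 0$ implies $\M w = 0$, and one can always choose the eigendecomposition so that the listed eigenvectors span the column space. Alternatively one argues directly from $w^T \M w = \sum \lambda_i (w^Tv_i)^2 = 0$ with $\lambda_i \ge 0$, dropping any zero-eigenvalue terms at the outset. Either way the argument is a couple of lines, so the lemma follows immediately.
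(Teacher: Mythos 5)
Your proof is correct and arrives at the same central identity $\sum_i \lambda_i (1_m^T a_i - 1_n^T b_i)^2 = 0$ that the paper derives, but your packaging is tidier: by reading the row constraints as $\M (1_m, -1_n)^T = 0$, you bypass the distribution constraint \eqref{NASH Eq: SDP2 Distribution} entirely (the paper invokes it to normalize four scalar quantities to $1$ before subtracting them in pairs). Once $w \defeq (1_m,-1_n)$ lies in $\nulls(\M)$, the orthogonality of the eigenvectors in fact gives $v_i^T w = 0$ for each $i$ with $\lambda_i \neq 0$ even without passing through $w^T\M w$, so either your quadratic-form argument or the direct one closes the proof.
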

\begin{proof}
	{}We know from (\ref{NASH Eq: SDP2 Distribution}), (\ref{NASH Eq: SDP2 Row x}), and (\ref{NASH Eq: SDP2 Row y}) that
	\begin{align}
	\sum_{i=1}^k \lambda_i 1_m^T a_ia_i^T 1_m \overset{(\ref{NASH Eq: SDP2 Distribution}),(\ref{NASH Eq: SDP2 Row x})}{=} 1, \label{Nash Eq: pl-1}\\
	\sum_{i=1}^k \lambda_i 1_m^T a_ib_i^T 1_n \overset{(\ref{NASH Eq: SDP2 Distribution})}{=} 1,\label{Nash Eq: pl-2}\\
	\sum_{i=1}^k \lambda_i 1_n^T b_ia_i^T 1_m \overset{(\ref{NASH Eq: SDP2 Distribution})}{=} 1,\label{Nash Eq: pl-3}\\
	\sum_{i=1}^k \lambda_i 1_n^T b_ib_i^T 1_n \overset{(\ref{NASH Eq: SDP2 Distribution}),(\ref{NASH Eq: SDP2 Row y})}{=} 1.\label{Nash Eq: pl-4}
	\end{align}
	
	Then by subtracting terms we have
	\begin{align}
	(\ref{Nash Eq: pl-1})-(\ref{Nash Eq: pl-2})=\sum_{i=1}^k \lambda_i 1_m^Ta_i (a_i^T1_m - b_i^T1_n) = 0, \label{Nash Eq: pl-5}\\
	(\ref{Nash Eq: pl-3})-(\ref{Nash Eq: pl-4})=\sum_{i=1}^k \lambda_i 1_n^Tb_i (a_i^T1_m - b_i^T1_n) = 0. \label{Nash Eq: pl-6}
	\end{align}
	
	By subtracting again these imply
	\begin{equation}
	(\ref{Nash Eq: pl-5})-(\ref{Nash Eq: pl-6})=\sum_{i=1}^k \lambda_i (1_m^T a_i - 1_n^Tb_i)^2 = 0.\label{Nash Eq: pl-7}
	\end{equation}
	As all $\lambda_{i}$ are nonnegative due to positive semidefiniteness of $\M$, the only way for this equality to hold is to have $1_m^Ta_i = 1_n^Tb_i, \forall i$. This is equivalent to the statement of the claim.
\end{proof}

From Lemma~\ref{NASH Lem: Partition Lemma}, we can let $s_i \defeq \sum_{j=1}^m (a_i)_j = \sum_{j=1}^n (b_i)_j$, and furthermore we assume without loss of generality that each $s_i$ is nonnegative. Note that from the definition of $x$ we have
\begin{equation} x_i = \sum_{j = 1}^m P_{ij} = \sum_{l = 1}^k \sum_{j = 1}^m \lambda_l (a_l)_i (b_l)_j = \sum_{j = 1}^k \lambda_j s_j (a_l)_i.
\end{equation}
Hence,
\begin{equation}\label{NASH Eq: Row constraint x}x = \sum_{i =1}^k \lambda_i s_i a_i.\end{equation}
Similarly,
\begin{equation}\label{NASH Eq: Row constraint y}
y = \sum_{i=1}^k \lambda_i s_i b_i.
\end{equation}
Finally note from the distribution constraint (\ref{NASH Eq: Distribution Inequalities}) that this implies \begin{equation}\label{NASH dist const}
\sum_{i=1}^k \lambda_is_i^2 = 1.\end{equation}

\begin{lem}\label{NASH Lem: Representation Lemma}
	Let $$\M = \sum_{i=1}^k \lambda_i \bmat a_i\\b_i \emat \bmat a_i\\b_i \emat^T,$$ be a feasible solution to~\ref{NASH Eq: SDP2}, such that the eigenvectors of $\M$ are partitioned into $a_i$ and $b_i$ with $\sum_{j=1}^m (a_i)_j = \sum_{j=1}^n (b_i)_j=s_i, \forall i$. Then $$P-xy^T= \sum_{i = 1}^k \sum_{j>i}^k \lambda_i\lambda_j (s_ja_i - s_ia_j)(s_jb_i - s_ib_j)^T.$$
\end{lem}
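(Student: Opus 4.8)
The plan is to verify the claimed identity by a direct computation starting from the eigendecomposition of $\M$ and the expressions \eqref{NASH Eq: Row constraint x} and \eqref{NASH Eq: Row constraint y} for $x$ and $y$. First I would write $P = Z^T = \sum_{i=1}^k \lambda_i a_i b_i^T$, which follows from reading off the upper-right block of $\M = \sum_i \lambda_i (a_i,b_i)(a_i,b_i)^T$. Then, using $x = \sum_i \lambda_i s_i a_i$ and $y = \sum_j \lambda_j s_j b_j$, I would expand
\[
xy^T = \sum_{i=1}^k \sum_{j=1}^k \lambda_i \lambda_j s_i s_j\, a_i b_j^T.
\]
So the goal becomes showing
\[
\sum_{i=1}^k \lambda_i a_i b_i^T - \sum_{i=1}^k\sum_{j=1}^k \lambda_i\lambda_j s_i s_j\, a_i b_j^T \;=\; \sum_{i=1}^k\sum_{j>i}^k \lambda_i \lambda_j (s_j a_i - s_i a_j)(s_j b_i - s_i b_j)^T.
\]

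The main step is to expand the right-hand side and match it term by term. Expanding a single summand $(s_j a_i - s_i a_j)(s_j b_i - s_i b_j)^T$ gives four terms: $s_j^2 a_i b_i^T - s_i s_j a_i b_j^T - s_i s_j a_j b_i^T + s_i^2 a_j b_j^T$. Summing over $j > i$ and then over $i$, I would collect the ``diagonal'' contributions ($a_\ell b_\ell^T$ terms) and the ``off-diagonal'' contributions ($a_i b_j^T$ with $i \neq j$) separately. For the coefficient of a fixed $a_\ell b_\ell^T$, the contributions come from pairs $(i,j) = (\ell, j)$ with $j > \ell$ (contributing $\lambda_\ell \lambda_j s_j^2$) and pairs $(i,j) = (i,\ell)$ with $i < \ell$ (contributing $\lambda_i \lambda_\ell s_i^2$); together these total $\lambda_\ell \sum_{j \neq \ell} \lambda_j s_j^2$, which by the distribution constraint \eqref{NASH dist const} ($\sum_j \lambda_j s_j^2 = 1$) equals $\lambda_\ell(1 - \lambda_\ell s_\ell^2) = \lambda_\ell - \lambda_\ell^2 s_\ell^2$. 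This matches the diagonal part of the left-hand side, where $\sum_i \lambda_i a_i b_i^T$ contributes $\lambda_\ell$ and $-\sum_{i,j}\lambda_i\lambda_j s_i s_j a_i b_j^T$ contributes $-\lambda_\ell^2 s_\ell^2$ on the diagonal. For the off-diagonal coefficient of $a_i b_j^T$ with $i \neq j$, the right-hand side contributes $-\lambda_i \lambda_j s_i s_j$ (from the term indexed by the unordered pair $\{i,j\}$, regardless of which is larger), which exactly matches the off-diagonal part of the left-hand side.

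I do not expect any serious obstacle here; this is essentially a bookkeeping exercise. The one subtlety worth being careful about is the bilinear-form structure: the product $a b^T$ is not symmetric, so in the expansion of $(s_j a_i - s_i a_j)(s_j b_i - s_i b_j)^T$ the two cross terms $s_i s_j a_i b_j^T$ and $s_i s_j a_j b_i^T$ are genuinely distinct matrices, and one must track which of $i,j$ sits in the $a$-slot versus the $b$-slot. Keeping the pair $(i,j)$ ordered in the cross-term accounting (rather than treating $a_i b_j^T$ and $a_j b_i^T$ as interchangeable) resolves this cleanly. The only external input needed beyond the eigendecomposition is the normalization \eqref{NASH dist const}, which collapses the diagonal sum; everything else is algebra. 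I would present the proof as: (1) record $P = \sum_i \lambda_i a_i b_i^T$ and the formulas for $x, y$; (2) expand the claimed right-hand side into diagonal and off-diagonal parts; (3) invoke \eqref{NASH dist const} for the diagonal part; (4) conclude equality with $P - xy^T$.
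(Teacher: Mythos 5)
Your proposal is correct and uses the same ingredients as the paper's proof: the formulas $P=\sum_i\lambda_i a_ib_i^T$, $x=\sum_i\lambda_i s_i a_i$, $y=\sum_j\lambda_j s_j b_j$, and the normalization $\sum_i\lambda_i s_i^2=1$. The paper reaches the same identity by transforming $P-xy^T$ directly (factoring out $a_i$, inserting $\sum_j\lambda_j s_j^2=1$ in front of $b_i$, and then merging the $(i,j)$ and $(j,i)$ summands), whereas you expand the claimed right-hand side and match coefficients of $a_pb_q^T$; these are the same computation run in opposite directions, so the approaches are essentially identical.
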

\begin{proof}
	{}Using equations (\ref{NASH Eq: Row constraint x}) and (\ref{NASH Eq: Row constraint y}) we can write
	\begin{equation*}
	\begin{aligned}
	P - xy^T &= \sum_{i=1}^k \lambda_i a_ib_i^T - (\sum_{i =1}^k \lambda_i s_i a_i)(\sum_{j=1}^k \lambda_j s_j b_j)^T\\
	&= \sum_{i=1}^k \lambda_ia_i(b_i - s_i\sum_{j=1}^k \lambda_js_jb_j)^T\\
	&\overset{(\ref{NASH dist const})}{=}\sum_{i=1}^k \lambda_ia_i(\sum_{j=1}^k \lambda_js_j^2b_i - s_i\sum_{j=1}^k \lambda_js_jb_j)^T\\
	&=\sum_{i=1}^k\sum_{j=1}^k \lambda_i\lambda_j a_is_j(s_jb_i-s_ib_j)^T\\
	&=\sum_{i = 1}^k \sum_{j > i}^k \lambda_i\lambda_j (s_ja_i - s_ia_j)(s_jb_i - s_ib_j)^T,
	\end{aligned}
	\end{equation*}
	where the last line follows from observing that terms where $i$ and $j$ are switched can be combined.
\end{proof}

We can relate $\epsilon$ and $P-xy^T$ with the following lemma.
\begin{lem}\label{NASH Lem: L1 norm/2 bound}
	Let the matrix $P$ and the vectors $x \defeq P1_n$ and $y \defeq P^T1_m$ come from any feasible solution to $\ref{NASH Eq: SDP2}$. Then $$\epsilon \le \frac{\|P-xy^T\|_1}{2},$$ where $\|\cdot\|_1$ here denotes the entrywise L-1 norm, i.e., the sum of the absolute values of the entries of the matrix.
\end{lem}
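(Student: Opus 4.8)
The plan is to bound $\epsilon$ by relating it directly to the approximate payoffs, and then to bound the deviation of those payoffs from the rank-1 product-distribution payoffs. By Lemma~\ref{NASH Lem: Bound for e}, we have $\epsilon(x,y) \le \max\{\Tr(AZ)-x^TAy,\ \Tr(BZ)-x^TBy\}$. Since $Z = P^T$, we can write $\Tr(AZ) = \Tr(A P^T) = \sum_{i,j} A_{ij} P_{ij} = \langle A, P\rangle$ (entrywise inner product), and similarly $x^TAy = \sum_{i,j} A_{ij} (xy^T)_{ij} = \langle A, xy^T\rangle$. Hence $\Tr(AZ) - x^TAy = \langle A,\ P - xy^T\rangle$, and likewise $\Tr(BZ) - x^TBy = \langle B,\ P - xy^T\rangle$. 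So it suffices to bound $\langle A,\ P-xy^T\rangle$ and $\langle B,\ P-xy^T\rangle$.

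First I would apply the standard duality between the entrywise $L^\infty$ and $L^1$ norms: for any matrix $M$ and any matrix $C$ with entries in $[0,1]$,
$$\langle C, M\rangle = \sum_{i,j} C_{ij} M_{ij} \le \sum_{i,j}|M_{ij}| = \|M\|_1.$$
Applying this with $M = P - xy^T$ and $C \in \{A,B\}$ gives $\langle A, P-xy^T\rangle \le \|P-xy^T\|_1$ and $\langle B, P-xy^T\rangle \le \|P-xy^T\|_1$, hence $\epsilon \le \|P-xy^T\|_1$. This already yields a bound, but with the constant $1$ rather than the claimed $\tfrac12$. To recover the factor of $\tfrac12$, the key observation is that the rows of $P-xy^T$ sum to zero: for each row index $i$,
$$\sum_j (P - xy^T)_{ij} = \Big(\sum_j P_{ij}\Big) - x_i\Big(\sum_j y_j\Big) = x_i - x_i\cdot 1 = 0,$$
using the definitions $x = P1_n$, $y = P^T1_m$ together with the distribution constraint (\ref{NASH Eq: Distribution Inequalities}), which gives $\sum_j y_j = 1$. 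For any real vector $w$ with $\sum_j w_j = 0$, writing $w^+$ and $w^-$ for its positive and negative parts we have $\sum_j w_j^+ = \sum_j w_j^- = \tfrac12\|w\|_1$. Since $C$ has entries in $[0,1]$, we get $\sum_j C_{ij} w_j \le \sum_j w_j^+ = \tfrac12\sum_j|w_j|$. Summing over $i$ (applying this to each row $w = (P-xy^T)_{i,\cdot}$) yields $\langle C, P-xy^T\rangle \le \tfrac12\|P-xy^T\|_1$ for $C \in \{A,B\}$, and combining with Lemma~\ref{NASH Lem: Bound for e} gives $\epsilon \le \tfrac12\|P-xy^T\|_1$ as claimed. (Alternatively, one could symmetrize: since the columns of $P-xy^T$ also sum to zero by the analogous computation, one could average the row-based and column-based estimates, but the row argument alone already delivers the factor $\tfrac12$.)

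The main obstacle — though it is minor — is making sure the factor-$\tfrac12$ improvement is justified rather than just the crude $\|M\|_1$ bound; this hinges entirely on the zero-row-sum property of $P - xy^T$, which in turn relies on the distribution constraint forcing $\mathbf{1}^Ty = 1$. I expect no serious difficulty beyond bookkeeping: one must be slightly careful that the inequality $\sum_j C_{ij}w_j \le \sum_j w_j^+$ uses both $C_{ij}\ge 0$ (to drop the negative part) and $C_{ij}\le 1$ (to bound the positive part), so the assumption that all payoff entries lie in $[0,1]$ is used in an essential way here.
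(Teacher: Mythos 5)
Your proof is correct and follows essentially the same route as the paper: bound $\epsilon$ by $\langle A,\,P - xy^T\rangle$ (resp.\ $\langle B,\,\cdot\rangle$) via Lemma~\ref{NASH Lem: Bound for e}, then use $0 \le A_{ij},B_{ij} \le 1$ together with a zero-sum property of $P - xy^T$ to conclude the bound is the sum of positive entries, i.e.\ $\tfrac12\|P-xy^T\|_1$. The only cosmetic difference is that you invoke the row-sum-zero property (a finer fact) whereas the paper's proof only needs the total entry sum of $P - xy^T$ to be zero; the two arguments are otherwise identical.
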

\begin{proof}
	{}Let $D \mathrel{\mathop:}= P - xy^T$. From Lemma~\ref{NASH Lem: Bound for e}, $$\epsilon_A \le \Tr(AZ) - x^TAy = \Tr(A(Z-yx^T)).$$ If we then hold $D$ fixed and restrict that $A$ has entries bounded in [0,1], the quantity $\Tr(AD^T)$ is maximized when $$A_{i,j} = \begin{cases} 1 & D_{i,j} \ge 0 \\ 0 & D_{i,j} < 0\end{cases}.$$ The resulting quantity $\Tr(AD^T)$ will then be the sum of all nonnegative elements of $D$. Since the sum of all elements in $D$ is zero, this quantity will be equal to $\frac{1}{2}\|D\|_1$.\\
	The proof for $\epsilon_B$ is identical, and the result follows from that $\epsilon$ is the maximum of $\epsilon_A$ and $\epsilon_B$.
\end{proof}

\subsection{Bounds on $\epsilon$}\label{NASH SSec: Bounds on e}

We provide a number of bounds on $\epsilon(x,y)$for $x \defeq P1_n$ and $y \defeq P^T1_m$ coming from any feasible solution to \ref{NASH Eq: SDP2}. Our first two theorems roughly state that solutions which are ``close'' to rank-1 provide small $\epsilon$.

\begin{theorem}\label{NASH Thm: nksl}
	Consider any feasible solution $\M$ to~\ref{NASH Eq: SDP2}. Suppose $\M$ is rank-$k$ and its eigenvalues are $\lambda_1 \ge \lambda_2 \ge ... \ge \lambda_k > 0$. Then $x$ and $y$ constitute an $\epsilon$-NE to the game $(A,B)$ with $\epsilon \le \frac{m+n}{2}\sum_{i=2}^k \lambda_i.$
\end{theorem}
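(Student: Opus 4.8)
The plan is to bound $\epsilon$ via Lemma~\ref{NASH Lem: L1 norm/2 bound}, which tells us $\epsilon \le \frac{1}{2}\|P - xy^T\|_1$, and then to control $\|P - xy^T\|_1$ using the explicit representation from Lemma~\ref{NASH Lem: Representation Lemma}. Recall that lemma gives
$$P - xy^T = \sum_{i=1}^k \sum_{j>i}^k \lambda_i \lambda_j (s_j a_i - s_i a_j)(s_j b_i - s_i b_j)^T,$$
where the $a_i \in \mathbb{R}^m$, $b_i \in \mathbb{R}^n$ are the (normalized) partitioned eigenvectors of $\M$ and $s_i = 1_m^T a_i = 1_n^T b_i$. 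So the first step is to take entrywise $L_1$ norms and use the triangle inequality together with the fact that for vectors $u, v$, $\|uv^T\|_1 = \|u\|_1 \|v\|_1$, giving
$$\|P - xy^T\|_1 \le \sum_{i=1}^k \sum_{j>i}^k \lambda_i \lambda_j \, \|s_j a_i - s_i a_j\|_1 \, \|s_j b_i - s_i b_j\|_1.$$

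The second step is to bound each factor $\|s_j a_i - s_i a_j\|_1$ and $\|s_j b_i - s_i b_j\|_1$. Here I would use that the eigenvectors are unit vectors, so $\|a_i\| \le 1$ and $\|a_j\| \le 1$ (Euclidean norm), hence $\|s_j a_i - s_i a_j\| \le s_j \|a_i\| + s_i \|a_j\| \le s_i + s_j$ by the triangle inequality; then converting to the $L_1$ norm over $m$ coordinates via Cauchy--Schwarz, $\|s_j a_i - s_i a_j\|_1 \le \sqrt{m}\,(s_i + s_j)$, and similarly $\|s_j b_i - s_i b_j\|_1 \le \sqrt{n}\,(s_i + s_j)$. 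Actually, one also needs to control the $s_i$ themselves: from $\sum_{i=1}^k \lambda_i s_i^2 = 1$ (equation~(\ref{NASH dist const})) and nonnegativity of the $\lambda_i$ and $s_i$, one gets that each $\lambda_i s_i^2 \le 1$. The cleanest route is probably to bound $\lambda_i \lambda_j (s_i + s_j)^2 \le 2\lambda_i\lambda_j(s_i^2 + s_j^2) = 2(\lambda_i s_i^2)\lambda_j + 2\lambda_i(\lambda_j s_j^2) \le 2\lambda_j + 2\lambda_i$ after using $\lambda_i s_i^2 \le 1$. Summing over the pairs $i < j$: for each fixed $i \ge 2$, the term $\lambda_i$ appears paired with indices $j \ne i$, and careful bookkeeping (each $\lambda_i$ with $i \ge 2$ appears at most $k-1 < m+n$ times, or one can just bound the whole double sum) should yield $\sum_{i<j} \lambda_i\lambda_j(s_i+s_j)^2 \le (m+n)\sum_{i=2}^k \lambda_i$ — this is where the $\frac{m+n}{2}$ constant and the fact that the sum starts at $i=2$ (not $i=1$) come from, since a term $\lambda_i\lambda_j$ with $i=1$ is still "charged" to $\lambda_j$ with $j \ge 2$.

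Putting the pieces together, $\|P - xy^T\|_1 \le \sqrt{mn}\sum_{i<j}\lambda_i\lambda_j(s_i+s_j)^2 \le \sqrt{mn}\,(m+n)\sum_{i=2}^k\lambda_i$, which with $\sqrt{mn} \le \frac{m+n}{2}$... no, wait — this would give an extra factor, so the bound above must be assembled more carefully: I suspect the intended argument keeps one $\sqrt{m}$ and one $\sqrt{n}$ separately and uses $\sqrt{m}\cdot\sqrt{n} = \sqrt{mn} \le \frac{m+n}{2}$, but then combined with the $(m+n)$ from the pair-counting this overshoots. So the \textbf{main obstacle} I anticipate is getting the constant exactly right: I expect the correct accounting is that the pair-counting contributes a factor roughly $1$ (not $m+n$) because $\sum_{i<j}\lambda_i\lambda_j(s_i^2+s_j^2) \le \sum_{i<j}(\lambda_j + \lambda_i)$ overcounts, and a tighter bound $\sum_j (j-1)\lambda_j$ or $\sum_{i=2}^k \lambda_i \cdot (\text{something} \le k)$ is needed, and then the $\sqrt{mn} \le \frac{m+n}{2}$ step supplies the $\frac{m+n}{2}$. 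I would reconcile this by first writing out the double sum bound exactly, replacing $(s_i+s_j)^2 \le 2(s_i^2+s_j^2)$ and $\lambda_\ell s_\ell^2 \le 1$, then using $\sqrt{m}\sqrt{n} \le \frac{1}{2}(m+n)$, and finally verifying the index bookkeeping collapses to $\sum_{i=2}^k \lambda_i$ with the claimed constant; dividing by $2$ (from Lemma~\ref{NASH Lem: L1 norm/2 bound}) then yields $\epsilon \le \frac{m+n}{2}\sum_{i=2}^k\lambda_i$ as stated.
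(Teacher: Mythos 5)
Your opening moves -- Lemma~\ref{NASH Lem: L1 norm/2 bound}, the representation from Lemma~\ref{NASH Lem: Representation Lemma}, the triangle inequality and the rank-one factorization $\|uv^T\|_1 = \|u\|_1\|v\|_1$ -- agree with the paper. But the proposal then diverges in a way that cannot be repaired by bookkeeping, and you flag the right symptom (``the constant overshoots'') without identifying the cause. The cause is a missing ingredient: \emph{Perron--Frobenius}. Since $\M$ is entrywise nonnegative, the eigenvector $v_1$ for $\lambda_1$ can be taken nonnegative, so $a_1, b_1 \ge 0$ and hence $s_1 = 1_m^Ta_1 = \|a_1\|_1 = \|b_1\|_1$. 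The paper never invokes the Euclidean unit-norm bound $\|a_i\|_2 \le 1$ followed by Cauchy--Schwarz to $L_1$; it stays in $L_1$ throughout, writing $\|s_ja_i - s_ia_j\|_1 \le s_j\|a_i\|_1 + s_i\|a_j\|_1$, and then for the $i=1$ terms factors out $s_1^2$ using $s_1 = \|a_1\|_1 = \|b_1\|_1$, obtaining $\lambda_1 s_1^2 \,\lambda_j\, (s_j+\|a_j\|_1)(s_j+\|b_j\|_1)$. Since $\lambda_1 s_1^2 \le 1$ by the distribution constraint, the largest eigenvalue is entirely absorbed. Your route has no analogue of this step, and without it the term $\lambda_1\lambda_j(s_1+s_j)^2$ (for $j\ge 2$) cannot be charged solely against $\sum_{i\ge 2}\lambda_i$: bounding $\lambda_i\lambda_j(s_i+s_j)^2 \le 2\lambda_i + 2\lambda_j$ as you propose, and summing over pairs, leaves a $(k-1)\lambda_1$ contribution with no compensating small factor.

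There is a second, independent overshoot. Bounding $\|s_ja_i - s_ia_j\|_1 \le \sqrt{m}(s_i+s_j)$ and $\|s_jb_i - s_ib_j\|_1 \le \sqrt{n}(s_i+s_j)$ yields a prefactor $\sqrt{mn}(s_i+s_j)^2$, and even with the $\lambda_1$ terms repaired via Perron--Frobenius, the double sum works out to at best $O\!\left(\sqrt{mn}\,(m+n)\right)\sum_{i\ge 2}\lambda_i$ -- off by a factor of order $\sqrt{mn}$. So the $\frac{m+n}{2}$ in the theorem does \emph{not} emerge from $\sqrt{mn} \le \frac{m+n}{2}$ as you conjectured. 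It comes from the $L_1$ bounds on the partitioned eigenvector pieces: $\|a_i\|_1\|b_i\|_1 \le \frac{m+n}{4}$, $\|a_i\|_1\|b_j\|_1 + \|a_j\|_1\|b_i\|_1 \le m+n$, and $s_i \le \frac{\sqrt{m+n}}{2}$, combined with AM-GM on $s_is_j$ and the inequality $\sum_{2\le i<j}\lambda_i\lambda_j(s_i^2+s_j^2) \le (1-\lambda_1 s_1^2)\sum_{i\ge 2}\lambda_i$, which together with the $i=1$ bound give $\frac{m+n}{8}(3+\lambda_1 s_1^2)\sum_{i\ge 2}\lambda_i \le \frac{m+n}{2}\sum_{i\ge 2}\lambda_i$. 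To make your outline rigorous, drop the $L_2$/Cauchy--Schwarz detour, work entirely with the $L_1$ quantities $\|a_i\|_1, \|b_i\|_1$, and bring in Perron--Frobenius to handle the top eigenvector.
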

\begin{proof}
	{}By the Perron Frobenius theorem (see e.g.~\cite[Chapter 8.3]{meyer2000matrix}), the eigenvector corresponding to $\lambda_1$ can be assumed to be nonnegative, and hence \begin{equation}\label{NASH Eq: s1 value}
	s_1 = \|a_1\|_1 = \|b_1\|_1.
	\end{equation}
	We further note that for all $i$, since $\bmat a_i\\b_i\emat$ is a vector of length $m+n$ with 2-norm equal to 1, we must have
	\begin{equation}\label{NASH Eq: L1 norm sum}\left\|\bmat a_i\\b_i\emat\right\|_1 \le \sqrt{m+n}.\end{equation}
	Since $s_i$ is the sum of the elements of $a_i$ and $b_i$, we know that
	\begin{equation}\label{NASH Eq: si bound} s_i \le \min\{\|a_i\|_1, \|b_i\|_1\} \le \frac{\sqrt{m+n}}{2}.\end{equation}
	This then gives us
	\begin{equation}\label{NASH Eq: L1 norm prod}s_i^2 \le \|a_i\|_1\|b_i\|_1 \le \frac{m+n}{4},\end{equation}
	with the first inequality following from (\ref{NASH Eq: si bound}) and the second from (\ref{NASH Eq: L1 norm sum}).
	Finally note that a consequence of the nonnegativity of $\|\cdot\|_1$ and (\ref{NASH Eq: L1 norm sum}) is that for all $i, j$,
	\begin{equation}\label{NASH Eq: l1 cross product}
	\|a_i\|_1\|b_j\|_1 + \|b_i\|_1\|a_j\|_1 \le (\|a_i\|_1+\|b_i\|_1)(\|a_j\|_1+\|b_j\|_1)= \left\|\bmat a_i\\b_i\emat\right\|_1\left\|\bmat a_j\\b_j\emat\right\|_1 \overset{(\ref{NASH Eq: L1 norm sum})}{\le} m+n.
	\end{equation}
	Now we let $D \mathrel{\mathop:}= P - xy^T$ and upper bound $\frac{1}{2}\|D\|_1$ using Lemma~\ref{NASH Lem: Representation Lemma}.
	\begin{align}
	\frac{1}{2}\|D\|_1 &= \frac{1}{2}\|\sum_{i = 1}^k \sum_{j>i}^k \lambda_i\lambda_j (s_ja_i - s_ia_j)(s_jb_i - s_ib_j)^T\|_1\nonumber \\
	&\le \frac{1}{2}\sum_{i = 1}^k \sum_{j>i}^k \|\lambda_i\lambda_j (s_ja_i - s_ia_j)(s_jb_i - s_ib_j)^T\|_1\nonumber  \\
	& \le \frac{1}{2}\sum_{i = 1}^k \sum_{j > i}^k \lambda_i\lambda_j \|s_ja_i - s_ia_j\|_1\|s_jb_i - s_ib_j\|_1\nonumber \\
	& \le \frac{1}{2}\sum_{i = 1}^k \sum_{j > i}^k \lambda_i\lambda_j (s_j\|a_i\|_1 + s_i\|a_j\|_1)(s_j\|b_i\|_1 + s_i\|b_j\|_1) \label{NASH Eq: Hell} \\
	& \overset{(\ref{NASH Eq: s1 value}),(\ref{NASH Eq: L1 norm prod})}{\le} \frac{1}{2}\sum_{j=2}^k\lambda_1s_1^2\lambda_j(s_j+\|a_j\|_1)(s_j+\|b_j\|_1)\nonumber  \\
	&+\frac{1}{2}\sum_{i = 2}^k \sum_{j > i}^k \lambda_i\lambda_j (s_j^2\frac{m+n}{4} + s_i^2\frac{m+n}{4} + s_is_j\|a_i\|_1\|b_j\|_1+ s_is_j\|a_j\|_1\|b_i\|_1)\nonumber \\
	&\overset{(\ref{NASH Eq: L1 norm sum}),(\ref{NASH Eq: l1 cross product}),(\ref{NASH Eq: si bound})}{\le} \frac{m+n}{2}\lambda_1s_1^2 \sum_{i=2}^k \lambda_i\nonumber \\
	&+ \frac{1}{2}\sum_{i=2}^k \sum_{j > i}^k \lambda_i\lambda_j\frac{m+n}{4}(s_i^2 + s_j^2) + \lambda_i\lambda_js_is_j (m+n)\nonumber \\
	&\overset{\text{AMGM\footnotemark}}{\le} \frac{m+n}{2}\lambda_1s_1^2 \sum_{i=2}^k \lambda_i + \frac{m+n}{2}\sum_{i=2}^k \sum_{j > i}^k \lambda_i\lambda_j (\frac{s_i^2 + s_j^2}{4} + \frac{s_i^2+s_j^2}{2})\nonumber \\
	&= \frac{m+n}{2}\lambda_1s_1^2 \sum_{i=2}^k \lambda_i + \frac{3(m+n)}{8}\sum_{i=2}^k \sum_{j > i}^k \lambda_i\lambda_j (s_i^2+s_j^2)\nonumber \\
	&= \frac{m+n}{2}\lambda_1s_1^2 \sum_{i=2}^k \lambda_i + \frac{3(m+n)}{8}(\sum_{i=2}^k\lambda_is_i^2 \sum_{j > i}^k \lambda_j + \sum_{i=2}^k \lambda_i \sum_{j > i}^k \lambda_js_j^2)\nonumber \\
	&= \frac{m+n}{2}\lambda_1s_1^2 \sum_{i=2}^k \lambda_i + \frac{3(m+n)}{8}(\sum_{j = 2}^k \lambda_j \sum_{2\le i<j}^k\lambda_is_i^2 + \sum_{i=2}^k \lambda_i \sum_{j > i}^k \lambda_js_j^2)\nonumber \\
	&\le \frac{m+n}{2}\lambda_1s_1^2 \sum_{i=2}^k \lambda_i + \frac{3(m+n)}{8}(\sum_{j =2}^k \lambda_js_j^2)\sum_{i=2}^k \lambda_i\nonumber \\
	&\overset{(\ref{NASH dist const})}{=}
	\frac{m+n}{2}\lambda_1s_1^2 \sum_{i=2}^k \lambda_i + \frac{3(m+n)}{8}(1-\lambda_1s_1^2)\sum_{i=2}^k \lambda_i\nonumber \\
	&= \frac{m+n}{8}(3+\lambda_1s_1^2)\sum_{i=2}^k \lambda_i\nonumber \\
	&\overset{(\ref{NASH dist const})}{\le} \frac{m+n}{2} \sum_{i=2}^k \lambda_i.\nonumber 
	\end{align}
\end{proof}
\footnotetext{AMGM is used to denote the arithmetic-mean-geometric-mean inequality.}

The following theorem quantifies how making the objective of the diagonal gap algorithm from Section \ref{NASH Sec: Algorithms} small makes $\epsilon$ small. The proof is similar to the proof of Theorem~\ref{NASH Thm: nksl}.
\begin{theorem}\label{NASH Thm: Trace minus 2xxt}
	Let $\M$ be a feasible solution to~\ref{NASH Eq: SDP2}. Then, $x$ and $y$ constitute an $\epsilon$-NE to the game $(A,B)$ with $\epsilon \le \frac{3(m+n)}{8}(\Tr(\M)-x^Tx-y^Ty).$
\end{theorem}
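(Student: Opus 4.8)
The plan is to mimic the structure of the proof of Theorem~\ref{NASH Thm: nksl}, but to keep the quantity $\Tr(\M) - x^Tx - y^Ty$ explicit throughout the estimate rather than discarding it. First I would invoke Lemma~\ref{NASH Lem: L1 norm/2 bound} to reduce the claim to showing $\frac{1}{2}\|P - xy^T\|_1 \le \frac{3(m+n)}{8}(\Tr(\M) - x^Tx - y^Ty)$. Then I would fix an eigendecomposition $\M = \sum_{i=1}^k \lambda_i \bigl[\begin{smallmatrix} a_i \\ b_i \end{smallmatrix}\bigr]\bigl[\begin{smallmatrix} a_i \\ b_i \end{smallmatrix}\bigr]^T$ with $\|a_i\|^2 + \|b_i\|^2 = 1$, and record (as in the earlier proof, via Lemma~\ref{NASH Lem: Partition Lemma}) that $s_i \defeq 1_m^Ta_i = 1_n^Tb_i$, that $x = \sum_i \lambda_i s_i a_i$, $y = \sum_i \lambda_i s_i b_i$, and $\sum_i \lambda_i s_i^2 = 1$. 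I would also compute $\Tr(\M) = \sum_i \lambda_i$ and, using the Schur-complement fact $\M \succeq \bigl[\begin{smallmatrix} x \\ y\end{smallmatrix}\bigr]\bigl[\begin{smallmatrix} x \\ y\end{smallmatrix}\bigr]^T$ (cf.\ (\ref{NASH Eq: M psd M' psd})), that $\Tr(\M) - x^Tx - y^Ty \ge 0$; more precisely I expect $x^Tx + y^Ty = \|\sum_i \lambda_i s_i v_i\|^2$ where $v_i = \bigl[\begin{smallmatrix} a_i \\ b_i\end{smallmatrix}\bigr]$, so that by orthonormality of the $v_i$ and $\sum_i \lambda_i s_i^2 = 1$ we get a clean identity $\Tr(\M) - x^Tx - y^Ty = \sum_i \lambda_i - \sum_i \lambda_i^2 s_i^2 = \sum_{i}\sum_{j} \lambda_i \lambda_j s_j^2 - \sum_i \lambda_i^2 s_i^2 = \sum_{i \ne j}\lambda_i \lambda_j s_j^2$, or equivalently a symmetrized form $\frac{1}{2}\sum_{i\ne j}\lambda_i\lambda_j(s_i^2 + s_j^2)$ after relabeling; this symmetric double-sum is exactly the shape that appears in the key upper bound for $\|P - xy^T\|_1$.

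Next I would expand $P - xy^T$ using Lemma~\ref{NASH Lem: Representation Lemma} as $\sum_{i}\sum_{j>i}\lambda_i\lambda_j (s_j a_i - s_i a_j)(s_j b_i - s_i b_j)^T$, apply the triangle inequality and submultiplicativity of $\|\cdot\|_1$ on rank-one matrices (the step marked (\ref{NASH Eq: Hell}) in the earlier proof), and bound $\|s_j a_i - s_i a_j\|_1 \le s_j\|a_i\|_1 + s_i\|a_j\|_1$ and likewise for the $b$'s. The difference from Theorem~\ref{NASH Thm: nksl} is that I would \emph{not} single out the Perron eigenvector $a_1, b_1$; instead I would uniformly use $\|a_i\|_1\|b_i\|_1 \le s_i \cdot \tfrac{\sqrt{m+n}}{2}$... actually more simply $s_i^2 \le \|a_i\|_1\|b_i\|_1 \le \frac{m+n}{4}$ is too lossy here — rather I want to keep a factor of $s_i^2$, so I would use $\|a_i\|_1\|b_i\|_1 \le \frac{m+n}{4}$ only where it multiplies an $s$-weighted term, and use the bound $\|a_i\|_1 + \|b_i\|_1 \le \sqrt{m+n}$ combined with AM--GM to convert cross terms $s_i s_j \|a_i\|_1\|b_j\|_1$ etc.\ into $\frac{m+n}{?}(s_i^2 + s_j^2)$-type terms. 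Carrying the algebra through as in the displayed chain of inequalities in the proof of Theorem~\ref{NASH Thm: nksl}, but stopping at the line $\frac{3(m+n)}{8}\sum_{i}\sum_{j>i}\lambda_i\lambda_j(s_i^2 + s_j^2)$ instead of further bounding by $\frac{m+n}{2}\sum_{i\ge2}\lambda_i$, should yield $\frac{1}{2}\|P - xy^T\|_1 \le \frac{3(m+n)}{8}\sum_{i}\sum_{j>i}\lambda_i\lambda_j(s_i^2+s_j^2) = \frac{3(m+n)}{16}\sum_{i\ne j}\lambda_i\lambda_j(s_i^2+s_j^2)$, which by the identity from the previous paragraph equals $\frac{3(m+n)}{8}(\Tr(\M) - x^Tx - y^Ty)$.

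The main obstacle I anticipate is getting the constant exactly $\frac{3}{8}$: the bound in Theorem~\ref{NASH Thm: nksl} uses the extra slack $s_1^2 \le \frac{1}{\lambda_1}$ (from $\sum_i\lambda_i s_i^2 = 1$) to absorb the Perron term, a trick unavailable here since we are not discarding any $\lambda_i$. I will need to verify that the cross-term estimate $\|a_i\|_1\|b_j\|_1 + \|a_j\|_1\|b_i\|_1 \le m+n$ together with AM--GM on $s_i s_j (m+n) \le \frac{m+n}{2}(s_i^2 + s_j^2)$ and $\frac{m+n}{4}(s_i^2 + s_j^2)$ from the ``diagonal'' pieces indeed sums to $\frac{3(m+n)}{4}(s_i^2+s_j^2)$ per unordered pair, i.e.\ $\frac{3(m+n)}{8}(s_i^2+s_j^2)$ after the $\frac{1}{2}$ from Lemma~\ref{NASH Lem: L1 norm/2 bound}; any sloppiness in pairing up the four terms $s_j^2\|a_i\|_1\|b_i\|_1$, $s_i^2\|a_j\|_1\|b_j\|_1$, $s_is_j\|a_i\|_1\|b_j\|_1$, $s_is_j\|a_j\|_1\|b_i\|_1$ will change the constant. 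I would double-check this by specializing to the rank-two case and comparing against the sharper constants obtained in Section~\ref{NASH SSec: Rank-2 case}. If the constant comes out slightly worse, the statement as written may still be recoverable by a more careful split of the cross terms, or by also exploiting $s_i^2 \le \|a_i\|_1 \|b_i\|_1$ on the pure-$s$ terms.
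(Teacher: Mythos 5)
Your proposal is correct and is essentially the paper's own proof: it reuses the chain of inequalities from Theorem~\ref{NASH Thm: nksl} through the line involving $\frac{3(m+n)}{8}\sum_{i<j}\lambda_i\lambda_j(s_i^2+s_j^2)$ (without singling out the Perron eigenvector), and then evaluates that double sum using $\sum_i\lambda_is_i^2=1$ and $x^Tx+y^Ty=\sum_i\lambda_i^2s_i^2$ to get exactly $\Tr(\M)-x^Tx-y^Ty$. Your worry about the constant is unwarranted: the $\frac{m+n}{4}(s_i^2+s_j^2)$ contribution from the diagonal pieces and the $\frac{m+n}{2}(s_i^2+s_j^2)$ from the cross terms via AM--GM do indeed combine, after the factor of $\frac12$ from Lemma~\ref{NASH Lem: L1 norm/2 bound}, to give $\frac{3(m+n)}{8}(s_i^2+s_j^2)$ per unordered pair, as you computed.
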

\begin{proof}
	{}Let $\M$ be rank-$k$ with eigenvalues $\lambda_1 \ge \lambda_2 \ge \ldots \ge \lambda_k > 0$ and eigenvectors $v_1, \ldots, v_k$ partitioned as in Lemma \ref{NASH Lem: Partition Lemma} so that $v_i = \bmat a_i\\ b_i\emat$ with $\sum_{j=1}^m (a_i)_j = \sum_{j=1}^n (b_i)_j$ for $i = 1, \ldots, k$. Let $s_i \defeq \sum_{j=1}^m (a_i)_j$. Then we have $\Tr(\M) = \sum_{i=1}^k \lambda_i$, and
	\begin{equation}
	\label{NASH Eq: xynorm}
	x^Tx + y^Ty \overset{(\ref{NASH Eq: Row constraint x}),(\ref{NASH Eq: Row constraint y})}{=} (\sum_{i=1}^k \lambda_i s_i v_i)^T(\sum_{i=1}^k \lambda_i s_i v_i) = \sum_{i=1}^k \lambda_i^2s_i^2.\end{equation}
	We now get the following chain of inequalities (the first one follows from Lemma~\ref{NASH Lem: L1 norm/2 bound} and inequality~(\ref{NASH Eq: Hell})):
	\begin{align*}
	\epsilon &\le \frac{1}{2}\sum_{i = 1}^k \sum_{j > i}^k \lambda_i\lambda_j (s_j\|a_i\|_1 + s_i\|a_j\|_1)(s_j\|b_i\|_1 + s_i\|b_j\|_1)\\
	& \overset{(\ref{NASH Eq: s1 value}),(\ref{NASH Eq: L1 norm prod})}{\le} \frac{1}{2}\sum_{i = 1}^k \sum_{j > i}^k \lambda_i\lambda_j (s_j^2\frac{m+n}{4} + s_i^2\frac{m+n}{4} + s_is_j\|a_i\|_1\|b_j\|_1+ s_is_j\|a_j\|_1\|b_i\|_1)\\
	&\overset{(\ref{NASH Eq: l1 cross product})}{\le} \frac{1}{2}\sum_{i=1}^k \sum_{j > i}^k \lambda_i\lambda_j\frac{m+n}{4}(s_i^2 + s_j^2) + \lambda_i\lambda_js_is_j (m+n)\\
	&\overset{AMGM}{\le} \frac{m+n}{2}\sum_{i=1}^k \sum_{j > i}^k \lambda_i\lambda_j (\frac{s_i^2 + s_j^2}{4} + \frac{s_i^2+s_j^2}{2})\\
	&= \frac{3(m+n)}{8}\sum_{i=1}^k \sum_{j > i}^k \lambda_i\lambda_j (s_i^2+s_j^2)\\
	&= \frac{3(m+n)}{8}(\sum_{i=1}^k\lambda_is_i^2 \sum_{j > i}^k \lambda_j + \sum_{i=1}^k \lambda_i \sum_{j > i}^k \lambda_js_j^2)\\
	&= \frac{3(m+n)}{8}(\sum_{j = 1}^k \lambda_j \sum_{1\le i<j}^k\lambda_is_i^2 + \sum_{i=1}^k \lambda_i \sum_{j > i}^k \lambda_js_j^2)\\
	&= \frac{3(m+n)}{8}(\sum_{i = 1}^k \lambda_i \sum_{j \ne i}\lambda_js_j^2)\\
	&\overset{(\ref{NASH dist const})}{=} \frac{3(m+n)}{8}(\sum_{i = 1}^k \lambda_i (1-\lambda_is_i^2))\\
	&= \frac{3(m+n)}{8}(\sum_{i=1}^k \lambda_i - \sum_{i=1}^k \lambda_i^2s_i^2) \overset{(\ref{NASH Eq: xynorm})}{=} \frac{3(m+n)}{8}(\Tr(\M)-x^Tx-y^Ty).
	\end{align*}
\end{proof}

We now give a bound on $\epsilon$ which is dependent on the nonnegative rank of the matrix returned by \ref{NASH Eq: SDP2}. Our analysis will also be useful for the next subsection. To begin, we first recall the definition of the nonnegative rank.

\begin{defn}\label{NASH Defn: Nonnegative Rank}
	The \emph{nonnegative rank} of a (nonnegative) $m \times n$ matrix $M$ is the smallest $k$ for which there exist a nonnegative $m \times k$ matrix $U$ and a nonnegative $n \times k$ matrix $V$ such that $M = UV^T$. Such a decomposition is called a \emph{nonnegative matrix factorization} of $M$.
\end{defn}

\begin{theorem}\label{NASH Thm: 1-1/k}
	Consider the matrix $P$ from any feasible solution to \ref{NASH Eq: SDP2}. Suppose its nonnegative rank is $k$. Then $x \defeq P1_n$ and $y \defeq P^T1_m$ constitute an $\epsilon$-NE to the game $(A,B)$ with $\epsilon \le 1-\frac{1}{k}$.
\end{theorem}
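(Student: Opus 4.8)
The plan is to use a nonnegative matrix factorization of $P$ to write $P$ as a convex combination of rank-1 nonnegative matrices, and then bound $\|P - xy^T\|_1$ via Lemma~\ref{NASH Lem: L1 norm/2 bound}. Concretely, suppose $P = \sum_{i=1}^k u_i v_i^T$ with $u_i \in \R^m$, $v_i \in \R^n$ nonnegative. Since all entries of $P$ sum to $1$ (constraint~(\ref{NASH Eq: SDP2 Distribution})), we have $\sum_{i=1}^k (1_m^T u_i)(1_n^T v_i) = 1$. After rescaling each pair $(u_i, v_i)$ we may assume $1_m^T u_i = 1_n^T v_i =: \sqrt{\mu_i}$, so that each $\frac{1}{\mu_i} u_i v_i^T$ is a product of two probability vectors, and $\sum_i \mu_i = 1$. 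Writing $\hat u_i := u_i / \sqrt{\mu_i}$ and $\hat v_i := v_i / \sqrt{\mu_i}$ (both in the simplex), we get $P = \sum_i \mu_i \hat u_i \hat v_i^T$, and moreover $x = P 1_n = \sum_i \mu_i \hat u_i$ and $y = P^T 1_m = \sum_i \mu_i \hat v_i$.

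The next step is to estimate $\|P - xy^T\|_1$. Using the convex-combination representations,
\[
P - xy^T = \sum_i \mu_i \hat u_i \hat v_i^T - \Big(\sum_i \mu_i \hat u_i\Big)\Big(\sum_j \mu_j \hat v_j\Big)^T = \sum_{i}\sum_{j>i} \mu_i \mu_j (\hat u_i - \hat u_j)(\hat v_i - \hat v_j)^T,
\]
by the same algebraic manipulation used in the proof of Lemma~\ref{NASH Lem: Representation Lemma} (this identity holds for any representation of $P$ as $\sum \mu_i \hat u_i \hat v_i^T$ with $\sum \mu_i = 1$, not just eigendecompositions). Then by the triangle inequality and submultiplicativity of the $\ell_1$ norm under outer products,
\[
\|P - xy^T\|_1 \le \sum_i \sum_{j>i} \mu_i \mu_j \|\hat u_i - \hat u_j\|_1 \|\hat v_i - \hat v_j\|_1 \le \sum_i \sum_{j > i} \mu_i \mu_j \cdot 2 \cdot 2 = 4 \sum_{i}\sum_{j>i}\mu_i\mu_j,
\]
where we used that the $\ell_1$ distance between two simplex vectors is at most $2$. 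Now $\sum_{i}\sum_{j>i}\mu_i\mu_j = \tfrac12\big((\sum_i \mu_i)^2 - \sum_i \mu_i^2\big) = \tfrac12(1 - \sum_i \mu_i^2)$, and since $\sum_i \mu_i = 1$ with $k$ terms, $\sum_i \mu_i^2 \ge \tfrac1k$ by Cauchy--Schwarz. Hence $\|P - xy^T\|_1 \le 4 \cdot \tfrac12(1 - \tfrac1k) = 2(1 - \tfrac1k)$, and Lemma~\ref{NASH Lem: L1 norm/2 bound} gives $\epsilon \le \tfrac12\|P - xy^T\|_1 \le 1 - \tfrac1k$.

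The routine part is the algebra above; the only place requiring care is the reduction to a \emph{balanced} nonnegative factorization (the claim that after rescaling one can take $1_m^T u_i = 1_n^T v_i$), which needs the observation that in an NMF $P = \sum u_i v_i^T$ of a nonzero matrix each rank-1 term $u_iv_i^T$ is nonzero, so both $1_m^T u_i$ and $1_n^T v_i$ are positive and the rescaling $u_i \mapsto \alpha_i u_i$, $v_i \mapsto \alpha_i^{-1} v_i$ is legitimate; one also must handle the case where $1_m^T u_i \neq 1_n^T v_i$ before rescaling, but since $\sum_i (1_m^T u_i)(1_n^T v_i) = 1 > 0$ at least the products are controlled, and balancing each pair individually is all that's used. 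I expect the main (minor) obstacle to be making this normalization argument clean and verifying that the identity for $P - xy^T$ genuinely only uses $\sum_i \mu_i = 1$ together with $x = \sum \mu_i \hat u_i$, $y = \sum \mu_i \hat v_i$ — i.e., that we do \emph{not} need orthogonality of the $\hat u_i$, unlike in Lemma~\ref{NASH Lem: Representation Lemma}. A quick direct expansion confirms this, so no real difficulty remains.
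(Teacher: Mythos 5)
Your proof matches the paper's argument: the normalized decomposition $P = \sum_i \mu_i \hat u_i \hat v_i^T$ with $\hat u_i, \hat v_i$ in the simplices and $\sum_i \mu_i = 1$ is exactly the paper's $P = \sum_i \sigma_i a_i b_i^T$, and your identity for $P - xy^T$ together with $\|\hat u_i - \hat u_j\|_1 \le 2$ reproduces inequality~(\ref{NASH Eq: Hell}) in the special case $s_i = 1$, after which Lemma~\ref{NASH Lem: L1 norm/2 bound} and Cauchy--Schwarz finish things identically. You supply more detail than the paper does on balancing the NMF and on why the representation identity from Lemma~\ref{NASH Lem: Representation Lemma} needs no orthogonality, but these are exactly the points the paper leaves implicit.
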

\begin{proof}	
	{}Since $P$ has nonnegative rank $k$ and its entries sum up to 1, we can write ${P = \sum_{i=1}^k \sigma_ia_ib_i^T}$, where $a_i \in \triangle_m, b_i \in \triangle_n$, and $\sum_{i=1}^k \sigma_i = 1$. From Lemma~\ref{NASH Lem: L1 norm/2 bound} and inequality~(\ref{NASH Eq: Hell}) (keeping in mind that $s_i = 1,\ \forall\ i$) we have
	\begin{align*}
	\epsilon &\le \frac{1}{2} \sum_{i=1}^k \sum_{j > i}^k \sigma_i\sigma_j (\|a_i\|_1 + \|a_j\|_1)(\|b_i\|_1 + \|b_j\|_1)\\
	& \le 2 \sum_{i=1}^k \sum_{j > i}^k \sigma_i\sigma_j\\
	&= 2(\frac{1}{2}(\sum_{i=1}^k \sigma_i \sum_{j=1}^k \sigma_j - \sum_{i=1}^k \sigma_i^2))\\
	&= 1 - \sum_{i=1}^k \sigma_i^2\\
	&\le 1 - \frac{1}{k},
	\end{align*}
	where the last line follows from  the fact that $\|v\|_2^2 \ge \frac{1}{k}$ for any vector $v \in \triangle_k$.
\end{proof}

\subsection{Bounds on $\epsilon$ in the Rank-2 Case}\label{NASH SSec: Rank-2 case}

We now provide a number of bounds on $\epsilon(x,y)$ with $x \defeq P1_n$ and $y \defeq P^T1_m$ which hold for rank-2 feasible solutions $\M$ to \ref{NASH Eq: SDP2} (note that $P$ will have rank at most 2 in this case). This is motivated by our ability to show stronger (constant) bounds in this case, and the fact that we often recover rank-2 (or rank-1) solutions with our algorithms in Section~\ref{NASH Sec: Algorithms}. Furthermore, our analysis will use the special property that a rank-2 nonnegative matrix will have nonnegative rank also equal to two, and that a nonnegative factorization of it can be computed in polynomial time (see, e.g., Section 4 of \cite{cohen1993nonnegative}). We begin with the following observation, which follows from Theorem \ref{NASH Thm: 1-1/k} when $k = 2$.

\begin{cor}\label{NASH Cor: 1/2e} If the matrix $P$ from a feasible solution to~\ref{NASH Eq: SDP2} is rank-2, then $x$ and $y$ constitute a $\frac{1}{2}-$NE.\end{cor}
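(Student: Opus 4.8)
The plan is to derive this directly from Theorem~\ref{NASH Thm: 1-1/k}, which bounds $\epsilon$ in terms of the \emph{nonnegative} rank of $P$, rather than its ordinary rank. So the only gap to bridge is the relationship between these two notions of rank for the matrix $P$ at hand.

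First I would observe that $P$, being a submatrix of a feasible solution $\M$ to~\ref{NASH Eq: SDP2}, is entrywise nonnegative by constraint~(\ref{NASH Eq: SDP2 Nonnegativity}). Next, I would invoke the structural fact (recalled in the text just before the corollary, see Section~4 of~\cite{cohen1993nonnegative}) that a nonnegative matrix whose ordinary rank is $2$ also has nonnegative rank exactly $2$; indeed, one can even compute a nonnegative factorization of it in polynomial time. Hence, under the hypothesis that $P$ is rank-$2$, its nonnegative rank $k$ satisfies $k = 2$.

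With this in hand, the conclusion is immediate: applying Theorem~\ref{NASH Thm: 1-1/k} with $k = 2$ shows that $x \defeq P1_n$ and $y \defeq P^T1_m$ form an $\epsilon$-NE with $\epsilon \le 1 - \tfrac{1}{2} = \tfrac{1}{2}$.

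There is essentially no obstacle here---the corollary is a one-line specialization of the preceding theorem, and the only non-obvious ingredient (ordinary rank $2$ $\Rightarrow$ nonnegative rank $2$) is a known fact that is already quoted in the surrounding text. The reason the corollary is stated separately is to set up the sharper rank-$2$ analysis that follows (Theorems~\ref{NASH Thm: 5/11e} and~\ref{NASH Thm: 1/3se}), which improve the constant $\tfrac12$ to $\tfrac{5}{11}$ for general games and $\tfrac13$ for symmetric games.
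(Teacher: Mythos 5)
Your proposal is correct and takes exactly the same route as the paper: the corollary is stated as an immediate consequence of Theorem~\ref{NASH Thm: 1-1/k} with $k=2$, using the fact (quoted in the surrounding text and attributed to \cite{cohen1993nonnegative}) that a nonnegative matrix of ordinary rank $2$ has nonnegative rank $2$. You have also correctly identified the one non-trivial bridging step that makes the specialization valid.
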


We now show how this pair of strategies can be refined.

\begin{theorem}\label{NASH Thm: 5/11e}
	If the matrix $P$ from a feasible solution to~\ref{NASH Eq: SDP2} is rank-2, then either $x$ and $y$ constitute a $\frac{5}{11}$-NE, or a $\frac{5}{11}$-NE can be recovered from $P$ in polynomial time.
\end{theorem}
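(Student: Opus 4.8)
The plan is to start from the candidate pair $(x,y)$ with $x\defeq P1_n$, $y\defeq P^T1_m$, improve it by a one-sided best-response ``fix-up,'' and fall back to a structural argument in a single residual case. First I would invoke the fact that a rank-$2$ nonnegative matrix has nonnegative rank $2$ and admits a nonnegative factorization computable in polynomial time (see, e.g., Section 4 of \cite{cohen1993nonnegative}), so $P=\sigma_1 a_1b_1^T+\sigma_2 a_2b_2^T$ with $a_i\in\triangle_m$, $b_i\in\triangle_n$, $\sigma_i\ge 0$, $\sigma_1+\sigma_2=1$; since all entries of $P$ sum to one this forces $\sum_i\sigma_i=1$, and $x=\sigma_1a_1+\sigma_2a_2$, $y=\sigma_1b_1+\sigma_2b_2$. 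A direct expansion gives $P-xy^T=\sigma_1\sigma_2(a_1-a_2)(b_1-b_2)^T$ (the rank-$2$ instance of Lemma~\ref{NASH Lem: Representation Lemma}), so by Lemma~\ref{NASH Lem: L1 norm/2 bound}, $\epsilon(x,y)\le\tfrac12\sigma_1\sigma_2\|a_1-a_2\|_1\|b_1-b_2\|_1\le\tfrac12\cdot\tfrac14\cdot 4=\tfrac12$ (recovering Corollary~\ref{NASH Cor: 1/2e}). If $\epsilon(x,y)\le\tfrac5{11}$ we output $(x,y)$; otherwise, after transposing the game if necessary (which transposes the factorization), assume $\epsilon_A(x,y)=\epsilon(x,y)>\tfrac5{11}$, so that $\epsilon_B(x,y)\le\epsilon_A(x,y)\le\tfrac12$.

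The second step is a one-sided fix-up. Let $r$ be a pure best response of player A to $y$, i.e. $e_r^TAy=\max_i e_i^TAy$, and set $x^{\ast}\defeq\tfrac{10}{11}x+\tfrac1{11}e_r$. A short computation using $e_r^TAy=\max_i e_i^TAy$ gives the exact identity $\epsilon_A(x^{\ast},y)=\tfrac{10}{11}\epsilon_A(x,y)\le\tfrac{10}{11}\cdot\tfrac12=\tfrac5{11}$, while, using $e_r^TBy\ge 0$ and $\max_j B_{rj}\le 1$, $\epsilon_B(x^{\ast},y)\le\tfrac{10}{11}\epsilon_B(x,y)+\tfrac1{11}$. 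Hence whenever $\epsilon_B(x,y)\le\tfrac25$, the pair $(x^{\ast},y)$ is a $\tfrac5{11}$-NE and we output it. The only remaining case is therefore $\tfrac25<\epsilon_B(x,y)\le\epsilon_A(x,y)\le\tfrac12$.

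In this residual case I would exploit the structure now forced on the factorization. From $\tfrac12\sigma_1\sigma_2\|a_1-a_2\|_1\|b_1-b_2\|_1\ge\max\{\epsilon_A(x,y),\epsilon_B(x,y)\}>\tfrac5{11}$ together with $\sigma_1\sigma_2\le\tfrac14$ and $\|a_1-a_2\|_1,\|b_1-b_2\|_1\le 2$, each of these three factors must be close to its maximum: $\sigma_1\sigma_2$ is close to $\tfrac14$ (so $\sigma_1,\sigma_2$ are both close to $\tfrac12$), and $\|a_1-a_2\|_1$, $\|b_1-b_2\|_1$ are both close to $2$, i.e. the supports of $a_1,a_2$ (resp. of $b_1,b_2$) are nearly disjoint, in that $\sum_i\min\{(a_1)_i,(a_2)_i\}$ and $\sum_j\min\{(b_1)_j,(b_2)_j\}$ are small. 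I would then feed this into the correlated-equilibrium constraints (\ref{NASH Eq: SDP2 CE A})--(\ref{NASH Eq: SDP2 CE B}): for $i$ in the support of $a_1$, row $i$ of $P$ equals $\sigma_1(a_1)_ib_1+\sigma_2(a_2)_ib_2$, and (\ref{NASH Eq: SDP2 CE A}) then forces action $i$ to be a best response to $b_1$ up to an error governed by the cross-term $\sigma_2(a_2)_i$; aggregating these inequalities against $a_1$ shows $a_1$ is an approximate best response to $b_1$ with error controlled by the (small) overlap, and symmetrically $b_1$ is an approximate best response to $a_1$ via (\ref{NASH Eq: SDP2 CE B}). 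The output in this case is one of the rank-one components $(a_1,b_1)$ or $(a_2,b_2)$ --- possibly after a best-response correction as in the second step, with mixing weight chosen as a function of $\sigma_1,\sigma_2$ and the overlaps --- and a quantitative version of these estimates shows the resulting profile is a $\tfrac5{11}$-NE. Every operation above (computing the factorization, the best responses, and the corrections) runs in polynomial time.

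I expect the residual case to be the main obstacle: converting ``nearly disjoint supports plus correlated equilibrium'' into a clean numerical guarantee, and then tuning the mixing parameter of the output profile so that the competing error terms --- the residual regret of $(a_1,b_1)$, the extra regret introduced by a best-response correction, and the dependence on $\sigma_1\sigma_2$ --- all balance at exactly $\tfrac5{11}$; this will likely require a short sub-case analysis on the sizes of the overlaps and of $\epsilon_B(x,y)$. By contrast, the factorization step and the clean $\delta=\tfrac1{11}$ one-sided fix-up of the second step are routine.
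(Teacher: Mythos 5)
Your Steps 1 and 2 are correct and, as a scheme, more economical than the paper's for the cases they cover: the rank-2 nonnegative factorization, the bound $\epsilon(x,y)\le\tfrac12\sigma_1\sigma_2\|a_1-a_2\|_1\|b_1-b_2\|_1\le\tfrac12$, and the one-sided fix-up $x^*=\tfrac{10}{11}x+\tfrac1{11}e_r$ (with the exact identity $\epsilon_A(x^*,y)=\tfrac{10}{11}\epsilon_A(x,y)$ and the bound $\epsilon_B(x^*,y)\le\tfrac{10}{11}\epsilon_B(x,y)+\tfrac1{11}$) all check out, and do settle the case $\epsilon_B(x,y)\le\tfrac25$. (This fix-up is a cousin of the paper's third case, which also mixes the offending player's strategy with a pure best response; the paper optimizes the mixing weight $p=\tfrac1{1+\epsilon_B-\epsilon_A}$ rather than fixing it at $\tfrac1{11}$, but both calculations go through.)

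The gap is in your residual case $\tfrac25<\epsilon_B(x,y)\le\epsilon_A(x,y)\le\tfrac12$, which you acknowledge you have not closed. Your proposed route through ``small overlaps plus correlated equilibrium inequalities'' is speculative and considerably harder than needed; it is not clear it yields a clean $\tfrac5{11}$. The observation you are missing --- and which is exactly how the paper handles its second case --- is the following nonnegativity trick applied directly to the rank-one components. Since $\epsilon_A\le\Tr\bigl(A(P-xy^T)^T\bigr)=\sigma_1\sigma_2(a_1-a_2)^TA(b_1-b_2)$ and $\sigma_1\sigma_2\le\tfrac14$, having $\epsilon_A>\tfrac25$ forces $(a_1-a_2)^TA(b_1-b_2)>\tfrac85$. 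Because $A,a_1,a_2,b_1,b_2$ are all entrywise nonnegative, $(a_1-a_2)^TA(b_1-b_2)\le a_1^TAb_1+a_2^TAb_2$, so $a_1^TAb_1+a_2^TAb_2>\tfrac85$; since each term is at most $1$, each is strictly greater than $\tfrac35$, hence $\epsilon_A(a_i,b_i)<\tfrac25$ for $i=1,2$. The same argument with $B$ in place of $A$ gives $\epsilon_B(a_i,b_i)<\tfrac25$. Thus in your residual case the rank-one components $(a_1,b_1)$ and $(a_2,b_2)$ are already $\tfrac25$-Nash equilibria, and outputting either one finishes the proof. With this replacement for your Step 3, your overall argument becomes a complete and valid (if slightly different) proof of the theorem.
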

\begin{proof}
	{}We consider 3 cases, depending on whether $\epsilon_A(x,y)$ and $\epsilon_B(x,y)$ are greater than or less than .4. If $\epsilon_A \le .4, \epsilon_B \le .4$, then $(x, y)$ is already a .4-Nash equilibrium. Now consider the case when $\epsilon_A \ge .4, \epsilon_B \ge .4$. Since $\epsilon_A \le \Tr(A(P-xy^T)^T)$ and $\epsilon_B \le \Tr(B(P-xy^T)^T)$ as seen in the proof of Lemma \ref{NASH Lem: Bound for e}, we have, reusing the notation in the proof of Theorem \ref{NASH Thm: 1-1/k},
	\begin{equation*}
	\sigma_1\sigma_2(a_1-a_2)^T A (b_1-b_2) \ge .4,
	\sigma_1\sigma_2(a_1-a_2)^T B (b_1-b_2) \ge .4.
	\end{equation*}
	Since $A, a_1, a_2, b_1,$ and $b_2$ are all nonnegative and $\sigma_1\sigma_2 \le \frac{1}{4}$,
	$$a_1^TAb_1 + a_2^TAb_2 \ge (a_1-a_2)^T A (b_1-b_2) \ge 1.6,$$ and the same inequalities hold for for player B. In particular, since $A$ and $B$ have entries bounded in [0,1] and $a_1,a_2,b_1,$ and $b_2$ are simplex vectors, all the quantities $a_1^TAb_1, a_2^TAb_2, a_1^TBb_1,\ \text{and}\ a_2^TBb_2$ are at most 1, and consequently at least .6. Hence $(a_1,a_2)$ and $(a_2,b_2)$ are both .4-Nash equilibria.
	
	Now suppose that $(x,y)$ is a .4-NE for one player (without loss of generality player A) but not for the other (without loss of generality player B). Then $\epsilon_A \le .4$, and $\epsilon_B \ge .4$. Let $y^*$ be a best response for player B to $x$, and let $p = \frac{1}{1+\epsilon_B - \epsilon_A}$. Consider the strategy profile $(\tilde{x},\tilde{y}) \defeq (x, py + (1-p)y^*)$. This can be interpreted as the outcome $(x,y)$ occurring with probability $p$, and the outcome $(x,y^*)$ happening with probability $1-p$. In the first case, player A will have $\epsilon_A(x,y) = \epsilon_A$ and player B will have $\epsilon_B(x,y) = \epsilon_B$. In the second outcome, player A will have $\epsilon_A(x,y^*)$ at most 1, while player~B will have $\epsilon_B(x,y^*) = 0$. Then under this strategy profile, both players have the same upper bound for $\epsilon$, which equals $\epsilon_B p = \frac{\epsilon_B}{1 + \epsilon_B - \epsilon_A}$. To find the worst case for this value, let $\epsilon_B = .5$ (note from Theorem~\ref{NASH Cor: 1/2e} that $\epsilon_B \le \frac{1}{2}$) and $\epsilon_A = .4$, and this will return $\epsilon = \frac{5}{11}$.
	
\end{proof}

We now show a stronger result in the case of symmetric games.

\begin{defn}\label{NASH Defn: Symmetric Game} A \emph{symmetric game} is a game in which the payoff matrices $A$ and $B$ satisfy $B=A^T$.\end{defn}
\begin{defn} A Nash equilibrium strategy $(x,y)$ is said to be \emph{symmetric} if $x=y$.\end{defn}
\begin{theorem}[see Theorem 2 in~\cite{nash1951}]	Every symmetric bimatrix game has a symmetric Nash equilibrium.
\end{theorem}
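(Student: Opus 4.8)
The statement to prove is the classical fact (due to Nash) that every symmetric bimatrix game $(A, A^T)$ has a symmetric Nash equilibrium, i.e., a point $x^* \in \triangle_n$ such that $(x^*, x^*)$ is a Nash equilibrium. The plan is to reduce it to Brouwer's fixed-point theorem via a Nash-style retraction map. First I would recall the characterization from (\ref{NASH QP}): for a symmetric game, $(x,x)$ is a Nash equilibrium if and only if $x^T A x \ge e_i^T A x$ for all $i \in \{1,\ldots,n\}$, since the payoff to the ``column player'' with payoff matrix $A^T$ against $x$ is $x^T A^T e_j = e_j^T A x$ viewed appropriately, so both players' best-response conditions collapse to the single system $x^T A x \ge e_i^T A x,\ \forall i$.

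Next I would define the Nash map $f: \triangle_n \to \triangle_n$ by
\begin{equation*}
f(x)_i = \frac{x_i + \max\{0,\ e_i^T A x - x^T A x\}}{1 + \sum_{j=1}^n \max\{0,\ e_j^T A x - x^T A x\}}.
\end{equation*}
Each gain term $g_i(x) \defeq \max\{0, e_i^T A x - x^T A x\}$ is continuous in $x$ (being the maximum of two continuous functions), the denominator is continuous and never zero, and one checks directly that $f(x) \in \triangle_n$ (the entries are nonnegative and sum to one by construction). Hence $f$ is a continuous self-map of the nonempty compact convex set $\triangle_n$, and Brouwer's fixed-point theorem yields $x^* \in \triangle_n$ with $f(x^*) = x^*$.

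The final step is to argue that any fixed point $x^*$ satisfies $g_i(x^*) = 0$ for all $i$, which by the characterization above gives that $(x^*, x^*)$ is a symmetric Nash equilibrium. Suppose not; then $\sum_j g_j(x^*) > 0$. Since $x^T A x$ is a convex combination of the pure-strategy payoffs $e_i^T A x$, there must exist an index $i$ with $x^*_i > 0$ and $e_i^T A x^* \le x^{*T} A x^*$, i.e., $g_i(x^*) = 0$; I would pick such an $i$ (its existence follows because the pure strategies in the support of $x^*$ cannot all strictly exceed the average). For that $i$, the fixed-point equation reads $x^*_i = \frac{x^*_i + 0}{1 + \sum_j g_j(x^*)}$, and since $x^*_i > 0$ this forces $\sum_j g_j(x^*) = 0$, a contradiction. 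The main (and only real) obstacle is the invocation of Brouwer's theorem — everything else is routine verification of continuity and of the simplex-preserving property; alternatively, one could simply cite Nash's original result, since the excerpt already references \cite{nash1951} for the existence of symmetric equilibria.
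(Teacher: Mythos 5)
The paper offers no proof of this statement; it is stated as a known result and cited directly to Nash's 1951 paper, where it appears as Theorem 2. Your argument is correct and is essentially Nash's original one: because $x^T A^T x = x^T A x$ and $x^T A^T e_j = e_j^T A x$ as scalars, both players' best-response conditions at a symmetric profile collapse to the single system $x^T A x \ge e_i^T A x$ for all $i$, so the problem reduces to finding a fixed point of the gain-adjustment map on a single simplex; Brouwer's theorem supplies the fixed point, and the convex-combination argument (the average $x^{*T}Ax^*$ of the pure payoffs $e_i^T A x^*$ over the support of $x^*$ cannot be strictly exceeded by every supported pure strategy) shows all gains vanish there. No gaps; the only nonelementary ingredient is Brouwer's theorem, which is also the engine behind Nash's proof.
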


For the proof of Theorem~\ref{NASH Thm: 1/3se} below we modify \ref{NASH Eq: SDP2} so that we are seeking a symmetric solution. We also need a more specialized notion of the nonnegative rank.

\begin{defn}\label{NASH Defn: Completely Positive} A matrix $M$ is \emph{completely positive} (CP) if it admits a decomposition $M=UU^T$ for some nonnegative matrix $U$.\end{defn}
\begin{defn} The \emph{CP-rank} of an $n \times n$ CP matrix $M$ is the smallest $k$ for which there exists a nonnegative $n \times k$ matrix $U$ such that $M = UU^T$.\end{defn}
\begin{theorem}[see e.g.~Theorem 2.1 in~\cite{berman2003completely}]\label{NASH CP rank 2}
	A rank-2, nonnegative, and positive semidefinite matrix is CP and has CP-rank 2.
\end{theorem}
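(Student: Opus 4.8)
The plan is to work with the Gram‑matrix picture. Since $M$ is symmetric, positive semidefinite, and of rank $2$, I would first factor $M = VV^T$ with $V \in \mathbb{R}^{n\times 2}$, and write $v_1,\dots,v_n \in \mathbb{R}^2$ for the rows of $V$, so that $M_{ij} = v_i^T v_j$. (Any index $i$ with $v_i = 0$ corresponds to a zero row and column of $M$, since $M\succeq 0$ and $M_{ii}=\|v_i\|^2=0$; such indices can be deleted, the argument carried out for the remaining principal submatrix, and the resulting $U$ padded back with zero rows. The case $V=0$ is excluded because $\operatorname{rank} M = 2$.) The key observation is that entrywise nonnegativity of $M$ is exactly the statement that $v_i^T v_j \ge 0$ for all $i,j$, i.e.\ every pair among the $v_i$ subtends an angle of at most $\pi/2$.

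Second, I would prove the planar lemma: a finite family of nonzero vectors in $\mathbb{R}^2$ with pairwise nonnegative inner products lies in a single closed convex cone of angular width at most $\pi/2$. To see this, pick a pair $v_p,v_q$ realizing the largest pairwise angle $\beta$ (angles measured in $[0,\pi]$); then $\beta\le\pi/2$ since $\cos\beta = v_p^T v_q/(\|v_p\|\|v_q\|)\ge 0$. If some $v_k$ lay outside the convex cone $C$ generated by $v_p$ and $v_q$, then on the circle of directions $v_k$ lies in one of the two arcs outside $C$, and a short case check shows that its angular distance to one of $v_p,v_q$ would strictly exceed $\beta$ — moving out of $C$ past either endpoint increases the distance to the opposite endpoint, and here the hypothesis $\beta\le\pi/2$ is precisely what also rules out the ``antipodal'' sub‑case — contradicting maximality of $\beta$. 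Hence every $v_k$ lies in $C$, a cone of angular width $\beta\le\pi/2$.

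Third, since $C$ has angular width at most $\pi/2$, there is a rotation $R\in\mathbb{R}^{2\times2}$ with $Rv_i\in\mathbb{R}^2_{\ge 0}$ for all $i$: if $C=\{\,r(\cos\phi,\sin\phi): r\ge 0,\ \phi\in[\phi_0,\phi_0+\beta]\,\}$, take $R=R_{-\phi_0}$. Put $U \defeq V R^T$, an $n\times 2$ matrix whose $i$-th row is $(Rv_i)^T\ge 0$. Because $R$ is orthogonal, $UU^T = VR^TRV^T = VV^T = M$, so $U\ge 0$ and $M=UU^T$, showing $M$ is completely positive with CP‑rank at most $2$. Finally, any factorization $M=UU^T$ with $U\in\mathbb{R}^{n\times k}$ forces $\operatorname{rank}M\le k$, so the CP‑rank is at least $\operatorname{rank}M=2$; combining the two bounds gives CP‑rank exactly $2$.

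The main obstacle is the planar lemma of the second step: that the Gram vectors ``fit in a quadrant'' is intuitively clear, but making the case analysis airtight — carefully handling directions diametrically opposite the cone, keeping angles in $[0,\pi]$, and checking that the extremal pair genuinely generates the cone containing everything — is the one place that needs real care. The remaining ingredients (the Gram factorization, the reduction of nonnegativity of $M$ to an angle condition, and the orthogonal‑invariance trick $UU^T=VR^TRV^T$) are routine linear algebra.
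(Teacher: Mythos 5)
The paper does not prove this statement; it is cited directly from Theorem~2.1 of Berman and Shaked-Monderer's book on completely positive matrices, and your proof is correct. The Gram-factorization-and-rotation argument you give is indeed the standard textbook route: write $M=VV^T$ with rows $v_i\in\mathbb{R}^2$, observe that entrywise nonnegativity of $M$ says precisely that the $v_i$ have pairwise nonnegative inner products, confine them to a cone of angular width at most $\pi/2$, rotate that cone into the nonnegative quadrant to get a nonnegative $U$ with $UU^T=M$, and use $\mathrm{CP\text{-}rank}\ge\mathrm{rank}$ for the lower bound. One small remark on the planar lemma you flag as the delicate step: a slightly more direct organization avoids the extremal-pair case analysis entirely. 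Place one direction at angle $0$; then every other direction lies in $[-\pi/2,\pi/2]$ by the pairwise angle bound, and if $\gamma\le 0\le\alpha$ are the extreme angles among the $v_i$, the quantity $\alpha-\gamma$ lies in $[0,\pi]$ and is therefore \emph{exactly} the angle between the two extreme vectors, so $\alpha-\gamma\le\pi/2$ and every direction sits in the arc $[\gamma,\alpha]$; rotating by $-\gamma$ lands all of them in the first quadrant, with no antipodal sub-case to rule out.
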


It is also known (see e.g., Section 4 in~\cite{kalofolias2012computing}) that the CP factorization of a rank-2 CP matrix can be found to arbitrary accuracy in polynomial time.

\begin{theorem}\label{NASH Thm: 1/3se} Suppose the constraint $P \succeq 0$ is added to~\ref{NASH Eq: SDP2}. Then if in a feasible solution to this new SDP the matrix $P$ is rank-2, either $x$ and $y$ constitute a symmetric $\frac{1}{3}$-NE, or a symmetric $\frac{1}{3}$-NE can be recovered from $P$ in polynomial time.\end{theorem}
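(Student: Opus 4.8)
The plan is to mirror the proof of Theorem~\ref{NASH Thm: 5/11e}, but to exploit complete positivity instead of mere nonnegative rank, and then to sharpen the worst-case bookkeeping using symmetry. First I would note that since $B = A^T$, the modified SDP admits a symmetric feasible solution (i.e., one invariant under swapping the two block coordinates), so without loss of generality $X = Y$, $Z = P = P^T$, and $x = y$. Since $P$ is rank-2, nonnegative, and now also psd, Theorem~\ref{NASH CP rank 2} gives $P = \sigma_1 a_1 a_1^T + \sigma_2 a_2 a_2^T$ with $a_1, a_2 \in \triangle_m$, $\sigma_1, \sigma_2 \ge 0$, $\sigma_1 + \sigma_2 = 1$; moreover (by the cited result in \cite{kalofolias2012computing}) this CP factorization is computable in polynomial time. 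Here $x = y = P1_m = \sigma_1 a_1 + \sigma_2 a_2$, and the quantity $P - xy^T = \sigma_1\sigma_2 (a_1 - a_2)(a_1 - a_2)^T$ from Lemma~\ref{NASH Lem: Representation Lemma} is now a symmetric rank-one term.

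Next I would carry out the same three-case split on the sizes of $\epsilon_A(x,y)$ and $\epsilon_B(x,y)$ relative to a threshold $t$ (to be optimized; I expect $t = 1/3$). \textbf{Case 1:} if both $\epsilon_A \le t$ and $\epsilon_B \le t$, then $(x,y)$ is itself a symmetric $t$-NE and we are done. \textbf{Case 2:} if both exceed $t$, then using $\epsilon_A \le \Tr(A(P - xy^T)^T)$, $\epsilon_B \le \Tr(B(P-xy^T)^T)$, $B = A^T$, and $\sigma_1\sigma_2 \le 1/4$, I would deduce $(a_1 - a_2)^T A (a_1 - a_2) \ge 4t$, hence $a_1^T A a_1 + a_2^T A a_2 \ge 4t$ (the cross terms are nonnegative). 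Because entries of $A$ lie in $[0,1]$ and $a_1,a_2$ are simplex vectors, each of $a_1^T A a_1$ and $a_2^T A a_2$ is at most $1$, so both are at least $4t - 1$; by symmetry the same holds with $B = A^T$. Thus $(a_1,a_1)$ and $(a_2,a_2)$ are each symmetric $(1 - (4t-1)) = (2 - 4t)$-NE, and these are recoverable in polynomial time. \textbf{Case 3:} if exactly one player, say B, has $\epsilon_B > t$ while $\epsilon_A \le t$ — by the symmetric structure $\epsilon_A = \epsilon_B$, so this case is actually vacuous; but to be safe I would instead phrase Case 3 as the ``mixing'' argument of Theorem~\ref{NASH Thm: 5/11e}: take $y^*$ a best response to $x$, set $\tilde y = p y + (1-p) y^*$ with $p = 1/(1 + \epsilon_B - \epsilon_A)$, which equalizes the two players' regret bounds at $\epsilon_B p = \epsilon_B / (1 + \epsilon_B - \epsilon_A)$.

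Then I would optimize $t$: balance the $t$-NE of Case 1, the $(2-4t)$-NE of Case 2, and (if needed) the mixed bound of Case 3 using $\epsilon_B \le 1/2$ (Corollary~\ref{NASH Cor: 1/2e}) and $\epsilon_A \ge$ the Case-1 threshold. Setting $t = 2 - 4t$ gives $t = 2/5$, but the symmetry collapse of Case 3 and the fact that in the symmetric setting we can also average $x$ with a best response to itself should let us push the threshold down; I anticipate that the sharp value is $1/3$, achieved by choosing the threshold so that $1 - (4t - 1)$, i.e. $2 - 4t$, also equals $1/3$ after re-optimizing the constant in Case 2 (using a tighter estimate of $\sigma_1\sigma_2$ or of the diagonal entries $a_i^T A a_i$ via the structure $x = \sigma_1 a_1 + \sigma_2 a_2$). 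The main obstacle I expect is precisely this last optimization: getting from the crude $2/5$ down to $1/3$ requires a more careful argument in Case 2 — specifically, bounding $\epsilon_A(a_i, a_i)$ not just by $1 - a_i^T A a_i$ but by comparing $a_i^T A a_i$ against the best pure response $e_j^T A a_i$, and using that the convex combination $x = \sigma_1 a_1 + \sigma_2 a_2$ already does well against all pure strategies (since $(x,y)$ passing through the relaxed Nash / correlated-equilibrium constraints of \ref{NASH Eq: SDP2} controls $e_j^T A x$). I would need to chase through how the correlated-equilibrium inequalities on $P$ translate into bounds on $e_j^T A a_i$ for the individual factors $a_i$, and then feed that into the case analysis to obtain the $1/3$ guarantee.
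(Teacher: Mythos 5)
Your setup is right: the CP decomposition $P = \sigma_1 a_1 a_1^T + \sigma_2 a_2 a_2^T$ with $a_i \in \triangle_m$, the identity $P - xy^T = \sigma_1\sigma_2(a_1-a_2)(a_1-a_2)^T$, the observation that $\epsilon_A = \epsilon_B$ so the ``mixed'' Case~3 is vacuous, and the chain $\sigma_1\sigma_2(a_1-a_2)^TA(a_1-a_2) \ge t \Rightarrow a_1^TAa_1 + a_2^TAa_2 \ge 4t$. The gap is in how you exit Case~2. You bound each $a_i^TAa_i \ge 4t-1$ by subtracting the crude upper bound $1$ on the \emph{other} term, which only yields that each $(a_i,a_i)$ is a $(2-4t)$-NE, and balancing $t = 2-4t$ stalls at $2/5$. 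You then speculate that closing the gap to $1/3$ requires feeding the correlated-equilibrium inequalities of~\ref{NASH Eq: SDP2} back into bounds on $e_j^TAa_i$ — but no such machinery is needed.

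The missing step is a pigeonhole: you only need \emph{one} of $(a_1,a_1)$, $(a_2,a_2)$ to be good, so from $a_1^TAa_1 + a_2^TAa_2 \ge 4t$ conclude that $\max\{a_1^TAa_1, a_2^TAa_2\} \ge 2t$, i.e.\ the \emph{better} of the two has payoff at least $2t$ and hence (since all payoffs lie in $[0,1]$, and by symmetry $a_i^TBa_i = a_i^TAa_i$) regret at most $1-2t$ for both players. Balancing $t = 1-2t$ gives $t = 1/3$ directly, and the better factor is identified in polynomial time from the CP factorization. Note $2t > 4t-1$ precisely when $t < 1/2$, so the pigeonhole bound strictly dominates your per-factor bound in the relevant range. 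This is exactly the paper's argument, which in fact only has two cases: either $(x,y)$ is already a $\frac{1}{3}$-NE, or it is not and one of the two CP factors must be.
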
 
\begin{proof}
	{}If $(x,y)$ is already a symmetric $\frac{1}{3}$-NE, then the claim is established. Now suppose that $(x,y)$ does not constitute a $\frac{1}{3}$-Nash equilibrium. Similarly as in the proof of Theorem \ref{NASH Thm: 1-1/k}, we can decompose $P$ into $\sum_{i=1}^2 \sigma_i a_ia_i^T$, where $\sum_{i=1}^2 \sigma_i = 1$ and each $a_i$ is a vector on the unit simplex. Then we have
	\begin{equation*}
	\sigma_1\sigma_2(a_1-a_2)^T A (a_1-a_2) \ge \frac{1}{3}.
	\end{equation*}
	Since $A, a_1,$ and $a_2$ are all nonnegative, and $\sigma_1\sigma_2 \le \frac{1}{4}$, we get
	\begin{equation*}a_1^TAa_1+ a_2^TAa_2 \ge (a_1-a_2)^T A (a_1-a_2) \ge \frac{4}{3}.\end{equation*}
	In particular, at least one of $a_1^TAa_1$ and $a_2^TAa_2$ is at least $\frac{2}{3}$. Since the maximum possible payoff is 1, at least one of $(a_1,a_1)$ and $(a_2,a_2)$ is a (symmetric) $\frac{1}{3}$-Nash equilibrium.
\end{proof}

\begin{remark}
	For symmetric games, instead of the construction stated in Theorem \ref{NASH Thm: 1/3se}, one can simply optimize over a smaller $m \times m$ matrix (note $m=n$). This is the relaxed version of exchangeable equilibria~\cite{stein1943exchangeable}, with the completely positive constraint relaxed to a psd constraint.
\end{remark}

\begin{remark}\label{NASH Rem: Rank of M}
	
	The statements of Corollary \ref{NASH Cor: 1/2e}, and Theorem \ref{NASH Thm: 5/11e}, and Theorem \ref{NASH Thm: 1/3se} hold for any rank-2 correlated equilibrium. Indeed, given any rank-2 (equivalently, nonnegative-rank-2) correlated equilibrium $P$, one can complete it to a (rank-2) feasible solution to \ref{NASH Eq: SDP2} as follows. Let $P = \sum_{i=1}^2 \sigma_ia_ib_i^T$, where $a_i \in \triangle_m, b_i \in \triangle_n$, and $\sigma_1 + \sigma_2 = 1$. It is easy to check that $$\M~\defeq~\sum_{i=1}^2 \sigma_i \bmat a_i \\ b_i \emat \bmat a_i \\ b_i \emat^T$$ is feasible to \ref{NASH Eq: SDP2}.	\end{remark}

\section{Bounding Payoffs and Strategy Exclusion in Symmetric Games}\label{NASH Sec: Bounding Payoffs and Strategy Exclusion}
In addition to finding $\epsilon$-additive Nash equilibria, our SDP approach can be used to answer certain questions of economic interest about Nash equilibria without actually computing them. For instance, economists often would like to know the maximum welfare (sum of the two players' payoffs) achievable under any Nash equilibrium, or whether there exists a Nash equilibrium in which a given subset of strategies (corresponding, e.g., to undesirable behavior) is not played. Both these questions are NP-hard for bimatrix games \cite{gilboa1989nash}, even when the game is symmetric and only symmetric equilibria are considered \cite{conitzer2008new}. In this section, we consider these two problems in the symmetric setting and compare the performance of our SDP approach to an LP approach which searches over symmetric correlated equilibria. For general equilibria, it turns out that for these two specific questions, our SDP approach is equivalent to an LP that searches over correlated equilibria.

\subsection{Bounding Payoffs}\label{NASH SSec: Bounding Payoffs}
When designing policies that are subject to game theoretic behavior by agents, economists would often like to find one with a good socially optimal outcome, which usually corresponds to an equilibrium giving the maximum welfare. Hence, given a game, it is of interest to know the highest achievable welfare under any Nash equilibrium. For symmetric games, symmetric equilibria are of particular interest as they reflect the notion that identical agents should behave similarly given identical options.

Note that the maximum welfare of a symmetric game under any symmetric Nash equilibrium is equal to the optimal value of the following quadratic program:
\begin{equation}\label{NASH Eq: Payoff QP}
\begin{aligned}
& \underset{x \in \triangle_m}{\max}
& & 2x^TAx\\
& \text{subject to}
& & x^TAx \ge e_i^TAx, \forall i\in \{1,\ldots, m\}.\\
\end{aligned}
\end{equation}
One can find an upper bound on this number by solving an LP which searches over symmetric correlated equilibria:
\begin{align}\label{NASH Eq: LP1}\tag{LP1}
& \underset{P \in \mathbb{S}^{m, m}}{\max}
& & \Tr(AP^T)& \nonumber\\
& \text{subject to}
& & \sum_{i=1}^m \sum_{j=1}^m P_{i,j} = 1\label{NASH Eq: SDP3 Distribution}&\\
&&& \sum_{j=1}^m A_{i,j}P_{i,j} \ge \sum_{j=1}^m A_{k,j}P_{i,j}, \forall i,k \in \{1,\ldots, m\}, & \label{NASH Eq: SDP3 CE}\\
&&& P \ge 0.\label{NASH Eq: SDP3 Nonnegativity}&
\end{align}
A potentially better upper bound on the maximum welfare can be obtained from a version of \ref{NASH Eq: SDP2} adapted to this specific problem:
\begin{samepage}
	\begin{align}\label{NASH Eq: SDP3}\tag{SDP3}
	& \underset{P \in \mathbb{S}^{m, m}}{\max}
	& & \Tr(AP^T)& \nonumber\\
	& \text{subject to}
	& & (\ref{NASH Eq: SDP3 Distribution}), (\ref{NASH Eq: SDP3 CE}), (\ref{NASH Eq: SDP3 Nonnegativity})\nonumber&\\
	&&& P \succeq 0.\nonumber&
	\end{align}
\end{samepage}
To test the quality of these upper bounds, we tested this LP and SDP on a random sample of one hundred $5\times 5$ and $10 \times 10$ games\footnote{The matrix $A$ in each game was randomly generated with diagonal entries uniform and independent in [0,.5] and off-diagonal entries uniform and independent in [0,1].}. The resulting upper bounds are in Figure~\ref{NASH Fig: welfareapprox}, which shows that the bound returned by~\ref{NASH Eq: SDP3} was exact in a large number of the experiments.\footnote{The computation of the exact maximum payoffs was done with the \texttt{lrsnash} software~\cite{avis2010enumeration}, which computes all extreme Nash equilibria. For a definition of extreme Nash equilibria and for understanding why it is sufficient for us to compare against extreme Nash equilibria (both in Section \ref{NASH SSec: Bounding Payoffs} and in Section~\ref{NASH SSec: Strategy Exclusion}), see Appendix~\ref{NASH Sec: Lemmas for Extreme Nash Equilibria}. The computation of the SDP upper bound has been implemented in the file {nashbound.m}, which is publicly available at \texttt{https://github.com/jeffreyzhang92/SDP\_Nash} along with the instance data. This file more generally computes an SDP-based lower bound on the minimum of an input quadratic function over the set of Nash equilibria of a bimatrix game. The file also takes as an argument whether one wishes to only consider symmetric equilibria when the game is symmetric.}

\begin{figure}[H]
	\includegraphics[height=.25\textheight,keepaspectratio]{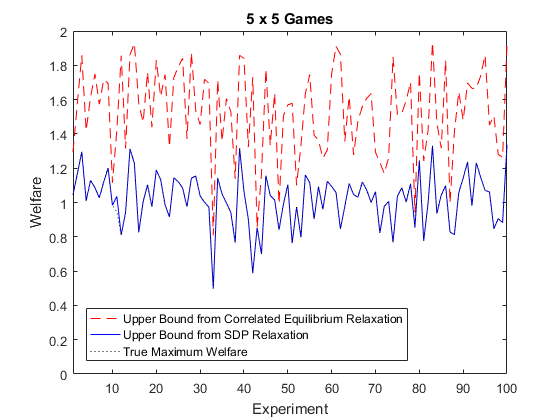}
	\includegraphics[height=.25\textheight,keepaspectratio]{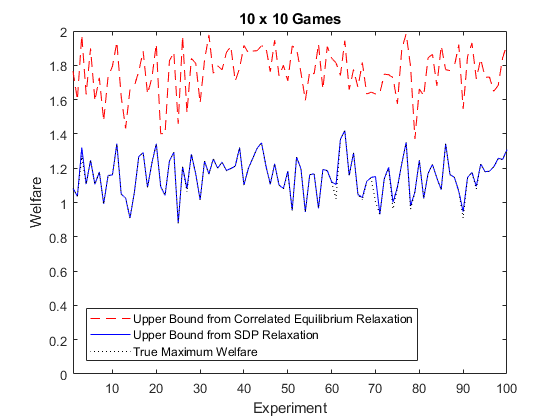}
	\caption{The quality of the upper bound on the maximum welfare obtained by~\ref{NASH Eq: LP1} and~\ref{NASH Eq: SDP3} on 100 $5\times 5$ games (left) and 100 $10\times 10$ games (right).\label{NASH Fig: welfareapprox}}
\end{figure}

\subsection{Strategy Exclusion}\label{NASH SSec: Strategy Exclusion}
The strategy exclusion problem asks, given a subset of strategies $\mathcal{S}=(\mathcal{S}_x,\mathcal{S}_y)$, with ${\mathcal{S}_x \subseteq\{1,\ldots,m\}}$ and $\mathcal{S}_y\subseteq\{1,\ldots,n\}$, is there a Nash equilibrium in which no strategy in $\mathcal{S}$ is played with positive probability. We will call a set $\mathcal{S}$ ``persistent'' if the answer to this question is negative, i.e. at least one strategy in $\mathcal{S}$ is played with positive probability in every Nash equilibrium. One application of the strategy exclusion problem is to understand whether certain strategies can be discouraged in the design of a game, such as reckless behavior in a game of chicken or defecting in a game of prisoner's dilemma. In these particular examples these strategy sets are persistent and cannot be discouraged.

As in the previous subsection, we consider the strategy exclusion problem for symmetric strategies in symmetric games (such as the aforementioned games of chicken and prisoner's dilemma). A quadratic program which addresses this problem is as follows:
\begin{samepage}
	\begin{equation}\label{NASH Eq: Exclusion QP}
	\begin{aligned}
	& \underset{x\in\triangle_m}{\min}
	& & \sum_{i \in \mathcal{S}_x} x_i\\
	& \text{subject to}
	& & x^TAx \ge e_i^TAx, \forall i\in \{1,\ldots, m\}.\\
	\end{aligned}
	\end{equation}
\end{samepage}
Observe that by design, $\mathcal{S}$ is persistent if and only if this quadratic program has a positive optimal value. As in the previous subsection, an LP relaxation of this problem which searches over symmetric correlated equilibria is given by
\begin{samepage}
	\begin{align}\label{NASH Eq: LP2}\tag{LP2}
	& \underset{P \in \mathbb{S}^{m, m}}{\min}
	& & \sum_{i \in \mathcal{S}_x}\sum_{j=1}^m P_{ij} &\nonumber\\
	& \text{subject to}
	& & (\ref{NASH Eq: SDP3 Distribution}), (\ref{NASH Eq: SDP3 CE}), (\ref{NASH Eq: SDP3 Nonnegativity}).\nonumber&
	\end{align}
\end{samepage}
The SDP relaxation that we propose for the strategy exclusion problem is the following:
\begin{samepage}
	\begin{align}\label{NASH Eq: SDP4}\tag{SDP4}
	& \underset{P \in \mathbb{S}^{m, m}}{\min}
	& & \sum_{i \in \mathcal{S}_x}\sum_{j=1}^m P_{ij} &\nonumber\\
	& \text{subject to}
	& & (\ref{NASH Eq: SDP3 Distribution}), (\ref{NASH Eq: SDP3 CE}), (\ref{NASH Eq: SDP3 Nonnegativity})&\nonumber\\
	&&& P \succeq 0.\nonumber&
	\end{align}
\end{samepage}
Our approach would be to declare that the strategy set $\mathcal{S}_x$ is persistent if and only if~\ref{NASH Eq: SDP4} has a positive optimal value.

Note that since the optimal value of~\ref{NASH Eq: SDP4} is a lower bound for that of (\ref{NASH Eq: Exclusion QP}),~\ref{NASH Eq: SDP4} carries over the property that if a set $\mathcal{S}$ is not persistent, then the SDP for sure returns zero. Thus, when using~\ref{NASH Eq: SDP4} on a set which is not persistent, our algorithm will always be correct. However, this is not necessarily the case for a persistent set. While we can be certain that a set is persistent if~\ref{NASH Eq: SDP4} returns a positive optimal value (again, because the optimal value of~\ref{NASH Eq: SDP4} is a lower bound for that of (\ref{NASH Eq: Exclusion QP})), there is still the possibility that for a persistent set~\ref{NASH Eq: SDP4} will have optimal value zero. The same arguments hold for the optimal value of \ref{NASH Eq: LP2}.

To test the performance of \ref{NASH Eq: LP2} and \ref{NASH Eq: SDP4}, we generated 100 random games of size $5\times 5$ and $10\times 10$ and computed all their symmetric extreme Nash equilibria\footnote{The exact computation of the exact Nash equilibria was done again with the \texttt{lrsnash} software \cite{avis2010enumeration}, which computes extreme Nash equilibria. To understand why this suffices for our purposes see Appendix~\ref{NASH Sec: Lemmas for Extreme Nash Equilibria}.}. We then, for every strategy set $\mathcal{S}$ of cardinality one and two, checked whether that set of strategies was persistent, first by checking among the extreme Nash equilibria, then through \ref{NASH Eq: LP2} and \ref{NASH Eq: SDP4}. The results are presented in Tables~\ref{NASH Tab: SE 5x5 1} and~\ref{NASH Tab: SE 10x10 1}. As can be seen,~\ref{NASH Eq: SDP4} was quite effective for the strategy exclusion problem.

\begin{table}[h]
	\caption{Performance of~\ref{NASH Eq: LP2} and~\ref{NASH Eq: SDP4} on $5\times 5$ games}
	{\begin{tabular}{|c|c|c|}\hline
			$|\mathcal{S}|$ & 1 & 2\\\hline
			Number of total sets&	500 & 1000\\\hline
			Number of persistent sets&	245 & 748 \\\hline
			Persistent sets certified (\ref{NASH Eq: LP2}) & 177 (72.2\%) & 661 (88.7\%)\\\hline
			Persistent sets certified (\ref{NASH Eq: SDP4})& 245 (100\%) & 748 (100\%)\\\hline
		\end{tabular}\label{NASH Tab: SE 5x5 1}}
	\centering
	{}
\end{table}
\begin{table}[h]
	\caption{Performance of~\ref{NASH Eq: LP2} and~\ref{NASH Eq: SDP4} on $10\times 10$ games}
	{\begin{tabular}{|c|c|c|}\hline
			$|\mathcal{S}|$ & 1 & 2\\\hline
			Number of total sets& 1000 & 4500\\\hline
			Number of persistent sets&	326 & 2383 \\\hline
			Persistent sets certified (\ref{NASH Eq: LP2}) & 39 (12.0\%) & 630 (26.4\%)\\\hline
			Persistent sets certified (\ref{NASH Eq: SDP4})& 318 (97.5\%) & 2368 (99.4\%)\\\hline
		\end{tabular}\label{NASH Tab: SE 10x10 1}}
	\centering
	{}
\end{table}

\section{Connection to the Sum of Squares/Lasserre Hierarchy}\label{NASH Sec: Connection to Sum of Squares/Lasserre Hierarchy}

In this section, we clarify the connection of the SDPs we have proposed in this chapter to those arising in the sum of squares/Lasserre hierarchy. We start by briefly reviewing this hierarchy.

\subsection{Sum of Squares/Lasserre Hierarchy}\label{NASH SSec: Lassere's Hierarchy}
The sum of squares/Lasserre hierarchy\footnote{The unfamiliar reader is referred to~\cite{lasserre2001global,parrilo2003semidefinite,laurent2009sums} for an introduction to this hierarchy and the related theory of moment relaxations.} gives a recipe for constructing a sequence of SDPs whose optimal values converge to the optimal value of a given polynomial optimization problem. Recall that for a POP of the form
\begin{equation}\label{NASH Defn: pop}
\begin{aligned}
& \underset{x \in \mathbb{R}^n}{\min}
& & p(x) \\
& \text{subject to}
&& q_i(x) \ge 0, \forall i \in \{1,\ldots,m\},\\
\end{aligned}
\end{equation}
where $p,q_i$ are polynomial functions, $k$-th level of the Lasserre hierarchy is given by

\begin{equation}\label{NASH Defn: Lasserre Hierarchy}
\begin{aligned}
\gamma_{sos}^k \defeq & \underset{\gamma,\sigma_i}{\max}
&& \gamma\\
& \text{subject to}
&& p(x)-\gamma = \sigma_0(x) + \sum_{i=1}^m \sigma_i(x)q_i(x),\\
&&&\sigma_i \text{ is sos, }\forall i \in \{0,\ldots,m\},\\
&&&\sigma_0, g_i\sigma_i \text{ have degree at most } 2k,\ \forall i \in \{1,\ldots,m\}.
\end{aligned}
\end{equation}

Recall that any fixed level of this hierarchy gives an SDP of size polynomial in $n$ and that, if the quadratic module generated by $\{x\in\mathbb{R}^n|g_i(x)\ge 0\}$ is Archimedean (see, e.g. \cite{laurent2009sums} for definition), then $\underset{k \to \infty}{\lim} \gamma_{sos}^k = p^*$, where $p^*$ is the optimal value of the pop in (\ref{NASH Defn: pop}). The latter statement is a consequence of Putinar's positivstellensatz (see, e.g.~\cite{putinar1993positive},~\cite{lasserre2001global}).

\subsection{The Lasserre Hierarchy and~\ref{NASH Eq: SDP1}}\label{NASH SSec: The Lasserre Hierarchy and SDP1}

One can show, e.g. via the arguments in \cite{lasserre2009convex}, that the feasible sets of the SDPs dual to the SDPs underlying the hierarchy we summarized above produce an arbitrarily tight outer approximation to the convex hull of the set of Nash equilibria of any game. The downside of this approach, however, is that the higher levels of the hierarchy can get expensive very quickly. This is why the approach we took in this chapter was instead to improve the first level of the hierarchy. The next proposition formalizes this connection.

\begin{prop}\label{NASH Prop: strongerLasserre}
	Consider the problem of minimizing any quadratic objective function over the set of Nash equilibria of a bimatrix game. Then,~\ref{NASH Eq: SDP1} (and hence~\ref{NASH Eq: SDP2}) gives a lower bound on this problem which is no worse than that produced by the first level of the Lasserre hierarchy.
\end{prop}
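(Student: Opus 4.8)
The plan is to prove the claim by a feasible-set containment argument in ``moment space''. The first level ($k=1$) of \ref{NASH Defn: Lasserre Hierarchy}, applied to the problem of minimizing a quadratic objective $f(x,y)$ over the Nash set described by \ref{NASH Eq: QCQP Formulation}, has a dual moment relaxation whose optimal value $\gamma^1_{\mathrm{mom}}$ satisfies $\gamma^1_{\mathrm{sos}}\le \gamma^1_{\mathrm{mom}}\le f^*$ by weak duality and validity of the relaxation, where $f^*$ is the minimum of $f$ over the Nash equilibria. I will show that every feasible solution of \ref{NASH Eq: SDP1} is feasible for this level-one moment relaxation, with the same objective value, which gives $\gamma_{\ref{NASH Eq: SDP1}}\ge \gamma^1_{\mathrm{mom}}\ge \gamma^1_{\mathrm{sos}}$; that is, \ref{NASH Eq: SDP1} is a lower bound no worse than the first Lasserre level (under either reading of ``first level''). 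The hypothesis that $f$ is quadratic is used only to note that, in both relaxations, $f$ lifts to \emph{the same} linear functional of the entries of the matrix $\M'$ from Section~\ref{NASH Sec: Intro to SDP} (monomials $x_ix_j,\ x_iy_j,\ y_iy_j,\ x_i$ map to the corresponding entries of the blocks $X,P,Z,Y$ and of the last column), so comparing the two bounds reduces to comparing feasible sets.

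First I would cast the Nash set in the form of \ref{NASH Defn: pop}: the two families of bilinear (degree-two) ``Nash inequalities'' $x^TAy-e_i^TAy\ge 0$ and $x^TBy-x^TBe_j\ge 0$, the affine sign constraints $x_i\ge 0,\ y_j\ge 0$, and the affine simplex equalities $\sum_i x_i-1=0,\ \sum_j y_j-1=0$ (each written, as in \ref{NASH Defn: Lasserre Hierarchy}, as a pair of inequalities). Instantiating $k=1$, the moment variable is exactly an $(m+n+1)\times(m+n+1)$ positive semidefinite matrix $\M'$ of the block form used in Section~\ref{NASH Sec: Intro to SDP}, normalized so that the entry indexed by the constant monomial equals $1$. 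The bookkeeping step is to record what the first level actually imposes: an sos multiplier attached to a degree-two constraint must be a nonnegative constant, so each bilinear Nash constraint contributes \emph{only} the scalar inequality obtained by applying the moment functional to it, namely the relaxed Nash constraints \ref{NASH Eq: SDP1 Relaxed Nash A} and \ref{NASH Eq: SDP1 Relaxed Nash B} ($\Tr(AZ)\ge e_i^TAy$, $\Tr(BZ)\ge x^TBe_j$); similarly an sos multiplier on an affine constraint is a nonnegative constant, so the sign constraints contribute only $x_i\ge0,\ y_j\ge0$ and the simplex equalities contribute only $\sum_i x_i=1,\ \sum_j y_j=1$, i.e. \ref{NASH Eq: SDP1 Unity x}--\ref{NASH Eq: SDP1 Unity y}. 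Hence every constraint of the first Lasserre level lies among: $\M'\succeq 0$, the normalization, the relaxed Nash constraints, the two unity constraints, and $x_i,y_j\ge0$; and all of these are implied by the constraints of \ref{NASH Eq: SDP1} (in particular $\M'\ge 0$ in \ref{NASH Eq: SDP1 Nonnegativity} already forces $x_i,y_j\ge 0$). Since \ref{NASH Eq: SDP1} imposes the additional entrywise nonnegativity of all of $\M'$, the containment is typically strict, which is precisely the sense in which \ref{NASH Eq: SDP1} ``improves'' the first level. This proves the statement for \ref{NASH Eq: SDP1}; for \ref{NASH Eq: SDP2} I would invoke the correspondence of Section~\ref{NASH SSec: Simplification}: any feasible $\M$ of \ref{NASH Eq: SDP2} extends to a feasible $\M'$ of \ref{NASH Eq: SDP1.1}, hence of \ref{NASH Eq: SDP1}, carrying the same value of the lifted objective (after the substitution $x=P1_n,\ y=P^T1_m$), so the bound of \ref{NASH Eq: SDP2} is no worse than that of \ref{NASH Eq: SDP1}, and therefore no worse than the first Lasserre level.

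The step I expect to be the main obstacle is the careful passage between the sos form in \ref{NASH Defn: Lasserre Hierarchy} and the moment form, and pinning down exactly which inequalities level one yields once the equality constraints and the degree-two bilinear Nash constraints are treated correctly. Concretely, one must verify that level one does \emph{not} produce the row constraints \ref{NASH Eq: R_X}--\ref{NASH Eq: R_Y} (which require multiplying the affine simplex equalities by degree-one polynomials, i.e. a strictly higher level), since otherwise the cleanest comparison would be with \ref{NASH Eq: SDP1.1}/\ref{NASH Eq: SDP2} rather than with \ref{NASH Eq: SDP1}; in either case the conclusion still holds, because \ref{NASH Eq: SDP1.1} and \ref{NASH Eq: SDP2} are only tighter than \ref{NASH Eq: SDP1}. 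A secondary, routine point is to confirm that an arbitrary quadratic objective in $(x,y)$ is represented by genuinely the same linear functional of $\M'$ in both relaxations, which is immediate from the block structure of $\M'$.
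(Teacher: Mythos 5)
Your proposal is correct and is essentially the paper's argument viewed from the moment side: the paper explicitly writes down a weakened version of \ref{NASH Eq: SDP1} (keeping only the relaxed Nash, unity, psd, and last-column nonnegativity constraints), computes its SDP dual, and matches it coefficient-by-coefficient with the level-one sos program, whereas you identify that same weakened SDP directly as the level-one moment relaxation and invoke weak duality abstractly; the chain $\gamma^1_{\mathrm{sos}}\le\gamma^1_{\mathrm{mom}}\le\gamma_{\ref{NASH Eq: SDP1}}$ is identical in both. The one small imprecision is your parenthetical that the row constraints would ``require \ldots a strictly higher level'': they are degree-admissible at $k=1$ and are excluded only because the paper's convention attaches sos (hence constant) multipliers to each side of the simplex equalities rather than arbitrary affine multipliers, but since you flag this and note the conclusion survives either convention, it does not affect the proof.
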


\begin{proof}
	{}To prove this proposition we show that the first level of the Lasserre hierarchy is dual to a weakened version of~\ref{NASH Eq: SDP1}.
	
	\textbf{Explicit parametrization of first level of the Lasserre hierarchy.} Consider the formulation of the Lasserre hierarchy in (\ref{NASH Defn: Lasserre Hierarchy}) with $k=1$. Suppose we are minimizing a quadratic function $$f(x,y)=\xy1vec^T \mathcal{C} \xy1vec$$ over the set of Nash equilibria as described by the linear and quadratic constraints in (\ref{NASH Eq: QCQP Formulation}). If we apply the first level of the Lasserre hierarchy to this particular pop, we get
	
	\begin{equation}
	\begin{aligned}\label{NASH Eq: LH1 long}
	\underset{Q, \alpha, \chi, \beta, \psi, \eta}{\max}
	&& \gamma\\
	\text{subject to}
	&& \xy1vec^T \mathcal{C} \xy1vec-\gamma &= \xy1vec^T Q \xy1vec+\sum_{i=1}^m \alpha_i(x^TAy - e_i^TAy)\\
	&& &+ \sum_{i=1}^n \beta_i(x^TBy - x^TBe_i)\\
	&& &+ \sum_{i=1}^m \chi_i x_i + \sum_{i=1}^n \psi_i y_i\\
	&& &+\eta_1 (\sum_{i=1}^m x_i - 1)+ \eta_2 (\sum_{i=1}^n y_i -1),\\
	&& Q &\succeq 0,\\
	&& \alpha, \chi, \beta, \psi &\ge 0,
	\end{aligned}
	\end{equation}
	
	where $Q \in \mathbb{S}^{m+n+1 \times m+n+1},\alpha, \chi \in \mathbb{R}^m, \beta, \psi \in \mathbb{R}^n, \eta \in \mathbb{R}^2$.
	
	By matching coefficients of the two quadratic functions on the left and right hand sides of (\ref{NASH Eq: LH1 long}), this SDP can be written as
	\begin{equation}\label{NASH Eq: Lassere Level 1}
	\begin{aligned}
	& \underset{\gamma,\alpha,\beta,\chi,\psi,\eta}{\max}
	&& \gamma\\
	& \text{subject to}
	&& \mathcal{H} \succeq 0,\\
	&&& \alpha, \beta, \chi, \psi \ge 0,
	\end{aligned}
	\end{equation}
	where
	\begin{small}
	\begin{equation}\label{NASH Eq: LH1 H}
	\mathcal{H} \mathrel{\mathop:}= \frac{1}{2}\bmat 0 & (-\sum_{i=1}^m \alpha_i )A + (-\sum_{i=1}^m \beta_i)B & \sum_{i=1}^n \beta_i B_{,i}-\chi - \eta_11_m\\
	(-\sum_{i=1}^m \alpha_i)A + (-\sum_{i=1}^n \beta_i)B & 0 & \sum_{i=1}^m \alpha_i A_{i,}^T-\psi - \eta_21_n\\
	\sum_{i=1}^n \beta_iB_{,i}^T-\chi^T-\eta_11_m^T & \sum_{i=1}^m \alpha_i A_{i,}-\psi^T - \eta_21_n^T  & 2\eta_1+2\eta_2-2\gamma
	\emat+\mathcal{C}.\end{equation}\end{small}
	
	\textbf{Dual of a weakened version of SDP1.} With this formulation in mind, let us consider a weakened version of~\ref{NASH Eq: SDP1} with only the relaxed Nash constraints, unity constraints, and nonnegativity constraints on $x$ and $y$ in the last column (i.e., the nonegativity constraint is not applied to the entire matrix). Let the objective be $\Tr(C\M')$. To write this new SDP in standard form, let\\
	$$\mathcal{A}_i \mathrel{\mathop:}= \frac{1}{2}\bmat 0 & A & 0\\ A^T & 0 & -A_{i,}^T\\ 0 & -A_{i,} & 0\emat, \mathcal{B}_i \mathrel{\mathop:}= \frac{1}{2}\bmat 0 & B & -B_{,i}\\ B^T & 0 & 0\\-B_{,i}^T & 0 & 0 \emat,$$
	$$\mathcal{S}_1 \mathrel{\mathop:}= \frac{1}{2}\bmat 0 & 0 & 1_m\\ 0 & 0 & 0\\1_m^T & 0 & -2\emat, \mathcal{S}_2 \mathrel{\mathop:}= \frac{1}{2}\bmat 0 & 0 & 0\\ 0 & 0 & 1_n\\ 0 & 1_n^T & -2 \emat.$$
	Let $\mathcal{N}_i$ be the matrix with all zeros except a $\frac{1}{2}$ at entry $(i,m+n+1)$ and $(m+n+1,i)$ (or a 1 if $i=m+n+1$).\\
	Then this SDP can be written as\\
	\begin{align}\label{NASH Eq: SDP0}\tag{SDP0}
	& \underset{\M'}{\min}
	& & \Tr(\mathcal{C}\M') \\
	& \text{subject to}
	& & \mathcal{M}' \succeq 0,\label{NASH Eq: SDP0 PSD}\\
	&&& \Tr(\mathcal{N}_{i}\M') \ge 0, \forall i \in \{1,\ldots,m+n\}, \label{NASH Eq: SDP0 nonnegativity}\\
	&&& \Tr(\mathcal{A}_i\M') \ge 0, \forall i \in \{1,\ldots,m\}, \label{NASH Eq: SDP0 Nash A}\\
	&&& \Tr(\mathcal{B}_i\M') \ge 0, \forall i \in \{1,\ldots,n\}, \label{NASH Eq: SDP0 Nash B}\\
	&&& \Tr(\mathcal{S}_1\M') = 0, \label{NASH Eq: SDP0 simplex x}\\
	&&& \Tr(\mathcal{S}_2\M') = 0, \label{NASH Eq: SDP0 simplex y}\\
	&&& \Tr({\mathcal{N}}_{m+n+1}) = 1 \label{NASH Eq: SDP0 Corner}.
	\end{align}
	
	We now create dual variables for each constraint; we choose $\alpha_i$ and $\beta_i$ for the relaxed Nash constraints (\ref{NASH Eq: SDP0 Nash A}) and (\ref{NASH Eq: SDP0 Nash B}), $\eta_1$ and $\eta_2$ for the unity constraints (\ref{NASH Eq: SDP0 simplex x}) and (\ref{NASH Eq: SDP0 simplex y}), $\chi$ for the nonnegativity of $x$ (\ref{NASH Eq: SDP0 nonnegativity}), $\psi$ for the nonnegativity of $y$ (\ref{NASH Eq: SDP0 nonnegativity}), and $\gamma$ for the final constraint on the corner (\ref{NASH Eq: SDP0 Corner}). These variables are chosen to coincide with those used in the parametrization of the first level of the Lasserre hierarchy, as can be seen more clearly below.\\
	
	We then write the dual of the above SDP as\\
	\begin{equation*}
	\begin{aligned}
	& \underset{\alpha, \beta, \lambda, \gamma}{\max}&& \gamma\\
	&\text{subject to} && \sum_{i = 1}^m \alpha_i \mathcal{A}_i + \sum_{i = 1}^n \beta_i \mathcal{B}_i + \sum_{i = 1}^2 \eta_i\mathcal{S}_i+\sum_{i=1}^m \mathcal{N}_{i+n}\chi_i+\sum_{i=1}^n \mathcal{N}_{i}\psi_i + \gamma \mathcal{N}_{m+n+1} \preceq \mathcal{C},\\\
	&&& \alpha, \beta, \chi, \psi \ge 0.
	\end{aligned}
	\end{equation*}
	which can be rewritten as
	\begin{equation} \label{NASH Eq: SDP1 Dual}
	\begin{aligned}
	& \underset{\alpha, \beta, \chi, \psi, \gamma}{\max}&& \gamma\\
	&\text{subject to} && \mathcal{G} \succeq 0,\\\
	&&& \alpha, \beta, \chi,\psi \ge 0,
	\end{aligned}
	\end{equation}
	
	where
	\begin{small}
	\begin{equation*}
	\mathcal{G} \defeq \frac{1}{2}\bmat 0 & (-\sum_{i=1}^m \alpha_i )A + (-\sum_{i=1}^m \beta_i)B & \sum_{i=1}^n \beta_i B_{,i}-\chi - \eta_11_m\\
	(-\sum_{i=1}^m \alpha_i)A + (-\sum_{i=1}^n \beta_i)B & 0 & \sum_{i=1}^m \alpha_i A_{i,}^T-\psi - \eta_21_n\\
	\sum_{i=1}^n \beta_iB_{,i}^T-\chi^T-\eta_11_m^T & \sum_{i=1}^m \alpha_i A_{i,}-\psi^T - \eta_21_n^T  & 2\eta_1+2\eta_2-2\gamma
	\emat+\mathcal{C}.
	\end{equation*}
	\end{small}
	
	We can now see that the matrix $\mathcal{G}$ coincides with the matrix $\mathcal{H}$ in the SDP (\ref{NASH Eq: Lassere Level 1}). Then we have $$(\ref{NASH Eq: LH1 long})^{opt}=(\ref{NASH Eq: Lassere Level 1})^{opt} = (\ref{NASH Eq: SDP1 Dual})^{opt} \le~\ref{NASH Eq: SDP0}^{opt} \le~\ref{NASH Eq: SDP1}^{opt},$$
	where the first inequality follows from weak duality, and the second follows from that the constraints of~\ref{NASH Eq: SDP0} are a subset of the constraints of~\ref{NASH Eq: SDP1}.
\end{proof}

\begin{remark}
	The Lasserre hierarchy can be viewed in each step as a pair of primal-dual SDPs: the sum of squares formulation which we have just presented, and a moment formulation which is dual to the sos formulation~\cite{lasserre2001global}. All our SDPs in this chapter can be viewed more directly as an improvement upon the moment formulation.
\end{remark}

\begin{remark}
	One can see, either by inspection or as an implication of the proof of Theorem~\ref{NASH Thm: Necessity of VI}, that in the case where the objective function corresponds to maximizing player A's and/or B's payoffs\footnote{This would be the case, for example, in the maximum social welfare problem of Section~\ref{NASH SSec: Bounding Payoffs}, where the matrix of the quadratic form in the objective function is given by $$\mathcal{C}=\bmat 0 & -A-B & 0\\-A-B&0&0\\0&0&0\emat.$$}, SDPs (\ref{NASH Eq: Lassere Level 1}) and (\ref{NASH Eq: SDP1 Dual}) are infeasible. This means that for such problems the first level of the Lasserre hierarchy gives an upper bound of $+\infty$ on the maximum payoff. On the other hand, the additional valid inequalities in~\ref{NASH Eq: SDP2} guarantee that the resulting bound is always finite.
\end{remark}

\section{Future Work} \label{NASH Sec: Conclusion}

Our work leaves many avenues of further research. Are there other interesting subclasses of games (besides strictly competitive games) for which our SDP is guaranteed to recover an exact Nash equilibrium? Can the guarantees on $\epsilon$ in Section~\ref{NASH Sec: Bounds} be improved in the rank-2 case (or the general case) by improving our analysis? Is there a polynomial time algorithm that is guaranteed to find a rank-2 solution to~\ref{NASH Eq: SDP2}? Such an algorithm, together with our analysis, would improve the best known approximation bound for symmetric games (see Theorem \ref{NASH Thm: 1/3se}). Can this bound be extended to general games? We show in Appendix \ref{NASH Sec: Irreducibility} that some natural approaches based on symmetrization of games do not immediately lead to a positive answer to this question. Can SDPs in a higher level of the Lasserre hierarchy be used to achieve better $\epsilon$ guarantees? What are systematic ways of adding valid inequalities to these higher-order SDPs by exploiting the structure of the Nash equilibrium problem? For example, since any strategy played with positive probability must give the same payoff, one can add a relaxed version of the cubic constraints $$x_ix_j(e_i^TAy - e_j^TAy)=0, \forall i,j \in \{1,\ldots,m\}$$ to the SDP underlying the second level of the Lasserre hierarchy. What are other valid inequalities for the second level? Finally, our algorithms were specifically designed for two-player one-shot games. This leaves open the design and analysis of semidefinite relaxations for repeated games or games with more than two players.
\appendix
\chapter{Appendices for Nash Equilibria}

\section{Statistics on $\epsilon$ from Algorithms in Section~\ref{NASH Sec: Algorithms}}\label{NASH Sec: Appendix Statistics}
	Below are statistics for the $\epsilon$ recovered in 100 random games of varying sizes using the algorithms of Section~\ref{NASH Sec: Algorithms}.
	
	\begin{table}[H]
		\caption{Statistics on $\epsilon$ for $5\times 5$ games after 20 iterations.\label{NASH Tab: 5x5}}
		{\begin{tabular}{|c|c|c|c|c|c|}\hline
				Algorithm & Max & Mean & Median & StDev\\\hline
				Square Root&	0.0702&	0.0040&	0.0004&	0.0099	\\\hline
				Diagonal Gap&	0.0448&	0.0027&	0&	0.0061	\\\hline
		\end{tabular}}
		\centering
		{}
	\end{table}
	
	\begin{table}[H]
		\caption{Statistics on $\epsilon$ for $10\times 5$ games after 20 iterations.\label{NASH Tab: 10x5}}
		{\begin{tabular}{|c|c|c|c|c|c|}\hline
				Algorithm & Max & Mean & Median & StDev\\\hline
				Square Root&	0.0327&	0.0044&	0.0021&	0.0064	\\\hline
				Diagonal Gap&	0.0267&	0.0033&	0.0006&	0.0053	\\\hline
		\end{tabular}}
		\centering
		{}
	\end{table}
	
	\begin{table}[H]
		\caption{Statistics on $\epsilon$ for $10\times 10$ games after 20 iterations.\label{NASH Tab: 10x10}}
		{\begin{tabular}{|c|c|c|c|c|c|}\hline
				Algorithm & Max & Mean & Median & StDev\\\hline
				Square Root&	0.0373&	0.0058&	0.0039&	0.0065	\\\hline
				Diagonal Gap&	0.0266&	0.0043&	0.0026&	0.0051	\\\hline
		\end{tabular}}
		\centering
		{}
	\end{table}
	
	\begin{table}[H]
		\caption{Statistics on $\epsilon$ for $15\times 10$ games after 20 iterations.\label{NASH Tab: 15x10}}
		{\begin{tabular}{|c|c|c|c|c|c|}\hline
				Algorithm & Max & Mean & Median & StDev\\\hline
				Square Root&	0.0206&	0.0050&	0.0034&	0.0045	\\\hline
				Diagonal Gap&	0.0212&	0.0038&	0.0025&	0.0039	\\\hline
		\end{tabular}}
		\centering
		{}
	\end{table}
	
	\begin{table}[H]
		\caption{Statistics on $\epsilon$ for $15\times 15$ games after 20 iterations.\label{NASH Tab: 15x15}}
		{\begin{tabular}{|c|c|c|c|c|c|}\hline
				Algorithm & Max & Mean & Median & StDev\\\hline
				Square Root&	0.0169&	0.0051&	0.0042&	0.0039	\\\hline
				Diagonal Gap&	0.0159&	0.0038&	0.0029&	0.0034	\\\hline
		\end{tabular}}
		\centering
		{}
	\end{table}
	
	\begin{table}[H]
		\caption{Statistics on $\epsilon$ for $20\times 15$ games after 20 iterations.\label{NASH Tab: 15x20}}
		{\begin{tabular}{|c|c|c|c|c|c|}\hline
				Algorithm & Max & Mean & Median & StDev\\\hline
				Square Root&	0.0152&	0.0046&	0.0035&	0.0036	\\\hline
				Diagonal Gap&	0.0119&	0.0032&	0.0022&	0.0027	\\\hline
		\end{tabular}}
		\centering
		{}
	\end{table}
	
	\begin{table}[H]
		\caption{Statistics on $\epsilon$ for $20\times 20$ games after 20 iterations.\label{NASH Tab: 20x20}}
		{\begin{tabular}{|c|c|c|c|c|c|}\hline
				Algorithm & Max & Mean & Median & StDev\\\hline
				Square Root&	0.0198&	0.0046&	0.0039&	0.0034	\\\hline
				Diagonal Gap&	0.0159&	0.0032&	0.0024&	0.0032	\\\hline
		\end{tabular}}
		\centering
		{}
	\end{table}
	
	\section{Comparison with an SDP Approach from \cite{laraki2012semidefinite}}
	
	In this section, at the request of a referee, we compare the first level of the SDP hierarchy given in \cite[Section 4]{laraki2012semidefinite} to~\ref{NASH Eq: SDP2} using $\Tr(M)$ as the objective function on 100 randomly generated games for each size given in the tables below. The first level of the hierarchy in \cite{laraki2012semidefinite} optimizes over a matrix which is slightly bigger than the one in \ref{NASH Eq: SDP2}, though it has a number of constraints linear in the size of the game considered, as opposed to the quadratic number in \ref{NASH Eq: SDP2}. We remark that the approach in \cite{laraki2012semidefinite} is applicable more generally to many other problems, including several in game theory.
	
	The scalar $\epsilon$ reported in Table \ref{NASH Table: SDP5 Statistics} is computed using the strategies $(x,y)$ extracted from the first row of the optimal matrix $M_1$ as described in Section 4.1 of \cite{laraki2012semidefinite}. The scalar $\epsilon$ reported in Table \ref{NASH Table: Trace Statistics} is computed using $x = P1_n$ and $y = P^T1_m$ from the optimal solution to \ref{NASH Eq: SDP2} with $\Tr(\M)$ as the objective function.
	
	\begin{table}[H]
		\caption{Statistics on $\epsilon$ for first level of the hierarchy in \cite{laraki2012semidefinite}.\label{NASH Table: SDP5 Statistics}}
		{\begin{tabular}{|c|c|c|c|c|c|c|c|c|c|c|}\hline
				& $5 \times 5$& $10 \times 5$ & $10 \times 10$ & $15 \times 10$ & $15 \times 15$ & $20 \times 15$ & $20 \times 20$ \\\hline
				Max &	0.3357 &	0.3304 &	0.2557 &	0.2189 &	0.1987 &	0.1837 &	0.1828 \\\hline
				Mean &	0.1883 &	0.1889 &	0.1513 &	0.1446 &	0.1262 &	0.1217 &	0.1087 \\\hline
				Median &	0.1803 &	0.1865 &	0.1452 &	0.1418 &	0.1271 &	0.1208 &	0.1070 \\\hline
		\end{tabular}}
		\centering
	\end{table}
	
	\begin{table}[H]
		\caption{Statistics on $\epsilon$ for~\ref{NASH Eq: SDP2} with $\Tr(M)$ as the objective function.\label{NASH Table: Trace Statistics}}
		{\begin{tabular}{|c|c|c|c|c|c|c|c|c|c|c|}\hline
				& $5 \times 5$ & $10 \times 5$ & $10 \times 10$ & $15 \times 10$ & $15 \times 15$ & $20 \times 15$ & $20 \times 20$ \\\hline
				Max &	0.1581 &	0.1589 &	0.115 &	0.1335 &	0.0878 &	0.082 &	0.0619	\\\hline
				Mean &	0.0219 &	0.0332 &	0.0405 &	0.04 &	0.0366 &	0.0356 &	0.0298 	\\\hline
				Median &	0.0046 &	0.0233 &	0.036 &	0.0346 &	0.0345 &	0.0325 &	0.0293 	\\\hline
		\end{tabular}}
		\centering
	\end{table}
	
	We also ran the second level of the hierarchy in \cite{laraki2012semidefinite} on the same 100 $5 \times 5$ games. The maximum $\epsilon$ observed was .3362, while the mean was .1880 and the median was .1800. The size of the variable matrix that needs to be positive semidefinite for this level is $78 \times 78$.
	
	\section{Lemmas for Extreme Nash Equilibria}\label{NASH Sec: Lemmas for Extreme Nash Equilibria}
	The results reported in Section~\ref{NASH Sec: Bounding Payoffs and Strategy Exclusion} were found using the \texttt{lrsnash}~\cite{avis2010enumeration} software which computes extreme Nash equilibria (see definition below). In particular the true maximum welfare and the persistent strategy sets were found in relation to extreme symmetric Nash equilibria only. We show in this appendix why this is sufficient for the claims we made about \emph{all} symmetric Nash equilibria. We prove a more general statement below about general games and general Nash equilibria since this could be of potential independent interest. The proof for symmetric games is identical once the strategies considered are restricted to be symmetric.
	\begin{defn}
		An \emph{extreme Nash equilibrium} is a Nash equilibrium which cannot be expressed as a convex combination of other Nash equilibria.
	\end{defn}
	
	\begin{lem}\label{NASH lem: convex combination ENE}
		All Nash equilibria are convex combinations of extreme Nash equilibria.
	\end{lem}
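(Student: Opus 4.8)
The plan is to show that the set of Nash equilibria of a bimatrix game is a finite union of polytopes, and that any point in a polytope is a convex combination of its vertices, which will be exactly the extreme Nash equilibria. First I would recall the standard characterization (already stated in equation~(\ref{NASH QP})): a pair $(x,y)\in\triangle_m\times\triangle_n$ is a Nash equilibrium if and only if $x^TAy\ge e_i^TAy$ for all $i$ and $x^TBy\ge x^TBe_j$ for all $j$. The key observation is that for a \emph{fixed} support pattern---i.e. a fixed subset $I\subseteq\{1,\dots,m\}$ of pure strategies of player A and $J\subseteq\{1,\dots,n\}$ of player B---the set of Nash equilibria whose supports are exactly $(I,J)$ is cut out by \emph{linear} conditions: $x_i=0$ for $i\notin I$, $y_j=0$ for $j\notin J$, the simplex constraints, and the best-response conditions $e_i^TAy=e_k^TAy$ for $i,k\in I$ (and likewise for $B$ and $J$), together with the inequalities $e_i^TAy\le e_k^TAy$ for $i\in I$, $k\notin I$. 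Indeed, the bilinear term $x^TAy$ becomes linear in $y$ once we know it equals the common value $e_i^TAy$ over the support $I$, and symmetrically for $B$.

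The steps I would carry out, in order: (1) State the best-response characterization. (2) Fix a support pair $(I,J)$ and write down the polyhedron $P_{I,J}$ defined by the linear equalities and inequalities above; argue that the Nash equilibria with support contained in $(I,J)$ form exactly $P_{I,J}$ (or a face thereof), hence a polytope---boundedness is immediate since it sits inside $\triangle_m\times\triangle_n$. (3) Observe that the full set of Nash equilibria is $\bigcup_{(I,J)}P_{I,J}$, a \emph{finite} union (there are at most $2^m\cdot 2^n$ choices of $(I,J)$) of polytopes. (4) Invoke the Minkowski--Weyl theorem: each nonempty polytope $P_{I,J}$ is the convex hull of its finitely many vertices. (5) Argue that every vertex $v$ of every $P_{I,J}$ is an extreme Nash equilibrium in the sense of the definition: if $v=\lambda w_1+(1-\lambda)w_2$ with $w_1,w_2$ Nash equilibria and $\lambda\in(0,1)$, then $w_1,w_2$ must have support contained in that of $v$ (a strict convex combination cannot zero out a coordinate that is nonzero in both, and cannot be nonzero where both are zero), so $w_1,w_2\in P_{I,J}$, contradicting that $v$ is a vertex of $P_{I,J}$---unless $w_1=w_2=v$. (6) Conclude: given any Nash equilibrium $z$, it lies in some $P_{I,J}$, so by step (4) it is a convex combination of vertices of $P_{I,J}$, each of which is an extreme Nash equilibrium by step (5). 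This proves the lemma.

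The main obstacle, and the step requiring the most care, is step (5)---verifying that vertices of the individual polytopes $P_{I,J}$ are genuinely extreme among \emph{all} Nash equilibria, not merely extreme within $P_{I,J}$. The subtlety is that a vertex $v$ of $P_{I,J}$ might conceivably be written as a convex combination of Nash equilibria lying in a \emph{different} polytope $P_{I',J'}$. The support argument resolves this: if $v=\lambda w_1+(1-\lambda)w_2$ is a proper convex combination and $v_i>0$, then at least one of $(w_1)_i,(w_2)_i$ is positive, and since $v_i>0$ forces, say, $(w_1)_i>0$; more importantly if $v_i=0$ then both $(w_1)_i=(w_2)_i=0$ by nonnegativity. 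Hence $\mathrm{supp}(w_1),\mathrm{supp}(w_2)\subseteq\mathrm{supp}(v)\subseteq(I,J)$, so $w_1,w_2\in P_{I,J}$, and vertex-extremality within $P_{I,J}$ finishes the job. A minor point worth noting in passing is that one should define $P_{I,J}$ using ``support contained in $(I,J)$'' rather than ``support equal to $(I,J)$'' to keep each $P_{I,J}$ closed (and hence a genuine polytope); the union over all $(I,J)$ is unaffected.
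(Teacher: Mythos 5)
Your architecture (decompose the Nash set into support-indexed polytopes, pass to vertices, identify vertices with extreme equilibria) does give a finite union of polytopes covering the set of Nash equilibria, but step~(5) --- the step you yourself flagged as delicate --- is incorrect as argued. The support containment $\mathrm{supp}(w_1),\mathrm{supp}(w_2)\subseteq\mathrm{supp}(v)\subseteq(I,J)$ only guarantees that $w_1,w_2$ satisfy the constraints $x_i=0$ for $i\notin I$ and $y_j=0$ for $j\notin J$; it does \emph{not} give the equality constraints $e_i^TAy=e_{i'}^TAy$ for all $i,i'\in I$ that also define $P_{I,J}$. A Nash equilibrium whose support is strictly inside $(I,J)$ need only equalize payoffs over its own smaller support, so it can violate those equalities and live in a different $P_{I',J'}$. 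Hence ``$w_1,w_2\in P_{I,J}$'' does not follow, and vertex-extremality of $v$ within $P_{I,J}$ is not contradicted.

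Concretely, take $m=n=3$ and let $A=B$ be the $3\times 3$ identity matrix, with $I=J=\{1,2,3\}$. Your $P_{I,J}$ is cut out by $y_1=y_2=y_3$ and $x_1=x_2=x_3$ together with the simplex constraints, so $P_{I,J}$ is the singleton $v=\bigl((\tfrac13,\tfrac13,\tfrac13),(\tfrac13,\tfrac13,\tfrac13)\bigr)$, trivially a vertex. But $v=\tfrac13(e_1,e_1)+\tfrac13(e_2,e_2)+\tfrac13(e_3,e_3)$, and each $(e_i,e_i)$ is a Nash equilibrium of this game, so $v$ is \emph{not} an extreme Nash equilibrium, contradicting step~(5). (Note $(e_1,e_1)$ has support $\{1\}\times\{1\}\subseteq I\times J$ yet does not lie in $P_{\{1,2,3\},\{1,2,3\}}$, since $e_1^TAe_1=1\ne 0=e_2^TAe_1$.) The paper sidesteps all of this with a shorter, global argument: the set of Nash equilibria is compact, so its convex hull is a compact convex set and hence equals the convex hull of its extreme points; those extreme points lie in the Nash set and, by a simple trichotomy, must be extreme Nash equilibria; every Nash equilibrium lies in the hull and is therefore a convex combination of extreme ones. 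Your decomposition could perhaps be repaired (e.g., by recursing on non-extreme vertices and invoking finiteness of supports), but the convex-hull argument is cleaner and avoids the issue entirely. As a minor side note, the inequality you wrote as ``$e_i^TAy\le e_k^TAy$ for $i\in I$, $k\notin I$'' should be reversed.
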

	
	\begin{proof}
		{}It suffices to show that any extreme point of the convex hull of the set of Nash equilibria must be an extreme Nash equilibrium, as any point in a compact convex set can be written as a convex combination of its extreme points. Note that this convex hull contains three types of points: extreme Nash equilibria, Nash equilibria which are not extreme, and convex combinations of Nash equilibria which are not Nash equilibria. The claim then follows because any extreme point of the convex hull cannot be of the second or third type, as they can be written as convex combinations of other points in the hull.
	\end{proof}
	
	The next lemma shows that checking extreme Nash equilibria are sufficient for the maximum welfare problem.
	
	\begin{lem}
		For any bimatrix game, there exists an extreme Nash equilibrium giving the maximum welfare among all Nash equilibria.
	\end{lem}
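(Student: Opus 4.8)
The claim is that among all Nash equilibria of a bimatrix game, the maximum welfare is attained at an \emph{extreme} Nash equilibrium. The plan is to combine the structural fact established in Lemma \ref{NASH lem: convex combination ENE} (every Nash equilibrium is a convex combination of extreme Nash equilibria) with the observation that the welfare function, restricted to the convex hull of Nash equilibria, behaves well enough that a maximizer can be pushed to an extreme point. First I would recall that the set of Nash equilibria of a bimatrix game, while not convex in general, is a finite union of polytopes (this is classical, e.g.\ from the Lemke--Howson structure or simply from the fact that the equilibrium conditions in (\ref{NASH Eq: QCQP Formulation}) become linear once the supports of $x$ and $y$ are fixed); in particular it is a compact set, and so is its convex hull.

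The key steps, in order. Step 1: Observe that welfare $w(x,y) \defeq x^TAy + x^TBy = x^T(A+B)y$ is a quadratic function on $\triangle_m \times \triangle_n$, but crucially it is \emph{bilinear}, hence affine when restricted to any line segment lying within a single ``support-fixed'' polytope of Nash equilibria. Step 2: Let $(x^*,y^*)$ be a Nash equilibrium achieving the maximum welfare $w^*$ over all Nash equilibria (such a point exists by compactness of the equilibrium set and continuity of $w$). Step 3: By Lemma \ref{NASH lem: convex combination ENE}, write $(x^*,y^*) = \sum_{i=1}^r \lambda_i (x_i,y_i)$ with each $(x_i,y_i)$ an extreme Nash equilibrium, $\lambda_i \ge 0$, $\sum_i \lambda_i = 1$. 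Step 4: Since $w$ need not be affine on the \emph{whole} convex hull, I cannot directly conclude $w^* = \sum_i \lambda_i w(x_i,y_i)$; instead I would argue as follows. Consider the true game-theoretic picture: the maximum-welfare equilibrium problem is the quadratic program (\ref{NASH Eq: Payoff QP}) (or its two-player analogue), whose feasible set is the union of finitely many polytopes $\{Q_j\}$. The maximum of $w$ over each polytope $Q_j$ is attained at a vertex of $Q_j$ because $w$ is bilinear, and bilinear functions attain their maximum over a polytope at a vertex --- this needs the standard lemma that $x^TMy$ maximized over $\triangle_m \times \triangle_n$ (or any product of polytopes) has an optimizer where both arguments are vertices; here $y$ is pinned by the support constraints so one optimizes a \emph{linear} function of $x$ over a polytope, attaining the max at a vertex of that polytope, and symmetrically. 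Step 5: A vertex of $Q_j$ is a Nash equilibrium that cannot be written as a proper convex combination of other points \emph{of $Q_j$}; I would then need to check it also cannot be written as a convex combination of Nash equilibria \emph{outside} $Q_j$ --- but this follows since any Nash equilibrium lying on the segment between two equilibria with the \emph{same} payoff vector has that payoff vector too (the equilibrium set is a union of polytopes on each of which support, and hence which pure-strategy best responses are tight, is constant), so a vertex of $Q_j$ is genuinely an extreme Nash equilibrium. Hence the overall maximum $w^* = \max_j \max_{Q_j} w$ is attained at an extreme Nash equilibrium.

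The main obstacle I anticipate is Step 4--5: the welfare function is \emph{not} affine on the convex hull of all Nash equilibria, so one cannot naively invoke ``a linear/affine function on a polytope attains its max at a vertex.'' The careful point is to work polytope-by-polytope within the union-of-polytopes decomposition of the Nash set (on each piece the relevant best-response inequalities are tight, so the equilibrium conditions are linear and $w$ is bilinear with one argument effectively constrained linearly), and then to reconcile ``vertex of a piece'' with ``extreme Nash equilibrium'' using the fact that two equilibria connected by a segment of equilibria share a payoff vector. Alternatively --- and this is probably the cleanest route, matching the excerpt's style --- I would lean entirely on Lemma \ref{NASH lem: convex combination ENE} together with a short argument that $w$ restricted to the convex hull is \emph{maximized at an extreme point of the hull}, and that every extreme point of the hull is an extreme Nash equilibrium (as shown inside the proof of Lemma \ref{NASH lem: convex combination ENE}); the only gap to fill is why a maximizer of a quadratic over the hull can be taken at an extreme point, which I would handle by the bilinearity argument above applied after first replacing $w$ by its restriction to segments. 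In short: existence of a maximizer by compactness, decomposition into extreme equilibria, and a bilinearity-at-a-vertex argument, with the delicacy being the non-affineness of $w$ on the hull.
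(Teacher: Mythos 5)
Your proposal takes a genuinely different route from the paper's, and as written it contains gaps that would be hard to close without essentially rediscovering the paper's argument. The paper's proof is much shorter and makes no appeal to the polytopal structure of the Nash set: given any Nash equilibrium $(\tilde x,\tilde y)$, it writes $(\tilde x,\tilde y)=\sum_i\lambda_i(x^i,y^i)$ as a convex combination of extreme equilibria via Lemma~\ref{NASH lem: convex combination ENE}, expands $\tilde x^T(A+B)\tilde y$ bilinearly into the double sum $\sum_{i,j}\lambda_i\lambda_j\, x^{iT}(A+B)y^j$, and then bounds each cross term using the Nash property of the extreme equilibria themselves: $x^{iT}Ay^j\le x^{jT}Ay^j$ since $x^j$ best-responds to $y^j$, and $x^{iT}By^j\le x^{iT}By^i$ since $y^i$ best-responds to $x^i$. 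This collapses the double sum to $\sum_i\lambda_i\, x^{iT}(A+B)y^i$, a convex combination of the extreme equilibria's welfares, so one of them attains at least the welfare of $(\tilde x,\tilde y)$. That cross-term bound is the key idea missing from your proposal, which tries to substitute a vertex argument for it, and the substitution breaks.

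Concretely: (1) Your Step~5 claims that a vertex of a support-fixed equilibrium polytope $Q_j$ is an extreme Nash equilibrium, justified by the assertion that a Nash equilibrium on a segment between two equilibria with the same payoff vector inherits that payoff vector. Both claims are false. In the $2\times 2$ coordination game with $A=B$ equal to the identity matrix, the interior mixed equilibrium $x=y=(\tfrac12,\tfrac12)$ is the unique point of, and hence a vertex of, the full-support polytope $Q_j$, yet it is the midpoint in $\triangle_2\times\triangle_2$ of the two pure equilibria $(e_1,e_1)$ and $(e_2,e_2)$ and so is not an extreme Nash equilibrium; the two endpoints give payoff $1$ to each player while the midpoint gives $\tfrac12$, so the payoff-preservation claim fails on the same example. (2) Your ``cleanest route'' asserts that the bilinear welfare on the convex hull of the Nash set is maximized at an extreme point of the hull; this is also false in general (the function $xy$ on the segment from $(0,1)$ to $(1,0)$ is maximized at the midpoint). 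Bilinear functions attain their maxima at extreme points of \emph{products} of polytopes, which is why your Step~4 is fine within a single support-fixed $Q_j$, but the convex hull of the Nash set is not such a product. Neither Step~5 nor the alternative route can be salvaged as stated; the paper sidesteps both by bounding the welfare of an arbitrary Nash equilibrium directly through the best-response inequalities of the extreme equilibria appearing in its convex decomposition.
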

	\begin{proof}
		{}Consider any Nash equilibrium $(\tilde{x}, \tilde{y})$, and let it be written as $\bmat \tilde{x} \\ \tilde{y} \emat = \sum_{i=1}^r \lambda_i \bmat x^i \\ y^i \emat$ for some set of extreme Nash equilibria $\bmat x^1 \\ y^1 \emat, \ldots, \bmat x^r \\ y^r \emat$ and $\lambda \in \triangle_r$. Observe that for any $i,j$, \begin{equation}\label{NASH eq: defn of NE}x^{iT}Ay^{j} \le x^{jT}Ay^{j}, x^{iT}By^{j} \le x^{iT}By^{i},\end{equation} from the definition of a Nash equilibrium. Now note that
		
		\begin{equation*}
		\begin{aligned}
		\tilde{x}^T(A+B)\tilde{y} & = (\sum_{i=1}^r \lambda_i x^i)^T(A+B)(\sum_{i=1}^r \lambda_i y^i)\\
		&=\sum_{i=1}^r \sum_{j=1}^r \lambda_i\lambda_j x^{iT}(A+B)y^j\\
		&= \sum_{i=1}^r \sum_{j=1}^r \lambda_i\lambda_j x^{iT}Ay^j + \sum_{i=1}^r \sum_{j=1}^r \lambda_i\lambda_j x^{iT}By^j\\
		& \overset{(\ref{NASH eq: defn of NE})}{\le} \sum_{i=1}^r \sum_{j=1}^r \lambda_i\lambda_j x^{jT}Ay^j + \sum_{i=1}^r \sum_{j=1}^r \lambda_i\lambda_j x^{iT}By^i\\
		&= \sum_{i=1}^r \lambda_i x^{iT}Ay^i + \sum_{i=1}^r \lambda_i x^{iT}By^i\\
		&= \sum_{i=1}^r \lambda_i x^{iT}(A+B)y^i.
		\end{aligned}
		\end{equation*}
		
		In particular, since each $(x^i, y^i)$ is an extreme Nash equilibrium, this tells us for any Nash equilibrium $(\tilde{x}, \tilde{y})$ there must be an extreme Nash equilibrium which has at least as much welfare.
	\end{proof}
	
	Similarly for the results for persistent sets in Section~\ref{NASH SSec: Strategy Exclusion}, there is no loss in restricting attention to extreme Nash equilibria.
	\begin{lem}
		For a given strategy set $\mathcal{S}$, if every extreme Nash equilibrium plays at least one strategy in $\mathcal{S}$ with positive probability, then every Nash equilibrium plays at least one strategy in $\mathcal{S}$ with positive probability.
	\end{lem}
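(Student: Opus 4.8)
The statement to prove is the final lemma in Appendix~\ref{NASH Sec: Lemmas for Extreme Nash Equilibria}: for a given strategy set $\mathcal{S}$, if every extreme Nash equilibrium plays at least one strategy in $\mathcal{S}$ with positive probability, then every Nash equilibrium plays at least one strategy in $\mathcal{S}$ with positive probability.

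The plan is to argue by contraposition, leveraging Lemma~\ref{NASH lem: convex combination ENE}, which states that every Nash equilibrium is a convex combination of extreme Nash equilibria. Suppose there is some Nash equilibrium $(\tilde{x},\tilde{y})$ that plays no strategy in $\mathcal{S}$ with positive probability, i.e., $\tilde{x}_i = 0$ for all $i \in \mathcal{S}_x$ and $\tilde{y}_j = 0$ for all $j \in \mathcal{S}_y$. By Lemma~\ref{NASH lem: convex combination ENE}, write $\bmat \tilde{x} \\ \tilde{y} \emat = \sum_{i=1}^r \lambda_i \bmat x^i \\ y^i \emat$ for extreme Nash equilibria $(x^i, y^i)$ and $\lambda \in \triangle_r$ with $\lambda_i > 0$ for each $i$ (discarding any zero-weight terms). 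The key observation is that all the vectors in this convex combination are nonnegative, so a coordinate of $\tilde{x}$ (resp.\ $\tilde{y}$) is zero only if the corresponding coordinate of \emph{every} $x^i$ (resp.\ $y^i$) appearing with positive weight is zero. Hence for each $i$, $x^i_j = 0$ for all $j \in \mathcal{S}_x$ and $y^i_j = 0$ for all $j \in \mathcal{S}_y$; that is, each extreme Nash equilibrium $(x^i, y^i)$ plays no strategy in $\mathcal{S}$ with positive probability, contradicting the hypothesis.

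I would then note that the symmetric-game version is immediate by the remark already made in the appendix: restricting the convex combination decomposition to symmetric extreme equilibria (which exist and suffice by the same argument applied within the symmetric subspace) yields the corresponding statement, which is exactly what is used in Section~\ref{NASH SSec: Strategy Exclusion}.

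I do not anticipate any real obstacle here; the only point requiring a small amount of care is the justification that a nonnegative convex combination with strictly positive weights has a zero coordinate only when all summands do, which is elementary. The proof is essentially a two-line consequence of Lemma~\ref{NASH lem: convex combination ENE} together with nonnegativity of mixed strategies.
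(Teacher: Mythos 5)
Your proof is correct and is essentially the same argument as the paper's: both invoke Lemma~\ref{NASH lem: convex combination ENE} to write an arbitrary Nash equilibrium as a convex combination of extreme Nash equilibria and then use nonnegativity of mixed strategies to conclude. The paper argues directly (a positive entry in any positively-weighted extreme equilibrium survives the convex combination), while you phrase it contrapositively (a zero entry in the combination forces zeros in all positively-weighted summands); this is a trivial reformulation of the same observation.
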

	
	\begin{proof}
		{}Let $\mathcal{S}$ be a persistent set of strategies. Since all Nash equilibria are composed of nonnegative entries, and every extreme Nash equilibrium has positive probability on some entry in $\mathcal{S}$, any convex combination of extreme Nash equilibria must have positive probability on some entry in $\mathcal{S}$.
	\end{proof}
	
	\section{A Note on Reductions from General Games to Symmetric Games}\label{NASH Sec: Irreducibility}
	
	An anonymous referee asked us if our guarantees for symmetric games transfer over to general games by symmetrization. Indeed, there are reductions in the literature that take a general game, construct a symmetric game from it, and relate the Nash equilibria of the original game to symmetric Nash equilibria of its symmetrized version. In this Appendix, we review two well-known reductions of this type, which are shown in \cite{griesmer1963symmetric} and \cite{jurg1992symmetrization}, and show that the quality of approximate Nash equilibria can differ greatly between the two games. We hope that our examples can be of independent interest.
	
	\subsection{The Reduction of \cite{griesmer1963symmetric}}
	
	Consider a game $(A,B)$ with $A, B > 0$ and a Nash equilibrium $(x^*, y^*)$ of it with payoffs $p_A~\defeq~x^{*T}Ay^*$ and $p_B~\defeq~x^{*T}By^*$. Then the symmetric game $(S_{AB}, S_{AB}^T)$ with
	
	$$S_{AB} \defeq \bmat
	0 & A\\
	B^T & 0
	\emat$$
	admits a symmetric Nash equilibrium in which both players play $\bmat \frac{p_A}{p_A+p_B}x^* \\ \frac{p_B}{p_A+p_B}y^*\emat$. In the reverse direction, any symmetric equilibrium $\left( \bmat x\\ y\emat, \bmat x\\ y\emat\right)$ of $(S_{AB}, S_{AB}^T)$ yields a Nash equilibrium $(\frac{x}{1_m^Tx}, \frac{y}{1_n^Ty})$ to the original game $(A,B)$.
	
	To demonstrate that high-quality approximate Nash equilibria in the symmetrized game can map to low-quality approximate Nash equilibria in the original game, consider the game given by $(A,B) = \left( \bmat \epsilon & 0 \\ 1 & 1\emat, \bmat \epsilon^2 & 0 \\ 0 & 1 \emat \right)$ for some $\epsilon > 0$. The symmetric strategy $$\left( \bmat \frac{1}{1 + \epsilon} \\ 0 \\ \frac{\epsilon}{1+\epsilon} \\ 0 \emat, \bmat \frac{1}{1 + \epsilon} \\ 0 \\ \frac{\epsilon}{1+\epsilon} \\ 0 \emat \right)$$ is an $\epsilon \frac{1-\epsilon}{1+\epsilon}$-NE for $(S_{AB}, S_{AB}^T)$, but the strategy pair $\left(\bmat 1 \\ 0 \emat, \bmat 1 \\ 0 \emat\right)$ is a $(1-\epsilon)$-NE for $(A,B)$.
	
	\subsection{The Reduction of \cite{jurg1992symmetrization}}
	Consider a game $(A,B)$ with $A > 0, B < 0$ and a Nash equilibrium $(x^*, y^*)$ of it with payoffs $p_A~\defeq~x^{*T}Ay^*$ and $p_B~\defeq~x^{*T}By^*$. Then the symmetric game $(S_{AB}, S_{AB}^T)$ with
	
	$$S_{AB} \defeq \bmat
	0_{m \times m} & A & -1_m\\
	B^T & 0_{n \times n} & 1_n\\
	1_m^T & -1_n^T & 0\\
	\emat$$
	admits a symmetric Nash equilibrium in which both players play $$\bmat \frac{x^*}{2 - p_B} \\ \frac{y^*}{2+p_A} \\ 1 - \frac{1}{2-p_B} - \frac{1}{2+p_A}\emat.$$ In the reverse direction, any symmetric equilibrium $\left( \bmat x\\ y\\z \emat, \bmat x\\ y\\z\emat\right)$ of $(S_{AB}, S_{AB}^T)$ yields a Nash equilibrium $(\frac{x}{1_m^Tx}, \frac{y}{1_n^Ty})$ to the original game $(A,B)$. This reduction has some advantages over the previous one (see \cite[Section 1]{jurg1992symmetrization}).
	
	To demonstrate that high-quality approximate Nash equilibria in the new symmetrized game can again map to low-quality approximate Nash equilibria in the original game, consider the game given by $(A,B) = \left( \bmat 0 & 0 \\ 0 & 1\emat, \bmat -1 & -1 \\ 0 & 0 \emat \right)$. Let $\epsilon \in (0,\frac{1}{2})$. The symmetric strategy $$\left( \bmat \epsilon \\ 0 \\ 1 - \epsilon \\ 0 \\ 0\emat, \bmat \epsilon \\ 0 \\ 1 - \epsilon \\ 0 \\ 0\emat \right)$$ is an $\frac{\epsilon}{2}(1-\epsilon)$-NE\footnote{Note that approximation factor is halved since the range of the entries of the payoff matrix in the symmetrized game is $[-1,1]$.} for $(S_{AB}, S_{AB}^T)$, but the strategy pair $\left(\bmat 1 \\ 0 \emat, \bmat 1 \\ 0 \emat\right)$ is a $1$-NE for $(A,B)$.

\singlespacing
\bibliographystyle{plain}

\cleardoublepage
\ifdefined\phantomsection
\phantomsection  
\else
\fi
\addcontentsline{toc}{chapter}{Bibliography}

\bibliography{Thesis_refs}

\end{document}